\newcommand{\leqnomode}{\tagsleft@true}
\newcommand{\reqnomode}{\tagsleft@false}
\renewcommand{\_}[1]{_{\left( #1 \right)}}
\newcommand{\coef}[2]{\mathsf{c}_{#1}^{(#2)}}
\newcommand{\coeff}[2]{\mathsf{d}_{#1}^{(#2)}}
\newcommand{\Dchaintwo}[3]{\xymatrix@C-4pt{\overset{#1}{\underset{\ }{\circ}}\ar
@{-}[r]^{#2}
& \overset{#3}{\underset{\ }{\circ}}}}
\newcommand{\Dchainfive}[9]{\xymatrix@C-6pt{\overset{#1}{\underset{\ }{\circ}}\ar
@{-}[r]^{#2}  & \overset{#3}{\underset{\ }{\circ}}\ar  @{-}[r]^{#4}  &
\overset{#5}{\underset{\ }{\circ}}
\ar  @{-}[r]^{#6}  & \overset{#7}{\underset{\ }{\circ}}\ar  @{-}[r]^{#8}  &
\overset{#9}{\underset{\ }{\circ}}}}
\newcommand{\supera}[2]{{\mathbf A}(#1|#2)}
\newcommand{\superb}[2]{{\mathbf B}(#1|#2)}
\newcommand{\superd}[2]{{\mathbf D}(#1|#2)}
\newcommand{\superda}[1]{{\mathbf D}(2,1;#1)}
\newcommand{\superf}{{\mathbf F}(4)}
\newcommand{\superg}{{\mathbf G}(3)}
\newcommand{\mor}{\sim_{\text{Mor}}}
\newcommand{\Brown}{\mathtt{br}}
\newcommand{\Bgl}{\mathtt{wk}}
\newcommand{\roots}{\mathtt X}
\newcommand{\x}{\mathtt x}
\newcommand{\y}{\mathtt y}
\newcommand{\ad}{\operatorname{ad}}
\newcommand{\cop}{\operatorname{cop}}
\newcommand{\car}{\operatorname{char}}
\newcommand{\coh}{\operatorname{H}}
\newcommand{\hoch}{\operatorname{HH}}
\newcommand{\gr}{\operatorname{gr}}
\newcommand{\Hom}{\operatorname{Hom}}
\newcommand{\Ext}{\operatorname{Ext}}
\newcommand{\id}{\operatorname{id}} 
\newcommand{\im}{\operatorname{im}}
\newcommand{\lcm}{\operatorname{lcm}}
\newcommand{\ord}{\operatorname{ord}}
\newcommand{\ztu}{\overline{\zeta}}
\newcommand{\supp}{\operatorname{supp}}
\newcommand{\ku}{ \Bbbk}
\newcommand{\I}{\mathbb I}
\newcommand{\G}{\mathbb G}
\newcommand{\N}{\mathbb N}
\newcommand{\Z}{\mathbb Z}
\newcommand{\toba}{\mathscr{B}}
\newcommand{\bq}{\mathfrak{q}}
\newcommand{\Cc}{{\mathcal C}}
\newcommand{\Dc}{\mathcal D}
\newcommand{\Xc}{\mathcal X}
\newcommand{\yd}[1]{{}^{#1}_{#1}\mathcal{YD}}
\newcommand{\ydh}{{}^{H}_{H}\mathcal{YD}}
\newcommand{\ydho}{{}^{H_0}_{H_0}\mathcal{YD}}
\newcommand{\ydhoo}{{}^{H_{[0]}}_{H_{[0]}}\mathcal{YD}}
\newcommand{\ydk}{{}^{K}_{K}\mathcal{YD}}
\newcommand{\ydkd}{{}^{K^\#}_{K^\#}\mathcal{YD}}
\newcommand{\ydkG}{{}^{\ku G}_{\ku G}\mathcal{YD}}
\newcommand{\ydkg}{{}^{\ku\Gamma}_{\ku\Gamma}\mathcal{YD}}
\newcommand{\ydkgd}{{}^{\ku^\Gamma}_{\ku^\Gamma}\mathcal{YD}}
\newcommand{\Ac}{\mathcal{A}}
\newcommand{\Ic}{\mathcal{I}}
\newcommand{\Rc}{\mathcal{R}}
\newcommand{\Tc}{\mathcal{T}}
\newcommand{\Asj}{q_{\alpha \beta}}
\newcommand{\Bsj}{\mathtt{b}}
\newcommand{\Csj}{q_{\alpha \gamma}}
\newcommand{\Dsj}{q_{\gamma \beta}}
\newcommand{\basis}{\mathtt{B}}
\newcommand{\chain}{\mathtt{M}}
\newcommand{\wchain}{\widehat{\chain}}
\newcommand{\Vt}{\mathtt{V}}
\newcommand{\pf}{\begin{proof}}
\newcommand{\epf}{\end{proof}}
\numberwithin{equation}{subsection}
\newtheorem{theorem}[equation]{Theorem}
\newtheorem{assumption}[equation]{Condition}
\newtheorem{lema}[equation]{Lemma}
\newtheorem{lemma}[equation]{Lemma}
\newtheorem{lem}[equation]{Lemma}
\newtheorem{cor}[equation]{Corollary}
\newtheorem{prop}[equation]{Proposition}
\theoremstyle{definition}
\newtheorem{definition}[equation]{Definition}
\newtheorem{example}[equation]{Example}
\newtheorem{observation}[equation]{Observation}
\newtheorem*{observation*}{Observation}
\newtheorem{question}[equation]{Question}
\newtheorem*{claim*}{Claim}
\newtheorem{claimA}{Claim}
\theoremstyle{remark}
\newtheorem{remark}[equation]{Remark}
\newcommand{\ot}{\otimes}
\newcommand{\bu}{\setlength{\unitlength}{1pt}\begin{picture}(2.5,2)
                      (1,1)\put(2,3.5){\circle*{2}}\end{picture}}
\newcommand{\hopfuno}{\mathcal H}
\newcommand{\hopfdos}{\mathcal L}
\newcommand{\Wc}{\mathcal W}
\begin{document}

\noindent
\title[Cohomology rings of finite-dimensional Hopf algebras]
{Cohomology rings of finite-dimensional pointed Hopf algebras over abelian groups}
\author{N. Andruskiewitsch, I. Angiono, J. Pevtsova, S. Witherspoon} 

\address{FaMAF-CIEM (CONICET), Universidad Nacional de C\'ordoba,
Medina A\-llen\-de s/n, Ciudad Universitaria, 5000 C\' ordoba, Rep\'
ublica Argentina.} 
\email{nicolas.andruskiewitsch@unc.edu.ar}
\email{ivan.angiono@unc.edu.ar}

\address{Department of Mathematics University of Washington Seattle, WA 98195,
USA}
\email{julia@math.washington.edu}

\address{Department of Mathematics Texas A\&M University College Station, TX 77843, USA}
\email{sjw@math.tamu.edu}

\date{\today} 
\thanks{MSC 2020: 16E40;16T20}
\begin{abstract}  
We show that the cohomology ring of a finite-dimensional complex pointed Hopf algebra 
with an abelian group of group-like elements is finitely generated. Our strategy has three major steps. 
We first reduce the problem to the finite generation of cohomology of finite dimensional Nichols 
algebras of diagonal type. For the Nichols algebras we do a detailed analysis of cohomology via 
the Anick resolution reducing the problem further to specific combinatorial properties. Finally, 
to check these properties we turn to the classification of Nichols algebras of diagonal type due 
to Heckenberger. In this paper we complete the verification of these combinatorial properties for 
major parametric families, including Nichols algebras of Cartan and super types and develop all 
the theoretical foundations necessary for the case-by-case analysis. The remaining discrete families 
are addressed in a separate publication.  As an application of the main theorem we deduce finite generation 
of cohomology for other classes of finite-dimensional Hopf algebras, including basic Hopf algebras with 
abelian groups of characters and finite quotients of quantum groups at roots of one.
\end{abstract} 
\maketitle
 \vspace{0.1in} 

\setcounter{tocdepth}{2}
\tableofcontents

\newpage
\section{Introduction}

\subsection{Antecedents}
A fundamental result in representation theory of a
finite group scheme \cite[Theorem 1.1]{FS} is that its cohomology 
satisfies the finite generation property. Using the language of Hopf algebras 
it can be phrased as follows. Let $H$ be a 
finite-dimensional cocommutative Hopf algebra over a field $k$.  Then

\begin{enumerate}[leftmargin=*,label=\rm{(fgc-\alph*)}]
\item\label{item:fgca} The cohomology ring $\coh(H, \ku)$ is finitely generated.

\item\label{item:fgcb} For any finitely generated $H$-module $M$, $\coh(H,M)$ is a finitely generated
module over $\coh(H, \ku)$. 
\end{enumerate}

Prior to the Friedlander-Suslin theorem, the result was known for group algebras of finite groups \cite{Go,V,evens}, 
restricted enveloping algebras \cite{FPa,AJ} and finite dimensional subalgebras of the Steenrod algebra \cite{wilkerson}. 
At the end of the introduction of \cite{FS},
the authors observe that the cohomology ring of a finite-dimensional 
\emph{commutative} Hopf algebra is easily seen to be finitely generated using the structure as in \cite{waterhouse} and add: 

\begin{quote}
\emph{We do not know whether it is reasonable to expect finite generation of the
 cohomology of an arbitrary finite-dimensional Hopf algebra.}
\end{quote}

Slowly, evidence confirming that this is indeed a reasonable question has emerged. In \cite{GK}
the cohomology ring of 
Lusztig's small quantum groups $u_q({{\mathfrak g}})$ (in characteristic 0)  
under some restrictions on the parameters was identified as the 
coordinate ring of the nilpotent cone of the Lie algebra $\mathfrak g$.
The restrictions on the parameters were weakened in \cite{BNPP}.
The finite generation of 
cohomology was established  for the duals of Lusztig's small quantum groups (in characteristic 0) \cite{Gordon},
for Lusztig's small quantum groups in positive characteristic  \cite{Du},
for finite supergroup schemes \cite{Du2}, for 
finite-dimensional complex pointed Hopf algebras  whose group of grouplike elements
is abelian and has order coprime to 210 \cite{MPSW},
for some pointed Hopf algebras of dimension $p^3$ \cite{NWW,EOW} (in characteristic $p > 0$), for
the bosonizations of the Fomin-Kirillov algebra $\mathcal{FK}_3$ with the group algebra 
of $\mathbb S_3$ and its dual \cite{stefan-vay}, for Drinfeld doubles of finite group schemes \cite{FN, N}.
In all the cases above, the approach  is based to a greater or lesser extent 
on the knowledge of the structure of the Hopf algebras under consideration. 

Finite tensor categories were introduced in \cite{etingof-ostrik03}, where it was also conjectured
that finite generation holds in this more general context. 
A systematic study of this question was started in \cite{NP}.

\subsection{The main result and applications}
In the present paper we work over an algebraically closed field $\ku$ of characteristic 0.
For brevity we shall say that an augmented algebra $H$ has finitely generated cohomology (abbreviated as fgc) 
when both \ref{item:fgca} and \ref{item:fgcb} hold.
Our main result is the following:

\begin{theorem}\label{thm:fingencoh-pointed}
Let $H$ be a finite-dimensional pointed Hopf algebra whose group of group-like elements is abelian. Then $H$ has finitely generated cohomology.
\end{theorem}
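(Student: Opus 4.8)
The plan is to execute the three-step strategy announced in the abstract, orchestrating a chain of reductions that ultimately rests on Heckenberger's classification. \textbf{Step 1 (reduction to the graded object).} The first move is to pass to the associated graded Hopf algebra with respect to the coradical filtration. Since $\Gamma := G(H)$ is abelian, the lifting machinery of Andruskiewitsch--Schneider, together with generation in degree one, identifies $\gr H$ with a bosonization $\toba(V)\#\ku\Gamma$, where $\toba(V)$ is a finite-dimensional Nichols algebra of \emph{diagonal} type (the braiding being diagonal precisely because $\Gamma$ is abelian). I would then invoke the general principle that finite generation of cohomology is inherited from $\gr H$ to $H$: the filtration gives a multiplicative May-type spectral sequence converging to $\coh(H,\ku)$ with first page controlled by $\coh(\gr H,\ku)$, and one argues that Noetherian finite generation at $E_1$, with permanent cycles chosen compatibly, forces finite generation of the abutment. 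The same mechanism handles fgc part \ref{item:fgcb} for modules.

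\textbf{Step 2 (removing the group).} Next I would strip off the group algebra. Because $\car \ku = 0$ and $\Gamma$ is finite, $\ku\Gamma$ is semisimple, so the Lyndon--Hochschild--Serre spectral sequence for the bosonization collapses and yields an isomorphism $\coh(\toba(V)\#\ku\Gamma,\ku)\cong \coh(\toba(V),\ku)^{\Gamma}$ onto the $\Gamma$-invariants. As $\Gamma$ is linearly reductive, the invariant subalgebra of a finitely generated algebra is again finitely generated by Noether's theorem, and likewise for invariants of finitely generated modules; hence it suffices to prove fgc for the Nichols algebra $\toba(V)$ itself.

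\textbf{Step 3 (the Nichols algebra via the Anick resolution).} The heart of the argument is to establish fgc for a finite-dimensional $\toba(V)$ of diagonal type. Here I would exploit the PBW basis in root vectors and construct the Anick (chain) resolution of $\ku$ over $\toba(V)$, a small free resolution whose generators are encoded by the combinatorics of the defining relations (Lyndon words and the attached Gröbner data). Through a careful analysis of the low chains I would isolate distinguished \emph{even} cohomology classes, arising from the powers of root vectors that occur among the relations, and prove that they generate a commutative polynomial subalgebra $R\subseteq \coh(\toba(V),\ku)$ over which the full cohomology is a finitely generated module. This converts fgc into explicit combinatorial conditions on the generalized Dynkin diagram of $V$ (existence of suitable permanent cycles in the right degrees, and control of the chains by these classes).

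\textbf{Step 4 (case-by-case verification).} Finally I would verify these combinatorial conditions against Heckenberger's classification of arithmetic root systems: Cartan type, super type, and the remaining discrete (sporadic) families. The present paper develops the full theoretical framework and settles the major parametric families, while the sporadic families are treated in the companion paper. \emph{The main obstacle} is twofold: in Step 3, passing from the Anick resolution to a genuine finite-generation statement, i.e.\ producing enough permanent cycles and showing they generate the cohomology finitely; and in Step 4, the sheer volume and delicacy of the combinatorial checks, since each family in the classification must be analyzed on its own terms.
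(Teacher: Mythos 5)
Your Steps 2--4 (stripping off the group via invariants, attacking the Nichols algebra with the Anick resolution, and verifying combinatorial conditions against Heckenberger's classification) track the paper faithfully. The genuine gap is in Step 1. You assert a ``general principle'' that fgc is inherited from $\gr H$ to $H$ via a May-type spectral sequence for the coradical filtration, ``with permanent cycles chosen compatibly.'' Producing those permanent cycles is precisely the obstruction, and in this generality it is not known how to do it directly: one would have to show that suitable generating cocycles of $\coh(\gr H,\ku)$ survive the deformation of the defining relations occurring in the lifting, and here those deformations are genuinely complicated (no smooth integration as in \cite{MPSW}, generators of the cohomology in arbitrarily high degree). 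Evens' Lemma \ref{Lch2} requires both Noetherianity of $E_1$ over a bigraded subalgebra \emph{and} that the image of that subalgebra consist of permanent cycles; Noetherianity of $E_1$ alone does not force finite generation of the abutment. This is exactly the step the paper is architected to avoid.

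The paper's actual route from $\toba(V)$ back to $H$ is a detour through the Drinfeld double. By Theorem \ref{th:summary} (resting on the hard results of \cite{Ang-crelle,AG}), $H$ is a cocycle deformation of $\gr H \simeq \toba(V)\#\ku\Gamma$, hence $H \mor \gr H$, i.e.~$D(H)\simeq D(\gr H)$ as algebras. One then proves fgc for $D(\gr H)$ (Theorem \ref{thm:fingencoh-drinfeld-double}): this uses a twisted tensor product decomposition of the double, the Bergh--Oppermann computation for the associated graded, and explicit extension of cocycles from each tensor factor --- and, crucially, it requires fgc not only for $\toba(V)$ but also for the \emph{dual} Nichols algebra $\toba(V^{\#})$, which your outline never mentions. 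Finally, fgc descends from $D(H)$ to the subalgebra $H$ by Frobenius reciprocity / Eckmann--Shapiro (Theorem \ref{lema:frobenius-reciprocity}), using Nichols--Z\"oller freeness. A smaller imprecision in your Step 2: $\coh(\toba(V),\ku)$ is only braided commutative, not commutative, so Noether's classical invariant theorem does not apply as stated; the paper first proves Noetherianity via braided commutativity (Proposition \ref{prop:braided-commutative} together with Lemma \ref{lema:braidedcommut-Noetherian}) and then runs a Hilbert-type argument (Lemma \ref{lema:hilbert-invariants}).
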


The class of finite-dimensional pointed Hopf algebras is
the best understood and the subclass of those with abelian group of group-like elements is the only one whose classification is essentially complete.
Theorem \ref{thm:fingencoh-pointed} goes beyond the situation treated in \cite{MPSW} but uses the same approach to the classification of
pointed Hopf algebras proposed in \cite{ASjalg,AS-survey}. 
Let us mention the main differences between the setting of \cite{MPSW}, that invoked the classification result \cite{AS-ann}, and the present work. In the former, 
the associated braided vector space $V$ (described below) was of Cartan type and the deformations of the defining relations in the liftings took values in the group algebras. 
These restrictions, in the terminology introduced in \cite{NPe}, guaranteed that the Nichols algebra $\toba(V)$ had a smooth integration $Q \to \toba(V)$ by an algebra of finite global dimension. That property, though not stated as explicitly, was crucial for the techniques in \cite{MPSW}. 
When the restriction on the order of the group of group-like elements $G(H)$ is dropped, $V$ belongs to the list in the celebrated classification of \cite{H-classif} but is not necessarily of Cartan type. The defining relations of the Nichols algebras and their deformations are more involved, see \cite{A-jems,Ang-crelle,AAG,AG,GJ,helbig} and conceptually 
different resulting in the absence of the crucial smooth integration property. In particular, as our results demonstrate, generating classes of the cohomology ring of a general Nichols algebra of diagonal type can lie in {\it arbitrary large degrees} whereas in the context of \cite{MPSW} and whenever the algebra is smoothly integrable, generating classes lie in degree 2. We do have control over the degrees of the generators: they depend on vanishing of certain coefficients as stated in Remark~\ref{obs:main}. In the very computationally heavy Section~\ref{sec:computational-lemmas} we calculate these coefficients which turn out to be quantum integers determined by the defining parameters of the Nichols algebra. By choosing different Nichols algebras from the classification list one can obtain various high values for the minimal degrees of the generators of the cohomology ring. Hence, handling this more general case of {\it all} Nichols algebras of diagonal type calls for development of new techniques which we present in this work.  

\medbreak
We state two direct applications of Theorem \ref{thm:fingencoh-pointed} extending further the number of types of finite-dimensional Hopf algebras with finitely generated cohomology. We also observe that Theorem \ref{thm:ppalrealiz-Kss}
provides another class of Hopf algebras satisfying fgc.

\begin{theorem}\label{thm:fingencoh-basic}
Let $H$ be a finite-dimensional basic Hopf algebra whose group of characters is abelian. Then $H$ has finitely generated cohomology.
\end{theorem}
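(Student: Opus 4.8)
The plan is to reduce the statement to Theorem~\ref{thm:fingencoh-pointed} via the duality between basic and pointed Hopf algebras, and then to transfer finite generation along the radical filtration. First I would use that a finite-dimensional Hopf algebra $A$ is basic if and only if its dual $A^{*}$ is pointed, and that under this duality the group of characters of $A$ is identified with the group $G(A^{*})$ of group-like elements of $A^{*}$. Hence the hypothesis says exactly that $A^{*}$ is a finite-dimensional pointed Hopf algebra whose group of group-like elements $\Gamma:=G(A^{*})$ is abelian, so $A^{*}$ has fgc by Theorem~\ref{thm:fingencoh-pointed}. One cannot simply apply the main theorem to $A$ itself: passing to the dual does not preserve pointedness (for instance the dual of a small quantum group $u_{q}(\mathfrak{sl}_{2})$ is basic but not pointed), and in general $\coh(A,\ku)$ and $\coh(A^{*},\ku)$ need not be isomorphic, so a genuine transfer mechanism is required.

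The mechanism I would use is the associated graded. The coradical filtration of $A^{*}$ is a Hopf algebra filtration whose associated graded is a Radford biproduct $\gr A^{*}\cong \toba(V)\#\ku\Gamma$, where $\toba(V)$ is the finite-dimensional Nichols algebra of the infinitesimal braiding $V$, of diagonal type since $\Gamma$ is abelian. Dualizing, the radical filtration $A\supseteq J\supseteq J^{2}\supseteq\cdots$ of $A$ is dual to the coradical filtration of $A^{*}$, and therefore its associated graded algebra is the graded dual
\[
\gr_{J}A\;\cong\;(\gr A^{*})^{*}\;\cong\;\toba(V^{*})\#\ku\widehat{\Gamma},
\]
where I have used the Hopf algebra isomorphism $(\ku\Gamma)^{*}\cong\ku\widehat{\Gamma}$ available for a finite abelian group over an algebraically closed field of characteristic $0$. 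This is again a finite-dimensional pointed Hopf algebra with abelian group of group-like elements $\widehat{\Gamma}$, so Theorem~\ref{thm:fingencoh-pointed} applies to it and $\gr_{J}A$ has fgc.

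To finish I would run the multiplicative spectral sequence of May type attached to the $J$-adic filtration of the augmented algebra $A$, with $E_{1}$-page assembled from $\coh(\gr_{J}A,\ku)$ and abutting to $\coh(A,\ku)$. Because the filtration is exhaustive and bounded ($J^{N}=0$) the spectral sequence converges after finitely many pages; because $\coh(\gr_{J}A,\ku)$ is a finitely generated graded-commutative algebra it is Noetherian, hence so is each successive page and in particular $E_{\infty}=\gr\coh(A,\ku)$, and a standard lifting argument then yields finite generation of $\coh(A,\ku)$, proving \ref{item:fgca}. Feeding a finitely generated $A$-module $M$ into the same spectral sequence, together with the module part \ref{item:fgcb} of Theorem~\ref{thm:fingencoh-pointed} for $\gr_{J}A$, yields the module statement for $A$. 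The main obstacle is precisely this last transfer: one must check that the radical filtration is multiplicative and that the resulting spectral sequence is one of algebras converging to $\coh(A,\ku)$, and that Noetherianity descends from $E_{\infty}$ to the abutment --- it is here that the finite length of the filtration is essential. Conceptually, the same conclusion reflects the fact that $\mathrm{Rep}(A)$ and $\mathrm{Rep}(A^{*})=\mathrm{Corep}(A)$ share the Drinfeld center $\mathrm{Rep}(D(A))$, so that finite generation of cohomology, being an invariant of the ambient tensor category, is shared by $A$ and $A^{*}$; the associated-graded argument makes this transfer explicit and elementary.
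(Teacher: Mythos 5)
Your reduction via duality is correct, and so is the identification of the associated graded: the radical filtration of $H$ is dual to the coradical filtration of $H^{\#}$, so $\gr_J H \cong (\gr H^{\#})^{\#} \cong \toba(V^{\#})\#\ku\widehat{\Gamma}$ is again pointed with abelian group of group-like elements and has fgc. The gap is in your final transfer from $\gr_J H$ to $H$. You assert that since $E_1=\coh(\gr_J H,\ku)$ is finitely generated, hence Noetherian, ``so is each successive page and in particular $E_\infty$.'' This is false as stated: $E_{r+1}$ is the homology of $E_r$ with respect to the derivation $d_r$, i.e.\ a quotient of the \emph{subalgebra} $\ker d_r\subset E_r$, and subalgebras of Noetherian rings need not be Noetherian. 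Noetherianity does not propagate through the pages of a multiplicative spectral sequence, and finiteness of the radical filtration does not repair this (the induced filtration on the cochain complex is bounded only in each fixed cohomological degree, not uniformly). The correct tool is Evens' Lemma (Lemma~\ref{Lch2}), whose hypothesis demands a finitely generated bigraded algebra of \emph{permanent cycles} over which $E_1$ is a Noetherian module. Producing such permanent cycles is exactly the hard content of the paper---Condition~\ref{assumption:intro-combinatorial} for Nichols algebras, Lemma~\ref{lema:extend} for doubles---and your proposal supplies no such verification for the radical-filtration spectral sequence of $H$: there is no reason for the generators of $\coh(\gr_J H,\ku)$, which the paper shows may occur in arbitrarily large degrees, to survive the differentials.

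The paper avoids this spectral sequence altogether. Its route is: $\gr H^{\#}\cong\toba(V)\#\ku\Gamma$ and its dual have fgc, hence $D(\gr H^{\#})$ has fgc by Theorem~\ref{thm:fingencoh-drinfeld-double} (the one place a spectral sequence is run, with permanent cocycles verified by Lemma~\ref{lema:extend}); by Theorem~\ref{th:summary}, $H^{\#}$ is a cocycle deformation of $\gr H^{\#}$, so $D(H^{\#})\simeq D(\gr H^{\#})$ as algebras---an honest isomorphism replacing any deformation spectral sequence; by Lemma~\ref{lema:morita-dual}, $D(H)\simeq D(H^{\#})$; finally $H$ is a Hopf subalgebra of $D(H)$, free over it by Nichols--Z\"oller, and Theorem~\ref{lema:frobenius-reciprocity} (Eckmann--Shapiro) descends fgc from $D(H)$ to $H$ (this is Corollary~\ref{cor:morita}). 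Note that your closing remark about the shared Drinfeld center is the germ of this correct argument, but as you phrase it---fgc ``being an invariant of the ambient tensor category''---it appeals to an invariance statement that is not known in general (cf.\ Question~\ref{question:morita}); what is available, and what suffices, is the one-directional descent from $D(H)$ to~$H$.
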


Basic Hopf algebras with abelian group of characters are just the duals of the Hopf algebras in 
Theorem \ref{thm:fingencoh-pointed}; thus  Theorem \ref{thm:fingencoh-basic}, that generalizes \cite{Gordon}, follows from 
Theorem \ref{thm:fingencoh-pointed}, Lemma \ref{lema:morita-dual}, Corollary \ref{cor:morita} and Theorem \ref{thm:fingencoh-drinfeld-double}.

\begin{theorem}\label{thm:fingencoh-extensions}
Let $H$ be a finite-dimensional Hopf algebra that fits into an extension   
$\ku \to K \rightarrow H \rightarrow L \to \ku$, where 
$K$ is semisimple and $L$ is either pointed with abelian group of group-like elements or else basic with abelian group of characters.
Then $H$ has fgc.
\end{theorem}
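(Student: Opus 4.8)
The plan is to prove Theorem~\ref{thm:fingencoh-extensions} by combining a Lyndon--Hochschild--Serre (LHS) spectral sequence argument for the extension with the already-established finite generation results for the ``outer'' Hopf algebra $L$. First I would recall that an exact sequence of finite-dimensional Hopf algebras $\ku \to K \to H \to L \to \ku$ is the Hopf-algebraic analogue of a normal subgroup, and that $K$ being a normal Hopf subalgebra of $H$ gives rise to a spectral sequence
\begin{equation}
E_2^{p,q} = \coh^p\bigl(L, \coh^q(K,\ku)\bigr) \Longrightarrow \coh^{p+q}(H,\ku),
\end{equation}
where the $L$-action on $\coh^*(K,\ku)$ comes from the conjugation-type action of $H$ on $K$ descending to $L$. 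Because $K$ is semisimple, its higher cohomology vanishes: $\coh^q(K,\ku) = 0$ for $q>0$ and $\coh^0(K,\ku) = \ku$. Hence the spectral sequence collapses, $E_2^{p,q}$ is concentrated in the row $q=0$, and one obtains an isomorphism $\coh^*(H,\ku) \cong \coh^*(L,\ku)$ of graded algebras (or at worst a filtration whose associated graded is $\coh^*(L,\ku)$, which is enough for finite generation).

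Having reduced to $L$, I would then invoke the hypothesis: $L$ is either pointed with abelian group of group-like elements, in which case Theorem~\ref{thm:fingencoh-pointed} applies, or basic with abelian group of characters, in which case Theorem~\ref{thm:fingencoh-basic} applies. Either way $L$ has fgc, so $\coh^*(L,\ku)$ is finitely generated, and therefore so is $\coh^*(H,\ku)$, giving \ref{item:fgca}. For \ref{item:fgcb}, given a finitely generated $H$-module $M$, I would run the module version of the same spectral sequence,
\begin{equation}
E_2^{p,q} = \coh^p\bigl(L, \coh^q(K,M)\bigr) \Longrightarrow \coh^{p+q}(H,M),
\end{equation}
again using semisimplicity of $K$ to collapse it onto the row $q=0$, where $\coh^0(K,M) = M^K$ is a finitely generated $L$-module; finite generation of $\coh^*(L,M^K)$ over $\coh^*(L,\ku) \cong \coh^*(H,\ku)$ then transfers back to $H$.

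The main technical obstacle is \emph{constructing and justifying the LHS spectral sequence in this Hopf-algebraic generality} and verifying that semisimplicity of $K$ really does force collapse together with an algebra (not merely vector-space) isomorphism on the $E_2$-page. One must be careful that the extension is faithfully flat / cleft enough for the spectral sequence to exist, that the $L$-module structure on $\coh^*(K,-)$ is the correct one, and that multiplicative structure is respected so that the inherited generation is as an algebra. A clean way to handle this is to phrase everything via the relative homological algebra of the inclusion $K \hookrightarrow H$, or equivalently to use that a semisimple $K$ makes $H$ a projective (indeed free, by Nichols--Zoeller) $K$-module, so that $\coh^*(H,-) $ can be computed by an $L$-relative resolution. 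I expect the rest of the argument to be formal once this spectral sequence and its collapse are in place; the nontrivial mathematical content is entirely imported from Theorems~\ref{thm:fingencoh-pointed} and~\ref{thm:fingencoh-basic}.
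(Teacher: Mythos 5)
Your proposal is correct and follows essentially the same route as the paper: the paper proves an intermediate Lemma (its Lemma \ref{lema:extensions}) via exactly this Hochschild--Serre spectral sequence, whose existence it justifies by citing Cartan--Eilenberg [Chapter 16, Theorem 6.1], uses semisimplicity of $K$ to collapse it onto the row $q=0$ yielding $\coh(H,N)\cong\coh(L,N^K)$, and then concludes by invoking Theorem \ref{thm:fingencoh-pointed} or Theorem \ref{thm:fingencoh-basic} for $L$. The technical obstacle you flag (constructing the spectral sequence in this generality) is thus handled in the paper by a direct citation rather than by a new argument.
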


Theorem \ref{thm:fingencoh-extensions} follows from Lemma \ref{lema:extensions} and one of the previous two theorems.
Quotients of algebras of functions on quantum groups at roots of one (of various kinds)
were classified in \cite{AG-compo,Ga,GG}.
In particular, these results provide families of Hopf algebras $H$ 
that fit into an extension   
$\ku \to \ku^{|G|} \rightarrow H \rightarrow L \to \ku$
where $G$ is a finite group and $L$ is a finite-dimensional basic Hopf algebra with abelian group of characters; 
thus Theorem \ref{thm:fingencoh-extensions} applies to them.

\subsection{Scheme of the proof of Theorem \ref{thm:fingencoh-pointed}} 
\label{sub:scheme}
Let $H$ be a finite-dimensional pointed Hopf algebra with abelian group of group-like elements $\Gamma := G(H)$
so that the coradical of $H$ is $H_0 \simeq \ku \Gamma$.
Let $D(H)$ be the the Drinfeld double of $H$, let $\gr H$ be the graded Hopf algebra associated to the coradical filtration and let $V$ be the infinitesimal braiding of $H$, see \S \ref{subsec:na-role}.  We know that $\gr H \simeq \toba(V) \# \ku \Gamma$ \cite{Ang-crelle}.
Then the Nichols algebra $\toba(V)$ is finite-dimensional. We shall use $V^\# = \Hom_k(V,k)$ to denote the $k$-linear dual. 
The key point in the proof of Theorem \ref{thm:fingencoh-pointed} is the following.

\begin{theorem}\label{th:fingencoh-nichols-diagonal}
Let $U$ be a braided vector space of diagonal type such that the Nichols algebra $\toba(U)$ 
has finite dimension. Then $\toba(U)$ has fgc.
\end{theorem}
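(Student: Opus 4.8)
The plan is to compute $\coh(\toba(U),\ku)$ through a filtration whose associated graded algebra is tractable, and then transport finite generation back to $\toba(U)$ via a spectral sequence. Since $\toba(U)$ is finite dimensional of diagonal type, Heckenberger's classification endows it with a restricted PBW basis indexed by a finite set of positive roots $\beta \in \Delta_+$, with root vectors $x_\beta$ satisfying $x_\beta^{N_\beta}=0$ for integers $N_\beta \geq 2$. Ordering the PBW generators and filtering $\toba(U)$ accordingly, the associated graded algebra $A := \gr \toba(U)$ is a quantum complete intersection: a skew-polynomial algebra in the $x_\beta$ modulo the top powers $x_\beta^{N_\beta}$. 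First I would analyze $\coh(A,\ku)$, which is well understood. Using the Anick resolution (equivalently, the Koszul-type resolution available for such quantum complete intersections) one exhibits a polynomial subalgebra generated by even classes $\zeta_\beta$, one per root $\beta$ dual to the relation $x_\beta^{N_\beta}$, over which $\coh(A,\ku)$ is a finitely generated module. This settles fgc for $A$.

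Next I would set up the multiplicative spectral sequence associated to the filtration, converging from $\coh(A,\ku)$ to $\coh(\toba(U),\ku)$. The crucial step is to show that enough of the polynomial generators are permanent cycles, i.e. that they lift to genuine cohomology classes of $\toba(U)$, and that the lifts generate a subalgebra over which $\coh(\toba(U),\ku)$ is Noetherian. Given finite generation of the $E_\infty$-page together with a finite set of permanent cycles lifting the generators, a standard argument in the style of Friedlander--Suslin and Evens yields finite generation of $\coh(\toba(U),\ku)$, and (fgc-b) follows by the same mechanism applied to the cohomology of a finitely generated module.

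The main obstacle, and the part where the present generality forces genuinely new work, is the verification that the requisite classes are permanent cycles. Unlike the Cartan situation of \cite{MPSW}, the defining relations of $\toba(U)$ are not quadratic, so the relevant cohomology generators need not sit in degree $2$ and can occur in arbitrarily large degrees. I would reduce the permanent-cycle condition to explicit combinatorial properties of the Anick chains and the defining parameters, and then check these properties against Heckenberger's classification. Concretely, the obstruction to lifting a generator is measured by certain coefficients that turn out to be quantum integers in the braiding matrix $(q_{ij})$; a class survives exactly when the appropriate quantum integer vanishes, and the computation pins down the degree in which a liftable substitute appears. The bulk of the proof is therefore a case-by-case computation of these coefficients, which I would carry out first for the Cartan and super parametric families, deferring the remaining discrete families to a companion paper.
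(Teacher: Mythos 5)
Your proposal follows essentially the same route as the paper: the convex PBW filtration with a quantum linear space as associated graded, the Anick resolution, a May-type spectral sequence combined with Evens' lemma, reduction of the permanent-cycle condition to the vanishing of quantum-integer coefficients attached to chains $(\x_\gamma^{L_\gamma})^*$, and a case-by-case verification against Heckenberger's classification with the discrete families deferred to a companion paper. The only device the paper adds that you omit is the reduction to a single representative of each Weyl-equivalence class, obtained from the algebra isomorphism $D(\toba(U)\#\ku G)\simeq D(\toba(U')\#\ku G)$ together with the Drinfeld-double and invariant-theory results of Part I; this is purely a labor-saving step that shrinks the case analysis rather than a logical necessity.
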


This Theorem being proved, the rest of the proof proceeds in the following steps: 
\begin{align*}
\xymatrix@C=60pt{
 *\txt{Theorem \ref{th:fingencoh-nichols-diagonal} \quad}\ar@{=>}[r] 
 &  *\txt{ $\toba(V)$, $\toba(V^\#)$ have fgc \quad}\ar@{=>}[r]^{\text{\quad Theorem \ref{th:RtoRsmashH} \quad}}
& *\txt{ $\gr H$, $(\gr H)^\#$ have fgc \quad} \ar@{=>}[d]_{\text{Theorem \ref{thm:fingencoh-drinfeld-double}}}
\\
*\txt{ $H$ has fgc \quad}  & *\txt{ $D(H)$ has fgc \quad} \ar@{=>}[l]_{\text{Theorem \ref{lema:frobenius-reciprocity}}}
& *\txt{ $D(\gr H)$ has fgc \quad} \ar@{<=>}[l]_{\text{Theorem \ref{th:summary} \quad}}}
\end{align*}

\medbreak

\subsection{Finite-generation of cohomology for Nichols algebras}\label{subsec:intro-nichols-fgc}
We next outline the proof of  Theorem \ref{th:fingencoh-nichols-diagonal} 
referring to \S \ref{subsec:na-diag} for unexplained terminology.

\subsubsection{Reduction to the connected case}\label{subsubsec:connected-reduction}

By Theorem \ref{lem:BO},  we conclude
that Theorem \ref{th:fingencoh-nichols-diagonal} holds  for $U$ if and only if it holds for $U_J$ for every connected component $J \in \Xc$. 

\emph{We assume for the rest of this Subsection that the Dynkin diagram of $U$ is connected.}

\subsubsection{The Anick resolution}
The Nichols algebra $\toba(U)$ has a convex PBW-basis, hence a suitable filtration. 
Its associated graded ring $\gr \toba(U)$ is a  quantum linear space. 
The cohomology ring of $\gr \toba(U)$ is well-known, but we provide a 
computation using the Anick resolution \cite{Anick} specifically in order to
relate it to permanent cycles in a suitable spectral sequence. 
See \S \ref{subsec:qls}.

Since the Anick resolution is compatible with the mentioned filtration on $\toba(U)$, 
we may use a spectral sequence argument based on Evens Lemma \ref{Lch2}
to  reduce the finite generation of $\coh(\toba(U), \ku)$ to the
verification of the following  statement.

\begin{assumption}\label{assumption:intro-combinatorial}
For every positive root $\gamma \in \varDelta_+^{U}$, there exists
$L_{\gamma} \in \N$ such that
the cochain $\left(\x_{\gamma}^{L_{\gamma}} \right)^*$ is a
cocycle, 
that is, represents an element in $\coh(\toba(U), \ku)$.
\end{assumption}
The elements $\left(\x_{\gamma}^{L_{\gamma}} \right)^*$ are cochains of the complex $\Hom(C_*(\toba(U),\ku)$ which computes $\coh(\toba(U),\ku)$ and are defined prior to Theorem~\ref{lemma:reduction-to-cocycles}. 
If Condition \ref{assumption:intro-combinatorial} holds, then 
Theorem \ref{lemma:reduction-to-cocycles} implies that $\toba(U)$ has fgc.

\subsubsection{Reduction to Weyl-equivalence}
In practice, given $U$ we shall prove that  Condition \ref{assumption:intro-combinatorial} holds for any braided vector space with the same Dynkin 
diagram as $U$, particularly for $U^\#$. 
Let $G$ be any finite abelian group such that $U$ is realized in $\ydkG$.
By Theorem \ref{th:RtoRsmashH}  and Theorem \ref{thm:fingencoh-drinfeld-double} we see that
$D(\toba(U) \# \ku G)$ has fgc.

We apply  this last claim as follows: let $U'$ be a braided vector space of diagonal type
which is \emph{Weyl-equivalent} to $U$ (see \S~\ref{sub:weyl}). This implies that 
$U'$ is realized as Yetter-Drinfeld module over $G$ and there is an algebra isomorphism
\begin{align*}
D(\toba(U) \# \ku G) \simeq D(\toba(U') \# \ku G).
\end{align*}
By Corollary  \ref{cor:frobeniusreciprocity-nichols} $\toba(U')$ has fgc. That is, we only need to deal 
with one representative of each Weyl-equivalence class which drastically reduced the amount of computations we need
 to  perform  to finish the proof.

\subsubsection{Verification of Condition \ref{assumption:intro-combinatorial}} 
We argue case-by-case using the list of \cite{H-classif}; by the preceding discussion
we just need to consider one representative in each Weyl-equivalence class---and we could choose the most convenient for our purpose.
We also argue recursively on $\dim U$. All in all, we reduce the verification to claims on 
Nichols algebras of diagonal type, see \S \ref{subsec:gral-lemmas} and we deal with them 
using  information  on the PBW-basis from \cite{AA17}.

\bigbreak

\subsection{Future directions/applications} 
We expect that our methods can be applied to prove fgc property for finite-dimensional Hopf algebras beyond the setting of Theorem~\ref{thm:fingencoh-pointed}, for instance, pointed but with non-abelian group of group-like elements $G(H)$. We outline here the steps which will need to be taken to follow our general strategy.

Let $H$ be a finite-dimensional  Hopf algebra whose coradical $H_0$ is a Hopf subalgebra. Let $V$ be the infinitesimal braiding of $H$.
Then $\gr H \simeq R \# H_0$ where $R$ is a connected graded Hopf algebra in the category of Yetter-Drinfeld modules over $H_0$, $\ydho$, and $\toba(V)$ is a graded Hopf subalgebra of $R$.  To prove that $H$ has fgc following the scheme presented in \ref{sub:scheme} one would need to address these problems:
\begin{enumerate}[leftmargin=*,label=\rm{(\roman*)}]
\item\label{it:pbm-B(V)-fgc} Prove that $\toba(V)$ and $\toba(V^\#)$ have fgc.

\medbreak
\item\label{it:question-gendegone} Is $R =\toba(V)$? (in all known examples in characteristic 0 the answer is positive).
If not, prove that $R$ and $R^\#$ have fgc.

\medbreak
\item\label{it:pbm-invariants} Prove Lemma \ref{lema:braidedcommut-Noetherian} for any semisimple Hopf algebra (and not just for group algebras and their duals).
Together with \ref{it:question-gendegone} this would give that $\gr H$, $(\gr H)^\#$ have fgc.

\medbreak
\item\label{it:pbm-fgc-drinfeld-double} Extend Theorem \ref{thm:fingencoh-drinfeld-double} to prove that $D(\gr H)$ has fgc.
Even in the pointed case, we would need Lemma \ref{lema:braidedcommut-Noetherian} for $D(\ku G(H))$
to prove this conjectural extension.

\medbreak
\item\label{it:question-morita} Is $H$ a cocycle deformation of $\gr H$ or at least Morita equivalent to $\gr H$ as in \S \ref{subsec:morita}?
(in all known examples in characteristic 0 the answer is positive).
This would imply that $D(H)$, and a fortiori  $H$, have fgc since Theorem \ref{lema:frobenius-reciprocity} holds in general.
\end{enumerate} 

We also notice that a large part of this approach could be used in positive characteristic under appropriate assumptions, e.g. the coradical $H_0$
needs to be a semisimple Hopf subalgebra.

\bigbreak

\subsection{Organization of the paper}

Part \ref{part:hopf} starts with a recollection of facts on Hopf and Nichols algebras in Section \ref{sec:hopf}. 
Section \ref{sec:preliminaries-cohomology} contains several preliminary results on cohomology including the passage from 
the cohomology of $\toba(V)$ to the cohomology of  $\toba(V)\# \ku \Gamma$ and versions of the Evens Lemma 
and the May spectral sequence crucial for our arguments. Section \ref{sec:Anick}  presents the Anick resolution
and the reduction to Condition \ref{assumption:intro-combinatorial}. In the last Section \ref{subsec:drinfeld-double}
of this Part it is shown that the Drinfeld double of $\toba(V)\# \ku \Gamma$ has fgc provided that $\toba(V)$ has
via considerations of cohomology for twisted tensor products. 

Parts \ref{part:nichols} is devoted to the proof of Condition \ref{assumption:intro-combinatorial}.
Section \ref{sec:nichols-strategy} presents the strategy of the verification with proofs of technical Lemmas postponed to
Section \ref{sec:computational-lemmas}.  
We verify Condition \ref{assumption:intro-combinatorial} for
 finite-dimensional Nichols algebras of diagonal type belonging to  families with continuous parameter. 
We proceed case by case in Sections \ref{sec:classical}, \ref{sec:exceptional} and \ref{sec:discrete}
corresponding respectively to classical (Cartan, standard and super) types, exceptional (Cartan, standard and super) types, and 
Nichols algebras with the same root systems as the modular Lie algebras $\Bgl(4)$ and $\Brown(2)$.
The remaining Nichols algebras of diagonal type are dealt with in a separate publication \cite{AAPPW} of more computational nature.


\subsection{Conventions}\label{subsection:conventions}
For $\ell < \theta \in\N_0$, we set $\I_{\ell, \theta}=\{\ell, \ell +1,\dots,\theta\}$, $\I_\theta 
= \I_{1, \theta}$. 
Let $\G_N$ be the group of roots of unity of order $N$ in $\ku$ and $\G_N'$ the subset of primitive roots of order $N$;
$\G_{\infty} = \bigcup_{N\in \N} \G_N$ and $\G'_{\infty} = \G_{\infty} - \{1\}$.
If $L \in \N$ and $q\in \ku^{\times}$, then $(L)_q := \sum_{j=0}^{L-1}q^{j}$.

All  vector spaces, algebras and tensor products  are over $\ku$. We use $V^\#$ to denote the linear dual to a vector space $V$, $V^\# = \Hom_k(V,k)$. 

By abuse of notation, $\langle a_i: i\in I\rangle$ denotes either the group, the subgroup or the vector subspace generated by all $a_i$ for $i$ in an indexing set $I$,
the meaning being clear from the context. Instead, the subalgebra generated by all $a_i$ for $i \in I$ is denoted by $\ku \langle a_i: i\in I\rangle$.

If $A$ is an associative augmented algebra and $M$ is an $A$-module, then we set 
\begin{align*}
\coh^n(A, M) &= \Ext^n_A(\ku, M),& \coh(A, M) &= \oplus_{n\in \N_0} \coh^n(A, M).
\end{align*}
Then $\coh(A, \ku) = \oplus_{n\in \N_0} \coh^n(A, \ku)$ is isomorphic to the Hochschild cohomology
$\hoch(A, \ku) = \oplus_{n\in \N_0} \Ext^n_{A \otimes A^{\operatorname{op}}} (A, \ku)$ via an equivalence of bar complexes; see for example
\cite[(2.4.1)]{MPSW}.

\medbreak
Let $P_*(A)$ be the normalized bar resolution  of $\ku$ in the category of left	 $A$-modules
and let $\Omega^*(A) = \Hom_{A} (P_*(A), \ku)$, in particular $\Omega^n(A) = \Hom_{\ku} (A^{\otimes n}_+, \ku)$, 
where $A_+$ is the augmentation ideal.

Let $\widehat{\Gamma} = \Hom_{\rm ab}(\Gamma, \ku^\times)$ be the character group of an abelian group $\Gamma$.

\subsection*{Acknowledgements} 
J.~P.~ thanks Eric Friedlander for introducing her to the problem of finite generation and for many invaluable discussions about it. 
S.~W.~ and J.~P.~ thank Jon Carlson for bestowing his computer prowess and wisdom on them in the early stages of this project.
N.~A.~thanks Cris Negron and Sonia Natale for very useful conversations.
I.~A.~ thanks Leandro Vendramin for joint work on the program to compute root systems of Nichols algebras.  We also thank the 
referee for the careful reading of our work and very valuable comments. 

The authors are extremely grateful to the hospitality and support of the American Institute for Mathematics 
(AIM) via their SQuaREs program. N.~A.~ was supported by the NSF under Grant No. DMS-1440140, while he was in residence at the Mathematical Sciences Research Institute in Berkeley, California, in the Spring semester of 2020. 
J.~P.~ was supported by the NSF grants DMS-1501146, DMS-1901854 and Brian and Tiffinie Pang faculty fellowship.
S.~ W.~ was supported by the NSF grants DMS-1401016 and DMS-1665286.
The work of N.~A.~ and I.~A.~ was partially supported by CONICET and Secyt (UNC).

\vspace{0.5in}
\renewcommand\thepart{\Roman{part}}

\part{From cohomology of Nichols algebras to cohomology of Hopf algebras }\label{part:hopf}

\section{Finite-dimensional Hopf algebras}\label{sec:hopf}

\subsection{Morita equivalence of Hopf algebras}\label{subsec:morita}
In this subsection, no restrictions on the the field $\ku$ are needed.
Let $H$ be a finite-dimensional Hopf algebra. 
We refer to \cite{R-book} for the definitions of the Drinfeld double 
$D(H)$ of $H$ and of the (braided tensor) category $\ydh$ of Yetter-Drinfeld modules over $H$.
It is well-known that $\ydh$ is the Drinfeld center of the category of $H$-modules and that it is
braided tensor equivalent to the category of $D(H)$-modules.

Let $H'$ be another finite-dimensional Hopf algebra. 
Borrowing terminology from \cite{Mu1,ENO2},
we say that  $H$
and $H'$ are \emph{Morita equivalent}, denoted $H \mor H'$, if there 
is an isomorphism of quasitriangular Hopf algebras $D(H) \simeq D(H')$. 
This is not the same as Morita equivalent as algebras!

\begin{lemma}\label{lema:morita-dual} $H$ is Morita equivalent to $H'$ in the following cases:
\begin{enumerate}[leftmargin=*,label=\rm{(\alph*)}]
\item\label{item:morita-dual} $H' \simeq H^{\#}$, the dual  Hopf algebra.

\item\label{item:morita-twist} $H' \simeq H^{F}$ is a twist of $H$ \cite{Dr2,Re}, i.e.~there exists $F\in H \otimes H$ invertible  
such that $H^{F} = H$ as algebra and has the comultiplication $\Delta^F = F \Delta F^{-1}$.

\item\label{item:morita-cocycle} $H' \simeq H_{\sigma}$ is a cocycle deformation of $H$ \cite{DT}, i.e.~there exists an invertible 2-cocycle $\sigma: H \otimes H \to \ku$ 
such that $H_{\sigma} = H$ as coalgebra and has the multiplication $x \cdot_{\sigma} y 
= \sigma(x\_1 \otimes y\_1) x\_2 y\_2 \sigma^{-1}(x\_3 \otimes y\_3)$.
\end{enumerate}
\end{lemma}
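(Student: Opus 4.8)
The plan is to establish each of the three Morita equivalences by exhibiting an explicit isomorphism of quasitriangular Hopf algebras $D(H) \simeq D(H')$, since this is precisely the definition of $H \mor H'$ adopted in the excerpt. The unifying principle is that the Drinfeld double is insensitive to the three operations (dualization, twisting, cocycle deformation) because all three preserve the braided tensor category $\ydh$ up to braided equivalence, and $\ydh$ is braided tensor equivalent to the category of $D(H)$-modules. Thus my strategy reduces each case to identifying the relevant braided category, and then invoking Tannakian-style reconstruction (or a direct structural isomorphism) to transfer the equivalence of categories to an isomorphism of the doubles.

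For part \ref{item:morita-dual}, I would use the classical fact that $D(H^{\#})$ and $D(H)$ are isomorphic as quasitriangular Hopf algebras; concretely, the double is built symmetrically from $H$ and $H^{\#}$, so interchanging their roles yields an isomorphism $D(H) \simeq D(H^{\#})$. The cleanest route is to recall that $\ydh$ and ${}^{H^{\#}}_{H^{\#}}\mathcal{YD}$ are braided tensor equivalent (a Yetter--Drinfeld module over $H$ is naturally one over $H^{\#}$ with action and coaction swapped), and then translate this to the doubles. For part \ref{item:morita-twist}, the key input is that twisting $H$ by $F$ induces a braided tensor \emph{equivalence} $\ydh \simeq {}^{H^{F}}_{H^{F}}\mathcal{YD}$ — twisting changes the comultiplication but not the underlying monoidal structure at the level of the center, so the Drinfeld centers are braided equivalent, giving $D(H) \simeq D(H^{F})$. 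Part \ref{item:morita-cocycle} is dual to the twist case: a cocycle deformation $H_\sigma$ is obtained by dualizing a twist, so one may either argue directly that $\ydh \simeq {}^{H_\sigma}_{H_\sigma}\mathcal{YD}$ as braided tensor categories, or deduce it from \ref{item:morita-dual} and \ref{item:morita-twist} via $H_\sigma \simeq ((H^{\#})^{F})^{\#}$ for the appropriate $F$ dual to $\sigma$.

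In all three cases the crucial step is to produce the isomorphism \emph{as quasitriangular Hopf algebras}, not merely as Hopf algebras or as algebras — the braiding must be matched. I would therefore track the universal $R$-matrix through each construction. For the double itself the $R$-matrix is canonical (built from a basis of $H$ and its dual), so the main work is checking that the candidate isomorphism carries the canonical $R$-matrix of $D(H)$ to that of $D(H')$.

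The hard part will be verifying that the braiding is preserved under twisting and cocycle deformation, since here the comultiplication (respectively multiplication) genuinely changes and one must confirm that the induced functor on Yetter--Drinfeld modules is not merely a monoidal equivalence but a \emph{braided} one. Once braidedness of the equivalence of centers is secured in each case, the passage to an isomorphism of the quasitriangular doubles is formal, and the three claims follow. I expect parts \ref{item:morita-twist} and \ref{item:morita-cocycle} to require the most care, while \ref{item:morita-dual} is essentially the well-known self-duality symmetry of the Drinfeld double.
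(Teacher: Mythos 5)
Your proposal is correct and follows essentially the same route as the paper: part \ref{item:morita-dual} via the braided identification of Yetter--Drinfeld categories over $H$ and $H^{\#}$ (the paper cites \cite[Proposition 2.2.1]{AGr} together with the identification of $\ydh$ with representations of $D(H)$), part \ref{item:morita-twist} via Drinfeld's tensor equivalence between the module categories of $H$ and $H^{F}$ \cite{Dr2} followed by passage to Drinfeld centers, and part \ref{item:morita-cocycle} by reduction to the first two cases through $(H_{\sigma})^{\#}\simeq (H^{\#})^{F}$ with $F=\sigma$, which is exactly the paper's argument. The one step you flag as delicate --- upgrading a braided equivalence of centers to an honest isomorphism of quasitriangular Hopf algebras, i.e.\ matching fiber functors and $R$-matrices rather than settling for a twist-equivalence of the doubles --- is left equally implicit in the paper's own two-line proof, so your treatment is no less complete than the original.
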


\pf \ref{item:morita-dual} This follows from \cite[Proposition 2.2.1]{AGr} because of the identification of
the category of Yetter-Drinfeld modules over H with the category of representations of D(H) as mentioned in Subsection~\ref{subsec:morita}. 
\ref{item:morita-twist} follows since the categories of $H$ and $H^F$-modules 
are tensor equivalent \cite[p. 1422]{Dr2}.  Finally \ref{item:morita-cocycle} is a consequence of the preceding, as 
$(H_{\sigma})^{\#}\simeq (H^{\#})^F$
where $F = \sigma$ in $H^{\#} \otimes H^{\#}$.
\epf

\subsection{The role of Nichols algebras}\label{subsec:na-role} Even though our primary interest is in cohomology 
of finite dimensional Hopf algebras, the Nichols algebras which originated in independent work of Nichols and Woronowicz 
show up very naturally in our study.  Here we give a very brief account of the general approach to the classification of 
finite dimensional Hopf algebras over an algebraically closed field $\ku$ of characteristic 0, highlighting the importance 
of Nichols algebras in their structure. The reader can find all the precise definitions and details
 in the surveys \cite{AS-survey,A-leyva}.

Let $H$ be a Hopf algebra, $H_{0}$ its coradical \cite{R-book}
and $H_{[0]} = \ku \langle H_0 \rangle$ its Hopf coradical, a Hopf subalgebra of $H$ \cite{AC}.
The classification of finite-dimensional Hopf algebras  can be organized 
in four classes, according to the relative behavior of $H_{0}$ and $H_{[0]}$:

\begin{multicols}{2}
\begin{enumerate}[leftmargin=*,label=\rm{(\alph*)}]
\item\label{item:intro-class-1} $H = H_{0}$, i.e.~$H$ is cosemisimple.

\medbreak
\item\label{item:intro-class-2} $H = H_{[0]} \neq H_{0}$.

\medbreak
\item\label{item:intro-class-3} $H \neq H_{[0]} = H_{0}$.

\medbreak
\item\label{item:intro-class-4} $H \neq H_{[0]}  \neq H_{0}$.
\end{enumerate}
\end{multicols}
Hopf algebras in class \ref{item:intro-class-1} are semisimple by a theorem of Larson and Radford.
Albeit families of examples and some classification results in low dimension are known, no systematic 
approach to the classification is available. Being semisimple, they are not interesting for our cohomology 
explorations since cohomology simply vanishes in positive degrees.  

Similarly to \ref{item:intro-class-1}, the class  \ref{item:intro-class-2} has no proposed method to deal with the 
classification, but contrary to \ref{item:intro-class-1}, cohomology rings are far from trivial. Even though 
there is no classification, many examples are known; some of them have the fgc property by Theorem~\ref{thm:fingencoh-basic}.

Hopf algebras in classes \ref{item:intro-class-3} and \ref{item:intro-class-4} have interesting cohomology rings and 
this is where Nichols algebras become highly relevant. The approach we develop allows us to deal with class 
\ref{item:intro-class-3}  under some additional assumptions on $H_{0}$  though we expect that it can be applied 
more generally. The class \ref{item:intro-class-4} is not as rigidly structured as the class \ref{item:intro-class-3} and 
even though the Nichols algebras still play a central role we do not know  enough about the structure to make conclusions 
about cohomology yet. We will mention a little bit more about case \ref{item:intro-class-4} towards the end of this subsection.


Now let us consider case \ref{item:intro-class-3} which is the case of interest for this paper. 
If  $H$ is  in class \ref{item:intro-class-3}  then 
$H_0$ is a proper Hopf subalgebra. Let $\gr H$ be the  graded Hopf algebra associated to the coradical filtration of $H$; then
\begin{align}\label{eq:bosonization}
\gr H \simeq R \# H_0
\end{align}
where $R = \oplus_{n\in \N_0} R^n$ is a connected
graded Hopf algebra in the braided monoidal category $\ydho$, called the \emph{diagram} of $H$.
We also say that $H$ is a lifting of $R$, or of $R \# H_0$. 
Then $R$ is coradically graded, hence its subalgebra generated by $V := R^1$ is isomorphic to the Nichols algebra $\toba(V)$; see \cite{AS-survey} for details.
The braided vector space $V$ is an important invariant of $H$ called its \emph{infinitesimal braiding}.

We make the following {\it additional assumptions} on $H$.  Assume that $H$ is pointed, that is $H_0 = k\Gamma$; also assume that $\Gamma$ is abelian. In that case the infinitesimal braiding $V$ is of {\it diagonal type} (see Subsection~\ref{subsec:na-diag}). Then the following two properties hold: 

\begin{gather}\label{eq:gen-deg1}
R \simeq \toba(V),
\\\label{eq:cocycle-def}
\exists \sigma: \gr H \otimes \gr H \to \ku \text{ such that } (\gr H)_{\sigma} \simeq H.
\end{gather}

\vspace{0.1in}
The first property  \eqref{eq:gen-deg1} holds by \cite[Theorem 2]{Ang-crelle}, see also \cite[Theorem 5.5]{AS-ann}; notice that the proof uses the
classification in \cite{H-classif} and the main result on convex orders from \cite{A-jems}. 
The second property \eqref{eq:cocycle-def} holds  by \cite[Theorem 1.1]{AG}, based on previous studies
of the lifting question and the explicit relations from \cite[Theorem 3.1]{Ang-crelle},  that again uses \cite{H-classif,A-jems}. 
Summarizing, we get the following structure theorem:

\begin{theorem}\label{th:summary}
Let $H$ be a finite-dimensional pointed Hopf algebra  such that the group of group-like elements $G(H)$ is abelian.
Then $H$ is a cocycle deformation of the bosonization of a Nichols algebra of diagonal type: 
$H \simeq \left(\toba(V) \# \ku G(H)\right)_{\sigma}$. Hence $H \mor \gr H$. \qed
\end{theorem}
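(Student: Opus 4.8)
The plan is to assemble Theorem \ref{th:summary} directly from the two structural properties \eqref{eq:gen-deg1} and \eqref{eq:cocycle-def} that were recorded immediately before the statement, together with the Morita-equivalence mechanism set up in \S\ref{subsec:morita}. First I would fix the notation: let $H$ be finite-dimensional pointed with $\Gamma := G(H)$ abelian, so that $H_0 \simeq \ku\Gamma$, and let $V = R^1$ be the infinitesimal braiding, which is of diagonal type because $\Gamma$ is abelian. The associated graded Hopf algebra $\gr H$ decomposes as $R \# \ku\Gamma$ by \eqref{eq:bosonization}, where $R$ is the diagram of $H$.

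The first substantive step is to invoke \eqref{eq:gen-deg1}, i.e.\ \cite[Theorem 2]{Ang-crelle}, to identify the diagram with the Nichols algebra: $R \simeq \toba(V)$. Combining this with \eqref{eq:bosonization} yields the identification
\begin{align*}
\gr H \simeq \toba(V) \# \ku\Gamma.
\end{align*}
The second step is to apply \eqref{eq:cocycle-def}, i.e.\ \cite[Theorem 1.1]{AG}, which produces an invertible $2$-cocycle $\sigma \colon \gr H \otimes \gr H \to \ku$ such that $(\gr H)_\sigma \simeq H$. Substituting the identification of $\gr H$ from the previous step gives
\begin{align*}
H \simeq \left(\toba(V) \# \ku\Gamma\right)_{\sigma},
\end{align*}
which is precisely the cocycle-deformation assertion of the theorem with $\Gamma = G(H)$.

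For the final clause $H \mor \gr H$, I would simply feed the preceding isomorphism into the Morita-equivalence toolkit. Since $H$ is a cocycle deformation of $\gr H$, Lemma \ref{lema:morita-dual}\ref{item:morita-cocycle} gives an isomorphism of quasitriangular Hopf algebras $D(H) \simeq D(\gr H)$, which is exactly the definition of $H \mor \gr H$ from \S\ref{subsec:morita}. This completes the argument.

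Honestly, there is no real obstacle here: the theorem is a \emph{summary}, packaging two previously cited deep results (\cite{Ang-crelle} and \cite{AG}, both of which ultimately rest on Heckenberger's classification \cite{H-classif} and the convex-order results of \cite{A-jems}) into a single convenient statement. The only points requiring care are bookkeeping ones—verifying that the diagonal-type hypothesis on $V$ is genuinely guaranteed by the abelianness of $\Gamma$ so that \eqref{eq:gen-deg1} and \eqref{eq:cocycle-def} are applicable, and that the cocycle deformation indeed falls under case \ref{item:morita-cocycle} of Lemma \ref{lema:morita-dual}. The substance lives in the external references, so the proof should be a short chain of citations ending with a \qed.
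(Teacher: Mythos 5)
Your proposal is correct and follows exactly the paper's own route: the theorem is indeed just the packaging of \eqref{eq:gen-deg1} (via \cite[Theorem 2]{Ang-crelle}) and \eqref{eq:cocycle-def} (via \cite[Theorem 1.1]{AG}) stated in the paragraphs immediately preceding the statement, with the final clause $H \mor \gr H$ obtained from Lemma \ref{lema:morita-dual}\ref{item:morita-cocycle} precisely as you describe. Nothing further is needed.
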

The theorem implies that to verify fgc for $H$ it suffices to verify it for the Nichols algebra $\toba(V)$ and for the Drinfeld double $D(\gr H)$ which is what we do in Theorem \ref{thm:ppalrealiz-Kss} and Corollary \ref{cor:morita}.

We point out that \eqref{eq:gen-deg1} and \eqref{eq:cocycle-def} have been verified in most known 
examples in class \ref{item:intro-class-3} beyond pointed Hopf algebras with abelian group of group-like 
elements and are expected to hold generally.  Hence, generalizing Theorem \ref{thm:ppalrealiz-Kss} and 
Corollary \ref{cor:morita} to the other cases in class \ref{item:intro-class-3} one should be able to reduce the 
question of fgc for $H$ again to the same question for $\toba(V)$ and the Drinfeld double for $\gr H$.  
We do not pursue this direction in this paper but we do expect our methods to be fruitful in these cases as 
well, potentially allowing one to finish off proving fgc property for all Hopf algebras in class \ref{item:intro-class-3}.

\medbreak 
Finally, let $H$ be in class \ref{item:intro-class-4}. Then one considers
the  graded Hopf algebra $\gr H$ associated to the standard filtration of $H$ \cite{AC}; again
\begin{align}\label{eq:bosonization2}
\gr H \simeq R \# H_{[0]}
\end{align}
where $R = \oplus_{n\in \N_0} R^n$ is a connected
graded Hopf algebra in the braided monoidal category $\ydhoo$. But
it is not known whether $R$ is coradically graded, or
its subalgebra $R'$ generated by $V := R^1$ is isomorphic to the Nichols algebra $\toba(V)$. 
We do know that $\toba(V)$ is a quotient of $R'$ but the present approach does not allow to reduce
the question of finitely generated cohomology for $H$ to the analogous question 
for $\toba(V)$.  So fgc property for class \ref{item:intro-class-4} is completely open. 

\subsection{Nichols algebras of diagonal type}\label{subsec:na-diag}
Since finite-dimensional
Nichols algebras of diagonal type are central in this paper, we present here the features more
relevant for our goals
and refer to \cite{AA17} for an exposition.
The input is a  matrix of non-zero scalars $\bq = (q_{ij})_{i, j \in \I}$ where $\I = \I_{\theta}$,  $\theta \in \N$.
To this datum we attach a braided vector space \emph{of diagonal type}
$V$ with a basis $(x_{i})_{i \in \I}$ and braiding $c^{\bq} \in GL(V \otimes V)$ given
by 
\begin{align*}
c^{\bq}(x_{i} \otimes x_{j}) &= q_{ij} x_{j} \otimes x_{i},& i, j &\in \I. 
\end{align*}
The corresponding  Nichols algebra is a graded connected algebra with strong properties
denoted here mostly as $\toba_{\bq}$ instead of $\toba(V)$. For these Nichols algebras substantial information is available.

\subsubsection{Dynkin diagrams and positive roots}
We codify as usual the matrix $\bq$ 
in a (generalized) Dynkin diagram $\Dc$ with vertices numbered by $\I$ 
and labelled with $q_{ii}$,
while two different vertices $i$ and $j$ are joined by an edge only if $\widetilde{q}_{ij} := q_{ij}q_{ji} \neq 1$
in which case the edge is labelled with $\widetilde{q}_{ij}$:
\begin{align}
\xymatrix{ \overset{q_{ii} }{\underset{i}{\circ}} \ar  @{-}[r]^{\widetilde{q}_{ij}}  & \overset{q_{jj}}{\underset{j}{\circ}} }.
\end{align} 
Two different matrices with the same Dynkin diagram are called \emph{twist-equivalent} \cite{AS-survey}.

\medbreak
The Nichols algebra $\toba_{\bq}$ has a very useful $\N_0^{\I}$-grading 
given by the rule $\deg x_i = \alpha_i$, $i\in \I$,
where $(\alpha_i)_{i\in \I}$ is the canonical basis of $\Z^{\theta}$.
By \cite[Theorem 2.2]{Kh},  $\toba_{\bq}$ has  a  PBW basis
\begin{align*}
B &= \big\{s_1^{e_1}\dots s_t^{e_t}: t \in \N_0,\ s_i \in S,  s_1>\dots >s_t, 
  0<e_i<h(s_i) \big\}.
\end{align*}
where $S$ is an ordered set of $\N_0^{\I}$-homogeneous elements and  $h: S \mapsto \N \cup \{ \infty \}$ 
is a function called the \emph{height}.
The following set does not depend on the choice of $B$:
\begin{align*}
\varDelta_+^{\bq} := \{\deg s: s \in S \} \subset \N_0^{\I}.
\end{align*}
Occasionally we set $\varDelta_+^{V} = \varDelta_+^{\bq}$. 
The elements of $\varDelta_+^{\bq}$ are called the (positive) roots of $\toba_{\bq}$.
We assume from now on  that
\begin{align*}
\dim \toba_{\bq} < \infty.
\end{align*}
Then $\varDelta_+^{\bq}$ is a finite set and  the map $S \to \varDelta_+^{\bq}$, $s \mapsto \deg s$, is bijective.
Also $\varDelta_+^{\bq}$ admits a convex (total) order in the sense
\begin{align*}
\alpha, \beta, \alpha + \beta \in \varDelta_+^{\bq}, \alpha < \beta \implies \alpha < \alpha + \beta < \beta.
\end{align*}
See \cite{A-jems}. The convex order is not unique; in the case-by-case analysis below we use that of \cite{AA17} 
except when a more suitable choice is possible that we mention explicitly. 

\medbreak
A connected component of $\Dc$ is a subset $J$ of $\I$ such that the matrix $\bq_J = (q_{ij})_{i, j \in J}$
gives rise to a connected Dynkin subdiagram $\Dc_J$ of $\Dc$ and is maximal with this property.
Let $\Xc$ be the set of connected components of $\Dc$.  For $J \subset  \I$, we say that a positive root 
(an element of $\varDelta_+^{\bq})$ is {\it supported} on $J$ if it belongs to the subset $\N_0^{J} \subset \N_0^{\I}$ 
where the containment is induced by the inclusion $J \subset \I$. We say that a root has {\it full support} if it's not supported on any proper subset of $\I$. 
If $J \subset  \I$, we identify  $\varDelta_+^{\bq_J}$ with
the subset $\varDelta_+^{J}$ of $\varDelta_+^{\bq}$ of roots with support in $J$. We also denote by $V_J$ the subspace of $V$
spanned by $(x_j)_{j\in J}$.
 By a result of Gra\~na, we have
\begin{align}\label{eq:connected-components}
\toba_{\bq} &\simeq \underline{\bigotimes}_{J\in \Xc} \toba_{\bq_J}, &
\varDelta_+^{\bq} &= \coprod_{J\in \Xc} \varDelta_+^{J}.
\end{align}
Here $\underline{\otimes}$ means the braided tensor product of algebras.

\subsubsection{Weyl equivalence}\label{sub:weyl} 

Let $\mathfrak{q}$ be such that $\dim \mathscr B_{\mathfrak{q}} < \infty$ and let $G$ be a finite abelian group 
such that $(V, c^{\mathfrak q})$ is realized in $\yd{\ku G}$. 
Given $i\in \mathbb I$,
one defines a matrix $\rho_i(\mathfrak{q})$ by a precise rule \cite{H-Weyl grp} or \cite[(2.25)]{AA17}. 
Then, although the matrices $\mathfrak{q}$ and $\rho_i(\mathfrak{q})$ might be quite different, 
there is an algebra isomorphism
$T_i :D(\mathscr B_{\mathfrak{q}} \# \ku G) \to D(\mathscr B_{\rho_i(\mathfrak{q})} \# \ku G)$, see \cite{H-Weyl grp}.
The assignments $\mathfrak{q} \mapsto \rho_i(\mathfrak{q})$, for all $i\in \mathbb I$, 
generate the so-called Weyl groupoid of $\mathfrak{q}$ \cite{H-Weyl grp}.
Two matrices
$\mathfrak{q}$ and $\mathfrak{q}'$ are \emph{Weyl-equivalent} if there exists a sequence $i_1, \dots, i_N \in \mathbb I$
such that $\mathfrak{q}' = \rho_{i_1} \dots \rho_{i_s}(\mathfrak{q})$. In this case one also says that the 
braided vector spaces $(V, c^{\mathfrak q})$  and $(V', c^{{\mathfrak q}'})$ are Weyl-equivalent.
Hence there is an algebra isomorphism
$D(\mathscr B_{\mathfrak{q}} \# \ku G) \to D(\mathscr B_{\mathfrak{q}'} \# \ku G)$.
See \cite[Section 2.6]{AA17} for an exposition.

\subsubsection{Classification}
The classification of the matrices $\bq$ such that $\dim \toba_{\bq} < \infty$ was achieved in \cite{H-classif} (the result is slightly more general).
By the preceding discussion we may assume that $\Dc$ is connected.
As in \cite{AA17} we organize the classification in 4 types:
\begin{multicols}{2}
\begin{enumerate}[leftmargin=*,label=\rm{(\alph*)}]
\item\label{item:nichols-type-cartan} Cartan type.

\medbreak
\item\label{item:nichols-type-super} Super type.

\medbreak
\item\label{item:nichols-type-modular} Modular type.

\medbreak
\item\label{item:nichols-type-ufo} UFO type.
\end{enumerate}
\end{multicols}

The type refers to the connection with different parts of Lie theory, see \emph{loc.~cit.}
We shall check Condition \ref{assumption:intro-combinatorial} for each entry of the classification of \cite{H-classif}.

\subsubsection{Root vectors}
For brevity, we set 
\begin{align}\label{eq:xij}
x_{ij} &= \ad_c x_i (x_j),& i&\neq j \in \I;
\end{align}
more generally, the iterated braided commutators are
\begin{align}\label{eq:iterated}
x_{i_1i_2\cdots i_k}& :=(\ad_c x_{i_1})\cdots(\ad_c x_{i_{k-1}})\, (x_{i_k}),& &i_1, i_2, \cdots, i_k\in\I.
\end{align}
In particular, we will use repeatedly the following further abbreviation:
\begin{align}\label{eq:roots-Atheta}
x_{(k \, l)} &:= x_{k\,(k+1)\, (k+2) \dots l},& &k < l.
\end{align}

Using a fixed convex order, we define the root vector
$x_{\alpha} \in \toba_{\bq}$ for every $\alpha \in \varDelta_+^{\bq}$ 
as iterated braided commutators proceeding case-by-case, see \cite{AA17}.

\medbreak
For those $\bq$ with $\dim \toba_{\bq} < \infty$, the defining relations of $\toba_{\bq}$ were given in \cite{A-jems,Ang-crelle}, again see \cite{AA17}, but these are not needed in this paper. Instead, we use systematically that for $\alpha < \beta \in \varDelta_+^{\bq}$ and a suitable defined $q_{\alpha \beta}\in \ku^{\times}$, we have
\begin{align}\label{eq:convex-order-relations}
x_{\alpha} x_{\beta} - q_{\alpha \beta} x_{\beta} x_{\alpha}  \in \sum_{\alpha< \gamma_1 \leq \gamma_2 \leq \dots \leq \gamma_t < \beta \in \varDelta_+^{\bq}}  \ku x_{\gamma_1}x_{\gamma_2} \dots x_{\gamma_t}.
\end{align}

See \cite[Theorem 4.9]{A-jems}, inspired by the original work of \cite{DCK}.

\subsection{Realizations}\label{se:realizations}
Let $\Gamma$ be a finite abelian group. A Yetter-Drinfeld module $V$ over $\ku\Gamma$
is determined by families $(g_i)_{i \in \I_\theta}$ of elements of $\Gamma$ and characters 
$(\chi_i)_{i \in \I_\theta}$ in $\widehat{\Gamma}$. Then $V$ is a braided vector space of diagonal type with braiding matrix
$\bq = (q_{ij})_{i, j \in \I}$ with respect to  a basis $(x_{i})_{i \in \I}$,
i.e.~$c(x_{i} \otimes x_{j}) = q_{ij} x_{j} \otimes x_{i}$, $i, j \in \I$, where $q_{ij} = \chi_j(g_i)$. 
That is, the same braided vector space $V$ with braiding matrix
$\bq = (q_{ij})_{i, j \in \I}$ can be realized in many ways over many $\Gamma$. Even more, it can be realized 
over other Hopf algebras than group algebras over abelian groups. To discuss the possible realizations we need the notion of a YD-pair.

\medbreak
Let $H$ be a Hopf algebra.
A pair $(g, \chi) \in G(H) \times \Hom_{\text{alg}}(H, \ku)$ is a \emph{YD-pair} for $H$ if
\begin{align}\label{eq:yd-pair}
\chi(h)\,g  &= \chi(h_{(2)}) h_{(1)}\, g\, \mathcal S(h_{(3)}),& h&\in H.
\end{align}
When this is the case, $g\in Z(G(H))$; 
also $\ku_g^{\chi} = \ku$ with action and coaction given by $\chi $ and $g$ respectively, is in $\ydh$.
YD-pairs classify the 1-dimensional objects in $\ydh$. Note that, if $\dim H<\infty$, then $(g, \chi)$ is a YD-pair for $H$ if and only if
$(\chi, g)$ is a YD-pair for $H^\#$.

\begin{definition}
Let $V$ be a braided vector space of diagonal type with braiding matrix
$\bq = (q_{ij})_{i, j \in \I}$.
A \emph{principal realization} of $V$ over $H$ is a family 
$(g_i, \chi_i)_{i \in \I}$ of YD-pairs such that
$\chi_j(g_i)=q_{ij}$, $i, j \in \I$, so that $V\in\ydh$ 
via $\ku x_i \simeq \ku_{g_i}^{\chi_i}$, and the braiding $c$ is the categorical one from $\ydh$. 
\end{definition}

Given a principal realization of $V$ over $H$, we have
$\Gamma :=\langle g_1,\dots,g_\theta\rangle \leq Z(G(H))$;
hence we can also realize $V$ as an object in $\ydkg$.

\begin{example}\label{exa:ppal-gps}
If $\Gamma$ is a finite group, then the YD-pairs of $H = \ku \Gamma$ are of the form 
$(g, \chi) \in Z(\Gamma) \times \Hom_{\text{grp}}(\Gamma, \ku^{\times})$. For example, for $\Gamma = GL_n(\mathbb F_p)$, the YD-pairs are 
$(\operatorname{diag}(t), \varphi \det^n)$, where $t\in \mathbb F_p^{\times}$ and $\varphi \in \widehat{\mathbb F_p^{\times}}$.
\end{example}

\begin{example}\label{exa:ppal-ab-ext}
Not every realization is principal: if $g \in Z(\Gamma)$ and $\rho \in \operatorname{Irrep} \Gamma$ with $\dim \rho = d >1$ and 
$\rho (g) = \zeta \id$, then the simple Yetter-Drinfeld  module $M(g, \rho)$ \cite[Example 24]{A-leyva} is
a braided vector space of diagonal type with braiding matrix
$(q_{ij})_{i, j \in \I_d}$ where $q_{ij} = \zeta$ for all $i,j$.
Other examples arise from simple Yetter-Drinfeld  modules $M(g, \rho)$ such that the elements in the conjugacy class of $g$ commute with each other.
\end{example}

\subsection{The Drinfeld double of a bosonization}\label{se:drinfeld}

Recall that $\car \ku =0$.
Let $\hopfdos$ be a Hopf algebra whose coradical $\hopfdos_0$ is a Hopf subalgebra
and let $\hopfuno$ be another Hopf subalgebra of $\hopfdos$.
Then $\hopfuno_0 = \hopfdos_0  \cap \hopfuno$ by \cite[4.2.2]{R-book} and this  is a Hopf subalgebra of $\hopfuno$.
By \cite[4.4.11]{R-book} we have an injective map of graded Hopf algebras $\gr \hopfuno \hookrightarrow \gr \hopfdos$.
Let $R$ and $S$ be the diagrams of $\hopfuno$ and $\hopfdos$ respectively, see \eqref{eq:bosonization}. 
Hence we have an injective map of graded  \emph{braided} Hopf algebras $R \hookrightarrow S$.

\smallbreak
Let $\hopfdos$ be a finite-dimensional Hopf algebra. The Drinfeld double of $\hopfdos$, denoted by $D(\hopfdos)$, is a Hopf algebra 
whose underlying coalgebra is $ \hopfdos\ot \hopfdos^{\# \operatorname{op}}$. Let  $\big\langle - , - \big\rangle: \hopfdos\ot \hopfdos^\# \to \ku$ denote the evaluation map. 
The multiplication on $D(\hopfdos)$ is given by the following formula:
\begin{align*}
(h\bowtie f)(h'\bowtie f') &= \big\langle f_{(1)},h'_{(1)}\big\rangle \big\langle f_{(3)}, \mathcal S(h'_{(3)}) \big\rangle (hh'_{(2)}\bowtie f'f_{(2)}),&
f, f'&\in  \hopfdos^\# \quad h, h'\in \hopfdos,
\end{align*}
where 
$h\bowtie f:= h \otimes f$ in $D(\hopfdos)$ and 
$fr = m(f\ot r)$ is the multiplication in $\hopfdos^\#$ rather than in $\hopfdos^{\# \operatorname{op}}$.

\smallbreak
Let $K$ be a semisimple Hopf algebra, hence cosemisimple by the Larson-Radford theorem.
Let $V \in \ydk$ with $\dim \toba(V) < \infty$ and $H = \toba(V) \# K$. 
Then $V^\# \in \ydkd$ appropriately and 
$H^\# \simeq  \toba(V^\#) \# K^\#$.  
Since $D(H) \simeq {H^\#}^{\cop} \otimes H$ as coalgebras,
the coradical of $H$, respectively $H^\#$, can be identified with $K$, respectively $K^\#$. 
We identify $D(K)$ with a Hopf subalgebra of $D(H)$ in a natural way.

The following result generalizes, with an analogous proof, Theorem 2.5 in \cite{Beattie}.

\begin{prop}\label{prop:double-lifting}
The Drinfeld double $D(H)$ is a lifting of a Nichols algebra $\toba(W)$ where $W = V \oplus V^\#$
and $V$ braided commutes with $V^\#$.
\end{prop}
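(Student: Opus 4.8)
The plan is to show that the coradical of $D(H)$ is the Hopf subalgebra $D(K)$ and that the graded Hopf algebra associated to the coradical filtration is $\toba(W)\# D(K)$ with $W=V\oplus V^\#$ as its degree-one part; this is exactly the assertion that $D(H)$ is a lifting of $\toba(W)$. First I would pin down the coradical. Since $\car\ku=0$ and $K$ is semisimple, it is cosemisimple by Larson--Radford, whence $D(K)$ is semisimple and cosemisimple and therefore equals its own coradical. Using the coalgebra isomorphism $D(H)\simeq (H^\#)^{\cop}\otimes H$ together with the fact that over the algebraically closed field $\ku$ the coradical of a tensor product of coalgebras is the tensor product of the coradicals, I obtain that the coradical of $D(H)$ is $(K^\#)^{\cop}\otimes K=D(K)$, a Hopf subalgebra.

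Next, by the general structure theory for a finite-dimensional Hopf algebra whose coradical is a Hopf subalgebra, $\gr D(H)\simeq R\# D(K)$ with $R=\bigoplus_{n\ge 0}R^n$ a connected graded Hopf algebra in $\yddk$, the diagram of $D(H)$. It remains to identify $R$ with $\toba(W)$. For this I would first extract the degree-one piece and a dimension count from the coalgebra side: the coradical filtration of a tensor product is the convolution of the two filtrations, so $\gr D(H)\simeq \gr\big((H^\#)^{\cop}\big)\otimes \gr H$ as graded coalgebras. Combined with $\gr H\simeq \toba(V)\# K$ and $H^\#\simeq \toba(V^\#)\# K^\#$, this reads off $R^1=W=V\oplus V^\#$ and gives $\dim R=\dim\toba(V)\,\dim\toba(V^\#)$. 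The Yetter--Drinfeld structure of $W$ over $D(K)$ is the one in which $V$ and $V^\#$ are sub-objects carrying the structures transported from $\ydk$ and $\ydkd$ along the inclusions $K,\,K^\#\hookrightarrow D(K)$.

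The crux is to prove that $V$ and $V^\#$ braided commute inside $R$, following the argument of \cite[Theorem 2.5]{Beattie}. Concretely, for $v\in V\subset H$ and $f\in V^\#\subset H^\#$ I would compute the products $vf$ and $fv$ in $D(H)$ by means of the double multiplication formula of \S\ref{se:drinfeld}; the outcome is that $vf$ minus the braided-commuted expression $(c\text{-twist})\,fv$ lies in $D(K)$, i.e.\ in filtration degree zero. Passing to the associated graded, the corresponding braided commutator then vanishes in the degree-two component, so $V$ braided commutes with $V^\#$ in $R$. Keeping track of exactly which cross-terms land in $D(K)$ --- rather than contributing to higher coradical degree --- is the main obstacle, precisely as in the pointed prototype of \cite{Beattie}.

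Finally I would assemble the pieces. The subalgebra of $R$ generated by $W$ contains the copies of $\toba(V)$ and $\toba(V^\#)$ coming from the two tensor factors, and since these braided commute it equals their braided tensor product $\toba(V)\underline{\otimes}\toba(V^\#)$, which by the factorization \eqref{eq:connected-components} is $\toba(V\oplus V^\#)=\toba(W)$. The dimension count above shows this subalgebra exhausts $R$, so $R$ is generated in degree one; being coradically graded, as the associated graded of a coradical filtration always is, $R$ is then forced to be the Nichols algebra $\toba(W)$. Hence $\gr D(H)\simeq \toba(W)\# D(K)$, which is precisely the statement that $D(H)$ is a lifting of $\toba(W)$ with $V$ braided commuting with $V^\#$.
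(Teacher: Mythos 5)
Your proposal is correct in outline, but it takes a genuinely different route from the paper at the decisive step. The paper never computes anything inside $D(H)$: after establishing $D(H)_0=D(K)$, $\gr D(H)\simeq R\#D(K)$, and the embedding $V\oplus V^\#\hookrightarrow W:=R^1$, it invokes Gra\~na's freeness theorem \cite[Theorem 2.2]{Grana} --- $\dim\toba(V)\dim\toba(V^\#)\le\dim\toba(V\oplus V^\#)$ with equality \emph{if and only if} $V$ and $V^\#$ braided commute --- and runs the dimension chain
$\dim D(H)=\dim\toba(V)\dim K\dim\toba(V^\#)\dim K^\#\le\dim\toba(W)\dim D(K)\le\dim R\dim D(K)=\dim D(H)$.
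Equality throughout simultaneously forces $W=V\oplus V^\#$, $R=\toba(W)$, \emph{and} the braided commutativity: the commutativity is an \emph{output} of the count. You instead take braided commutativity as an \emph{input}, proving it by a Beattie-style computation with the multiplication formula of \S\ref{se:drinfeld}, and then use the equality case of Gra\~na's theorem together with your dimension count (obtained from the tensor decomposition of the coradical filtration, a nice alternative to the paper's graded-Hopf-subalgebra argument) to conclude $R=\toba(W)$. Your route buys more information --- it produces the explicit cross-relations between $V$ and $V^\#$ in $D(H)$, the analogue of Proposition \ref{prop:double-relations}, which the paper only records for $K=\ku\Gamma$ abelian --- while the paper's route buys brevity and the complete avoidance of computation.

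Two caveats. First, your crux step is only sketched, and you flag it yourself as ``the main obstacle''; note that \cite[Theorem 2.5]{Beattie} is a computation for group algebras, whereas the proposition concerns an arbitrary semisimple $K$, so the computation must genuinely be redone: for $v\in V$ one has $\Delta^{(2)}(v)=v\ot 1\ot 1+v_{(-1)}\ot v_{(0)}\ot 1+v_{(-2)}\ot v_{(-1)}\ot v_{(0)}$ with $v_{(-i)}\in K$ (similarly for $f\in V^\#$ with components in $K^\#$), and feeding this into the multiplication formula one checks that every cross term pairs away either the $V$- or the $V^\#$-component and hence lands in $K\bowtie K^\#=D(K)$, the surviving term being the categorical braiding of $\yddk$. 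This works, but it is the entire labor of the proof, and the paper's observation is precisely that it can be bypassed. Second, a citation slip: the identification $\toba(V)\underline{\otimes}\toba(V^\#)\simeq\toba(V\oplus V^\#)$ for braided-commuting summands is Gra\~na's theorem, not \eqref{eq:connected-components}, which is the statement for diagonal type with disconnected Dynkin diagram.
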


\pf First, the coradical $D(H)_0$ of $D(H)$ equals $D(K)$; this follows from \cite[4.1.8]{R-book}.
Hence the coradical filtration of $D(H)$ is a Hopf algebra filtration and $\gr D(H) \simeq R\# D(K)$,
where $R = \oplus_{n\ge 0} R^n$ is the diagram of $D(H)$. Let $W = R^1$. 
By the preceding paragraph applied to $\hopfdos = D(H)$ and either $\hopfuno =H$ or $\hopfuno = H^\#$,
we have morphisms of braided vector spaces $V \hookrightarrow W$ and $V^\# \hookrightarrow W$;
we have $V \oplus V^\# \hookrightarrow W$ by comparing the $D(K)$-comodule structures. Recall that
$\dim \toba(V) \dim \toba(V^\#) \leq \dim \toba(V \oplus V^\#)$ and the equality holds iff   
$V$ and $V^\#$ braided commute \cite[Theorem 2.2]{Grana}. Then 
\begin{align*}
\dim D(H) &= \dim \toba(V) \dim K \dim \toba(V^\#) \dim K^\# 
\\ &\leq \dim \toba(W) \dim D(K) \leq \dim R \dim D(K) = \dim D(H),
\end{align*}
hence $R = \toba(W) = \toba(V \oplus V^\#)$ and $V$ and $V^\#$ braided commute.
\epf

Assume next that $K = \ku \Gamma$ where $\Gamma$ is a finite abelian group; recall that $\widehat{\Gamma}$ is the group
of characters of $\Gamma$.   Then
\begin{align*} 
\ku\widehat{\Gamma} &\cong (\ku\Gamma)^\#,\\
D(\ku\Gamma) &\cong \ku (\Gamma \times \widehat{\Gamma}) \cong \ku \Gamma \otimes (k\Gamma)^\#.
 \end{align*} 
Let  $(g_i)_{i \in \I_\theta}$ and   
$(\chi_i)_{i \in \I_\theta}$ be (dual) generating families in $\Gamma$ and $\widehat{\Gamma}$ respectively.
Let $V\in \yd{\ku\Gamma}$ with a basis $(x_{i})_{i \in \I}$ 
such that the action and coaction of $\Gamma$ on $x_i$ are given by $\chi_i$ and $g_i$ respectively, $i\in \I$.
Assume that $\toba(V)$ has finite dimension and let $H = \toba(V) \# \ku \Gamma$.
Let $(y_i)_{i\in \I}$ be the basis of $V^{\#}$ dual to $(x_{i})_{i \in \I}$.
Then  ${H^\#}^{\cop} \simeq \toba(V^{\#}) \# \ku \widehat{\Gamma}$ where 
the action and coaction of $\widehat{\Gamma}$ on $y_i$ are given by $g_i$ and $\chi_i^{-1}$ respectively,  $i\in \I$.
Also $W = V \oplus V^\#$ can be realized in $\yd{\ku(\Gamma \times \widehat{\Gamma})}$ extending these structures. 
See \cite[Theorem 2.5]{Beattie} for details.
Let $\mathcal{I} (V) = \ker (T(V) \to \toba(V))$ be the ideal of defining relations of the Nichols algebra $\toba(V)$. The following statement is well-known.

\begin{prop}\label{prop:double-relations}  $D(H)$ is isomorphic to the quotient of $T(W) \# \ku(\Gamma \times \widehat{\Gamma})$ by the ideal generated by 
$\mathcal{I} (V) $, $\mathcal{I} (V^\#) $ and the relations
\begin{align}\label{eq:double-relations}
x_{i} y_{j} - \chi_j^{-1}(g_i) y_{j} x_{i} &= \delta_{i,j}  \chi_i^{-1}(g_i) \left(1 - g_i \chi_i \right), & i,j &\in \I. 
\end{align}
\end{prop}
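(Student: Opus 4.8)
The plan is to exhibit a surjective algebra map from the proposed presentation onto $D(H)$ and then compare dimensions. Write $A$ for the quotient of $T(W)\#\ku(\Gamma\times\widehat{\Gamma})$ by the ideal generated by $\mathcal I(V)$, $\mathcal I(V^\#)$ and the relations \eqref{eq:double-relations}. Since $D(H)$ is generated by $H$ and $H^{\#\operatorname{op}}$, and these are generated over $\ku\Gamma$ and $\ku\widehat{\Gamma}$ by the $x_i$ and the $y_j$ respectively, the group $\Gamma\times\widehat{\Gamma}$ together with $W=\langle x_i,y_j\rangle$ generates $D(H)$. The realization of $W$ in $\ydkgg$ recalled before the statement shows that the $\Gamma\times\widehat{\Gamma}$-action built into the smash product $T(W)\#\ku(\Gamma\times\widehat{\Gamma})$ agrees with conjugation in $D(H)$; hence there is a well-defined algebra map $\pi\colon T(W)\#\ku(\Gamma\times\widehat{\Gamma})\to D(H)$ sending generators to generators.

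First I would check that $\pi$ annihilates the listed relations. The relations $\mathcal I(V)$ hold because $x_1,\dots,x_\theta$ generate a copy of $\toba(V)$ inside $H\subset D(H)$, and likewise $\mathcal I(V^\#)$ holds because $y_1,\dots,y_\theta$ generate a copy of $\toba(V^\#)$ inside $H^{\#\operatorname{op}}\subset D(H)$. For the cross relations \eqref{eq:double-relations} I would compute $x_iy_j$ and $y_jx_i$ directly from the double multiplication formula of \S\ref{se:drinfeld}, using the skew-primitive coproduct $\Delta(x_i)=x_i\otimes 1+g_i\otimes x_i$, its iterate $\Delta^{(2)}(x_i)=x_i\otimes 1\otimes 1+g_i\otimes x_i\otimes 1+g_i\otimes g_i\otimes x_i$, the antipode $\mathcal S(x_i)=-g_i^{-1}x_i$, the dual skew-primitive structure of $y_j$ arising from the coaction $\chi_j^{-1}$, and the evaluation pairing, for which $\langle y_j,x_i\rangle=\delta_{ij}$ and $\langle\chi,g\rangle=\chi(g)$ while $y_j$ vanishes on group-likes and characters vanish on the degree-one part. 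One finds $x_iy_j=x_i\bowtie y_j$ at once, whereas the expansion of $y_jx_i$ has only three surviving Sweedler terms; collecting them produces exactly \eqref{eq:double-relations}.

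Granting this, $\pi$ descends to a surjection $\bar\pi\colon A\twoheadrightarrow D(H)$, and it remains to bound $\dim A$. Modulo $\mathcal I(V)$ and $\mathcal I(V^\#)$ the subalgebras generated by the $x_i$ and by the $y_j$ are spanned by the respective PBW bases of $\toba(V)$ and $\toba(V^\#)$, and the cross relations \eqref{eq:double-relations} allow one to rewrite every $y_jx_i$ as a combination of $x_iy_j$ and group-likes. Hence $A$ is spanned by the monomials (PBW word in the $x$'s)(PBW word in the $y$'s)(element of $\Gamma\times\widehat{\Gamma}$), so that $\dim A\le \dim\toba(V)\,\dim\toba(V^\#)\,|\Gamma|\,|\widehat{\Gamma}|$. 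By Proposition~\ref{prop:double-lifting}, together with $D(\ku\Gamma)\cong\ku(\Gamma\times\widehat{\Gamma})$, one has $\gr D(H)\simeq\toba(V\oplus V^\#)\#\ku(\Gamma\times\widehat{\Gamma})$, whence $\dim D(H)$ equals precisely this product. Therefore $\bar\pi$ is a surjection between algebras of equal finite dimension, hence an isomorphism.

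The main obstacle is the cross-relation computation of the second paragraph: while the overall shape of the relation (a $q$-commutator whose right-hand side is $\delta_{ij}$ times a scalar times the difference of $1$ and a group-like) falls out immediately, pinning down the exact scalar $\chi_j^{-1}(g_i)$ multiplying $y_jx_i$ and the precise group-like $g_i\chi_i$, rather than some inverse of these, demands careful tracking of the antipode placements and of the orientation of the evaluation pairing fixed in \S\ref{se:drinfeld}; a single inversion error there propagates into the wrong coefficient. Everything else is routine once Proposition~\ref{prop:double-lifting} is in hand.
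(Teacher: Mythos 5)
Your proposal is correct, and its skeleton (build the algebra map onto $D(H)$, check the relations, get a surjection from the quotient, then force injectivity) is the same as the paper's; the difference lies in how the two finishing steps are carried out. For the relations, the paper does not compute: it simply notes that the kernel of the Hopf algebra map $T(W)\#\ku(\Gamma\times\widehat{\Gamma})\to D(H)$ contains $\mathcal I(V)$, $\mathcal I(V^\#)$ and \eqref{eq:double-relations}, citing the proof of Beattie's Theorem 2.5, whereas you sketch the Sweedler computation in the double yourself --- which is fine, and your honest flag about tracking the antipode placements and the orientation of the pairing is exactly where that computation can go wrong. For injectivity, the paper argues that the induced surjection preserves the coradical filtration and that the associated graded map is injective (this is where Proposition~\ref{prop:double-lifting} enters, identifying $\gr D(H)$ with $\toba(V\oplus V^\#)\#D(\ku\Gamma)$), while you instead bound $\dim A$ by a straightening/spanning argument --- every monomial reduces to (PBW word in the $x_i$)(PBW word in the $y_j$)(group element) --- and compare with $\dim D(H)$. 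Both routes ultimately rest on the same structural fact; note, though, that your appeal to Proposition~\ref{prop:double-lifting} for the dimension of $D(H)$ is a detour, since $\dim D(H)=\dim H\cdot\dim H^{\#}$ is immediate from $D(H)\simeq {H^{\#}}^{\cop}\otimes H$ as coalgebras. What your version buys is self-containedness: no filtration-preservation check is needed, at the cost of making the rewriting argument explicit; the paper's version is shorter once the lifting machinery of Proposition~\ref{prop:double-lifting} is in place.
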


\noindent\emph{Outline of the proof.} 
By the preceding discussion there is a morphism of Hopf algebras $T(W) \# \ku(\Gamma \times \widehat{\Gamma}) \to D(H)$
whose kernel $\mathcal J$ contains  $\mathcal{I} (V) $, $\mathcal{I} (V^\#) $ and the relations \eqref{eq:double-relations}, see the proof of 
\cite[Theorem 2.5]{Beattie}. The induced map $T(W) \# \ku(\Gamma \times \widehat{\Gamma}) / \mathcal J \to D(H)$
is clearly surjective and preserves the coradical filtration. Since the associated graded map is injective, the claim follows. \qed

\section{ Cohomology}\label{sec:preliminaries-cohomology}

\subsection{Invariants}\label{subsec:smash}
Let $H$ be a Hopf algebra and $A$ an $H$-module algebra.
The ring of invariants is the  subalgebra
\begin{align*}
A^H = \{x \in A: h\cdot x = \varepsilon_H(h) x\quad \forall h \in H \}.
\end{align*}

\bigbreak
Let now $K$ be a semisimple, hence finite-dimensional, Hopf algebra. Let $t \in K$ be a normalized integral, that is
$kt = \varepsilon(k)t = tk$ for all $k \in K$ and $\varepsilon(t) = 1$.  
In other words $t$ is a left, hence, right, integral \cite[2.2.4]{Mo}.

Let $A$ be a $K$-module algebra and let $\Rc: A \to A$ be the Reynolds operator
$\Rc(x) = t\cdot x$, $x \in A$. Then

\begin{itemize}[leftmargin=*] \renewcommand{\labelitemi}{$\circ$}
\item The Reynolds operator is a projector,  $\Rc^2 = \Rc$, and $\im \Rc = A^K$.

\medbreak
\item $\Rc$ is a morphism of $K$-modules.

\medbreak
\item $\Rc$ is a morphism of $R^K$-bimodules: $\Rc(xyz) = x\Rc(y)z$, for $x,z \in A^K$ and $y \in A$. 
\end{itemize}

\medbreak
The following result,  a variation of a classical argument by Hilbert, is well-known.
\begin{lemma}\label{lema:hilbert-invariants}
Let $K$ be a semisimple Hopf algebra. 
Let $A = \oplus_{n\in \N_0} A^n$ be a graded $K$-module algebra that 
is connected and (right) Noetherian.
Let $M$ be a finitely generated $A\# K$-module.
Then $A^K$ is finitely generated and $M^K$ is a finitely generated
$A^K$-module. 
\end{lemma}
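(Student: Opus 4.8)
The plan is to prove the two assertions separately: the algebra finite generation by Hilbert's classical averaging argument, and the module finiteness by an idempotent (corner ring) argument. Throughout I use the three properties of the Reynolds operator $\Rc = t\cdot(-)$ recorded just above the statement: it is a projector onto $A^K$, a morphism of $K$-modules, and a morphism of $A^K$-bimodules; I also use that the $K$-action is graded, so $\Rc$ preserves the grading.

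For the first assertion, connectedness gives $A^0 = \ku$, so the relevant object is the space $A^K_+$ of positive-degree invariants. Since the right ideal $A^K_+ A$ of $A$ is generated by the homogeneous subspace $A^K_+$ and $A$ is right Noetherian, finitely many homogeneous invariants $f_1,\dots,f_r \in A^K_+$ already generate it, say $A^K_+ A = f_1 A + \cdots + f_r A$. I claim $A^K = \ku\langle f_1,\dots,f_r\rangle$, which I would prove by induction on degree. The base case is degree $0$, where $A^K \cap A^0 = \ku \subseteq \ku\langle f_1,\dots,f_r\rangle$. For a homogeneous $a \in A^K$ of degree $n>0$ I write $a = \sum_i f_i b_i$ with $b_i \in A$ homogeneous of degree $n - \deg f_i$; applying $\Rc$ and using the $A^K$-bimodule property (with the invariant $f_i$ on the left) gives $a = \Rc(a) = \sum_i f_i \Rc(b_i)$, where each $\Rc(b_i) \in A^K$ is homogeneous of degree $< n$ because $\deg f_i > 0$. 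The inductive hypothesis expresses each $\Rc(b_i)$, hence $a$, as a polynomial in the $f_i$.

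For the second assertion the key observation is that $e := t \in K \subseteq A\# K$ is an idempotent, since $e^2 = \varepsilon(t)\,t = t = e$, and that the corner ring and corner module recover the invariants: using the straightening relation and the integral identity $\sum \varepsilon(t_{(1)}) t_{(2)} = t$, one checks that $a \mapsto a\# t$ is a ring isomorphism $A^K \xrightarrow{\ \sim\ } e(A\# K)e$ and that $eM = t\cdot M = M^K$ as a module over this corner ring, compatibly with the natural $A^K$-action on $M^K$. Now $\dim K < \infty$, so $A\# K$ is free of finite rank as a module over the Noetherian ring $A$ and is therefore itself Noetherian; hence the finitely generated $A\# K$-module $M$ is Noetherian. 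Finiteness of $M^K$ then follows from the general fact that for an idempotent $e$ in a ring $B$ and a Noetherian $B$-module $M$, the corner module $eM$ is Noetherian over $eBe$: the assignment $N \mapsto BN$ embeds the lattice of $eBe$-submodules of $eM$ into the lattice of $B$-submodules of $M$, with $e(BN) = N$ (because every $n \in N$ satisfies $n = en$, so $eBN \subseteq eBeN \subseteq N$, while $N = eN \subseteq eBN$). Thus the ascending chain condition transfers, and a Noetherian module being finitely generated, $M^K = eM$ is finitely generated over $eBe \cong A^K$.

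The main obstacle is the module statement. The naive strategy of writing $M = \sum_i A\mu_i$ (possible since $A\# K$ is finite over $A$) and then averaging the generators fails, because the $\mu_i$ need not be $K$-invariant and the bimodule property of $\Rc$ only simplifies products in which an invariant already sits on the correct side. The idempotent device circumvents this and has the further merit of transferring the ascending chain condition \emph{directly} from $A\# K$ to $A^K$, so we never need to know a priori that the possibly noncommutative ring $A^K$ is Noetherian. One minor technical point is matching the Noetherian side: $A$ is assumed right Noetherian, so one should take $M$ on the compatible side, a distinction that is immaterial in the intended applications where $A$ is a graded-commutative cohomology ring and hence two-sided Noetherian.
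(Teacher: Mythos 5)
Your proof is correct, and while the algebra half coincides with the paper's argument, the module half takes a genuinely different route. For the first assertion the paper does exactly what you do: pick finitely many homogeneous invariants generating the ideal of $A$ generated by the augmentation ideal of $A^K$ (the paper uses the left ideal $A(A^K)_+$, you use the right ideal, which in fact matches the stated right Noetherian hypothesis more cleanly), then apply $\Rc$ and induct on degree using the $A^K$-bimodule property. For the module statement, however, the paper does not argue via the smash product at all: it invokes Montgomery's Theorem~4.4.2, whose hypothesis (surjectivity of the trace map $\hat t$) is supplied by the Reynolds operator, to conclude that $A$ is a right Noetherian $A^K$-module; since $M$ is module-finite over $A$ (as $A\# K$ is module-finite over $A$), $M$ is then a Noetherian $A^K$-module and $M^K$ is an $A^K$-submodule of it. You instead set $e=1\# t$, an idempotent of $B=A\# K$, check $eBe\cong A^K$ and $eM=M^K$, and transfer the ascending chain condition from the Noetherian $B$-module $M$ down to $M^K$ via the lattice embedding $N\mapsto BN$, $e(BN)=N$. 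The trade-off: the paper's route yields the stronger intermediate fact that the extension $A^K\subseteq A$ is module-finite (and in particular that $A^K$ is a Noetherian ring), at the price of an external citation; your corner-ring route is self-contained, never needs any information about $A$ as an $A^K$-module, and, as you note, never needs to know in advance that $A^K$ is Noetherian. One quibble: the identity you quote in the corner computation, $\sum \varepsilon(t_{(1)})t_{(2)}=t$, is just the counit axiom; what the computation actually uses is the left-integral property $kt=\varepsilon(k)t$ applied in the second tensor factor, which gives $e(a\# k)e=\varepsilon(k)\,\Rc(a)\#\, t$ and hence both the ring isomorphism $A^K\simeq eBe$ and $eM=M^K$. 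This is a slip in the justification, not in the claim, so it does not affect the validity of the proof.
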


\pf Let $I = A(A^K)_+ $ be the left ideal of $A$ generated by the augmentation ideal of $A^K$.
Since $A$ is Noetherian, $I$ is finitely generated; we may assume that $I = \langle f_1, \dots, f_M\rangle$,
where $f_i \in A^K$ is homogeneous of degree $d_i$. We claim that $A^K  = \ku\langle f_1, \dots, f_M\rangle$.
For this we shall prove that any $f\in A^K$ homogeneous of degree $d$ belongs to $\ku\langle f_1, \dots, f_M\rangle$.
If $d =0$, this follows by connectedness. If $d > 0$, then we may write $f = \sum_{i} a_i f_i$ with $a_i$ either 0 or else  homogeneous of degree $d - d_i$. Then $f = \Rc(f) = \sum_{i} \Rc(a_i) f_i$ and $\Rc(a_i) \in \ku\langle f_1, \dots, f_M\rangle$
by the recursive hypothesis, so $f  \in \ku\langle f_1, \dots, f_M\rangle$.

For the module statement, note that 
the hypothesis of~\cite[Theorem~4.4.2]{Mo} holds; namely the map denoted there $\hat t$ is the Reynolds operator $\Rc$.
Hence $A$ is a right Noetherian $A^K$-module,
and thus finitely generated over $A^K$.
Thus $M$ is a Noetherian $A^K$-module.
Since $M^K$ is an $A^K$-submodule, it is also Noetherian, therefore finitely generated over $A^K$.
\epf

If $R$ is an $H$-module algebra, then $H$ acts on $\Omega^*(R)$ via the comultiplication and the antipode,
and a fortiori on $\coh(R, \ku)$.
The following proposition is well-known.

\begin{prop} 
\label{prop:SV}
Let $K$ be a semisimple Hopf algebra and let $R$ be a finite-dimensional $K$-module algebra.
Let $M$ be an $R\# K$-module.
Then  
\begin{align*}
\coh(R \# K, \ku) &\simeq \coh(R, \ku)^K, & 
 \coh(R\# K,M) &\simeq  \coh (R,M)^K,
 \end{align*} 
 and the action of 
$\coh(R\# K , \ku)$ on $\coh(R\# K,M)$ is precisely that induced
by the isomorphisms and the action of $\coh(R, \ku)$ on $\coh(R,M)$.
\end{prop}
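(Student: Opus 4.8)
The plan is to compute both sides from the single, explicit normalized bar resolution of $\ku$ over $R$ and to exploit the exactness of the $K$-invariants functor coming from semisimplicity. Let $P_*(R) \to \ku$ be the normalized bar resolution, so $P_n(R) = R \otimes R_+^{\otimes n}$ with $R$ acting by left multiplication on the first factor. Since $R$ is a $K$-module algebra, $K$ acts on each $P_n(R)$ diagonally through the comultiplication, and together with the $R$-action this makes $P_*(R)$ a complex of $R\# K$-modules with $R\# K$-linear differentials. I would first record the identification $P_n(R) \cong (R\# K)\otimes_K R_+^{\otimes n}$, where $R_+^{\otimes n}$ carries the diagonal $K$-action: this is immediate from $(1\# k)(r\# 1) = \sum (k_{(1)}\cdot r)\# k_{(2)}$ in the smash product. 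Because $K$ is semisimple, $R_+^{\otimes n}$ is projective over $K$, and $R\# K$ is free as a right $K$-module; hence each $P_n(R)$ is projective over $R\# K$. Thus $P_*(R) \to \ku$ is simultaneously a projective resolution over $R$ and over $R\# K$.

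The next step is the standard adjunction identifying $\Hom_{R\# K}(N, M) \cong \Hom_R(N, M)^K$ for $R\# K$-modules $N$ and $M$, where $K$ acts on $\Hom_R(N,M)$ by $(k\cdot f)(n) = k_{(1)}\cdot f(\mathcal{S}(k_{(2)})\cdot n)$: an $R$-linear map is invariant for this action exactly when it is also $K$-linear, hence $R\# K$-linear. Applying this degreewise to $P_*(R)$ gives an isomorphism of cochain complexes $\Hom_{R\# K}(P_*(R), M) \cong \Hom_R(P_*(R), M)^K$; for $M = \ku$ the right-hand complex is $\Omega^*(R)^K$. Since $K$ is semisimple, the Reynolds operator $\Rc$ splits the inclusion of invariants, exhibiting $(-)^K$ as a direct summand functor; in particular it is exact and commutes with passage to cohomology. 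Therefore
\[
\coh^n(R\# K, M) = H^n\big(\Hom_R(P_*(R), M)^K\big) \cong H^n\big(\Hom_R(P_*(R), M)\big)^K = \coh^n(R, M)^K,
\]
which yields both asserted isomorphisms, taking $M = \ku$ for the first.

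It remains to match the module structures, which is the only delicate point. On the bar resolution the cup product is induced by an explicit diagonal approximation $P_*(R) \to P_*(R)\otimes P_*(R)$, and the key observation is that this diagonal can be chosen $K$-equivariant; since $R$ is a $K$-module algebra, $K$ then acts on $\Omega^*(R) = \Hom_R(P_*(R),\ku)$ compatibly with the product, so that $\coh(R,\ku)$ is a $K$-module algebra and $\coh(R,M)$ a $K$-equivariant module over it. The isomorphism of the previous paragraph is realized on cochains by the inclusion of $K$-invariants, which is then a map of differential graded algebras and of modules; passing to cohomology identifies $\coh(R\# K,\ku) \cong \coh(R,\ku)^K$ as graded algebras and $\coh(R\# K,M)\cong \coh(R,M)^K$ as modules over it, with the action induced precisely by that of $\coh(R,\ku)$ on $\coh(R,M)$. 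The main obstacle is exactly this last verification: one must check that the diagonal approximation and the product on $\Omega^*(R)$ can be taken strictly $K$-equivariant and that the invariants inclusion respects cup products. Both are routine for the bar resolution, but require carefully tracking the diagonal $K$-action through the chosen diagonal approximation, which is where the real bookkeeping lies.
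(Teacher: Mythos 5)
Your proof is correct, but it is not the route the paper takes: the paper disposes of this proposition in two lines, citing \cite[Theorem 2.17]{stefan-vay} for the first isomorphism and remarking that, for $K$ a group algebra, it is the collapse of the Lyndon--Hochschild--Serre spectral sequence \cite{evens} forced by semisimplicity. You instead give a self-contained argument: the normalized bar resolution is induced, $P_n(R)\cong (R\# K)\otimes_K R_+^{\otimes n}$, hence is simultaneously a projective resolution of $\ku$ over $R$ and over $R\# K$; the adjunction $\Hom_{R\# K}(N,M)\cong \Hom_R(N,M)^K$ together with exactness of $(-)^K$ (semisimplicity) then yields both isomorphisms at one stroke, with no spectral sequence. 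This buys transparency and handles the coefficient module $M$ uniformly; what it costs is exactly the bookkeeping you flag at the end, about which two remarks are in order. First, your claim that the concatenation product on $\Omega^*(R)$ is \emph{strictly} $K$-equivariant is slightly off: since $\Delta S=\tau(S\otimes S)\Delta$, one finds $k\cdot (f\smile g)=\sum (k_{(2)}\cdot f)\smile (k_{(1)}\cdot g)$, i.e.\ equivariance for $K^{\cop}$ rather than $K$; this is harmless, because invariant cochains still form a subalgebra, which is all you use. Second, you can bypass the diagonal-approximation bookkeeping entirely: the inclusion $\Hom_{R\# K}(P_*(R),\ku)=\Hom_R(P_*(R),\ku)^K\hookrightarrow \Hom_R(P_*(R),\ku)$ induces on cohomology the restriction map along the algebra inclusion $R\hookrightarrow R\# K$, which is automatically a homomorphism for Yoneda products and for the module actions; combined with the injectivity your exactness argument already gives, this identifies the multiplicative structures with no further computation. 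Finally, note the implicit standing hypothesis that $\epsilon_R$ is $K$-equivariant (so that $R_+$ is a $K$-submodule), which your diagonal $K$-action on $R\otimes R_+^{\otimes n}$ silently uses; it holds in the paper's setting, where $R$ is an augmented algebra in the category of $K$-modules.
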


\pf
This is well-known; 
the first isomorphism is for example~\cite[Theorem 2.17]{stefan-vay}.
In case $K$ is a group algebra, the relevant spectral sequence
is the Lyndon-Hochschild-Serre spectral sequence~\cite[\S\S7.2, 7.3]{evens}
which collapses since $K$ is semisimple.
\epf

\medbreak
Now we pass to algebras in $\ydh$. 
An algebra $A$ in $\ydh$ is \emph{braided commutative} if the multiplication $m_A$ satisfies
$m_A = m_A c_{A, A}$, that is 
\begin{align}\label{eq:braided-comm}
xy &= (x\_{-1} \cdot y) x\_{0},& x,y&\in A.
\end{align}
If $A$ is braided commutative, then $A^H$ is central in $A$.
We elaborate on an idea of \cite{MPSW}; for this we do not need the commutativity of $\Gamma$.

\begin{lemma}\label{lema:braidedcommut-Noetherian}
Let $\Gamma$ be a finite  group and 
let $A$ be a braided commutative algebra either in $\ydkg$ or in $\ydkgd$. Assume that
$A$ is finitely generated (as an algebra).
Then $A$ is Noetherian.
\end{lemma}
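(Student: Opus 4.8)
The plan is to exhibit a commutative, affine (hence Noetherian) subalgebra $B$ contained in the center of $A$ and over which $A$ is module-finite. Once this is achieved, $A$ is Noetherian by the standard ring-theoretic fact that a ring which is finitely generated as a module over a central Noetherian subring is itself left and right Noetherian: every one-sided ideal of $A$ is a $B$-submodule of the Noetherian $B$-module $A$, hence finitely generated over $B$ and a fortiori over $A$.

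First I would unpack the structure in the case $A\in\ydkg$. The $\ku\Gamma$-coaction makes $A=\bigoplus_{g\in\Gamma}A_g$ a $\Gamma$-graded algebra with $A_gA_h\subseteq A_{gh}$, while the action and coaction interact through $c(x\ot y)=(x\_{-1}\cdot y)\, x\_0$. For homogeneous $x\in A_g$ the braided commutativity relation \eqref{eq:braided-comm} reads $xy=(g\cdot y)\,x$; in particular, when $g=e$ the braiding on $x$ is trivial and $xy=yx$ for all $y\in A$, so the identity component $A_e$ is \emph{central} in $A$ (this refines the remark preceding the statement). Since $\Gamma$ is finite, any finite generating set is contained in a finite-dimensional Yetter--Drinfeld submodule $V\subseteq A$ which still generates $A$ as an algebra; I fix a homogeneous basis $a_1,\dots,a_N$ of $V$ with $\deg a_i=g_i$ and set $m_i=\ord(g_i)$. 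Then $a_i^{m_i}\in A_{g_i^{m_i}}=A_e$ is central, and I take $B=\ku\langle a_1^{m_1},\dots,a_N^{m_N}\rangle$. As a subalgebra of the commutative ring $A_e$ it is commutative; being affine, it is Noetherian by the Hilbert basis theorem.

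The heart of the argument is the module-finiteness of $A$ over $B$, which I would establish through a PBW-type normal form. The relation $a_ja_i=(g_j\cdot a_i)\,a_j$, together with $g_j\cdot a_i\in V=\langle a_1,\dots,a_N\rangle$, lets me rewrite any \emph{descent} $a_ja_i$ (with $j>i$) as a linear combination $\sum_k c_k\, a_ka_j$ of words of the same length. Ordering words of a fixed length $k$ by reverse-lexicographic order (reading from the rightmost letter), each such rewrite strictly increases the word, because the more significant (right-hand) of the two affected slots changes from $i$ to $j>i$; as there are only finitely many words of length $k$, the straightening terminates and shows that the span of length-$k$ monomials equals the span of the non-decreasing ones $a_1^{e_1}\cdots a_N^{e_N}$. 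Summing over $k$, these ordered monomials span $A$. Finally, using that each $a_i^{m_i}$ is central I reduce exponents: writing $e_i=m_iq_i+r_i$ with $0\le r_i<m_i$ and pulling the central factors $(a_i^{m_i})^{q_i}$ to the front into $B$, every ordered monomial lies in $B\cdot a_1^{r_1}\cdots a_N^{r_N}$. Hence the finitely many monomials $a_1^{r_1}\cdots a_N^{r_N}$ with $0\le r_i<m_i$ generate $A$ as a $B$-module, completing the proof in this case. For $A\in\ydkgd$ the same computation (now with the $\Gamma$-grading coming from the $\ku^\Gamma$-action) shows that the identity component is central, while the commutation relation takes the mirror form $a_ja_i=\sum_k d_k\,a_ia_k$; running the sorting with the ordinary left-to-right lexicographic order, which now strictly decreases, yields the analogous normal form and the same conclusion.

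The delicate point I expect to be the main obstacle is precisely this normal-form/termination step. Because $\Gamma$ need not be abelian, the action $g_j\cdot a_i$ permutes gradings by conjugation and does not in general return a single generator, so one cannot sort monomials naively; it is the monotonicity of the word order under the straightening rewrites that makes the procedure well-founded and produces the finite $B$-spanning set. Everything else---the centrality of $A_e$, the Noetherianity of the commutative affine $B$, and the passage from module-finiteness to the Noetherian property of $A$---is routine.
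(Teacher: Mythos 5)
Your proof is correct and follows the same strategy as the paper's: pass to homogeneous generators, observe that suitable powers of them land in the central identity component $A_e$, invoke the Hilbert basis theorem for the resulting commutative affine central subalgebra $B$, and conclude by showing $A$ is a finite $B$-module. The only divergence is that the paper merely asserts the spanning statement $A=\sum_{0\le a_i\le N}B\,f_1^{a_1}\cdots f_M^{a_M}$ for the original generators, whereas you actually prove it --- and your two refinements (enlarging the generating set to a homogeneous basis of a finite-dimensional Yetter--Drinfeld submodule, so that each $g_j\cdot a_i$ stays in the span of the generators, together with the word-order termination argument for the straightening) are precisely what is needed to make that spanning claim rigorous, since the paper's rewriting $f_jf_i=(g_j\cdot f_i)f_j$ need not remain in the span of the original generators.
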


\pf Let $N$ be the exponent of $\Gamma$.  We deal first with $\ydkg$.
As an object in $\ydkg$, $A$ is $\Gamma$-graded: $A = \oplus_{g\in \Gamma} A_g$. Thus, if
$A = \ku\langle f_1, \dots, f_M\rangle$, then we may assume that each $f_i$ belongs to $A_{g_i}$ for some 
$g_i\in \Gamma$.  Then $f_i^N \in A_{g_i^N} = A_e$.
Since $A$ is braided commutative,   
$f_i^N f_j = (g_i^N \cdot f_j) f_i^N = f_j f_i^N$ for all $i,j$. 
Then $B = \ku\langle f_1^N, \dots, f_M^N \rangle$ is a central subalgebra of $A$ and  is
Noetherian by Hilbert's Basissatz. Now $A$ is a finitely generated $B$-module, actually  
$A = \sum_{0 \le a_i \le N} B\, f_1^{a_1} \dots, f_M^{a_M}$.
Thus $A$ is a Noetherian $B$-module hence a  Noetherian algebra.
We deal next with $\ydkgd$. Since $H= \ku^{\Gamma}$ has a basis of idempotents $\delta_g$, $g \in \Gamma$, 
again $A$ is $\Gamma$-graded: $A = \oplus_{g\in \Gamma} A_g$ where $A_g = \delta_gA$. 
Thus, if
$A = \ku\langle f_1, \dots, f_M\rangle$, with each $f_i \in A_{g_i}$ for some 
$g_i\in \Gamma$, then $f_i^N \in A_{g_i^N} = A_e = \delta_eA$. But $\delta_e$ is the integral of $\ku^{\Gamma}$, thus again 
$f_i^N \in A^H$ is central. Then we proceed as previously.
\epf

We wonder whether any finitely generated braided commutative algebra is Noetherian.
We need the following result from \cite{MPSW}.

\begin{prop} \cite[Corollary 3.13]{MPSW} \label{prop:braided-commutative}
Let $H$ be a  Hopf algebra and let $R$ be a bialgebra in $\ydh$. Assume that either $H$
or $R$ is finite-dimensional. Then the (opposite of) the 
Hochschild cohomology $\hoch(R, \ku)$  is a  braided commutative graded algebra
 in $\ydh$. \qed
\end{prop}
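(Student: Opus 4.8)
The plan is to recognize this as the braided-monoidal avatar of the classical fact that the self-extension algebra of the unit object of a symmetric (here braided) monoidal category is graded commutative, and to run a braided Eckmann--Hilton argument (in the spirit of Su\'arez-\'Alvarez). First I would exploit that $R$ is a \emph{bialgebra} in $\ydh$: the comultiplication $\Delta\colon R \to R\underline{\otimes}R$ is a morphism of algebras in $\ydh$, so the category of $R$-modules inside $\ydh$ becomes monoidal, with tensor product the braided tensor product of the underlying Yetter--Drinfeld modules and $R$ acting diagonally through $\Delta$; the trivial module $\ku$ (via $\varepsilon_R$) is its unit object. Identifying $\coh(R,\ku)=\Ext^*_R(\ku,\ku)$ with $\hoch(R,\ku)$ as in the conventions, and recalling that $H$ already acts on $\Omega^*(R)$ by comultiplication and antipode, the graded object $\hoch(R,\ku)$ is an algebra in $\ydh$ under the Yoneda composition product.

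The heart of the argument is to produce a \emph{second} graded product on $\hoch(R,\ku)$ and to compare it with the first. Using the bialgebra structure I would define the cup product as the composite $\coh(R,\ku)\underline{\otimes}\coh(R,\ku)\xrightarrow{\ \sim\ }\coh(R\underline{\otimes}R,\ku)\xrightarrow{\ \Delta^*\ }\coh(R,\ku)$, where the first arrow is the Künneth comparison; here the finite-dimensionality of $H$ or of $R$ is exactly what guarantees that this comparison is an isomorphism in $\ydh$, so that the two tensor factors can be separated and the ambient braiding $c$ of $\ydh$ can be transported onto $\hoch(R,\ku)$. The Yoneda composition product and this cup product share the same unit, namely the class of $\varepsilon_R$ in degree zero, and both are morphisms in $\ydh$.

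The final step is the braided Eckmann--Hilton principle. The bialgebra compatibility between $m_R$ and $\Delta$, together with the naturality and the hexagon axioms of $c$, yields an interchange law relating the two products; Eckmann--Hilton then forces them to coincide and to be commutative up to the braiding, i.e.\ the multiplication $\mu$ of $\hoch(R,\ku)^{\mathrm{op}}$ satisfies $\mu=\mu\circ c$, which is the graded braided-commutativity modeled on $xy=(x\_{-1}\cdot y)\,x\_{0}$ of \eqref{eq:braided-comm}. I expect the main obstacle to be the cochain-level bookkeeping: verifying the interchange law requires checking that the Alexander--Whitney-type coproduct on the bar complex is compatible with the braided product coming from $m_R$, and carrying the Koszul signs and the braiding $c$ coherently through that identity in the graded braided setting. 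The passage to the \emph{opposite} in the statement is precisely the convention that makes the resulting commutativity read as in \eqref{eq:braided-comm} rather than with the inverse braiding.
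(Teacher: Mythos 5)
The paper offers no proof of this proposition at all: the statement is imported verbatim from \cite[Corollary 3.13]{MPSW}, the \qed closes a citation rather than an argument, and the remark immediately following it points to \cite[Theorem 3.12]{MPSW} as the general categorical source. So the comparison is really with the cited proof, and in outline your reconstruction — Yoneda product versus a cup product built from $\Delta$, same unit, braided Eckmann--Hilton/interchange — is indeed the mechanism behind that proof. The strategy is not the problem.

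The genuine gap is your handling of the hypothesis that $H$ or $R$ is finite-dimensional. You spend it on making the K\"unneth comparison $\coh(R,\ku)\otimes\coh(R,\ku)\to\coh(R\,\underline{\otimes}\,R,\ku)$ an isomorphism, but that is both insufficient and unnecessary. Insufficient: the hypothesis allows $H$ finite-dimensional with $R$ infinite-dimensional, and in that case the K\"unneth map need not be an isomorphism (for infinite-dimensional spaces $\Hom_{\ku}(V\otimes W,\ku)$ strictly contains $V^{\#}\otimes W^{\#}$), so the step where you ``transport the braiding'' through this isomorphism breaks in one of the two allowed cases. Unnecessary: the cup product only needs the cross-product map followed by $\Delta^{*}$, not its invertibility. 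The actual role of the finiteness hypothesis — which your sketch never addresses — is to make $\hoch(R,\ku)$ an object of $\ydh$ at all. Being an algebra \emph{in} $\ydh$ requires an $H$-coaction in addition to the $H$-action on $\Omega^{*}(R)$ that you invoke; the cochain spaces are duals $\Hom_{\ku}(R_+^{\otimes n},\ku)$, and the dual of an infinite-dimensional Yetter--Drinfeld module need not be one. One needs either $R$ finite-dimensional (so each $R_+^{\otimes n}$ is finite-dimensional) or $H$ finite-dimensional (so $\ydh$ is the module category of $D(H)$, which is closed under linear duals). This is precisely the ``needed hom-objects'' caveat in the paper's remark after the proposition. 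Until that structure is in place, the algebra whose braided commutativity you want to prove does not yet live in $\ydh$, so your Eckmann--Hilton step has nothing to apply to.
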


Actually \cite[Theorem 3.12]{MPSW} gives more: the claim is true if $R$ is a bialgebra in an abelian braided monoidal category 
$\Cc$ where the needed hom-objects exist.

\begin{theorem} \label{th:RtoRsmashH}
Let $\Gamma$ be a finite group and let $R$ be a finite-dimensional Hopf algebra in $\ydkg$.
f $R$ has fgc, then so does $R\# \ku \Gamma $.
\end{theorem}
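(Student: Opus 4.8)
The plan is to reduce the statement to the invariant-theoretic machinery already in place, taking $K = \ku\Gamma$, which is semisimple because $\car \ku = 0$ and $\Gamma$ is finite. Write $A = \coh(R,\ku)$, a connected $\N_0$-graded algebra (under cup product) on which $\ku\Gamma$ acts via comultiplication and antipode as explained before Proposition~\ref{prop:SV}, so that $A$ is a graded $K$-module algebra. Since $R$ has fgc by hypothesis, $A$ is finitely generated as an algebra.

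First I would establish that $A$ is Noetherian, which is the conceptual crux. Using the bar-complex isomorphism $\coh(R,\ku) \cong \hoch(R,\ku)$ recorded in the conventions, Proposition~\ref{prop:braided-commutative} applies (the finite-dimensionality of $R$ supplying its hypothesis) and shows that $A$ is braided commutative in $\ydkg$. Being finitely generated and braided commutative, $A$ is then Noetherian by Lemma~\ref{lema:braidedcommut-Noetherian}. This is precisely what unlocks the Hilbert-style argument below, and I expect it to be the main point: without braided commutativity there is no reason for the cohomology of $R$ to be Noetherian, and all of \ref{lema:hilbert-invariants} hinges on it.

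Next I would deduce \ref{item:fgca} for $R \# \ku\Gamma$. Applying Lemma~\ref{lema:hilbert-invariants} to the connected graded right-Noetherian $K$-module algebra $A$ gives that the subalgebra $A^K$ is finitely generated. By Proposition~\ref{prop:SV} there is an algebra isomorphism $\coh(R\#\ku\Gamma, \ku) \cong \coh(R,\ku)^K = A^K$, so the left-hand side is finitely generated, which is \ref{item:fgca}. For \ref{item:fgcb}, let $M$ be a finitely generated $R\#\ku\Gamma$-module. Since $\ku\Gamma$ is finite-dimensional, $R\#\ku\Gamma$ is a finitely generated (free) left $R$-module, hence $M$ is finitely generated over $R$; then \ref{item:fgcb} for $R$ gives that $\coh(R,M)$ is finitely generated over $A=\coh(R,\ku)$. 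A fortiori $\coh(R,M)$ is a finitely generated module over the smash product $A \# K$, so the module statement of Lemma~\ref{lema:hilbert-invariants} applied to $A$ and to the $A\#K$-module $\coh(R,M)$ yields that $\coh(R,M)^K$ is finitely generated over $A^K$. Translating through the module isomorphisms of Proposition~\ref{prop:SV}, $\coh(R\#\ku\Gamma, M)$ is finitely generated over $\coh(R\#\ku\Gamma, \ku)$, which is \ref{item:fgcb}.

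The remaining work beyond the Noetherianity input is routine bookkeeping, but it does need care: one must check that the $K$-action and the cup-product $A$-action on $\coh(R,M)$ genuinely assemble into an $A\#K$-module structure, and that the compatibility of actions asserted in Proposition~\ref{prop:SV} matches the module structures used when invoking \ref{item:fgcb} for $R$. Once these compatibilities are verified, the assembled chain \emph{fgc for $R$} $\Rightarrow$ \emph{$A$ finitely generated and braided commutative} $\Rightarrow$ \emph{$A$ Noetherian} $\Rightarrow$ \emph{$A^K$ finitely generated and $M^K$ finitely generated} delivers both halves of fgc for $R\#\ku\Gamma$.
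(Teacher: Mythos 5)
Your proof is correct, and its first half (finite generation of $\coh(R\#\ku\Gamma,\ku)$) is exactly the paper's argument: Proposition~\ref{prop:braided-commutative} plus Lemma~\ref{lema:braidedcommut-Noetherian} give Noetherianity of $\coh(R,\ku)$, and Lemma~\ref{lema:hilbert-invariants} together with Proposition~\ref{prop:SV} then finish. Where you diverge is the module statement \ref{item:fgcb}. You restrict a finitely generated $R\#\ku\Gamma$-module $M$ to $R$ (legitimate, since $R\#\ku\Gamma$ is a free $R$-module of rank $|\Gamma|$) and invoke hypothesis \ref{item:fgcb} for $R$ to get $\coh(R,M)$ finitely generated over $\coh(R,\ku)$, then feed this into the module half of Lemma~\ref{lema:hilbert-invariants}. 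The paper instead inducts on a composition series of $M$, reducing to $M$ simple; then $R_+M$ is an $R\#\ku\Gamma$-submodule, so $R_+M=0$ by Nakayama, i.e.\ $M\vert_R$ is a trivial $R$-module, and finite generation of $\coh(R,M)$ over $\coh(R,\ku)$ is immediate. The two routes buy different things: the paper's argument never uses hypothesis \ref{item:fgcb} for $R$ --- only \ref{item:fgca} plus the Noetherianity machinery --- so it in effect proves the slightly stronger statement that finite generation of $\coh(R,\ku)$ alone suffices; your argument uses the full fgc hypothesis for $R$ but is more direct, avoiding the composition-series induction and the Nakayama step. Both routes share the (genuine, but routine) compatibility check you flag, namely that the $\Gamma$-action and the cup product assemble $\coh(R,M)$ into a module over $\coh(R,\ku)\#\ku\Gamma$, which is what Lemma~\ref{lema:hilbert-invariants} requires; the paper relies on this implicitly as well.
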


\pf By Proposition \ref{prop:braided-commutative}, Lemma \ref{lema:braidedcommut-Noetherian} and the hypothesis,
$\coh(R, \ku)$ is Noetherian. Then $\coh(R, \ku)^{\Gamma}$ is finitely generated by Lemma \ref{lema:hilbert-invariants}.
By Proposition \ref{prop:SV}, $\coh(R \# \ku \Gamma, \ku) \simeq \coh(R, \ku)^{\Gamma}$ is finitely generated.
We next prove: If $M$ is a finitely generated module, then
$\coh(R\# \ku \Gamma , M)$ is finitely generated as an
$\coh(R\# \ku \Gamma , \ku)$-module. 
For this, we may induct on the length of the composition series
of $M$, and so it suffices to prove it in case $M$ is simple.
Let $R_+$ denote the augmentation ideal of $R$. Note that $R_+ M$ is
an $R\# \ku \Gamma$-submodule of $M$ and therefore $R_+ M = 0$ (by Nakayama),
that is, $M | _R$ is a trivial $R$-module.
We conclude that $\coh(R, M)$ is finitely generated as an $\coh(R,\ku)$-module.
By Lemma~\ref{lema:hilbert-invariants}, it follows that 
$\coh(R,M)^K$ is finitely generated over $\coh(R,\ku)^K$.
\epf

We are ready for one of our main results.

\begin{theorem} \label{thm:ppalrealiz-Kss}
Let $V$ be a braided vector space of diagonal type such that 
\begin{enumerate}[leftmargin=*,label=\rm{(\alph*)}]
\item\label{item:ppalrealiz-Kss1} the Nichols algebra $\toba(V)$ is finite-dimensional,

\item\label{item:ppalrealiz-Kss2} $V$ is realizable over a finite abelian group,

\item\label{item:ppalrealiz-Kss3} $\coh(\toba(V), \ku)$ is finitely generated.
\end{enumerate}

Let $K$ be a semisimple Hopf algebra and assume that  $V$ is realizable over $K$. 
Then $\toba(V) \# K$  has fgc.
\end{theorem}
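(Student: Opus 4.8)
The plan is to prove Theorem \ref{thm:ppalrealiz-Kss} by reducing the statement about the semisimple Hopf algebra $K$ to the case of a group algebra, where the already-established Theorem \ref{th:RtoRsmashH} applies directly. The starting observation is that fgc is invariant under the Morita equivalence $\mor$ of Subsection \ref{subsec:morita}, since an isomorphism of Drinfeld doubles $D(\toba(V)\# K)\simeq D(\toba(V)\# K')$ induces an equivalence of module categories and hence an isomorphism on cohomology. So the first move is to exploit that the \emph{braided vector space} $V$ of diagonal type depends only on its braiding matrix $\bq=(q_{ij})$, not on the particular Hopf algebra over which it is realized. By hypothesis \ref{item:ppalrealiz-Kss2}, $V$ is realizable over a finite abelian group $\Gamma$, say via a principal realization $(g_i,\chi_i)$ with $\chi_j(g_i)=q_{ij}$.

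The heart of the argument is then to compare $D(\toba(V)\# K)$ with $D(\toba(V)\#\ku\Gamma)$. First I would apply Proposition \ref{prop:double-lifting} to both realizations: in each case the Drinfeld double is a lifting of the same Nichols algebra $\toba(W)$ with $W=V\oplus V^\#$ and $V$ braided-commuting with $V^\#$, the braiding on $W$ being determined entirely by $\bq$ together with the cross relations \eqref{eq:double-relations}, which are governed by the scalars $\chi_j^{-1}(g_i)=q_{ij}^{-1}$ and $\chi_i^{-1}(g_i)=q_{ii}^{-1}$. These are intrinsic to $\bq$ and independent of whether the realizing object is $K$ or $\ku\Gamma$. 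Thus the associated graded Hopf algebras agree, $\gr D(\toba(V)\# K)\simeq \toba(W)\# D(K_0)$ where $K_0$ is the relevant coradical datum; the point is that over an abelian group $D(\ku\Gamma)\cong\ku(\Gamma\times\widehat\Gamma)$ is again a group algebra. The upshot I aim for is an identification, at least up to the Morita relation $\mor$, of $\toba(V)\# K$ with a bosonization of the form $R'\#\ku G$ for $R'$ a finite-dimensional Hopf algebra in $\ydkg$ over some finite abelian group $G=\Gamma\times\widehat\Gamma$.

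Once this reduction is in place, the conclusion follows mechanically: by hypothesis \ref{item:ppalrealiz-Kss3}, $\coh(\toba(V),\ku)$ is finitely generated, so $\toba(V)$ has fgc; applying Theorem \ref{th:RtoRsmashH} to the abelian group $\Gamma$ yields that $\toba(V)\#\ku\Gamma$ has fgc, and likewise $\toba(V^\#)\#\ku\widehat\Gamma$, since $V^\#$ has the same Dynkin diagram and hence the same finiteness properties. Feeding these through Proposition \ref{prop:double-relations} and a further application of Theorem \ref{th:RtoRsmashH} over $\Gamma\times\widehat\Gamma$ gives that $D(\toba(V)\#\ku\Gamma)$ has fgc. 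Finally, invoking the Morita invariance $D(\toba(V)\# K)\simeq D(\toba(V)\#\ku\Gamma)$ transports fgc back to $\toba(V)\# K$, using Lemma \ref{lema:morita-dual} and the fact that fgc is preserved under $\mor$.

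The main obstacle I anticipate is establishing the isomorphism of Drinfeld doubles $D(\toba(V)\# K)\simeq D(\toba(V)\#\ku\Gamma)$ rigorously when $K$ is a genuinely noncommutative (though cosemisimple) Hopf algebra, rather than a group algebra. Proposition \ref{prop:double-lifting} identifies the diagram $\toba(W)$ intrinsically, but one must check that the full double---coradical $D(K)$ together with the diagram and its module/comodule structure---depends only on the braiding matrix $\bq$ and not on the semisimple factor $K$. Concretely, the cross relations determining how $V$ and $V^\#$ interact, and how $\Gamma\times\widehat\Gamma$ sits inside, must be shown to be insensitive to replacing $K$ by $\ku\Gamma$; this is plausible precisely because a diagonal $V$ is supported on a central grouplike/character datum via its YD-pairs, so only the abelian subdata $\langle g_i\rangle$ and $\langle\chi_i\rangle$ actually enter. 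Making this independence precise---and ensuring the equality $D(\toba(V)\# K)\mor D(\toba(V)\#\ku\Gamma)$ rather than merely an isomorphism of associated graded objects---is where the real work lies, and it likely requires a careful comparison of the two realizations as principal realizations over their respective doubles.
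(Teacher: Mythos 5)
Your proposal hinges on establishing an isomorphism (or Morita equivalence in the sense of \S\ref{subsec:morita}) $D(\toba(V)\# K)\simeq D(\toba(V)\#\ku\Gamma)$, and this is exactly where the argument breaks: such an isomorphism cannot exist in general, for dimension reasons. Morita equivalence as defined in the paper requires an isomorphism of quasitriangular Hopf algebras between the doubles, and since $\dim D(H)=(\dim H)^2$ this forces $\dim(\toba(V)\# K)=\dim(\toba(V)\#\ku\Gamma)$, i.e.~$\dim K=|\Gamma|$. But the hypotheses allow $K$ to be \emph{any} semisimple Hopf algebra over which $V$ is realizable; for instance a one-dimensional $V$ with $q_{11}=-1$ is realizable over $\Gamma=\Z/2$ and also over arbitrarily large semisimple Hopf algebras admitting a suitable YD-pair, so the two doubles have different dimensions. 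The Weyl-groupoid isomorphisms of \S\ref{sub:weyl} that you are implicitly modelling this on compare doubles of bosonizations over the \emph{same} group $G$ with \emph{different} braiding matrices; they never compare different realizing Hopf algebras. Proposition \ref{prop:double-lifting} only identifies the diagram $\toba(W)$ of each double; the coradicals $D(K)$ and $D(\ku\Gamma)$ remain genuinely different, so your claim that ``the associated graded Hopf algebras agree'' is false as stated.

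The paper's proof avoids doubles altogether: the two realizations play complementary roles on one and the same object, the cohomology ring $\coh(\toba(V),\ku)$. The realization over the abelian group $\Gamma$ (hypothesis \ref{item:ppalrealiz-Kss2}) makes $\coh(\toba(V),\ku)$ a braided commutative algebra in $\ydkg$ by Proposition \ref{prop:braided-commutative}, hence Noetherian by Lemma \ref{lema:braidedcommut-Noetherian} once it is finitely generated (hypothesis \ref{item:ppalrealiz-Kss3}). The realization over $K$ then gives a $K$-action on this Noetherian ring, and the Hilbert-type argument of Lemma \ref{lema:hilbert-invariants} shows that $\coh(\toba(V),\ku)^{K}$ is finitely generated; Proposition \ref{prop:SV} identifies this invariant ring with $\coh(\toba(V)\# K,\ku)$, and the module statement follows by the same reduction as in Theorem \ref{th:RtoRsmashH}. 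If you want to salvage your approach, you must replace the false comparison of doubles by this direct invariant-theoretic argument; note in particular that Lemma \ref{lema:braidedcommut-Noetherian} is only available over group algebras and their duals, which is precisely why hypothesis \ref{item:ppalrealiz-Kss2} is needed even though the conclusion concerns $K$.
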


\pf The proof is the same as for the previous result. By \ref{item:ppalrealiz-Kss2}, \ref{item:ppalrealiz-Kss3},
Proposition \ref{prop:braided-commutative} and Lemma \ref{lema:braidedcommut-Noetherian}, 
$\coh(\toba(V), \ku)$ is Noetherian. Then $\coh(\toba(V), \ku)^{K}$ is finitely generated by Lemma \ref{lema:hilbert-invariants}.
By Proposition \ref{prop:SV} and \ref{item:ppalrealiz-Kss1}, $\coh(\toba(V) \# K, \ku) \simeq \coh(\toba(V), \ku)^{K}$ is finitely generated.
The proof of the module statement is similar.
\epf

\begin{observation}
If $V$ admits a  principal realization over $K$, then \ref{item:ppalrealiz-Kss2} holds.
Notice that \ref{item:ppalrealiz-Kss1} does not imply \ref{item:ppalrealiz-Kss2}: take $V$ of dimension 2 with 
braiding matrix $\begin{pmatrix}
q_{11} & q_{12}  \\ q_{12}^{-1} &q_{22}
\end{pmatrix}$ where $q_{11} \in \G_M'$, $q_{12} \notin \G_{\infty}$, $q_{22} \in \G_N'$, $N, M > 1$.
However we do not know if $V$ being realizable over $K$ semisimple implies \ref{item:ppalrealiz-Kss2}.
\end{observation}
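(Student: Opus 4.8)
The plan is to treat the two assertions of the observation separately; both are structural and short. For the first, suppose $V$ admits a principal realization $(g_i,\chi_i)_{i\in\I}$ over the semisimple Hopf algebra $K$, so that $\chi_j(g_i)=q_{ij}$ for all $i,j$. Since each $(g_i,\chi_i)$ is a YD-pair, the remark following \eqref{eq:yd-pair} gives $g_i\in Z(G(K))$. Because $K$ is finite-dimensional, $G(K)$ is a finite group, and hence $\Gamma:=\langle g_1,\dots,g_\theta\rangle$ is a finite abelian subgroup of $Z(G(K))$. Restricting the characters produces $\chi_j|_\Gamma\in\widehat\Gamma$ with $\chi_j|_\Gamma(g_i)=q_{ij}$, which is precisely a realization of $V$ over the finite abelian group $\Gamma$ in the sense of \S\ref{se:realizations}; indeed this is exactly the realization in $\ydkg$ recorded in the paragraph following the definition of a principal realization. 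Thus \ref{item:ppalrealiz-Kss2} holds.

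For the second assertion I would verify that the displayed two-dimensional example satisfies \ref{item:ppalrealiz-Kss1} but not \ref{item:ppalrealiz-Kss2}. The decisive observation is that $\widetilde q_{12}=q_{12}q_{21}=q_{12}q_{12}^{-1}=1$, so the Dynkin diagram of $V$ is disconnected, consisting of two isolated vertices labelled $q_{11}\in\G_M'$ and $q_{22}\in\G_N'$. By Gra\~na's decomposition \eqref{eq:connected-components}, $\toba(V)\simeq\toba(\ku x_1)\,\underline{\otimes}\,\toba(\ku x_2)$, and since $q_{11},q_{22}$ are nontrivial roots of unity of orders $M,N>1$ each one-dimensional factor is a truncated polynomial algebra, namely $\toba(\ku x_1)=\ku[x_1]/(x_1^M)$ and $\toba(\ku x_2)=\ku[x_2]/(x_2^N)$. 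Consequently $\dim\toba(V)=MN<\infty$, so \ref{item:ppalrealiz-Kss1} holds.

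To see that \ref{item:ppalrealiz-Kss2} fails, I would argue by contradiction: suppose $V$ were realizable over a finite abelian group $\Gamma$, with $g_i\in\Gamma$ and $\chi_i\in\widehat\Gamma$ satisfying $q_{ij}=\chi_j(g_i)$. In particular $q_{12}=\chi_2(g_1)$; writing $d$ for the order of $g_1$ in $\Gamma$ we obtain $q_{12}^d=\chi_2(g_1)^d=\chi_2(g_1^d)=\chi_2(e)=1$, so $q_{12}\in\G_d\subset\G_\infty$, contradicting the hypothesis $q_{12}\notin\G_\infty$. Hence no realization over a finite abelian group exists, and \ref{item:ppalrealiz-Kss1} does not imply \ref{item:ppalrealiz-Kss2}. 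There is no genuine obstacle in either direction; the only point requiring a moment of care is the finiteness of $\dim\toba(V)$, which reduces immediately to the disconnectedness of the diagram via \eqref{eq:connected-components} together with the classical computation of the Nichols algebra of a one-dimensional braided vector space.
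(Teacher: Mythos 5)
Your proof is correct and follows exactly the route the paper intends: the first assertion is precisely the remark after the definition of a principal realization (the $g_i$ lie in $Z(G(K))$, which is finite since $\dim K<\infty$, so $\Gamma=\langle g_1,\dots,g_\theta\rangle$ is a finite abelian group realizing $V$), and the example is the routine verification the paper leaves unwritten, via Gra\~na's decomposition \eqref{eq:connected-components} for finiteness of $\dim\toba(V)$ and the fact that every value $\chi_j(g_i)$ over a finite group is a root of unity, contradicting $q_{12}\notin\G_\infty$. No gaps.
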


\subsection{Subalgebras, extensions}\label{subsec:subalgebras}

\begin{theorem}\label{lema:frobenius-reciprocity} Let $R$ be an 
augmented subalgebra of a finite-dimensional 
augmented algebra $\Ac$, over which $\Ac$ is projective as
a right $R$-module under multiplication.
If $\Ac$ has fgc, then so does $R$.
\end{theorem}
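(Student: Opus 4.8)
The plan is to transfer finite generation along the restriction map coming from the inclusion $\iota\colon R\hookrightarrow \Ac$. Since $\iota$ is a morphism of augmented algebras, it induces a homomorphism of graded algebras $\iota^{*}\colon \coh(\Ac,\ku)\to\coh(R,\ku)$, and through $\iota^{*}$ every $\coh(R,\ku)$-module — in particular $\coh(R,M)$ for an arbitrary $R$-module $M$ — becomes a module over $\coh(\Ac,\ku)$. The whole statement then reduces to a single claim: \emph{for every finitely generated $R$-module $M$, the graded module $\coh(R,M)$ is finitely generated over $\coh(\Ac,\ku)$}. Indeed, granting this, condition \ref{item:fgcb} for $R$ is immediate, since $\coh(\Ac,\ku)$ acts through $\iota^{*}$ and hence $\coh(R,M)$ is a fortiori finitely generated over $\coh(R,\ku)$. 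Condition \ref{item:fgca} follows by taking $M=\ku$: then $\coh(R,\ku)$ is module-finite over the image $\iota^{*}\big(\coh(\Ac,\ku)\big)$ of the finitely generated algebra $\coh(\Ac,\ku)$, and a module-finite extension of a finitely generated algebra is again finitely generated (if $\coh(\Ac,\ku)=\ku\langle b_{1},\dots,b_{r}\rangle$ and $\coh(R,\ku)=\sum_{j}\iota^{*}\big(\coh(\Ac,\ku)\big)\,c_{j}$, then $\coh(R,\ku)=\ku\langle \iota^{*}(b_{1}),\dots,\iota^{*}(b_{r}),c_{1},\dots,c_{s}\rangle$). This Artin--Tate step is purely formal and needs no graded-commutativity input.

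The heart of the argument is a Frobenius reciprocity (Shapiro) isomorphism, and this is exactly where the projectivity hypothesis is used. The coinduction functor $\operatorname{Coind}M=\Hom_{R}(\Ac,M)$ is right adjoint to the exact restriction functor, and the projectivity of $\Ac$ over $R$ guarantees that restriction carries projective $\Ac$-modules to $R$-modules that are acyclic for the relevant $\Hom$, equivalently that $\operatorname{Coind}$ is exact. Feeding a projective $\Ac$-resolution of $\ku$ through the adjunction $\Hom_{\Ac}(-,\operatorname{Coind}M)\cong\Hom_{R}(\operatorname{Res}(-),M)$ then yields a natural isomorphism of $\coh(\Ac,\ku)$-modules
\[
\coh(R,M)\;\cong\;\coh\big(\Ac,\Hom_{R}(\Ac,M)\big),
\]
whose right-hand side is the $\Ac$-cohomology with coefficients in the \emph{finite-dimensional} $\Ac$-module $\Hom_{R}(\Ac,M)$ (finite-dimensional because $\Ac$ is).

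Finite generation now comes for free from the hypothesis that $\Ac$ has fgc: condition \ref{item:fgcb} applied to the finite-dimensional coefficient module $\Hom_{R}(\Ac,M)$ says precisely that $\coh\big(\Ac,\Hom_{R}(\Ac,M)\big)$ is finitely generated over $\coh(\Ac,\ku)$, which by the displayed isomorphism is the claim above. I expect the main obstacle to be the homological bookkeeping hidden in the second paragraph: verifying that the Shapiro isomorphism is genuinely an isomorphism of $\coh(\Ac,\ku)$-modules (so that compatibility with $\iota^{*}$ is real and not merely an abstract isomorphism of graded vector spaces), and pinning down the exactness so that the projectivity of $\Ac$ over $R$ is used on the correct side — the one delicate point being that restriction sends projective $\Ac$-modules to $R$-modules against which the comparison is acyclic. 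Once that is settled, the module statement follows directly from \ref{item:fgcb} for $\Ac$, and the algebra statement is the formal Artin--Tate deduction of the first paragraph.
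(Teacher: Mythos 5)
Your proposal is correct and follows essentially the same route as the paper's proof: both rest on the Eckmann--Shapiro isomorphism $\coh(R,M)\simeq \coh\bigl(\Ac,\Hom_R(\Ac,M)\bigr)$ of $\coh(\Ac,\ku)$-modules (with projectivity of $\Ac$ over $R$ making coinduction work), the finite-dimensionality of $\Hom_R(\Ac,M)$, and then fgc of $\Ac$ applied to this coefficient module. The paper deduces \ref{item:fgca} exactly as in your Artin--Tate paragraph (module generators of $\coh(\Ac,\Hom_R(\Ac,\ku))$ together with restricted algebra generators of $\coh(\Ac,\ku)$ generate $\coh(R,\ku)$), and \ref{item:fgcb} exactly as in your a fortiori step, so there is nothing to add.
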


\begin{proof}
By the right module version of the 
Eckmann-Shapiro Lemma~\cite[Corollary 2.8.4]{benson91a}, for each $n$, 
and any $R$-module $M$,
there is an isomorphism of vector spaces,
\[
\coh^n(R,M) \simeq  \Ext^n_{\Ac}(\ku,\Hom_R(\Ac, M))
   = \coh^n(\Ac , \Hom_R(\Ac,M)),
\]
where $\Hom_R(\Ac,M)$ is the coinduced right $\Ac$-module.
(The action is given by $(f\cdot a)(b) = f(ab)$ for all $a,b\in\Ac$,
$f\in\Hom_R(A,M)$.
Then $f\cdot a$ is indeed a right $R$-module homomorphism.)
These isomorphisms, one for each $n$,
provide an isomorphism of $\coh(A,\ku)$-modules
$\coh(R, M)\simeq  \coh(\Ac , \Hom_R (\Ac, M))$. 
Now when $M$ is a finite-dimensional $R$-module, 
$\Hom_R(\Ac, M)$ is finite-dimensional as a vector space.
For $M = \ku$, a set of generators of $\coh(\Ac,\Hom_R(\Ac, \ku))$ as a 
module for $\coh(\Ac,\ku)$, together with
the restriction to $R$ of a set of generators of $\coh(\Ac,\ku)$,
generates $\coh(R,\ku)$ as a $k$-algebra.   For an arbitrary 
finite-dimensional module $M$,  $\Hom_R(\Ac, M)$ is then a finite-dimensional module 
over $\coh(\Ac,\ku)$ and, hence, over $\coh(R,\ku)$.
\end{proof}

If $K$ is a Hopf subalgebra of a finite-dimensional Hopf algebra $H$, then $H$ is free as a left or right module over $K$ with respect
to multiplication by the Nichols--Z\"oller Theorem. 
Thus Theorem \ref{lema:frobenius-reciprocity} applies to inclusions of Hopf algebras, in particular to the inclusion of
a finite-dimensional Hopf algebra into its Drinfeld double, see \cite[Theorem 3.4]{NP}.
For further reference we state a useful application of Theorem \ref{lema:frobenius-reciprocity}.

\begin{cor}\label{cor:frobeniusreciprocity-nichols}
Let $H$ be a finite-dimensional Hopf algebra and $V\in \ydh$ such that $dim \toba(V) <\infty$.
If $D(\toba(V) \# H)$ has fgc, then so does $\toba(V)$.  
\end{cor}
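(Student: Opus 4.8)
The plan is to deduce this corollary directly from Theorem \ref{lema:frobenius-reciprocity} by identifying $\toba(V)$ as an augmented subalgebra of $D(\toba(V)\# H)$ satisfying the projectivity hypothesis. First I would set $\Ac = D(\toba(V)\# H)$ and recall that $\toba(V)$ sits inside the Hopf algebra $H' := \toba(V)\# H$ as a subalgebra: the inclusion $\toba(V)\hookrightarrow \toba(V)\# H$, $x\mapsto x\# 1$, is an algebra map, and it is augmented since the counit of the bosonization restricts to the augmentation of the Nichols algebra. In turn, $H' = \toba(V)\# H$ is itself a Hopf subalgebra of its own Drinfeld double $\Ac = D(H')$ via the standard embedding $h\mapsto h\bowtie \varepsilon$. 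Composing these, $\toba(V)$ is realized as an augmented subalgebra of $\Ac = D(\toba(V)\# H)$.

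The key step is to verify the freeness/projectivity hypothesis of Theorem \ref{lema:frobenius-reciprocity}, namely that $\Ac$ is projective (indeed free) as a right $\toba(V)$-module under multiplication. I would establish this in two stages via transitivity of freeness. By the Nichols--Z\"oller theorem (as noted in the excerpt immediately before the corollary), $\Ac = D(H')$ is free as a right module over its Hopf subalgebra $H'$, since $H'$ is finite-dimensional. It therefore remains to show that $H' = \toba(V)\# H$ is free as a right $\toba(V)$-module. This is a general feature of bosonizations: the smash product $\toba(V)\# H$ decomposes as $\toba(V)\otimes H$ as a right $\toba(V)$-module, or alternatively one uses that $\toba(V)$ is a normal Hopf subalgebra in the braided sense and invokes again the appropriate freeness result for the Radford biproduct. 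Since a module that is free over $H'$ and $H'$ free over $\toba(V)$ yields freeness of $\Ac$ over $\toba(V)$ by restriction of scalars, the projectivity hypothesis holds.

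With the embedding and the projectivity verified, the conclusion is immediate: Theorem \ref{lema:frobenius-reciprocity} applied to the pair $\toba(V)\subseteq \Ac = D(\toba(V)\# H)$ states that if $\Ac$ has fgc then so does $\toba(V)$. This is precisely the assertion of the corollary. I expect the main obstacle to be the careful bookkeeping in the freeness argument, specifically confirming that $\toba(V)\# H$ is genuinely free (not merely projective) as a right $\toba(V)$-module with respect to multiplication in the correct ring structure; the potential subtlety is that the smash product multiplication intertwines $\toba(V)$ and $H$ through the $H$-action, so one must check that the natural $H$-module decomposition still furnishes a right $\toba(V)$-basis. Once this is confirmed, everything reduces to a direct citation of the two already-established inputs (Nichols--Z\"oller and Theorem \ref{lema:frobenius-reciprocity}), so the proof is short.
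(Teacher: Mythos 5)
Your proposal is correct and follows essentially the same route as the paper's (implicit) proof: the corollary is presented there as a direct application of Theorem \ref{lema:frobenius-reciprocity}, combining the Nichols--Z\"oller freeness of $D(\toba(V)\# H)$ over its Hopf subalgebra $\toba(V)\# H$ with the freeness of the bosonization $\toba(V)\# H$ over $\toba(V)$. The point you flag as a potential subtlety --- that $(1\# h)(y\# 1)=(h_{(1)}\cdot y)\# h_{(2)}$ still yields a right $\toba(V)$-basis, via the isomorphism $H\ot\toba(V)\to\toba(V)\# H$ whose inverse uses the antipode of $H$ --- is exactly the step the paper leaves to the reader, and it goes through as you expect.
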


These ideas apply in particular to Morita equivalence of  Hopf algebras as  in \S \ref{subsec:morita}.

\begin{cor}\label{cor:morita}
If $H \mor H'$ and $D(H)$ has fgc, then so does $H'$.
\end{cor}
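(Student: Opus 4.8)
The plan is to chain the definition of Morita equivalence with Theorem \ref{lema:frobenius-reciprocity} applied to the canonical inclusion of $H'$ into its own Drinfeld double, exactly as in the proof of Corollary \ref{cor:frobeniusreciprocity-nichols}. First I would unwind the hypothesis $H \mor H'$: by the definition recalled in \S\ref{subsec:morita}, this asserts that there is an isomorphism of quasitriangular Hopf algebras $D(H) \simeq D(H')$. Since finite generation of cohomology is an invariant of the isomorphism class of an augmented algebra (both \ref{item:fgca} and \ref{item:fgcb} are phrased purely in terms of $\coh(-,\ku)$ and its module structure), the assumption that $D(H)$ has fgc transfers directly along this isomorphism to yield that $D(H')$ has fgc.

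Next I would invoke the standard fact that $H'$ sits inside $D(H')$ as a Hopf subalgebra. By the Nichols--Z\"oller theorem, $D(H')$ is free, and in particular projective, as a right $H'$-module under multiplication---precisely the hypothesis required by Theorem \ref{lema:frobenius-reciprocity}. Applying that theorem with $\Ac = D(H')$ and $R = H'$, the fgc of $D(H')$ descends to the fgc of $H'$, which is the desired conclusion. The one point to verify is that both the algebra part and the module part of fgc are accounted for, but this is exactly what the Eckmann--Shapiro argument underlying Theorem \ref{lema:frobenius-reciprocity} supplies, so no additional computation is needed.

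I do not expect any genuine obstacle here: the corollary is a formal consequence of the isomorphism $D(H) \simeq D(H')$ furnished by the hypothesis $H \mor H'$, combined with the already-established Theorem \ref{lema:frobenius-reciprocity}. The entire content lies in recognizing that Morita equivalence was \emph{defined} so as to make the Drinfeld doubles isomorphic, after which the descent along the Hopf subalgebra inclusion $H' \hookrightarrow D(H')$ is routine.
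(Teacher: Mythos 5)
Your proposal is correct and is exactly the argument the paper intends: unwind $H \mor H'$ to the isomorphism $D(H) \simeq D(H')$, transfer fgc along it, and then descend from $D(H')$ to the Hopf subalgebra $H'$ via Theorem \ref{lema:frobenius-reciprocity} and the Nichols--Z\"oller freeness, just as in Corollary \ref{cor:frobeniusreciprocity-nichols}. No discrepancy with the paper's (implicit) proof.
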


\begin{question}\label{question:morita}
Is the fgc property for Hopf algebras invariant under Morita equivalence in the sense of \S \ref{subsec:morita}?
\end{question}
The reader might want to compare this question to \cite[Conjecture 1.1]{NP}.

\begin{lema}\label{lema:extensions} Let $\ku \to K \rightarrow H \rightarrow L \to \ku$ be an extension of finite-dimensional
Hopf algebras.
If $K$ is semisimple and $L$ has finitely generated cohomology, then so does $H$.
\end{lema}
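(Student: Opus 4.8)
The plan is to construct a Lyndon--Hochschild--Serre type spectral sequence for the Hopf algebra extension and to exploit the semisimplicity of $K$ to collapse it. I would first set up the functorial framework. Write $K^+ = \ker \varepsilon_K$, so that $L = H/HK^+$ and the quotient map $H \to L$ gives the (exact) inflation functor $\mathrm{infl}\colon L\text{-mod} \to H\text{-mod}$, sending an $L$-module $N$ to $N$ with $H$ acting through $H \to L$. Since $K$ is a \emph{normal} Hopf subalgebra of $H$, for any $H$-module $M$ the invariants $M^K = \Hom_K(\ku, M)$ form an $H$-submodule on which $K$ acts trivially, hence an $L$-module; this defines a functor $(-)^K\colon H\text{-mod} \to L\text{-mod}$. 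A direct adjunction computation gives $\Hom_H(\mathrm{infl}(N), M) \cong \Hom_L(N, M^K)$, so $\mathrm{infl}$ is left adjoint to $(-)^K$, and as its left adjoint is exact, $(-)^K$ preserves injectives.

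Next I would record the factorization of functors $(-)^H = (-)^L \circ (-)^K$ and invoke the Grothendieck spectral sequence of the composite, whose hypothesis holds precisely because $(-)^K$ sends injective $H$-modules to injective (hence $(-)^L$-acyclic) $L$-modules:
\[
E_2^{p,q} = \coh^p\big(L,\, R^q(-)^K(M)\big) \Longrightarrow \coh^{p+q}(H, M),
\]
where the underlying space of $R^q(-)^K(M)$ is $\coh^q(K, M) = \Ext^q_K(\ku, M)$. The key point is that $K$ semisimple forces $\ku$ to be projective over $K$, so $\Ext^q_K(\ku, M) = 0$ for all $q > 0$ and all $M$. Thus the spectral sequence collapses onto the row $q = 0$, yielding natural isomorphisms
\[
\coh^n(H, M) \cong \coh^n(L, M^K), \qquad n \ge 0.
\]

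I would then extract the two halves of the fgc property. Taking $M = \ku$ gives $\ku^K = \ku$, so the collapse produces an isomorphism of graded algebras $\coh(H, \ku) \cong \coh(L, \ku)$; since $L$ has fgc, $\coh(L, \ku)$ is finitely generated, establishing \ref{item:fgca}. For \ref{item:fgcb}, let $M$ be a finitely generated, hence finite-dimensional, $H$-module; then $M^K$ is a finite-dimensional $L$-module, and because $L$ has fgc, $\coh(L, M^K)$ is finitely generated as a module over $\coh(L, \ku)$. Transporting along the compatible isomorphisms $\coh(H,\ku) \cong \coh(L,\ku)$ and $\coh(H, M) \cong \coh(L, M^K)$ shows $\coh(H, M)$ is finitely generated over $\coh(H, \ku)$.

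The main obstacle will be the multiplicative bookkeeping: one must verify that this Grothendieck spectral sequence carries its standard cup-product structure and that the collapse identifies products, so that $\coh(H, \ku) \cong \coh(L, \ku)$ is genuinely an algebra isomorphism and $\coh(H, M) \cong \coh(L, M^K)$ is one of modules over it. A secondary point requiring care is checking that $M^K$ is an $H$-submodule (so it descends to an $L$-module), which rests on the normality of $K$ in $H$; this is standard but should be stated explicitly. Everything else is formal once semisimplicity of $K$ kills the higher derived functors of $(-)^K$.
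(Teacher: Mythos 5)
Your proof is correct and follows essentially the same route as the paper: both arguments rest on the Hochschild--Serre spectral sequence $\coh^p(L,\coh^q(K,M))\Rightarrow \coh^{p+q}(H,M)$ for the extension, which collapses because semisimplicity of $K$ kills all higher $K$-cohomology, yielding $\coh(H,M)\cong\coh(L,M^K)$ and then both halves of fgc exactly as you describe. The only difference is presentational: you rederive the spectral sequence as a Grothendieck composite-functor spectral sequence via the inflation--invariants adjunction, whereas the paper simply cites it from Cartan--Eilenberg (Chapter 16, Theorem 6.1).
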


The proof makes use of a variation of the classical Hochschild-Serre spectral sequence.

\pf Let $M$ be an $L$-module and $N$ an $H$-module. 
By \cite[Chapter 16, Theorem 6.1]{cartan-eilenberg} there exists a convergent spectral sequence
\begin{align*}
H^p(L, H^q(K, N)) \implies H^{p + q}(H, N).
\end{align*} 

Since $K$ is semisimple, we have $H^q(K, N) = 0$ when $q>0$. Hence, the spectral sequence 
degenerates giving an isomorphism $H(H,N)  \cong H(L, N^K)$.  For $N = \ku$ we 
immediately get that $H(H,\ku) \cong H(L,\ku)$ is finitely generated. For an arbitrary 
finitely generated $H$-module $N$, we have that $N^K$ is a finitely generated $L$-module, and, hence, by assumption $H(L, N^K)$ is a finite $H(L,\ku) \cong H(H, \ku)$-module.
\epf

\subsection{Evens Lemma}\label{subsec:permanent-cycles}

Let $R = \oplus_{n\in \N_0} R^n$ be an $\N_0$-graded ring with a decreasing algebra filtration $F^{n}R$, $n\in \N_0$, compatible with the grading. We shall assume that $F^{i}R^n = 0$ for $i$ sufficiently large.
Then the associated graded ring $E_0(R) = \sum_i F^{i}R/F^{i+1}R$ is $\N_0^2$-graded.

Similarly, the graded $E_0(R)$-module associated to an $\N_0$-graded $R$-module $N$ with a decreasing module filtration 
is $\N_0^2$-graded. Again, $F^{i}N^j = 0$ for $i$ sufficiently large.
The following proposition is \cite[Section 2, Proposition 2.1]{evens}.

\begin{prop}\label{Pch2}
Let $R$ be a graded filtered ring  and $N$ a graded filtered $R$-module as above. If $E_0(N)$ is (left) Noetherian over $E_0(R)$, then $N$ is Noetherian over $R$. \qed
\end{prop}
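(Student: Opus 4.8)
The plan is to prove Proposition \ref{Pch2} by a standard filtration argument, lifting Noetherianity from the associated graded object $E_0(R)$ to $R$ itself. The statement is precisely \cite[Section 2, Proposition 2.1]{evens}, so the approach mirrors the classical proof that a filtered ring is Noetherian when its associated graded ring is. The key idea is that submodules of $N$ inherit a filtration, and their associated graded objects sit inside $E_0(N)$; Noetherianity of $E_0(N)$ over $E_0(R)$ then controls the submodules of $N$.

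First I would let $N'$ be any $R$-submodule of $N$ and equip it with the induced filtration $F^i N' = N' \cap F^i N$. This gives an embedding of associated graded $E_0$-modules $E_0(N') \hookrightarrow E_0(N)$. Since $E_0(N)$ is Noetherian over $E_0(R)$ by hypothesis, the submodule $E_0(N')$ is finitely generated over $E_0(R)$. I would then choose finitely many homogeneous generators of $E_0(N')$ and lift them to elements $n_1, \dots, n_r \in N'$, where each $n_k$ is chosen in the appropriate filtration layer $F^{i_k} N'$ so that its class in $F^{i_k}N'/F^{i_{k}+1}N'$ is the corresponding generator. The claim is that these lifts generate $N'$ over $R$.

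The verification that the $n_k$ generate $N'$ is the heart of the argument and proceeds by a descending induction on the filtration degree, which terminates because $F^i N^j = 0$ for $i$ sufficiently large (the stated finiteness hypothesis on the filtration). Given $n \in N'$, say $n \in F^i N'$, its image $\overline{n}$ in $E_0(N')$ is an $E_0(R)$-combination of the generators, so I can subtract an appropriate $R$-combination $\sum_k r_k n_k$ (using lifts $r_k$ of the graded coefficients) to push $n$ into a strictly deeper filtration layer $F^{i+1}N'$. Iterating and using that the filtration is eventually zero, $n$ lands in the submodule generated by the $n_k$ after finitely many steps. Hence every submodule $N'$ of $N$ is finitely generated over $R$, which is exactly the statement that $N$ is Noetherian over $R$.

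The main obstacle, and the point requiring the most care, is the bookkeeping in the inductive step: one must track filtration degrees correctly so that subtracting the lifted combination genuinely increases the filtration index, and one must invoke the hypothesis $F^i N^j = 0$ for large $i$ to guarantee termination. Since the grading and the filtration interact (the filtration is compatible with the $\N_0$-grading, making $E_0$ doubly graded), I would restrict attention to a fixed grading degree $j$ at a time, so that within each graded piece the filtration is finite and the induction is manifestly well-founded. Because this is a well-documented classical result, I would simply cite \cite{evens} for the complete details rather than reproduce the routine verification.
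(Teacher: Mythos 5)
Your proposal coincides with the paper's own treatment: the paper offers no argument for Proposition \ref{Pch2} beyond attributing it to \cite[Section 2, Proposition 2.1]{evens}, which is the same reference you invoke, and your sketch is the standard lifting argument that this citation stands for. One caution about the sketch itself: the step where you ``restrict attention to a fixed grading degree $j$ at a time'' does not work as literally stated, because a submodule $N' \subseteq N$ need not be graded --- homogeneous components of elements of $N'$ need not lie in $N'$, so $E_0(N')$ is graded by filtration degree but not by internal degree, the lifted generators $n_k$ cannot be taken internally homogeneous, and the correction terms $\sum_k r_k n_k$ can introduce components in ever higher internal degrees, so the iteration is not confined to a single graded piece and its termination does not follow immediately from $F^iN^j = 0$ for $i \gg 0$. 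Repairing this (for instance by first treating graded submodules, where the degreewise induction is clean, and then reducing arbitrary submodules to graded ones, or by running the more careful bookkeeping of the original argument) is precisely what the cited proof in \cite{evens} supplies; since you defer to that reference for the details, exactly as the paper does, the proposal is acceptable, but you should be aware that this termination step is the one genuinely non-routine point of the proof.
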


The following result is  a non-commutative version of 
\cite[Lemma 2.5]{MPSW}, adapted in turn from \cite[Lemma 1.6]{FS} and inspired by early work of Evens. 

\medbreak
Let $E_1^{p,q} \Rightarrow E_{\infty}^{p+q}$ be a multiplicative spectral sequence of bigraded $\ku$-algebras concentrated in the half plane $p+q \geq 0$.
Recall that $x \in E_r^{p,q}$ is called a \emph{permanent cycle} if $d_i(x)=0$ for all $i\geq r$. 
More precisely, if $i> r,\ d_i$ is applied to the image of $x$ in $E_i$.

\begin{lemma}\label{Lch2} \cite[Lemma 2.6]{Shroff}
\begin{enumerate}[leftmargin=*,label=\rm{(\alph*)}]
\item
Let $C^{*,*}$ be a bigraded $\ku$-algebra such that for each fixed $q$,  $C^{p,q} = 0$ for $p$ sufficiently large. Assume that there exists a bigraded map of algebras $\phi: C^{*,*}\rightarrow E_1^{*,*}$ such that 

\begin{enumerate}[leftmargin=*,label=\rm{(\arabic*)}]
\medbreak
\item $\phi$ makes $E_1^{*,*}$ into a left Noetherian $C^{*,*}$-module, and

\medbreak
\item the image of $C^{*,*}$ in $E_1^{*,*}$ consists of permanent cycles.
\end{enumerate}
Then $E_\infty^{*}$ is a left Noetherian module over Tot($C^{*,*}$).

\medbreak
\item 
Let $\widetilde{E}_1^{p,q} \Rightarrow \widetilde{E}_\infty^{p+q}$ be a spectral sequence that is a bigraded module over the spectral sequence $E^{*,*}$. Assume that $\widetilde{E}_1^{*,*}$ is a left Noetherian module over $C^{*,*}$ where $C^{*,*}$ acts on $\widetilde{E}_1^{*,*}$ via the map $\phi$. Then $\widetilde{E}_\infty^{*}$ is a finitely generated $E_\infty^{*}$-module.
\qed
\end{enumerate}
\end{lemma}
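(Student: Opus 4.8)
The plan is to use the permanent-cycle hypothesis to turn every differential of the spectral sequence into a homomorphism of $C^{*,*}$-modules, so that the single input ``$E_1^{*,*}$ is Noetherian over $C^{*,*}$'' propagates through all the pages and forces the spectral sequence to degenerate after finitely many steps. The limit term $E_\infty^{*,*}$ then appears as a subquotient of the Noetherian module $E_1^{*,*}$, and is therefore itself Noetherian; the boundedness of $C^{*,*}$ serves to make the passage to the total grading legitimate.

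\emph{Part \rm{(a)}.} First I would record the basic mechanism: each page $E_r^{*,*}$ carries the $C^{*,*}$-action induced by $\phi$, because $\phi(C^{*,*})$ consists of permanent cycles and hence survives to every page. Since $\phi$ is an algebra map and $d_r$ is a derivation, for $c \in C^{a,b}$ and $x \in E_r^{p,q}$ one has
\[
d_r\bigl(\phi(c)\cdot x\bigr) = d_r(\phi(c))\cdot x + (-1)^{a+b}\,\phi(c)\cdot d_r(x) = (-1)^{a+b}\,\phi(c)\cdot d_r(x),
\]
using $d_r(\phi(c)) = 0$. Thus each $d_r$ is $C^{*,*}$-linear up to sign, and $E_{r+1}^{*,*} = H(E_r^{*,*}, d_r)$ is a subquotient of $E_r^{*,*}$ as a $C^{*,*}$-module. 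Writing $E_r^{*,*} = Z_r/B_r$ with $B_r \subseteq Z_r \subseteq E_1^{*,*}$ the usual $C^{*,*}$-submodules of cycles and boundaries pulled back to $E_1^{*,*}$, the boundaries form an ascending chain $B_1 \subseteq B_2 \subseteq \cdots$ of $C^{*,*}$-submodules of the Noetherian module $E_1^{*,*}$. By the ascending chain condition it stabilizes at some page $N$; since $B_{r+1} = B_r$ forces $\im d_r = 0$, we obtain $d_r = 0$ for all $r \geq N$, whence $Z_r$ also stabilizes and $E_N^{*,*} = E_\infty^{*,*}$. Being a subquotient of the Noetherian $C^{*,*}$-module $E_1^{*,*}$, the page $E_\infty^{*,*}$ is a Noetherian $C^{*,*}$-module. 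The hypothesis that $C^{p,q} = 0$ for $p$ large (with $q$ fixed) is exactly what guarantees that $\operatorname{Tot}(C^{*,*})$ is a well-defined $\N_0$-graded ring with finite pieces in each total degree, and lets one read off the conclusion as Noetherianity over $\operatorname{Tot}(C^{*,*})$.

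\emph{Part \rm{(b)}.} The module case is entirely parallel. Since $\widetilde{E}$ is a module spectral sequence over $E$ and $C^{*,*}$ acts on it through the permanent cycles $\phi(C^{*,*}) \subseteq E$, the same Leibniz computation shows each $\widetilde{d}_r$ is $C^{*,*}$-linear; hence every $\widetilde{E}_r^{*,*}$ is a Noetherian $C^{*,*}$-module and the identical stabilization argument yields that $\widetilde{E}_\infty^{*,*}$ is a Noetherian, hence finitely generated, $C^{*,*}$-module. Because the $C^{*,*}$-action on $\widetilde{E}_\infty$ factors through the induced algebra map $C^{*,*} \to E_\infty^{*,*}$, any finite $C^{*,*}$-generating set is also an $E_\infty^{*,*}$-generating set, so $\widetilde{E}_\infty^{*}$ is finitely generated over $E_\infty^{*}$.

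The conceptual heart, and the only place where genuine care is needed, is the first step: recognizing $\phi(C^{*,*})$ as permanent cycles so that all differentials become $C^{*,*}$-module maps, since this is precisely what transfers Noetherianity from $E_1$ to every later page and to the limit. The remaining points---tracking the $C^{*,*}$-module structure compatibly (and with correct signs) across pages, and transferring the bigraded Noetherian conclusion to the total grading using the boundedness of $C^{*,*}$---are bookkeeping rather than real obstacles.
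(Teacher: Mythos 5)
Your proof is correct, and it is essentially the intended one: the paper supplies no argument for this lemma at all (it is quoted verbatim from \cite[Lemma 2.6]{Shroff}, whose proof in turn follows \cite[Lemma 2.5]{MPSW} and \cite[Lemma 1.6]{FS}), and your argument is exactly that standard Evens-style proof — permanent cycles plus the Leibniz rule make every $d_r$ (and $\widetilde{d}_r$) a $C^{*,*}$-module map, the ascending chain of boundary submodules inside the Noetherian module $E_1^{*,*}$ (resp.\ $\widetilde{E}_1^{*,*}$) stabilizes, so the spectral sequence degenerates at a finite page and $E_\infty$ (resp.\ $\widetilde{E}_\infty$) is a Noetherian subquotient of the first page, after which the passage to $\operatorname{Tot}$ and the factorization of the $C^{*,*}$-action through $E_\infty^{*}$ finish both parts. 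The only loose phrase is your attribution of the hypothesis $C^{p,q}=0$ for $p\gg 0$ solely to the legitimacy of totalization (Noetherianity is in any case independent of the grading), but this does not affect the validity of the argument.
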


\subsection{The May spectral sequence}\label{subsec:May}
Let $A$ be a Hopf algebra equipped with an increasing multiplicative filtration $A_0 \subset A_1 \subset A_2 \ldots \subset A$.  We fix a (non-canonical) vector space splitting 
$A \simeq  A_0 \oplus A_+$ so that $A/A_+ \simeq  A_0$. 
Let $(P,d) = ( V_n\ot A, d)$ be a free resolution of the trivial module $k$ satisfying the following properties. 
\begin{assumption}
\label{as:resol} 
\begin{enumerate} [leftmargin=*,label=\rm{(\arabic*)}]
\item $V_n$ is a finite-dimensional vector space, the action of $A$ on $P_n$ is on the last factor $A$. 
\item $P_{\bu}$ is equipped with an increasing filtration $ \ldots F_iP_n \subset F_{i+1}P_n \ldots$. 
\item For any $x \in F_iV_n = F_i(V_n\ot 1) := F_iP_n \cap ( V_n\ot 1)$, we have 
\begin{align*}
d(x) \in  F_iV_{n-1}\ot A_0 +  F_{i-1}V_{n-1}\ot A_+.
\end{align*}
\end{enumerate}
\end{assumption}

\begin{example} We will apply this setting in at least two different situations. 
\begin{enumerate}[leftmargin=*,label=\rm{(\roman*)}]
\item When $P_{\bu}$ is the bar resolution and $F_{\bu}$ is the coradical filtration, see Theorem \ref{thm:fingencoh-drinfeld-double}. 
\item When $P_{\bu}$ is the Anick resolution of $k$ for $\toba(V)$, and the filtration is given by the PBW basis induced by the convex ordering of the roots,
see Theorem \ref{lemma:reduction-to-cocycles}. In this case $A_0 =k$ and we identify $A_ +$ with the augmentation ideal. 
\end{enumerate} 

\end{example} 
 
We set up a version of May spectral sequence analogous to the one in \cite[5.5]{GK}.   We follow the construction in May \cite{May} but without assuming that the module $M$ is filtered. Such a spectral sequence is also constructed in \cite[\S9]{BKN} using a non-canonical filtration on $M$ induced by the filtration on $A$. 
\begin{theorem}
\label{may}
Let $A$ be a filtered finite-dimensional Hopf algebra , $(P_{\bu}, d)$ be a projective resolution of the trivial module $k$, and assume that $A$
and $P_{\bu}$ satisfy Condition~\ref{as:resol}. Let $M$ be an $A$-module. Then there exists a converging cohomological spectral sequence 
\[\widetilde E_1 = H(\gr A, M_{A_0}) \Rightarrow H(A,M) \]
equipped with a natural module structure over the multiplicative spectral sequence
\[E_1 = H(\gr A, k) \Rightarrow H(A,k). \] 
The action of $\gr A$ on $M_{A_0}$ is via the projection $\gr A \to A_0$ and then restricting the action of $A$ on $M$ to the action of the subalgebra $A_0 \subset A$.

\end{theorem}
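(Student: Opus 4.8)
The plan is to follow the classical construction of the May spectral sequence as in \cite{May}, adapting it to the case where the module $M$ is \emph{not} assumed to be filtered, and then to verify the multiplicative structure and the module structure over the trivial-coefficient version. The starting point is the filtered projective resolution $(P_{\bullet}, d)$ satisfying Condition~\ref{as:resol}. First I would apply the cochain functor $\Hom_A(-, M)$ to the filtration $F_\bullet P_\bullet$, obtaining a decreasing filtration on the cochain complex $\Hom_A(P_\bullet, M)$. The key structural input is Condition~\ref{as:resol}(3): since $d(x) \in F_i V_{n-1} \otimes A_0 + F_{i-1} V_{n-1} \otimes A_+$ for $x \in F_i V_n$, the differential decreases the filtration degree by at most one in the ``$A_+$ direction,'' so that the associated graded differential picks out precisely the $A_0$-component. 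This is what will force the $E_1$-page to be the cohomology computed against $\gr A$ with the module structure restricted through the projection $\gr A \to A_0$.

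Next I would identify the $E_0$ and $E_1$ pages explicitly. Taking $E_0$ of the filtered complex, the associated graded differential $d_0$ is induced by the $F_i V_{n-1} \otimes A_0$ part of $d$, which means $E_0$ is the cochain complex computing $H(\gr A, M_{A_0})$, where $M_{A_0}$ denotes $M$ with the $A$-action restricted to $A_0$ via the projection and then transported along $\gr A \to A_0$. Hence
\[
\widetilde{E}_1 = H(\gr A, M_{A_0}).
\]
For the trivial coefficients $M = \ku$ the same computation yields $E_1 = H(\gr A, \ku)$. Convergence is the standard convergence of a first-quadrant-type spectral sequence: here one uses the finite-dimensionality of each $V_n$ together with the vanishing $F_i V_n = 0$ for $i$ large (which follows from finiteness of $A$ and the exhaustiveness/boundedness of the filtration), guaranteeing that the filtration on each $\Hom_A(P_n, M)$ is finite, so the spectral sequence converges to $H(A, M)$.

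The multiplicative and module structures require more care, and this is where I expect the main obstacle to lie. The product on $E_1 = H(\gr A, \ku)$ should come from a comultiplicative (diagonal approximation) structure on the resolution $P_\bullet$ compatible with the filtration, i.e.\ a filtered chain map $P_\bullet \to P_\bullet \otimes P_\bullet$ lifting the identity, which induces the cup product on cochains and respects $F_\bullet$. The technical heart is checking that this diagonal can be chosen so that it is filtration-preserving in the sense needed to make the spectral sequence multiplicative — and similarly that the pairing $\Hom_A(P_\bullet,\ku) \otimes \Hom_A(P_\bullet, M) \to \Hom_A(P_\bullet, M)$ descends to give $\widetilde{E}_r$ the structure of a module over $E_r$ at every page. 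Because $M$ is not filtered, one cannot simply filter everything functorially; instead the module structure must be induced by first mapping $M \to M_{A_0}$ appropriately and tracking the interaction with the comparison map. I would follow \cite{May} for the construction of the multiplicative structure on the trivial-coefficient side and \cite[\S9]{BKN} for the handling of the module $M$, noting that their non-canonical filtration on $M$ induced from $A$ recovers the same $E_1$-page; the content of the theorem is that one obtains this without the auxiliary filtration by letting the differential itself select the $A_0$-part. Naturality of all the maps in $M$ then yields the asserted natural module structure, completing the proof.
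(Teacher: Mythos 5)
Your proposal follows essentially the same route as the paper: filter $\Hom_A(P_\bullet, M)$ by the filtration on $P_\bullet$, use Condition~\ref{as:resol}(3) to show that the $E_0$-differential kills the $A_+$-contribution (since $\bar v' = 0$ in the associated graded) and hence depends only on the $A_0$-action on $M$, identify $\widetilde E_1 = H(\gr A, M_{A_0})$, and deduce convergence from the finiteness of the filtration coming from $\dim A < \infty$. The only divergence is at the end: where you propose constructing a filtered diagonal approximation on $P_\bullet$ to obtain the module structure of $\widetilde E$ over $E$, the paper simply asserts that this structure follows from the construction of the spectral sequence, so your extra care there is compatible with, and in effect fills in, the paper's one-line treatment.
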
 
 
 \begin{proof} 
 Let $C^{\bu}(A,M) := \Hom_A(P_{\bu}, M)$ be the complex computing $H(A,M)$.  Let 
 \[F^iC^n(A,M):= \{f \in \Hom_A(P_n, A) \, | \, f{\downarrow_{F_{i-1}P_n}}  =0\} \subset C^n(A,M) \] 
 be a decreasing filtration on $C^{\bu}(A,M)$ making it into a filtered complex. 
 
 As $V_n \subset P_n$,  we have an induced filtration (of vector spaces) on $V_{\bu}$: 
 $F_iV_n = F_i( V_n\ot 1) = F_i P_n \cap ( V_n\ot 1)$. Using the isomorphism $\Hom_A( V_n\ot A, M) \simeq  \Hom_k(V_n, M)$, we make the identifications: 
 \begin{align*} 
 \frac{F^iC^n(A,M)}{F^{i+1}C^n(A,M)} = & \frac{\{f: V_n \to M \, | \, f{\downarrow_{F_{i-1}V_n}}=0\}}{\{f: V_n \to M \, | \, f{\downarrow_{F_{i}V_n}}=0\}}\\ 
 & \simeq  \frac{\Hom_k(V_n/F_{i-1}V_n, M)}{\Hom_k(V_n/F_{i}V_n, M)} \\
&  \simeq  \Hom_k\left(\frac{F_iV_n}{F_{i-1}V_n},M\right). 
 \end{align*}  
 Letting $n$ be the total and $i$ be the internal degree, we have 
 \begin{align*}
 \Hom_k\left(\dfrac{F_iV_n}{F_{i-1}V_n},M\right) 
 \simeq  \Hom_{\gr A}\left(\dfrac{F_i P_n}{F_{i-1}P_n},M_{\rm tr}\right) 
 \simeq  H^{n,i}(\gr A, M_{\rm tr}).
 \end{align*}
 Let $\widetilde E^{i, n-i}_0(M) := \dfrac{F^iC^n(A,M)}{F^{i+1}C^n(A,M)}$. Since $A$ is finite-dimensional, the filtration is finite and, hence, 
 this defines the $0$ page of the spectral sequence  of a filtered complex $C^{\bu}(A,M)$ converging to $H(A,M)$. 
 We have identified the terms of the double complex $\widetilde E_0(M)$ with the complex $C^{\bu}(\gr A, M_{A_0})$ computing cohomology 
 $H(\gr A, M_{A_0})$. To identify $\widetilde E_1(M)$ and $H(\gr A, M_{A_0})$ as complexes, it suffices to show that the differentials in $\widetilde E(M)$ and 
 $\widetilde E(M_{A_0})$ are the same, that is, that the differential $d_M$ in the spectral sequence only depends on the $A_0$-module structure on $M$.
 
 Consider the differential $d_M$: 
 \[\xymatrix{ \widetilde E^{i, n-i}_0(M) \ar@{=}[r] & \Hom_k\left(\frac{F_{i}V_{n}}{F_{i-1}V_n},M\right)\\ 
\widetilde E^{i, n-1-i}_0(M)\ar@{=}[r]   \ar[u]_-{d_M} &  \Hom_k\left(\frac{F_iV_{n-1}}{F_{i-1}V_{n-1}},M\right) \ar[u]_-{d_M}.  
 }\]
 Let $f \in  \Hom_k\left(\dfrac{F_iV_{n-1}}{F_{i-1}V_{n-1}},M\right)$, $\bar x \in \dfrac{F_{i}V_{n}}{F_{i-1}V_n}$, and let $x \in F_iV_n$ be a representative of $\bar x$.  By Condition~\ref{as:resol}(3) we can write $d(x) = v_0\ot a_0 +  v^\prime\ot a^\prime$ with $a_0 \in A_0$ and $v^\prime \in F_{i-1}V_{n-1}$. 
 We now compute 
 \begin{align*}
 d_M(f)(\bar x) & = f(\overline{d_M(x)})  \\
 & = f(\bar v_0)a_0 + f(\bar v^\prime)a^\prime \\
 & \stackrel{*}{=} f(\bar v_0)a_0\\
 & = f(\overline{d_{M_{A_0}}(x)}) = d_{M_{A_0}}(f)(\bar x)\\
 \end{align*} 
 The equality (*) holds since $\bar v^\prime = v^\prime \mod F_{i-1}V_{n-1} = 0$. 
 
 The statement about the action of $E_1(k)$ acting on $\widetilde E_1(M)$ follows from the construction of the spectral sequence.
 \end{proof} 

\section{The Anick resolution}\label{sec:Anick}

\subsection{The setup}\label{subsec:Anick}

In this section we discuss the Anick graph and the construction of 
the Anick resolution \cite{Anick,Fa,CU}.
Let $V$ be a finite-dimensional vector space with a basis $(x_i)_{i\in\I}$, and $\Ic$ an ideal of $T(V)$ such that $\epsilon(\Ic)=0$, where 
$\epsilon : T(V)\rightarrow \ku$ is the standard augmentation map,
$\epsilon(x_i) =0$ for all $i\in \I$. 
Thus the algebra $\Ac:=T(V)/\Ic$ has an augmentation map $\epsilon:\Ac\to\Bbbk$.

\subsubsection{The tips}
Let $X$ be the set of words on the letters $(x_i)_{i\in\I}$ (including the empty word 1). Notice that $X$ is a basis of $T(V)$. 
Let $x, y\in X$. We say that $x$ is a subword of $y$ if there exist $w, z\in X$ such that $y = wxz$.
If $w = 1$, respectively $z = 1$, then we say that $x$ is a prefix, respectively a suffix, of $y$.

\medbreak
Let  $\ell:X\to\N_0$ be the length function.
The lex-length order $<$ on $X$ is defined as follows: 
given $v,w\in X$, we say that $v<w$ if either $\ell(v)<\ell(w)$ or else $\ell(v)=\ell(w)$ 
and $v$ is less than $w$ for the lexicographical order (induced by the numeration of the basis). 
This is a total order on $X$ compatible with left and right multiplication.

\medbreak
Here is a way to give a set of generators of the ideal $\Ic$.
Given $f\in\Ic-0$, write $f$ as a linear combination of elements of $X$; let $x_f$ be the largest element of $X$ (with respect to $<$) 
with non-zero coefficient. Then $x_f$ is called the \emph{tip} of $f$. Consider the set of all tips of all elements in $\Ic-0$.
A tip $t$ is \emph{minimal} if each subword of $x$ is not a tip (Anick calls a minimal tip an \emph{obstruction}). Let $\Tc$ be the set of minimal tips of $\Ic$.
For each $t\in\Tc$ we pick $\omega_t\in\Ic$ such that $t$ is the tip of $\omega_t$ (which is not unique in general). Arguing recursively on $<$, it is possible to show that
\begin{align*}
\Ic &= \langle \omega_t: \, t\in\Tc\rangle.
\end{align*}

For each $w\in X$ we also denote by $w$ its image in $\Ac=T(V)/\Ic$. By \cite[Lemmas 1.1 and 1.2]{Anick}, the set
\begin{align}\label{eq:basis-A}
\basis=\{ w\in X: t \text{ is not a subword of }w \, \, \forall t\in\Tc \}
\end{align}
is a basis of $\Ac$.

\subsubsection{The chains} Let $n\in\N_0$. We describe the $n$-chains which are words defined from the minimal tips;
they will provide a basis of the $n$-th term of the Anick resolution of $\Ac$. 
The unique $0$-chain is the empty word $1$. The  $1$-chains are the letters, 
i.e.~the  $x_i$'s. Let $n\ge 1$. An $n$-chain is a word $w$ such that:

\medbreak
\begin{enumerate}[leftmargin=*,label=\rm{(\alph*)}]

\medbreak
\item\label{item:n-chain-1} $w$ admits a factorization $w=uv$ such that $u$ is an $(n-1)$-chain and the suffix $v$ 
does not contain any minimal tip as a subword (i.e., does not contain any tip);

\medbreak
\item\label{item:n-chain-2} for every suffix $y \neq 1$ of $u$ as in \ref{item:n-chain-1}, the word $yv$ contains a minimal tip as a subword;

\medbreak
\item\label{item:n-chain-3} any other prefix $w'$ of $w$ does not satisfy \ref{item:n-chain-1} and \ref{item:n-chain-2} simultaneously. 
\end{enumerate}
Let $\chain(n)$ be the set of $n$-chains. We urge the reader to check that $\chain(2)$ is the set of minimal tips---all requirements are needed.

\medbreak
There exists an alternative way to express the definition of  $n$-chains. 
A word $w=x_{i_1}\cdots x_{i_t}$, $i_j\in\I$, is an $(n+1)$-chain if there exist integers $a_j$, $b_j$, $1\le j\le n$, such that

\medbreak
\begin{enumerate}[leftmargin=*]

\medbreak
\item\label{item:n-chain-alt-1} $1=a_1<a_2\le b_1<a_3\le b_2 <\cdots <a_n\le b_{n-1}<b_n=t$;

\medbreak
\item\label{item:n-chain-alt-2} $x_{i_{a_j}}x_{i_{a_j + 1}}\dots x_{i_{b_j - 1}}x_{i_{b_j}}\in \Tc$ for all $1\le j\le n$;

\medbreak
\item\label{item:n-chain-alt-3} for all $1\le m\le n$, the words $x_{i_1}\cdots x_{i_s}$, $s<b_m$, are not $m$-chains. 
\end{enumerate}

By \cite[Lemma 1.3]{Anick} the integers $a_j,b_j$ satisfying \eqref{item:n-chain-alt-1}--\eqref{item:n-chain-alt-3} are uniquely determined and
\begin{itemize}
\item $x_{i_1}\cdots x_{i_{b_{n-1}}}$ is the unique prefix which is an $(n-1)$-chain, and 

\item $x_{i_{b_{n-1}+1}} \cdots x_{i_t}$ does not contain any element of $\Tc$ as a subword.

\end{itemize}

\begin{example}
We fix $N\ge 2$, $V$ of dimension 1, $x\in V-0$, $\Ic=\langle x^N \rangle$. Thus $\Tc=\{x^N\}$. Now $\chain(0)=\{1\}$, $\chain(1)=\{x\}$, and we claim that 
\begin{align*}
\chain(2k)&=\{x^{Nk}\}, & 
\chain(2k+1)&=\{x^{Nk+1}\}, &
&k\ge 1.
\end{align*}
Moreover $a_{2i-1}=(i-1)N+1$, $a_{2i}=(i-1)N+2$, $b_{2i-1}=iN$, $b_{2i}=iN+1$. We proceed by induction on $\ku$. If $k=2$, then $\chain(1)= \{x^N\}$ since this is the unique minimal tip, and $\{x^{N+1}\}$ is a $2$-chain with $a_1=1$, $a_2=2$, $b_1=N$, $b_2=N+1$; thus, each word $x^{N+j}$, $j>1$, is not a $2$-chain since $x^{N+1}$ is a prefix of $x^{N+j}$.

Assume that $k\ge 2$ and the statement holds for $\ku$. To compute $\chain(2k+2)$, we start with the unique $(2k+1)$-chain $x^{kN+1}$ and the integers $a_j$, $b_j$ already determined: $a_{2k+1}$ should satisfy $b_{2k-1}=kN < a_{2k+1}\le b_{2k} = kN+1$, hence $a_{2k+1}=kN+1$. Hence $\chain(2k+2)=\{x^{(k+1)N}\}$. For $\chain(2k+3)$, we have that $x^{(k+1)N+1}$ is a $(2k+3)$-chain, hence this is the unique $(2k+3)$-chain: indeed, if $w=x^{s} \in \chain(2k+3)$, then $s>(k+1)N$ since $w$ should contain the $(2k+2)$-chain $x^{(k+1)N}$ as a prefix; but if $s>(k+1)N$, then $w$ contains the $(2k+3)$-chain $x^{(k+1)N+1}$ as a prefix so it cannot be a $(2k+3)$-chain.
\end{example}

Let $\Vt(n)$ be the $\Bbbk$-vector space with basis $\chain(n)$. Then
\begin{align*}
\wchain(n):=\{ u\otimes w : u\in\chain(n), \, w\in \basis  \}
\end{align*}
is a basis of $\Vt(n)\ot \Ac$. Given $u\otimes w, v\otimes z\in \wchain(n)$, if $uw=vz$, then $u=v$, $w=z$; indeed, if $\ell(u)\leq \ell(v)$, then the $n$-chain $u$ is a prefix of the $n$-chain $v$ and \eqref{item:n-chain-alt-3} implies that $u=v$. Hence the order on $X$ induces an order on $\wchain(n)$:
$u\otimes w< v\otimes z$ if $uw<vz$.

\subsubsection{The Anick graph}

We next introduce a graph which helps to compute the chains of the Anick resolution \cite{CU}. Let $\Gamma$ be the graph whose set of vertices is given by the union of $\{1\}$, $X$ and the set of all proper suffixes of the minimal tips. For the arrows, there exists one arrow from $1 \to x$ for each $x\in X$, and 
one arrow $u\to v$ if the word $uv$ contains a unique minimal tip such that it is a suffix of $uv$ (possibly the word $uv$).

A basis of the free module of $n$-chains of the Anick resolution is given by paths of length $n$ starting at $1$. Thus:
\begin{itemize}
\item There exists a unique $0$-chain: $1$.
\item The set $X$ gives a basis of the $1$-chains.
\item The set of minimal tips gives a basis of the $2$-chains.
\end{itemize}

Notice that vertices $v$ not connected to $1$ (that is, without a path from $1$ to $v$) do not contribute new elements for the basis of chains, hence we may omit them and the related arrows.

\begin{example}
Let $\zeta\in\G'_{12}$, $q\in\Bbbk^\times$. 
We want to determine the Anick graph of the Nichols algebra $\toba_{\bq}$ of 
\cite[\S 10.7.5]{AA17} (The scalar $q$ corresponds to $q_{12}$ in loc.~cit.) In terms of the PBW generators,
$\toba_{\bq}$ is presented by generators
$x_1$, $x_{1112}$, $x_{112}$, $x_{12}$, $x_2$, and relations
\begin{align*}
x_1^4&=0, & 
x_1x_{1112} &= \zeta^3q\, x_{1112}x_1,  &
x_1x_{112} &= -q \, x_{112}x_1+x_{1112},
\\
x_{1112}^2&=0, & 
x_1x_{12} &=\zeta^9q\,x_{12}x_1+x_{112},  &
x_1x_{2} &=q \, x_2x_1+x_{12}
\\
x_{112}^3&=0, & 
x_{1112}x_{112} &= \zeta^2 q \, x_{112}x_{1112},        &
x_{1112}x_{12} &=\zeta^{10}q^2 \, x_{12}x_{1112}-q(1+\zeta)x_{112}^2,
\\
x_{12}^3&=0, & 
x_{112}x_{12} &=\zeta q \, x_{12}x_{112}, &
x_{1112}x_{2} &=-q^3x_2x_{1112}+\zeta^5q^2 \, x_{12}x_{112},
\\
x_{2}^2&=0, &
x_{12}x_{2} &=-q\, x_2 x_{12},       &
x_{112}x_{2} &= -q^2\, x_2x_{112}-q_{12}(1+\zeta^9) x_{12}^2.
\end{align*}
Thus the set of 2-chains (equivalently, minimal tips or obstructions) is
\begin{align*}
\chain(2) &= \{
x_1^4, x_{1112}^2, x_{112}^3, x_{12}^3,x_{2}^2,
x_1x_{1112}, x_1x_{112}, x_1x_{12}, x_1x_{2},
\\
& \quad 
x_{1112}x_{112}, x_{1112}x_{12}, x_{1112}x_{2}, x_{112}x_{12}, x_{112}x_{2}, x_{12}x_{2} \}
\end{align*}
and the Anick graph is  

\begin{align*}
\xymatrix@=45pt{ & &  1 \ar@/_1.5pc/[ddll] \ar[ddl] \ar[dd] \ar[ddr] \ar@/^1.5pc/[ddrr] & & 
\\
&&&&
\\
x_1\ar@/^1.5pc/[dd] \ar[r] \ar@/^2pc/[rr] \ar@/^4pc/[rrr] \ar@/^6pc/[rrrr]
& 
x_{1112} \ar[r] \ar@(dl,dr)[] \ar@/^2pc/[rr] \ar@/^4pc/[rrr]
&
x_{112} \ar[r]\ar@/^1.5pc/[dd] \ar@/^2pc/[rr]
&
x_{12} \ar[r]\ar@/^1.5pc/[dd] 
&
x_2\ar@(dl,dr)[]
\\
&&&&
\\
x_1^{3}\ar@/^1.5pc/[uu]\ar[uur] \ar[uurr] \ar[uurrr] \ar[uurrrr]
& &
x_{112}^{2}\ar@/^1.5pc/[uu] \ar[uur] \ar[uurr]
&
x_{12}^{2}\ar@/^1.5pc/[uu] \ar[uur]
& } 
\end{align*}
There exists a vertex $x_1^2$ with a loop on itself which we omit since this vertex is not connected to $1$.
Using the graph we compute 
\begin{align*}
\chain(3) &=
\{
x_1^5, x_1^4x_{1112},x_1^4x_{112},x_1^4x_{12},x_1^4x_2,
x_1x_{1112}^2, x_{1112}^3, x_{1112}^2x_{112}, x_{1112}^2x_{12}, x_{1112}^2x_2,
\\ & \quad 
x_1x_{112}^3, x_{1112}x_{112}^3, x_{112}^4, x_{112}^3x_{12}, x_{112}^3x_2, 
x_1x_{12}^3, x_{1112}x_{12}^3, x_{112}x_{12}^3, x_{12}^4, x_{12}^3x_2, 
\\ & \quad 
x_1x_{2}^2, x_{1112}x_{2}^2, x_{112}x_{2}^2, x_{12}x_{2}^2, x_{2}^3, 
x_1x_{1112}x_{112}, x_1x_{1112}x_{12}, x_1x_{1112}x_2, x_1x_{112}x_{12}, 
\\ & \quad 
x_1x_{112}x_2, x_1x_{12}x_{2},
x_{1112}x_{112}x_{12}, x_{1112}x_{112}x_2, 
x_{1112}x_{12}x_2, x_{112}x_{12}x_{2}\}.
\end{align*}

\end{example}

\subsubsection{The resolution}
We consider the Anick resolution of the $\Ac$-module $\Bbbk$. Recall that $\Vt(n)$ is the $\Bbbk$-vector space with basis $\chain(n)$. In \cite[Theorem 1.4]{Anick} Anick introduced an $\Ac$-free complex
\begin{align}\label{eq:anick-resolution}
\xymatrix@C=20pt{ \ar@{.}[r] & \Vt(n)\ot \Ac \ar[r]^-{d_n} & \Vt(n -1)\ot \Ac \ar[r]^-{d_{n-1}} & \ar@{.}[r] & \Vt(1)\ot \Ac \ar[r]^-{d_1} & \Ac \ar[r]^-{\epsilon} & \ku
\ar[r] & 0}
\end{align}
and $\Bbbk$-linear maps $s_n:\Vt(n)\ot \Ac \to \Vt(n+1)\ot \Ac$, $n\in\N_0$, such that \eqref{eq:anick-resolution} is an $\Ac$-resolution of $\Bbbk$ and $s_{\bu}$ is a contracting homotopy:
\begin{align}\label{eq:contracting-homotopy}
d_{n+1}s_n+s_{n-1}d_n &= \id_{\Vt(n)\ot \Ac} & \text{for all }& n\in\N.
\end{align}

The maps $d_n$, $s_{n-1}$ are defined recursively; see for example \cite[\S 4.1]{NWW} for a left module version. For $n=1$, 
the map $d_1$ is determined by 
\begin{align*}
d_1 (x \otimes 1)&=x & \text{for all }&x\in\chain(1),
\end{align*}
while for $s_0$ we give the values on each $w\in \basis$: 
\begin{align*}
s_0(w)&=x\otimes z,  & w=xz, & \, x\in \chain(1), \, z\in \basis.
\end{align*}

Now assume that $d_1,\dots,d_{n-1}$, $s_0,\dots,s_{n-2}$ were already defined and satisfy:
\begin{align*}
&\eqref{eq:contracting-homotopy} & 
d_{i-1}d_{i} &=0, &
s_{i-2}s_{i-1} &=0, &
\text{for all }&1\le i\le n-1.
\end{align*}
The morphisms of $\Ac$-modules
$d_n: \Vt(n)\ot \Ac \to \Vt(n -1)\ot \Ac$ are determined by
\begin{align*}
d_n (u \otimes 1) & = v \ot t - s_{n-2}d_{n-1}(v \ot t), & u&\in \chain(n), \, u=vt, \, v\in \chain(n-1), \, t\in\basis.
\end{align*}
Now we define $s_{n-1}$. From \eqref{eq:contracting-homotopy}, 
$\Vt(n -1)\ot \Ac=\ker d_{n-1}\oplus \im s_{n-2}$. 
We start by defining $(s_{n-1})_{\vert \im s_{n-2}}\equiv 0$, so 
$s_{n-1}s_{n-2}=0$. Now we define $(s_{n-1})_{\vert \ker d_{n-1}}$ recursively on the order of the leading term of each element of $\ker d_{n-1}$, which we write in terms of the basis $\wchain(n-1)$. We require $d_n (s_{n-1})_{\vert \ker d_{n-1}} = \id_{\ker d_{n-1}}$.
Let
\begin{align*}
K &=\sum_{j\in\I_m} a_j \, u_j\ot b_j\in\ker d_{n-1}, &
a_j\in \Bbbk^{\times}, \, & u_j\in \chain(n-1), b_j\in\basis.
\end{align*}
We assume that $u_1\ot b_1$ is bigger than $u_j\ot b_j$ for all $j>1$. We write $u_j=v_j t_j$, where $v_j\in\chain(n-2)$, $t_j\in \basis$. Hence
\begin{align*}
0& = d_{n-1}(K)=a_1 \, v_1\ot t_1b_1 + \widehat{K}, & 
\widehat{K}&:= \sum_{j\ge 2} a_j d_{n-1}(u_j\ot b_j)
-s_{n-3}d_{n-2} (a_1 v_1\ot t_1b_1)
\end{align*}
Hence $t_1b_1\notin \basis$, otherwise $v_1\ot t_1b_1$ is the biggest element of $\wchain(n-1)$ in the previous expression of $d_{n-1}(K)$ with non-zero coefficient. Now we write $b_1=w_1y_1$, where $w_1$ is the shortest prefix of $b_1$ such that $t_1w_1 \notin\basis$. Hence $u_1w_1=v_1 t_1 w_1\in\chain(n)$. Hence we set
\begin{align}\label{eq:Anick-def-sn}
s_{n-1}(K) := a_1\, u_1w_1 \ot y_1 + s_{n-1} \left( \sum _{j\in\I_m} a_j\, u_j \ot b_j - d_n (a_1\, u_1w_1 \ot y_1)\right).
\end{align}
Below the differentials $d_n$ and the maps $s_n$ will be denoted simply by $d$ and $s$.

\vspace{0.1in} We shall refer to the complex \eqref{eq:anick-resolution} as the {\it Anick resolution} of $\Ac$, and denote it by $(C_*(\Ac), d)$: 
\begin{align} 
\label{eq:anick-notation} 
\xymatrix@C=20pt{ \ar@{.}[r] & C_n(\Ac)  \ar[r]^-{d_n} &C_{n-1}(\Ac) \ar[r]^-{d_{n-1}} & \ar@{.}[r] &C_1(\Ac) \ar[r]^-{d_1} & \Ac \ar[r]^-{\epsilon} & \ku
\ar[r] & 0}
\end{align} 
so that $C_n(\Ac) =  \Vt(n)\ot \Ac$. 

\subsection{Application to Nichols algebras}

We consider now the Anick resolution of a Nichols algebra $\toba_{\bq}$ of diagonal type associated to the presentation given by PBW generators $(x_{\beta})_{\beta\in\varDelta_+^{\bq}}$. We fix a convex order on $\varDelta_+^{\bq}$, which induces a total order on the letters: $x_{\beta_1}>x_{\beta_2}>\dots >x_{\beta_{m}}$.
We set $N_{\beta} = \ord \bq_{\beta\beta}$.

The defining relations of $\toba_{\bq}$ are
\begin{align}\label{eq:powerrootvector}
x_{\beta}^{N_{\beta}}&=0, & &N_\beta \mbox{ finite};
\\ \label{eq:convex-combination}
\left[ x_{\beta_i}, x_{\beta_j} \right]_c&= 
\sum_{n_{i+1}, \dots, n_{j-1} \in\N_0} c_{n_{i+1}, \dots, n_{j-1}}^{(i,j)} \ x_{\beta_{j-1}}^{n_{j-1}} \dots x_{\beta_{i+1}}^{n_{i+1}}, & & i<j.
\end{align}
where $c_{n_{i+1}, \dots, n_{j-1}}^{(i,j)} \in \Bbbk$ can be computed explicitly \cite[Lemma 4.5]{A-jems}.

We denote by $\x_\beta$, $\beta\in\varDelta_+^{\bq}$, the set of letters for the tips and the chains, to distinguish them from the generators $x_\beta$ of the Nichols algebra $\toba_{\bq}$.
The minimal tips are the following: 
\begin{align}\label{eq:Nichols-minimal-tips}
&\x_\alpha \x_{\beta}, \quad \alpha>\beta \in \beta\in\varDelta_+^{\bq}; &
& \x_{\beta}^{N_{\beta}}, \quad \beta\in\varDelta_+^{\bq}.
\end{align}
Hence the Anick graph looks \emph{locally}  as
\begin{align*}
\xymatrix@C=30pt{ &  1 \ar[dl]  \ar[dr] & \\
\x_\alpha\ar@/^1.5pc/[dd] \ar[rr] && \x_{\beta}
\ar@/^1.5pc/[dd] 
\\ & & \\
\x_\alpha^{N_\alpha-1}\ar@/^1.5pc/[uu]\ar[uurr]
&& \x_{\beta}^{N_{\beta}-1}\ar@/^1.5pc/[uu] }
\end{align*}
for $\alpha>\beta$, where if $N_\alpha=2$, then the loop between $x_\alpha$ and $x_\alpha^{N_\alpha-1}$ is understood to be collapsed 
to a loop from $x_\alpha$ to itself, and similarly for $x_\beta$.
If $N_\alpha>3$, then there exist vertices $\x_{\alpha}^{t}$ and $\x_{\alpha}^{N_\alpha-t}$ for each $2\le t\le N_\alpha-2$, and arrows between them. These vertices are not connected to $1$ and are omitted.

Now we describe the set of chains. For each $\delta\in \varDelta_+^{\bq}$ we set
\begin{align}\label{eq:power-root-vector-chain}
f_\delta: & \ \N_0\to\N_0, 
& 
f_{\delta}(2k)&= N_\beta k,
&
f_{\delta}(2k+1)&=N_\beta k+1, 
& 
k&\in\N_0.
\end{align}
The set of all $n$-chains, $n\in\N$, is given by
\begin{align}\label{eq:chains-Nichols}
\chain(n) & = \left\{ \x_{\beta_1}^{f_{\beta_1}(n_1)} \x_{\beta_2}^{f_{\beta_2}(n_2)} \dots \x_{\beta_m}^{f_{\beta_m}(n_m)} \, : \, \sum n_j=n  \right\}.
\end{align}

\bigbreak




\subsection{Quantum linear spaces}\label{subsec:qls}
These are the less complicated Nichols algebras of diagonal type.
Let $\bq = (q_{ij})_{i, j \in \I}$ be as above and assume that $q_{ij}q_{ji} = 1$ for all
$i\neq j \in \I$. Let $\I' = \{i \in \I: q_{ii} \in \G'_{\infty}\}$ and for $i\in \I'$, set $N_i = \ord q_{ii}$.
Then $\toba_{\bq}$ is presented by generators $y_i$, $i\in \I$, with relations
\begin{align} \label{eq:qls1}
y_iy_j &= q_{ij} y_j y_i,&  i&<j \in \I, 
\\ \label{eq:qls2}
y_i^{N_i}&=0,&  i &\in I'.
\end{align}

\begin{prop} \label{prop:qci}
The cohomology ring of $\toba_{\bq}$ is generated by $\eta_i$ for $i  \in \I$ and 
$\xi_j$, for $j  \in \I'$ with relations
\begin{align}\label{eq:relations-qci}
\begin{aligned}
\xi_h\xi_j &= q_{hj}^{N_hN_j}\xi_j\xi_h, & h &< j \in \I',
\\
\eta_i\xi_j &= q_{ij}^{N_j}\xi_j\eta_i, & i &\in \I, \ j \in \I',
\\
\eta_i\eta_k &= -q_{ik}\eta_k\eta_i, & i &< k \in \I,
\\ \eta_i^2 &= 0, & i &\in \I, \ N_i>2, \\ \eta_i\xi_i &= \xi_i\eta_i,&  i &\in \I'.
\end{aligned}
\end{align}
If $N_i=2$, then $\eta_i^2=\xi_i$. 
If $M$ is a finitely generated $\toba_{\bq}$-module,
then $\coh(\toba_{\bq}, M)$ is finitely generated as a 
module over $\coh(\toba_{\bq}, \ku)$. 
\end{prop}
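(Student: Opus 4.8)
The plan is to read the entire statement off the Anick resolution $(C_*(\toba_{\bq}),d)$ of \eqref{eq:anick-notation}, whose chains are given explicitly by \eqref{eq:chains-Nichols} and \eqref{eq:power-root-vector-chain}. First I would exploit the internal $\N_0^{\I}$-grading: the chain $\x_{\beta_1}^{f_{\beta_1}(n_1)}\cdots\x_{\beta_m}^{f_{\beta_m}(n_m)}$ sits in internal degree $\sum_\ell f_{\beta_\ell}(n_\ell)\,\alpha_{\beta_\ell}$ and cohomological degree $\sum_\ell n_\ell$. Since each $f_\delta$ is strictly increasing (an increase of $n_\ell$ by one raises $f_{\beta_\ell}$ by $1$ or by $N_{\beta_\ell}-1\ge 1$) and the $\alpha_\beta$ are linearly independent, the assignment (chain)$\mapsto$(internal degree) is injective on the whole set of chains. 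Because $d_n$ preserves internal degree, it can therefore have no component $v'\otimes 1$ with $v'\in\chain(n-1)$ (such a $v'$ would share the internal degree of an $n$-chain), so $d_n(C_n(\toba_{\bq}))\subseteq \Vt(n-1)\otimes(\toba_{\bq})_+$. Thus the resolution is minimal, applying $\Hom_{\toba_{\bq}}(-,\ku)$ annihilates every differential, and $\coh^n(\toba_{\bq},\ku)\cong\Vt(n)^*$ has basis the duals of the $n$-chains. In particular $\coh(\toba_{\bq},\ku)$ is \emph{at most one-dimensional in each internal degree}.

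Next I would name the candidate generators as the duals of the smallest chains: $\eta_i:=(\x_i)^*$ for $i\in\I$, of cohomological degree $1$ and internal degree $\alpha_i$, and $\xi_j:=(\x_j^{N_j})^*$ for $j\in\I'$, of cohomological degree $2$ and internal degree $N_j\alpha_j$. The relations then come essentially for free from Proposition \ref{prop:braided-commutative}: $\coh(\toba_{\bq},\ku)\cong\hoch(\toba_{\bq},\ku)$ is braided graded-commutative in $\ydh$, so for homogeneous classes $a,b$ of internal degrees $\mu,\nu$ and cohomological degrees $p,r$ one has $ab=(-1)^{pr}\,q_{\mu\nu}\,ba$, where $q_{\mu\nu}=\prod_{s,t}q_{st}^{\mu_s\nu_t}$ is the diagonal braiding scalar. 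Evaluating this on the generators yields exactly \eqref{eq:relations-qci}: the sign is nontrivial only when both classes are $\eta$'s, giving $\eta_i\eta_k=-q_{ik}\eta_k\eta_i$; the scalars are $q_{ij}^{N_j}$, $q_{hj}^{N_hN_j}$, and (using $q_{ii}\in\G_{N_i}'$, whence $q_{ii}^{N_i}=1$) trivial in the $\eta_i\xi_i$ case. Applying braided graded-commutativity to $\eta_i$ with itself gives $(1+q_{ii})\eta_i^2=0$, which forces $\eta_i^2=0$ when $q_{ii}\neq-1$, i.e. $N_i\neq 2$, and is vacuous when $N_i=2$; in the latter case $\eta_i^2=(\x_i^2)^*=(\x_i^{N_i})^*=\xi_i$ by the explicit description of the chains.

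To finish the ring computation I must show the $\eta_i,\xi_j$ generate and that \eqref{eq:relations-qci} is a complete set of relations. Let $\mathcal E$ be the algebra abstractly presented by these generators and relations; the relations let one rewrite every word as a monomial $\prod_i\eta_i^{\epsilon_i}\prod_j\xi_j^{k_j}$ with $\epsilon_i\in\{0,1\}$ (using the convention $\eta_i^2=\xi_i$ when $N_i=2$), and a direct count shows the bigraded Hilbert series of $\mathcal E$ equals the generating function of the chains. Since the relations hold in $\coh(\toba_{\bq},\ku)$, there is a bigraded algebra map $\mathcal E\to\coh(\toba_{\bq},\ku)$, and because each internal degree of the target is at most one-dimensional, this map is an isomorphism as soon as each such monomial is nonzero. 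This nonvanishing is the one genuinely computational point, and I expect it to be the main obstacle: I would establish it by restricting along each strand subalgebra $\ku[y_\beta]/(y_\beta^{N_\beta})\hookrightarrow\toba_{\bq}$ (over which $\toba_{\bq}$ is free by PBW), where the classical single-variable computation via the periodic resolution of $\ku$ gives $\eta_\beta^{\epsilon}\xi_\beta^{k}\neq 0$, and then promoting this to products across distinct strands through the twisted-tensor-product structure of the quantum linear space and the resulting K\"unneth-type behaviour of the cup product. Alternatively one writes down a diagonal approximation on the Anick resolution and computes the cup products directly, which simultaneously reproves the scalars in \eqref{eq:relations-qci}.

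For the module statement I would first record that $\coh(\toba_{\bq},\ku)$ is Noetherian: the $\xi_j$ commute up to roots of unity, so suitable powers $\xi_j^{m_j}$ generate a central polynomial subalgebra $Z$, and by \eqref{eq:relations-qci} the whole ring is spanned over $Z$ by the finitely many monomials $\prod_i\eta_i^{\epsilon_i}\prod_j\xi_j^{a_j}$ with $\epsilon_i\in\{0,1\}$ and $0\le a_j<m_j$; hence $\coh(\toba_{\bq},\ku)$ is module-finite over the Noetherian ring $Z$, and so Noetherian. Now for a finitely generated $\toba_{\bq}$-module $M$ I would compute $\coh(\toba_{\bq},M)$ from the same resolution as the cohomology of $\Hom_{\toba_{\bq}}(C_*(\toba_{\bq}),M)\cong\Vt(*)^*\otimes M$; when $\toba_{\bq}$ is finite-dimensional (the case of interest, where $\I'=\I$) this is a finitely generated module over $\coh(\toba_{\bq},\ku)=\Vt(*)^*$ on which the cup-product action is $\coh(\toba_{\bq},\ku)$-linear, so $\coh(\toba_{\bq},M)$, being a subquotient of a finitely generated module over a Noetherian ring, is finitely generated. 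Equivalently, since $\toba_{\bq}$ is connected graded with unique graded simple module $\ku$, one reduces by d\'evissage along a composition series of $M$ to the trivial case $M=\ku$ and propagates finite generation through the long exact sequences using the Noetherian property.
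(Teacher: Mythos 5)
Your overall architecture is sound and in places cleaner than the paper's: proving minimality of the Anick resolution via injectivity of the chain-to-internal-degree map is a nice self-contained replacement for the paper's comparison with the explicit minimal resolution of \cite{MPSW}, and your d\'evissage argument for the module statement (together with the Noetherianity of $\coh(\toba_{\bq},\ku)$, which you rightly make explicit) is exactly the paper's argument. However, there is a genuine gap at the computational core of the proposition: the nonvanishing of the monomials $\prod_i\eta_i^{\epsilon_i}\prod_j\xi_j^{k_j}$, equivalently the bijectivity of your map $\mathcal{E}\to\coh(\toba_{\bq},\ku)$. You flag this as the main obstacle but do not close it, and of the two strategies you name, the first cannot work as described: restriction to a single strand $\ku\langle y_\beta\rangle/(y_\beta^{N_\beta})$ kills every class whose internal degree lies outside $\N_0\alpha_\beta$, so every mixed monomial restricts to zero on every strand, and single-strand restrictions detect nothing about cross products. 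The entire burden therefore falls on the twisted-tensor-product route (essentially Theorem \ref{lem:BO} applied inductively to $\toba_{\bq}\cong\toba_{\bq'}\ot^t\ku\langle y_\theta\rangle/(y_\theta^{N_\theta})$), which would indeed work---and would in fact supersede most of the rest of your argument, since it yields generators, relations and the monomial basis at once---but you would need to carry it out: verify the graded twisted tensor decomposition, compute the induced bicharacter $\hat t$ on cohomology, and run the induction. The paper instead settles this point concretely, by writing down explicit chain maps $\xi_i,\eta_i$ on the minimal resolution and checking that each monomial sends the basis chain $\y_1^{b_1N_1+c_1}\cdots\y_{\theta}^{b_{\theta}N_{\theta}+c_{\theta}}\ot 1$ to a nonzero scalar and all other basis chains to zero.

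A secondary under-justified step is your derivation of the precise scalars in \eqref{eq:relations-qci} from Proposition \ref{prop:braided-commutative}. That proposition only asserts braided commutativity in $\ydh$; to extract $ab=(-1)^{pr}q_{\mu\nu}\,ba$ you must identify the Yetter--Drinfeld structure (action and coaction) on the classes $\eta_i,\xi_j$, which involves duals and hence antipodes, so the scalar could a priori come out as $q_{\nu\mu}^{-1}$ rather than $q_{\mu\nu}$. In the quantum linear space case this ambiguity happens to be harmless (since $q_{ij}q_{ji}=1$ for $i\neq j$ and $q_{ii}^{N_i}=1$), but that observation, or a direct computation of the YD degrees of $\eta_i,\xi_j$, belongs in the proof; the paper avoids the issue entirely by verifying the relations through explicit chain-level calculations. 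Finally, your first argument for the module statement (that $\Hom_{\toba_{\bq}}(C_*(\toba_{\bq}),M)\cong\Vt(*)^*\ot M$ is finitely generated over $\coh(\toba_{\bq},\ku)$ via a cup-product action) presupposes a chain-level module structure that again requires the explicit chain maps or a diagonal approximation, so it is circular as stated; your fallback d\'evissage through a composition series is the argument that actually works, and it is the paper's.
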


\pf
We will construct the Anick resolution $K_{\bu}$ of $\ku$ as a $\toba_{\bq}$-module.
The following notation will be helpful. 
For each $i$, $1\leq i\leq\theta$,
let $\sigma_i, \tau_i: \N_0\rightarrow \N_0$ be the functions defined
by
$$
\sigma_i(a) = \left\{\begin{array}{cl}
1, & \mbox{ if }a\mbox{ is odd}\\
N_i-1, & \mbox{ if }a\mbox{ is even},
\end{array}\right.
$$
and $\tau_i(a) = \displaystyle{\sum_{j=1}^a \sigma_i(j)}$ for $a\geq 1$, $\tau(0)=0$.

We claim that the differential $d$
of the Anick resolution is given as follows: 
$$
d ( \y_1^{\tau_1(a_1)}\cdots \y_{\theta}^{\tau_{\theta}(a_{\theta})} \ot 1) = 
\sum_{i=1}^{\theta}\left(\prod_{\ell < i} (-1)^{a_{\ell}}
q_{ \ell i} ^{-\sigma_i(a_i)\tau_{\ell}(a_{\ell})}\right)
\y_1^{\tau_1(a_1)}\cdots \y_i^{\tau_i(a_i-1)}\cdots y_{\theta}^{\tau_{\theta}(a_{\theta})}
  \ot y_i^{\sigma_i(a_i)} ,
$$
where we set $y_i^{\sigma_i(0)}=0$.
This is the right module analog
of a formula from~\cite[\S4]{MPSW}, for an explicitly constructed minimal
resolution of $\ku$ as a $\toba_{\bq}$-module. 
(There
is a slight difference in comparison of our formulas 
to those in \cite{MPSW} since
we are working with right modules. This also leads to a slight
difference in relations among the generators.)
The resolution from~\cite{MPSW} is in fact the Anick resolution:
The modules in the Anick
resolution will have the same vector space dimension in each degree by construction,
implying that the Anick resolution is also minimal in this case,
and so comparison maps between the Anick resolution and
this one must be isomorphisms in each degree.



Next apply  $\Hom_S( - , \ku)$ to $K_{\bu}$
in order to compute $\Ext^*_S(\ku,\ku)$. Note that 
$d^* $ is the zero map since $y_i^{\sigma_i(a_i)}$ acts as 0 on $\ku$.
Thus  in degree $n$ the cohomology is a vector space of
dimension $\binom{n+\theta -1}{\theta -1}$.
Now let $\xi_i\in\Hom_S(K_2,\ku)$ be the function dual to
$\y_i^{N_i}\ot 1$ 
and $\eta_i\in \Hom_S(K_1,\ku)$ be the function dual to
$\y_i\ot 1$.
Identify these functions with the
corresponding elements in $\coh^2(S,\ku)$ and $\coh^1(S,\ku)$, respectively.
We claim that $\xi_i$, $\eta_i$ generate $\coh(S,\ku)$.
We also  denote by $\xi_i$ and $\eta_i$ the
corresponding chain maps $\xi_i : K_n\rightarrow K_{n-2}$
and $\eta_i : K_n \rightarrow K_{n-1}$ given by 
\begin{multline*}
\xi_i(\y_1^{\tau_1(a_1)}\cdots\y_{\theta}^{\tau_{\theta}(a_{\theta})}\ot 1)
\! = \! \prod_{\ell>i} q_{i\ell }
^{-N_i\tau_{\ell}(a_{\ell})} \y_1^{\tau_1(a_1)}\cdots\y_i^{\tau_i(a_i-2)}\cdots
\y_{\theta}^{\tau_{\theta}(a_{\theta})} \ot 1 \\
\eta_i (\y_1^{\tau_1(a_1)}\cdots \y_{\theta}^{\tau_{\theta}(a_{\theta})}\ot 1)
\! = 
\\
\! \prod_{\ell<i}q_{\ell i } ^{-(\sigma_i(a_i)-1)\tau_{\ell}
(a_{\ell})} \prod_{\ell >i} (-1)^{a_{\ell}} q_{i\ell }^{-\tau_{\ell}
(a_{\ell})} 
\y_1^{\tau_1(a_1)}\cdots \y_i^{\tau_i(a_i-1)}\cdots \y_{\theta}^{\tau_{\theta}(a_{\theta})} \ot y_i^{\sigma_i(a_i)-1}.
\end{multline*}
Note this implies that if $a_i$ is even and $N_i>2$, 
then $\eta_i (\y_1^{\tau_1(a_1)}\cdots \y_{\theta}^{\tau_{\theta}(a_{\theta})}\ot 1)
= 0$, since $y_i^{N_i-2}$ acts as 0 on $\ku$. 
Calculations show that these maps satisfy the following equations:
\begin{equation}\label{relations}
\xi_i\xi_j=q_{ij}^{N_iN_j}\xi_j\xi_i, \ \ \
\eta_i\xi_j=q_{ij}^{N_j}\xi_j\eta_i, \ \ \
\mbox{ and } \ \eta_i\eta_j=-q_{ij}\eta_j\eta_i
\end{equation}
for all $i<j$, and $\eta_i\xi_i=\xi_i\eta_i$ for all $i$, 
with one exception:
If $N_i=2$, then $\eta_i^2=\xi_i$ 
(so that we may leave $\xi_i$ out of our choice of generators, or not,
as is convenient),
while if $N_i>2$, then $\eta_i^2 =0$. 
Due to these equations, 
any element in the subalgebra of $\Ext^*_S(k,k)$ 
generated by the $\xi_i$ and $\eta_i$
may be written as a linear combination of elements of the form
$\xi_1^{b_1}\cdots \xi_{\theta}^{b_{\theta}}\eta_1^{c_1}\cdots
\eta_{\theta}^{c_{\theta}}$ with $b_i\geq 0$ and $c_i\in\{0,1\}$.
Such an element takes $\y^{b_1N_1+c_1}\cdots \y_{\theta}^{b_{\theta}N_{\theta}
+c_{\theta}}\ot 1$ to a nonzero scalar multiple of $1$ and all other
$S$-basis elements of $K_{\sum (2b_i+c_i)}$ to 0.
Recall that the dimension of $\coh^n(S,\ku)$ is $\binom{n+\theta -1}{\theta -1}$;
consequently, the elements 
$\xi_1^{b_1}\cdots \xi_{\theta}^{b_{\theta}}\eta_1^{c_1}\cdots
\eta_{\theta}^{c_{\theta}}$ form a $\ku$-basis for $\coh(S,\ku)$.

For the last statement, we may induct on the length of
a composition series of $M$, and it suffices to prove 
the statement in the case that $M$ is a simple module. 
The generators of $\toba_{\bq}$ are all nilpotent, and
so the only simple module is the trivial module $\ku$, for
which the statement is clear. 
\epf

\subsection{Cohomology of graded algebras with convex PBW basis} \label{subsec:gral-lemma}

Here we consider a graded connected algebra $R = \oplus_{n\in \N_0} R^n$
with a finite PBW-basis $B = B(\{1\},\roots,<,h)$; that is 
$\roots$ is a finite subset of $R$ with $r = \vert \roots \vert$ elements,
$<$ a total order on $\roots$ 
(with a numeration $\roots = \{x_1,x_2,\dots\}$ such that $i<j$ if and only if $x_i<x_j$)
and a function $h: \roots \to \N \cup \{ \infty \}$, $x_i \mapsto N_i$ ($h$ is called the height), such that
\begin{align*}
B &= \big\{x_r^{e_r}x_{r-1}^{e_{r-1}}\dots x_2^{e_2}x_1^{e_1}:\, 
0 \leq e_i<N_i \big\}
\end{align*}
is a $\ku$-basis of $R$. Let $\I = \{1, \dots, r \}$ and $\I' = \{i\in \I: N_i < \infty \}$.
We assume that the elements of $\roots$ are homogeneous: 
$x_j \in R^{d_j}$,  $d_j \in \N$.
We set
\begin{align*}
\deg b &= (e_1,\dots, e_r, \sum_j e_jd_j) \in \N_0^{r + 1}, & 
b &= x_r^{e_r}x_{r-1}^{e_{r-1}}\dots x_2^{e_2}x_1^{e_1} \in B.
\end{align*}
Let $\preceq$ be the lexicographical order, reading from the right, on $\N_0^{r + 1}$. 
We consider the $\N_0^{r + 1}$-filtration on $R$ given by
\begin{align*}
R_f &= \langle b \in B: \deg b \preceq f \rangle,& f &= (f_1, \dots, f_{r+1}) \in \N_0^{r+1}.
\end{align*}
Inspired by \cite{DCK}, we also assume that 
the PBW-basis $B$ is \emph{convex}, i.e.~$(R_f)_{f \in \N_0^{r+1}}$ is an algebra filtration.
It can be shown that the PBW-basis $B$ is convex if and only if 
\begin{enumerate}[leftmargin=*,label=\rm{(\alph*)}]
\item\label{item:PBW-convex-1} for every  $i, j\in \I$ with $i <j$, there exists $q_{ij} \in \ku$ such that
\begin{align}\label{eq:PBW-convex-sisj}
x_ix_j &= q_{ij} x_jx_i + \sum_{f \prec \deg x_{i} + \deg x_{j}} R_f;
\end{align}
\item\label{item:PBW-convex-2} for every  $i\in \I'$,
\begin{align}\label{eq:PBW-convex-prv}
x_i^{N_i} & \in \sum_{f \prec N_i \deg x_{i}} R_f.
\end{align}
\end{enumerate} 
See \cite{AAH-infinite}. We call this the PBW-filtration.
Assume that in \eqref{eq:PBW-convex-sisj}, $q_{ij}\neq 0$ for all $i<j$. Then the associated graded algebra 
$S := \gr R$ is a  quantum linear space, i.e.~it is presented by generators $y_i$ (the  class of $x_i$)
and relations 
\eqref{eq:qls1}, \eqref{eq:qls2}.

The PBW basis gives rise to the Anick resolution $C_*(R)$ with $C_n(R) = \Vt(n) \otimes R$ computing 
$\coh(R,k)$ and $\coh(R,M)$ for an $R$-module $M$ as in \S\ref{subsec:Anick}.  
By construction of the resolution, $\x_i^{\ell_iN_i}$ (or rather $\x_i^{\ell_iN_i} \ot 1$) 
are homogeneous elements  of cohomological degree $2\ell_i$. Moreover, again by construction, 
$\x_i^{\ell_iN_i}$ is a basis element in $\Vt(2\ell_i)$. We denote by $(\x_i^{\ell_iN_i})^*$ the 
corresponding element in the dual basis of  $\Hom_\ku(\Vt(2\ell_i),k) \cong \Hom_R(C_{2\ell_i}(R), \ku)$, 
that is the function which evaluates to $1$ on  $\x_i^{\ell_iN_i}$ and to $0$ on all other basis elements. 
By construction, these are homogeneous elements (or cochains) in the complex $\Hom_R(C_*(R), \ku)$ which computes $\coh(R, \ku)$. 

In the next theorem we prove that if all cochains $(\x_i^{\ell_iN_i})^*$ are cocycles, then $\coh(R, \ku)$ has fgc.

\begin{theorem}\label{lemma:reduction-to-cocycles} Let $R$ be a graded connected 
algebra with a finite convex PBW-basis satisfying all of the assumptions above; let $C_*(R)$ be the Anick resolution of $R$. 
Suppose that there exist positive integers $\ell_i$
for any $i$ with $N_i<\infty$, such that the cochains $\left(\x_i^{\ell_iN_i} \right)^*$ are
cocycles in $\Hom(C_*(R),\ku)$, that is, represent elements in $\coh(R, \ku)$. Then
$\coh(R, \ku)$ is finitely generated
and $\coh(R,M)$ is finitely generated as a module over
$\coh(R,\ku)$ for any finitely generated $R$-module $M$. 
\end{theorem}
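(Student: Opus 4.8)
The plan is to feed the convex PBW-filtration on $R$ and the Anick resolution $C_*(R)$ into the May spectral sequence of Theorem~\ref{may}, taking $A=R$ and $A_0=\ku$ as in Example~(ii) of \S\ref{subsec:May}. The first task is to check Condition~\ref{as:resol} for this data: parts (1) and (2) are immediate because each $\Vt(n)$ is finite-dimensional and $C_*(R)$ carries the PBW-filtration, while part~(3) is where convexity enters. Writing the Anick differential as $d_n(u\otimes 1)=v\otimes t-s_{n-2}d_{n-1}(v\otimes t)$ with $u=vt$, the relations \eqref{eq:PBW-convex-sisj} and \eqref{eq:PBW-convex-prv} show that the leading term $v\otimes t$ preserves the PBW-filtration degree with coefficient $1\in A_0=\ku$, whereas every correction term generated by the rewriting \eqref{eq:convex-order-relations}, the power relations \eqref{eq:powerrootvector}, and the recursion \eqref{eq:Anick-def-sn} for the homotopy strictly lowers the filtration degree and lies in the augmentation ideal $A_+$. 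Granting Condition~\ref{as:resol}, Theorem~\ref{may} produces a multiplicative spectral sequence
\[
E_1=\coh(\gr R,\ku)=\coh(S,\ku)\ \Longrightarrow\ \coh(R,\ku),\qquad S:=\gr R,
\]
together with, for each finitely generated (hence finite-dimensional) $R$-module $M$, a compatible module spectral sequence $\widetilde E_1=\coh(S,M_{\mathrm{tr}})\Rightarrow\coh(R,M)$; here $M_{\mathrm{tr}}$ is $M$ with trivial $S$-action, since $\gr R$ acts on $M_{A_0}$ through the projection onto $A_0=\ku$.

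Because $S$ is a quantum linear space, Proposition~\ref{prop:qci} describes $E_1=\coh(S,\ku)$ completely: it has a basis of monomials $\prod_{j\in\I'}\xi_j^{b_j}\prod_{i\in\I}\eta_i^{c_i}$ with $b_j\ge 0$ and $c_i\in\{0,1\}$, in which the $\xi_j$ $q$-commute and the $\eta_i$ contribute only a finite-dimensional exterior-type factor. I would next identify the images of the hypothesized classes: passing to the associated graded complex replaces $C_*(R)$ by the Anick resolution $C_*(S)$ of the quantum linear space (the chains \eqref{eq:chains-Nichols} depend only on the integers $N_\beta$), and the symbol of the cochain $\left(\x_i^{\ell_iN_i}\right)^*$ is the corresponding dual cochain in the $S$-complex, which by the computation in Proposition~\ref{prop:qci} represents a nonzero scalar multiple of $\xi_i^{\ell_i}$. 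Since by hypothesis $\left(\x_i^{\ell_iN_i}\right)^*$ is a genuine cocycle and is homogeneous for the PBW-filtration, its symbol is a \emph{permanent cycle}. Moreover, the strictness of the power relation \eqref{eq:PBW-convex-prv} forces $\xi_i^{\ell_i}$ into strictly positive complementary degree, which is what will secure the boundedness requirement below.

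I would then let $C^{*,*}$ be the bigraded subalgebra of $E_1$ generated by the permanent cycles $\xi_i^{\ell_i}$, $i\in\I'$. Their relations (the appropriate power of the first identity in \eqref{eq:relations-qci}) make $C^{*,*}$ a quantum affine space, hence left Noetherian, and the positivity of complementary degrees gives the boundedness hypothesis of Lemma~\ref{Lch2}(a). Furthermore $E_1=\coh(S,\ku)$ is a finitely generated $C^{*,*}$-module: each $\xi_j$ is integral over $\xi_j^{\ell_j}$ and the $\eta_i$-part is finite-dimensional, so the monomials above with $0\le b_j<\ell_j$ form a finite generating set; in particular $E_1$ is Noetherian over $C^{*,*}$. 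Thus both numbered conditions of Lemma~\ref{Lch2}(a) hold, and the lemma yields that $E_\infty^{*}$ is Noetherian over $\mathrm{Tot}(C^{*,*})$. Lifting each $\xi_i^{\ell_i}$ to the genuine class $\zeta_i\in\coh^{2\ell_i}(R,\ku)$ represented by the cocycle $\left(\x_i^{\ell_iN_i}\right)^*$ and setting $\mathcal C=\ku\langle\zeta_i:i\in\I'\rangle$, the associated graded $\gr\mathcal C$ coincides with $\mathrm{Tot}(C^{*,*})$, so $E_\infty=\gr\coh(R,\ku)$ is Noetherian over $\gr\mathcal C$; by the filtered-to-graded Proposition~\ref{Pch2}, $\coh(R,\ku)$ is a finitely generated module over $\mathcal C$, and hence a finitely generated algebra. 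For the module statement, $\widetilde E_1=\coh(S,M_{\mathrm{tr}})\cong\coh(S,\ku)\otimes_\ku M$ is finitely generated over $E_1$ because $M$ is finite-dimensional, hence Noetherian over $C^{*,*}$; Lemma~\ref{Lch2}(b) then shows $\widetilde E_\infty^{*}$ is finitely generated over $E_\infty^{*}$, and the same filtered-to-graded argument promotes this to finite generation of $\coh(R,M)$ over $\coh(R,\ku)$.

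The main obstacle I anticipate is the verification of Condition~\ref{as:resol}(3): one must control the PBW-filtration degrees of all the correction terms $s_{n-2}d_{n-1}(v\otimes t)$ produced by the recursive definition \eqref{eq:Anick-def-sn} of the Anick homotopy and confirm they never raise the filtration degree. Conceptually, the crux is that the degree-one classes $\eta_i$ need not be permanent cycles -- they may support higher differentials -- so finite generation cannot rest on them; it rests entirely on the polynomial generators $\xi_i^{\ell_i}$ surviving to $E_\infty$, which is exactly what the cocycle hypothesis guarantees, while the $\eta_i$ contribute only a harmless finite exterior factor.
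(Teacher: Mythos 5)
Your proposal is correct and follows essentially the same route as the paper's own proof: the May spectral sequence attached to the PBW-filtration (Theorem~\ref{may}), the identification via Proposition~\ref{prop:qci} of the symbols of the cochains $\left(\x_i^{\ell_iN_i}\right)^*$ with the classes $\xi_i^{\ell_i}$, which the cocycle hypothesis makes permanent cycles, and Evens' Lemma~\ref{Lch2} to conclude for both $\coh(R,\ku)$ and $\coh(R,M)$. Your additional details (checking Condition~\ref{as:resol}(3) explicitly, and deducing finite generation from module-finiteness over the subalgebra generated by the lifted classes together with Proposition~\ref{Pch2}, where the paper instead invokes Lemma~\ref{lema:hilbert-invariants} with trivial $K$) are minor elaborations of the same argument.
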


\pf Observe that $\gr R$ is a quantum linear space. 
By Proposition~\ref{prop:qci}, 
the cohomology $\coh( \gr R,\ku)$ is finitely generated over its subalgebra
generated by $\xi_{i}^{\ell_i}$ for all $i\in \I'$, 
since it is generated by all
$\xi_{i},\eta_{i}$.
Since the Anick resolution for $R$ is compatible with the PBW-filtration on $R$,
there exists an associated spectral sequence $E$ convergent to $\coh(R,\ku)$ whose 
$E_1$-page is $\coh(\gr R,\ku)$; see Theorem~\ref{may}.
Moreover, the cochains $(\x_i^{\ell_iN_i})^*$ are the images of
$\xi_i^{\ell_i}$ in the spectral sequence (see the proof of Proposition~\ref{prop:qci})
and so by assumption, the $\xi_{i}^{\ell_{i}}$ are permanent cycles.
Thus the hypotheses of Lemma~\ref{Lch2} are
satisfied, and, hence, $\coh(R,\ku)$ is left Noetherian.
(That is, $\gr \coh(R,\ku)$ is Noetherian, from which it follows
that $\coh(R,\ku)$ is Noetherian.)
Finite generation follows from Lemma~\ref{lema:hilbert-invariants} 
taking $K$ to be a trivial algebra there. 


If $M$ is a finitely generated $R$-module, 
then since the Condition~\ref{as:resol} is satisfied, 
Theorem~\ref{may} implies that $\coh(R,M)$ is finitely generated
as an $\coh(R,\ku)$-module. 
\epf

The following corollary is immediate since Nichols algebras of diagonal type have convex $PBW$ bases. 

\begin{cor}\label{cor:Nichols-fgc} Let $\toba_q$ be a finite-dimensional Nichols algebra of diagonal type. 
If $\toba_q$ satisfies Condition~\ref{assumption:intro-combinatorial}, then it has fgc. \qed
\end{cor}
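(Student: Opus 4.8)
The plan is to recognize Corollary \ref{cor:Nichols-fgc} as the specialization of Theorem \ref{lemma:reduction-to-cocycles} to $R = \toba_\bq$, so that the entire task reduces to checking that a finite-dimensional Nichols algebra of diagonal type satisfies the standing hypotheses of \S\ref{subsec:gral-lemma} and that Condition \ref{assumption:intro-combinatorial} is literally the cocycle hypothesis appearing there.

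First I would verify the structural assumptions. By \S\ref{subsec:na-diag} and \cite[Theorem 2.2]{Kh}, $\toba_\bq$ is a graded connected algebra carrying a PBW-basis whose generators are the root vectors $(x_\gamma)_{\gamma \in \varDelta_+^\bq}$ and whose heights are $N_\gamma = \ord q_{\gamma\gamma}$. Since $\dim \toba_\bq < \infty$ by hypothesis, the set $\varDelta_+^\bq$ is finite and every $N_\gamma$ is finite, so in the notation of \S\ref{subsec:gral-lemma} the index sets satisfy $\I = \I'$ and label the finite root system. Convexity of this basis, that is relations \eqref{eq:PBW-convex-sisj}--\eqref{eq:PBW-convex-prv}, is exactly \eqref{eq:convex-order-relations} (with the scalars $q_{\alpha\beta} \in \ku^\times$, whence $\gr \toba_\bq$ is a genuine quantum linear space) together with the power relations \eqref{eq:powerrootvector}. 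Hence all hypotheses of \S\ref{subsec:gral-lemma} hold for $\toba_\bq$.

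Next I would match the cocycle hypotheses. By \eqref{eq:power-root-vector-chain}--\eqref{eq:chains-Nichols}, the only Anick chains that are pure powers $\x_\gamma^{L}$ of a single letter and sit in even cohomological degree $2\ell$ are those with $L = \ell N_\gamma$; consequently a cochain $(\x_\gamma^{L_\gamma})^*$ in $\Hom(C_*(\toba_\bq),\ku)$ is defined only when $L_\gamma = \ell_\gamma N_\gamma$ for some $\ell_\gamma \in \N$. Thus Condition \ref{assumption:intro-combinatorial}, which asserts that for each $\gamma$ some $(\x_\gamma^{L_\gamma})^*$ is a cocycle, is precisely the assertion that there exist positive integers $\ell_\gamma$ with $(\x_\gamma^{\ell_\gamma N_\gamma})^*$ a cocycle---the hypothesis of Theorem \ref{lemma:reduction-to-cocycles}. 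Feeding this into that theorem yields finite generation of $\coh(\toba_\bq,\ku)$ together with finite generation of $\coh(\toba_\bq, M)$ over it for every finitely generated module $M$, i.e. $\toba_\bq$ has fgc.

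There is no genuine obstacle here, which is exactly why the result is flagged as immediate; the single point deserving care is the bookkeeping identifying the exponent $L_\gamma$ of Condition \ref{assumption:intro-combinatorial} with a multiple $\ell_\gamma N_\gamma$ of the height, since otherwise $(\x_\gamma^{L_\gamma})^*$ would fail to be the dual of an even Anick chain and the hypothesis of Theorem \ref{lemma:reduction-to-cocycles} would not apply verbatim.
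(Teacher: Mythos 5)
Your proposal is correct and is exactly the paper's argument: the corollary is stated as immediate from Theorem \ref{lemma:reduction-to-cocycles} because finite-dimensional Nichols algebras of diagonal type have finite convex PBW bases (via \cite[Theorem 2.2]{Kh}, \eqref{eq:powerrootvector} and \eqref{eq:convex-order-relations}), which is precisely the verification you carry out. Your extra care in matching $L_\gamma$ with a multiple $\ell_\gamma N_\gamma$ of the height is the same normalization the paper makes right after Remark \ref{obs:main}\ref{item:obs-chains1}, so there is no divergence in approach.
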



\section{Cohomology of the Drinfeld double}\label{subsec:drinfeld-double}
In this section we prove that if the bosonization of a Nichols algebra of diagonal type has fgc then so does its Drinfeld double.

We briefly recall the general definition of a twisting map
$\tau$ for two algebras $A$ and $B$: 
Let $\tau: B\ot A\rightarrow A\ot B$ be a bijective $\ku$-linear map for which
$\tau(1_B\ot a) = a\ot 1_B$, $\tau(b\ot 1_A)=1_A\ot b$ for all
$a\in A$ and $b\in B$, and the following compositions of maps
from $B\ot B\ot A\ot A$ to $A\ot B$ are equal:
\[
   \tau \circ (m_B\ot m_A) = (m_A\ot m_B)(1\ot \tau\ot 1)
    (\tau\ot \tau)(1\ot \tau \ot 1),
\]
where $m_A$ (respectively, $m_B$) denotes multiplication on $A$
(respectively, on $B$), and 1 denotes an identity map. 
The {\em twisted tensor product algebra} $A\ot_{\tau}B$ is $A\ot B$
as a vector space, and its multiplication is the composition
$(m_A\ot m_B) (1\ot \tau\ot 1)$. 

If $A$ and $B$ are Hopf algebras, we say that $\tau$ is a Hopf twisting if 
$A\ot_{\tau}B$ is a Hopf algebra with coalgebra structure being the usual 
tensor product of coalgebras (no twisting), and $A$, $B$ are Hopf subalgebras. 
The augmentation map is $\epsilon_A \ot \epsilon_B : A \ot_\tau B\rightarrow \ku$.


We assume that the Hopf twisting $\tau$ is compatible with coradical filtrations, 
that is for 
\[ 
    C^A_0\subset C^A_1\subset\cdots \ \ \ \mbox{ and } \ \ \
   C^B_0\subset C^B_1\subset\cdots
\]
the coradical filtrations of $A$ and $B$, 
we have 
\[\tau(C^B_b \ot C^A_a)  \subset \sum\limits_{r+s\leq a+b} C^A_r \ot C^B_s.\] 
Then the associated graded space 
$\gr (A \ot_\tau B)$ is again a Hopf algebra.

We will need a special case of the twisting construction to apply it to Nichols algebras:
Assume that $A$ and $B$ are graded by 
abelian groups $\Gamma$ and $\Gamma'$. 
Let $t:\Gamma\times\Gamma' \rightarrow \ku^{\times}$ be
a bicharacter (that is, it induces a homomorphism
$\Gamma\ot_{\Z}\Gamma'\rightarrow \ku^{\times}$ of abelian groups). 
Define $\tau:B\ot A\rightarrow A\ot B$ by
$\tau (b\ot a) = t(|a|, |b|) a\ot b$ for all homogeneous $a\in A$,
$b\in B$, where $|a|\in\Gamma$, $|b|\in \Gamma'$ denote grading. 
In order to distinguish a twisted tensor product algebra
$A\ot_{\tau}B$ for which the twisting $\tau$ is defined by
a bicharacter $t$ in this way, we will write $A\ot ^t B$ for
this twisted tensor product algebra.

Due to the following result of Bergh and Oppermann~\cite[Theorem~3.7]{Bergh-Oppermann}, 
the cohomology of $A \ot^t B$ can be computed. 

\begin{theorem} \label{lem:BO}
Let $A$ and $B$ be augmented algebras graded by abelian groups $\Gamma$
and $\Gamma '$.
Let $t$ be a bicharacter on $\Gamma\times \Gamma '$. 
   There is a twisting map $\hat{t}$, induced by the bicharacter $t$, for which
   \[ \coh( A\ot^t B , \ku)\cong \coh(A,\ku)\ot^{\hat{t}} \coh(B,\ku) . \]
\end{theorem}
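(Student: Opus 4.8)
\emph{Proof proposal.} The plan is to compute both sides from compatible projective resolutions and to lean on the Künneth theorem, which is available because $\ku$ is a field. Choose projective resolutions $\varepsilon_P\colon P_\bullet \to \ku$ of the trivial $A$-module and $\varepsilon_Q\colon Q_\bullet \to \ku$ of the trivial $B$-module; since $A$ and $B$ are graded by $\Gamma$ and $\Gamma'$, these can be taken $\Gamma$- and $\Gamma'$-graded, so that the bicharacter $t$ acts by scalars on homogeneous components. From these I would assemble a twisted tensor product complex $P_\bullet \ot^t Q_\bullet$ of $A \ot^t B$-modules, whose differential is the usual tensor-product differential with Koszul signs and whose module structure uses the same twisting $\tau$ determined by $t$.

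Two points then need checking. First, each term $P_i \ot^t Q_j$ should be projective over $A \ot^t B$: writing $P_i$ as a direct summand of a free module $A \ot W$ and $Q_j$ as a summand of $B \ot U$, multiplicativity of the bicharacter $t$ yields a natural isomorphism $(A \ot^t B) \ot (W \ot U) \cong (A \ot W) \ot^t (B \ot U)$ of $A \ot^t B$-modules, so free modules go to free modules and projectives to projectives. Second, exactness: as a complex of $\ku$-vector spaces $P_\bullet \ot^t Q_\bullet$ is the ordinary tensor-product complex $P_\bullet \ot Q_\bullet$ — the twisting changes only the module action, not the underlying spaces or differentials — which resolves $\ku \ot \ku = \ku$ by Künneth. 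Hence $P_\bullet \ot^t Q_\bullet$ is a projective resolution of $\ku$ over $A \ot^t B$.

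Applying $\Hom_{A \ot^t B}(-, \ku)$ and using the free-module identification above, the resulting cochain complex is isomorphic to $\Hom_A(P_\bullet, \ku) \ot \Hom_B(Q_\bullet, \ku)$ as a complex of vector spaces. Taking cohomology and invoking Künneth once more gives the vector space isomorphism $\coh(A \ot^t B, \ku) \cong \coh(A, \ku) \ot \coh(B, \ku)$, so only the algebra structure remains to be identified.

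The hard part will be producing the twisting $\hat t$ and showing the cup (Yoneda) product on $\coh(A \ot^t B, \ku)$ matches the $\hat t$-twisted product on $\coh(A, \ku) \ot \coh(B, \ku)$. Cohomology classes carry two gradings — the homological degree and an internal $\Gamma \times \Gamma'$-degree inherited from the gradings of $A$ and $B$ — and the product picks up scalars from two sources: Koszul signs from the homological degree and values of $t$ on the internal degree. The real work is to show that these combine into a single bicharacter $\hat t$ on the bigraded cohomology and that, with respect to a diagonal (comparison) map computing the product on $P_\bullet \ot^t Q_\bullet$, the product obeys the expected rule $(\alpha \ot \beta)(\alpha' \ot \beta') = \hat t(|\beta|, |\alpha'|)\, (\alpha \alpha') \ot (\beta \beta')$, including verifying that the chosen comparison maps and contracting homotopies are compatible with these scalars.
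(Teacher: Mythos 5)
Your proposal should first be situated correctly: the paper never proves this statement itself, it imports it from Bergh--Oppermann \cite[Theorem 3.7]{Bergh-Oppermann}, so the relevant comparison is with that proof. Your route is essentially the same as theirs (and as the twisted-resolution construction of \cite{Shepler-Witherspoon}, which the paper reuses in proving Theorem \ref{thm:fingencoh-drinfeld-double}): take graded projective resolutions, form $P_\bullet\ot^t Q_\bullet$, note that projectivity survives twisting and that the underlying complex of vector spaces is untwisted so Künneth gives exactness. That skeleton is sound.

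There are, however, two genuine gaps. First, the Hom--tensor interchange is not hypothesis-free. One has $\Hom_{A\ot^t B}(P_i\ot^t Q_j,\ku)\cong\bigl((\ku\ot_A P_i)\ot(\ku\ot_B Q_j)\bigr)^{*}$, and the natural map from $\Hom_A(P_i,\ku)\ot\Hom_B(Q_j,\ku)$ into this space is an isomorphism only under a finiteness condition (resolutions with finitely generated terms, as one can arrange when $A$ and $B$ are finite dimensional --- the case the paper actually needs --- or locally finite gradings together with graded $\Hom$). Without such a hypothesis the step, and indeed the displayed isomorphism, can fail: already for the untwisted tensor product, $\coh^2(A\ot B,\ku)$ contains the full linear dual of $\operatorname{Tor}_1^A(\ku,\ku)\ot\operatorname{Tor}_1^B(\ku,\ku)$, which strictly contains $\coh^1(A,\ku)\ot\coh^1(B,\ku)$ when both factors are infinite dimensional. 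Second, and more seriously, what you label ``the real work'' --- producing $\hat t$ and showing the cup product on $\coh(A\ot^t B,\ku)$ equals the $\hat t$-twisted product --- is precisely the content of the theorem, and the proposal stops before doing it. To close it you must exhibit $\hat t$ as a bicharacter on the bidegree groups $(\Z\times\Gamma)\times(\Z\times\Gamma')$, of the shape $\hat t\bigl((i,g),(j,h)\bigr)=(-1)^{ij}\,t(g,h)^{\pm 1}$ (sign and inversion fixed by the left/right module conventions), and then verify the product formula on representative cocycles through explicit chain-level comparison (diagonal) maps; the practical choice is to take $P_\bullet$ and $Q_\bullet$ to be graded bar resolutions, where the diagonal maps are explicit and the twisting scalars can be tracked. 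That verification is where Bergh--Oppermann spend their effort; as written, your argument yields (under the missing finiteness hypothesis) only a linear isomorphism, not the isomorphism of algebras that the theorem asserts.
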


Let $A=R\# \ku \Gamma$ and $B=(R\# \ku \Gamma)^{\#}$ with $R = \toba (V)$.
Let $D = D(A)$ be the Drinfeld double of $A$. 
Since $A$ and $B$ are subalgebras of $D=D(A)$ and, as a
vector space, $D$ is isomorphic to $A\ot B$,
there is an isomorphism of algebras,
\[
  D \cong A\ot_{\tau} B,
\]
where $A\ot_{\tau}B$ is a twisted tensor product algebra whose twisting map
$\tau: B\ot A\rightarrow A\ot B$ is defined to correspond to multiplication in $D$. The augmentation map on $D(A)$ is
\[
\epsilon_D=  \epsilon_A \ot \epsilon_B.
\] 

Recall that $A$ and $B$ are both coradically graded.
With respect to the coradical filtration on $D$, there is indeed an 
isomorphism 
\[
    \gr D \cong A\ot^t B
\]
for some bicharacter $t$ on grading groups of $A$ and $B$.
The bicharacter $t$ is
defined by the braiding $c$ and the group action. 
See, for example,~\cite{Beattie}.
Hence, as a consequence of Theorem~\ref{lem:BO}, $\coh(\gr D,\ku)$ can be computed in terms of $\coh(A,\ku)$ and $\coh(B,\ku)$. 
Moreover, since $\gr D$ is a Hopf algebra, its cohomology is graded commutative so the bicharacter $\hat t$ takes values $\pm1$. 

To show that $D$ has finitely generated cohomology we will establish 
that $\coh(D,\ku)$ has ``enough" cocycles and apply Evens Lemma (\ref{Lch2}). 

Let $P_{\bu}$, $Q_{\bu}$ be bar resolutions of $\ku$
as an $A$-module and as a $B$-module.
Then as in~\cite{Bergh-Oppermann,Shepler-Witherspoon} for left modules, 
we may form the twisted tensor product resolutions
$P_{\bu}\ot_{\tau} Q_{\bu}$   and $P_{\bu}\ot^t Q_{\bu}$ of $\ku$
as a right $A\ot_{\tau}B$-module and a right $A\ot^t B$-module, respectively.
We recall here briefly this construction, and translate to right modules:
As a complex of vector spaces, each of $P_{\bu}\ot_{\tau}Q_{\bu}$
and $P_{\bu}\ot^t Q_{\bu}$ is simply $P_{\bu}\ot Q_{\bu}$,
and it remains to define the $A\ot_{\tau}B$- and $A\ot^t B$-module
structures on each vector space $P_i\ot Q_j$.
We will do this for $A\ot_{\tau} B$, and $A\ot^t B$ is similar.
For each $j$, define $\tau_j:Q_j\ot A\rightarrow A\ot Q_j$ by iterating $\tau$.
The right module structure is defined by the following composition of maps:

\begin{equation}\label{eqn:mod-str}
\begin{xy}*!C\xybox{
\xymatrix{
P_i\!\ot \! Q_j \ot\! A\! \ot\!  B 
\ar[rr]^{1\ot\, \tau_{j}\ot 1}
&& P_i \! \ot \! A\ot \! Q_j\! \ot\!  B 
\ar[rr]^{\hspace{.5cm}\rho_{P_i}\ot \rho_{Q_j}}
&& P_i\ot Q_j  ,
}}
\end{xy}
\end{equation}
where $\rho_{P_i}$ and $\rho_{Q_j}$ denote the module structure maps.

\begin{lemma}\label{lema:extend}
Let $f \in \Hom_A(P_i,\ku)$ be a cocycle. Then $f$ extends to a cocycle representing an element in $H^*(D,\ku)$. A similar statement holds for $\Hom_B(Q_j,\ku)$. 

\end{lemma}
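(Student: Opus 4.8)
The plan is to compute $H^*(D,\ku)$ via the twisted tensor product resolution $P_{\bu}\ot_{\tau}Q_{\bu}$ of $\ku$ by right $D\cong A\ot_{\tau}B$-modules, whose module structure is recorded in \eqref{eqn:mod-str}, and to produce the required class as an explicit extension of $f$ by the degree-zero augmentation in the $Q$-direction. Concretely I would set $\tilde f:=f\ot\epsilon_Q$, where $\epsilon_Q\colon Q_0\to\ku$ is the augmentation of the bar resolution $Q_{\bu}$; thus $\tilde f$ agrees with $f$ on the summand $P_i\ot Q_0$ of the degree-$i$ part of the total complex and vanishes on $P_p\ot Q_q$ whenever $q>0$. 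The two things to verify are that $\tilde f$ is a morphism of right $D$-modules and that it is a cocycle.

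For $D$-linearity, since $\ku$ is the trivial module it suffices to check $\tilde f\big((p\ot q)\cdot(a\ot b)\big)=\epsilon_D(a\ot b)\,\tilde f(p\ot q)$ for $p\ot q\in P_i\ot Q_0$ and $a\ot b\in A\ot_{\tau}B$. Unwinding \eqref{eqn:mod-str} with $\tau_0=\tau$ and using that $f$ and $\epsilon_Q=\epsilon_B$ are linear for the trivial $A$- and $B$-actions, the computation collapses to the identity $(\epsilon_A\ot\epsilon_B)\circ\tau=\epsilon_B\ot\epsilon_A$, i.e. to compatibility of the twisting map $\tau$ with the augmentation $\epsilon_D=\epsilon_A\ot\epsilon_B$ of $D$; this holds for any twisting map and is immediate for the double from the explicit multiplication formula. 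For the cocycle condition I would compute $\delta\tilde f=\tilde f\circ d$ in total degree $i+1$. Since $\tilde f$ is supported on $P_i\ot Q_0$, only two summands have a differential reaching $P_i\ot Q_0$: the horizontal differential $d_P\ot\id$ on $P_{i+1}\ot Q_0$, contributing $f(d_P(-))\,\epsilon_Q=0$ because $f$ is a cocycle; and the vertical differential $\id\ot d_Q$ on $P_i\ot Q_1$, contributing $\pm f\,\epsilon_Q(d_Q(-))=0$ because $\epsilon_Q d_Q=0$ in the augmented resolution $Q_{\bu}\to\ku$. Hence $\delta\tilde f=0$, and $\tilde f$ extends $f$ to a cocycle representing a class in $H^*(D,\ku)$.

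For the statement about $g\in\Hom_B(Q_j,\ku)$ I would \emph{not} reuse the same resolution, because the naive extension $\epsilon_P\ot g$ is not $D$-linear for $j>0$: the analogous reduction now involves $(\epsilon_A\ot\id_B)\circ\tau$, which for a noncommutative $A$ produces the adjoint action of $A$ rather than $\epsilon_A$. Instead I would use the second factorization: multiplication induces linear isomorphisms $A\ot B\cong D$ and $B\ot A\cong D$, so there is a twisting $\tau'$ with $D\cong B\ot_{\tau'}A$. Forming the twisted tensor product resolution $Q_{\bu}\ot_{\tau'}P_{\bu}$ and setting $\tilde g:=g\ot\epsilon_P$, the same two verifications go through verbatim, now using the augmentation compatibility of $\tau'$ and $\epsilon_P d_P=0$. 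As $P_{\bu}\ot_{\tau}Q_{\bu}$ and $Q_{\bu}\ot_{\tau'}P_{\bu}$ both compute $H^*(D,\ku)$, this yields the required class.

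I expect the main obstacle to be exactly the $D$-linearity of the extensions: establishing the correct twisting-compatibility identity and recognizing that the appropriate resolution depends on which tensor factor the given cocycle comes from, the symmetric-looking choice failing for the $B$-factor. Everything else is a formal consequence of the tensor-product shape of the differential together with the cocycle conditions on $f$ (resp. $g$) and on the augmentations $\epsilon_Q$ (resp. $\epsilon_P$).
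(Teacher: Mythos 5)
Your proposal is correct and follows essentially the same route as the paper's proof: extend $f$ to $f\ot\epsilon_B$ on the twisted tensor product resolution $P_{\bu}\ot_{\tau}Q_{\bu}$, check $D$-linearity and the cocycle condition (your two-summand computation of $\delta\tilde f$ is exactly the paper's $d^*(f\ot\epsilon_B)=fd_{i+1}\ot\epsilon_B+(-1)^i f\ot\epsilon_B d_1=0$), and handle the $\Hom_B(Q_j,\ku)$ case by passing to the reversed resolution built from the inverse twisting (your $\tau'$ is the paper's $\tau^{-1}$); your observation that the naive extension $\epsilon_A\ot g$ fails to be $D$-linear matches the paper's closing remark that the transported cocycle $g'$ need not equal $\epsilon_A\ot g$. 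The one place where you genuinely diverge is the $D$-linearity of $f\ot\epsilon_B$: the paper verifies it generator by generator, splitting into the cases where $a$ and $y$ are scalars, differences of group-likes, or root vectors, and in the last case invoking the explicit relation $ya=\lambda ay+\kappa(1-g\chi)$ coming from Proposition~\ref{prop:double-relations}; you instead reduce everything to the single identity $(\epsilon_A\ot\epsilon_B)\circ\tau=\epsilon_B\ot\epsilon_A$, which is cleaner and does hold here, since $\epsilon_D=\epsilon_A\ot\epsilon_B$ is the counit of the Hopf algebra $D$, hence multiplicative, and $\tau(b\ot a)$ is by definition the element $ba\in D$ rewritten in $A\ot B$. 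One caution: your aside that this compatibility ``holds for any twisting map'' is an overstatement --- for a general twisting map of augmented algebras, $\epsilon_A\ot\epsilon_B$ need not be multiplicative on $A\ot_{\tau}B$, so the identity is an extra hypothesis; it is automatic precisely for Hopf twistings such as the one coming from the Drinfeld double, which is the justification you actually use, so the proof stands.
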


\begin{proof}
The first statement will follow from the construction of the resolution
$P_{\bu}\ot_{\tau} Q_{\bu}$ and the definitions.
The second statement involves switching the order of $P_{\bu}$ and $Q_{\bu}$.
This asymmetry in the proof is due to the asymmetry of choosing
to work with right modules instead of left modules. 

Let $f \in \Hom_A(P_i,\ku)$ be a cocycle. 
We first claim that $f\ot \epsilon_B$, as a function on $P_i\ot_{\tau} Q_0 = P_i\ot_{\tau} B $, 
is an $A\ot_{\tau}B$-module homomorphism. 

\vspace{0.1in}

Consider 
\begin{equation}\label{eqn:f-epsilon}
   (f\ot \epsilon_B) ((x\ot y)\cdot (a\ot b))
\end{equation}
where $x\in P_i$, $a\in A$, and $b,y\in B$.
Expression~(\ref{eqn:f-epsilon}) can be evaluated by first
applying $1\ot \tau\ot 1$ to $x\ot y\ot a\ot b$, then applying
$f\ot\epsilon_A\ot\epsilon_B\ot\epsilon_B$ to the result,
since $f$ is an $A$-module homomorphism and $A$ acts trivially on $\ku$.
We wish to show this is equal to
\begin{equation}\label{eqn:f-epsilon2}
  ((f\ot \epsilon_B) (x\ot y))\cdot (a\ot b) = (f\ot \epsilon_B)(x \ot y) \epsilon_{D(A)}(a \ot b) =  f(x)\epsilon_B(y)
      \epsilon_A(a)\epsilon_B(b) .
\end{equation}
It suffices to show this for all $y$ from a set of generators
of $B$, for all $a$ from a set of generators of $A$,
and for all $x\in P_i$ and $b\in B$. 
If either $a$ or $y$ is an element of the field $\ku$, 
the expression~(\ref{eqn:f-epsilon}) is equal to
\begin{equation*}
   (f\ot \epsilon_B)((x\cdot a) \ot (y\cdot b))
   = f(x\cdot a) \epsilon_B(y\cdot b) = f(x)\epsilon_B(y)\epsilon_A(a)
   \epsilon_B(b) ,
\end{equation*}
as desired. Now assume that $a$ and $y$ are generators in the kernel
of the augmentation maps for $A$ and $B$, respectively,
so each is either a root vector or a difference of group elements.
The case where either is a difference of group elements 
is straightforward since applying $\epsilon$ to either of the
middle two factors will yield 0 after applying $\tau$.
Now assume that $a$ and $y$ are both root vectors. Then by Proposition~\ref{prop:double-relations} we have that in $D(A)$,
\[
     ya = \lambda ay +\kappa (1 -  g \chi)
\]
where $\lambda,\kappa$ are scalars, $g\in \Gamma$, $\chi \in \widehat\Gamma$.
Then 
\begin{align*}
  (f\ot\epsilon_B)((x\ot y ) \cdot (a\ot b)) =\\
  (f\ot\epsilon_A\ot\epsilon_B\ot\epsilon_B)(\lambda x \ot a \ot y \ot b + \kappa x \ot 1 \ot 1 \ot b - \kappa x \ot g \ot \chi \ot b) =  \\
  \lambda f(x)\epsilon_A(a)\epsilon_B(y)\epsilon_B(b) + \kappa (f(x)\epsilon_B(b) - f(x)\epsilon_A(g)\epsilon_B(\chi)\epsilon_B(b)  =\\
  0 
  \end{align*}
  where the first term disappears since $\epsilon_B(y)=0$ and the second two terms cancel out since $\epsilon_A(g) = \epsilon_B(\chi) =1$. 
We also have $f(x)\epsilon_B(y)
      \epsilon_A(a)\epsilon_B(b)  = 0$ since $\epsilon_B(y)=0$.
Therefore the expressions~(\ref{eqn:f-epsilon})
and~(\ref{eqn:f-epsilon2}) are equal, as desired.
It follows that $f\ot\epsilon_B$ is an $A\ot_{\tau}B$-module homomorphism,
that is,  $f\ot\epsilon_B \in \Hom_{A\ot_{\tau}B} (P_i\ot Q_0 , \ku)$.

By hypothesis, $0=d_{i+1}^*(f) = f d_{i+1}$ where $d_{i+1} : P_{i+1}\rightarrow P_i$
is the differential. Letting $d$ denote the differential on $P_{\bu}\ot_{\tau}Q_{\bu}$,
\[
    d^*(f\ot \epsilon_B) = (f\ot\epsilon_B) (d_{i+1}\ot 1
   + (-1)^i \ot d_1) = fd_{i+1}\ot\epsilon_B+ (-1)^i f\ot \epsilon_B d_1 = 
   0 .
\]
Therefore $f\ot \epsilon_B$ is a cocycle representing an
element of $\coh (D,\ku)$. 

Now let $g\in \Hom_B(Q_j,\ku)$ be a cocycle representing an
element of $\coh(B,k)$. 
Note that $A\ot_{\tau}B\cong B\ot_{\tau^{-1}}A$.
Let $Q_{\bu}\ot_{\tau^{-1}} P_{\bu}$ be the twisted
tensor product resolution corresponding to this inverse twisting $\tau^{-1}$.
By the above arguments, $g\ot \epsilon$ is a cocycle representing
an element of $\coh(A\ot_{\tau}B,k)$.
Since $P_{\bu}\ot_{\tau}Q_{\bu}$ is quasi-isomorphic to $Q_{\bu}\ot_{\tau^{-1}}P_{\bu}$
(in fact, a comparison map is given by iterating $\tau$),
there is a cocycle $g'$ defined on the resolution $P_{\bu}\ot_{\tau}Q_{\bu}$
corresponding to $g\ot\epsilon$ on $Q_i\ot_{\tau^{-1}} P_0$.
Note that in general $g'$ will not equal $\epsilon_A\ot g$, due to the twisting.

\end{proof}



\begin{theorem} \label{thm:fingencoh-drinfeld-double}
If the Nichols algebra $R = \toba(V)$ and its dual $R^\#$ have fgc, then 
the Drinfeld double $D = D(\toba(V) \# \ku \Gamma)$ of the bosonization $\toba(V) \# \ku \Gamma$ has
fgc. 
\end{theorem}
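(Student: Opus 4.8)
The plan is to exploit the algebra isomorphism $D\simeq A\ot_\tau B$ with $A=\toba(V)\#\ku\Gamma$ and $B=(\toba(V)\#\ku\Gamma)^{\#}$, run the May-type spectral sequence of Theorem~\ref{may} for the coradical filtration of $D$, compute its $E_1$-page by Theorem~\ref{lem:BO}, and feed enough permanent cycles into the Evens Lemma~\ref{Lch2}. First I would record that both factors have fgc. Since $\toba(V)=R$ has fgc by hypothesis, Theorem~\ref{th:RtoRsmashH} gives fgc for $A$. As $\coh(B,\ku)=\Ext_B(\ku,\ku)$ only sees the algebra structure of $B$, and $B^{\cop}\simeq\toba(V^{\#})\#\ku\widehat\Gamma$ with $\toba(V^{\#})=R^{\#}$ having fgc, a second application of Theorem~\ref{th:RtoRsmashH} over $\widehat\Gamma$ gives fgc for $B$; in particular $\coh(A,\ku)$ and $\coh(B,\ku)$ are finitely generated.

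Next I would identify the spectral sequence. The coradical filtration yields $\gr D\simeq A\ot^t B$ for the bicharacter $t$, so by Theorem~\ref{lem:BO} the $E_1$-page is $E_1=\coh(\gr D,\ku)\simeq\coh(A,\ku)\ot^{\hat t}\coh(B,\ku)$. Because $A\ot 1$ and $1\ot B$ generate a twisted tensor product, $E_1$ is generated as an algebra by the images of finite generating sets $\{a_i\}$ of $\coh(A,\ku)$ and $\{b_j\}$ of $\coh(B,\ku)$; and since $\gr D$ is a genuine Hopf algebra, $E_1$ is graded commutative (the values of $\hat t$ being $\pm1$), hence finitely generated and Noetherian.

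Now I would produce the permanent cycles. Representing each $a_i$ by a cocycle in $\Hom_A(P_{\bu},\ku)$ and each $b_j$ by a cocycle in $\Hom_B(Q_{\bu},\ku)$, Lemma~\ref{lema:extend} extends them to cocycles $\widetilde a_i,\widetilde b_j$ on $P_{\bu}\ot_\tau Q_{\bu}$ representing classes in $\coh(D,\ku)$, whose symbols in $E_1$ are permanent cycles. Granting that these symbols are $a_i\ot 1$ and $1\ot b_j$ (the point discussed below), and letting $C^{*,*}\subseteq E_1$ be the subalgebra they generate, we get $C^{*,*}=E_1$, so the inclusion makes $E_1$ a Noetherian (cyclic) $C^{*,*}$-module whose image consists of permanent cycles; the finiteness hypothesis of Lemma~\ref{Lch2}(a) holds as $D$ is finite-dimensional, and it follows that $E_\infty=\gr\coh(D,\ku)$ is Noetherian over $\operatorname{Tot}(C^{*,*})$, the image of $\ku\langle\widetilde a_i,\widetilde b_j\rangle\subseteq\coh(D,\ku)$. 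Hence $\gr\coh(D,\ku)$ is Noetherian, so $\coh(D,\ku)$ is Noetherian by Proposition~\ref{Pch2} and finitely generated by Lemma~\ref{lema:hilbert-invariants} with trivial $K$. For the module statement, given a finitely generated $D$-module $N$ I would run the parallel module spectral sequence $\widetilde E_1=\coh(\gr D,N_{D_0})\Rightarrow\coh(D,N)$ from Theorem~\ref{may}; using the realization $\gr D\simeq\toba(W)\#\ku(\Gamma\times\widehat\Gamma)$ with $W=V\oplus V^{\#}$ (Proposition~\ref{prop:double-lifting}) and the finite generation of $\coh(\toba(W),\ku)$ coming from a further application of Theorem~\ref{lem:BO}, Theorem~\ref{thm:ppalrealiz-Kss} shows that $\gr D$ has fgc, so $\widetilde E_1$ is finitely generated over the Noetherian ring $E_1$, and Lemma~\ref{Lch2}(b) gives that $\coh(D,N)$ is finitely generated over $\coh(D,\ku)$.

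The main obstacle is the symbol bookkeeping underlying Lemma~\ref{lema:extend}: as noted there, the lift $\widetilde b_j=g'$ of a $B$-class is \emph{not} $\epsilon_A\ot g$ because of the twisting, so one must verify that its leading term in the associated graded $E_1$ is nonetheless $1\ot b_j$ in the correct bidegree (and symmetrically for $\widetilde a_i$). Securing that the extended classes surject onto an algebra generating set of $\coh(\gr D,\ku)$ consisting of permanent cycles is the crux of the argument; once it is in place, the finite generation of $\coh(A,\ku)$ and $\coh(B,\ku)$ together with the Bergh--Oppermann computation and the Evens and May machinery assemble the conclusion with no further difficulty.
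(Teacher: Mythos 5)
Your proposal is correct and follows essentially the same route as the paper's proof: the decomposition $D\simeq A\ot_{\tau}B$ with $\gr D\simeq A\ot^t B$, the Bergh--Oppermann identification of the $E_1$-page (Theorem~\ref{lem:BO}), the extension of cocycles from $A$ and $B$ to permanent cycles via Lemma~\ref{lema:extend}, and the conclusion through the Evens Lemma~\ref{Lch2}. Your two deviations are harmless variants --- taking $C^{*,*}=E_1$ itself (justified by graded commutativity and finite generation) where the paper uses the commutative subalgebra generated by high powers of the chosen generators, and routing the module statement through the fgc of $\gr D$ rather than the Jantzen-style spectral sequence $\coh(\gr(A\ot_{\tau}B),\ku)\ot M\Rightarrow\coh(A\ot_{\tau}B,M)$ --- and the ``symbol bookkeeping'' you flag as granted is asserted at exactly the same level of detail in the paper's own proof.
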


\begin{proof}
Let $A = R  \# \ku \Gamma$ and $B = A^\#$. By hypothesis, $A$ and $B$ have finitely generated cohomology,
specifically, the cohomology $\coh(A,\ku)$ is a finite module over
a finitely generated commutative subalgebra, and similarly
for $\coh(B,\ku)$.
Choose generators of these commutative subalgebras and representative
cocycles on $P_{\bu}$ and $Q_{\bu}$; we will use these in
a spectral sequence argument in combination with Theorem~\ref{lem:BO}.

As a consequence of the Theorem~\ref{lem:BO}, $A\ot^t B$ has finitely generated cohomology
since both $A$ and $B$ do. 
Of necessity, since $A\ot^t B$ is also a Hopf algebra, $\coh(\gr (A\ot_{\tau}B),\ku)$
is graded commutative, and so $\hat{t}$ will in the end only take values $\pm 1$.
We will next show that $A\ot_{\tau}B$ also has finitely
generated cohomology. 
This relies on existence of needed cocycles.
Let $f\in\Hom_A (P_i,\ku)$ be a cocycle representing an element
of $\coh(A,\ku)$.
Then by Lemma~\ref{lema:extend},  $f$ extends to a 
cocycle representing an element of $\coh(A\ot_{\tau}B,\ku)$.
A similar statement holds for $\Hom_B(Q_j,\ku)$.

Next note that 
the filtration on $A\ot_{\tau}B$ induces a filtration on the resolution
$P_{\bu}\ot_{\tau}Q_{\bu}$.
Let $E$ be the corresponding spectral sequence. 
Page $E_1$ is $\coh(A\ot^t B, \ku)$, which by Theorem~\ref{lem:BO} is
isomorphic to  $\coh(A,\ku)\ot^{\hat{t}} \coh(B,\ku)$.
The cohomology $\coh(A\ot^{\tau}B,\ku)$ is the homology of the total complex of the
bicomplex
\[
  \Hom_{A\ot_{\tau}B} (P_{\bu}\ot_{\tau} Q_{\bu} , \ku ) .
\]
By the above argument, a cocycle $f\in \Hom_A(P_i,\ku)$ representing
an element of $\coh^i(A,\ku)$ may be extended to a cocycle
$f\ot\epsilon\in\Hom_{A\ot_{\tau}B}(P_i\ot_{\tau}Q_0,\ku)$ representing
an element of $\coh(A\ot^{\tau}B,\ku)$.
This is thus a permanent cocycle in the spectral sequence $E$.
Moreover, it corresponds to $f\ot \epsilon$, this time
representing an element of the $E_1$-page $\coh(A,\ku)\ot^{\hat{t}}\coh(B,\ku)$.
Similarly, a cocycle $g\in\Hom_B(Q_j,\ku)$ representing an element of
$\coh^j(B,\ku)$ may be extended to a cocycle $g'\in\Hom_{A\ot_{\tau}B}
(P_0\ot_{\tau}Q_j,\ku)$ representing an element of $\coh(A\ot_{\tau}B,\ku)$.
Thus we obtain, for each chosen generator of $\coh(\gr A\ot_{\tau}B,\ku)$, a permanent
cocycle in the spectral sequence.
Applying the spectral sequence Lemma~\ref{Lch2}, 
since $\coh(A\ot^t B , \ku)$ is a finite
module over a finitely generated (commutative) subalgebra,
$\coh(A\ot_{\tau}B, \ku)$ is finitely generated.
(A commutative subalgebra can be found by taking high enough
powers of the chosen generators since the defining parameters
and thus also the values of the bicharacter $\hat{t}$
are all roots of unity.)

Now let $M$ be a finitely generated $A\ot_{\tau}B$-module.
Then $\coh(A\ot_{\tau}B,M)$ is a graded module over $\coh(A\ot_{\tau}B,\ku)$.
The coradical filtration on $A\ot_{\tau}B$ induces a filtration on
the bar resolution $K_{\bu} $ of $\ku$ as $A\ot_{\tau}B$-module and thus on 
$\Hom_{A\ot_{\tau}B}(K_{\bu}, M)$. 
Let $\widetilde{E^*}$ be the corresponding spectral sequence.
Arguing as in~\cite[\S I.9.13]{Jantzen}, we get a spectral sequence
\[
  \widetilde{E}^*_M = \coh (\gr A\ot_{\tau}B , \ku) \ot M 
   \Rightarrow \coh (A\ot_{\tau}B, M) .
\]
which is a module over the spectral sequence $\widetilde{E^*}$.
By Lemma~\ref{Lch2}, $\coh(A\ot_{\tau}B,M)$ is finitely generated over $\coh(A\ot_{\tau}B,\ku)$.
\end{proof}


\vspace{0.5in}
\part{Permanent cocycles for Nichols algebras of diagonal type}\label{part:nichols}

In this Part we deal with 

\medbreak
\textbf{Condition \ref{assumption:intro-combinatorial}.}
\emph{Let $U$ be a braided vector space of diagonal type whose Nichols algebra is finite-dimensional.
For every positive root $\gamma \in \varDelta_+^{U}$, there exists
$L_{\gamma} \in \N$ such that
the cochain $\left(\x_{\gamma}^{L_{\gamma}} \right)^*$ is a
cocycle, that is, represents an element in $\coh(\toba(U), \ku)$.
}

\medbreak
We shall prove that Condition \ref{assumption:intro-combinatorial} holds for one representative $U$ of each Weyl-equivalence class 
in the classification of \cite{H-classif}. By Theorem \ref{lemma:reduction-to-cocycles} this shows that $\toba(U)$ has fgc
and as explained in \S \ref{subsec:intro-nichols-fgc}, this implies Theorem \ref{th:fingencoh-nichols-diagonal}.
We argue also by induction on $\dim U$; in other words we often assume that the root $\gamma$ has full support, i.e.~$\supp \gamma = \I$.
Towards this, we choose the representative $U$ in the Weyl-equivalence class in such a way that Condition \ref{assumption:intro-combinatorial}
was already verified for any proper subdiagram.

\medbreak
We discuss the strategy in Section \ref{sec:nichols-strategy}, a summary been given in \S\ \ref{subsec:summary}.
Proofs of the technical  statements in this Section are deferred to Section \ref{sec:computational-lemmas}.
We proceed case by case in Sections \ref{sec:classical} (classical Cartan and super types), 
\ref{sec:exceptional} (exceptional Cartan and super types) and \ref{sec:discrete}
(modular types $\Bgl(4)$ and $\Brown(2)$).
The remaining Nichols algebras of  diagonal type in the classification
are dealt with in \cite{AAPPW}.

\section{The strategy}\label{sec:nichols-strategy}
\subsection{The setup}
Let $\bq$ be the braiding matrix of $U$ and denote by $\toba_{\bq}$ the corresponding Nichols algebra as before. 
For $\delta \in \varDelta_+^{\bq}$ recall that $N_{\delta} = \ord q_{\delta \delta}$. 
Recall that the set of $n$-chains, $n\in\N$, is given by
\begin{align*}\tag{\ref{eq:chains-Nichols}}
\chain(n) & = \left\{ \x_{\delta_1}^{f_{\delta_1}(n_1)} \x_{\delta_2}^{f_{\delta_2}(n_2)} \dots \x_{\delta_m}^{f_{\delta_m}(n_m)} \, : \, \sum n_j=n  \right\},
\end{align*}
where for $\delta\in \varDelta_+^{\bq}$ we introduce $f_\delta: \ \N_0\to\N_0$ by
\begin{align*}
f_{\delta}(2k)&= N_\delta k,
&
f_{\delta}(2k+1)&=N_\delta k+1, 
& 
k&\in\N_0.
\end{align*}

\medbreak The starting point is the following straightforward observation.

\begin{remark} \label{obs:main}
Let $\gamma \in \varDelta_+^{\bq}$ and $L\in \N$. 
\begin{enumerate}[leftmargin=*,label=\rm{(\alph*)}]
\item\label{item:obs-chains1}
$\x_\gamma^L$ is a chain if and only if $L$ is of the form $\ell N_\gamma$ or $\ell N_\gamma + 1$, for some $\ell \in \N$.

\item\label{item:obs-chains2} A cochain  $(\x_\gamma^L)^*$ is a cocycle if and only if for any chain $c \in \chain(n+1)$ such that $d(c \otimes 1)  \in \Vt(n) \otimes \toba_{\bq}$,
when written as a linear combination of basis elements, the term $\x_\gamma^L \otimes 1$ has zero coefficient.
\end{enumerate}
\end{remark}

Because of Theorem \ref{lemma:reduction-to-cocycles} and Remark \ref{obs:main} \ref{item:obs-chains1} we shall assume that $L  = \ell N_{\gamma}$ for $\ell \in \N$.

\medbreak
We reduce the set of chains $c \in \chain(n+1)$ to be considered in  \ref{item:obs-chains2}  using degree and grading constraints.
First, since the relations of $\toba_{\bq}$ are $\mathbb N_0^{\mathbb I}$-homogeneous by definition we have:
\begin{lemma}
\label{lem:grading} 
The differential of the Anick resolution preserves the $\mathbb N^{\mathbb I}$-grading. \qed
\end{lemma}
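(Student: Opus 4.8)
The plan is to equip each term $C_n(\toba_{\bq}) = \Vt(n) \ot \toba_{\bq}$ of the Anick resolution with a tensor-product $\N^{\I}$-grading and to prove that every map produced by the simultaneous recursion defining the differentials $d_n$ and the contracting homotopies $s_n$ is homogeneous of degree $0$. Recall that $\toba_{\bq}$ is $\N^{\I}$-graded with $\deg x_{\beta} = \beta$ for each $\beta \in \varDelta_+^{\bq}$, so its PBW basis $\basis$ consists of homogeneous elements. Assigning to each PBW letter $\x_{\beta}$ the degree $\beta$, the explicit description \eqref{eq:chains-Nichols} shows that every $n$-chain is a homogeneous monomial, so $\chain(n)$ is a homogeneous basis of $\Vt(n)$. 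Setting $\deg(u \ot w) := \deg u + \deg w$ then makes $C_n(\toba_{\bq})$ into an $\N^{\I}$-graded right $\toba_{\bq}$-module, and since the $\N^{\I}$-degree is additive under concatenation, every factorization $u = vt$ (with $v \in \chain(n-1)$, $t \in \basis$) used in the construction satisfies $\deg(v \ot t) = \deg u = \deg(u \ot 1)$.

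The core of the argument is a simultaneous induction on $n$ showing that $d_n$ and $s_{n-1}$ are homogeneous of degree $0$. For the base case, $d_1(\x_{\beta} \ot 1) = x_{\beta}$ and $s_0(w) = x \ot z$ for $w = xz$ are visibly degree preserving. For the inductive step I assume $d_1, \dots, d_{n-1}$ and $s_0, \dots, s_{n-2}$ all have degree $0$. In the formula $d_n(u \ot 1) = v \ot t - s_{n-2} d_{n-1}(v \ot t)$ with $u = vt$, the term $v \ot t$ has degree $\deg u$, while by the inductive hypothesis $s_{n-2} d_{n-1}(v \ot t)$ has the same degree; extending as a right $\toba_{\bq}$-module map (which adds the degree of the acting element) shows $d_n$ is homogeneous of degree $0$.

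The one delicate point, which I expect to be the main obstacle, is $s_{n-1}$, because of its recursive definition \eqref{eq:Anick-def-sn}. Since $d_{n-1}$ and $s_{n-2}$ are now known to be homogeneous, the splitting $\Vt(n-1) \ot \toba_{\bq} = \ker d_{n-1} \oplus \im s_{n-2}$ is a decomposition into $\N^{\I}$-graded subspaces, so it suffices to evaluate $s_{n-1}$ on a homogeneous $K = \sum_j a_j\, u_j \ot b_j \in \ker d_{n-1}$ of degree $\mu$. Then each $u_j \ot b_j$ has degree $\mu$; writing the factor $b_1 = w_1 y_1$ of the leading term as in the construction, the monomial identity gives $\deg(u_1 w_1 \ot y_1) = \deg u_1 + \deg b_1 = \mu$, and $d_n(a_1\, u_1 w_1 \ot y_1)$ has degree $\mu$ by the previous step. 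Hence the argument $K - d_n(a_1\, u_1 w_1 \ot y_1)$ of the inner $s_{n-1}$ in \eqref{eq:Anick-def-sn} again lies in $\ker d_{n-1}$ (using $d_{n-1}d_n = 0$) and is homogeneous of degree $\mu$, while the recursion terminates because its leading term strictly decreases in the order on $\wchain(n-1)$. Therefore $s_{n-1}(K)$ is homogeneous of degree $\mu$, closing the induction; in particular every $d_n$ preserves the $\N^{\I}$-grading, which is the assertion of the lemma.
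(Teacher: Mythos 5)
Your proof is correct and follows the same idea the paper relies on: since the defining relations of $\toba_{\bq}$ are $\N_0^{\I}$-homogeneous, the PBW basis, the chains, and every step of the recursive construction of $d_n$ and $s_n$ are homogeneous, so the differential preserves the grading. The paper states the lemma with no written proof (treating it as immediate from that homogeneity), and your simultaneous induction on $d_n$ and $s_{n-1}$ — including the observation that the splitting $\ker d_{n-1} \oplus \im s_{n-2}$ is graded — is exactly the detailed justification left implicit there.
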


Let $c =  \x_{\delta_1}^{f_{\delta_1}(n_1)} \x_{\delta_2}^{f_{\delta_2}(n_2)} \dots \x_{\delta_m}^{f_{\delta_m}(n_m)} \in \chain(n+1)$ such that 
\begin{align*}
d(c \ot 1) &= \ldots + \lambda \x_\gamma^{\ell N_{\gamma}} \otimes 1 + \dots,& \lambda &\neq 0,
\end{align*}
 as a linear combination of basis elements. By Lemma~\ref{lem:grading} and since $\x_\gamma^{\ell N_{\gamma}}$ is a $2\ell$-chain, 
 we have the following constraints: 
\begin{align}
\label{eq:grading}
f_{\delta_1}(n_1) \delta_1 + \cdots + f_{\delta_m}(n_m) \delta_m &= \ell N_{\gamma} \gamma,
\\ \label{eq:degree}
n_1  + \cdots + n_m &= 2\ell +1.
\end{align}
Henceforth  we refer to the conditions \eqref{eq:grading} and \eqref{eq:degree} on the chains 
as the {\bf $\mathbb N_0^{\mathbb I}$-grading and homological degree constraints}.   
Writing the roots as linear combinations of simple roots, \eqref{eq:grading} and \eqref{eq:degree}
boil down to a system of equations on $\{n_1, \ldots, n_m, \ell \}$. 

\medbreak
Let $\gamma \in \varDelta_+^{\bq}$ and $\ell \in \N$. 
We summarize now the approaches to verify that  $(\x_{\gamma}^{\ell N_{\gamma}})^*$ 
is a cocycle using Remark \ref{obs:main} \ref{item:obs-chains2}. Thus we need to 
consider the chains $c \in \chain(n+1)$  in  \ref{item:obs-chains2}  up to degree and grading constraints.

\begin{itemize} [leftmargin=*] \renewcommand{\labelitemi}{$\circ$}
\item  We introduce integers $P_\gamma, Q_\gamma$ in \S \ref{subsec:gral-lemmas}. If  $N_\gamma > P_\gamma, Q_\gamma$, 
then  $(\x_\gamma^{N_\gamma})^*$ is a cocycle of degree $2$, by Lemma \ref{lem:largeN}. Here $\ell = 1$.
\end{itemize}

\medbreak We may assume that $\gamma$ is not simple, otherwise it is covered by the previous discussion.
Then there are $\beta,\delta\in \varDelta^{\bq}_{+}$ such that $\beta<\gamma<\delta$, $\gamma = \beta + \delta$.
If $N_{\gamma}$ is small, typically 2 or 3, then the condition $N_\gamma > P_\gamma, Q_\gamma$ does not hold. 

\begin{itemize} [leftmargin=*] \renewcommand{\labelitemi}{$\circ$}

\medbreak
\item  Assume that $N_{\gamma} = 2$ and  condition \eqref{eq:second-technique-2cocycles} holds. Then $(\x_{\gamma}^{N_{\gamma}})^*$
is a 2-cocycle by Lemma \ref{lem:second-technique-2cocycles}.  Again $\ell = 1$.

\medbreak
\item   
Condition \eqref{eq:second-technique-2cocycles} is about the relations between the root vectors 
$x_{\beta}$, $x_{\gamma}$, $x_{\delta}$. If it does not hold, then a finer analysis is needed. We summarize in Proposition
\ref{prop:cocycle-xgamma-N=2} all possible cases that we need to check in this setting when $N_{\gamma} = 2$.

\medbreak
\item   
Similarly we summarize in Proposition
\ref{prop:cocycle-xgamma-N>2} all possible  cases to check when $N_{\gamma} > 2$ and the condition $N_\gamma > P_\gamma, Q_\gamma$ does not hold.
\end{itemize}

Propositions \ref{prop:cocycle-xgamma-N=2} and \ref{prop:cocycle-xgamma-N>2} depend on several Lemmas
whose proof is deferred to Section \ref{sec:computational-lemmas}.

 \medbreak
 The techniques presented in Lemma \ref{lem:largeN}, Lemma \ref{lem:second-technique-2cocycles}, Proposition \ref{prop:cocycle-xgamma-N=2} and Proposition \ref{prop:cocycle-xgamma-N>2}  are applied to those Dynkin diagrams in the list of \cite{H-classif} with a continuous parameter in Sections
 \ref{sec:classical}, \ref{sec:exceptional} and \ref{sec:discrete}. The remaining diagrams in the classification are treated in Part III.


\subsection{Degree 2 cocycles}\label{subsec:gral-lemmas}
We discuss two techniques to get generators of degree $2$ in cohomology from root vectors. 
First we introduce $P_{\gamma}$ and $Q_{\gamma}$ that under suitable conditions imply the existence of the cocycles. 
\begin{definition}
Let $\gamma \in \varDelta^{\bq}_+$. We define
\begin{align*}
P_\gamma  &= \max \{p \in \N: \exists \text{ distinct } \delta_1,\delta_2, \delta_3 \in \varDelta^{\bq}_+ \text{  such that } \delta_1 + \delta_2 + \delta_3 = p \gamma\}, \\
Q_\gamma  &= \max \{q \in \N: \exists \text{ distinct } \delta_1,\delta_2 \in \varDelta^{\bq}_+ \text{  such that } N_{\delta_1}\delta_1 + \delta_2 = q \gamma\}.
\end{align*}
We set $P_\gamma=0$, respectively $Q_\gamma= 0$ if no such relation exists.
\end{definition}

\begin{remark}\label{rem:comptutation-P-Q-straightforward}
For any specific  $\gamma \in \varDelta^{\bq}_+$, the computation of $P_{\gamma}$, $Q_{\gamma}$ 
depends only on the combinatorics of the corresponding root system, see for example Lemma \ref{lema:non-simple-roots-Palfa}. 
We will leave these calculations for an interested reader in the later sections as they are straightforward to do in any specific case. 
\end{remark}

\begin{example} If  $\gamma$ is simple, then $P_\gamma = Q_\gamma = 0$. Also, if $\gamma$ is not simple, then
\begin{align}\label{eq:Palfa-notsimple}
P_{\gamma} \geq 2.
\end{align}
For, since $\gamma$ is not simple, $\gamma = \beta + \delta$ for some  distinct $\beta, \delta \in \varDelta^{\bq}_+$, 
hence $2\gamma = \gamma + \beta + \delta$.
\end{example}

\begin{lema}\label{lem:largeN} 
Let $\gamma \in \varDelta^{\bq}_+$.  If  $N_\gamma > P_\gamma, Q_\gamma$, 
then  $(\x_\gamma^{N_\gamma})^*$ is a cocycle of degree $2$.  
In particular, if $\gamma$ is simple, then $(\x_\gamma^{N_\gamma})^*$ is a cocycle of degree $2$.  
\end{lema}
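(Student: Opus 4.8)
The plan is to verify the cocycle condition directly through Remark~\ref{obs:main}\ref{item:obs-chains2}. Since $\x_\gamma^{N_\gamma}$ is a $2$-chain (indeed $N_\gamma=f_\gamma(2)$), the cochain $(\x_\gamma^{N_\gamma})^*$ sits in cohomological degree $2$, so it is a cocycle exactly when, for every $3$-chain $c\in\chain(3)$, the coefficient of $\x_\gamma^{N_\gamma}\ot 1$ in $d(c\ot 1)$---expanded in the basis $\wchain(2)$ of $\Vt(2)\ot\toba_{\bq}$---is zero. First I would use Lemma~\ref{lem:grading} together with the homological degree count to restrict the $3$-chains that can possibly contribute: writing $c=\x_{\delta_1}^{f_{\delta_1}(n_1)}\cdots\x_{\delta_m}^{f_{\delta_m}(n_m)}$ as in \eqref{eq:chains-Nichols}, a nonzero contribution forces the $\N_0^{\I}$-grading and degree constraints \eqref{eq:grading}, \eqref{eq:degree} with $\ell=1$, that is $\sum_j f_{\delta_j}(n_j)\,\delta_j=N_\gamma\gamma$ and $\sum_j n_j=3$.

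Next I would enumerate the $3$-chains allowed by these constraints according to the three partitions of $3$. If $3=1+1+1$, then $c=\x_{\delta_1}\x_{\delta_2}\x_{\delta_3}$ for distinct roots $\delta_1>\delta_2>\delta_3$ with $\delta_1+\delta_2+\delta_3=N_\gamma\gamma$; by the definition of $P_\gamma$ this forces $N_\gamma\le P_\gamma$, contradicting the hypothesis. If $3=2+1$, then one factor is an $N_\delta$-th power and the grading reads $N_{\delta_1}\delta_1+\delta_2=N_\gamma\gamma$ for distinct roots $\delta_1,\delta_2$ (in either order of which root is squared, since $f_{\delta}(2)=N_{\delta}$ and $f_{\delta}(1)=1$); by the definition of $Q_\gamma$ this forces $N_\gamma\le Q_\gamma$, again a contradiction. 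Thus both cases involving more than one root are excluded outright by the assumption $N_\gamma>P_\gamma,Q_\gamma$.

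The remaining partition $3=3$ is the one I expect to be the genuine obstacle, since it is not governed by $P_\gamma$ or $Q_\gamma$: here $c=\x_\delta^{N_\delta+1}$ is supported on a single root $\delta$ (as $f_\delta(3)=N_\delta+1$), so the grading constraint becomes $(N_\delta+1)\delta=N_\gamma\gamma$. This relation forces $\delta$ and $\gamma$ to be proportional; but distinct positive roots in $\varDelta^{\bq}_+$ are never proportional (the root system is reduced), while $\delta=\gamma$ is impossible by comparing exponents, as $N_\gamma+1\neq N_\gamma$. Hence no $3$-chain satisfies both constraints, the coefficient of $\x_\gamma^{N_\gamma}\ot 1$ in $d(c\ot 1)$ is forced to vanish for all $c\in\chain(3)$, and therefore $(\x_\gamma^{N_\gamma})^*$ is a cocycle of degree $2$. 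Finally, the \emph{in particular} assertion is immediate: for simple $\gamma$ one has $P_\gamma=Q_\gamma=0$ by the Example preceding the statement, whereas $N_\gamma\ge 2$ since $q_{\gamma\gamma}$ is a root of unity of order $N_\gamma>1$; thus $N_\gamma>P_\gamma,Q_\gamma$ holds automatically and the general case applies.
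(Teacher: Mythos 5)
Your proof is correct and follows essentially the same route as the paper's: reduce via Remark~\ref{obs:main} to showing that no $3$-chain contributes a term $\x_\gamma^{N_\gamma}\ot 1$, and dispose of the chains $\x_\beta\x_\delta\x_\eta$, $\x_\beta^{N_\beta}\x_\delta$, $\x_\beta\x_\delta^{N_\delta}$ by the grading constraint of Lemma~\ref{lem:grading} together with the hypotheses $N_\gamma>P_\gamma$ and $N_\gamma>Q_\gamma$, exactly as the paper does. The one place you diverge is the single-root chain $c=\x_\delta^{N_\delta+1}$: you rule it out through the grading relation $(N_\delta+1)\delta=N_\gamma\gamma$ plus the claim that distinct positive roots are never proportional. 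That claim is true in this setting, but it is nowhere stated or proved in the paper and it is not a formality: it rests on the Weyl-groupoid fact that every root is real, so a proportionality $\gamma=c\,\delta$ could be transported to a simple root $\alpha_i$, forcing $c\,\alpha_i$ to be a root, which fails because the rank-one subalgebra $\ku[x_i]/(x_i^{N_i})$ has $\alpha_i$ as its unique root. The paper sidesteps this input with a one-line computation: $d(\x_\delta^{N_\delta+1}\ot 1)=\x_\delta^{N_\delta}\ot x_\delta$ (the homotopy correction $s\,d(\x_\delta^{N_\delta}\ot x_\delta)$ vanishes because $x_\delta^{N_\delta}=0$ in $\toba_{\bq}$), and a term whose right-hand tensor factor is $x_\delta\neq 1$ can never equal $\x_\gamma^{N_\gamma}\ot 1$, independently of any root-system considerations. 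So the paper's treatment of that case is both shorter and self-contained; if you keep your version, you should cite the reducedness of arithmetic root systems explicitly. Your deduction of the ``in particular'' clause (for simple $\gamma$ one has $P_\gamma=Q_\gamma=0<2\le N_\gamma$) is fine.
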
 

\begin{proof}  To show that $(\x_\gamma^{N_\gamma})^*$ is a cocycle, we use Remark \ref{obs:main} \ref{item:obs-chains2}.
That is,  we show 
that there is no chain $c \in C_3$ such that 
$x_\gamma^{N_\gamma} \otimes 1$ is among the terms, with nonzero
coefficient, of $d_3(c \otimes 1) \in C_2 \otimes A$. 

The chains in $C_3$ are of the form $\x_{\beta}^{N_\beta +1}$,  $\x_\beta^{N_\beta} \x_\delta$, $\x_\beta \x_\delta^{N_\delta}$ and $\x_\beta \x_\delta \x_\eta$.  
We consider these cases separately. 

For $c =  \x_{\beta}^{N_\beta +1}$, 
$d_3(c) = \x_{\beta}^{N_\beta} \otimes x_\beta$, which is not of the required form.

Let $c = \x_\beta^{N_\beta} \x_\delta$.  If $\x_\gamma^{N_\gamma} \otimes 1$ is present in $d_3(c \otimes 1)$, then Lemma~\ref{lem:grading} implies that we have a numerical relation 
\[N_\beta  \beta + \delta = N_\gamma \gamma,\]
which contradicts the assumption $N_\gamma>Q_\gamma $.  The case $\x_\beta \x_\delta^{N_\delta}$ is similar. 

Finally, if $c = \x_\beta \x_\delta \x_\eta$, and $\x_\gamma^{N_\gamma} \otimes 1$ is present in $d_3(c \otimes 1)$, then we have a relation 
\[
\beta + \delta +\eta = N_\gamma \gamma 
\]
which again contradicts the assumption $N_\gamma>P_\gamma$. 
\end{proof}

Because of the previous Lemma we need to compute $P_{\gamma}$ and $Q_\gamma$; this is simplified via the following result.
Let $\Wc$ be the Weyl groupoid of the Nichols algebra $\toba_{\bq}$, see \cite{H-Weyl grp} or \cite{AA17}.

\begin{lema}\label{lema:3roots}
Let $\delta_1$, $\delta_2$, $\delta_3 \in \varDelta_+^{\bq}$. 
Then there exist $w \in \Wc$ 
 and $\tau \in \mathbb S_{\theta}$ such that
\begin{align*}
w(\delta_i) &\in \varDelta_+^{\mathfrak p} \cap \left(\Z \gamma_{\tau(1)} + \Z \gamma_{\tau(2)} + \Z \gamma_{\tau(3)} \right),&
i &= 1,2,3, \end{align*}
for a suitable $\mathfrak p$.
\end{lema}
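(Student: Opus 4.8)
The plan is to place $\delta_1,\delta_2,\delta_3$ inside a rank-$3$ ``standard parabolic'' subsystem by an adapted-base argument for the generalized root system of $\toba_{\bq}$, using the freedom to move the point $\mathfrak p$ along the Weyl groupoid $\Wc$. First I would set $V_0 := \mathbb{Q}\delta_1 + \mathbb{Q}\delta_2 + \mathbb{Q}\delta_3 \subseteq \mathbb{Q}^{\I}$, so that $\dim V_0 \le 3$ and $V_0$ is spanned by the roots it contains. Writing $\varDelta^{\bq} := \varDelta_+^{\bq}\cup(-\varDelta_+^{\bq})$ for the full root system, the whole lemma reduces to the following structural claim, which is the generalized-root-system analogue of the classical fact that the intersection of a root system with a subspace spanned by roots is conjugate to a standard parabolic subsystem.

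\emph{Claim.} Since $V_0$ is spanned by roots, there exist a point $\mathfrak p$ reachable from $\bq$, the associated groupoid element $w\colon \bq \to \mathfrak p$, and a subset $J \subseteq \I$ with $|J| = \dim V_0 \le 3$, such that $w$ maps $\varDelta^{\bq}\cap V_0$ bijectively onto the set of roots of $\mathfrak p$ supported on $J$ (the parabolic subsystem attached to $J$). To prove the Claim I would adapt the classical construction of a base adapted to $V_0$: choose a generic functional $f_0$ on $V_0$, extend it to a generic dominating functional $f$ on $\mathbb{Q}^{\I}$ whose positive system restricts to that of $f_0$, so that the corresponding base of $\varDelta^{\bq}$ contains a base of $\varDelta^{\bq}\cap V_0$, which is then supported on a subset $J$ of the simple roots. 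The two features of a finite Weyl groupoid that make this work are that every positive root reduces to a simple root under simple reflections and that simple reflections permute the remaining positive roots between adjacent points (see \cite{H-Weyl grp,AA17}); these let me realize the needed adapted positive system as a genuine point $\mathfrak p$ of $\Wc$, proceeding by induction on $\theta$ and minimizing a height-type functional along the groupoid orbit.

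Granting the Claim, $w(\delta_i) \in \varDelta^{\mathfrak p}\cap(\Z\gamma_{j_1}+\Z\gamma_{j_2}+\Z\gamma_{j_3})$ where $J=\{j_1,j_2,j_3\}$, and it remains to force all three images to be \emph{positive}. This can be arranged by a further element of the parabolic subgroupoid $\Wc_J\subseteq\Wc$: the $\delta_i$ are distinct elements of $\varDelta_+^{\bq}\subseteq \N_0^{\I}$, and since $\N_0^{\I}$ is a pointed cone there is no nontrivial relation $\sum_i c_i\delta_i = 0$ with $c_i\ge 0$; applying $w^{-1}$ shows the same for the $w(\delta_i)$, so a separating linear functional exists and selects a point of $\Wc_J$ at which $w(\delta_1),w(\delta_2),w(\delta_3)$ are simultaneously positive and supported on $J$. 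Finally I would choose $\tau\in\mathbb{S}_{\theta}$ with $\tau(\{1,2,3\})=J$ to match the statement, the base case $\theta\le 3$ being immediate with $w=\id$ and $\mathfrak p=\bq$. The main obstacle is the Claim of the second paragraph, and within it the delicate point that only ``Weyl-chamber'' sign systems arise as points of $\Wc$, so that one must show the adapted positive system is actually reachable by the groupoid; the inductive packaging and the pointed-cone positivity argument are then routine.
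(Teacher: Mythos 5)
Your proposal does not prove the lemma; it repackages it as a Claim that is essentially the result being cited. The paper's own proof is a one-line reference to \cite[Theorem 2.3]{CH-rank 3}, and your Claim in the second paragraph --- that $\varDelta^{\bq}\cap V_0$ is carried by some $w\in\Wc$ onto a standard parabolic subsystem of rank $\le 3$ at a suitable object $\mathfrak p$ --- is a mild rephrasing of that theorem of Cuntz--Heckenberger. The sketch you give for the Claim is the classical adapted-base argument for Weyl groups, but the step that makes that argument work classically is exactly the one you flag and then defer: for an ordinary root system, every positive system cut out by a generic functional is the positive system of a chamber, and the Weyl group acts transitively on chambers. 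For a Weyl groupoid neither fact is available for free: there is no invariant bilinear form, the root system changes from object to object, and the assertion that every generic sign system on $\varDelta^{\bq}$ is realized at some object of $\Wc$ is equivalent to the chamber--object correspondence for the associated crystallographic arrangement, which is a substantial theorem of Cuntz and Heckenberger, not a consequence of ``every positive root reduces to a simple root'' plus induction on $\theta$ and height minimization. As written, ``realize the needed adapted positive system as a genuine point $\mathfrak p$ of $\Wc$'' is an assertion, not an argument, and it is the entire mathematical content of the lemma.

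The same gap resurfaces in your final positivity step. The pointed-cone argument does produce a functional $f$ with $f(w(\delta_i))>0$ for all $i$, but to conclude you again need an object of the parabolic subgroupoid $\Wc_J$ whose positive system contains every root on which $f$ is positive --- the same reachability statement, one rank-$3$ level down; you also implicitly use that the restriction of a finite Weyl groupoid to a subset $J$ of the simple roots is again a finite Weyl groupoid, which is true but is likewise part of the Cuntz--Heckenberger theory rather than formal. So your proposal correctly identifies the geometric mechanism behind the lemma, but carrying out your Claim amounts to reproving \cite[Theorem 2.3]{CH-rank 3}; unless you intend to develop the crystallographic-arrangement machinery from scratch, the efficient proof is the paper's: invoke that theorem directly.
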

\pf See \cite[Theorem 2.3]{CH-rank 3}.
\epf

\begin{lema}\label{lema:non-simple-roots-Palfa} Assume that $\bq$ is of Cartan type and that $\gamma$ is not simple.

\begin{enumerate}[leftmargin=*,label=\rm{(\alph*)}]
\item\label{item:typeA} In types $A_{\theta}$, $D_{\theta}$ and $E_{\theta}$, 
we have $P_{\gamma} = 2$ and $Q_\gamma = 1$.

\item\label{item:typeB} In types $B_{\theta}$, $C_{\theta}$ and $F_{4}$,
we have $P_{\gamma} \leq 3$ and $Q_\gamma = 2$.

\item\label{item:typeG2} In type $G_{2}$, $P_{\gamma} \leq 4$ and $Q_\gamma \leq  3$.
\end{enumerate}

\end{lema}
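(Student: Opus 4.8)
The plan is to convert each numerical relation that defines $P_\gamma$ and $Q_\gamma$ into the \emph{same} relation inside a Cartan root system of rank at most three, where the bound can be settled by a finite inspection, and then read off the uniform estimates. Throughout I would use two standard facts about the Weyl groupoid $\Wc$ of $\bq$ in Cartan type: its elements act $\Z$-linearly on $\Z^\theta$ and carry $\varDelta_+^{\bq}$ to the positive roots of another Cartan datum, and they preserve the diagonal data, so that $q_{w(\delta)w(\delta)} = q_{\delta\delta}$ and hence $N_{w(\delta)} = N_\delta$ for every $\delta\in\varDelta_+^{\bq}$ and $w\in\Wc$.

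For $P_\gamma$, suppose $\delta_1 + \delta_2 + \delta_3 = p\gamma$ with distinct positive roots. I would apply Lemma \ref{lema:3roots} to the triple $(\delta_1,\delta_2,\delta_3)$, obtaining $w\in\Wc$ with each $w(\delta_i)$ in $\varDelta_+^{\mathfrak p}\cap(\Z\gamma_{\tau(1)}+\Z\gamma_{\tau(2)}+\Z\gamma_{\tau(3)})$ for a suitable $\mathfrak p$. Linearity gives $w(\delta_1)+w(\delta_2)+w(\delta_3) = p\,w(\gamma)$, and since $w(\gamma) = p^{-1}\sum_i w(\delta_i)$ is a positive rational combination of positive roots lying in that rank-$\le 3$ lattice, it is itself a positive root of the corresponding rank-$\le 3$ parabolic subsystem. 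Hence $p$ is bounded by the analogous maximum inside that subsystem, and it suffices to enumerate: $A_1,A_2,A_3$ and their products in the simply-laced case, the doubly-laced $B_2,C_2,B_3,C_3$ (and products with $A_1$) in the $B,C,F_4$ case, and $G_2$ itself. Concretely, after this reduction I may assume $\gamma$ and the $\delta_i$ already lie in a rank-$\le 3$ subsystem, and pairing $\delta_1+\delta_2+\delta_3 = p\gamma$ with the coroot $\gamma^{\vee}$ gives $\sum_i\langle\delta_i,\gamma^{\vee}\rangle = 2p$; in the simply-laced case each summand is at most $2$ with equality only when $\delta_i=\gamma$, so at most one summand equals $2$ and the rest are $\le 1$, forcing $p\le 2$. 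Together with the lower bound \eqref{eq:Palfa-notsimple} this yields $P_\gamma = 2$ for types $A,D,E$, and the doubly-laced and $G_2$ enumerations give $P_\gamma\le 3$ and $P_\gamma\le 4$ respectively.

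For $Q_\gamma$ I would argue in exactly the same way, applying Lemma \ref{lema:3roots} instead to the triple $(\delta_1,\delta_2,\gamma)$. Using the $N$-invariance recorded above, the relation $N_{\delta_1}\delta_1 + \delta_2 = q\gamma$ transports to $N_{w(\delta_1)}\,w(\delta_1) + w(\delta_2) = q\,w(\gamma)$ inside a rank-$\le 3$ subsystem, with $w(\gamma)$ positive as before. After renaming, pairing with $\gamma^{\vee}$ gives $2q = N_{\delta_1}\langle\delta_1,\gamma^{\vee}\rangle + \langle\delta_2,\gamma^{\vee}\rangle$; the decisive point is that the coefficient $N_{\delta_1}\ge 2$ is precisely what keeps $q$ small once the admissible inner products are controlled in rank $\le 3$. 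A finite check of these subsystems then produces the stated bounds $Q_\gamma\le 1$, $Q_\gamma\le 2$ and $Q_\gamma\le 3$, which are the estimates actually consumed by Lemma \ref{lem:largeN}; the sharp values are exhibited by explicit short/long root configurations in the relevant rank-$2$ subsystem.

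The main obstacle is the bookkeeping once two root lengths are present. In the simply-laced case the coroot pairing closes the estimate essentially for free, but in types $B,C,F_4,G_2$ the inner products $\langle\delta_i,\gamma^{\vee}\rangle$ can reach $2$ for non-proportional roots, so one cannot avoid genuinely running through the rank-$\le 3$ parabolic subsystems while tracking which of $\gamma,\delta_1,\delta_2,\delta_3$ are long and which are short. A secondary point I would make explicit is the legitimacy of the reduction for $Q_\gamma$: it depends on $N_\delta$ being transported by $\Wc$, which is exactly the invariance $q_{w(\delta)w(\delta)} = q_{\delta\delta}$, and I would state and invoke this before using it.
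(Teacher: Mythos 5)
Your treatment of $P_\gamma$ is sound, and in the simply-laced case it is arguably cleaner than the paper's: the reduction via Lemma \ref{lema:3roots}, together with the observation that $w(\gamma)=p^{-1}\sum_i w(\delta_i)$ is again a positive root of the rank-$\le 3$ parabolic, is legitimate, and the pairing $\sum_i\langle\delta_i,\gamma^\vee\rangle=2p$, where each term is $\le 2$ with equality only for $\delta_i=\gamma$ (which distinctness allows at most once), gives $p\le 2$ without any reduction at all. By contrast the paper counts coefficients of extreme simple roots directly in types $A$, $B$, $C$ and invokes Lemma \ref{lema:3roots} only for $D$, $E$, $F_4$; the two routes for $P_\gamma$ are comparable and both correct.

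The gap is in $Q_\gamma$. The identity $2q=N_{\delta_1}\langle\delta_1,\gamma^\vee\rangle+\langle\delta_2,\gamma^\vee\rangle$ cannot bound $q$ in the way you claim, because $N_{\delta_1}=\ord q_{\delta_1\delta_1}$ is \emph{unbounded} as the braiding parameter varies; your sentence that ``$N_{\delta_1}\ge 2$ is precisely what keeps $q$ small'' is backwards. When $\langle\delta_1,\gamma^\vee\rangle\le 0$ the identity does give $2q\le\langle\delta_2,\gamma^\vee\rangle$ and you are done, but when $\langle\delta_1,\gamma^\vee\rangle\ge 1$ (which occurs: $\delta_1=\alpha_2$, $\gamma=\alpha_1+2\alpha_2$ in $B_2$ have $\langle\delta_1,\gamma^\vee\rangle=1$, and a genuine relation $2\alpha_2+\alpha_1=\gamma$ exists when $N_{\delta_1}=2$) the right-hand side grows with $N_{\delta_1}$ and the pairing says nothing. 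For the same reason your ``finite check of these subsystems'' is not finite as stated: the data to be checked includes the unbounded parameter $N_{\delta_1}$. What closes this case is a coordinate argument, not a pairing argument. One fix: since $\delta_1\ne\gamma$ (otherwise $\delta_2=(q-N_{\delta_1})\gamma$ forces $\delta_2=\delta_1$), the roots $\delta_1$ and $\gamma$ are linearly independent, so for each of the finitely many triples $(\delta_1,\delta_2,\gamma)$ in the rank-$\le 3$ system the pair $(N_{\delta_1},q)$ solving $N_{\delta_1}\delta_1+\delta_2=q\gamma$ is \emph{unique}, and the enumeration becomes finite. Alternatively one compares coefficients of simple roots as the paper does: in type $A$, on a coordinate of $\supp\delta_2\setminus\supp\delta_1$ the relation forces $q=1$, while on $\supp\delta_1$ it forces $q\ge N_{\delta_1}\ge 2$, a contradiction. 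One of these arguments must be inserted before your enumeration; as written, the $Q_\gamma$ half does not go through. (A minor further point: in $A$, $D$, $E$ there are no short/long roots, so your claim that the sharp value $Q_\gamma=1$ is ``exhibited by short/long configurations'' is vacuous; only the upper bounds are consumed by Lemma \ref{lem:largeN} anyway.)
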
 

\pf 
\ref{item:typeA} Type $A$: 
Let $\gamma = \gamma_{i j}$ with $i < j$. Suppose that there exists $P \in \N$ such that
$P\gamma_{i j} = \gamma_{k \ell} + \gamma_{m n} + \gamma_{s u}$ with $k \leq m \leq s$ (and the three roots in the right are different). 
Then the coefficient of $\gamma_k$ in the right hand side is at most 3, so $P \leq 3$. 
If $P= 3$, then $k = m = s = i$ and $\ell$, $n$, $u$ are all different. If, say, $u$ is the largest of them,
then the coefficient of $\gamma_u$ in the right hand side is $1$, a contradiction. Thus $P_{\gamma} = 2$ by \eqref{eq:Palfa-notsimple}.
Next, suppose that there exists $P, t \in \N$ such that
$P\gamma_{i j} = t\gamma_{k \ell} + \gamma_{m n}$ (and the two roots in the right are different). Arguing as before, we see that $P \leq 1$, and \eqref{eq:Palfa-notsimple} applies.
Types $D, E$: this follows from Type $A$ and Lemma \ref{lema:3roots}.

\ref{item:typeB}  Type $B$: Let $\gamma_1, \ldots, \gamma_\theta$ be the simple roots with $\gamma_\theta$ the short root. Then the roots come in three flavors: $\gamma_i + \ldots + \gamma_{j}$, $i <j<\theta$, $\gamma_i + \ldots + \gamma_{\theta}$, and $\gamma_i + \ldots + \gamma_{j-1} + 2\gamma_j + \ldots + 2\gamma_\theta$,
$i <j \leq \theta$. Hence, the maximum $P$ is $3$ and  $P_\gamma \leq 3$; similarly, $Q_\gamma \leq 2$. 
Note that $P_{\gamma} =3$ and $Q_\gamma =2$ can occur, e.g. 
\begin{align*}
(\gamma_{\theta-2} + \gamma_{\theta-1}) + (\gamma_{\theta-2} + \gamma_{\theta-1} + \gamma_{\theta}) + (\gamma_{\theta-2} + \gamma_{\theta-1} + 2\gamma_\theta) = 3(\gamma_{\theta-2} + \gamma_{\theta-1} + \gamma_\theta).
\end{align*}

Type $C$:
The coefficient of $\gamma_\theta$ in $\gamma$ is 0 or 1, but in the former, $\gamma$ belongs to a 
sub-diagram of type $A_{\theta - 1}$ that was already settled. Looking at the coefficient of
$\gamma_\theta$ in both sides of $\delta_1 + \delta_2 + \delta_3 = P \gamma$, we conclude that $P \leq 3$.
Similarly, $Q_\gamma \leq 2$.
Note that $P_{\gamma} =3$ and $Q_\gamma =2$ can occur, e.g. 
\begin{align*}
(2\gamma_{\theta-2} + 2\gamma_{\theta-1} + \gamma_\theta) + (\gamma_{\theta-2} + \gamma_{\theta-1} + \gamma_{\theta}) + \gamma_\theta = 3(\gamma_{\theta-2} + \gamma_{\theta-1} + \gamma_\theta).
\end{align*}

Type $F$: this follows from Types $B$ and $C$ and Lemma \ref{lema:3roots}.

\ref{item:typeG2} By inspection.
\epf

By Lemma \ref{lem:largeN} we may assume $\gamma$ is not simple. If $N_{\gamma}=2$,
then the Lemma does not apply. 
The second technique provides an explicit computation of the differential of a suitable chain in this setting.
Let $s = s_n$ be as in \eqref{eq:Anick-def-sn}.

\begin{lemma}\label{lem:second-technique-2cocycles} Let $\gamma \in \varDelta_{+}^{\bq}$ be such that $N_{\gamma}=2$ and the following conditions hold:
\medbreak

\begin{enumerate}[leftmargin=*,label=\rm{(\alph*)}]
\item\label{item:second-technique-2cocycles-1} For all $\beta,\delta\in \varDelta^{\bq}_{+}$, $\beta<\delta$, $\gamma = \beta + \delta$,
\begin{align}\label{eq:second-technique-2cocycles}
x_{\beta}x_{\gamma}&=q_{\beta\gamma} x_{\gamma}x_{\beta}, &
x_{\gamma}x_{\delta}&=q_{\gamma\delta} x_{\delta}x_{\gamma}, &
q_{\beta\beta}&=q_{\delta\delta}.
\end{align}
\medbreak

\item\label{item:second-technique-2cocycles-2} If $\gamma_1,\gamma_2,\gamma_3\in \varDelta_{+}^{\bq}$ are three different roots, $\gamma_i\neq \gamma$, then $\gamma_1+\gamma_2+\gamma_3\neq 2\gamma$.
\medbreak

\item\label{item:second-technique-2cocycles-3} If $\gamma_1,\gamma_2\in \varDelta_{+}^{\bq}$, $\gamma_1 \neq \gamma_2$, then $N_{\gamma_1}\gamma_1+\gamma_2 \neq 2\gamma$.
\end{enumerate}
Then $(\x_{\gamma}^2)^*$ is a cocycle of degree two.
\end{lemma}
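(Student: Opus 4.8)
The plan is to apply Remark~\ref{obs:main}\ref{item:obs-chains2} with $L = N_\gamma = 2$, so that proving $(\x_\gamma^2)^*$ is a cocycle amounts to showing that for every $3$-chain $c \in \chain(3)$, the coefficient of $\x_\gamma^2 \otimes 1$ in $d_3(c \otimes 1)$ vanishes. As in the proof of Lemma~\ref{lem:largeN}, the $3$-chains fall into four shapes: $\x_\beta^{N_\beta+1}$, $\x_\beta^{N_\beta}\x_\delta$, $\x_\beta\x_\delta^{N_\delta}$, and $\x_\beta\x_\delta\x_\eta$ with $\beta > \delta > \eta$. First I would dispose of the three shapes that are ruled out purely by the grading and degree constraints \eqref{eq:grading}--\eqref{eq:degree}. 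For $c = \x_\beta^{N_\beta+1}$ the differential is $\x_\beta^{N_\beta}\otimes x_\beta$, which is never of the form $\x_\gamma^2\otimes 1$. For the mixed power shape $\x_\beta^{N_\beta}\x_\delta$ (or symmetrically $\x_\beta\x_\delta^{N_\delta}$), a nonzero coefficient would force the numerical relation $N_\beta\beta + \delta = 2\gamma$, which is precisely excluded by hypothesis~\ref{item:second-technique-2cocycles-3}. For the triple-letter shape $\x_\beta\x_\delta\x_\eta$, a nonzero coefficient would force $\beta + \delta + \eta = 2\gamma$ with three distinct roots; if none of them equals $\gamma$ this is excluded by hypothesis~\ref{item:second-technique-2cocycles-2}, and if one of them equals $\gamma$ the relation reads (say) $\gamma + \delta + \eta = 2\gamma$, i.e.\ $\delta + \eta = \gamma$, which is the genuinely remaining case.

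So the crux of the argument is the single surviving family of $3$-chains, namely those $c = \x_\beta\x_\gamma\x_\delta$ (after relabeling) where $\beta < \gamma < \delta$, $\gamma = \beta + \delta$, and the middle letter of the chain is $\x_\gamma$ itself. Here I would compute $d_3(c \otimes 1)$ explicitly using the recursive formula
\[
d_3(u\otimes 1) = v\otimes t - s_2 d_2(v\otimes t), \qquad u = vt,\ v\in\chain(2),\ t\in\basis,
\]
from \S\ref{subsec:Anick}, together with the contracting homotopy $s = s_2$ defined in \eqref{eq:Anick-def-sn}. The point of hypothesis~\ref{item:second-technique-2cocycles-1} is that the relevant $2$-chain $\x_\beta\x_\gamma$ (a minimal tip) lifts via the relation $x_\beta x_\gamma = q_{\beta\gamma} x_\gamma x_\beta$ with \emph{no lower-order correction terms}, and likewise $x_\gamma x_\delta = q_{\gamma\delta} x_\delta x_\gamma$; this makes the commutation relations \eqref{eq:convex-order-relations} collapse to pure $q$-commutators, so that the straightening of $x_\gamma\cdot x_\delta$ and $x_\beta\cdot x_\gamma$ inside $\toba_{\bq}$ produces no stray monomials that could recombine into $\x_\gamma^2$. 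I would track the coefficient of $\x_\gamma^2\otimes 1$ through each application of $d_2$ and $s_2$, using that $N_\gamma = 2$ (so $\x_\gamma^2$ is exactly a $2$-chain) and the degree constraint \eqref{eq:degree} which forces $n_1 + n_2 + n_3 = 3$.

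The main obstacle I anticipate is precisely this last computation: even with the clean commutation relations of \ref{item:second-technique-2cocycles-1}, one must verify that the term $\x_\gamma^2\otimes 1$ does not appear when $s_2 d_2(v\otimes t)$ is expanded, because $s_2$ is defined recursively and can in principle reintroduce higher chains. The condition $q_{\beta\beta} = q_{\delta\delta}$ is what I expect to be the decisive arithmetic input: it guarantees a matching of scalar factors so that the two contributions to the $\x_\gamma^2$-coefficient (coming from straightening on the left versus on the right of $\x_\gamma$) cancel, rather than merely being individually controlled. I would set up the bookkeeping so that hypotheses~\ref{item:second-technique-2cocycles-2} and \ref{item:second-technique-2cocycles-3} have already eliminated all \emph{other} sources of $\x_\gamma^2$, reducing the problem to this single cancellation, and then carry out the explicit degree-three computation to confirm the coefficient is zero. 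The detailed verification of this cancellation is exactly the kind of computation deferred to Section~\ref{sec:computational-lemmas}, and I would state it as such.
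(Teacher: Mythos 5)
Your proposal follows the paper's proof essentially step for step: hypotheses \ref{item:second-technique-2cocycles-2} and \ref{item:second-technique-2cocycles-3} eliminate every $3$-chain of degree $2\gamma$ except $\x_{\beta}\x_{\gamma}\x_{\delta}$ with $\gamma=\beta+\delta$, and the decisive cancellation is exactly the one you predict --- the explicit homotopy computation shows the coefficient of $\x_{\gamma}^2\ot 1$ in $d(\x_{\beta}\x_{\gamma}\x_{\delta}\ot 1)$ is $(q_{\beta\gamma}-q_{\gamma\delta})\Bsj$, where $\Bsj$ is the coefficient of $x_{\gamma}$ in the straightening of $x_{\beta}x_{\delta}$, and this vanishes because $q_{\beta\gamma}=q_{\beta\beta}q_{\beta\delta}$, $q_{\gamma\delta}=q_{\beta\delta}q_{\delta\delta}$ and $q_{\beta\beta}=q_{\delta\delta}$. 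The only difference is organizational: the paper carries out that computation inside the proof itself (disposing of the leftover terms $\Bsj_{\nu_1,\dots,\nu_k}x_{\nu_k}\cdots x_{\nu_1}$ via $s^2=0$ and convexity), rather than deferring it to Section~\ref{sec:computational-lemmas}, which is reserved for the higher-degree cocycle computations.
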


\pf
By Remark \ref{obs:main} we have to check that the coefficient of $\x_{\gamma}^2\ot 1$ in $d(c\ot 1)$ is zero for all 3-chains $c$ of degree $2\gamma$. By \ref{item:second-technique-2cocycles-2} and  \ref{item:second-technique-2cocycles-3} we have to deal with $c=\x_{\beta} \x_{\gamma} \x_{\delta}$, where $\beta,\delta\in \varDelta^{\bq}_{+}$, $\beta<\delta$, $\gamma = \beta + \delta$: Here we use the convexity to deduce that $\beta<\gamma<\delta$. 

Fix $\beta,\delta\in \varDelta^{\bq}_{+}$ such that $\beta<\delta$ and $\gamma = \beta + \delta$. By \eqref{eq:convex-combination},
\begin{align*}
x_{\beta}x_{\delta}&=q_{\beta\delta} x_{\delta}x_{\beta}
+ \Bsj x_{\gamma} + \sum_{\beta<\nu_1\le \dots \le \nu_k <\delta: \, \sum \nu_i=\gamma} \Bsj_{\nu_1,\dots,\nu_k} x_{\nu_k} \dots x_{\nu_1}
\end{align*}
for some $\Bsj x_{\gamma},\Bsj_{\nu_1,\dots,\nu_k}\in\Bbbk$.
Using the convexity again we see that if $\nu_1\le \dots \le \nu_k$ are such that $\sum \nu_i=\gamma$, then $\nu_1<\gamma<\nu_k$.

By definition of the differential on 2-chains and \ref{item:second-technique-2cocycles-1},
\begin{align*}
d( & \x_{\beta}\x_{\delta} \ot 1) =
\x_{\beta} \ot x_{\delta}
-q_{\beta\delta} \x_{\delta} \ot x_{\beta}
-\Bsj \x_{\gamma} \ot 1 
-\sum \Bsj_{\nu_1,\dots,\nu_k} \x_{\nu_k} \ot x_{\nu_{k-1}} \dots x_{\nu_1},
\\
d( & \x_{\beta}\x_{\gamma} \ot 1) = \x_{\beta} \ot x_{\gamma}
-q_{\beta\gamma} \x_{\gamma} \ot x_{\beta},
\quad 
d( \x_{\gamma}\x_{\delta} \ot 1) = \x_{\gamma} \ot x_{\delta}
-q_{\gamma\delta} \x_{\delta} \ot x_{\gamma}.
\end{align*}
Using these computations and  \ref{item:second-technique-2cocycles-1},
\begin{align*}
d( & \x_{\beta} \x_{\gamma} \x_{\delta} \ot 1  )= 
\x_{\beta}\x_{\gamma} \ot x_{\delta} - 
sd \big( \x_{\beta}\x_{\gamma} \ot x_{\delta} \big)
= \x_{\beta}\x_{\gamma} \ot x_{\delta} - 
s \big(\x_{\beta} \ot x_{\gamma}x_{\delta} 
-q_{\beta\gamma} \x_{\gamma} \ot x_{\beta}x_{\delta}
\big)
\\ & = 
\x_{\beta}\x_{\gamma} \ot x_{\delta} - 
s \left(q_{\gamma\delta} \x_{\beta} \ot x_{\delta}x_{\gamma} 
-q_{\beta\gamma} \x_{\gamma} \ot \big( q_{\beta\delta} x_{\delta}x_{\beta}
+ \Bsj x_{\gamma} + \sum \Bsj_{\nu_1,\dots,\nu_k} x_{\nu_k} \dots x_{\nu_1} \big)
\right)
\\ & = 
\x_{\beta}\x_{\gamma} \ot x_{\delta}
-q_{\gamma\delta} \x_{\beta}\x_{\delta} \ot x_{\gamma} 
-s \Big(
(q_{\gamma\delta}-q_{\beta\gamma}) \Bsj \x_{\gamma} \ot x_{\gamma}
-q_{\beta\gamma}q_{\beta\delta} \x_{\gamma} \ot  x_{\delta}x_{\beta}
\\ & \quad
+\sum \Bsj_{\nu_1,\dots,\nu_k} (q_{\gamma\delta}\x_{\nu_k} \ot x_{\nu_{k-1}} \dots x_{\nu_1}x_{\gamma}
- q_{\beta\gamma} \x_{\gamma} \ot  x_{\nu_k} \dots x_{\nu_1})
+q_{\beta\gamma}q_{\beta\delta}q_{\gamma\delta} \x_{\delta} \ot x_{\gamma}x_{\beta}\Big)
\\ & = 
\x_{\beta}\x_{\gamma} \ot x_{\delta}
-q_{\gamma\delta} \x_{\beta}\x_{\delta} \ot x_{\gamma} 
+(q_{\beta\gamma}-q_{\gamma\delta}) \Bsj \x_{\gamma}^2 \ot 1
+q_{\beta\gamma}q_{\beta\delta} \x_{\gamma}\x_{\delta} \ot  x_{\beta}
\\ & \quad
+\sum \Bsj_{\nu_1,\dots,\nu_k} q_{\beta\gamma} \x_{\gamma}\x_{\nu_k} \ot  x_{\nu_{k-1}} \dots x_{\nu_1}
\\ & \quad
-s \Big(
\sum \Bsj_{\nu_1,\dots,\nu_k} \x_{\nu_k} \ot 
\big( q_{\gamma\delta} x_{\nu_{k-1}} \dots x_{\nu_1}x_{\gamma} -
q_{\beta\gamma}q_{\gamma\nu_k} x_{\gamma}x_{\nu_{k-1}} \dots x_{\nu_1} \big)
\\ & \quad
- \sum \Bsj_{\nu_1,\dots,\nu_k} q_{\beta\gamma}
s(f_{x_{\gamma},x_{\nu_k}}x_{\nu_{k-1}} \dots x_{\nu_1}) \Big)
\end{align*}
Here $f_{x_{\gamma},x_{\nu_k}}= [x_{\gamma},x_{\nu_k}]_c-x_{\gamma}x_{\nu_k}+q_{\gamma\nu_k} \x_{\nu_k} \ot x_{\gamma}$. We claim that 
\begin{align*}
d( & \x_{\beta} \x_{\gamma} \x_{\delta} \ot 1  )= \x_{\beta}\x_{\gamma} \ot x_{\delta}
-q_{\gamma\delta} \x_{\beta}\x_{\delta} \ot x_{\gamma} 
+(q_{\beta\gamma}-q_{\gamma\delta}) \Bsj \x_{\gamma}^2 \ot 1
+q_{\beta\gamma}q_{\beta\delta} \x_{\gamma}\x_{\delta} \ot  x_{\beta}
\\ & \quad
+\sum \Bsj_{\nu_1,\dots,\nu_k} q_{\beta\gamma} \x_{\gamma}\x_{\nu_k} \ot  x_{\nu_{k-1}} \dots x_{\nu_1}.
\end{align*}

According with the previous computation we should prove that
$s$ annihilates
\begin{align}\label{eq:s-annihilated-terms}
&\x_{\nu_k} \ot 
\big( q_{\gamma\delta} x_{\nu_{k-1}} \dots x_{\nu_1}x_{\gamma} -
q_{\beta\gamma}q_{\gamma\nu_k} x_{\gamma}x_{\nu_{k-1}} \dots x_{\nu_1}\big), && s(f_{x_{\gamma},x_{\nu_k}}x_{\nu_{k-1}} \dots x_{\nu_1}).
\end{align}
For the elements on the right of \eqref{eq:s-annihilated-terms} we use that $s^2=0$. For the elements on the left of \eqref{eq:s-annihilated-terms}, due to the convexity of the PBW basis, 
$x_{\nu_{k-1}} \dots x_{\nu_1}x_{\gamma}$ and $x_{\gamma}x_{\nu_{k-1}} \dots x_{\nu_1}$ are linear combinations of products $x_{\mu_j} \dots x_{\mu_1}$, with $\nu_1 \le \mu_1\le \dots\le \mu_j \le \nu_{k-1}$ and $\mu_j \le \gamma$. Hence $\mu_j\le \nu_k$, so they are linear combinations of
\begin{align*}
\x_{\nu_k} \ot x_{\mu_j} \dots x_{\mu_1} = s( x_{\nu_k} x_{\mu_j} \dots x_{\mu_1} ),
\end{align*}
and we use again that $s^2=0$. Finally, using the claim and that
\begin{align*}
q_{\beta\gamma}-q_{\gamma\delta} =
q_{\beta\beta}q_{\beta\delta}-q_{\beta\delta}q_{\delta\delta}
\overset{\eqref{eq:second-technique-2cocycles}}{=} 0,
\end{align*}
the coefficient of $\x_{\gamma}^2 \ot 1$ is zero.
\epf

\subsection{Higher degree cocycles}
We now assume that we are not in the situations of \S \ref{subsec:gral-lemmas}.
We shall compute all chains $c \in \chain(2\ell+1)$ satisfying the degree and grading constraints
\eqref{eq:grading} and \eqref{eq:degree} and verify that the condition in  Remark \ref{obs:main}  \ref{item:obs-chains2}  is satisfied.
As before $\gamma \in \varDelta_+^{\bq}$ is fixed.

Let $f_\delta: \N_0\to\N_0$ be the function defined in \eqref{eq:power-root-vector-chain} for  
$\delta \in  \varDelta_+$. 

\subsubsection{$N_\gamma=2$} Here the constraints \eqref{eq:grading} and \eqref{eq:degree}
take the form
\begin{align}\label{eq:equation-roots-cocycle-N=2}
\sum_{\delta\in\varDelta_+} f_\delta(n_\delta) \delta &= L \gamma, &
\sum_{\delta\in\varDelta_+} n_\delta &= L +1.
\end{align}

In the following mega statement we collect all possible conditions that we may need to verify on $\gamma$   
to conclude that 
$(\x_\gamma^{L})^*$ is an $L$-cocycle (that is, a cocycle of degree $L$). We explain the scheme of the proof up to the specific computation of differentials
that is postponed to Section \ref{sec:computational-lemmas}.

\begin{prop}\label{prop:cocycle-xgamma-N=2}
Let $L = 2 \ell\in \N$ even. Assume that each
solution $(n_\delta)_{\delta\in\varDelta_+} \in \N_0^{\varDelta_+}$ of the equations \eqref{eq:equation-roots-cocycle-N=2}
is of one of the forms \ref{item:cocycle-xgamma-N=2-1}, \ref{item:cocycle-xgamma-N=2-2},
\ref{item:cocycle-xgamma-N=2-3}, \ref{item:cocycle-xgamma-N=2-4}, \ref{item:cocycle-xgamma-N=2-5}, \ref{item:cocycle-xgamma-N=2-6},
\ref{item:cocycle-xgamma-N=2-7}, \ref{item:cocycle-xgamma-N=2-8},
\ref{item:cocycle-xgamma-N=2-9} or \ref{item:cocycle-xgamma-N=2-10}.
Then   $(\x_\gamma^{L})^*$ is an $L$-cocycle.

\begin{enumerate}[leftmargin=*,label=\rm{(\Alph*)}]
\item\label{item:cocycle-xgamma-N=2-1}
$n_{\gamma}= L-1$, $n_{\alpha}=n_{\beta}=1$ and $n_{\varphi}= 0$ for the other $\varphi \in\varDelta_+$;
where  $\alpha, \beta \in \varDelta_+$ satisfy 
\begin{align}\label{eq:alfa-beta-N=2}
&\alpha < \beta,\quad  \alpha+\beta = \gamma,
\\
\notag &\text{the corresponding PBW generators satisfy \eqref{eq:diff2-hypothesis},}
\\ \label{eq:Csj-Dsj-N=2}
&\text{and $L$ satisfies } (L)_{-\frac{q_{\alpha \alpha}}{q_{\beta \beta}}} = 0.
\end{align}

\item\label{item:cocycle-xgamma-N=2-2} $n_{\gamma}= L-2$, 
$n_{\alpha}=n_{\beta}=n_{\delta}=1$ and $n_{\varphi}= 0$ for the other $\varphi \in\varDelta_+$.
where  $\alpha, \beta,\delta,\eta \in \varDelta_+$ satisfy 
\begin{align}\label{eq:alfa-beta-delta-eta-N=2}
&\alpha  < \eta <\gamma < \beta <\delta,   \quad \gamma + \eta =\alpha+\beta, \quad  \eta+\delta = \gamma,
\\
\notag &\text{the corresponding PBW generators satisfy \eqref{eq:diff-case2-hypothesis},}
\\ \label{eq:Csj-Dsj-N=2-2}
&\text{and $L$ satisfies } \coef{\alpha\beta\gamma}{L}:=\sum_{k=0}^L (-\widetilde{q}_{\alpha\gamma})^{k} (k+1)_{\widetilde{q}_{\beta\gamma}} = 0.
\end{align}

\item\label{item:cocycle-xgamma-N=2-3}  $n_{\gamma}= L-3$, 
$n_{\alpha}=2$, $n_{\beta}=n_{\delta}=1$ and $n_{\varphi}= 0$ for the other $\varphi \in\varDelta_+$ where $\alpha, \beta,\delta,\eta,\tau$  
satisfy
\begin{align}\label{eq:alfa-beta-delta-eta-tau-N=2}
&\begin{aligned}
&\alpha < \eta <\gamma < \tau < \beta <\delta, & \gamma + \tau &=\alpha+\beta, &  \eta+\delta &= \gamma, 
\\
&N_{\alpha}=2 & \alpha+\tau &= \gamma+\eta, & \eta+\beta &= 2\tau,
\end{aligned}
\\
\notag &\text{the corresponding PBW generators satisfy \eqref{eq:diff-case3-hypothesis},}
\\\label{eq:Csj-Dsj-N=2-3}
&\text{and $L$ satisfies }
(L)_{\widetilde{q}_{\gamma\alpha}\widetilde{q}_{\gamma\beta}} + \sum_{j=1}^{L-1} \coef{\alpha\tau\gamma}{j} = 0.
\end{align}

\item\label{item:cocycle-xgamma-N=2-4} $n_{\gamma}= L-2$, 
$n_{\alpha}=n_{\beta}=n_{\delta}=1$ and $n_{\varphi}= 0$ for the other $\varphi \in\varDelta_+$, where
$\alpha, \beta,\delta,\eta,\tau \in \varDelta_+$ satisfy
\begin{align}\label{eq:alfa-beta-delta-eta-tau-N=2-diff4}
&\alpha < \beta <\gamma < \tau < \eta <\delta, \quad \begin{aligned} 
&\alpha + \delta=\gamma+\eta, &  &\beta+\delta= \eta+\tau,
\\ &\gamma+\delta=2\eta+\tau, & &\eta+\beta=\gamma.
\end{aligned}
\\
\notag &\text{the corresponding PBW generators satisfy \eqref{eq:diff-case4-hypothesis},}
\\\label{eq:Csj-Dsj-N=2-4}
&\text{and $L$ satisfies }
\coef{\alpha\beta\gamma}{L} = 0.
\end{align}

\item\label{item:cocycle-xgamma-N=2-5} $n_{\gamma}= L-3$, 
$n_{\alpha}=n_{\beta}=n_{\delta}=n_{\eta}=1$ and $n_{\varphi}= 0$ for the other $\varphi \in\varDelta_+$, where $\alpha, \beta, \delta, \tau, \mu, \eta \in \varDelta_+$ satisfy
\begin{align}\label{eq:alfa-beta-delta-etc-N=2-diff5}
&\begin{aligned}
&\alpha < \beta < \delta < \gamma < \tau < \mu < \eta, & 
\alpha + \mu &=\gamma = \beta + \tau, &  \eta+\delta &= \gamma+\tau+\mu.
\end{aligned}
\\
\notag &\text{the corresponding PBW generators satisfy \eqref{eq:diff-case5-hypothesis},}
\\\label{eq:Csj-Dsj-N=2-5}
&\text{and $L$ satisfies }
\coeff{\alpha\beta\delta\gamma}{L}:=\sum\limits_{k=0}^{L-1} \widetilde{q}_{\delta\gamma}^{\, k} (k+1)_{\widetilde{q}_{\alpha\gamma}}
(k+2)_{\widetilde{q}_{\beta\gamma}}= 0.
\end{align}

\item\label{item:cocycle-xgamma-N=2-6} $n_{\gamma}= L-2$, 
$n_{\alpha}=n_{\beta}=n_{\delta}=1$ and $n_{\varphi}= 0$ for the other $\varphi \in\varDelta_+$, where $\alpha, \beta,\delta,\eta \in \varDelta_+$ satisfy
\begin{align}\label{eq:roots-N=2-case6}
& 
\begin{aligned}
&\alpha < \eta <\gamma < \beta <\delta,   & \gamma + \eta &=\alpha+\delta, &  \eta+\beta &= \gamma,
\end{aligned}
\\
\notag &\text{the corresponding PBW generators satisfy \eqref{eq:diff-case6-hypothesis},}
\\\label{eq:Csj-Dsj-N=2-6}
&\text{and $L$ satisfies }
\coef{\alpha\delta\gamma}{L}= 0.
\end{align}

\item\label{item:cocycle-xgamma-N=2-7} $n_{\gamma}= L-2$, 
$n_{\alpha}=n_{\beta}=n_{\delta}=1$ and $n_{\varphi}= 0$ for the other $\varphi \in\varDelta_+$, where $\alpha, \beta,\delta,\eta \in \varDelta_+$ satisfy
\begin{align}\label{eq:roots-N=2-case7}
& \begin{aligned}
\alpha & < \beta <\gamma < \eta <\delta,   & \gamma + \eta &=\beta+\delta, &  \eta+\alpha &= \gamma,
\end{aligned}
\\
\notag &\text{the corresponding PBW generators satisfy \eqref{eq:diff-case7-hypothesis},}
\\\label{eq:Csj-Dsj-N=2-7}
&\text{and $L$ satisfies }
\coef{-\delta\alpha\gamma}{L}= 0.
\end{align}

\item\label{item:cocycle-xgamma-N=2-8} $n_{\gamma}= L-2$, 
$n_{\alpha}=n_{\beta}=n_{\delta}=1$ and $n_{\varphi}= 0$ for the other $\varphi \in\varDelta_+$, where $\alpha,\beta, \delta,\eta, \tau,\mu,\nu \in \varDelta_+$ satisfy
\begin{align}\label{eq:roots-N=2-case8}
&\begin{aligned}
\alpha &< \tau < \beta <\gamma < \mu < \nu < \eta <\delta,  
&
\alpha + \delta &= \eta + \tau, &
\beta + \delta &= \nu + \gamma, 
\\ & &
\beta + \eta &= \mu + \gamma, &
\alpha + \nu &= \gamma,
\end{aligned}
\\
\notag &\text{the corresponding PBW generators satisfy \eqref{eq:diff-case8-hypothesis},}
\\\label{eq:Csj-Dsj-N=2-8}
&\text{and $L$ satisfies }
\coef{-\delta\alpha\gamma}{L}= 0.
\end{align}

\item\label{item:cocycle-xgamma-N=2-9} $n_{\gamma}= L-3$, 
$n_{\alpha}=n_{\beta}=n_{\delta}=n_{\eta}=1$ and $n_{\varphi}= 0$ for the other $\varphi \in\varDelta_+$, where $\alpha, \beta, \nu, \mu, \delta, \eta \in \varDelta_+$ satisfy
\begin{align}\label{eq:roots-N=2-case9}
& \begin{aligned}
\alpha &< \beta < \nu < \gamma < \mu <\delta < \eta,  
&
\beta+\delta &= \gamma+\nu, &
\nu+\eta &= \mu+\gamma, &
\alpha+\mu &= \gamma,
\end{aligned}
\\
\notag &\text{the corresponding PBW generators satisfy \eqref{eq:diff-case9-hypothesis},}
\\\label{eq:Csj-Dsj-N=2-9}
&\text{and $L$ satisfies }
\coeff{\alpha+\beta,\delta,\alpha,\gamma}{L}= 0.
\end{align}

\item\label{item:cocycle-xgamma-N=2-10} $n_{\gamma}= L-3$, 
$n_{\alpha}=n_{\beta}=n_{\delta}=n_{\eta}=1$ and $n_{\varphi}= 0$ for the other $\varphi \in\varDelta_+$, where $\alpha, \beta, \delta, \eta, \nu, \mu \in \varDelta_+$ satisfy
\begin{align}\label{eq:roots-N=2-case10}
& \begin{aligned}
\alpha &< \beta < \delta < \gamma < \mu <\nu < \eta,  
&
\beta+\eta &= \gamma+\nu, &
\delta+\nu &= \mu+\gamma, &
\alpha+\mu &= \gamma,
\end{aligned}
\\
\notag &\text{the corresponding PBW generators satisfy \eqref{eq:diff-case9-hypothesis},}
\\\label{eq:Csj-Dsj-N=2-10}
&\text{and $L$ satisfies }
\coeff{\beta \, -\nu \, \alpha \gamma}{L}= 0.
\end{align}
\end{enumerate}

\end{prop}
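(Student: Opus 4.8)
The plan is to verify the cocycle criterion of Remark~\ref{obs:main}\ref{item:obs-chains2} directly: $(\x_\gamma^L)^*$ is a cocycle exactly when, for every $(L+1)$-chain $c$, the coefficient of $\x_\gamma^L\ot 1$ in $d(c\ot 1)$ is zero once $d(c\ot 1)$ is expanded in the PBW basis. The first reduction is a grading argument. Since $d$ preserves the $\N_0^{\I}$-grading by Lemma~\ref{lem:grading}, and since $\x_\gamma^L$ is a $2\ell$-chain lying in homological degree $2\ell$ while $c$ lies in homological degree $2\ell+1$, the only chains $c$ that can produce $\x_\gamma^L\ot 1$ are those whose exponent vector $(n_\delta)_{\delta\in\varDelta_+}$ solves the grading and homological-degree constraints \eqref{eq:equation-roots-cocycle-N=2}. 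By hypothesis every such solution has one of the ten shapes \ref{item:cocycle-xgamma-N=2-1}--\ref{item:cocycle-xgamma-N=2-10}, so it suffices to treat a single chain of each shape and check that its contribution to the coefficient of $\x_\gamma^L\ot 1$ vanishes.

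I would then run through the shapes one at a time. For \ref{item:cocycle-xgamma-N=2-1} the constraints force $c=\x_\alpha\x_\gamma^{L-1}\x_\beta$ with $\alpha<\gamma<\beta$ and $\alpha+\beta=\gamma$, the ordering being dictated by convexity. Evaluating $d(c\ot 1)$ proceeds through the recursive formulas for the Anick differential and the contracting homotopy $s$ of \eqref{eq:Anick-def-sn}, using the convex-order relations \eqref{eq:convex-order-relations} to rewrite every product of root vectors in the PBW basis; the PBW hypothesis \eqref{eq:diff2-hypothesis} is precisely what controls which monomials can appear. Collecting the scalars accrued from commuting $x_\alpha$ and $x_\beta$ past the $L-1$ copies of $x_\gamma$ collapses the coefficient of $\x_\gamma^L\ot 1$ to a nonzero scalar times the quantum integer $(L)_{-q_{\alpha\alpha}/q_{\beta\beta}}$, which is zero by the assumption \eqref{eq:Csj-Dsj-N=2}. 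The nine remaining shapes are handled by the same mechanism: for each, the corresponding computational lemma of Section~\ref{sec:computational-lemmas}, operating under the PBW hypotheses \eqref{eq:diff-case2-hypothesis}, \eqref{eq:diff-case3-hypothesis}, \eqref{eq:diff-case4-hypothesis}, \eqref{eq:diff-case5-hypothesis}, \eqref{eq:diff-case6-hypothesis}, \eqref{eq:diff-case7-hypothesis}, \eqref{eq:diff-case8-hypothesis} and \eqref{eq:diff-case9-hypothesis}, identifies the coefficient of $\x_\gamma^L\ot 1$ in $d(c\ot 1)$ with a nonzero multiple of the displayed coefficient $\coef{\cdots}{L}$ or $\coeff{\cdots}{L}$, which vanishes by the stated numerical condition on $L$.

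Combining the shapes, the coefficient of $\x_\gamma^L\ot 1$ in $d(c\ot 1)$ is zero for every $(L+1)$-chain $c$ meeting the constraints, and it is zero for grading reasons for all other chains; by Remark~\ref{obs:main}\ref{item:obs-chains2} this makes $(\x_\gamma^L)^*$ an $L$-cocycle.

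The genuine work, and the main obstacle, lies entirely in the per-shape differential computations deferred to Section~\ref{sec:computational-lemmas}. Two features make them delicate. First, $s$ is defined recursively on the leading term of a kernel element, so a single evaluation of $d(c\ot 1)$ unfolds into an iterated application of $s$, at each stage requiring that non-PBW products such as $x_{\nu_{k-1}}\cdots x_{\nu_1}x_\gamma$ and $x_\gamma x_{\nu_{k-1}}\cdots x_{\nu_1}$ be rewritten in the basis and that the resulting error terms be recognized as lying in $\im s$, where $s^2=0$ applies---exactly the bookkeeping already seen in the proof of Lemma~\ref{lem:second-technique-2cocycles}, but now over longer chains. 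Second, one must confirm that after all cancellations the surviving scalar in front of $\x_\gamma^L\ot 1$ is precisely the advertised quantum-integer sum, so that the chosen vanishing condition on $L$ really does annihilate it; this is where the higher-degree cases, those with $n_\gamma=L-3$ and four auxiliary roots, demand the most care.
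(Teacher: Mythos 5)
Your proposal is correct and follows essentially the same route as the paper: reduce via the grading and homological-degree constraints and Remark~\ref{obs:main} to the ten admissible chain shapes, then for each shape invoke the corresponding differential formula (Lemma~\ref{lem:diff2} for shape \ref{item:cocycle-xgamma-N=2-1}, and Lemmas~\ref{lem:diff-case2}--\ref{lem:diff-case10} for the rest) to see that the coefficient of $\x_\gamma^L\ot 1$ is a scalar multiple of the displayed quantum-integer sum, which vanishes by the stated hypothesis on $L$. The heavy lifting is indeed exactly where you place it, in the deferred computations of Section~\ref{sec:computational-lemmas}.
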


\pf
As $N_\gamma=2$, $\x_\gamma^n\ot 1$ is a $n$-chain for all $n\in\N$; hence $\x_\gamma^L\ot 1$ is so. By assumption all $(L+1)$-chains of degree $L \gamma$ are one of the following forms: $\x_\alpha \x_\gamma^{L-1}\x_\beta$, for a pair $(\alpha, \beta)$ satisfying \eqref{eq:alfa-beta-N=2}; $\x_\alpha \x_\gamma^{L-2}\x_\beta\x_{\delta}$, for a $4$-tuple  $(\alpha, \beta,\delta,\eta)$ satisfying \eqref{eq:alfa-beta-delta-eta-N=2}; $\x_\alpha^2 \x_\gamma^{L-3}\x_\beta\x_{\delta}$, for a $5$-tuple  $(\alpha, \beta,\delta,\eta,\tau)$ satisfying \eqref{eq:alfa-beta-delta-eta-tau-N=2},
$\x_\alpha \x_\beta\x_\gamma^{L-2}\x_{\delta}$, for a $5$-tuple $(\alpha, \beta,\delta,\eta,\tau)$ satisfying \eqref{eq:alfa-beta-delta-eta-tau-N=2-diff4},
$\x_\alpha \x_\beta \x_{\delta} \x_\gamma^{L-2} \x_{\eta}$, for a $6$-tuple  $(\alpha, \beta, \delta, \tau, \varphi, \eta)$ satisfying \eqref{eq:alfa-beta-delta-etc-N=2-diff5},
$\x_\alpha \x_\gamma^{L-2}\x_\beta\x_{\delta}$, for a $4$-tuple  $(\alpha, \beta,\delta,\eta)$ satisfying \eqref{eq:roots-N=2-case6},
$\x_\alpha \x_\beta\x_\gamma^{L-2}\x_{\delta}$, for a $4$-tuple  $(\alpha, \beta,\delta,\eta)$ satisfying \eqref{eq:roots-N=2-case7},
$\x_\alpha \x_\beta\x_\gamma^{L-2}\x_{\delta}$, for a $7$-tuple  $(\alpha, \beta,\delta,\eta,\tau,\mu,\nu)$ satisfying \eqref{eq:roots-N=2-case8},
$\x_\alpha \x_\beta\x_\gamma^{L-3}\x_{\delta}\x_{\eta}$, for a $6$-tuple $(\alpha, \beta,\nu,\mu\delta,\eta)$ satisfying \eqref{eq:roots-N=2-case9}.

\begin{itemize}[leftmargin=*,label=$\circ$]
\item Fix a pair $(\alpha, \beta)$ satisfying \eqref{eq:alfa-beta-N=2}. To simplify the notation, call $\zeta:=-\frac{\Csj}{\Dsj}=-
\frac{q_{\alpha \alpha}}{q_{\beta \beta}}$. We can apply Lemma \ref{lem:diff2} since conditions \eqref{eq:diff2-hypothesis} hold by hypothesis.
Assume first that $L=2a+1$ is odd. Hence the coefficient of $\x_\gamma^L\ot 1$ in $d(\x_\alpha \x_\gamma^{L-1}\x_\beta\ot 1)$ is 
\begin{align*}
\Bsj\Dsj^{2a} &
\left\{ \left(-\zeta-1 \right) (a)_{(-\zeta)^2}
- \left(-\zeta\right)^{2a}\right\}
=
-\Bsj\Dsj^{L-1}
\left\{ \left(1+\zeta\right) (a)_{\zeta^2}
+\zeta^{2a}\right\} = -\Bsj\Dsj^{L-1} (L)_{\zeta}.
\end{align*}
If $L=2a$ is even, then the coefficient of $\x_\gamma^L\ot 1$ in $d(\x_\alpha \x_\gamma^{L-1}\x_\beta\ot 1)$ is
\begin{align*}
&\Bsj\Dsj^{2a-1} \left(-\zeta-1\right) (a)_{\left(-\zeta\right)^{2}}
=
-\Bsj\Dsj^{L-1} \left(1+\zeta\right) (a)_{\zeta^{2}}
=
-\Bsj\Dsj^{L-1} (L)_{\zeta}.
\end{align*}
By \ref{item:cocycle-xgamma-N=2-1} such coefficient is zero in both cases. 

\medbreak

\item Fix a $4$-tuple  $(\alpha, \beta,\delta,\eta)$ satisfying \eqref{eq:alfa-beta-delta-eta-N=2}. We can apply Lemma \ref{lem:diff-case2} since conditions \eqref{eq:diff-case2-hypothesis} hold by hypothesis.
Hence the coefficient of $\x_\gamma^L\ot 1$ in $d(\x_\alpha \x_\gamma^{L-2}\x_\beta\x_{\delta}\ot 1)$ is zero by \ref{item:cocycle-xgamma-N=2-2}.

\medbreak

\item Fix a $5$-tuple  $(\alpha, \beta,\delta,\eta,\tau)$ satisfying \eqref{eq:alfa-beta-delta-eta-tau-N=2}. We can apply Lemma \ref{lem:diff-case3} since conditions \eqref{eq:diff-case3-hypothesis} hold by hypothesis.
Hence the coefficient of $\x_\gamma^L\ot 1$ in $d(\x_\alpha^2 \x_\gamma^{L-3}\x_\beta\x_{\delta}\ot 1)$ is zero by \ref{item:cocycle-xgamma-N=2-3}.

\medbreak

\item Fix a $5$-tuple  $(\alpha, \beta,\delta,\eta,\tau)$ satisfying \eqref{eq:alfa-beta-delta-eta-tau-N=2-diff4}. We can apply Lemma \ref{lem:diff-case4} since conditions \eqref{eq:diff-case4-hypothesis} hold by hypothesis.
Hence the coefficient of $\x_\gamma^L\ot 1$ in $d(\x_\alpha \x_\beta\x_\gamma^{L-2}\x_{\delta}\ot 1)$ is zero by \ref{item:cocycle-xgamma-N=2-4}.

\medbreak

\item Fix a $6$-tuple  $(\alpha, \beta, \delta, \tau, \varphi, \eta)$ satisfying \eqref{eq:alfa-beta-delta-etc-N=2-diff5}. We can apply Lemma \ref{lem:diff-case5} since conditions \eqref{eq:diff-case5-hypothesis} hold by hypothesis.
Hence the coefficient of $\x_\gamma^L\ot 1$ in $d(\x_\alpha \x_\beta \x_{\delta} \x_\gamma^{L-2} \x_{\eta} \ot 1)$ is zero by \ref{item:cocycle-xgamma-N=2-5}.

\medbreak

\item Fix a $4$-tuple  $(\alpha, \beta,\delta,\eta)$ satisfying \eqref{eq:roots-N=2-case6}. We can apply Lemma \ref{lem:diff-case6} since \eqref{eq:diff-case6-hypothesis} holds by hypothesis.
Hence the coefficient of $\x_\gamma^L\ot 1$ in $d(\x_\alpha \x_\gamma^{L-2}\x_\beta\x_{\delta}\ot 1)$ is zero by \ref{item:cocycle-xgamma-N=2-6}.

\medbreak

\item Fix a $4$-tuple  $(\alpha, \beta,\delta,\eta)$ satisfying \eqref{eq:roots-N=2-case7}. We apply Lemma \ref{lem:diff-case7} since \eqref{eq:diff-case7-hypothesis} holds by hypothesis: The coefficient of $\x_\gamma^L\ot 1$ in $d(\x_\alpha \x_\beta\x_\gamma^{L-2}\x_{\delta}\ot 1)$ is zero by \ref{item:cocycle-xgamma-N=2-7}.

\medbreak

\item Fix a $7$-tuple  $(\alpha, \beta,\delta,\eta,\tau,\mu,\nu)$ satisfying \eqref{eq:roots-N=2-case8}. We can apply Lemma \ref{lem:diff-case8} since \eqref{eq:diff-case8-hypothesis} holds by hypothesis: The coefficient of $\x_\gamma^L\ot 1$ in $d(\x_\alpha \x_\beta\x_\gamma^{L-2}\x_{\delta}\ot 1)$ is zero by \ref{item:cocycle-xgamma-N=2-8}.

\medbreak

\item Fix a $6$-tuple  $(\alpha, \beta,\nu,\mu,\delta,\eta)$ satisfying \eqref{eq:roots-N=2-case9}. We apply Lemma \ref{lem:diff-case9} since \eqref{eq:diff-case9-hypothesis} holds by hypothesis: The coefficient of $\x_\gamma^L\ot 1$ in $d(\x_\alpha \x_\beta\x_\gamma^{L-3}\x_{\delta}\x_{\eta}\ot 1)$ is zero by \ref{item:cocycle-xgamma-N=2-9}.

\medbreak

\item Fix a $6$-tuple  $(\alpha, \beta, \delta, \eta, \nu, \mu)$ satisfying \eqref{eq:roots-N=2-case10}. We apply Lemma \ref{lem:diff-case10} since \eqref{eq:diff-case10-hypothesis} holds by hypothesis: The coefficient of $\x_\gamma^L\ot 1$ in $d(\x_\alpha \x_\beta\x_{\delta}\x_\gamma^{L-3}\x_{\eta}\ot 1)$ is zero by \ref{item:cocycle-xgamma-N=2-10}.
\end{itemize}

\medbreak

Thus the coefficient of $\x_\gamma^{L}\ot 1$ in $d(c)$ is zero for all $c\in\chain(L+1)$ and Remark \ref{obs:main} applies.
\epf

\subsubsection{$N_\gamma > 2$} We carry out a similar analysis when the assumption is $N_\gamma > 2$ instead.

\begin{prop}\label{prop:cocycle-xgamma-N>2}
Let $\gamma \in \varDelta_+$ be such that $N_\gamma>2$ and for all pairs $(\alpha, \beta) \in \varDelta_+^2$ such that 
\begin{align}\label{eq:alfa-beta}
\alpha &< \beta& &\text{and}& \alpha+\beta &= (N_\gamma-1)\gamma,
\end{align}
the corresponding PBW generators satisfy \eqref{eq:diff2-hypothesis}. 
Let $f_\delta: \N_0\to\N_0$ be the function defined in \eqref{eq:power-root-vector-chain} for each 
$\delta \in  \varDelta_+$. 
Assume that $\ell\in \N$ satisfies the following two conditions:
\begin{enumerate}[leftmargin=*,label=\rm{(\alph*)}]
\item\label{item:cocycle-xgamma-N>2-1} For each pair $(\alpha, \beta)  \in \varDelta_+^2$ satisfying \eqref{eq:alfa-beta} the scalars $\Csj$ and $\Dsj$ satisfy
\begin{align}\label{eq:Csj-Dsj}
\big(\frac{\Csj}{\Dsj}-1 \big) (\ell)_{(\frac{\Csj}{\Dsj})^{N_\gamma}} &= 0.
\end{align}

\bigbreak
\item\label{item:cocycle-xgamma-N>2-2} The solutions $(n_\delta)_{\delta\in\varDelta_+} \in \N_0^{\varDelta_+}$ of the equations
\begin{align}\label{eq:equation-roots-cocycle-N>2}
\sum_{\delta\in\varDelta_+} f_\delta(n_\delta) \delta &= \ell N_{\gamma}\gamma,  &
\sum_{\delta\in\varDelta_+} n_\delta &= 2\ell +1
\end{align}
are all of the form $n_{\gamma}= 2(\ell-1)+1$, 
$n_{\alpha}=n_{\beta}=1$, for any pair  $(\alpha, \beta)$ satisfying \eqref{eq:alfa-beta}, and $n_{\delta}= 0$ for the remaining $\delta \in\varDelta_+$.
\end{enumerate}
Then  $(\x_\gamma^{\ell N_\gamma })^*$ is a $2\ell$-cocycle.    
\end{prop}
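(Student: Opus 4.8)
The plan is to verify the cocycle condition through Remark~\ref{obs:main}, exactly as in the proof of Proposition~\ref{prop:cocycle-xgamma-N=2}, the point being that hypothesis~\ref{item:cocycle-xgamma-N>2-2} isolates a single family of relevant chains. Write $L := \ell N_\gamma$. Since $L = f_\gamma(2\ell)$, Remark~\ref{obs:main}\ref{item:obs-chains1} guarantees that $\x_\gamma^{L}$ is a $2\ell$-chain, so $(\x_\gamma^{L})^*$ is a well-defined cochain of homological degree $2\ell$. By Remark~\ref{obs:main}\ref{item:obs-chains2}, to prove that it is a cocycle it suffices to show that for every $(2\ell+1)$-chain $c$, the coefficient of $\x_\gamma^{L}\ot 1$ in $d(c\ot 1)$, once expanded in the basis $\wchain(2\ell)$, vanishes.

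First I would cut down the set of chains to be examined. By Lemma~\ref{lem:grading} the differential preserves the $\N_0^{\I}$-grading, so a chain $c = \x_{\delta_1}^{f_{\delta_1}(n_1)}\cdots \x_{\delta_m}^{f_{\delta_m}(n_m)}$ can contribute the term $\x_\gamma^{L}\ot 1$ only if its exponent data $(n_\delta)_{\delta\in\varDelta_+}$ solve the grading and homological degree constraints \eqref{eq:equation-roots-cocycle-N>2}. By hypothesis~\ref{item:cocycle-xgamma-N>2-2}, every such solution has $n_\gamma = 2(\ell-1)+1$, $n_\alpha = n_\beta = 1$ for some pair $(\alpha,\beta)$ satisfying \eqref{eq:alfa-beta}, and $n_\delta = 0$ otherwise. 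Since $f_\gamma(2(\ell-1)+1) = N_\gamma(\ell-1)+1$ and $f_\alpha(1) = f_\beta(1) = 1$, the chains to be considered are precisely $c = \x_\alpha\,\x_\gamma^{N_\gamma(\ell-1)+1}\,\x_\beta$, one for each pair $(\alpha,\beta)$ of \eqref{eq:alfa-beta}; these are the higher-$N_\gamma$ analogue of the chains treated in case \ref{item:cocycle-xgamma-N=2-1}. One checks the grading matches using $\alpha+\beta = (N_\gamma-1)\gamma$, namely $(N_\gamma(\ell-1)+1)\gamma + \alpha + \beta = N_\gamma\ell\,\gamma = L\gamma$.

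Next, for each such pair I would invoke the explicit differential computation of Lemma~\ref{lem:diff2}, whose hypothesis \eqref{eq:diff2-hypothesis} holds by the standing assumption on the PBW generators attached to $(\alpha,\beta)$. That lemma expresses the coefficient of $\x_\gamma^{L}\ot 1$ in $d(c\ot 1)$ as a nonzero scalar multiple of $\big(\frac{\Csj}{\Dsj}-1\big)\,(\ell)_{(\frac{\Csj}{\Dsj})^{N_\gamma}}$, which vanishes precisely by condition~\ref{item:cocycle-xgamma-N>2-1}, equation~\eqref{eq:Csj-Dsj}. Summing over the finitely many pairs $(\alpha,\beta)$, the coefficient of $\x_\gamma^{L}\ot 1$ in $d(c\ot 1)$ is zero for every $(2\ell+1)$-chain $c$, and Remark~\ref{obs:main}\ref{item:obs-chains2} then yields that $(\x_\gamma^{L})^*$ is a $2\ell$-cocycle.

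The genuine work is thus hidden entirely in Lemma~\ref{lem:diff2}, deferred to Section~\ref{sec:computational-lemmas}: computing $d(\x_\alpha\x_\gamma^{m}\x_\beta\ot 1)$ requires unwinding the recursively defined contracting homotopy $s$ of \eqref{eq:Anick-def-sn} against the convex-order relations \eqref{eq:convex-order-relations}, and then collecting, through a telescoping quantum-binomial identity, all contributions to $\x_\gamma^{L}\ot 1$ into the closed form $\big(\frac{\Csj}{\Dsj}-1\big)(\ell)_{(\frac{\Csj}{\Dsj})^{N_\gamma}}$. I expect that bookkeeping---tracking which intermediate monomials $s$ returns into the span of the chains and which it annihilates (using $s^2=0$), and then resumming the resulting geometric/$q$-series---to be the main obstacle. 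Once Lemma~\ref{lem:diff2} is established, the Proposition follows by the short reduction above.
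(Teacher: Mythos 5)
Your proposal is correct and follows essentially the same argument as the paper's proof: hypothesis \ref{item:cocycle-xgamma-N>2-2}, combined with the $\N_0^{\I}$-homogeneity of the differential, reduces the verification to the chains $\x_\alpha\x_\gamma^{N_\gamma(\ell-1)+1}\x_\beta$ attached to pairs $(\alpha,\beta)$ as in \eqref{eq:alfa-beta}, and then Lemma \ref{lem:diff2} together with condition \eqref{eq:Csj-Dsj} shows that the coefficient of $\x_\gamma^{\ell N_\gamma}\ot 1$ vanishes, so Remark \ref{obs:main} applies. The only minor imprecision is your claim that this coefficient is a \emph{nonzero} scalar multiple of $\bigl(\frac{\Csj}{\Dsj}-1\bigr)(\ell)_{(\frac{\Csj}{\Dsj})^{N_\gamma}}$: by Lemma \ref{lem:diff2} the multiplier is $\Bsj\Dsj^{N_\gamma(\ell-1)+1}$, and $\Bsj$ may be zero, but this is harmless since the coefficient then vanishes regardless.
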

\pf
Fix a pair of positive roots $(\alpha, \beta)$ satisfying \eqref{eq:alfa-beta}. We can apply Lemma \ref{lem:diff2} since conditions \eqref{eq:diff2-hypothesis} hold by hypothesis and conclude that the coefficient of $\x_\gamma^{LN_\gamma}\ot 1$ in $d(\x_\alpha \x_\gamma^{N_{\gamma}(L-1)+1}\x_\beta \ot 1)$ is zero by \ref{item:cocycle-xgamma-N>2-1}. 

By \ref{item:cocycle-xgamma-N>2-2} all $(2L+1)$-chains of degree $L N_{\gamma} \gamma$ are of the form $\x_\alpha \x_\gamma^{N_{\gamma}(L-1)+1}\x_\beta$, for a pair $(\alpha, \beta)$ satisfying \eqref{eq:alfa-beta}. 
Thus the coefficient of $\x_\gamma^{LN_\gamma}\ot 1$ in $d(c)$ is zero for all $c\in\chain(2L+1)$ and Remark \ref{obs:main} applies.
\epf

Next we deal with the scalars $\coef{\alpha\beta\gamma}{L}$ and $\coeff{\alpha\beta\delta\gamma}{L}$. Given $r,s,t \in \Bbbk$, let
\begin{align}\label{eq:coef-general-formula}
\coef{r,s}{L}&:=\sum_{k=0}^{L-1} r^{k} (k+1)_{s}, &
\coeff{r,s,t}{L}&:=\sum_{k=0}^{L-1} r^{k} (k+1)_{s}(k+2)_{t}.
\end{align}
Notice that $\coef{\alpha\beta\gamma}{L}=\coef{\widetilde{q}_{\alpha\gamma}, \widetilde{q}_{\beta\gamma}}{L}$ and $\coeff{\alpha\delta\beta\gamma}{L}= \coeff{\widetilde{q}_{\alpha\gamma}, \widetilde{q}_{\beta\gamma}, \widetilde{q}_{\delta\gamma}}{L}$.

\begin{lemma}\label{lemma:coef-roots-unity}
\begin{enumerate}[leftmargin=*,label=\rm{(\alph*)}]
\item\label{item:coef-roots-unity-i} Assume that $(L)_r=0$. Then
$\coef{r,s}{L} = -s \coef{s,r}{L}$
\item\label{item:coef-roots-unity-ii} Assume that $(L)_r=0=(L)_s$ and $rs\neq 1$. Then $\coef{r,s}{L}=0$.
\item\label{item:coef-roots-unity-iii} Assume that $(L)_r=0=(L)_s$ for $L\ge 3$ and $rs\neq 1$. Then $\coeff{r,s,s}{L}=0$.
\end{enumerate}
\end{lemma}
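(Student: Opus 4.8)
The plan is to reduce all three parts to elementary identities for the $q$-integers $(L)_q=\sum_{j=0}^{L-1}q^j$, using throughout the characteristic-zero fact that $(L)_q=0$ holds if and only if $q^L=1$ and $q\neq 1$: indeed $(L)_1=L\neq 0$, while for $q\neq 1$ one has $(q-1)(L)_q=q^L-1$. I will also use two bookkeeping identities: splitting $(L)_r=\sum_{i=0}^{L-1}r^i$ at $i=L-j$ gives $(L-j)_r=(L)_r-r^{L-j}(j)_r$; and if $r^L=s^L=1$ then $(rs)^L=(rs^2)^L=1$, so that $(L)_{rs}$ and $(L)_{rs^2}$ each vanish as soon as the corresponding base differs from $1$.

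For part \ref{item:coef-roots-unity-i} I would interchange the order of summation in the defining double sum,
\begin{align*}
\coef{r,s}{L}=\sum_{k=0}^{L-1}r^k\sum_{j=0}^{k}s^j=\sum_{j=0}^{L-1}s^j\sum_{k=j}^{L-1}r^k=\sum_{j=0}^{L-1}(rs)^j\,(L-j)_r.
\end{align*}
Assuming $(L)_r=0$ (so $r^L=1$), the splitting identity gives $(L-j)_r=-r^{L-j}(j)_r$, whence
\begin{align*}
\coef{r,s}{L}=-r^{L}\sum_{j=0}^{L-1}s^j\,(j)_r=-\sum_{j=1}^{L-1}s^j\,(j)_r,
\end{align*}
the $j=0$ term being zero. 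On the other hand, reindexing $k\mapsto j=k+1$ yields $-s\,\coef{s,r}{L}=-\sum_{j=1}^{L}s^j(j)_r$, and the extra top term $s^{L}(L)_r$ vanishes. Comparing the two expressions gives $\coef{r,s}{L}=-s\,\coef{s,r}{L}$, with no case distinction needed for $s\in\{0,1\}$.

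Part \ref{item:coef-roots-unity-ii} is then immediate: applying \ref{item:coef-roots-unity-i} once as stated and once with $r,s$ interchanged (legitimate since $(L)_s=0$ as well) gives $\coef{r,s}{L}=-s\,\coef{s,r}{L}=rs\,\coef{r,s}{L}$, so $(1-rs)\coef{r,s}{L}=0$, and $rs\neq 1$ forces $\coef{r,s}{L}=0$. For part \ref{item:coef-roots-unity-iii} I would pass to closed form. Since $(L)_s=0$ already forces $s\neq 1$, I may write $(m)_s=(s^m-1)/(s-1)$, expand the product $(k+1)_s(k+2)_s$, and sum the resulting geometric series to obtain
\begin{align*}
\coeff{r,s,s}{L}=\frac{1}{(s-1)^2}\Big(s^3(L)_{rs^2}-(s+s^2)(L)_{rs}+(L)_r\Big).
\end{align*}
By hypothesis $(L)_r=0$, and since $rs\neq 1$ with $(rs)^L=1$ we get $(L)_{rs}=0$; hence $\coeff{r,s,s}{L}=s^3(L)_{rs^2}/(s-1)^2$.

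The main obstacle is controlling the surviving factor $(L)_{rs^2}$. Here $(rs^2)^L=1$, so $(L)_{rs^2}=0$ precisely when $rs^2\neq 1$, and this is the one place where an input beyond $(L)_r=(L)_s=0$ and $rs\neq 1$ is genuinely required; granting $rs^2\neq 1$ we conclude $\coeff{r,s,s}{L}=0$. I therefore expect the real content of \ref{item:coef-roots-unity-iii} to be the verification that $rs^2\neq 1$ in the configurations where the lemma is invoked, since if $rs^2=1$ then $(L)_{rs^2}=L\neq 0$ and the quantity is \emph{non}-zero; this non-vanishing must be read off from the combinatorics of the root system rather than from the numerical hypotheses alone. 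The assumption $L\geq 3$ enters only to keep the hypotheses of \ref{item:coef-roots-unity-iii} consistent, as $L=2$ would force $r=s=-1$ and hence $rs=1$.
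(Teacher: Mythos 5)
Your parts (a) and (b) are correct and coincide, modulo bookkeeping, with the paper's own argument: the same interchange of the order of summation plus the splitting identity and the vanishing top term give (a), and (b) follows by applying (a) twice and using $rs\neq 1$.

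For part (c) you take a different route---closed forms via geometric series instead of the paper's manipulation with Gaussian binomials---and your diagnosis is not only correct but exposes a genuine gap in the paper's own proof. The paper writes $(k+1)_s(k+2)_s=(2)_s\binom{k+2}{2}_s$, applies the $q$-Pascal identity to get $\coeff{r,s,s}{L}=(2)_s\sum_k r^k\binom{k+1}{2}_s+(2)_s\coef{rs,s}{L}$, kills the second summand ``by part (b)'', and reindexes the first to conclude $\coeff{r,s,s}{L}=r\,\coeff{r,s,s}{L}$. But part (b) applied to the pair $(rs,s)$ requires $(rs)\cdot s=rs^2\neq 1$, and this is never checked; it is exactly the obstruction you isolate, since with $(L)_r=(L)_{rs}=0$ one has $\coef{rs,s}{L}=s(L)_{rs^2}/(s-1)$, matching your formula $\coeff{r,s,s}{L}=s^3(L)_{rs^2}/(s-1)^2$. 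Your claim that the statement can genuinely fail is also right: take $L=3$ and $r=s=\zeta\in\G_3'$. Then $(3)_r=(3)_s=0$ and $rs=\zeta^2\neq 1$, so all hypotheses of (c) hold, yet only the $k=0$ term of $\coeff{\zeta,\zeta,\zeta}{3}$ survives and equals $(1)_\zeta(2)_\zeta=1+\zeta\neq 0$. So the lemma needs the additional hypothesis $rs^2\neq 1$ (equivalently $r\neq s^{-2}$), under which both your argument and the paper's become complete. In the places where the paper invokes (c)---e.g.\ type $\superf$, where $(r,s)=(q^{-1},q^{2})$ with $\ord q=N>3$, so that $rs^2=q^{3}\neq 1$---the extra condition does hold, so the downstream results are unaffected; but as stated and proved the lemma itself is incorrect, and your proposal pinpoints precisely the missing hypothesis.
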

\pf
For \ref{item:coef-roots-unity-i} we compute
\begin{align*}
\coef{r,s}{L} &=\sum_{k=0}^{L-1} r^{k} \Big(\sum_{j=0}^{k} s^j\Big) 
= \sum_{i=0}^{L-1} s^{i} \Big(\sum_{k=i}^{L-1} r^k\Big)
= \sum_{i=1}^{L-1} s^{i} \Big( (L)_r - (i)_r \Big)
= -\sum_{i=1}^{L-1} s^{i} (i)_r
\\ &= -\sum_{k=0}^{L-2} s^{k+1} (k+1)_r = -s\sum_{k=0}^{L-1} s^{k} (k+1)_r 
= -s \coef{s,r}{L}.
\end{align*}
Now \ref{item:coef-roots-unity-ii} follows using \ref{item:coef-roots-unity-i}. Indeed, we have that
$\coef{r,s}{L} = -s \coef{s,r}{L} = rs \coef{r,s}{L}$;
as $rs\neq 1$ by hypothesis, we have that $\coef{r,s}{L}=0$.

Next we deal with \ref{item:coef-roots-unity-iii}. As $rs\neq 1$ and $(L)_r=(L)_s=0$, we have that $(L)_{rs}=0$; thus $\coef{rs,s}{L}=0$ by \ref{item:coef-roots-unity-ii}. Also, $\binom{L+1}{2}_s=0$. Then we compute
\begin{align*}
\coeff{r,s,s}{L}&= (2)_s \sum_{k=0}^{L-1} r^{k} \binom{k+2}{2}_s =
(2)_s \sum_{k=0}^{L-1} r^{k} \Big( \binom{k+1}{2}_s + s^{k}(k+1)_s \Big)
\\
&= (2)_s \sum_{k=1}^{L-1} r^{k} \binom{k+1}{2}_s
+(2)_s \coef{rs,s}{L}
= (2)_s \sum_{j=0}^{L-2} r^{j+1} \binom{j+2}{2}_s
\\
&= (2)_s r \sum_{j=0}^{L-1} r^{j} \binom{j+2}{2}_s = r \coeff{r,s,s}{L}.
\end{align*}
As $r\ne 1$ we have that $\coeff{r,s,s}{L}=0$.
\epf

For each $\delta\in\varDelta_+$, let $a_{i}^{\delta}\in\N_0$ be the coordinate of $\alpha_i$ in $\delta$: that is, $\delta=\sum_{i\in\I} a_{i}^{\delta} \alpha_i$.

\subsection{Summary of the algorithm}\label{subsec:summary} 

As we have seen in Part~\ref{part:hopf}, to prove finite generation it is remained to establish Condition~\eqref{assumption:intro-combinatorial} 
for Nichols algebras of diagonal type. Now that we introduced all the necessary players we can describe the procedure we will follow to establish this condition. Going through these steps will occupy the remainder of the paper. 

\begin{itemize}[leftmargin=*,label=$\circ$]
\item We fix one type in the classification of Nichols algebras of diagonal type.
We choose a representative of the Weyl-equivalence (as defined in \S~\ref{sub:weyl}) with the care that the proper subdiagrams were already treated.

\medbreak
\item We fix  $\gamma \in \varDelta_{+}^{\bq}$. We assume that $\gamma$ has full support (see definition in \S~\ref{subsec:na-diag}); recall that Lemma \ref{lem:largeN} takes care of 
simple roots.

\medbreak
\item  We compute $N_\gamma$, $P_\gamma$, $Q_\gamma$.
\end{itemize}

Then we apply one of the following criteria:

\begin{enumerate}[leftmargin=*,label=\rm{(\Roman*)}]

\medbreak
\item\label{item:summary-1} If  $N_\gamma {>} P_\gamma, Q_\gamma$, 
then  $(\x_\gamma^{N_\gamma})^*$ is a cocycle of degree $2$.

\medbreak
\item\label{item:summary-2} If $N_{\gamma} = 2$ and  \eqref{eq:second-technique-2cocycles} holds, then $(\x_{\gamma}^{N_{\gamma}})^*$
is a 2-cocycle.

\medbreak
\item\label{item:summary-3} Assume that $N_{\gamma} = 2$ but \eqref{eq:second-technique-2cocycles} does not hold. We define 
\begin{align*}
L_{\gamma} &= \lcm \Big(\{2\} \cup \big\{\ord (- \frac{q_{\alpha\alpha}}{q_{\beta \beta}}) : \  \gamma = \alpha + \beta, \alpha, \beta  \in \varDelta_{+}^{\bq}  \big\}\Big).
\end{align*}
We find all families $(n_{\delta})_{\delta \in \varDelta_{+}^{\bq}}$ of non-negative integers satisfying \eqref{eq:equation-roots-cocycle-N=2} 
with $L = L_{\gamma}$.

\medbreak
\noindent We check that any of these families $(n_{\delta})_{\delta \in \varDelta_{+}^{\bq}}$   has one  of the forms \ref{item:cocycle-xgamma-N=2-1}, \dots, 
or \ref{item:cocycle-xgamma-N=2-10} in Proposition \ref{prop:cocycle-xgamma-N=2}.
Then $(\x_{\gamma}^{L_{\gamma}})^*$
is a cocycle.

\bigbreak
\item\label{item:summary-4} Assume that $N_{\gamma} > 2$ but the inequality in \ref{item:summary-1}  does not hold. We define 
\begin{align*}
L_{\gamma} &= \lcm \Big(\{N_{\gamma} \} \cup \big\{\ord \left( \big(\frac{q_{\alpha\gamma}}{q_{\gamma \beta}}\big)^{N_{\gamma}} \right): 
\  (N_{\gamma} - 1)\gamma = \alpha + \beta, \alpha, \beta  \in \varDelta_{+}^{\bq}  \big\}\Big).
\end{align*}
We find all families $(n_{\delta})_{\delta \in \varDelta_{+}^{\bq}}$ of non-negative integers satisfying \eqref{eq:equation-roots-cocycle-N>2} 
with $L = L_{\gamma}$. 
 We check that any of these families $(n_{\delta})_{\delta \in \varDelta_{+}^{\bq}}$   has one  of the forms \ref{item:cocycle-xgamma-N>2-1}, \dots
in Proposition \ref{prop:cocycle-xgamma-N>2}.
Then $(\x_{\gamma}^{L_{\gamma}})^*$ is a cocycle.
\end{enumerate}

Actually we distinguish two classes of types in the classification of finite-dimensional Nichols algebras of diagonal type.
In the first the braiding matrices have continuous parameters and correspondingly the values of $N_{\gamma}$ might depend on these parameters. 
Then arguments by hand are needed. These are the types  treated in Sections \ref{sec:classical}, \ref{sec:exceptional} and \ref{sec:discrete}.

\medbreak
The second class consists of the remaining types where the braiding matrices are so to say discrete. 
For them we compute $N_\gamma$, $P_\gamma$, $Q_\gamma$ and the suitable families $(n_{\delta})_{\delta \in \varDelta_{+}^{\bq}}$
using a computer program developed by H\'ector Pe\~na Pollastri towards these goals.
We then check whether $(n_{\delta})_{\delta \in \varDelta_{+}^{\bq}}$   has one  of the forms \ref{item:cocycle-xgamma-N=2-1}, \dots, 
or \ref{item:cocycle-xgamma-N=2-10}, respectively \ref{item:cocycle-xgamma-N>2-1}, \dots by hand using the 
defining relations, or at least the convex order, of $\toba_{\bq}$.
The types in this class are those treated in Part III (to appear later).

\medbreak
The implicit numeration of any generalized Dynkin diagram is from the left to the right and from bottom to top; otherwise,
the numeration appears below the vertices.

\section{Classical types}\label{sec:classical}
We shall use notations and conventions introduced in \S~\ref{subsection:conventions}. For a positive integer $\theta$ defining the Dynkin type let $\I = \I_\theta$. 

\subsection{Types \texorpdfstring{$A_{\theta}$ and $\supera{j}{\theta - j}$, $\theta \ge 1$, $j \in \I_{\lfloor\frac{\theta+1}{2} \rfloor}$}{}}\label{sec:An}

Let $q$ be a root of 1 of order $N \geq 2$.
In this subsection, we deal with the Nichols algebra $\toba_{\bq}$ of standard diagonal type $A_{\theta}$, that is associated to
the Dynkin diagram
\begin{align*}
\xymatrix{ \overset{q_{11}}{\underset{\ }{\circ}}\ar  @{-}[r]^{\widetilde{q}_{12}}  &
\overset{q_{22}}{\underset{\ }{\circ}}\ar  @{-}[r]^{\widetilde{q}_{23}} 
&  \overset{q_{33}}{\underset{\ }{\circ}}\ar@{.}[r] 
& \overset{q_{\theta-1\, \theta-1}}{\underset{\ }{\circ}} \ar  @{-}[rr]^{\widetilde{q}_{\theta-1\, \theta}}&  &
\overset{q_{\theta \theta}}{\underset{\ }{\circ}}}
\end{align*}
where the $q_{ii}$'s are either $-1$, $q$ or $q^{-1}$ and locally the edges are of the following forms:
\begin{align*}
&\xymatrix{ &\overset{q}{\underset{\ }{\circ}}\ar  @{-}[r]^{q^{-1}} \ar  @{-}[l]_{q^{-1}}  & }, &
&\xymatrix{ &\overset{q^{-1}}{\underset{\ }{\circ}}\ar  @{-}[r]^{q} \ar  @{-}[l]_{q}  & },&
&\xymatrix{ &\overset{-1}{\underset{\ }{\circ}}\ar  @{-}[r]^{q^{-1}} \ar  @{-}[l]_{q}  & }, &
&\xymatrix{ &\overset{-1}{\underset{\ }{\circ}}\ar  @{-}[r]^{q} \ar  @{-}[l]_{q^{-1}}  & }.
\end{align*}

For more information, see \cite[\S 4.1, \S 5.1]{AA17}.
The aim of this Section is to prove that Condition \ref{assumption:intro-combinatorial} holds for types
$A_{\theta}$ and $\supera{j}{\theta - j}$, $\theta \ge 1$, $j \in \I_{\lfloor\frac{\theta+1}{2} \rfloor}$. That is,

\begin{prop}\label{prop:roots-cocycles-Atheta}
For every  $\gamma \in \varDelta_+^{\bq}$, there exists $L_{\gamma}\in\N$ 
such that $(\x_{\gamma}^{L_\gamma})^*$ is a cocycle.
\end{prop}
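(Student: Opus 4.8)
The plan is to follow the algorithm of \S\ref{subsec:summary}, making the type-$A$ (and super type-$A$) combinatorics explicit. First I would reduce to full-support roots. By Lemma \ref{lem:grading} and the $\N_0^{\I}$-grading constraint \eqref{eq:grading}, any chain $c$ whose differential carries a nonzero $\x_\gamma^{L}\ot 1$ term can only involve PBW generators $\x_\delta$ with $\supp\delta\subseteq\supp\gamma$, since the roots appearing sum, with positive coefficients, to a vector supported on $\supp\gamma$. Hence whether $(\x_\gamma^{L})^*$ is a cocycle is governed entirely by the connected subdiagram on $\supp\gamma$, which is again of (super) type $A$. Since all simple roots are covered by Lemma \ref{lem:largeN} (there $P_\gamma=Q_\gamma=0$), it suffices, by induction on $\theta$, to treat the unique full-support root $\gamma=\gamma_{1\theta}=\alpha_1+\dots+\alpha_\theta$ for $\theta\ge 2$, every other root $\gamma_{ij}$ being handled by the identical analysis on its subdiagram.

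Next I would compute the invariants. Because the positive roots are exactly the intervals $\gamma_{ij}$, a pigeonhole argument on interval coverings—the combinatorial content of Lemma \ref{lema:non-simple-roots-Palfa}\ref{item:typeA}, whose proof applies verbatim to super type $A$ as the root system coincides—gives $P_{\gamma}=2$ and $Q_{\gamma}\le 1$ for $\gamma=\gamma_{1\theta}$. I would then read off $q_{\gamma\gamma}=\prod_{k} q_{kk}\,\prod_{k}\widetilde q_{k,k+1}$ from the allowed vertex and edge labels and set $N_\gamma=\ord q_{\gamma\gamma}$. This splits the proof in two. If $N_\gamma\ge 3$, then $N_\gamma>P_\gamma,Q_\gamma$, so criterion \ref{item:summary-1} (Lemma \ref{lem:largeN}) gives that $(\x_\gamma^{N_\gamma})^*$ is a degree-$2$ cocycle and $L_\gamma=N_\gamma$ works.

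The substantive case is $N_\gamma=2$, i.e.\ $q_{\gamma\gamma}=-1$. Here conditions \ref{item:second-technique-2cocycles-2} and \ref{item:second-technique-2cocycles-3} of Lemma \ref{lem:second-technique-2cocycles} hold automatically: the interval pigeonhole shows that any triple of distinct roots summing to $2\gamma_{1\theta}$ must contain $\gamma_{1\theta}$ itself, and that $N_{\delta_1}\delta_1+\delta_2=2\gamma$ has no solution (this is $Q_\gamma<2$). The only decompositions $\gamma=\beta+\delta$ into positive roots are $\gamma_{1m}+\gamma_{m+1,\theta}$ with $1\le m<\theta$, and I would test condition \ref{item:second-technique-2cocycles-1} for each of them using the convex-order relations \eqref{eq:convex-order-relations} from \cite{AA17}: the two braided commutators must collapse to pure $q$-commutators and one needs $q_{\beta\beta}=q_{\delta\delta}$. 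When this holds for every $m$—in particular throughout Cartan type $A$ at $q=-1$—criterion \ref{item:summary-2} applies and $(\x_\gamma^2)^*$ is a $2$-cocycle.

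The hard part will be the remaining super-type subcases, where condition \ref{item:second-technique-2cocycles-1} fails, typically because $q_{\beta\beta}\ne q_{\delta\delta}$ for some splitting. Then I would pass to criterion \ref{item:summary-3}: set $L_\gamma=\lcm\!\big(\{2\}\cup\{\ord(-q_{\alpha\alpha}/q_{\beta\beta}):\gamma=\alpha+\beta\}\big)$ and solve the Diophantine system \eqref{eq:equation-roots-cocycle-N=2} with $L=L_\gamma$, showing that every solution $(n_\delta)$ has the shape \ref{item:cocycle-xgamma-N=2-1} of Proposition \ref{prop:cocycle-xgamma-N=2}, namely $n_\gamma=L_\gamma-1$ and $n_\alpha=n_\beta=1$ with $\alpha+\beta=\gamma$, and that none of the elaborate forms \ref{item:cocycle-xgamma-N=2-2}--\ref{item:cocycle-xgamma-N=2-10} occurs—ruling these out amounts to checking that the interval root system admits no configurations such as $\gamma+\eta=\alpha+\beta$, $\eta+\delta=\gamma$. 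Combined with verifying the $q$-commutation hypotheses demanded by form \ref{item:cocycle-xgamma-N=2-1} and the vanishing $(L_\gamma)_{-q_{\alpha\alpha}/q_{\beta\beta}}=0$ built into the choice of $L_\gamma$, this enumeration is the main obstacle: the real effort is the uniform bookkeeping over all splittings $\gamma_{1m}+\gamma_{m+1,\theta}$ and over the admissible vertex/edge label patterns of the super type $A$ diagram.
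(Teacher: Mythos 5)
Your proposal is correct, and its mathematical substance matches the paper's, but the packaging is genuinely different. The paper does not invoke Lemma \ref{lem:second-technique-2cocycles} or Proposition \ref{prop:cocycle-xgamma-N=2} for type $A$ at all: after disposing of the roots with $N_\gamma>2$ by Lemma \ref{lem:largeN} exactly as you do, it treats every root with $N_\gamma=2$ (Cartan at $q=-1$ and super alike) uniformly, with $L_\gamma=N$, via three type-$A$-specific Claims: a homological-degree count showing that any chain whose differential could hit $\x_\gamma^{N}\ot 1$ involves the generators of order $>2$ with exponent at most $1$; Lemma \ref{le:roots} (your interval pigeonhole), which then forces the chain to be $\x_\alpha\x_\gamma^{N-1}\x_\beta$ with $\alpha+\beta=\gamma$; and an explicit differential formula (obtained as a special case of Lemma \ref{lem:diff2}) whose $\x_\gamma^{N}\ot 1$-coefficient is $-q_{\gamma\beta}^{N-1}(N)_{\wp}$ with $\wp=-q_{\alpha\alpha}/q_{\beta\beta}=q^{\pm1}$, hence zero. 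Your route runs these same three ingredients through the general criteria of \S\ref{subsec:summary}: this buys you the differential computation for free (it is exactly form \ref{item:cocycle-xgamma-N=2-1} of Proposition \ref{prop:cocycle-xgamma-N=2}), at the cost of a Cartan/super case split that the paper's uniform treatment shows is unnecessary --- at $q=-1$ one still has $\wp=q^{\pm1}=-1$, so the same quantum-integer vanishing applies and criterion \ref{item:summary-2} can be bypassed. One small point where your version is tidier: your $L_\gamma=\lcm(2,N)$ is automatically a multiple of $N_\gamma=2$, as Theorem \ref{lemma:reduction-to-cocycles} requires, whereas the paper's literal choice $L_\gamma=N$ can be odd, in which case one must pass to $2N$ (the same vanishing $(2N)_{q^{\pm1}}=0$ covers it).
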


We start by setting the notation. Let
\begin{align}\label{eq:alfaij}
\alpha_{i j} &= \sum_{k \in \I_{i,j}} \alpha_k,&  i&\leq j \in \I.
\end{align}
The set of positive roots is 
$\varDelta_+ =\{\alpha_{i\, j}\,|\, i, j \in \I,\, i\leq j\}$; this set is ordered lexicographically on 
the subindex $(i,j)$.
Let $r = \vert \varDelta_+ \vert = \binom{\theta + 1}{2}$; we have a numeration
$\varDelta_+ =\{\beta_{i}\,|\, i\in \I_r \}$ so that $\beta_k < \beta_\ell$ if $k < \ell$.

We set $N_{ij} := \ord q_{\alpha_{i j}}$, that is $N_{ij} = N_{\alpha}$ if $\alpha = \alpha_{i j}$.
For simplicity we set $N_{i} = N_{ii}$ for all $i$. Then 
\begin{align}\label{eq:ord-root-Atheta}
N_{ij} &= \begin{cases} 2 & \vert \{k\in \I_{i,j}: q_{kk} = -1\}\vert \text{ is odd,}
\\ N & \vert \{k\in \I_{i,j}: q_{kk} = -1\}\vert \text{ is even.}
\end{cases}
\end{align}
The root vectors are 
\begin{align*}
x_{\alpha_{ii}} &= x_{\alpha_{i}} = x_{i},& i \in \I, \\
x_{\alpha_{i j}} &= x_{(ij)} = [x_{i}, x_{\alpha_{i+1\, j}}]_c,& i <  j \in \I,
\end{align*}
see \eqref{eq:roots-Atheta}; we order them lexicographically: $ \ x_1 < x_{(12)} < \dots <x_2 < \dots < x_{\theta}$. Thus
\begin{align*}
\{ x_{ \theta}^{n_{\theta  \theta}} x_{(\theta-1  \theta)}^{n_{\theta-1  \theta}} x_{\theta-1}^{n_{\theta-1  \theta-1}} \dots x_{(1  \theta)}^{n_{1  \theta}} \dots x_{1}^{n_{11}} \, | \, 0\le n_{ij}<N_{ij}\}
\end{align*}
is a PBW-basis of $\toba_{\bq}$. 
The defining relations in terms of the PBW-generators are
\begin{align*}
x_{(ij)} x_{(ik)} &= q_{\alpha_{i j}\alpha_{i k}} x_{(ik)}x_{(ij)}, & i &\leq j < k;
\\
x_{(ik)} x_{(jk)} &= q_{\alpha_{i k}\alpha_{j k}} x_{(jk)}x_{(ik)}, & i &< j \leq k;
\\
x_{(ij)} x_{(j+1 \, k)} &= q_{\alpha_{i j}\alpha_{j+1\, k}} x_{(j+1\, k)}x_{(ij)}
+ x_{(i k)}, & i &\leq j < k;
\\
x_{(ij)} x_{(k \ell)} &= q_{\alpha_{i j}\alpha_{k \ell}} x_{(k \ell)}x_{(i j)}, & i &\leq j < k - 1 \leq \ell - 1;\\
x_{(i \ell)} x_{(j k)} &= q_{\alpha_{i \ell}\alpha_{j k}} x_{(j k)}x_{(i \ell)}, & i &< j \leq k < \ell;
\\
x_{(ij)} x_{(k \ell)} &= q_{\alpha_{i j}\alpha_{k \ell}} x_{(k \ell)}x_{(i j)} 
+ (1 - \widetilde{q}_{j, j+1}) q_{\alpha_{i j}\alpha_{k j}} x_{(i \ell)}x_{(k j)} , & 
i &< k \leq j < \ell;
\\
x_{(ij)}^{N_{ij}} &= 0, & i&\leq j.
\end{align*}
The relations are homogeneous with respect to the $\N_0^{\theta}$-grading, an observation
that will be useful later.
As an example we draw the Cojocaru-Ufnarovski graph that encodes the Anick resolution for the Nichols
algebra $\toba_{\bq}$ of type $A_2$  in case $N_1,N_{12},N_2 >2$:
\[
\xymatrix@=20pt{ &&  1 \ar[dll] \ar[d]  \ar[drr] && \\
x_1\ar@/^1.5pc/[dd] \ar[rr] \ar@/_2pc/[rrrr] && x_{12}
\ar[rr]\ar@/^1.5pc/[dd] && x_2\ar@/^1.5pc/[dd]\\
&&&&\\
x_1^{N_1-1}\ar@/^1.5pc/[uu]\ar[uurr] \ar[uurrrr]
&& x_{12}^{N_{12}-1}\ar@/^1.5pc/[uu] \ar[uurr]&& x_2^{N_2-1}\ar@/^1.5pc/[uu] 
} 
\]
Observe that in this case $N_1 = N_{12} = N_2$ and we are in Cartan type. The chains are then 
\begin{align*}
\chain(0) \!  &= \! \{ \x_1, \ \x_{12}, \ \x_2 \} ,\\
\chain(1) \! &= \! \{ \x_1^{N_1}, \ \x_{12}^{N_{12}}, \ \x_2^{N_2}, \x_1\x_{12}, \ \x_1 \x_2, \ \x_{12}\x_2 \} , \\
\chain(2)\! &= \! \{ \x_1^{N_1+1}, \ \x_{12}^{N_{12}+1}, \ \x_2^{N_2+1}, 
\ \x_1^{N_1} \x_{12}, \ \x_1^{N_1}\x_2,  \ \x_1 \x_{12}^{N_{12}}, \ 
\x_1\x_2^{N_2}, \ \x_{12}\x_2^{N_2}, \ \x_{12}^{N_{12}}\x_2,
\ \x_1\x_{12}\x_2\} ,  \\
\chain(3) \! &= \! \{ \x_1^{2N_1},\ \x_{12}^{2N_{12}}, \ \x_2 ^{2N_2},  
\ \x_1^{N_1} \x_{12}^{N_{12}}, \ \x_1^{N_1}\x_{12}\x_2 ,
\ \x_1^{N_1} \x_2^{N_2},  
\ \x_1^{N_1+1}\x_{12}, \ \x_1^{N_1+1}\x_2, \\ 
&     \x_1\x_{12}^{N_{12}+1}, \ \x_1\x_2^{N_2+1}, \
\x_1 \x_{12}\x_2^{N_2}, \ \x_{12}^{N_{12}+1}\x_2, \ \x_{12}^{N_{12}}\x_2^{N_2}, \ \x_{12}\x_2^{N_2+1}  \} , 
\end{align*}
and so on. 
In case $N_1=2$, the loop between $\x_1$ and $\x_1^{N_1-1}$ is understood to be collapsed 
to a loop from $\x_1$ to itself, and similarly for the other root vectors.

\bigbreak
In order to apply Remark \ref{obs:main}, we start by the following Claim.

\begin{claimA}\label{claimA:1}
Let $\alpha$ be a non-simple root with $N_{\alpha} = 2$.  Let $C_*(\toba_{\bq})$ 
be the Anick resolution of the Nichols algebra $\toba_{\bq}$.
Let $c \in \chain(N+1)$ and assume that $d(c\ot 1) \in C_N(\toba_{\bq})$
has a term $\x_\alpha^N \otimes 1$ with nonzero coefficient.
Write $c = \x_{\beta_1}^{a_1} \dots \x_{\beta_r}^{a_r}$, where $a_j \in \N_0$ for all $j$. 
If $N_{\beta_{i}} > 2$, then $a_i =0, 1$ and
\begin{align}\label{eq:sum-ai}
\sum_{i\in \I}  a_i =N + 1.
\end{align}
\end{claimA}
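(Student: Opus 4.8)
The plan is to distill the two hypotheses into the only data I actually need and then run one inductive argument. First I would record the two constraints on $c=\x_{\beta_1}^{a_1}\cdots\x_{\beta_r}^{a_r}$. Writing $a_i=f_{\beta_i}(n_i)$ with $f$ as in \eqref{eq:power-root-vector-chain}, membership $c\in\chain(N+1)$ gives the homological constraint $\sum_i n_i=N+1$ (see \eqref{eq:chains-Nichols}). Since $\x_\alpha^N\ot 1$ occurs in $d(c\ot 1)$, Lemma~\ref{lem:grading} forces the $\N_0^{\I}$-multidegree of $c$ to equal that of $\x_\alpha^N$, that is
\[
\sum_i a_i\,\beta_i = N\alpha .
\]
Writing $\alpha=\alpha_{pq}$, this already shows every $\beta_i$ occurring with $a_i>0$ is supported in $[p,q]$. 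One has $f_\beta(n)\ge n$, with equality iff $N_\beta=2$ or $n\le 1$; hence $\sum_i a_i\ge\sum_i n_i=N+1$, and I observe that both assertions of the Claim — that $a_i\in\{0,1\}$ whenever $N_{\beta_i}>2$, and that $\sum_i a_i=N+1$ — are equivalent to the single statement that no root $\beta_i$ with $N_{\beta_i}>2$ occurs with $n_i\ge 2$ (equivalently $a_i\ge 2$).

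The engine is a saturation observation about any covering of $N\alpha_{uv}$ by roots supported in $[u,v]$. If a root $\beta=\alpha_{st}$ with $N_\beta>2$ occurs with $a_\beta\ge 2$, then by \eqref{eq:power-root-vector-chain} in fact $a_\beta\ge N_\beta=N$; since $\beta$ contributes $a_\beta$ to each coordinate of $[s,t]$ while the target value there is exactly $N$, we must have $a_\beta=N$ and no other root may cover a coordinate of $[s,t]$. Thus $\beta$ isolates $[s,t]$ and splits the remaining covering into independent coverings of $N\alpha_{p\,s-1}$ and $N\alpha_{t+1\,q}$. By \eqref{eq:ord-root-Atheta}, $N_\beta>2$ means $[s,t]$ carries an even number of vertices with $q_{kk}=-1$, whereas $N_\alpha=2$ means $[p,q]$ carries an odd number; hence the two flanking intervals together carry an odd number, so at least one nonempty flank again has an odd count, i.e. is of type $N_{\bullet}=2$.

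I would package this into a lemma: any chain of multidegree $N\alpha_{uv}$ with $N_{\alpha_{uv}}=2$ has homological degree $\ge N$, proved by induction on $v-u$. If no root with $N_\beta>2$ is saturated, then every occurring root satisfies $a_i=n_i$, so the homological degree equals $\sum_i a_i$, which is at least the coverage of the leftmost coordinate $u$; that coverage is $N$, because every root meeting $u$ must begin at $u$. If some root is saturated, the splitting above produces a strictly shorter flank with $N_\bullet=2$, to which induction applies, yielding homological degree $\ge 2+N$. Granting the lemma, the Claim follows: $c$ has homological degree $N+1<N+2$, so applying the splitting dichotomy to $[p,q]$ shows no root with $N_{\beta_i}>2$ is saturated; therefore $a_i=n_i$ for all $i$, whence $\sum_i a_i=\sum_i n_i=N+1$ and $a_i\in\{0,1\}$ for $N_{\beta_i}>2$, as required by \eqref{eq:sum-ai} and Remark~\ref{obs:main}.

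The main obstacle is precisely this lower-bound lemma, that is, controlling the homological degree of an arbitrary covering of a type-$2$ interval. The delicate point is that a single root $\alpha_{uv}$ with $N_{\alpha_{uv}}=N$ covers its whole support to level $N$ at homological cost only $2$, so such high-multiplicity roots must be excluded exactly where they would push the count below $N$; this is what the parity bookkeeping of \eqref{eq:ord-root-Atheta} (odd versus even number of $-1$ diagonal entries) together with the saturation and splitting step accomplishes. Everything else is routine bookkeeping with the coordinate-by-coordinate coverage equations.
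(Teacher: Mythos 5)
Your proof is correct and follows essentially the same route as the paper's: both arguments rest on the facts that a root $\beta_i$ with $N_{\beta_i}>2$ and $a_i\ge 2$ must satisfy $a_i=N$ (hence contributes only $2$ to homological degree and has support disjoint from every other occurring root), that the parity count of \eqref{eq:ord-root-Atheta} leaves part of $\supp\alpha$ untouched by such saturated roots, and that the non-saturated roots covering that region then force total homological degree at least $N+2>N+1$, a contradiction. The only difference is bookkeeping: the paper removes all saturated roots at once (its set $\mathfrak S$) and picks a single vertex outside the union of their supports, which makes your induction on the interval length unnecessary, but the content of the two arguments is the same.
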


Indeed, we may safely assume that $\alpha = \alpha_{1 \theta} = \alpha_{1} + \dots + \alpha_{\theta}$ for simplicity.
Since $d$ is homogeneous, $N  \alpha_{1 \theta} \overset{\ast}{=} \sum_{i\in \I_r} a_i \beta_i$.
Assume that
\begin{align*}
\mathfrak S := \{i\in \I_r:  N_{\beta_i} > 2, a_i > 1\}
\end{align*}
is non-empty. If $i\in \mathfrak S$, then $a_i \geq N$ by looking at the Anick graph, hence
$a_i = N$ by $\ast$. Also, the supports of the $\beta_i$ with $i\in \mathfrak S$ are disjoint.
Let $\mathfrak K = \{k\in \I: k\notin \supp \beta_i, i\in \mathfrak S\}$; observe that  $\mathfrak K$ is non-empty because $N_{\alpha} = 2$, cf. \eqref{eq:ord-root-Atheta}. 
Now
\begin{align*}
\sum_{i\notin \mathfrak S}  a_i \beta_i \overset{\text{by } \ast}{=} N \alpha - N \sum_{i\in \mathfrak S} \beta_i 
= N \sum_{k \in \mathfrak K} \alpha_k.
\end{align*}
Pick $k \in \mathfrak K$ and compute the coefficient of $\alpha_k$ in the last expression; then
\begin{align*}
\sum_{i\notin \mathfrak S}  a_i \geq \sum_{i\notin \mathfrak S, k\in \supp \beta_i} a_i = N.
\end{align*}
Now the cohomological degree  of $x_{\beta_1}^{a_1} \dots x_{\beta_r}^{a_r}$
sums up $a_i$ for each $i\notin \mathfrak S$ and 2 for each $i\in \mathfrak S$ (one arrow in, one out).
Thus 
\begin{align*}
N + 1 = \sum_{i\notin \mathfrak S}  a_i  + 2 \vert \mathfrak S\vert \geq N + 2.
\end{align*}
This contradiction shows that $\mathfrak S = \emptyset$ and the claim is proved.

\medbreak The following result on root systems of type A should be well-known; we provide a proof for completeness of the argument.

\begin{lemma} 
\label{le:roots} 
Let $\gamma, \gamma_1, \ldots, \gamma_{n+1} \in \varDelta_+$
with $\gamma_1 \leq \gamma_2 \leq \ldots \leq \gamma_{n+1}$. Assume that 
\begin{align}\label{eq:roots-A} 
\gamma_1 + \ldots  + \gamma_{n+1}  = n \gamma.
\end{align}
Then \begin{align*} 
\gamma_2 &= \gamma_3 = \ldots = \gamma_{n} = \gamma,&
\gamma &= \gamma_1 + \gamma_{n+1}. 
\end{align*}   
\end{lemma}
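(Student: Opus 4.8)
The plan is to exploit the explicit ``interval'' description of the positive roots of type $A_\theta$. Every $\mu \in \varDelta_+$ equals $\alpha_{ij} = \alpha_i + \cdots + \alpha_j$ for a unique pair $i \le j$, so I would identify $\mu$ with the integer interval $[i,j]$ and record, for each simple root $\alpha_m$, its coefficient in $\mu$ as the indicator of the event $i \le m \le j$. Writing $\gamma = \alpha_{ab}$ and $\gamma_k = \alpha_{i_k j_k}$, the hypothesis \eqref{eq:roots-A} becomes, coordinate by coordinate, the assertion that for each $m$ the number of indices $k$ with $i_k \le m \le j_k$ equals $n$ when $a \le m \le b$ and equals $0$ otherwise.

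First I would read off two consequences of this. Since no $\gamma_k$ can have a nonzero coefficient outside $[a,b]$, every interval $[i_k,j_k]$ is contained in $[a,b]$; in particular such an interval contains $a$ exactly when its left endpoint is $a$, and contains $b$ exactly when its right endpoint is $b$. Evaluating the counting condition at $m=a$ and at $m=b$ then shows that exactly $n$ of the $n+1$ roots have left endpoint $a$ and exactly $n$ have right endpoint $b$. Calling these two subsets $A$ and $B$, an inclusion–exclusion count inside the $(n+1)$-element index set gives $\vert A \cap B\vert \ge 2n - (n+1) = n-1$; but $A\cap B = \{k : \gamma_k = \gamma\}$, so at least $n-1$ of the $\gamma_k$ equal $\gamma$.

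Next I would pin down the exact number of copies of $\gamma$ using positivity of roots. If all $n+1$ of the $\gamma_k$ equaled $\gamma$ the sum would be $(n+1)\gamma \ne n\gamma$, and if exactly $n$ equaled $\gamma$ the one remaining root would have to be $n\gamma - n\gamma = 0$, impossible; hence precisely $n-1$ of the $\gamma_k$ equal $\gamma$. Subtracting these, the two remaining roots $\delta,\varepsilon$ satisfy $\delta + \varepsilon = \gamma$. As their coefficient vectors are $0/1$-valued and sum to the indicator of $[a,b]$, the intervals $\delta$ and $\varepsilon$ are disjoint and cover $[a,b]$, so $\{\delta,\varepsilon\} = \{\alpha_{a\,c},\, \alpha_{c+1\,b}\}$ for some $a \le c < b$. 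Comparing subindices in the lexicographic order yields $\alpha_{a\,c} < \alpha_{a\,b} = \gamma < \alpha_{c+1\,b}$, so one of $\delta,\varepsilon$ lies strictly below $\gamma$ and the other strictly above. Sorting all $n+1$ roots, these two occupy the extreme slots $\gamma_1$ and $\gamma_{n+1}$, the $n-1$ copies of $\gamma$ fill the middle, giving $\gamma_2 = \cdots = \gamma_n = \gamma$, and $\gamma = \gamma_1 + \gamma_{n+1}$.

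The only point that demands genuine care is the final bookkeeping: I must be sure the two non-$\gamma$ roots really straddle $\gamma$ in the fixed order rather than both falling on one side, and it is exactly the ``partition into two consecutive subintervals'' structure that guarantees this. The counting and positivity steps are routine once the interval dictionary is in place; the extension from type $A$ to types $D$ and $E$ would then follow from this case together with Lemma \ref{lema:3roots}, though the present statement only asserts the type $A$ version.
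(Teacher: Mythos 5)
Your proof is correct and follows essentially the same route as the paper's: both arguments use the interval model of the type $A$ root system and count, via the coefficients of the two extremal simple roots of $\supp \gamma$, how many of the $\gamma_k$ are forced to equal $\gamma$. The only difference is organizational --- you run a pure counting (inclusion--exclusion) argument first and invoke the lexicographic order only at the very end to place the two remaining roots $\alpha_{a\,c}$ and $\alpha_{c+1\,b}$ at the extreme positions, whereas the paper uses the order from the outset to identify which sorted positions contain $\alpha_1$ and $\alpha_\theta$; your arrangement neatly sidesteps the mild subtlety behind the paper's ``similarly'' (the roots containing $\alpha_\theta$ do not form a final segment of the lexicographic order, so that step really needs the sortedness plus the exclusion of degenerate cases, which your count handles automatically).
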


\begin{proof} 
We may assume that 
$\gamma = \alpha_1 + \ldots + \alpha_\theta$ for otherwise we reduce to a smaller $\theta$.  
Since we have $n+1$ roots in the sum which contains each simple 
root with coefficient exactly $n$, we must have that $\gamma_1, \ldots, \gamma_{n}$ have $\alpha_1$ in the support but $\gamma_{n+1}$ does not. 
Similarly, $\gamma_2, \ldots, \gamma_{n+1}$ must have $\alpha_\theta$ in their support but $\gamma_1$ does not. Hence, the supports of 
$\gamma_2, \ldots, \gamma_n$ contain all simple roots.  We conclude that $\gamma_2 = \ldots = \gamma_n = \gamma$,
thus $\gamma_1 + \gamma_{n+1} = \gamma$ by \eqref{eq:roots-A}. 
\end{proof}

\begin{claimA}\label{claimA:2}
Let $\gamma \in \varDelta_+$ and let $c \in \chain(n)$. Assume that 
\begin{enumerate}[leftmargin=*,label=\rm{(\alph*)}]
\item\label{item:claim2-Atheta1} $d(c \otimes 1)  =  \ldots + C \x_\gamma^n \otimes 1 + \ldots $ 
for some   $C \not = 0$,
\item\label{item:claim2-Atheta2} The polynomial degree of $c$ is $n + 1$. 
\end{enumerate} 
Then there exist $\alpha, \beta \in \varDelta_+$ such that 
\begin{align*} 
c &= \x_\alpha \x_\gamma^{n-1} \x_\beta,&
\gamma &= \alpha + \beta.
\end{align*} 
\end{claimA}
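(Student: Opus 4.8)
The plan is to convert the two hypotheses into numerical constraints on the exponents occurring in $c$ and then feed them into the root-theoretic Lemma~\ref{le:roots}; the pleasant feature is that one never has to compute the Anick differential, only to use that it is $\N_0^{\I}$-homogeneous (Lemma~\ref{lem:grading}). Write $c=\x_{\delta_1}^{f_{\delta_1}(n_1)}\cdots\x_{\delta_m}^{f_{\delta_m}(n_m)}$ as in \eqref{eq:chains-Nichols}, with the $\delta_j\in\varDelta_+$ pairwise distinct and each $n_j\ge 1$. Since $\x_\gamma^n\ot1$ occurs in $d(c\ot1)$, the monomial $\x_\gamma^n$ must itself be a basis $n$-chain; by Remark~\ref{obs:main}\ref{item:obs-chains1} and \eqref{eq:power-root-vector-chain} this is possible only when $N_\gamma=2$. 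The grading and homological degree constraints \eqref{eq:grading}--\eqref{eq:degree} attached to this occurrence then read $\sum_j f_{\delta_j}(n_j)\,\delta_j=n\gamma$ and $\sum_j n_j=n+1$.

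The key step is to linearize the grading identity by means of hypothesis \ref{item:claim2-Atheta2}. The \emph{polynomial degree} of $c$ equals $\sum_j f_{\delta_j}(n_j)$, so \ref{item:claim2-Atheta2} says $\sum_j f_{\delta_j}(n_j)=n+1=\sum_j n_j$. From \eqref{eq:power-root-vector-chain} one has $f_\delta(k)\ge k$ for every root $\delta$ and every $k$, with equality precisely when $k\le1$ or $N_\delta=2$. Comparing the two equal sums forces $f_{\delta_j}(n_j)=n_j$ for all $j$, and the grading identity collapses to the linear system
\begin{align*}
\sum_j n_j\,\delta_j = n\gamma, \qquad \sum_j n_j = n+1.
\end{align*}
This is exactly the place where hypothesis \ref{item:claim2-Atheta2}---equivalently, the conclusion of Claim~\ref{claimA:1}---is indispensable: it excludes the chains in which some root of order $>2$ occurs with exponent a nontrivial multiple of its order, which would otherwise spoil the combinatorics that follows.

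Now I would list each root $\delta_j$ with its multiplicity $n_j$, obtaining $n+1$ positive roots (with repetition) whose sum is $n\gamma$; every root occurring is supported on $\supp\gamma$, so we work inside the type-$A$ subsystem cut out by $\supp\gamma$. This is precisely the hypothesis of Lemma~\ref{le:roots}, which tells us that, after sorting, exactly $n-1$ of these roots equal $\gamma$ and the remaining two, say $\alpha$ and $\beta$, satisfy $\alpha+\beta=\gamma$. Since $\alpha,\beta\in\varDelta_+$, neither can equal $\gamma$ (else the other would vanish), so by convexity of the order $\alpha<\gamma<\beta$ and the three roots are pairwise distinct. Hence $n_\gamma=n-1$ and $n_\alpha=n_\beta=1$, that is, $c=\x_\alpha\x_\gamma^{n-1}\x_\beta$ in the notation of \eqref{eq:chains-Nichols}, and this word is a legitimate $(n+1)$-chain precisely because $\alpha<\gamma<\beta$.

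The argument is largely bookkeeping once Lemma~\ref{le:roots} is available, and I expect the only genuinely delicate point to be the linearization: one must verify the equality case---namely that $f_\delta(k)=k$ exactly when $k\le1$ or $N_\delta=2$---directly from \eqref{eq:power-root-vector-chain}, and confirm that assumption \ref{item:claim2-Atheta2} forces $f_{\delta_j}(n_j)=n_j$ simultaneously at every index $j$. The reduction to $\gamma$ of full support needed in order to apply Lemma~\ref{le:roots} is harmless, since it is already built into the proof of that lemma.
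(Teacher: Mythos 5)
Your proof is correct and follows essentially the same route as the paper's: hypothesis \ref{item:claim2-Atheta2} is used to realize $c$ as a word in $n+1$ root-vector letters, $\N_0^{\I}$-homogeneity of the Anick differential (Lemma \ref{lem:grading}) turns hypothesis \ref{item:claim2-Atheta1} into the relation \eqref{eq:roots-A}, and Lemma \ref{le:roots} finishes the argument. The only difference is that the paper lists the $n+1$ letters of $c$ directly and never needs your intermediate bookkeeping (deducing $f_{\delta_j}(n_j)=n_j$ from the homological-degree constraint, which also forces $N_\gamma=2$); that step is harmless and consistent with how the claim is applied, so the two arguments coincide in substance.
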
 

\pf By \ref{item:claim2-Atheta2}, we may write 
$c = \x_{\gamma_1}  \dots \x_{\gamma_{n+1}}  \otimes 1$, where  
$\gamma_1 \leq \gamma_2 \leq \ldots \leq \gamma_{n+1} \in \varDelta_+$.
Since $d$ is $\N_0^{\theta}$-homogeneous, \eqref{eq:roots-A} holds by \ref{item:claim2-Atheta1}. 
Thus Lemma \ref{le:roots} applies.
\epf

\begin{claimA}\label{claimA:3} Let $\alpha,\beta, \gamma \in \varDelta_+$ with $\gamma = \alpha + \beta$. 
Let $\wp = -\dfrac{q_{\alpha\alpha}}{q_{\beta \beta}}$. Assume that $N_{\gamma} = 2$.
Then 
\begin{multline*}
 d(\x_{\alpha} \x_{\beta}^{j-1} \x_{\gamma}\ot 1)=
\x_{\alpha}\x_{\gamma}^{j-1}
\ot x_{\beta} - q_{\gamma, \beta}\x_{\alpha} \x_{\gamma}^{j-2} \x_{\beta}\ot x_{\gamma} 
\\
+ (-1)^{j} q_{\alpha, \beta}q_{\alpha, \gamma}^{j-1} \x_{\gamma}^{j-1} \x_{\beta}\ot x_{\alpha} -q_{\gamma, \beta}^{j-1}(j)_{\wp}  \x_{\gamma}^{j}\ot 1.
\end{multline*}
\end{claimA}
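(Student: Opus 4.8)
\emph{Setup and reductions.} I read the left-hand side as the differential of the genuine $(j+1)$-chain $c_j:=\x_\alpha\x_\gamma^{j-1}\x_\beta$, whose letters increase in the convex order ($\alpha<\gamma<\beta$, since $\gamma=\alpha+\beta$). The proof rests on three structural facts valid in type $A$ for every decomposition $\gamma=\alpha+\beta$: the two pure commutations $x_\alpha x_\gamma=q_{\alpha\gamma}x_\gamma x_\alpha$ and $x_\gamma x_\beta=q_{\gamma\beta}x_\beta x_\gamma$, together with $x_\alpha x_\beta=q_{\alpha\beta}x_\beta x_\alpha+x_\gamma$; these are the explicit PBW relations recorded in \S\ref{sec:An} and are exactly the hypotheses \eqref{eq:diff2-hypothesis} needed to invoke Lemma~\ref{lem:diff2}. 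Since $N_\gamma=2$ we also have $x_\gamma^2=0$ and $\x_\gamma^m\in\chain(m)$ for all $m$. Finally, bimultiplicativity of $\bq$ gives $q_{\alpha\gamma}=q_{\alpha\alpha}q_{\alpha\beta}$ and $q_{\gamma\beta}=q_{\alpha\beta}q_{\beta\beta}$, whence $q_{\alpha\gamma}=-\wp\,q_{\gamma\beta}$; this identity is what turns the powers of $q_{\alpha\gamma}$ produced by the differential into the quantum integer $(j)_\wp$.

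\emph{Auxiliary differentials.} First I would compute, by induction on $k$, the simpler differentials $d(\x_\gamma^m\ot1)=\x_\gamma^{m-1}\ot x_\gamma$ (the $N=2$ quantum-linear-space case) and $d(\x_\alpha\x_\gamma^k\ot1)=\x_\alpha\x_\gamma^{k-1}\ot x_\gamma+(-1)^kq_{\alpha\gamma}^k\,\x_\gamma^k\ot x_\alpha$. For the latter one factors $\x_\alpha\x_\gamma^k=(\x_\alpha\x_\gamma^{k-1})\cdot x_\gamma$, uses the recursive rule $d(u\ot1)=v\ot t-s\,d(v\ot1)\,t$, kills the term carrying $x_\gamma^2$, and evaluates $s(\x_\gamma^{k-1}\ot x_\gamma x_\alpha)=\x_\gamma^k\ot x_\alpha$ through \eqref{eq:Anick-def-sn}. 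A companion two-term formula for $d(\x_\gamma^m\x_\beta\ot1)$ is obtained the same way.

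\emph{Main induction.} I would then induct on $j$. The base $j=1$ is the direct $2$-chain computation $d(\x_\alpha\x_\beta\ot1)=\x_\alpha\ot x_\beta-q_{\alpha\beta}\x_\beta\ot x_\alpha-\x_\gamma\ot1$, which is the stated formula (its second summand is absent and $(1)_\wp=1$). For the step, factor $c_j=(\x_\alpha\x_\gamma^{j-1})\cdot x_\beta$ so that $d(c_j\ot1)=\x_\alpha\x_\gamma^{j-1}\ot x_\beta-s\big(d(\x_\alpha\x_\gamma^{j-1}\ot1)\,x_\beta\big)$, substitute the auxiliary formula and rewrite $x_\gamma x_\beta$ and $x_\alpha x_\beta$ with the relations above. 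This reduces $d(c_j\ot1)$ to the leading term $\x_\alpha\x_\gamma^{j-1}\ot x_\beta$ minus $s$ applied to three explicit terms. Two of them, $s(\x_\gamma^{j-1}\ot x_\beta x_\alpha)$ and $s(\x_\gamma^{j-1}\ot x_\gamma)$, evaluate through \eqref{eq:Anick-def-sn} and the auxiliary formulas to $\x_\gamma^{j-1}\x_\beta\ot x_\alpha$ and $\x_\gamma^j\ot1$, and after $q_{\alpha\gamma}=-\wp q_{\gamma\beta}$ they produce the third summand $(-1)^jq_{\alpha\beta}q_{\alpha\gamma}^{j-1}\x_\gamma^{j-1}\x_\beta\ot x_\alpha$ and a first contribution $-\wp^{j-1}q_{\gamma\beta}^{j-1}\x_\gamma^j\ot1$ to the last.

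\emph{The crux and the main obstacle.} The remaining term $s(\x_\alpha\x_\gamma^{j-2}\ot x_\beta x_\gamma)$ is where the induction feeds back on itself: by \eqref{eq:Anick-def-sn} it equals $\x_\alpha\x_\gamma^{j-2}\x_\beta\ot x_\gamma$ plus $s$ of $\x_\alpha\x_\gamma^{j-2}\ot x_\beta x_\gamma-d(c_{j-1}\ot1)\,x_\gamma$, and the induction hypothesis for $d(c_{j-1}\ot1)$ cancels the leading pieces while feeding its $\x_\gamma^{j-1}\ot1$ coefficient $-q_{\gamma\beta}^{j-2}(j-1)_\wp$ into a further $\x_\gamma^j\ot1$ contribution $-q_{\gamma\beta}^{j-1}(j-1)_\wp$. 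Adding the two contributions and using $(j)_\wp=\wp^{j-1}+(j-1)_\wp$ gives the asserted coefficient $-q_{\gamma\beta}^{j-1}(j)_\wp$, while the second summand $-q_{\gamma\beta}\x_\alpha\x_\gamma^{j-2}\x_\beta\ot x_\gamma$ is exactly the leading term just displayed. The hard part throughout is this homotopy bookkeeping: one must repeatedly split elements along $\Vt(n)\ot\toba_{\bq}=\ker d\oplus\im s$, and use $s^2=0$, $x_\gamma^2=0$ and the convexity \eqref{eq:convex-order-relations} to show that every recursive correction generated by \eqref{eq:Anick-def-sn} either vanishes or reproduces one of the four listed terms, so that no spurious multiple of $\x_\gamma^j\ot1$ survives and the genuine ones telescope into $(j)_\wp$.
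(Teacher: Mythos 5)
Your argument is correct, but it is not the route the paper takes. The paper's entire proof of Claim~\ref{claimA:3} consists of observing that your three relations are precisely the hypotheses \eqref{eq:diff2-hypothesis} (with $n=0$, $\Bsj=1$, $N_\gamma=2$) and then invoking Lemma~\ref{lem:diff2} as a black box, so that all that remains is scalar algebra: for $j=2a+1$ one reads off the lemma's formula for $\x_\alpha\x_\gamma^{aN_\gamma}\x_\beta$, for $j=2a+2$ the one for $\x_\alpha\x_\gamma^{aN_\gamma+1}\x_\beta$, and in both cases the bracketed coefficient collapses to $-q_{\gamma\beta}^{j-1}(j)_\wp$ via exactly the identity $q_{\alpha\gamma}/q_{\gamma\beta}=-\wp$ that you isolate. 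You instead re-prove the needed special case of Lemma~\ref{lem:diff2} from scratch by a single induction on $j$. That route is legitimate and even has a small structural advantage: since $N_\gamma=2$ makes every power $\x_\gamma^m$ a chain, your induction can advance one step at a time and the quantum integer emerges from its recursion $(j)_\wp=(j-1)_\wp+\wp^{j-1}$, with no parity split, whereas the general lemma must alternate between its two formulas because $\x_\gamma^m$ is a chain only for $m\equiv 0,1\pmod{N_\gamma}$. The price is that you shoulder all the homotopy bookkeeping that the paper pays for once, in full generality, in Section~\ref{sec:computational-lemmas}: the evaluations $s(\x_\gamma^{j-1}\ot x_\gamma)=\x_\gamma^{j}\ot 1$, the cancellation $s\bigl(\x_\gamma^{j-1}\ot x_\beta x_\alpha\bigr)-q_{\gamma\beta}\,s\bigl(\x_\gamma^{j-2}\x_\beta\ot x_\gamma x_\alpha\bigr)=\x_\gamma^{j-1}\x_\beta\ot x_\alpha$ (which follows from $sd+ds=\id$, $s^2=0$, and the fact that chains ending in $\x_\beta$ tensored with decreasing basis words lie in $\im s$), and the feedback of the inductive coefficient $-q_{\gamma\beta}^{j-2}(j-1)_\wp$ through right multiplication by $x_\gamma$ --- your outline identifies all of these correctly, your base case and all final coefficients agree with the specialization of Lemma~\ref{lem:diff2}, and you also correctly read the misprinted word $\x_\alpha\x_\beta^{j-1}\x_\gamma$ in the statement as the genuine chain $\x_\alpha\x_\gamma^{j-1}\x_\beta$, which is what the paper differentiates as well.
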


\begin{proof} The defining relations say that
\begin{align*}
x_{\alpha}x_{\gamma} &= q_{\alpha, \beta} x_{\gamma}x_{\alpha} +x_{\gamma},&
x_{\alpha}x_{\beta} &= q_{\alpha, \gamma} x_{\beta}x_{\alpha},&
x_{\beta}x_{\gamma} &= q_{\gamma, \beta}x_{\gamma}x_{\beta}.
\end{align*}
Hence we are in the setting of Lemma~\ref{lem:diff2}. We consider two cases.

First we assume that $j = 2a+1$ is odd. The only thing we need to prove is that
 the coefficient of $\x_{\gamma}^{j}\ot 1$ is of the given form. We compute 
\begin{align*} 
&q_{\gamma, \beta}^{j-1}
\left\{ \left(\frac{q_{\alpha, \gamma}}{q_{\gamma, \beta}}-1 \right) (a)_{(\frac{q_{\alpha, \gamma}}{q_{\gamma, \beta}})^2}
- \left(\frac{q_{\alpha, \gamma}}{q_{\gamma, \beta}}\right)^{2a}\right\} = 
 q_{\gamma, \beta}^{j-1}( (-\wp -1)(a)_{(-\wp)^2} - (-\wp)^{2a})  \\
&= -q_{\gamma, \beta}^{j-1}(1+\wp)(1+\wp^2 + \ldots + \wp^{2a-2} + \wp^{2a}) =  
-q_{\gamma, \beta}^{j-1}(2a+1)_{\wp} = -q_{\gamma, \beta}^{j-1}(j)_{\wp}.    
\end{align*}

Next  we assume that $j = 2a+2$ is even. 
We do a similar calculation for the coefficient of $\x_{\gamma}^{j}\ot 1$.
\begin{align*} 
&q_{\gamma, \beta}^{j-1}\left(\frac{q_{\alpha, \gamma}}{q_{\gamma, \beta}}-1\right)(a+1)_{\left(\frac{q_{\alpha, \gamma}}{q_{\gamma, \beta}}\right)^2} = q_{\gamma, \beta}^{j-1}(-\wp-1)(a+1)_{(-\wp)^2} 
\\&= 
-q_{\gamma, \beta}^{j-1}(1+\wp)(1+\wp^2 + \ldots + \wp^{2a}) = -q_{\gamma, \beta}^{j-1}(2a+2)_{\wp} =  -q_{\gamma, \beta}^{j-1}(j)_{\wp}. 
\end{align*}
\end{proof}

\medbreak
\emph{Proof of Proposition \ref{prop:roots-cocycles-Atheta}.} \ 
First, if $N_{\gamma} > 2$, then $(\x_{\gamma}^{N_{\gamma}})^*$ is a
cocycle in degree~2 by Lemma~\ref{lem:largeN}, cf. Lemma \ref{lema:non-simple-roots-Palfa} \ref{item:typeA}.
Let $\gamma$ be a non-simple root with $N_{\gamma} = 2$. 

By Remark \ref{obs:main}, it suffices to show that there is no $c \in C_{N} (R)$ such that
$d(c)$ contains  $\x_{\gamma}^{N}\ot 1$  with a non-zero coefficient.
Assume that this happens. By Claim \ref{claimA:1}, $c$ satisfies the hypothesis \ref{item:claim2-Atheta2}
of Claim \ref{claimA:2}. Hence $c = \x_{\alpha} \x_{\gamma}^{N-1} \x_{\beta}\ot 1$ for some $\alpha$ and $\beta$.
But $\x_{\gamma}^{N}\ot 1$ does not enter
in $d(\x_{\alpha} \x_{\gamma}^{N-1} \x_{\beta}\ot 1)$ with a non-zero coefficient for any $\alpha, \beta, \gamma \in \Delta^+$ with $\gamma = \alpha + \beta$.
Indeed, $\wp = -\dfrac{q_{\alpha\alpha}}{q_{\beta \beta}} = q^{\pm 1}$, see \eqref{eq:ord-root-Atheta}; taking $j = N$, the coefficient in Lemma \ref{claimA:3} is  $q_{\gamma, \beta}^{j-1}(j)_{\wp} = 0$.
\qed

\vspace{0.1in} 

\subsection{Types \texorpdfstring{$B_{\theta}$ and $\superb{j}{\theta - j}$, $\theta \ge 1$, $j \in \I_{\theta-1}$}{}}\label{sec:Bn}

Let $q$ be a root of 1 of order $N>2$.
In this subsection, we deal with the Nichols algebras $\toba_{\bq}$ of diagonal types $B_{\theta}$ or $\superb{j}{\theta - j}$. 
In the first case, the Dynkin diagram is
\begin{align*}
\xymatrix{ \overset{q^2}{\underset{\ }{\circ}}\ar  @{-}[r]^{q^{-2}}  &
\overset{q^2}{\underset{\ }{\circ}}\ar  @{-}[r]^{q^{-2}} 
&  \overset{q^2}{\underset{\ }{\circ}}\ar@{.}[r] 
& \overset{q^2}{\underset{\ }{\circ}} \ar  @{-}[r]^{q^{-2}}&
\overset{q}{\underset{\ }{\circ}}}
\end{align*}
while in the second we may assume that the corresponding diagram is
\begin{align*}
\xymatrix{ \overset{q^{-2}}{\underset{\ }{\circ}}\ar  @{-}[r]^{q^{2}}  
& \overset{q^{-2}}{\underset{\ }{\circ}}\ar@{.}[r] 
& \overset{q^{-2}}{\underset{\ }{\circ}}\ar  @{-}[r]^{q^2} 
& \overset{-1}{\underset{j}{\circ}}\ar  @{-}[r]^{q^{-2}} 
&  \overset{q^2}{\underset{\ }{\circ}}\ar@{.}[r] 
& \overset{q^2}{\underset{\ }{\circ}} \ar  @{-}[r]^{q^{-2}}&
\overset{q}{\underset{\ }{\circ}}}
\end{align*}
The set of positive roots in both cases is
\begin{align}\label{eq:root-system-Bn}
\varDelta_+ =\{\alpha_{ik}\,|\, i \leq k \in \I \} \cup
\{\alpha_{i\theta} + \alpha_{k\theta}\,|\, i < k \in \I \}. 
\end{align}
We fix the following convex order:
\begin{align}\label{eq:convex-order-Bn}
\begin{aligned}
&\alpha_{1}< \alpha_{12}< \dots < \alpha_{1\theta} < \alpha_{1\theta} + \alpha_{\theta} < \dots < \alpha_{1\theta} + \alpha_{2 \, \theta}
\\
&< \alpha_{2}< \alpha_{23}< \dots <\alpha_{\theta-1} 
<\alpha_{\theta-1\,\theta}<\alpha_{\theta-1 \, \theta}+\alpha_{\theta}<\alpha_{\theta}.
\end{aligned}
\end{align}

We set $N_{ik} := \ord q_{\alpha_{ik}}$, 
$M_{ik} := \ord q_{\alpha_{i\theta} + \alpha_{k\theta}}$, that is $N_{ik} = N_{\alpha}$ if $\alpha = \alpha_{ik}$, $M_{ik} = N_{\alpha}$ if $\alpha =\alpha_{i\theta} + \alpha_{k\theta}$.
Let $M=\ord q^2$, $P=\ord (-q)$.
Then 
\begin{align*}
N_{ik} &= \begin{cases} 
2 & i\le j\le k<\theta,
\\ M & i\le k<j \text{ or }j<i\le k<\theta;
\\ P & i\le j < k=\theta,
\\ N & j<i <k=\theta;
\end{cases}
&
M_{ik} &= \begin{cases} 2 & i\le j<k
\\ M & i<k\le j \text{ or }j<i<k.
\end{cases}
\end{align*}
Here we set $j=0$ if $\bq$ is of Cartan type.
For more information, see \cite[\S 4.2, \S 5.2]{AA17}.

In this Section we prove that Condition \ref{assumption:intro-combinatorial} holds for types
$B_{\theta}$ and $\superb{j}{\theta - j}$, $\theta \ge 1$, $j \in \I_{\theta-1}$. We need the following technical result. 

\begin{lem}\label{lem:Btheta-relations-root-vectors} Let $\gamma = \alpha_{1\theta}$, assume $N_\gamma = 3$, and let 
$(\alpha,\beta)$ be a pair of positive roots such that 
\begin{enumerate} 
\item $\alpha  + \beta = 2\gamma$,
\item $\alpha=\alpha_{1k}$, $\beta=\alpha_{1\theta}+\alpha_{k+1 \, \theta}$ for some $k\in\I_{\theta-1}$, 
\end{enumerate} 
(in particular, $(\alpha, \beta)$ satisfy $\alpha < \beta$). 

Then the relations between the root vectors $x_{\alpha}$, $x_{\beta}$ and $x_{\gamma}$ are of the form
\begin{align*}
x_{\alpha}x_{\gamma} &= q_{\alpha\gamma} x_{\gamma}x_{\alpha}, &
x_{\gamma}x_{\beta} &= q_{\gamma\beta} x_{\beta}x_{\gamma}, &
x_{\alpha}x_{\beta} &= q_{\alpha\beta} x_{\beta}x_{\alpha}+ \Bsj x_{\gamma}^2,
&
&\Bsj\in \Bbbk
\end{align*}
\end{lem}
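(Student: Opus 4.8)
The plan is to deduce all three relations from the convexity property \eqref{eq:convex-order-relations}, which guarantees that for any pair $\mu < \nu$ in $\varDelta_+$ the braided commutator $[x_\mu, x_\nu]_c = x_\mu x_\nu - q_{\mu\nu} x_\nu x_\mu$ is a linear combination of ordered monomials $x_{\gamma_1}\cdots x_{\gamma_t}$ with $\mu < \gamma_1 \le \dots \le \gamma_t < \nu$; by $\N_0^\theta$-homogeneity each such monomial satisfies $\sum_i \gamma_i = \mu + \nu$. So the argument reduces to enumerating, for each of the three pairs, the admissible intermediate monomials. I would first record that under the convex order \eqref{eq:convex-order-Bn} one has $\alpha = \alpha_{1k} < \gamma = \alpha_{1\theta} < \beta = \alpha_{1\theta} + \alpha_{k+1\,\theta}$ and that $\alpha + \beta = 2\gamma$.

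For the first two relations I would run a short endpoint-counting argument. For $[x_\alpha, x_\gamma]_c$ the target degree $\alpha + \gamma$ carries coefficient $2$ on $\alpha_1$ and $1$ on $\alpha_\theta$, while every root strictly between $\alpha_{1k}$ and $\alpha_{1\theta}$ is of the form $\alpha_{1j}$ with $k<j<\theta$, hence has coefficient $1$ on $\alpha_1$ and $0$ on $\alpha_\theta$; matching the $\alpha_1$-coefficient forces exactly two factors, which then contribute $0$ to the $\alpha_\theta$-coefficient, a contradiction. Thus no intermediate monomial exists and $x_\alpha x_\gamma = q_{\alpha\gamma} x_\gamma x_\alpha$. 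The same scheme applies to $[x_\gamma, x_\beta]_c$: the roots strictly between $\gamma$ and $\beta$ are exactly the $\alpha_{1\theta}+\alpha_{m\theta}$ with $k+2\le m\le\theta$, each with coefficient $1$ on $\alpha_1$ and $2$ on $\alpha_\theta$, whereas $\gamma+\beta$ has coefficient $2$ on $\alpha_1$ and $3$ on $\alpha_\theta$; two factors give $4\ne 3$ on $\alpha_\theta$, again impossible, yielding $x_\gamma x_\beta = q_{\gamma\beta} x_\beta x_\gamma$.

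For the third relation the same enumeration applied to the pair $\alpha < \beta$, with target degree $\alpha + \beta = 2\gamma = 2\alpha_{1\theta}$, leaves more than one admissible monomial: counting the $\alpha_1$-coefficient again forces exactly two factors, and solving $\gamma_1 + \gamma_2 = 2\alpha_{1\theta}$ among the intermediate roots yields $x_\gamma^2$ together with the \emph{mixed} monomials $x_{\alpha_{1j}}\,x_{\alpha_{1\theta}+\alpha_{j+1\,\theta}}$ for $k+1\le j\le \theta-1$. Thus a priori $[x_\alpha, x_\beta]_c = \Bsj\, x_\gamma^2 + \sum_{j} c_j\, x_{\alpha_{1j}}\,x_{\alpha_{1\theta}+\alpha_{j+1\,\theta}}$, and the entire content of the lemma is that every $c_j$ vanishes.

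Showing $c_j = 0$ is the main obstacle, since no grading can separate these terms from $x_\gamma^2$ (all share degree $2\gamma$). I would handle it by reducing to a rank-three sub-root-system: the roots $\alpha$, $\beta$, $\gamma$ and the pair $\alpha_{1j}, \alpha_{1\theta}+\alpha_{j+1\,\theta}$ occurring in a fixed mixed term all lie in the $\Z$-span of three suitable roots, so by Lemma \ref{lema:3roots} a Weyl groupoid element carries them into the positive roots of a rank-three parabolic, where the algebra isomorphism transports the relation. In that rank-three Nichols algebra I would compute $[x_\alpha, x_\beta]_c$ directly from the iterated braided-commutator definition of the root vectors together with the two relations already established and the braided Jacobi identity, using the explicit coefficient formula of \cite[Lemma 4.5]{A-jems}; the hypothesis $N_\gamma = 3$ pins the diagram down to the few cases of types $B_\theta$ and $\superb{j}{\theta-j}$ in which the resulting values of the $\widetilde{q}_{\mu\nu}$ force these coefficients to be $0$. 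I expect this rank-three computation, and the bookkeeping of the dependence on the braiding entries, to be the technically delicate part; everything else is the clean combinatorial enumeration above.
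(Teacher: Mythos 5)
Your handling of the first two relations is correct and coincides with the paper's own argument: under the order \eqref{eq:convex-order-Bn} the roots strictly between the endpoints are the $\alpha_{1r}$ (resp.\ the $\alpha_{1\theta}+\alpha_{r\theta}$), and comparing coefficients of $\alpha_1$ and $\alpha_\theta$ shows no intermediate monomial can occur, so \eqref{eq:convex-order-relations} forces $q$-commutation. For the third relation your enumeration is in fact \emph{more} careful than the printed proof: the paper asserts that the only solution of $\alpha<\eta_1\le\dots\le\eta_t<\beta$, $\sum_i\eta_i=\alpha+\beta$, is $t=2$, $\eta_1=\eta_2=\gamma$, whereas, as you observe, the pairs $\eta_1=\alpha_{1j}$, $\eta_2=\alpha_{1\theta}+\alpha_{j+1\,\theta}$ with $k+1\le j\le\theta-1$ satisfy every constraint (this already happens in $B_3$ with $k=1$). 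So you have correctly located the real content of the statement: degree and order counting alone cannot prove it, and one must show the mixed coefficients $c_j$ vanish.

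Your proposed route to $c_j=0$ is where the genuine gap lies. Lemma \ref{lema:3roots} is a purely combinatorial statement about triples of roots; the Weyl-groupoid maps behind it are isomorphisms of Drinfeld doubles $D(\toba_{\bq}\#\ku G)\to D(\toba_{\rho_i(\bq)}\#\ku G)$ which do \emph{not} restrict to maps of the Nichols algebras (on the reflected vertex they exchange the two triangular halves), so there is no off-the-shelf ``transport of the relation'' into a rank-three parabolic, and the paper never uses Lemma \ref{lema:3roots} for anything but root combinatorics. Moreover the reduction would buy nothing: the same mixed pair survives in rank three, where you would still face the original computation; and the vanishing is not caused by $N_\gamma=3$ or by special values of the $\widetilde{q}_{\mu\nu}$ --- it is structural. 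The argument that actually closes the gap, in any rank and for any braiding of this shape, uses the definition of the root vectors in \cite{AA17}: with $\delta:=\alpha_{k+1\,\theta}$ one has $x_\beta=[x_\gamma,x_\delta]_c$, and the same endpoint count you used for the first relation, applied to the pair $\alpha<\gamma<\delta$, gives $[x_\alpha,x_\delta]_c=c\,x_\gamma$ for some $c\in\ku$ (the only admissible monomial of degree $\alpha+\delta=\gamma$ is $x_\gamma$ itself). The braided Jacobi identity together with your first relation $[x_\alpha,x_\gamma]_c=0$ then yields
\begin{align*}
[x_\alpha,x_\beta]_c
=[[x_\alpha,x_\gamma]_c,x_\delta]_c+q_{\alpha\gamma}\,x_\gamma\,[x_\alpha,x_\delta]_c-q_{\gamma\delta}\,[x_\alpha,x_\delta]_c\,x_\gamma
=c\,(q_{\alpha\gamma}-q_{\gamma\delta})\,x_\gamma^{2},
\end{align*}
so every $c_j=0$ and $\Bsj=c(q_{\alpha\gamma}-q_{\gamma\delta})$. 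This repairs both your proposal and the printed proof; note that the paper itself performs exactly this kind of $q$-Jacobi computation in the footnote to the proof of Proposition \ref{prop:roots-cocycles-Ctheta}.
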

\pf The statement follows by the repeated application of the convexity and homogeneity of the relations.We prove $x_{\alpha}x_{\gamma} = q_{\alpha\gamma} x_{\gamma}x_{\alpha}$ first. For any $\eta\in\varDelta_+$ such that $\alpha<\eta<\gamma$, we must have $\eta=\alpha_{1r}$ for some $k<r<\theta$ by \eqref{eq:convex-order-Bn}. From here, we see that is impossible to find $\eta_1, \ldots, \eta_t$ such that $\alpha+\gamma\ne \eta_1+\cdots+\eta_t$  and $\alpha<\eta_i<\gamma$. Hence, $x_{\alpha}x_{\gamma} = q_{\alpha\gamma} x_{\gamma}x_{\alpha}$ by \eqref{eq:convex-order-relations}.

For the second relation, if $\eta\in\varDelta_+$ is such that $\gamma<\eta<\beta$, then $\eta=\alpha_{1\theta}+\alpha_{r\theta}$ for some $k<r<\theta$. Hence $\beta+\gamma\ne \eta_1+\cdots+\eta_t$ if $\alpha<\eta_i<\gamma$, so $x_{\gamma}x_{\beta}=q_{\gamma\beta} x_{\beta}x_{\gamma}$.

The last relation follows similarly: $\alpha<\eta_1\le \cdots \le\eta_t<\beta$ are such that $\sum_i \eta_i=\alpha+\beta$ if and only if $k=2$ and $\eta_1=\eta_2=\gamma$.
\epf

\begin{prop}\label{prop:roots-cocycles-Btheta}
For every  $\gamma \in \varDelta_+^{\bq}$, there exists $L_{\gamma}\in\N$ 
such that $(\x_{\gamma}^{L_\gamma})^*$ is a cocycle.
\end{prop}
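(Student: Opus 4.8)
The proof follows the algorithm summarized in \S\ref{subsec:summary}, organized by the size of $N_\gamma$. The first reduction is to dispose of the simple roots: for these $P_\gamma = Q_\gamma = 0$, so Lemma \ref{lem:largeN} already gives that $(\x_\gamma^{N_\gamma})^*$ is a $2$-cocycle. Arguing by induction on $\theta$ as in the preamble to Part \ref{part:nichols}, I may then assume $\gamma$ has full support. Inspecting the root system \eqref{eq:root-system-Bn} and the convex order \eqref{eq:convex-order-Bn} shows that the only full-support positive roots are the short root $\gamma = \alpha_{1\theta}$ and the long roots $\gamma = \alpha_{1\theta} + \alpha_{k\theta}$ with $2 \le k \le \theta$. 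Since the root system coincides with that of $B_\theta$ in both the Cartan and super cases, Lemma \ref{lema:non-simple-roots-Palfa}\ref{item:typeB} bounds $P_\gamma \le 3$ and $Q_\gamma \le 2$; a direct count in the root lattice refines this to $P_\gamma = 2$ for the long roots and $P_\gamma = 3$ for $\alpha_{1\theta}$. The orders $N_\gamma$ are read off the tables for $N_{ik}$, $M_{ik}$ in terms of $M = \ord q^2$, $N = \ord q$ and $P = \ord(-q)$; note that $P \ge 3$ always, since $q$ has order $N > 2$.

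With these data the case $N_\gamma \ge 4$ is immediate from criterion \ref{item:summary-1}: both $P_\gamma$ and $Q_\gamma$ are strictly smaller than $N_\gamma$, so $(\x_\gamma^{N_\gamma})^*$ is a $2$-cocycle by Lemma \ref{lem:largeN}. The same criterion covers every long root with $N_\gamma = 3$, because there $P_\gamma = Q_\gamma = 2 < 3$. The value $N_\gamma = 2$ cannot occur for $\alpha_{1\theta}$ (as $N, P \ge 3$); it appears only for the long roots $\alpha_{1\theta} + \alpha_{k\theta}$ with $k > j$ in the super case, and for the long roots in Cartan type when $N = 4$, $M = 2$. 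For these I first test the three equalities \eqref{eq:second-technique-2cocycles} using the convex-order relations \eqref{eq:convex-order-relations}; when they hold, Lemma \ref{lem:second-technique-2cocycles} (criterion \ref{item:summary-2}) yields the $2$-cocycle. When they fail, I pass to criterion \ref{item:summary-3}: I set $L_\gamma$ to be the relevant least common multiple, enumerate the solutions of the constraints \eqref{eq:equation-roots-cocycle-N=2}, and match each of them to one of the normal forms \ref{item:cocycle-xgamma-N=2-1}--\ref{item:cocycle-xgamma-N=2-10} of Proposition \ref{prop:cocycle-xgamma-N=2}, so that the statement reduces to the vanishing of the associated quantum integers, which follows from Lemma \ref{lemma:coef-roots-unity}.

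The genuinely delicate case is $N_\gamma = 3$ for the short root $\gamma = \alpha_{1\theta}$, which occurs when $N = 3$ in Cartan type or $P = 3$ in the super case. Here criterion \ref{item:summary-1} fails precisely because $P_\gamma = 3 = N_\gamma$, so I invoke criterion \ref{item:summary-4} through Proposition \ref{prop:cocycle-xgamma-N>2}. This is exactly what Lemma \ref{lem:Btheta-relations-root-vectors} is prepared for: the pairs $(\alpha, \beta)$ entering the proposition are those with $\alpha < \beta$ and $\alpha + \beta = (N_\gamma - 1)\gamma = 2\gamma$, and the lemma identifies them as $\alpha = \alpha_{1k}$, $\beta = \alpha_{1\theta} + \alpha_{k+1\,\theta}$ while recording that $x_\alpha, x_\gamma, x_\beta$ satisfy precisely the relations \eqref{eq:diff2-hypothesis} required to apply Lemma \ref{lem:diff2}. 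It then remains to verify the two hypotheses of Proposition \ref{prop:cocycle-xgamma-N>2}: condition \ref{item:cocycle-xgamma-N>2-1}, the quantum-integer vanishing \eqref{eq:Csj-Dsj} with $\Csj/\Dsj = q_{\alpha\gamma}/q_{\gamma\beta}$ for a suitable $\ell$ (taken as $L_\gamma/N_\gamma$ with $L_\gamma$ the least common multiple prescribed in criterion \ref{item:summary-4}); and condition \ref{item:cocycle-xgamma-N>2-2}, that the only solutions of the grading and degree constraints \eqref{eq:equation-roots-cocycle-N>2} are the chains $\x_\alpha \x_\gamma^{N_\gamma(\ell-1)+1}\x_\beta$.

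I expect condition \ref{item:cocycle-xgamma-N>2-2} --- the exhaustive solution of the $\N_0^\I$-grading and homological-degree equations --- to be the main obstacle. This is the analogue for type $B$ of Lemma \ref{le:roots}, but it is genuinely harder: the presence of the long roots $\alpha_{i\theta} + \alpha_{k\theta}$, which carry coefficient $2$ on the short simple root and satisfy extra additive relations, means I must rule out a wider variety of competing chains of the prescribed multidegree by a careful coefficient analysis in the root lattice. Once this list is pinned down, the cocycle condition collapses, via Lemma \ref{lem:diff2} and the arithmetic of quantum integers in Lemma \ref{lemma:coef-roots-unity}, to the vanishing forced by the choice of $L_\gamma$, completing the verification of Condition \ref{assumption:intro-combinatorial} for these types.
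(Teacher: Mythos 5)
Your road map coincides with the paper's proof in every structural respect: reduction to the two families of full-support roots $\alpha_{1\theta}$ and $\alpha_{1\theta}+\alpha_{k\theta}$, disposal of all cases with $N_\gamma > P_\gamma, Q_\gamma$ by Lemma \ref{lem:largeN}, treatment of the short root with $N_\gamma=3$ via Lemma \ref{lem:Btheta-relations-root-vectors} together with Proposition \ref{prop:cocycle-xgamma-N>2} (with $\Csj/\Dsj=1$, so $\ell=1$ works), and treatment of the long roots with $N_\gamma=2$ via Proposition \ref{prop:cocycle-xgamma-N=2}. Two harmless deviations: the paper never tests Lemma \ref{lem:second-technique-2cocycles} for the $N_\gamma=2$ roots but goes straight to Proposition \ref{prop:cocycle-xgamma-N=2}, and only the normal form \ref{item:cocycle-xgamma-N=2-1} ever occurs, not the full list \ref{item:cocycle-xgamma-N=2-1}--\ref{item:cocycle-xgamma-N=2-10}; also $Q_\gamma=1$ (not $2$) for the long roots, which changes nothing since both bounds sit below $N_\gamma$.

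The genuine gap is that the step you yourself flag as ``the main obstacle'' --- the complete enumeration of the solutions of \eqref{eq:equation-roots-cocycle-N>2} and \eqref{eq:equation-roots-cocycle-N=2}, and the verification of the relations \eqref{eq:diff2-hypothesis} for each shape of pair $(\alpha,\beta)$ --- is never carried out, and this constitutes essentially the entire body of the paper's proof. Moreover, describing that step as ``a careful coefficient analysis in the root lattice'' misses its real difficulty: the unknowns enter the grading constraint through the values $f_\delta(n_\delta)$, which equal $n_\delta$ only when $n_\delta \le 1$ or $N_\delta = 2$. So before any additive count in the root lattice one must prove that every solution satisfies $f_\delta(n_\delta)=n_\delta$ for all $\delta$, i.e., rule out contributions $N_\delta k$ or $N_\delta k+1$ with $k\ge 1$ from roots with $N_\delta>2$. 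In Cartan type with $N=4$ this is quick, but for $\superb{j}{\theta-j}$ with $k>j$ the paper needs a genuinely delicate argument interlacing the grading equations with the homological degree equation $\sum_\delta n_\delta = M+1$: supposing $n_{\alpha_{1i}}=2$ for some $i<j$ forces, recursively, a family of roots $\alpha_{i_{r-1}\,i_r}$ each with $n=2$ whose supports exhaust $\I_{j-1}$, and then the count of the coefficient of $\alpha_j$ produces a root $\mu$ with $a_j^{\mu}=2$, $n_\mu\neq 0$ and $j-1\in\supp\mu$, contradicting the vanishing already established. Only after this reduction does the problem become the purely additive one you describe (writing $L\gamma$ as a sum of $L+1$ roots), and even that residual count is done case by case on the coefficients of $\alpha_1$, $\alpha_k$ and $\alpha_\theta$. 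As it stands, the hypotheses of Propositions \ref{prop:cocycle-xgamma-N=2} and \ref{prop:cocycle-xgamma-N>2} remain unverified, so your text is an accurate plan of the paper's argument rather than a proof of the proposition.
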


\pf 
Let $\gamma$ be a positive non-simple root.
Arguing recursively we may assume that $\gamma$ has full support. Hence, either $\gamma=\alpha_{1\theta}$ or else there exists $k\in\I_{2, \theta}$ such that $\gamma=\alpha_{1\theta} +\alpha_{k\theta}$.

\medbreak

First we consider $\gamma=\alpha_{1\theta}$. It is easy to check that
$P_{\gamma} = 3$ and $Q_{\gamma} = 1$. Hence, if $N_{\gamma}>3$, then Lemma \ref{lem:largeN} applies and $(\x_{\gamma}^{N_\gamma})^*$ is a $2$-cocycle. 
Next we consider the case $N_{\gamma}=3$: that is, either $N=3$ if the braiding is of Cartan type, or else $N=6$ if not.

All the pairs $(\alpha,\beta)$ as in \eqref{eq:alfa-beta} are of the form $\alpha=\alpha_{1k}$, $\beta=\alpha_{1\theta}+\alpha_{k+1 \, \theta}$ for some $k\in\I_{\theta-1}$. Fix a pair $(\alpha,\beta)$. By Lemma~\ref{lem:Btheta-relations-root-vectors}, 
$x_{\alpha}$, $x_{\beta}$, $x_{\gamma}$ satisfy \eqref{eq:diff2-hypothesis}. Also,
$\frac{q_{\alpha\gamma}}{q_{\gamma\beta}}=1$ for all of them, so we take $L=1$. Now we look for solutions of \eqref{eq:equation-roots-cocycle-N>2}. That is, $\sum_{\delta\in\varDelta_+} f_\delta(n_\delta) \delta =3\gamma$, $\sum_{\delta\in\varDelta_+} n_\delta = 3$. We check that there is no solution with $n_{\delta}=3$ neither with $n_{\delta}=2$,  $\delta\in\varDelta_+$. Hence we are forced to look for solutions with $n_{\gamma_t}=1$ for three different roots $\gamma_t\in\varDelta_+$ (and $0$ for the other roots). We write $\gamma_t=\sum_{i\in\I} a_{i}^{(t)}\alpha_i$. As $\sum_t a_i^{(t)}=3$ for all $i\in\I$, we have $a_1^{(t)}=1$ for all $t\in\I_3$, and either $a_\theta^{(t)}=1$ for all $t\in\I_3$, or $a_\theta^{(t)}=2$ for some $t\in\I_3$. If $a_\theta^{(t)}=1$ for all $t\in\I_3$, then $\gamma_t=\gamma$ for all $t$, a contradiction. Hence we may assume $a_{\theta}^{(1)}=0$, $a_{\theta}^{(2)}=2$, $a_{\theta}^{(3)}=1$.
Then $\gamma_1=\alpha_{1k}$, $\gamma_2=\alpha_{1\theta}+\alpha_{k+1 \, \theta}$, $\gamma_3=\gamma$, for some $k\in\I_{\theta-1}$.
Thus Proposition \ref{prop:cocycle-xgamma-N>2} applies and $(\x_{\gamma}^{3})^{\ast}$ is a 2-cocycle.

\medbreak
Now we consider $\gamma=\alpha_{1\theta}+\alpha_{k\theta}$, $k\in\I_{2, \theta}$.
Here,
$P_{\gamma} = 2$ and $Q_{\gamma} = 1$. If $N_{\gamma}>2$, then Lemma \ref{lem:largeN} applies and $(\x_{\gamma}^{N_\gamma})^*$ is a $2$-cocycle. 
Next we consider the case $N_{\gamma}=2$: that is, either $N=4$ if the braiding is of Cartan type, or else $k>j$ if the braiding is of type $\superb{j}{\theta-j}$.
Let $\alpha<\beta$ be a pair of positive roots as in \eqref{eq:alfa-beta-N=2}. We have several possibilities:
\begin{itemize}[leftmargin=*,label=$\circ$]
\item $\alpha=\alpha_{1 \, i-1}$, $\beta=\alpha_{i\theta}+\alpha_{k\theta}$, $i<k$. Arguing as in Lemma \ref{lem:Btheta-relations-root-vectors}, the relations between the root vectors are of the form:
\begin{align*}
x_{\alpha}x_{\gamma} &= q_{\alpha\gamma} x_{\gamma}x_{\alpha}, &
x_{\gamma}x_{\beta} &= q_{\gamma\beta} x_{\beta}x_{\gamma}, &
x_{\alpha}x_{\beta} &= q_{\alpha\beta} x_{\beta}x_{\alpha}+ \Bsj x_{\gamma}, & &\Bsj\in\Bbbk.
\end{align*}
\item $\alpha=\alpha_{1\, i-1}$, $\beta=\alpha_{k\theta}+\alpha_{i\theta}$, $k<i\le \theta$. The relations between root vectors are of the form
\begin{align*}
x_{\alpha}x_{\gamma} &= q_{\alpha\gamma} x_{\gamma}x_{\alpha}, &
x_{\gamma}x_{\beta} &= q_{\gamma\beta} x_{\beta}x_{\gamma}, &
x_{\alpha}x_{\beta} &= q_{\alpha\beta} x_{\beta}x_{\alpha}+ \Bsj x_{\gamma}+\sum_{t=k+1}^{i-1} \Bsj_t x_{\alpha_{k\theta}+\alpha_{t\theta}} x_{\alpha_{1\, t-1}},
\end{align*}
for some  $\Bsj, \Bsj_t\in\Bbbk$.
\item $\alpha=\alpha_{1\theta}$, $\beta=\alpha_{k\theta}$. Arguing as in Lemma \ref{lem:Btheta-relations-root-vectors},
\begin{align*}
x_{\alpha}x_{\gamma} &= q_{\alpha\gamma} x_{\gamma}x_{\alpha}, &
x_{\gamma}x_{\beta} &= q_{\gamma\beta} x_{\beta}x_{\gamma}, &
x_{\alpha}x_{\beta} &= q_{\alpha\beta} x_{\beta}x_{\alpha}+ \Bsj x_{\gamma}+\sum_{t=k+1}^{\theta-1} \Bsj_t x_{\alpha_{1\theta}+\alpha_{t\theta}} x_{\alpha_{k\, t-1}},
\end{align*}
for some $\Bsj, \Bsj_t\in\Bbbk$.
\end{itemize}
In all cases the root vectors satisfy \eqref{eq:diff2-hypothesis}, and $-\frac{q_{\alpha\alpha}}{q_{\beta\beta}}=q^{\pm 2}$, so we take $L=M$. 

Next we look for solutions of \eqref{eq:equation-roots-cocycle-N=2}. 
In the Cartan case with $N=4$, we have $\sum_{\delta\in\varDelta_+} n_{\delta}=3$: we easily discard the possibility that $n_{\delta}\ge 2$ for some $\delta\in\varDelta_+$, so exactly three of them are one, and $n_{\varphi}= 0$ for the remaining $\varphi \in\varDelta_+$. Arguing as in the case $\gamma=\alpha_{1\theta}$ we check that all solutions are of form $n_{\gamma}=n_{\alpha}=n_{\beta}=1$ for a pair $(\alpha, \beta)$ satisfying \eqref{eq:alfa-beta-N=2}.
Next we consider the case $\superb{j}{\theta-j}$, $k>j$. 
We have that $\sum_{\delta\in\varDelta_+: \, 1\in\supp \delta} f_{\delta}(n_{\delta})=M$.
Let $\eta\in\varDelta_+$ such that $1\in\supp \eta$ and $n_{\eta}>0$. 

\begin{itemize}[leftmargin=*]
\item If $\eta=\alpha_{1\theta}$, then $n_{\eta}=1$: otherwise, $f_{\eta}(n_{\eta})\ge P =2M>M=\sum_{\delta\in\varDelta_+: \, 1\in\supp \delta} f_{\delta}(n_{\delta})$, a contradiction.

\item If $\eta=\alpha_{1i}$, $i<j$, then $n_{\eta}=1$. Suppose on the contrary that $n_{\eta}>1$. Then
$n_{\eta}=2$, since $f_{\eta}(n_{\eta})\le M=N_{\eta}$.
This implies that $n_{\delta}=0$ for all $\delta\neq \alpha_{1i}$ such that $i\in\supp \delta$.

Let $\delta\in\varDelta_+$ such that $a_{i+1}^{\delta}=2$. Then $n_{\delta}=0$ since $i\in\supp \delta$. Thus
\begin{align*}
M &=\sum_{\delta\in\varDelta_+: \, a_{i+1}^{\delta}=1} f_{\delta}(n_{\delta}), & \sum_{\delta\in\varDelta_+: \, a_{i+1}^{\delta}=1} n_{\delta} &<M.
\end{align*} 
Then $n_{i+1 \, \ell}=2$ for some $\ell>i$, which implies that $n_{\delta}=0$ for all $\delta\neq \alpha_{i+1 \, \ell}$ such that $\ell\in\supp \delta$. Recursively, there exist $i_0=1<i_1=i<i_2< \dots <i_s=j-1$ such that 
\begin{align*}
n_{\delta}&=\begin{cases}
2 & \delta=\alpha_{i_{r-1} i_r}, \, r\in\I_s,
\\
0 & \delta \neq \alpha_{i_{r-1} i_r}, \, \supp\delta \cap \I_{j-1} \neq \emptyset.
\end{cases}
\end{align*}
On the other hand, if $a_j^{\delta}=1$, then either $N_{\delta}=1$, or $\delta=\alpha_{i\theta}$, $i<j$, in which case $n_{\delta}\leq 1$ since $N_{\delta}=P=2M$. Hence $n_{\delta}=f_{\delta}(n_{\delta})$ for all $\delta$ such that $a_j^{\delta}=1$. Using this fact,
\begin{align*}
M &=\sum_{\delta\in\varDelta_+} f_{\delta}(n_{\delta})a_{j}^{\delta}
=
\sum_{\delta\in\varDelta_+: \, a_j^{\delta}=1} n_{\delta}
+2 \sum_{\delta\in\varDelta_+: \, a_j^{\delta}=2} f_{\delta}(n_{\delta})
\\ &\le
\sum_{\delta\in\varDelta_+-\{\eta\}} n_{\delta}
+2 \sum_{\delta\in\varDelta_+: \, a_j^{\delta}=2} f_{\delta}(n_{\delta})
\leq M-1+2 \sum_{\delta\in\varDelta_+: \, a_j^{\delta}=2} f_{\delta}(n_{\delta}).
\end{align*} 
Thus $n_{\mu}\ne 0$ for some $\mu\in\varDelta_+$ such that $a_j^{\mu}=2$; but $j-1\in\supp \mu$, a contradiction.

\item If either $\eta=\alpha_{1i}$ or else $\eta=\alpha_{1\theta}+\alpha_{i\theta}$, $j\le i<\theta$, then $N_{\eta}=2$, so $f_{\eta}(n_{\eta})=n_{\eta}$.

\item If $\eta=\alpha_{1\theta}+\alpha_{i\theta}$, $i<j$, then $n_{\eta}=1$. Otherwise, 
\begin{align*}
M=\sum_{\delta\in\varDelta_+: \, j-1\in\supp \delta} f_{\delta}(n_{\delta})a_{j-1}^{\delta} \ge 2 f_{\eta}(n_{\eta}) \ge 2M,
\end{align*}
and we get a contradiction.
\end{itemize}
Hence $f_{\delta}(n_{\delta})=n_{\delta}$ for all $\delta$ such that $1\in\supp \delta$. Therefore,
\begin{align*}
\sum_{\delta\in\varDelta_+: \, 1\notin\supp \delta} n_{\delta}=M+1-\sum_{\delta\in\varDelta_+: \, 1\in\supp \delta} n_{\delta}=M+1-\sum_{\delta\in\varDelta_+: \, 1\in\supp \delta} f_{\delta}(n_{\delta})=1.
\end{align*}
That is, there exists a unique root $\beta\in\varDelta_+$ such that $1\notin\supp \beta$ and $n_{\beta}\neq 0$; moreover $n_{\beta}=1$ for this root $\beta$. Thus $f_{\delta}(n_{\delta})=n_{\delta}$ for all $\delta$ and we may translate to solve \eqref{eq:equation-roots-cocycle-N=2} as follows: find $\gamma_t\in\varDelta_+$, $t\in\I_{M+1}$, not necessarily different such that $\sum_t \gamma_t=M\gamma$. Write $\gamma_t=\sum_{i\in\I} a_i^{(t)} \alpha_i$. Hence we may assume that $a_1^{(t)}=1$ for $t\in\I_M$, $a_{1}^{(M+1)}=0$. As $\sum_{t\in\I_{M+1}} a_k^{(t)}=2M$ and each $a_{k}^{(t)}\le 2$, at least $M-1$ of them are $2$, and for the other two $a_{k}^{(t)}$'s, either both are $1$, or else one of them is $2$ and the other is $0$. Hence we may assume that $a_k^{(t)}=1$ for $t\in\I_{M-1}$. This forces to have $\gamma_t=\gamma$ for $t\in\I_{M-1}$, $\gamma_M+\gamma_{M+1}=\gamma$.
\medbreak

Hence all the hypotheses of Proposition \ref{prop:cocycle-xgamma-N=2} hold, and $(\x_{\gamma}^{L_{\gamma}})^*$ is a cocycle.
\epf

\subsection{Type \texorpdfstring{$B_{\theta,j}$ standard, $j \in \I_{\theta-1}$}{}}\label{sec:Bn-standard}

Here $\zeta \in\G_3'$.
In this subsection, we deal with Nichols algebras $\toba_{\bq}$ of standard type $B_{\theta,j}$. We assume that the corresponding diagram is
\begin{align*}
\xymatrix{ \overset{-\zeta}{\underset{\ }{\circ}}\ar  @{-}[r]^{-\ztu}  
& \overset{-\zeta}{\underset{\ }{\circ}}\ar@{.}[r] 
& \overset{-\zeta}{\underset{\ }{\circ}}\ar  @{-}[r]^{-\ztu} 
& \overset{-1}{\underset{j}{\circ}}\ar  @{-}[r]^{-\zeta} 
&  \overset{-\ztu}{\underset{\ }{\circ}}\ar@{.}[r] 
& \overset{-\ztu}{\underset{\ }{\circ}} \ar  @{-}[r]^{-\zeta}&
\overset{\zeta}{\underset{\ }{\circ}}}
\end{align*}
The set of positive roots is \eqref{eq:root-system-Bn}, and we fix the same convex order, see \eqref{eq:convex-order-Bn}.
For more information, see \cite[\S 6.1]{AA17}.
We prove that Condition \ref{assumption:intro-combinatorial} holds for type
$B_{\theta,j}$ standard:

\begin{prop}\label{prop:roots-cocycles-Btheta-standard}
For every  $\gamma \in \varDelta_+^{\bq}$, there exists $L_{\gamma}\in\N$ 
such that $(\x_{\gamma}^{L_\gamma})^*$ is a cocycle.
\end{prop}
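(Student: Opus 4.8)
The plan is to follow verbatim the strategy of the proof of Proposition~\ref{prop:roots-cocycles-Btheta}, adapting the numerics to the standard braiding with $\zeta\in\G_3'$. As there, I first argue recursively on the rank, so that it suffices to treat a root $\gamma$ with full support; since the root system \eqref{eq:root-system-Bn} and the convex order \eqref{eq:convex-order-Bn} coincide with those of type $B_\theta$, such a $\gamma$ is either $\gamma=\alpha_{1\theta}$ or $\gamma=\alpha_{1\theta}+\alpha_{k\theta}$ for some $k\in\I_{2,\theta}$. For each of these I would compute $q_{\gamma\gamma}=\prod_{i,\ell}q_{i\ell}^{a_i a_\ell}$ directly from the diagram; the diagonal entries and edge labels are all of the form $-\zeta^{\pm1}$, $-1$ or $\zeta$, so $N_\gamma=\ord q_{\gamma\gamma}\in\{2,3,6\}$. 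A short computation gives $N_{\alpha_{1\theta}}=3$ (the total sign is $+1$ and the accumulated power of $\zeta$ is $2$), while the long roots $\alpha_{1\theta}+\alpha_{k\theta}$ fall into the cases $N_\gamma=2$, $3$ or $6$ according to $k$ and $j$.

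Because $P_\gamma$ and $Q_\gamma$ depend only on the combinatorics of the type $B$ root system (Remark~\ref{rem:comptutation-P-Q-straightforward}) together with the orders $N_\delta$, the bounds $P_\gamma\le 3$ and $Q_\gamma\le 2$ persist. Hence whenever $N_\gamma=6$ we have $N_\gamma>P_\gamma,Q_\gamma$, and Lemma~\ref{lem:largeN} immediately yields that $(\x_\gamma^{6})^*$ is a cocycle of degree $2$. The case $N_\gamma=3$ (which occurs for $\gamma=\alpha_{1\theta}$, and possibly for some long roots) is handled as in the Cartan $B_\theta$ analysis: I would first prove a standard-type analogue of Lemma~\ref{lem:Btheta-relations-root-vectors}, showing that for every pair $(\alpha,\beta)$ with $\alpha+\beta=2\gamma$ as in \eqref{eq:alfa-beta} the PBW generators $x_\alpha,x_\beta,x_\gamma$ satisfy the commutation relations \eqref{eq:diff2-hypothesis} (the braiding enters only through the scalars, and convexity forces the correction term to be a multiple of $x_\gamma^2$). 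One then checks the root-of-unity condition \eqref{eq:Csj-Dsj}, where $q_{\alpha\gamma}/q_{\gamma\beta}=1$ so that it holds with $L=1$, and enumerates the solutions of the grading/degree system \eqref{eq:equation-roots-cocycle-N>2}, concluding that the only contributing chains are $\x_\alpha\x_\gamma^{N_\gamma(\ell-1)+1}\x_\beta$. Proposition~\ref{prop:cocycle-xgamma-N>2} then applies.

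For the remaining long roots with $N_\gamma=2$, I would run the $N_\gamma=2$ branch of the algorithm: list all pairs $(\alpha,\beta)$ satisfying \eqref{eq:alfa-beta-N=2}, record (again by convexity and homogeneity) the relations among $x_\alpha,x_\beta,x_\gamma$, solve the system \eqref{eq:equation-roots-cocycle-N=2} with $L=L_\gamma$, and verify that every solution has one of the admissible shapes listed in Proposition~\ref{prop:cocycle-xgamma-N=2}; the relevant quantum-integer coefficient then vanishes since the parameters are roots of unity. I expect the main obstacle to be exactly this last enumeration for the long roots: as in the super case $\superb{j}{\theta-j}$ treated in Proposition~\ref{prop:roots-cocycles-Btheta}, the coexistence of root vectors of orders $2$, $3$ and $6$ makes the analysis of the $\N_0^{\I}$-grading constraint delicate, and one must rule out all the spurious solutions $(n_\delta)_{\delta\in\varDelta_+}$ coming from roots supported at the special vertices $j$ and $\theta$. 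Carrying out this bookkeeping, patterned on the recursive support argument already used for $\superb{j}{\theta-j}$, is where the real work lies, while the cohomological input is entirely supplied by Propositions~\ref{prop:cocycle-xgamma-N=2} and \ref{prop:cocycle-xgamma-N>2}.
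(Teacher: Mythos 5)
Your proposal follows essentially the same route as the paper's proof: reduction to full-support roots, Lemma~\ref{lem:largeN} for the long roots $\alpha_{1\theta}+\alpha_{k\theta}$ with $k\le j$ (where $N_\gamma=6>P_\gamma,Q_\gamma$), Proposition~\ref{prop:cocycle-xgamma-N>2} with $L=1$ for $\gamma=\alpha_{1\theta}$ (where $N_\gamma=3$ and the relations of Lemma~\ref{lem:Btheta-relations-root-vectors} carry over), and Proposition~\ref{prop:cocycle-xgamma-N=2} with $L=6$ and the $\superb{j}{\theta-j}$-style enumeration for the long roots with $k>j$ (where $N_\gamma=2$ and $-q_{\alpha\alpha}/q_{\beta\beta}=-\zeta^{\pm1}$). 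The only imprecision is your suggestion that some long roots might have $N_\gamma=3$ (in fact they have order $6$ or $2$ according to $k\le j$ or $k>j$), but since your case analysis covers all three possibilities this does not affect the argument.
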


\pf 
Let $\gamma$ be a positive non-simple root.
Arguing recursively we may assume that $\gamma$ has full support. Hence, either $\gamma=\alpha_{1\theta}$ or else there exists $k\in\I_{2, \theta}$ such that $\gamma=\alpha_{1\theta} +\alpha_{k\theta}$.

\medbreak

First we consider $\gamma=\alpha_{1\theta}$. Here,
$P_{\gamma} = 3$ and $Q_{\gamma} = 2$, $N_{\gamma}=3$. All the pairs $(\alpha,\beta)$ as in \eqref{eq:alfa-beta} are of the form $\alpha=\alpha_{1k}$, $\beta=\alpha_{1\theta}+\alpha_{k+1 \, \theta}$ for some $k\in\I_{\theta-1}$. By Lemma \ref{lem:Btheta-relations-root-vectors},
\begin{align*}
x_{\alpha}x_{\gamma} &= q_{\alpha\gamma} x_{\gamma}x_{\alpha}, &
x_{\gamma}x_{\beta} &= q_{\gamma\beta} x_{\beta}x_{\gamma}, &
x_{\alpha}x_{\beta} &= q_{\alpha\beta} x_{\beta}x_{\alpha}+ \Bsj_k x_{\gamma}^2, & \text{for some }&\Bsj_{k}\in \Bbbk,
\end{align*}
so the root vectors satisfy \eqref{eq:diff2-hypothesis}. Also,
$\frac{q_{\alpha\gamma}}{q_{\gamma\beta}}=1$ for all of them, so we take $L=1$. Now we look for solutions of \eqref{eq:equation-roots-cocycle-N>2}. As in the proof of Proposition \ref{prop:roots-cocycles-Btheta}, there exists a pair $(\alpha,\beta)$ as in \eqref{eq:alfa-beta} such that  $n_{\gamma}=n_{\alpha}=n_{\beta}=1$, and $n_{\delta}=0$ for the remaining roots $\delta\in\varDelta_+^{\bq}$. Thus Proposition \ref{prop:cocycle-xgamma-N>2} applies and $(\x_{\gamma}^{3})^{\ast}$ is a $2$-cocycle.

\medbreak

Now we consider $\gamma=\alpha_{1\theta}+\alpha_{k\theta}$, $k\in\I_{2, \theta}$.
Here we have
$P_{\gamma} = 2$ and $Q_{\gamma} = 1$. Hence, if $k\le j$, then  Lemma \ref{lem:largeN} applies since $N_{\gamma}=6$, so $(\x_{\gamma}^{6})^*$ is a $2$-cocycle. 
Next we consider the case $k>j$, so $N_{\gamma}=2$. 
The pairs $\alpha<\beta$ as in \eqref{eq:alfa-beta-N=2} are the same as in Proposition \ref{prop:roots-cocycles-Btheta}, the root vectors satisfy \eqref{eq:diff2-hypothesis} and $-\frac{q_{\alpha\alpha}}{q_{\beta\beta}}=-\zeta^{\pm 1}$, so we take $L=6$. Also, the same argument as in the case $\superb{j}{\theta-j}$ in Proposition \ref{prop:roots-cocycles-Btheta} shows that 
there exists a pair $(\alpha,\beta)$ as in \eqref{eq:alfa-beta} such that  $n_{\gamma}=4$, $n_{\alpha}=n_{\beta}=1$, and $n_{\delta}=0$ for the remaining roots $\delta\in\varDelta_+^{\bq}$. Hence all the hypotheses of Proposition \ref{prop:cocycle-xgamma-N=2} hold, and $(\x_{\gamma}^{6})^*$ is a cocycle.
\epf

\subsection{Type \texorpdfstring{$C_{\theta}$}{}}\label{sec:Cn}

Let $q$ be a root of 1 of order $N>2$.
In this subsection, we deal with the Nichols algebras $\toba_{\bq}$ of diagonal type $C_{\theta}$: the Dynkin diagram is
\begin{align*}
\xymatrix{ \overset{q}{\underset{\ }{\circ}}\ar  @{-}[r]^{q^{-1}}  &
\overset{q}{\underset{\ }{\circ}}\ar  @{-}[r]^{q^{-1}} 
&  \overset{q}{\underset{\ }{\circ}}\ar@{.}[r] 
& \overset{q}{\underset{\ }{\circ}} \ar  @{-}[r]^{q^{-2}}&
\overset{q^2}{\underset{\ }{\circ}}}.
\end{align*}
The set of positive roots is
\begin{align}\label{eq:roots-Cn}
\varDelta_+ =\{\alpha_{ij}\,|\, i \leq j \in \I \} \cup
\{\alpha_{i\theta} + \alpha_{j\, \theta-1}\,|\, i \le j \in \I_{\theta-1} \}. 
\end{align}
We fix the following convex order:
\begin{align}\label{eq:roots-Cn-convex-order}
\begin{aligned}
&\alpha_{1}< \alpha_{12}< \dots < \alpha_{1\theta-1} < \alpha_{1\theta} + \alpha_{1 \, \theta-1} < \alpha_{1\theta} + \alpha_{\theta-1} < \dots <\alpha_{1\theta} + \alpha_{2\theta-1}
\\
&< \alpha_{1\theta} < \alpha_{2}< \alpha_{23}< \dots <\alpha_{\theta-1} 
<\alpha_{\theta-1\,\theta}<\alpha_{\theta-1 \, \theta}+\alpha_{\theta-1}<\alpha_{\theta}.
\end{aligned}
\end{align}

Let $M=\ord q^2$. It follows from the definition that
\begin{align*}
N_{\gamma} &= \begin{cases} 
M, & \gamma=\alpha_{i\theta}+\alpha_{i\theta-1} \text{ or } \gamma= \alpha_{\theta};
\\ 
N, & \gamma=\alpha_{i\theta}+\alpha_{j\theta-1}, \, i<j, \text{ or }\gamma=\alpha_{ij}, \, (i,j)\ne (\theta,\theta).
\end{cases}
\end{align*}

For more information, see \cite[\S 4.3]{AA17}.
The aim of this Section is to prove that Condition \ref{assumption:intro-combinatorial} holds for type
$C_{\theta}$. More precisely,

\begin{prop}\label{prop:roots-cocycles-Ctheta}
For every $\gamma  \in \varDelta_+^{\bq}$,  $(\x_{\gamma}^{N_\gamma})^*$ is a $2$-cocycle.
\end{prop}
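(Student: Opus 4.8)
The plan is to apply Lemma~\ref{lem:largeN} uniformly, reducing everything to the computation of $P_\gamma$ and $Q_\gamma$ for the root system of type $C_\theta$. By Lemma~\ref{lem:largeN}, for a simple root $\gamma$ we already have that $(\x_\gamma^{N_\gamma})^*$ is a $2$-cocycle, so arguing recursively on $\dim U$ as in \S\ref{subsec:summary} I may assume $\gamma$ has full support. The key observation for type $C_\theta$ is that $N_\gamma \in \{M, N\}$ where $M = \ord q^2$ and $N = \ord q$, and that for \emph{every} non-simple root $\gamma$ we will show the strict inequalities $N_\gamma > P_\gamma$ and $N_\gamma > Q_\gamma$, so that criterion~\ref{item:summary-1} applies directly. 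This is the cleanest possible outcome: no higher-degree cocycles are needed and no computation of differentials beyond Lemma~\ref{lem:largeN} is required.

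First I would compute $P_\gamma$ and $Q_\gamma$. By Lemma~\ref{lema:non-simple-roots-Palfa}\ref{item:typeB}, in type $C_\theta$ we have $P_\gamma \le 3$ and $Q_\gamma \le 2$ for any non-simple $\gamma$. Thus it suffices to check that $N_\gamma \ge 4$ in all relevant cases, or else to sharpen the bounds on $P_\gamma, Q_\gamma$ when $N_\gamma$ is small. I would split according to the two flavors of roots in \eqref{eq:roots-Cn}: the ``type $A$'' roots $\alpha_{ij}$ and the long roots $\alpha_{i\theta}+\alpha_{j\,\theta-1}$. Since we assume $N > 2$, we have $N \ge 3$; and $M = \ord q^2 \ge 2$. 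The delicate cases are precisely those where $N_\gamma$ is as small as $2$ or $3$, namely $N_\gamma = M$ when $M \in \{2,3\}$, or $N_\gamma = N$ when $N = 3$. In these cases the generic bounds $P_\gamma \le 3$, $Q_\gamma \le 2$ are not enough, and I must show that for these particular $\gamma$ the actual values of $P_\gamma$ and $Q_\gamma$ are strictly smaller, using the explicit combinatorics of the convex order \eqref{eq:roots-Cn-convex-order} and looking at the coefficient of $\alpha_\theta$ (the long simple root) on both sides of any relation $\delta_1+\delta_2+\delta_3 = p\gamma$ or $N_{\delta_1}\delta_1 + \delta_2 = q\gamma$.

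The main obstacle I expect is the case analysis when $M$ is small, say $M = 2$ (i.e.\ $q^2$ has order $2$, which forces $q \in \G_4'$ so $N = 4$) or $M = 3$. When $\gamma$ is a long root $\alpha_{i\theta}+\alpha_{i\,\theta-1}$ with $N_\gamma = M$, I must rule out the existence of three distinct positive roots summing to $2\gamma$ or $3\gamma$, and similarly control $Q_\gamma$; the argument hinges on the fact that the coefficient of $\alpha_\theta$ in $\gamma$ is $1$ while in the competing roots it is $0$ or $1$, together with the parity/support restrictions inherited from the underlying type $A_{\theta-1}$ subdiagram. I would handle this by the same ``largest-index'' argument used in Lemma~\ref{lema:non-simple-roots-Palfa}\ref{item:typeA}: examining which simple root of maximal index appears and tracking its coefficient forces the putative decomposition to degenerate. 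Once the strict inequalities $N_\gamma > P_\gamma, Q_\gamma$ are verified in every case, Lemma~\ref{lem:largeN} immediately yields that $(\x_\gamma^{N_\gamma})^*$ is a $2$-cocycle, completing the proof; in particular, unlike types $B$ and $B_{\theta,j}$ treated above, type $C_\theta$ never needs the higher-degree machinery of Propositions~\ref{prop:cocycle-xgamma-N=2} or~\ref{prop:cocycle-xgamma-N>2}.
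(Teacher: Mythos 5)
Your proposal has a genuine gap: the central claim that the strict inequalities $N_\gamma > P_\gamma, Q_\gamma$ can be forced in \emph{every} case is false, and no sharpening of the bounds on $P_\gamma$, $Q_\gamma$ can repair it, because the offending decompositions are facts about the root system that do not depend on $q$. Concretely, take $\gamma = \alpha_{1\theta}$, for which $N_\gamma = N$. One has
\begin{align*}
3\,\alpha_{1\theta} = \left(\alpha_{1\theta}+\alpha_{1\,\theta-1}\right) + \alpha_{1\theta} + \alpha_{\theta},
\end{align*}
three distinct positive roots, so $P_\gamma = 3$ exactly; hence when $N=3$ (which is allowed, since only $N>2$ is assumed) you have $N_\gamma = P_\gamma$ and Lemma~\ref{lem:largeN} is silent. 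Worse, for $\gamma = \alpha_{1\theta}+\alpha_{1\,\theta-1}$ one has $N_\gamma = M = \ord q^2$, and when $N=4$ this equals $2$; but by \eqref{eq:Palfa-notsimple} \emph{every} non-simple root satisfies $P_\gamma \ge 2$ (write $\gamma = \alpha+\beta$ and note $2\gamma = \gamma+\alpha+\beta$), so $N_\gamma > P_\gamma$ is impossible whenever $N_\gamma = 2$. Your proposed ``largest-index'' refinement cannot make these decompositions disappear.

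This is precisely why the paper, after disposing of the generic cases with Lemma~\ref{lem:largeN} exactly as you do, switches to the higher-degree machinery you claim is unnecessary: for $\gamma=\alpha_{1\theta}$ with $N=3$ it invokes Proposition~\ref{prop:cocycle-xgamma-N>2}, identifying the unique pair $\alpha=\alpha_{1\theta}+\alpha_{1\,\theta-1}$, $\beta=\alpha_\theta$ with $\alpha+\beta=2\gamma$, checking the commutation relations \eqref{eq:diff2-hypothesis} among the root vectors, and solving the constraint system \eqref{eq:equation-roots-cocycle-N>2}; and for $\gamma=\alpha_{1\theta}+\alpha_{1\,\theta-1}$ with $N=4$ it invokes Proposition~\ref{prop:cocycle-xgamma-N=2}, listing all pairs $(\alpha,\beta)$ with $\alpha+\beta=\gamma$ and solving \eqref{eq:equation-roots-cocycle-N=2}. (Both still produce $2$-cocycles, consistent with the statement, but the verification goes through Remark~\ref{obs:main}\ref{item:obs-chains2} against all admissible chains rather than through the three chain shapes ruled out in the proof of Lemma~\ref{lem:largeN}, because here such chains genuinely exist.) To fix your argument you must carry out these two boundary cases by the paper's method; the purely combinatorial route cannot close them.
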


\pf 
Let $\gamma$ be a positive non-simple root.
Arguing recursively we may assume that $\gamma$ has full support. Hence, either $\gamma=\alpha_{1\theta}$ or else there exists $j\in\I_{\theta-1}$ such that $\gamma=\alpha_{1\theta} +\alpha_{j\, \theta-1}$.

\medbreak

First we consider $\gamma=\alpha_{1\theta}$. In this case we have
$P_{\gamma} = 3$ and $Q_{\gamma} = 1$. Hence, if $N_{\gamma}>3$, then Lemma \ref{lem:largeN} applies and $(\x_{\gamma}^{N_\gamma})^*$ is a $2$-cocycle. 

Now assume that $N_{\gamma}=3$: that is, $N=3$. The unique pair $(\alpha,\beta)$ as in \eqref{eq:alfa-beta} is $\alpha=\alpha_{1\theta}+\alpha_{1 \, \theta-1}$, $\beta=\alpha_{\theta}$. 
Arguing as in Lemma \ref{lem:Btheta-relations-root-vectors}, the relations are of the form
\footnote{Indeed, $x_{\alpha}=[x_{1\,\theta-1}, x_{1\theta}]_c$, $x_{\gamma}=x_{1\theta}$, and using $q$-Jacobi identity we have that $\Bsj=q (q-1) \prod_{i\in\I_{\theta-1}} q_{i\theta}$.}
\begin{align*}
x_{\alpha}x_{\gamma} &= q_{\alpha\gamma} x_{\gamma}x_{\alpha}, &
x_{\gamma}x_{\beta} &= q_{\gamma\beta} x_{\beta}x_{\gamma}, &
x_{\alpha}x_{\beta} &= q_{\alpha\beta} x_{\beta}x_{\alpha}+ \Bsj x_{\gamma}^2 & &\text{ for some }\Bsj\in\Bbbk,
\end{align*}
and then the root vectors satisfy \eqref{eq:diff2-hypothesis}. Also,
$\frac{q_{\alpha\gamma}}{q_{\gamma\beta}}=1$ for all of them, so we take $L=1$. Now we look for solutions of \eqref{eq:equation-roots-cocycle-N>2}. That is, $\sum_{\delta\in\varDelta_+} f_\delta(n_\delta) \delta =3\gamma$, $\sum_{\delta\in\varDelta_+} n_\delta = 3$. We check that there is no solution with $n_{\delta}=3$ neither with $n_{\delta}=2$,  $\delta\in\varDelta_+$. Hence we look for solutions with $n_{\gamma_t}=1$, for three different roots $\gamma_t\in\varDelta_+$ (and $n_{\delta}=0$ for the other roots). We write $\gamma_t=\sum_{i\in\I} a_{i}^{(t)}\alpha_i$. As $\sum_t a_i^{(t)}=3$ for all $i\in\I$, we have $a_{\theta}^{(t)}=1$ for all $t\in\I_3$, and either $a_1^{(t)}=1$ for all $t\in\I_3$, or $a_1^{(t)}=2$ for some $t\in\I_3$. The case $a_\theta^{(t)}=1$ for all $t\in\I_3$ gives a contradiction: either $\gamma_t=\gamma$ for all $t$, or else $\sum_t a_{\theta-1}^{(t)}>3$. Hence we may assume $a_{1}^{(1)}=2$, $a_{1}^{(2)}=0$, $a_{1}^{(3)}=1$.
Then $\gamma_1=\alpha_{1\theta}+\alpha_{1 \, \theta-1}$, $\gamma_2=\alpha_{\theta}$, $\gamma_3=\gamma$.
Thus Proposition \ref{prop:cocycle-xgamma-N>2} applies and $(\x_{\gamma}^{3})^{\ast}$ is a $2$-cocycle.

\medbreak
Now we consider $\gamma=\alpha_{1\theta}+\alpha_{j\theta-1}$, $j\in\I_{\theta-1}$.
In this case,
$P_{\gamma} = 2$ and $Q_{\gamma} = 1$. If $N_{\gamma}>2$, then Lemma \ref{lem:largeN} applies and $(\x_{\gamma}^{N_\gamma})^*$ is a $2$-cocycle. 
Hence we need to study the case $N_{\gamma}=2$: that is, $N=4$ and  $\gamma=\alpha_{1\theta}+\alpha_{1\theta-1}$.
Let $\alpha<\beta$ be a pair of positive roots as in \eqref{eq:alfa-beta-N=2}. We have the following possibilities:
\begin{itemize}[leftmargin=*,label=$\circ$]
\item $\alpha=\alpha_{1 \, i-1}$, $\beta=\alpha_{1\theta}+\alpha_{i\theta}$, $i\in\I_{2, \theta-1}$. There exist $\Bsj, \Bsj_t\in\Bbbk$ such that
\begin{align*}
x_{\alpha}x_{\gamma} &= q_{\alpha\gamma} x_{\gamma}x_{\alpha}, &
x_{\gamma}x_{\beta} &= q_{\gamma\beta} x_{\beta}x_{\gamma}, &
x_{\alpha}x_{\beta} &= q_{\alpha\beta} x_{\beta}x_{\alpha}+ \Bsj x_{\gamma}+\sum_{t=i+1}^{\theta-1} \Bsj_t x_{\alpha_{1\theta}+\alpha_{t\, \theta-1}} x_{\alpha_{1\, t-1}}.
\end{align*}
\item $\alpha=\alpha_{1\theta}$, $\beta=\alpha_{1\, \theta-1}$. In this case,
\begin{align*}
x_{\alpha}x_{\gamma} &= q_{\alpha\gamma} x_{\gamma}x_{\alpha}, &
x_{\gamma}x_{\beta} &= q_{\gamma\beta} x_{\beta}x_{\gamma}, &
x_{\alpha}x_{\beta} &= q_{\alpha\beta} x_{\beta}x_{\alpha}+ x_{\gamma}.
\end{align*}
\end{itemize}
In all cases the root vectors satisfy \eqref{eq:diff2-hypothesis}, and $-\frac{q_{\alpha\alpha}}{q_{\beta\beta}}=-1$, so we take $L=2$. 

Next we look for solutions of \eqref{eq:equation-roots-cocycle-N=2}. That is, $\sum_{\delta\in\varDelta_+} f_\delta(n_\delta) \delta =2\gamma$, $\sum_{\delta\in\varDelta_+} n_\delta = 3$.
Let $\mu\in\varDelta_+$ be such that $n_{\mu}\neq 0$, $\theta\in\supp \mu$. Notice that 
\begin{align*}
2= \sum_{\delta\in\varDelta_+: \, \theta\in\supp \delta} f_\delta(n_\delta)a_{\theta}^{\delta} = \sum_{\delta\in\varDelta_+: \, \theta\in\supp \delta} f_\delta(n_\delta),
\end{align*} 
so $n_{\mu} \le f_\mu(n_\mu)\le 2$. Suppose that $n_{\mu}=2$. Then $\mu=\alpha_{i\theta}+\alpha_{i \, \theta-1}$ for some $i\in\I_{\theta-1}$ since $N_{\mu}=f_\mu(2)\le 2$, and there exists $\eta\neq \mu$ such that $n_{\eta}=1$, $n_{\delta}=0$ if $\delta\neq \mu,\eta$. But then
\begin{align*}
4\alpha_{1 \, i-1} &=2\gamma-2\mu = \sum_{\delta\neq \mu} f_\delta(n_\delta) \delta  = f_\eta(1) \eta =\eta,
\end{align*}
a contradiction. Hence $n_{\mu}=1$ for all $\mu\in\varDelta_+$ such that $n_{\mu}\neq 0$, $\theta\in\supp \mu$. Then there exist three different roots $\gamma_i\in\varDelta_+$ such that $n_{\gamma_i}=1$, and we may assume that $\theta\in\supp\gamma_2\cap\supp\gamma_3$, $\theta\notin\supp\gamma_1$. As $\gamma_2\neq \gamma_3$, we may assume $\gamma_2\neq \gamma$, so $a_{1}^{\gamma_2}=1$. This implies that $a_{1}^{\gamma_1}=1$, so $\gamma_1=\alpha_{1i}$ for some $i\in\I_{\theta-1}$, and $a_{1}^{\gamma_3}=2$, so $\gamma_3=\gamma$.

\medbreak

Hence all the hypotheses of Proposition \ref{prop:cocycle-xgamma-N=2} hold, and $(\x_{\gamma}^{N_{\gamma}})^*$ is a $2$-cocycle.
\epf

\subsection{Type \texorpdfstring{$D_\theta$}{}}\label{sec:D-theta}

Let $q\in\G_N'$, $n\ge 2$. 
Let $\toba_{\bq}$ be a Nichols algebra of type $D_{\theta}$. That is, the generalized Dynkin diagram of $\toba_{\bq}$ has the form
\begin{align*}
\xymatrix{ & & & &  \overset{q}{\circ} &\\
\overset{q}{\underset{\ }{\circ}}\ar  @{-}[r]^{q^{-1}}  & \overset{q}{\underset{\
}{\circ}}\ar  @{-}[r]^{q^{-1}} &  \overset{q}{\underset{\ }{\circ}}\ar@{.}[r] &
\overset{q}{\underset{\ }{\circ}} \ar  @{-}[r]^{q^{-1}}  & \overset{q}{\underset{\
}{\circ}} \ar @<0.7ex> @{-}[u]_{q^{-1}}^{\quad} \ar  @{-}[r]^{q^{-1}} &
\overset{q}{\underset{\ }{\circ}}}
\end{align*}
The set of positive roots is
\begin{align}\label{eq:root-system-D}
\begin{aligned}
\varDelta^\bq_+&=\{\alpha_{i\, j}\,|\, i\leq j\in\I, \, (i,j)\neq (\theta-1,\theta) \}
\\ 
& \quad \cup \{\alpha_{i\, \theta-2}+\alpha_{\theta}\,|\, i\in\I_{\theta-2} \} \cup \{\alpha_{i\, \theta}+\alpha_{j\, \theta-2}\,|\, i<j\in\I_{\theta-2} \}.
\end{aligned}\end{align}
We fix the following convex order:
\begin{align*}
&\alpha_{11}< \alpha_{12}< \dots <\alpha_{1 \, \theta-1}< \alpha_{1\, \theta-2}+\alpha_{\theta} < \alpha_{1\theta} < \alpha_{1\theta} + \alpha_{\theta-2} < \dots 
\\
&< \alpha_{1\theta} + \alpha_{2 \, \theta-2}< \alpha_{22}< \alpha_{23}< \dots <\alpha_{\theta-2} 
<\alpha_{\theta-2\,\theta-1}<\alpha_{\theta-2}+\alpha_{\theta}
\\
&<\alpha_{\theta-2\, \theta}<\alpha_{\theta-1}<\alpha_{\theta}.
\end{align*}

For more information, see \cite[\S 4.4]{AA17}.
The aim of this Section is to prove that Condition \ref{assumption:intro-combinatorial} holds for type
$D_{\theta}$. More precisely,
\begin{prop}\label{prop:roots-cocycles-Dtheta}
For every $\gamma  \in \varDelta_+^{\bq}$,  $(\x_{\gamma}^{N_\gamma})^*$ is a $2$-cocycle.
\end{prop}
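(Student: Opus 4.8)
The plan is to follow closely the proof of the type $A_\theta$ case (Proposition \ref{prop:roots-cocycles-Atheta}), since $D_\theta$ is a simply laced diagram of Cartan type with all $q_{ii}=q$. Arguing recursively on $\dim U$ I may assume $\gamma$ has full support, and for simple $\gamma$ the statement is already covered by Lemma \ref{lem:largeN}. Because the diagram is simply laced, every root has the same associated order $N_\delta = N = \ord q$, so there is a clean dichotomy: either $N>2$ or $N=2$. By Lemma \ref{lema:non-simple-roots-Palfa}\ref{item:typeA} we have $P_\gamma=2$ and $Q_\gamma=1$ for every non-simple $\gamma$. In the first case $N_\gamma=N>2=P_\gamma$ and $N_\gamma>1=Q_\gamma$, so Lemma \ref{lem:largeN} immediately yields that $(\x_\gamma^{N_\gamma})^*$ is a cocycle of degree $2$. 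Thus it remains only to treat $N=2$, i.e.\ $q=-1$, in which case $N_\delta=2$ for all $\delta\in\varDelta_+^\bq$.

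So assume $N=2$ and $\gamma$ non-simple. By Remark \ref{obs:main}\ref{item:obs-chains2} it suffices to show that $\x_\gamma^2\ot 1$ occurs with zero coefficient in $d(c\ot 1)$ for every $3$-chain $c$ of $\N_0^\I$-degree $2\gamma$. Since all $N_\delta=2$, the function $f_\delta$ of \eqref{eq:power-root-vector-chain} is the identity, so every such $c$ automatically has polynomial degree $3$ (this is the $D_\theta$-analog of Claim \ref{claimA:1}, here trivial because there are no roots of higher order). The possible shapes are therefore $\x_{\delta_1}^3$, $\x_{\delta_1}^2\x_{\delta_2}$, $\x_{\delta_1}\x_{\delta_2}^2$ and $\x_{\delta_1}\x_{\delta_2}\x_{\delta_3}$ with distinct roots. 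The shape $\x_{\delta_1}^3$ is impossible by a coordinate count ($3\delta_1=2\gamma$ forces $\delta_1=\gamma=0$), and $\x_{\delta_1}^2\x_{\delta_2}$, $\x_{\delta_1}\x_{\delta_2}^2$ require $2\delta_1+\delta_2=2\gamma$ (resp.\ $\delta_1+2\delta_2=2\gamma$), which is excluded by $Q_\gamma=1$. For the remaining chains I will establish the $D_\theta$-analog of Lemma \ref{le:roots}: if $\delta_1\le\delta_2\le\delta_3$ are positive roots with $\delta_1+\delta_2+\delta_3=2\gamma$, then necessarily $\delta_2=\gamma$ and $\gamma=\delta_1+\delta_3$. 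This forces $c=\x_\alpha\x_\gamma\x_\beta$ with $\alpha<\gamma<\beta$ and $\gamma=\alpha+\beta$, exactly as in Claim \ref{claimA:2}.

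For such a chain the differential is computed as in Claim \ref{claimA:3} (equivalently, Lemma \ref{lem:diff2}): after checking that the PBW generators $x_\alpha,x_\gamma,x_\beta$ satisfy the hypotheses \eqref{eq:diff2-hypothesis} — which follows from the convex order via \eqref{eq:convex-order-relations} together with the explicit type-$D_\theta$ relations — the coefficient of $\x_\gamma^2\ot 1$ in $d(\x_\alpha\x_\gamma\x_\beta\ot 1)$ equals a nonzero scalar times $(2)_{\wp}$, where $\wp=-q_{\alpha\alpha}/q_{\beta\beta}$. Since we are in Cartan type with $q=-1$, we have $q_{\alpha\alpha}=q_{\beta\beta}=-1$, hence $\wp=-1$ and $(2)_{-1}=1+(-1)=0$, so the coefficient vanishes and $(\x_\gamma^{2})^*$ is a cocycle. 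This matches criterion \ref{item:summary-3} of the summary algorithm: the unique solution of \eqref{eq:equation-roots-cocycle-N=2} with $L=2$ is of the shape \ref{item:cocycle-xgamma-N=2-1}, and the relevant quantum integer $(2)_{-q_{\alpha\alpha}/q_{\beta\beta}}=(2)_{-1}$ is zero.

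I expect the main obstacle to be the root-combinatorial statement that $2\gamma$ cannot be written as a sum of three positive roots unless one of them is $\gamma$ itself (the type-$D_\theta$ analog of Lemma \ref{le:roots}); note that $P_\gamma=2$ alone does \emph{not} suffice, since it does not preclude three distinct roots all different from $\gamma$. This can be handled either by a direct computation in the standard model $\varDelta_+\subset\{e_i\pm e_j\}$ of the $D_\theta$ root system, tracking coordinate sums as in the proof of Lemma \ref{le:roots}, or by transporting the three roots into a rank-$3$ parabolic subsystem via Lemma \ref{lema:3roots} and reducing to the already-settled type-$A$ statement. A secondary technical point is the verification of \eqref{eq:diff2-hypothesis}, which amounts to ruling out unwanted intermediate terms in the braided commutators $[x_\alpha,x_\gamma]_c$ and $[x_\gamma,x_\beta]_c$ using the convex order \eqref{eq:convex-order-relations}.
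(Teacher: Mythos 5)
Your proposal is correct and takes essentially the same route as the paper: reduce to full support, dispose of $N>2$ via Lemma~\ref{lem:largeN} using $P_\gamma=2$, $Q_\gamma=1$ from Lemma~\ref{lema:non-simple-roots-Palfa}, and for $q=-1$ verify the hypotheses of the $N_\gamma=2$ criterion (Proposition~\ref{prop:cocycle-xgamma-N=2}\ref{item:cocycle-xgamma-N=2-1}, which is the same computation as your Claim-based type-$A$ argument), with the coefficient killed by $(2)_{-1}=0$. The only divergence is in the root-combinatorial step (any three positive roots summing to $2\gamma$ must include $\gamma$): the paper proves it by direct coordinate analysis in the parametrization \eqref{eq:root-system-D} together with a recursion via simple reflections, whereas your suggested alternatives --- the standard $e_i\pm e_j$ model, or rank-$3$ reduction via Lemma~\ref{lema:3roots}, which is in fact how the paper handles type $E_\theta$ in Proposition~\ref{prop:roots-cocycles-Etheta} --- are equally viable.
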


\pf By Lemma \ref{lema:non-simple-roots-Palfa}, $P_{\alpha} = 2$ and $Q_\alpha = 1$ for all non-simple roots $\alpha$. Hence, if $N>2$, then Lemma \ref{lem:largeN} applies and $(\x_{\gamma}^{N_\gamma})^*$ is a $2$-cocycle for all roots $\alpha$.

\bigbreak

Next we assume $N=2$, that is, $q=-1$. We will apply Proposition \ref{prop:cocycle-xgamma-N=2}. 
Let $\gamma$ be a positive non-simple root.
Arguing recursively we may assume that $\gamma$ has full support. Hence, either $\gamma=\alpha_{1\theta}$ or else there exists $k\in\I_{2, \theta-2}$ such that $\gamma=\alpha_{1\theta}+\alpha_{k\, \theta-2}$. We look for pairs $\alpha<\beta \in\varDelta_+$ such that $\gamma=\alpha+\beta$. Notice that $q_{\alpha\alpha}=q_{\beta\beta}=-1=-\frac{q_{\alpha\alpha}}{q_{\beta\beta}}$ in any case so we may guess that $L=2$.

\medbreak

First we consider $\gamma=\alpha_{1\theta}$. All the pairs $(\alpha,\beta)$ as in \eqref{eq:alfa-beta-N=2} are of the form $\alpha=\alpha_{1j}$, $\beta=\alpha_{j+1 \, \theta}$ for some $j\in\I_{\theta-1}$. Similar to Lemma \ref{lem:Btheta-relations-root-vectors},
\begin{align*}
x_{\alpha}x_{\gamma} &= q_{\alpha\gamma} x_{\gamma}x_{\alpha}, &
x_{\gamma}x_{\beta} &= q_{\gamma\beta} x_{\beta}x_{\gamma}, &
x_{\alpha}x_{\beta} &= q_{\alpha\beta} x_{\beta}x_{\alpha}+ \Bsj x_{\gamma} &&\text{ for some } \Bsj\in\Bbbk,
\end{align*}
so the root vectors satisfy \eqref{eq:diff2-hypothesis}. Next we look for solutions of \eqref{eq:equation-roots-cocycle-N=2}. That is, $2\gamma=\delta_1+\delta_2+\delta_3$, $\delta_i\in\varDelta_+$. We write $\delta_j=\sum_{i\in\I} a_i^{(j)} \alpha_i$. As $a_i^{(j)}$ is 0 or 1 for $i=1,\theta-1,\theta$, we may fix $a_1^{(1)}=a_1^{(2)}=1$, $a_1^{(3)}=0$ and see the possible pairs of roots such that $a_{\theta-1}^{(m)}=a_{\theta-1}^{(n)}=1$, respectively $a_{\theta}^{(p)}=a_{\theta}^{(r)}=1$. Suppose that no one of the $\delta_i$'s has coefficient $1$ for the three simple roots simultaneously. Then we may assume $a_{\theta-1}^{(1)}=a_{\theta-1}^{(3)}=1$,  $a_{\theta}^{(2)}=a_{\theta}^{(3)}=1$, so $a_{\theta-2}^{(j)}>0$ for all $j\in\I_3$, a contradiction. Hence we assume $a_{\theta-1}^{(1)}=a_{\theta}^{(1)}=1$, so $a_{i}^{(1)}\ge 1$ for all $i\in\I$. If either $a_{\theta-1}^{(2)}=1$ or  $a_{\theta}^{(2)}=1$, then $a_{\theta-2}^{(2)}\ge 1$, which implies
$a_{\theta-2}^{(1)}=1$ and so $\delta_1=\gamma$. Otherwise 
$a_{\theta-1}^{(3)}=1=a_{\theta}^{(3)}$, then $a_{\theta-2}^{(3)}\ge 1$, which implies again $\delta_1=\gamma$.

\medbreak
Finally, let $\gamma=\alpha_{1\theta}+\alpha_{k\, \theta-2}$.
Let $\alpha<\beta$ be a pair of positive roots as in \eqref{eq:alfa-beta-N=2}. Then the coefficient of $\alpha_1$ is one for just one of them (and zero for the other): it should be $\alpha$, since $\alpha<\beta$. We have several possibilities:
\begin{itemize}[leftmargin=*,label=$\circ$]
\item $\alpha=\alpha_{1 \, j-1}$, $\beta=\alpha_{j\theta}+\alpha_{k\, \theta-2}$, $j<k$. Then
\begin{align*}
x_{\alpha}x_{\gamma} &= q_{\alpha\gamma} x_{\gamma}x_{\alpha}, &
x_{\gamma}x_{\beta} &= q_{\gamma\beta} x_{\beta}x_{\gamma}, &
x_{\alpha}x_{\beta} &= q_{\alpha\beta} x_{\beta}x_{\alpha}+ \Bsj x_{\gamma} && \text{ for some } \Bsj\in\Bbbk.
\end{align*}
\item $\alpha=\alpha_{1\, j-1}$, $\beta=\alpha_{k\theta}+\alpha_{j\, \theta-2}$, $k<j\le \theta-2$. Then
\begin{align*}
x_{\alpha}x_{\gamma} &= q_{\alpha\gamma} x_{\gamma}x_{\alpha}, &
x_{\gamma}x_{\beta} &= q_{\gamma\beta} x_{\beta}x_{\gamma}, &
x_{\alpha}x_{\beta} &= q_{\alpha\beta} x_{\beta}x_{\alpha}+ \Bsj x_{\gamma}+\sum_{t=k+2}^{j-1} \Bsj_t x_{\alpha_{k\theta}+\alpha_{t\, \theta-2}} x_{\alpha_{1\, t-1}},
\end{align*}
for some $\Bsj, \Bsj_t\in\Bbbk$.
\item $\alpha=\alpha_{1\, \theta-1}$, $\beta=\alpha_{k\theta-2}+\alpha_{\theta}$.
Then
\begin{align*}
x_{\alpha}x_{\gamma} &= q_{\alpha\gamma} x_{\gamma}x_{\alpha}, &
x_{\gamma}x_{\beta} &= q_{\gamma\beta} x_{\beta}x_{\gamma}, &
x_{\alpha}x_{\beta} &= q_{\alpha\beta} x_{\beta}x_{\alpha}+ \Bsj x_{\gamma},
\end{align*}
for some $\Bsj, \Bsj_t\in\Bbbk$.

\item $\alpha=\alpha_{1\theta}$, $\beta=\alpha_{k \, \theta-2}$. Then
\begin{align*}
x_{\alpha}x_{\gamma} &= q_{\alpha\gamma} x_{\gamma}x_{\alpha}, &
x_{\gamma}x_{\beta} &= q_{\gamma\beta} x_{\beta}x_{\gamma}, &
x_{\alpha}x_{\beta} &= q_{\alpha\beta} x_{\beta}x_{\alpha}+ \Bsj x_{\gamma}+\sum_{t=k+1}^{j-1} \Bsj_t x_{\alpha_{1\theta}+\alpha_{t\, \theta-2}} x_{\alpha_{k\, t-1}},
\end{align*}
for some $\Bsj, \Bsj_t\in\Bbbk$.

\item $\alpha=\alpha_{1\theta}+\alpha_{j\, \theta-2}$, $\beta=\alpha_{k \, j-1}$, $k<j\le \theta-2$. Then
\begin{align*}
x_{\alpha}x_{\gamma} &= q_{\alpha\gamma} x_{\gamma}x_{\alpha}, &
x_{\gamma}x_{\beta} &= q_{\gamma\beta} x_{\beta}x_{\gamma}, &
x_{\alpha}x_{\beta} &= q_{\alpha\beta} x_{\beta}x_{\alpha}+ \Bsj x_{\gamma}+\sum_{t=k+1}^{j-1} \Bsj_t x_{\alpha_{1\theta}+\alpha_{t\, \theta-2}} x_{\alpha_{k\, t-1}},
\end{align*}
for some $\Bsj, \Bsj_t\in\Bbbk$.

\end{itemize}

In each case the justification relies on the homogeneity of the relations and is similar to
Lemma \ref{lem:Btheta-relations-root-vectors}; we leave the details to an interested reader.
Therefore the root vectors satisfy \eqref{eq:diff2-hypothesis}. Next we look for solutions of \eqref{eq:equation-roots-cocycle-N=2}. That is, $2\gamma=\delta_1+\delta_2+\delta_3$, $\delta_i\in\varDelta_+$. We write $\delta_j=\sum_{i\in\I} a_i^{(j)} \alpha_i$. 

When $k=\theta-2$, first consider the case $\delta_1=\alpha_{\theta-2}$. Then $a_{j}^{(i)}=1$ for $i=1,\theta-1,\theta$ and $j=2,3$, so $\delta_2$, $\delta_3$ have full support. This implies that $a_{j}^{(i)}=1$ for $2\le i <\theta-2$ and $j=2,3$, and we need that $a_{j}^{(\theta-2)}=1$ for one of them; that is, either $\delta_2=\gamma$ or else $\delta_3=\gamma$. If $\delta_j \neq \alpha_{\theta-2}$ for all $j\in\I_3$, then
\begin{align*}
2 \alpha_{1\theta} &= 2 s_{\theta-2}(\gamma) = s_{\theta-2}(\delta_1) + s_{\theta-2}(\delta_2) + s_{\theta-2}(\delta_3), & s_{\theta-2}(\delta_j) &\in\varDelta_+.
\end{align*}
Applying the previous case, $s_{\theta-2}(\delta_j)=\alpha_{1\theta}$ for some $j\in\I_3$, so $\delta_j=\gamma$.

If $k<\theta-2$, then we argue recursively. Indeed, we first consider the case $\delta_1=\alpha_k$ and argue as in the case $k=\theta-2$ to show that either $\delta_2=\gamma$ or else $\delta_3=\gamma$. If $\delta_j \neq \alpha_{\theta-2}$ for all $j\in\I_3$, then
\begin{align*}
2 \alpha_{1\theta}+ \alpha_{k+1 \, \theta-2} &= 2 s_{k}(\gamma) = s_{k}(\delta_1) + s_{k}(\delta_2) + s_{k}(\delta_3), & s_{k}(\delta_j) &\in\varDelta_+.
\end{align*}
Hence $s_{k}(\delta_j)=\alpha_{1\theta}+ \alpha_{k+1 \, \theta-2}$ for some $j\in\I_3$, which means that $\delta_j=\gamma$ for some $j\in\I_3$.

\medbreak

Hence all the hypotheses of Proposition \ref{prop:cocycle-xgamma-N=2} hold, and $(\x_{\gamma}^2)^*$ is a $2$-cocycle.
\epf

\subsection{Type \texorpdfstring{$\superd{j}{\theta - j}$, $\theta \ge 1$, $j \in \I_{\theta-1}$}{}}\label{sec:Dsuper}

Let $q$ be a root of 1 of order $N>2$.
In this subsection, we deal with the Nichols algebras $\toba_{\bq}$ of type $\superd{j}{\theta - j}$. 
We may assume that the corresponding diagram is
\begin{align*}
\xymatrix{ \overset{q^{-1}}{\underset{\ }{\circ}}\ar  @{-}[r]^{q}  
& \overset{q^{-1}}{\underset{\ }{\circ}}\ar@{.}[r] 
& \overset{q^{-1}}{\underset{\ }{\circ}}\ar  @{-}[r]^{q} 
& \overset{-1}{\underset{j}{\circ}}\ar  @{-}[r]^{q^{-1}} 
&  \overset{q}{\underset{\ }{\circ}}\ar@{.}[r] 
& \overset{q}{\underset{\ }{\circ}} \ar  @{-}[r]^{q^{-2}}&
\overset{q^2}{\underset{\ }{\circ}}}
\end{align*}
The set of positive roots is
\begin{align}\label{eq:roots-Dsuper}
\begin{aligned}
\varDelta_+^{\bq} &=\{\alpha_{ik}\,|\, i \leq k \in \I \} \cup
\{\alpha_{i\theta} + \alpha_{k\, \theta-1}\,|\, i < k \in \I_{\theta-1} \} 
\\
& \quad \cup \{\alpha_{i\theta} + \alpha_{i\theta-1}\,|\, i \in \I_{j+1,\theta-1} \}. 
\end{aligned}
\end{align}
Thus \eqref{eq:roots-Dsuper} is a subset of the set \eqref{eq:roots-Cn} of positive roots of type $C_{\theta}$: We fix the convex order in $\varDelta_+^{\bq}$ obtained from \eqref{eq:roots-Cn-convex-order}.
For more information, see \cite[\S 5.3]{AA17}.
We prove Condition \ref{assumption:intro-combinatorial} for type
$\superd{j}{\theta - j}$:

\begin{prop}\label{prop:roots-cocycles-Dsuper}
For every  $\gamma \in \varDelta_+^{\bq}$, there exists $L_{\gamma}\in\N$ such that $(\x_{\gamma}^{L_\gamma})^*$ is a cocycle.
\end{prop}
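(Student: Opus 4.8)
The plan is to follow the exact same scheme used in the preceding cases (types $B_\theta$, $C_\theta$, $D_\theta$), adapted to the root system \eqref{eq:roots-Dsuper} and its convex order. Since $\toba_{\bq}$ of type $\superd{j}{\theta-j}$ sits inside type $C_\theta$ as a sub-root-system, I would first reduce to roots of full support by the recursive hypothesis, so that $\gamma$ is either $\alpha_{1\theta}$, or $\alpha_{1\theta}+\alpha_{k\,\theta-1}$ for some $k\in\I_{2,\theta-1}$, or of the form $\alpha_{1\theta}+\alpha_{1\,\theta-1}$ coming from the super node. For each such $\gamma$ I would compute $N_\gamma$, $P_\gamma$, $Q_\gamma$: by Lemma~\ref{lema:non-simple-roots-Palfa} (type $C$ combinatorics) one expects $P_\gamma\le 3$ and $Q_\gamma\le 2$, and crucially the values of $N_\gamma$ now depend on the location of $\gamma$ relative to the distinguished node $j$ where the diagonal entry is $-1$.

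The first step is to apply criterion~\ref{item:summary-1}: whenever $N_\gamma>P_\gamma,Q_\gamma$ — which will hold for generic $q$ (large order $N$) — Lemma~\ref{lem:largeN} immediately gives that $(\x_\gamma^{N_\gamma})^*$ is a degree-$2$ cocycle. The remaining work concentrates on the small cases $N_\gamma=3$ (for $\gamma=\alpha_{1\theta}$ when $N=3$) and $N_\gamma=2$ (for the long-type roots $\alpha_{1\theta}+\alpha_{k\,\theta-1}$ when $N=4$, or when the support straddles $j$ so that an odd number of $-1$'s forces $N_\gamma=2$). For $N_\gamma=3$ I would invoke Proposition~\ref{prop:cocycle-xgamma-N>2}, exactly as in the $C_\theta$ argument: identify the unique relevant pair $(\alpha,\beta)$ with $\alpha+\beta=2\gamma$, verify via the convexity \eqref{eq:convex-order-relations} and homogeneity (the analogue of Lemma~\ref{lem:Btheta-relations-root-vectors}) that the root vectors satisfy \eqref{eq:diff2-hypothesis} with $q_{\alpha\gamma}/q_{\gamma\beta}=1$, hence $L=1$, and then solve the grading/degree system \eqref{eq:equation-roots-cocycle-N>2} to check the solutions are all of the required form.

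For $N_\gamma=2$ I would apply Proposition~\ref{prop:cocycle-xgamma-N=2}. This requires enumerating all pairs $(\alpha,\beta)$ with $\alpha<\beta$, $\alpha+\beta=\gamma$; as in the $C_\theta$ and $\superb{j}{\theta-j}$ proofs, each such pair produces commutation relations of the shape $x_\alpha x_\gamma=q_{\alpha\gamma}x_\gamma x_\alpha$, $x_\gamma x_\beta=q_{\gamma\beta}x_\beta x_\gamma$, $x_\alpha x_\beta=q_{\alpha\beta}x_\beta x_\alpha+\Bsj x_\gamma+(\text{lower terms})$, justified by convexity. Since $q_{\alpha\alpha}=q_{\beta\beta}$ on the $-1$-nodes forces $-q_{\alpha\alpha}/q_{\beta\beta}=-1$ (or $q^{\pm2}$ off them), I would set $L_\gamma=\lcm(2,M)$ with $M=\ord q^2$ and solve \eqref{eq:equation-roots-cocycle-N=2}. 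The main obstacle — and the bulk of the genuine work — will be the combinatorial solution of this grading/degree system: one must show every nonnegative-integer solution $(n_\delta)$ reduces to the form $n_\gamma=L-1$, $n_\alpha=n_\beta=1$, i.e.\ case~\ref{item:cocycle-xgamma-N=2-1}. Here I would mirror the $\superd{j}{\theta-j}\subset C_\theta$ reduction argument: track the coefficient of $\alpha_1$ and of $\alpha_\theta$, use that the heights $f_\delta$ collapse to $n_\delta$ for roots of order $2$ while roots through the $-1$-node behave differently, and apply the Weyl-groupoid simple reflection $s_k$ (as in the $D_\theta$ proof) to recursively peel off the support and force one summand to equal $\gamma$. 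The delicate point is handling the roots $\alpha_{i\theta}+\alpha_{i\,\theta-1}$ special to the super case, which have order $M$ and must be excluded from contributing spurious solutions; this parallels but slightly complicates the $D_\theta$ analysis. Once all solutions are classified into the listed forms, Proposition~\ref{prop:cocycle-xgamma-N=2} yields that $(\x_\gamma^{L_\gamma})^*$ is a cocycle, completing the verification of Condition~\ref{assumption:intro-combinatorial}.
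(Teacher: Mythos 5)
There are two genuine gaps, both concentrated exactly where the real content of this case lies. First, your strategy leans on criterion \ref{item:summary-1}: you assert that $N_\gamma>P_\gamma,Q_\gamma$ holds ``for generic $q$ (large order $N$)'', so that only small values of $N$ need special treatment ($N_\gamma=3$ when $N=3$, $N_\gamma=2$ when $N=4$). In type $\superd{j}{\theta-j}$ this is false: any full-support root whose coefficient at the distinguished node $j$ equals $1$ is an odd root with $q_{\gamma\gamma}=-1$, hence $N_\gamma=2$ \emph{for every} $q$. In particular $\gamma=\alpha_{1\theta}$ has $N_\gamma=2$ while $P_\gamma=3$, and $\gamma=\alpha_{1\theta}+\alpha_{k\,\theta-1}$ with $k>j$ has $N_\gamma=2$; Lemma \ref{lem:largeN} never applies to these roots no matter how large $N$ is, and there is no $N_\gamma=3$ case at all in this type (that case belongs to $B_\theta$, $\superb{j}{\theta-j}$ and $C_\theta$). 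Relatedly, $\alpha_{1\theta}+\alpha_{1\,\theta-1}$ is not a root here, since the family $\alpha_{i\theta}+\alpha_{i\,\theta-1}$ in \eqref{eq:roots-Dsuper} requires $i\ge j+1$. Your exponent is also wrong: for the pairs $\alpha+\beta=\gamma$ one finds $-q_{\alpha\alpha}/q_{\beta\beta}\in\{q^{-1},q^{-2}\}$, not $-1$, so \eqref{eq:Csj-Dsj-N=2} forces $N\mid L_\gamma$ and one must take $L_\gamma=N$; your choice $L_\gamma=\lcm(2,M)$ fails whenever $4\mid N$, e.g.\ for $N=8$ it gives $L_\gamma=4$ and $(4)_{q^{-1}}\neq 0$.

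Second, your classification claim that every solution of \eqref{eq:equation-roots-cocycle-N=2} has the form \ref{item:cocycle-xgamma-N=2-1} is false for $\gamma=\alpha_{1\theta}$. Since $\alpha_{1\,k-1}+\bigl(\alpha_{1\theta}+\alpha_{k\,\theta-1}\bigr)+\alpha_{\theta}=2\gamma$, there are solutions with $n_\gamma=L_\gamma-2$ and $n_{\alpha_{1\,k-1}}=n_{\alpha_{1\theta}+\alpha_{k\,\theta-1}}=n_{\alpha_\theta}=1$, i.e.\ chains $\x_{\alpha}\x_{\gamma}^{L_\gamma-2}\x_{\beta}\x_{\delta}$. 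These must be handled by case \ref{item:cocycle-xgamma-N=2-2} of Proposition \ref{prop:cocycle-xgamma-N=2}: one exhibits the $4$-tuple $\alpha=\alpha_{1\,k-1}<\eta=\alpha_{1\,\theta-1}<\gamma<\beta=\alpha_{1\theta}+\alpha_{k\,\theta-1}<\delta=\alpha_{\theta}$ satisfying \eqref{eq:alfa-beta-delta-eta-N=2}, checks the relations \eqref{eq:diff-case2-hypothesis}, namely $x_{\alpha}x_{\beta}=q_{\alpha\beta}x_{\beta}x_{\alpha}+\Bsj\, x_{\gamma}x_{\eta}$ and $x_{\eta}x_{\delta}=q_{\eta\delta}x_{\delta}x_{\eta}+x_{\gamma}$ with all other pairs $q$-commuting, and verifies $\coef{\alpha\beta\gamma}{N}=0$, which holds by Lemma \ref{lemma:coef-roots-unity} \ref{item:coef-roots-unity-ii} because $\widetilde{q}_{\alpha\gamma}=\widetilde{q}_{\beta\gamma}=q^{-1}$. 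Without this case the hypotheses of Proposition \ref{prop:cocycle-xgamma-N=2} cannot be verified and the argument stops. The rest of your outline (reduction to full support, convexity arguments for \eqref{eq:diff2-hypothesis}, showing $f_\delta(n_\delta)=n_\delta$ by tracking coefficients of the simple roots, and controlling the order-$M$ roots $\alpha_{i\theta}+\alpha_{i\,\theta-1}$) does agree with the paper's proof.
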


\pf 
Let $\gamma$ be a positive non-simple root.
Arguing recursively we may assume that $\gamma$ has full support. Hence, either $\gamma=\alpha_{1\theta}$ or else there exists $k\in\I_{\theta-1}$ such that 
$\gamma=\alpha_{1\theta} +\alpha_{k \, \theta-1}$.

\medbreak

First we consider $\gamma=\alpha_{1\theta}$. Again, one can easily check that
$N_{\gamma}=2$, $P_{\gamma} = 3$ and $Q_{\gamma} = 1$. 
Let $\alpha<\beta$ be a pair of positive roots as in \eqref{eq:alfa-beta-N=2}. Then there exists $i\in\I_{\theta-1}$ such that $\alpha=\alpha_{1i}$, $\beta=\alpha_{i+1\, \theta}$. Now,
\begin{align*}
x_{\alpha}x_{\gamma} &= q_{\alpha\gamma} x_{\gamma}x_{\alpha}, &
x_{\gamma}x_{\beta} &= q_{\gamma\beta} x_{\beta}x_{\gamma}, &
x_{\alpha}x_{\beta} &= q_{\alpha\beta} x_{\beta}x_{\alpha}+ \Bsj x_{\gamma} &&\text{ for some }
\Bsj \in \Bbbk,
\end{align*}
hence the root vectors satisfy \eqref{eq:diff2-hypothesis}, and $-\frac{q_{\alpha\alpha}}{q_{\beta\beta}}\in \{ q^{-1}, q^{-2}\}$, so we take $L=N$. 

There exist $4$-tuples $(\alpha, \beta,\delta,\eta) \in \varDelta_+^4$ as in \eqref{eq:alfa-beta-delta-eta-N=2}:
\begin{align*}
\alpha=\alpha_{1i} & < \eta=\alpha_{1\, \theta-1} <\gamma < \beta=\alpha_{1\theta}+\alpha_{i+1\, \theta-1} <\delta=\alpha_{\theta}.
\end{align*}
The corresponding PBW generators satisfy \eqref{eq:diff-case2-hypothesis}; indeed, there exists $\Bsj\in\Bbbk$ such that
$[x_{\alpha},x_{\beta}]_c = q_{\alpha \beta} x_{\beta}x_{\alpha} +\Bsj x_{\gamma}x_{\eta}$, $x_{\eta}x_{\delta} = q_{\eta\delta}  x_{\delta}x_{\eta} + x_{\gamma}$,
and the other pairs of root vectors $q$-commute. Now $\coef{\alpha\beta\gamma}{N}=0$ by Lemma \ref{lemma:coef-roots-unity} \ref{item:coef-roots-unity-ii} since $\widetilde{q}_{\alpha\gamma} =\widetilde{q}_{\beta\gamma}=q^{-1}$.

Next we look for solutions of \eqref{eq:equation-roots-cocycle-N=2}. 
We claim that $f_{\eta}(n_{\eta})=n_{\eta}$ for all $\eta\in\varDelta_+$. 

\begin{itemize}[leftmargin=*,label=$\circ$]
\item If $N_{\eta}=2$, then this holds by definition of $f_{\eta}$.
\item If $\eta=\alpha_{i\theta}+\alpha_{k \, \theta-1}$, $i<k\le \theta-1$, then $N_{\eta}=N$, so $f_{\eta}(k)\ge N$ if $k\ge 2$. As
\begin{align*}
2 f_{\eta}(n_{\eta}) \le \sum_{\delta\in\varDelta_+^{\bq}} f_{\eta}(n_{\eta}) a_{\theta-1}^{\delta} =N,
\end{align*}
we have that $n_{\eta}\le 1$, so $f_{\eta}(n_{\eta})=n_{\eta}$.
\item Let $\eta=\alpha_{ik}$, with $i\le k<j$. Then $N_{\eta}=N$. Suppose that $n_{\eta}\ge 2$: as $f_{\eta}(s)>N$ if $s>2$, we may have $n_{\eta}=2$: moreover, $n_{\delta}=0$ for all $\delta\neq \alpha_{ik}$ such that $\delta\cap \I_{i,k}\neq\emptyset$ since
\begin{align*}
N &= \sum_{\delta\in\varDelta_+^{\bq}} f_{\delta}(n_{\delta}) a_{t}^{\delta} = N + \sum_{\delta\neq \alpha_{ik}: t\in\supp \delta} f_{\delta}(n_{\delta}) a_{t}^{\delta} & &\text{for all }t\in\I_{i,k}.
\end{align*}
Now if $k+1<j$, then $n_{\delta}=0$ for all $\delta\neq\alpha_{k+1 \, t}$, $t\ge k+1$: as $\sum_{\delta\in\varDelta_+^{\bq}} f_{\delta}(n_{\delta}) a_{k+1}^{\delta}=N$, we have $n_{\alpha_{k+1\, t}}=2$ for some $k+1\le t<j$. Thus we may assume $k=j-1$. 
Let $\delta\in\varDelta_+^{\bq}$ be such that $j\in\supp\delta$. Then $n_{\delta}=0$ if $j-1\in\supp\delta$, and $N_{\delta}=2$ if $j-1\notin\supp \delta$, so
\begin{align*}
N &= \sum_{\delta\in\varDelta_+^{\bq}: j\in\supp\delta} f_{\delta}(n_{\delta}) a_{j}^{\delta} 
= \sum_{\delta: j\in\supp\delta, \,j-1\notin\supp \delta} f_{\alpha_{jt}}(n_{\alpha_{jt}})
= \sum_{\delta: j\in\supp\delta, \,j-1\notin\supp \delta} n_{\alpha_{jt}}.
\end{align*}
This implies that $\sum_{\delta\in\varDelta_+^{\bq}} n_{\delta}\ge N+2$, a contradiction. Then $n_\eta\le 1$, so $f_{\eta}(n_{\eta})=n_{\eta}$.

\item Let $\eta=\alpha_{ik}$, $j<i\le k$. Then $N_{\eta}=N$ and an argument as in the previous case shows that we have that $n_{\eta}\le 1$, so $f_{\eta}(n_{\eta})=n_{\eta}$.

\item Similar situation holds for $\eta=\alpha_{i\theta} +\alpha_{i \, \theta-1}$: $N_{\eta}=M$ but again $n_{\eta}\le 1$, so $f_{\eta}(n_{\eta})=n_{\eta}$.
\end{itemize}

As the claim holds, we may rewrite the problem as follows: find $\gamma_i\in\varDelta_+^{\bq}$, $i\in\I_{N+1}$, such that $\sum \gamma_i=N\gamma$. As $a_{1}^{\delta}=1$ if $1\in\supp\delta$, $a_{\theta}^{\delta}=1$ if $\theta\in\supp\delta$, 
there exist $\theta-1$ roots such that $1,\theta\in\supp\gamma_i$: we may fix that $1,\theta\in\supp\gamma_i$ for $i\ge 3$. As $N=\sum_{i=3}^{N+1} a_{\theta-1}^{\gamma_i}$ and $a_{\theta-1}^{\gamma_i}\ge 1$, there exists at most one $i\ge 3$ such that $a_{\theta-1}^{\gamma_i}=2$:

\begin{itemize}[leftmargin=*]
\item if $a_{\theta-1}^{\gamma_i}=1$ for all $i\ge 3$, then $\gamma_i=\gamma$ for all $i\ge 3$ and $\gamma_1+\gamma_2=\gamma$.
\item if $a_{\theta-1}^{\gamma_3}=2$, then $\gamma_i=\gamma$ for all $i\ge 4$ and $\gamma_3=\alpha_{1\theta}+\alpha_{k \, \theta-1}$ for some $k\in\I_{2, \theta-1}$. Hence $\gamma_1+\gamma_2=\alpha_{1\, k-1} + \alpha_{\theta}$, so $\gamma_1$, $\gamma_2$ are $\alpha_{1\, k-1}$, $\alpha_{\theta}$.
\end{itemize}

Hence all the hypotheses of Proposition \ref{prop:cocycle-xgamma-N=2} hold, and $(\x_{\gamma}^{L_{\gamma}})^*$ is a cocycle.

\medbreak
Now we consider $\gamma=\alpha_{1\theta}+\alpha_{i\, \theta-1}$, $i\in\I_{2, \theta-1}$. In this case, $P_{\gamma} = 2$ and $Q_{\gamma} = 1$. 
Let $i\le j$. Then $N_{\gamma}=N>2$, so Lemma \ref{lem:largeN} applies and $(\x_{\gamma}^{N_\gamma})^*$ is a $2$-cocycle.

Next we assume that $i>j$, so $N_{\gamma}=2$. The pairs $(\alpha,\beta)$ as in \eqref{eq:alfa-beta-N=2} are the following:
\begin{itemize}[leftmargin=*,label=$\circ$]
\item $\alpha=\alpha_{1\theta}$, $\beta=\alpha_{i \, \theta-1}$. As in Lemma \ref{lem:Btheta-relations-root-vectors},
\begin{align*}
x_{\alpha}x_{\gamma} &= q_{\alpha\gamma} x_{\gamma}x_{\alpha}, &
x_{\gamma}x_{\beta} &= q_{\gamma\beta} x_{\beta}x_{\gamma}, &
x_{\alpha}x_{\beta} &= q_{\alpha\beta} x_{\beta}x_{\alpha}+ \Bsj x_{\gamma} &&\text{ for some }\Bsj\in\Bbbk.
\end{align*}

\item $\alpha=\alpha_{1\theta}+\alpha_{k \, \theta}$, $\beta=\alpha_{i \, k-1}$, $k\in\I_{i+1,\theta-1}$. Then
\begin{align*}
x_{\alpha}x_{\gamma} &= q_{\alpha\gamma} x_{\gamma}x_{\alpha}, &
x_{\gamma}x_{\beta} &= q_{\gamma\beta} x_{\beta}x_{\gamma}, &
x_{\alpha}x_{\beta} &= q_{\alpha\beta} x_{\beta}x_{\alpha}+ \Bsj x_{\gamma} + \sum_{t=k+1}^{\theta-1} \Bsj_t 
x_{\alpha_{i \, t-1}}x_{\alpha_{1\theta}+\alpha_{t \, \theta}},
\end{align*}
for some  $\Bsj,\Bsj_{t}\in \Bbbk$.

\item $\alpha=\alpha_{1k-1}$, $\beta=\alpha_{k\, \theta}+\alpha_{i \, \theta-1}$, $k\in\I_{j+1,i-1}$. Then
\begin{align*}
x_{\alpha}x_{\gamma} &= q_{\alpha\gamma} x_{\gamma}x_{\alpha}, &
x_{\gamma}x_{\beta} &= q_{\gamma\beta} x_{\beta}x_{\gamma}, &
x_{\alpha}x_{\beta} &= q_{\alpha\beta} x_{\beta}x_{\alpha}+ \Bsj x_{\gamma} + \sum_{t=k+1}^{i-1} \Bsj_t 
x_{\alpha_{t\theta}+\alpha_{i \, \theta}}x_{\alpha_{1 \, t-1}}
\end{align*}
for some  $\Bsj,\Bsj_{t}\in \Bbbk$.
\item $\alpha=\alpha_{1 \,i-1}$, $\beta=\alpha_{i\theta}+\alpha_{i \, \theta-1}$. In this case,
\begin{align*}
x_{\alpha}x_{\gamma} &= q_{\alpha\gamma} x_{\gamma}x_{\alpha}, &
x_{\gamma}x_{\beta} &= q_{\gamma\beta} x_{\beta}x_{\gamma}, &
x_{\alpha}x_{\beta} &= q_{\alpha\beta} x_{\beta}x_{\alpha}+ \Bsj x_{\gamma} &&\text{ for some }\Bsj\in\Bbbk.
\end{align*}
\end{itemize}
Hence all the pairs of root vectors satisfy \eqref{eq:diff2-hypothesis} and $-\frac{q_{\alpha\alpha}}{q_{\beta\beta}}\in\{q^{-1},q^{-2}\}$, so we take $L=N$. 
Now we look for solutions of \eqref{eq:equation-roots-cocycle-N=2}; i.e.~$\sum_{\delta\in\varDelta_+} f_\delta(n_\delta) \delta =N\gamma$, $\sum_{\delta\in\varDelta_+} n_\delta = N+1$. Let $\eta\in\varDelta_+^{\bq}$ such that $1\in\supp\eta$, $N_{\eta}>2$ and $n_{\eta}\neq 0$. Suppose that $n_{\eta}\ge 2$. Arguing as for the case $\gamma=\alpha_{1\theta}$, there exists $t\in\I_{j-1}$ such that $\eta=\alpha_{1t}$ and $n_{\eta}=2$. This implies that $n_{\delta}=0$ for all $\delta\in\varDelta_+^{\bq}$ such that $\supp \delta \cap \I_t\neq\emptyset$. Recursively, there exist $t_0=0< t_1=t<t_2<\dots <t_s=j-1$ such that $n_{\eta}=2$ if $\eta=\alpha_{i_{r-1}+1 \, i_r}$, $r\in\I_s$, and $n_{\delta}=0$ for all $\delta\in\varDelta_+^{\bq}$ such that $\supp \delta \cap \I_{j-1}\neq\emptyset$. Now, if $j\in\supp\delta$, then either $j-1\supp\delta$ (so $n_{\delta}=0$ by the previous argument) or $a_{j}^{\delta}=1$, $N_{\delta}=2$, so $f_{\delta}(n_{\delta})=n_{\delta}$. Thus,
\begin{align*}
N &= \sum_{\delta \in\varDelta_+^{\bq}: j\in\supp\delta} f_{\delta}(n_{\delta}) a_j^{\delta} = \sum_{\delta \in\varDelta_+^{\bq}: j\in\supp\delta} n_{\delta}.
\end{align*}
But then
\begin{align*}
N+1=\sum_{\delta \in\varDelta_+^{\bq}} n_{\delta} \ge n_{\eta} + \sum_{\delta \in\varDelta_+^{\bq}: j\in\supp\delta} n_{\delta}=N+2,
\end{align*}
a contradiction. Thus we have that $f_{\eta}(n_{\eta})=n_{\eta}$ for all $\eta\in\varDelta_+^{\bq}$ such that $1\in\supp \eta$ since either $N_{\eta}=2$ or else $n_{\eta}\ge 1$. From here, 
\begin{align*}
N=\sum_{\delta \in\varDelta_+^{\bq}: 1\in\supp\delta} f_{\delta}(n_{\delta}) a_j^{\delta} = \sum_{\delta \in\varDelta_+^{\bq}: 1\in\supp\delta} n_{\delta},
\end{align*}
As $\sum_{\delta \in\varDelta_+^{\bq}} n_{\delta}=N+1$, there exists a unique $\eta\in\varDelta_+$ such that $n_{\eta}\neq 0$ and $1\notin \supp\eta$; moreover, $n_{\eta}=1$. Again we may rewrite the problem as follows: find $\gamma_k\in\varDelta_+^{\bq}$, $k\in\I_{N+1}$, such that $\sum_k \gamma_k=N\gamma$. As $a_{1}^{\delta}=1$ if $1\in\supp\delta$, $a_{\theta}^{\delta}=1$ if $\theta\in\supp\delta$, 
there exist $\theta-1$ roots such that $1,\theta\in\supp\gamma_k$: we may fix that $1,\theta\in\supp\gamma_k$ for $k\ge 3$. Also, $a_{i}^{\gamma_k}\le 2$ and $\sum_t a_{i}^{\gamma_k}=2N$, so either $a_{i}^{\gamma_k}=2$ for exactly $N$ of them and 0 for the remaining one $a_{i}^{\gamma_k}$, or else $a_{i}^{\gamma_k}=2$ for exactly $N-1$ of them and $1$ for the remaining two $a_{i}^{\gamma_k}$'s. A detailed study case-by-case shows that $\gamma_i=\gamma$ for $i\ge 3$, and $\gamma_1+\gamma_2=\gamma$. Hence all the hypotheses of Proposition \ref{prop:cocycle-xgamma-N=2} hold, and $(\x_{\gamma}^{N})^*$ is a cocycle.
\epf

\section{Exceptional types}\label{sec:exceptional}

\subsection{Type \texorpdfstring{$E_\theta$}{}}\label{sec:E-theta}

Let $q\in\G_N'$, $n\ge 2$. 
Let $\toba_{\bq}$ be a Nichols algebra of type $E_{\theta}$, $6\le\theta\le 8$. That is, the generalized Dynkin diagram of $\toba_{\bq}$ has the form
\begin{align*}
\xymatrix{ & & \overset{q}{\underset{2}{\circ}} & & \\
\overset{q}{\underset{1}{\circ}}\ar@{-}[r]^{q^{-1}} &  \overset{q}{\underset{3}{\circ}}\ar@{-}[r] &
\overset{q}{\underset{4}{\circ}} \ar@{-}[u]_{q^{-1}}^{\quad} \ar@{-}[r]^{q^{-1}} &
\overset{q}{\underset{5}{\circ}} \ar@{.}[r]^{q^{-1}} & \overset{q}{\underset{\theta}{\circ}} }
\end{align*}
Here $\varDelta^{\bq}=\varDelta$ is a root system of type $E_{\theta}$.
We fix the following convex orders on the sets of positive roots:
\begin{align*}
\mathbf{E_6:} & 1, 2, 13, 3, 1234, 134, 234, 24, 34, 4, 123^24^25, 1234^25, 234^25, 12345, 1345, 2345, 345, 245, 
\\ & 45, 5, 12^23^24^35^26, 123^24^35^26, 123^24^25^26, 123^24^256, 1234^25^26, 23^24^25^26, 1234^256, 234^256,
\\ & 123456, 23456, 2456, 13456, 3456, 456, 56, 6;
\\
\mathbf{E_7:} &\text{ roots of support contained in } \I_6\text{ ordered as for }E_6\text{ followed by }\\
& 1^22^23^34^45^36^27, 12^23^34^45^36^27, 12^23^24^45^36^27, 12^23^24^35^36^27, 123^24^35^36^27, 12^23^24^35^26^27,
\\ & 12^23^24^35^267, 123^24^35^26^27, 123^24^35^267, 123^24^25^26^27, 123^24^25^267, 123^24^2567,
\\ & 1234^25^26^27, 1234^25^267, 1234^2567, 1234567, 134567, 234^25^26^27, 234^25^267, 234^2567, 
\\ & 234567, 24567, 34567, 4567, 567, 67, 7;
\\
\mathbf{E_8:} &\text{ roots of support contained in } \I_7\text{ ordered as for }E_7\text{ followed by }\\
& 1^22^33^44^65^56^47^38, 1^22^33^44^65^56^47^28, 1^22^33^44^65^56^37^28, 1^22^33^44^65^46^37^28, 1^22^33^44^55^46^37^28, 
\\ & 1^22^23^44^55^46^37^28, 1^22^33^34^55^46^37^28, 12^33^34^55^46^37^28, 1^22^23^34^55^46^37^28, 
12^23^34^55^46^37^28, 
\\ & 1^22^23^34^45^46^37^28, 12^23^34^45^46^37^28, 12^23^24^45^46^37^28, 
1^22^23^34^45^36^37^28, 12^23^34^45^36^37^28,
\\ & 12^23^24^45^36^37^28, 12^23^24^35^36^37^28, 
123^24^35^36^37^28, 1^22^23^34^45^36^27^28, 12^23^34^45^36^27^28, 
\\ & 12^23^24^45^36^27^28, 
12^23^24^35^36^27^28, 12^23^24^35^26^27^28, 123^24^35^36^27^28, 123^24^35^26^27^28, 
\\ & 123^24^25^26^27^28, 
1234^25^26^27^28, 234^25^26^27^28, 1^22^33^44^65^56^47^38^2, 
1^22^23^34^45^36^278, 
\\ & 12^23^34^45^36^278, 12^23^24^45^36^278, 12^23^24^35^36^278, 
123^24^35^36^278, 12^23^24^35^26^278, 
\\ & 12^23^24^35^2678, 123^24^35^26^278, 
123^24^35^2678, 123^24^25^26^278, 123^24^25^2678, 123^24^25678, 
\\ & 1234^25^26^278, 1234^25^2678, 1234^25678, 12345678, 
1345678, 234^25^26^278, 234^25^2678, 
\\ & 234^25678, 2345678, 245678, 345678, 45678, 5678, 678, 78, 8.
\end{align*}

For more information, see \cite[\S 4.5]{AA17}.
The aim of this Section is to prove that Condition \ref{assumption:intro-combinatorial} holds for type
$E_{\theta}$. We need first the following result.

\begin{lemma}\label{lem:Etheta-sum-roots}
Let $\beta<\delta\in \varDelta^{\bq}_{+}$ be such that $\gamma = \beta + \delta\in \varDelta^{\bq}_{+}$.

\begin{enumerate}[leftmargin=*,label=\rm{(\alph*)}]
\item\label{item:Etheta-sum-roots-beta-gamma} If $\mu_1\le \dots\le \mu_{k}\in\varDelta_+$ satisfy $\sum_i \mu_i=\beta+\gamma$, then either
$\mu_1\le \beta$ or else $\mu_k\ge \gamma$.

\item\label{item:Etheta-sum-roots-delta-gamma} If $\mu_1\le \dots\le \mu_{k}\in\varDelta_+$ satisfy $\sum_i \mu_i=\delta+\gamma$, then either
$\mu_1\le \gamma$ or else $\mu_k\ge \delta$.
\end{enumerate}
\end{lemma}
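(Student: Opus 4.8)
The plan is to prove both statements at once by exploiting a symmetry and then arguing by contradiction. First I note that reversing a convex order yields another convex order, under which the roles of $\beta$ and $\delta$ are interchanged while the relation $\gamma=\beta+\delta$ and the chain $\beta<\gamma<\delta$ are preserved (this chain being itself forced by convexity, since $\gamma=\beta+\delta$ with $\beta<\delta$). Under this reversal the configuration of part \ref{item:Etheta-sum-roots-delta-gamma} becomes that of part \ref{item:Etheta-sum-roots-beta-gamma}, so it suffices to treat \ref{item:Etheta-sum-roots-beta-gamma}. Negating its conclusion, I assume a decomposition $\sum_i\mu_i=\beta+\gamma$ with $\beta<\mu_i<\gamma$ for every $i$; equivalently, all $\mu_i$ lie in the open interval $(\beta,\gamma)$ of the convex order. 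I would record at this stage that this is precisely the assertion that no correction term survives in the bracket in \eqref{eq:convex-order-relations}, i.e. that $x_\beta$ and $x_\gamma$ $q$-commute, which is the form in which the lemma will later feed the root-vector hypotheses of the difference lemmas.

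Two structural inputs then enter. The first is a strengthened convexity property: for a \emph{root} $\nu$ and any decomposition $\nu=\sum_j\rho_j$ into positive roots with $\rho_1\le\cdots\le\rho_m$, one has $\rho_1\le\nu\le\rho_m$. This upgrade of the two-term convexity axiom is proved through the inversion-set/reflection-order picture: choosing $w$ with inversion set $\{\mu\le\nu\}$ (resp. $\{\mu<\nu\}$) and applying $w^{-1}$ to $\nu=\sum_j\rho_j$, a sum of positive roots cannot equal a negative root, which forces at least one $\rho_j$ below $\nu$ and at least one at or above $\nu$. The second input is the simply-laced geometry of $\{\beta,\gamma,\delta\}$: equal root lengths and $\gamma=\beta+\delta$ give $(\beta,\delta)=-1$ and $(\beta,\gamma)=(\gamma,\delta)=1$, so these three span an $A_2$, and one computes $\sum_i(\mu_i,\beta)=\sum_i(\mu_i,\gamma)=3$, $\sum_i(\mu_i,\delta)=0$. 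Because the target $\beta+\gamma=2\beta+\delta$ is not itself a root, the strengthened convexity does not close the argument by itself; it must be combined with the $A_2$-bookkeeping and with a reduction in size.

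The reduction I would use is that every $\mu_i$ is supported on $\operatorname{supp}\beta\cup\operatorname{supp}\delta=\operatorname{supp}\gamma$, so the whole problem lives in the (simply-laced) sub-root-system cut out by those simple roots; recursively one may assume $\gamma$ has full support, and otherwise the proper connected subdiagrams of $E_\theta$ are of type $A$, $D$, or smaller $E$, where the analogous statement is available (the type-$A$ case being the coefficient-counting argument of Lemma \ref{le:roots}). To finish in the full-support case I would invoke Lemma \ref{lema:3roots} to normalize the triple $\beta,\gamma,\delta$ inside a rank-$\le 3$ subsystem and then run a support-and-coefficient count in the spirit of Lemma \ref{le:roots}, using the explicit convex orders listed for $E_6,E_7,E_8$ to control the interval $(\beta,\gamma)$. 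The main obstacle is exactly this last step: unlike in type $A$, the trivalent node of type $E$ breaks the uniform "endpoints carry the extreme simple roots" bookkeeping, so the contradiction has to be extracted either by a careful case analysis around the branch vertex or by bounding the rank enough (via the support reduction together with Lemma \ref{lema:3roots}) that it becomes a finite inspection of the tabulated orders.
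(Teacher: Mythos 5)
Your setup matches the paper's: the same positive definite form with $(\nu,\nu)=2$, the computation $(\beta,\delta)=-1$, $(\beta,\gamma)=1$, and the contradiction hypothesis $\beta<\mu_i<\gamma$ for all $i$. But there is a genuine gap in how you finish, and it is exactly the step the paper's proof is built around: you have no mechanism to bound the number $k$ of summands from above. The decisive observation in the paper is that if $k\ge 4$, then comparing $6=(\beta+\gamma,\beta+\gamma)=\sum_i(\mu_i,\mu_i)+\sum_{i\ne j}(\mu_i,\mu_j)\ge 2k+\sum_{i\ne j}(\mu_i,\mu_j)$ forces some pair with $(\mu_j,\mu_\ell)=-1$; then $\mu_j+\mu_\ell\in\varDelta_+$ and, by convexity, $\mu_j<\mu_j+\mu_\ell<\mu_\ell$, so the pair can be replaced by its sum \emph{without leaving the interval} $(\beta,\gamma)$. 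Iterating reduces to $k=3$, and the proof concludes by a (computer) check that no triple $\beta<\nu_1\le\nu_2\le\nu_3<\gamma$ sums to $\beta+\gamma$ for the fixed convex orders. You assemble all the ingredients of this merging step (the form, the pairings $\sum_i(\mu_i,\beta)=3$, convexity) but never extract it, and without it neither your inner-product bookkeeping nor your support induction controls $k$.

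Your proposed endgame via Lemma \ref{lema:3roots} does not work as stated. That lemma normalizes only the \emph{three} roots $\beta,\gamma,\delta$ into a rank-$3$ subdiagram; the element $w$ of the Weyl groupoid need not send the $\mu_i$ to positive roots, so from $\sum_i w(\mu_i)=w(\beta)+w(\gamma)$ you cannot conclude that the $w(\mu_i)$ have support in that subdiagram (cancellations between positive and negative parts are possible), and in any case $w$ destroys the convex order, so the hypothesis $\beta<\mu_i<\gamma$ is not transported and no ``support-and-coefficient count in the interval'' can be run on the other side. (This is precisely why the paper, which does use Lemma \ref{lema:3roots} elsewhere for sums of exactly three unknown roots, resorts to a direct verification here instead.) Two secondary points: your reduction of \ref{item:Etheta-sum-roots-delta-gamma} to \ref{item:Etheta-sum-roots-beta-gamma} by reversing the convex order is correct in principle, but since the final verification is order-specific it saves nothing over the paper's ``analogous'' argument on the interval $(\gamma,\delta)$; and the claim that the analogous statement is ``available'' on proper subdiagrams of type $A$, $D$, or smaller $E$ is not established in the paper --- Lemma \ref{le:roots} concerns decompositions of $n\gamma$, not of $\beta+\gamma$, so even your induction base would need a separate argument.
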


\pf
Let $(\cdot,\cdot)$ be the symmetric positive definite form on $\mathbb{R}^{\theta}$ such that $(\nu,\nu)=2$ for all $\nu\in\varDelta$. Then 
$-1\le (\mu,\mu')\le 1$ if $\nu \neq \nu' \in\varDelta$.
As $\beta + \delta\in \varDelta$, $(\beta,\delta)=-1$; thus $(\beta,\gamma)=1$.

Next we prove \ref{item:Etheta-sum-roots-beta-gamma}: the proof of \ref{item:Etheta-sum-roots-delta-gamma} is analogous. 
Let $\mu_1\le \dots\le \mu_{k}$ be such that $\sum_i \mu_i=\beta+\gamma$.
Note that $k\ge 2$, since $\beta+\gamma=2\beta+\delta\notin\varDelta$. Suppose on the contrary that $\beta<\mu_1\le \dots\le \mu_{k}<\gamma$. Then $k\ge 3$, since $(\mu_i,\beta)\le 1$ and
\begin{align*}
3 &=(\beta+\gamma,\beta)= \sum_i (\mu_i,\beta).
\end{align*}
Assume that $k\ge 4$. Then there exist $j\ne \ell$ such that $(\mu_j,\mu_\ell)=-1$ since
\begin{align*}
6 &=(\beta+\gamma,\beta+\gamma)= \sum_i (\mu_i,\mu_i)+\sum_{i\ne j} (\mu_i,\mu_j)\le 8+\sum_{i\ne j} (\mu_i,\mu_j).
\end{align*}
Thus $\mu_j+\mu_\ell\in\varDelta_{+}$, $\mu_j<\mu_j+\mu_\ell<\mu_\ell$ and we can replace the set $\{\mu_i\}_{i\in\I_k}$ by
\begin{align*}
\big(\{\mu_i\}_{i\in\I_k}-\{\mu_j,\mu_{\ell} \} \big) \cup \{\mu_j+\mu_\ell\}.
\end{align*}
Hence, recursively, we may assume that $k=3$. But using the computer we check that $\nu_1+\nu_2+\nu_3\ne \beta+\gamma$ for all the $3$-uples 
$\beta<\nu_1\le\nu_2\le\nu_3<\gamma$ so we get a contradiction.
\epf

\begin{prop}\label{prop:roots-cocycles-Etheta}
For every $\gamma  \in \varDelta_+^{\bq}$,  $(\x_{\gamma}^{N_\gamma})^*$ is a $2$-cocycle.
\end{prop}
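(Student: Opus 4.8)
The plan is to follow the algorithm summarized in \S\ref{subsec:summary}, treating each non-simple $\gamma\in\varDelta_+^{\bq}$ of full support (simple roots and roots of smaller support being handled by Lemma~\ref{lem:largeN} and the inductive hypothesis on $\dim U$). First I would observe that in type $E_\theta$ the root system is \emph{simply laced}, so by Lemma~\ref{lema:non-simple-roots-Palfa}\ref{item:typeA} combined with Lemma~\ref{lema:3roots} we have $P_\gamma=2$ and $Q_\gamma=1$ for every non-simple $\gamma$. Consequently, whenever $N_\gamma>2$ the inequality $N_\gamma>P_\gamma,Q_\gamma$ holds and criterion~\ref{item:summary-1} (that is, Lemma~\ref{lem:largeN}) immediately yields that $(\x_\gamma^{N_\gamma})^*$ is a $2$-cocycle. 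Since $q\in\G_N'$ and all diagonal entries $q_{ii}=q$, the only way to get $N_\gamma=2$ is $N=2$, i.e.\ $q=-1$; so the entire difficulty concentrates in the case $N=2$.

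For the remaining case $q=-1$ I would apply Proposition~\ref{prop:cocycle-xgamma-N=2}, which reduces the claim to a purely combinatorial check on the root system together with a verification that the relevant PBW-generators satisfy the structural hypotheses \eqref{eq:diff2-hypothesis} etc. The key simplification is that when $q=-1$ every $q_{\delta\delta}=-1$, so $-q_{\alpha\alpha}/q_{\beta\beta}=-1$ for any decomposition $\gamma=\alpha+\beta$, giving $L_\gamma=2$ throughout. Thus I only need to enumerate the solutions $(n_\delta)_{\delta\in\varDelta_+}$ of the grading and degree constraints \eqref{eq:equation-roots-cocycle-N=2} with $L=2$, i.e.\ triples of roots (with multiplicity) summing to $2\gamma$, and show each falls into one of the admissible forms \ref{item:cocycle-xgamma-N=2-1}--\ref{item:cocycle-xgamma-N=2-10}. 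Lemma~\ref{lem:Etheta-sum-roots}, stated just above, is exactly the tool that controls how $\beta+\gamma$ and $\delta+\gamma$ decompose; I expect it (together with the positive-definite form argument it rests on) to be the engine that rules out the "bad" configurations and forces each solution into form~\ref{item:cocycle-xgamma-N=2-1}, namely $n_\gamma=L-1$, $n_\alpha=n_\beta=1$ with $\gamma=\alpha+\beta$. The cocycle condition \eqref{eq:Csj-Dsj-N=2} then reads $(L)_{-1}=(2)_{-1}=0$, which holds automatically.

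The main obstacle will be the case analysis showing that \emph{only} the triple-solutions of form~\ref{item:cocycle-xgamma-N=2-1} occur, i.e.\ that no decomposition $2\gamma=\delta_1+\delta_2+\delta_3$ arises from the more exotic patterns \ref{item:cocycle-xgamma-N=2-2}--\ref{item:cocycle-xgamma-N=2-10} (which would force higher-degree cochains and more delicate coefficient vanishing). My strategy here mirrors the type-$A$ argument (Claim~\ref{claimA:1}, Claim~\ref{claimA:2}, Lemma~\ref{le:roots}): using $\N_0^{\theta}$-homogeneity and the symmetric bilinear form with $(\nu,\nu)=2$, I would show that any chain contributing $\x_\gamma^N\otimes 1$ to $d(c)$ must have polynomial degree $N+1$ and hence be of the shape $\x_\alpha\x_\gamma^{N-1}\x_\beta$ with $\alpha+\beta=\gamma$. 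Rather than do this root-by-root for each of the (many) $\gamma$ in $E_6,E_7,E_8$, I would invoke Lemma~\ref{lema:3roots} to transport any triple $\{\delta_1,\delta_2,\delta_3\}$ into a rank-$\le 3$ standard parabolic, reducing the verification to the already-settled $A_n$ computation; the inner-product inequalities of Lemma~\ref{lem:Etheta-sum-roots} then close the gap by excluding the strictly-between-$\beta$-and-$\gamma$ configurations. I would supplement this conceptual reduction with the computer check already alluded to in the proof of Lemma~\ref{lem:Etheta-sum-roots} to confirm there are no sporadic $3$-tuples $\beta<\nu_1\le\nu_2\le\nu_3<\gamma$ summing to $\beta+\gamma$, after which Proposition~\ref{prop:cocycle-xgamma-N=2}\ref{item:cocycle-xgamma-N=2-1} applies and completes the proof.
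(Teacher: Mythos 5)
Your first step coincides with the paper's: in type $E_\theta$ all $N_\gamma=N$, and $P_\gamma=2$, $Q_\gamma=1$ for non-simple $\gamma$ by Lemma~\ref{lema:non-simple-roots-Palfa}, so Lemma~\ref{lem:largeN} disposes of every $\gamma$ when $N>2$, and the whole problem is the case $q=-1$. For that case, however, you diverge from the paper: you route the argument through Proposition~\ref{prop:cocycle-xgamma-N=2} with $L_\gamma=2$ and form~\ref{item:cocycle-xgamma-N=2-1}, whereas the paper applies Lemma~\ref{lem:second-technique-2cocycles}. The combinatorial half of your plan is sound and is exactly what the paper does: Lemma~\ref{lema:3roots} transports any triple $\gamma_1+\gamma_2+\gamma_3=2\gamma$ (and any relation $2\gamma_1+\gamma_2=2\gamma$) into a rank-$\le 3$ subsystem, forcing some $\gamma_i=\gamma$ and hence reducing every solution of \eqref{eq:equation-roots-cocycle-N=2} to the shape $\{\alpha,\gamma,\beta\}$ with $\alpha+\beta=\gamma$; and the vanishing $(2)_{-1}=0$ is automatic since all $q_{\delta\delta}=-1$.

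The gap is in the algebraic half. Form~\ref{item:cocycle-xgamma-N=2-1} of Proposition~\ref{prop:cocycle-xgamma-N=2} requires the PBW generators to satisfy \eqref{eq:diff2-hypothesis}, which demands not only $[x_\alpha,x_\gamma]_c=0$ and $[x_\gamma,x_\beta]_c=0$ (these do follow from Lemma~\ref{lem:Etheta-sum-roots} together with \eqref{eq:convex-order-relations}, as you say), but also that the braided commutator $[x_\alpha,x_\beta]_c$ of the two halves of $\gamma$ is a combination of $x_\gamma$ and \emph{two-factor} products $x_{\eta_j}x_{\delta_j}$ with $\delta_j+\eta_j=\gamma$, each factor $q$-commuting with $x_\gamma$. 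Lemma~\ref{lem:Etheta-sum-roots} does not give this: it controls decompositions of $\beta+\gamma$ and $\delta+\gamma$ (i.e.\ the commutators \emph{with} $x_\gamma$), not decompositions of $\gamma$ itself, so nothing in your argument rules out PBW monomials of length $\ge 3$ appearing in $[x_\alpha,x_\beta]_c$; ruling these out would require a further root-combinatorial (likely computer-assisted) check analogous to the one inside the proof of Lemma~\ref{lem:Etheta-sum-roots}. This is precisely the verification the paper's choice of criterion is engineered to avoid: the proof of Lemma~\ref{lem:second-technique-2cocycles} tolerates commutators $[x_\beta,x_\delta]_c$ containing monomials $x_{\nu_k}\cdots x_{\nu_1}$ of arbitrary length, using only convexity and $\N_0^{\I}$-homogeneity, so its hypotheses \eqref{eq:second-technique-2cocycles} reduce exactly to the two facts you already have. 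Your argument can be completed either by supplying that extra check, or, more economically, by switching to Lemma~\ref{lem:second-technique-2cocycles} as the paper does.
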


\pf By Lemma \ref{lema:non-simple-roots-Palfa}, $P_{\gamma} = 2$ and $Q_\gamma = 1$ for all non-simple roots $\gamma$. Hence, if $N>2$, then Lemma \ref{lem:largeN} applies and $(\x_{\gamma}^{N_\gamma})^*$ is a $2$-cocycle for all roots $\gamma$.

\medbreak

Next we assume $N=2$, that is, $q=-1$. We will apply Lemma \ref{lem:second-technique-2cocycles}. 
Let $\gamma$ be a positive non-simple root. For each pair $\beta<\delta\in \varDelta^{\bq}_{+}$ such that $\gamma = \beta + \delta$, we have that
\begin{align*}
x_{\beta}x_{\gamma}&=q_{\beta\gamma} x_{\gamma}x_{\beta}, &
x_{\gamma}x_{\delta}&=q_{\gamma\delta} x_{\delta}x_{\gamma}
\end{align*}
by \eqref{eq:convex-combination} and Lemma \ref{lem:Etheta-sum-roots}, and $q_{\beta\beta}=q_{\delta\delta}=-1$. Hence \eqref{eq:second-technique-2cocycles} holds.

Let $\gamma_1,\gamma_2,\gamma_3\in \varDelta_{+}^{\bq}$ be three different roots such that $\gamma_1+\gamma_2+\gamma_3=2\gamma$. 
By Lemma \ref{lema:3roots} there exists $w\in\Wc$ such that the support of $w(\gamma_i)$ is of size $\le 3$, and a fortiori $\gamma$ too. 
As $w(\gamma_1)+w(\gamma_2)+w(\gamma_3)=2w(\gamma)$ and these roots are contained in a subdiagram of type $A_3$ or $A_2\times A_1$, we conclude that
$\gamma_i=\gamma$ for some $i\in\I_3$. Using a similar argument we also check that $2\gamma_1+\gamma_2 \neq 2\gamma$ for all $\gamma_1 \neq  \gamma_2\in \varDelta_{+}^{\bq}$. Hence all the hypothesis of Lemma \ref{lem:second-technique-2cocycles} hold, and $(\x_{\gamma}^2)^*$ is a $2$-cocycle.
\epf

\subsection{Type \texorpdfstring{$F_4$}{}}\label{sec:F4}
Let $q$ be a root of 1 of order $N > 2$, $M=\ord q^2$.
In this section, we deal with a Nichols algebra $\toba_{\bq}$ of Cartan type $F_{4}$, that is associated to
the Dynkin diagram
\begin{align*}
\xymatrix{ \overset{\,\,q}{\underset{\ }{\circ}}\ar  @{-}[r]^{q^{-1}}  &
\overset{\,\,q}{\underset{\ }{\circ}}\ar  @{-}[r]^{q^{-2}} &   \overset{q^2}{\underset{\
}{\circ}} \ar  @{-}[r]^{q^{-2}}  &  \overset{q^2}{\underset{\ }{\circ}} }
\end{align*}
For more information, see \cite[\S 4.6]{AA17}.
The set of positive roots with full support is 
\begin{align*}
\{1^22^43^34,\; 1^22^43^24,\; 1^22^33^24, \; 1^22^23^24, \; 1^22^234, \; 12^33^24,\; 12^23^24,\; 1^22^43^34^2, \;
12^234, \; 1234\}.
\end{align*}
The aim of this Section is to prove that Condition \ref{assumption:intro-combinatorial} holds for type
$F_{4}$. More precisely,

\begin{prop}\label{prop:roots-cocycles-F4}
For every $\gamma  \in \varDelta_+^{\bq}$,  $(\x_{\gamma}^{N_\gamma})^*$ is a $2$-cocycle.
\end{prop}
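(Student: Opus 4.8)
The plan is to follow exactly the strategy summarized in \S\ref{subsec:summary}, applying it to each positive root $\gamma \in \varDelta_+^{\bq}$ of full support (the roots of non-full support are handled recursively by passing to a proper subdiagram, which in type $F_4$ is of type $A_3$, $B_3$, $C_3$ or $A_2 \times A_1$, all already treated in \S\ref{sec:classical}). Since $\bq$ is of Cartan type, Lemma \ref{lema:non-simple-roots-Palfa}\ref{item:typeB} gives $P_\gamma \leq 3$ and $Q_\gamma = 2$ for every non-simple root. Thus the very first step is the dichotomy on $N_\gamma$: by Lemma \ref{lem:largeN}, whenever $N_\gamma > P_\gamma, Q_\gamma$---in particular whenever $N_\gamma > 3$---the cochain $(\x_\gamma^{N_\gamma})^*$ is automatically a $2$-cocycle. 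So I only need to analyze the roots $\gamma$ for which $N_\gamma \in \{2,3\}$.

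**Computing $N_\gamma$ for the full-support roots.**

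The next step is to compute $N_\gamma = \ord q_{\gamma\gamma}$ for each of the ten full-support roots listed above, as a function of $N$ and $M = \ord q^2$. Writing $\gamma = \sum_i a_i \alpha_i$, the diagonal entry $q_{\gamma\gamma}$ is determined by the braiding matrix via the standard formula $q_{\gamma\gamma} = \prod_{i,j} q_{ij}^{a_i a_j}$. For Cartan type $F_4$ these values are powers of $q$, and a routine bookkeeping (which I will not grind through here) shows that the short roots give $N_\gamma = N$ while the long roots give $N_\gamma = M$, with the two flavors distinguished by the length under the $F_4$ form. The outcome is that $N_\gamma \in \{2,3\}$ forces $N \in \{3,4,6\}$ (so that $N$ or $M$ equals $2$ or $3$), and I would tabulate precisely which roots fall into the $N_\gamma = 2$ and $N_\gamma = 3$ cases for each such $N$.

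**Verifying the cocycle condition in the small-$N_\gamma$ cases.**

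For each $\gamma$ with $N_\gamma = 3$, I would attempt criterion \ref{item:summary-4}: identify all pairs $(\alpha,\beta)$ with $\alpha < \beta$ and $\alpha + \beta = 2\gamma$, check that the corresponding PBW generators satisfy the $q$-commuting hypotheses \eqref{eq:diff2-hypothesis} (this uses convexity and homogeneity of the relations, exactly as in Lemma \ref{lem:Btheta-relations-root-vectors}), verify the coefficient condition \eqref{eq:Csj-Dsj}, and then enumerate the solutions of the grading/degree equations \eqref{eq:equation-roots-cocycle-N>2} with $L = L_\gamma$ to confirm they all have the required shape in Proposition \ref{prop:cocycle-xgamma-N>2}. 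For each $\gamma$ with $N_\gamma = 2$, I would either try the clean criterion \ref{item:summary-2} (checking \eqref{eq:second-technique-2cocycles} via Lemma \ref{lem:second-technique-2cocycles}), or fall back to the full case-analysis \ref{item:summary-3}, enumerating the non-negative integer solutions of \eqref{eq:equation-roots-cocycle-N=2} and matching each against the forms \ref{item:cocycle-xgamma-N=2-1}--\ref{item:cocycle-xgamma-N=2-10} of Proposition \ref{prop:cocycle-xgamma-N=2}. The main obstacle is the combinatorial enumeration of solutions to the grading and homological-degree constraints \eqref{eq:grading}--\eqref{eq:degree}: unlike the classical types, $F_4$ has no uniform inductive pattern on $\theta$, so each of the ten roots (and each relevant value of $N$) must be examined individually, and one must be careful to confirm that \emph{every} admissible chain is covered by one of the enumerated forms---missing a single solution would invalidate the cocycle conclusion via Remark \ref{obs:main}\ref{item:obs-chains2}. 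I expect Lemma \ref{lema:3roots} (the Weyl-groupoid reduction to rank-$\leq 3$ supports) to be the decisive tool for bounding and organizing these solution sets, as it was in the proof of Proposition \ref{prop:roots-cocycles-Etheta}.
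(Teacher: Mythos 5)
Your roadmap coincides in outline with the paper's proof: reduce to full-support roots by induction on the rank, dispatch every root with $N_\gamma > P_\gamma, Q_\gamma$ via Lemma \ref{lem:largeN}, and treat the remaining cases through Propositions \ref{prop:cocycle-xgamma-N=2} and \ref{prop:cocycle-xgamma-N>2}, using Lemma \ref{lema:3roots} to reduce the enumeration of solutions of the grading/degree constraints to rank-three subdiagrams (of types $B_3$ and $C_3$; incidentally, $A_3$ is not a subdiagram of $F_4$, so your list of subdiagrams is slightly off). However, what you have written is a plan, not a proof: every substantive step is deferred (``routine bookkeeping'', ``I would tabulate'', ``I would attempt''). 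For a statement of this kind the content \emph{is} the case analysis. The paper's proof consists precisely of listing, for each of the ten full-support roots and each decomposition $\gamma = \alpha + \beta$ (resp.\ $2\gamma = \alpha + \beta$), the explicit form of $[x_\alpha, x_\beta]_c$ needed to verify \eqref{eq:diff2-hypothesis}, and then enumerating \emph{all} solutions of \eqref{eq:equation-roots-cocycle-N=2} (resp.\ \eqref{eq:equation-roots-cocycle-N>2}) and matching each against the admissible forms. None of this appears in your proposal, so as it stands the proposition is not proved.

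Two concrete pieces of content are hidden by the deferral. First, the crude bound $P_\gamma \le 3$, $Q_\gamma = 2$ from Lemma \ref{lema:non-simple-roots-Palfa} \ref{item:typeB} is not what the paper uses: it computes the refined values per root length, namely $P_\gamma = 3$, $Q_\gamma = 2$ for the five full-support roots with $N_\gamma = N$, but $P_\gamma = Q_\gamma = 2$ for the five roots with $N_\gamma = M$. With the refined values, the long roots with $N_\gamma = M = 3$ (i.e.\ $N \in \{3,6\}$) are already covered by Lemma \ref{lem:largeN}, so the heavy machinery is needed only in two situations: $N = 4$ with the $M$-roots ($N_\gamma = 2$, Proposition \ref{prop:cocycle-xgamma-N=2}) and $N = 3$ with the $N$-roots ($N_\gamma = 3$, Proposition \ref{prop:cocycle-xgamma-N>2}). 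Your dichotomy ``analyze all $\gamma$ with $N_\gamma \in \{2,3\}$'' would send you into the detailed analysis for cases where it is unnecessary, and there you would have to establish bracket relations of the form $[x_\alpha, x_\beta]_c = \Bsj\, x_\gamma^2 + \dots$ for pairs with $\alpha + \beta = 2\gamma$ that the paper never needs to touch. Second, the proposition asserts that $(\x_\gamma^{N_\gamma})^*$ itself is a $2$-cocycle, so in criterion \ref{item:summary-3} you must check that $L_\gamma$ comes out equal to $N_\gamma = 2$ (in the paper this holds because $-q_{\alpha\alpha}/q_{\beta\beta} = -1$ for every relevant pair), and in criterion \ref{item:summary-4} that one may take $\ell = 1$, i.e.\ that $q_{\alpha\gamma}/q_{\gamma\beta} = 1$ so that \eqref{eq:Csj-Dsj} holds trivially. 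Your proposal never addresses why the exponent and the cohomological degree land exactly where the statement claims, rather than at some larger $L_\gamma$.
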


\pf By induction on the rank it is enough to consider $\gamma$ with full support. We have:
\begin{align*}
&\text{ if } \gamma \in
\{1^22^33^24, \; 12^33^24,\; 12^23^24,\; 12^234, \; 1234\}, \text{ then} & N_{\gamma}&=N, \, P_{\gamma}=3, \, Q_{\gamma}=2;
\\
&\text{ if } \gamma \in
\{1^22^43^34,\; 1^22^43^24,\; 1^22^23^24, \; 1^22^234, \; 1^22^43^34^2\}, \text{ then} & N_{\gamma}&=M, \, P_{\gamma}=2, \, Q_{\gamma}=2.
\end{align*}
Hence, if $N>4$, then $N_{\gamma}>P_{\gamma}, Q_{\gamma}$  for all $\gamma$  with full support. 
Thus $(\x_{\gamma}^{N_{\gamma}})^*$ is a $2$-cocycle for all $\gamma$  with full support by Lemma \ref{lem:largeN}.

\bigskip
Next we assume $N=4$. If $\gamma\in\{1^22^33^24, \; 12^33^24,\; 12^23^24,\; 12^234, \; 1234\}$, then $N_{\gamma}=4>P_{\gamma}, Q_{\gamma}$, so $(\x_{\gamma}^{N_{\gamma}})^*$ is a $2$-cocycle by Lemma \ref{lem:largeN}.

Now we consider $\gamma \in
\{1^22^43^34,\; 1^22^43^24,\; 1^22^23^24, \; 1^22^234, \; 1^22^43^34^2\}$. 
Let $\alpha<\beta$ be a pair of positive roots as in \eqref{eq:alfa-beta-N=2}. We have the following possibilities:
\begin{itemize}[leftmargin=*,label=$\circ$]
\item $\gamma=1^22^43^34$, $\alpha=3$, $\beta=1^22^43^24$. There exists $\Bsj\in\Bbbk$ such that $[x_{\alpha},x_{\beta}]_c = \Bsj x_{\gamma}$.
\medbreak

\item $\gamma=1^22^43^34$, $\alpha=23$, $\beta=1^22^33^24$. There exist $\Bsj, \Bsj_1\in\Bbbk$ such that
\begin{align*}
[x_{\alpha},x_{\beta}]_c &= \Bsj x_{\gamma} + \Bsj_1 x_{1^22^43^24} x_{3}.
\end{align*}

\medbreak

\item $\gamma=1^22^43^34$, $\alpha=2^23$, $\beta=1^22^23^24$. There exist $\Bsj, \Bsj_t\in\Bbbk$ such that
\begin{align*}
[x_{\alpha},x_{\beta}]_c &= \Bsj x_{\gamma} + \Bsj_1 x_{1^22^43^24} x_{3}
+ \Bsj_2 x_{1^22^33^24} x_{23}.
\end{align*}
\medbreak

\item $\gamma=1^22^43^34$, $\alpha=123$, $\beta=12^33^24$. There exist $\Bsj, \Bsj_t\in\Bbbk$ such that
\begin{align*}
[x_{\alpha},x_{\beta}]_c &= \Bsj x_{\gamma} + \Bsj_1 x_{1^22^43^24} x_{3}
+ \Bsj_2 x_{1^22^33^24} x_{23}+\Bsj_3 x_{1^22^23^24}x_{2^23}.
\end{align*}

\medbreak

\item $\gamma=1^22^43^34$, $\alpha=12^23$, $\beta=12^23^24$. There exist $\Bsj, \Bsj_t\in\Bbbk$ such that
\begin{align*}
[x_{\alpha},x_{\beta}]_c &= \Bsj x_{\gamma} + \Bsj_1 x_{1^22^43^24} x_{3}
+ \Bsj_2 x_{1^22^33^24} x_{23}+\Bsj_3 x_{1^22^23^24}x_{2^23} +\Bsj_4 x_{12^33^24}x_{123}.
\end{align*}

\medbreak

\item $\gamma=1^22^43^34$, $\alpha=1^22^23$, $\beta=2^23^24$. There exist $\Bsj, \Bsj_t\in\Bbbk$ such that
\begin{align*}
[x_{\alpha},x_{\beta}]_c &= \Bsj x_{\gamma} + \Bsj_1 x_{1^22^43^24} x_{3}
+ \Bsj_2 x_{1^22^33^24} x_{23}+\Bsj_3 x_{1^22^23^24}x_{2^23} 
\\ & \quad +\Bsj_4 x_{12^33^24}x_{123} +\Bsj_5 x_{12^23^24}x_{123}.
\end{align*}

\medbreak

\item $\gamma=1^22^43^24$, $\alpha=2^23$, $\beta=1^22^234$. There exist $\Bsj\in\Bbbk$ such that $[x_{\alpha},x_{\beta}]_c = \Bsj x_{\gamma}$.

\medbreak

\item $\gamma=1^22^43^24$, $\alpha=12^23$, $\beta=12^234$. There exist $\Bsj, \Bsj_1\in\Bbbk$ such that
\begin{align*}
[x_{\alpha},x_{\beta}]_c &= \Bsj x_{\gamma} + \Bsj_1 x_{1^22^234} x_{2^23}.
\end{align*}

\medbreak

\item $\gamma=1^22^43^24$, $\alpha=1^22^23$, $\beta=2^234$. There exist $\Bsj, \Bsj_t\in\Bbbk$ such that
\begin{align*}
[x_{\alpha},x_{\beta}]_c &= \Bsj x_{\gamma} + \Bsj_1 x_{1^22^234} x_{2^23}
+ \Bsj_2 x_{12^234} x_{12^23}.
\end{align*}
\medbreak

\item $\gamma=1^22^43^24$, $\alpha=2$, $\beta=1^22^33^24$. There exists $\Bsj\in\Bbbk$ such that $[x_{\alpha},x_{\beta}]_c= \Bsj x_{\gamma}$.

\medbreak

\item $\gamma=1^22^43^24$, $\alpha=12$, $\beta=12^33^24$. There exist $\Bsj, \Bsj_t\in\Bbbk$ such that
\begin{align*}
[x_{\alpha},x_{\beta}]_c &= \Bsj x_{\gamma} + \Bsj_1 x_{1^22^234} x_{2^23}
+ \Bsj_2 x_{1^22^33^24}x_{2}.
\end{align*}

\medbreak

\item $\gamma=1^22^23^24$, $\alpha=3$, $\beta=1^22^234$. There exists $\Bsj\in\Bbbk$ such that $[x_{\alpha},x_{\beta}]_c = \Bsj x_{\gamma}$.

\medbreak

\item $\gamma=1^22^23^24$, $\alpha=123$, $\beta=1234$. There exist $\Bsj, \Bsj_1\in\Bbbk$ such that
\begin{align*}
[x_{\alpha},x_{\beta}]_c &= \Bsj x_{\gamma} + \Bsj_1 x_{1^22^234} x_{3}.
\end{align*}

\medbreak

\item $\gamma=1^22^23^24$, $\alpha=1^22^23$, $\beta=34$. There exist $\Bsj, \Bsj_t\in\Bbbk$ such that
\begin{align*}
[x_{\alpha},x_{\beta}]_c &= \Bsj x_{\gamma} + \Bsj_1 x_{1^22^234} x_{3}
+ \Bsj_2 x_{1234} x_{123}.
\end{align*}

\medbreak

\item $\gamma=1^22^23^24$, $\alpha=1$, $\beta=12^23^24$. There exist $\Bsj, \Bsj_1\in\Bbbk$ such that
\begin{align*}
[x_{\alpha},x_{\beta}]_c &= \Bsj x_{\gamma} + \Bsj_1 x_{1^22^234} x_{3}.
\end{align*}

\medbreak

\item $\gamma=1^22^234$, $(\alpha,\beta)$ one of the pairs $(12,1234)$, $(1^22^23,4)$, $(1,12^234)$. There exists $\Bsj\in\Bbbk$ such that $[x_{\alpha},x_{\beta}]_c = \Bsj x_{\gamma}$.

\medbreak

\item $\gamma=1^22^43^34^2$, $\alpha=12^23^24$, $\beta=12^234$. There exists $\Bsj\in\Bbbk$ such that $[x_{\alpha},x_{\beta}]_c = \Bsj x_{\gamma}$.
\medbreak

\item $\gamma=1^22^43^34^2$, $\alpha=12^33^24$, $\beta=1234$. There exist $\Bsj, \Bsj_1\in\Bbbk$ such that
\begin{align*}
[x_{\alpha},x_{\beta}]_c &= \Bsj x_{\gamma} + \Bsj_1 x_{12^234} x_{12^23^24}.
\end{align*}

\medbreak

\item $\gamma=1^22^43^34^2$, $\alpha=1^22^234$, $\beta=2^23^24$. There exist $\Bsj, \Bsj_t\in\Bbbk$ such that
\begin{align*}
[x_{\alpha},x_{\beta}]_c &= \Bsj x_{\gamma} + \Bsj_1 x_{12^234} x_{12^23^24} + \Bsj_2 x_{1234} x_{12^33^24}.
\end{align*}
\medbreak

\item $\gamma=1^22^43^34^2$, $\alpha=1^22^23^24$, $\beta=2^234$. There exist $\Bsj, \Bsj_t\in\Bbbk$ such that
\begin{align*}
[x_{\alpha},x_{\beta}]_c &= \Bsj x_{\gamma} + \Bsj_1 x_{12^234} x_{12^23^24} + \Bsj_2 x_{1234} x_{12^33^24} +\Bsj_3 x_{2^23^24} x_{1^22^234}.
\end{align*}

\medbreak

\item $\gamma=1^22^43^34^2$, $\alpha=1^22^33^24$, $\beta=234$. There exist $\Bsj, \Bsj_t\in\Bbbk$ such that
\begin{align*}
[x_{\alpha},x_{\beta}]_c &= \Bsj x_{\gamma} + \Bsj_1 x_{12^234} x_{12^23^24} + \Bsj_2 x_{1234} x_{12^33^24} +\Bsj_3 x_{2^23^24} x_{1^22^234} +\Bsj_4 x_{234}x_{1^22^33^24}.
\end{align*}

\medbreak

\item $\gamma=1^22^43^34^2$, $\alpha=1^22^43^24$, $\beta=34$. There exist $\Bsj, \Bsj_t\in\Bbbk$ such that
\begin{align*}
[x_{\alpha},x_{\beta}]_c &= \Bsj x_{\gamma} + \Bsj_1 x_{12^234} x_{12^23^24} + \Bsj_2 x_{1234} x_{12^33^24} +\Bsj_3 x_{2^23^24} x_{1^22^234} 
\\ & \quad +\Bsj_4 x_{234}x_{1^22^33^24} +\Bsj_5 x_{234}x_{1^22^33^24}.
\end{align*}

\medbreak

\item $\gamma=1^22^43^34^2$, $\alpha=1^22^43^34$, $\beta=4$. There exist $\Bsj, \Bsj_t\in\Bbbk$ such that
\begin{align*}
[x_{\alpha},x_{\beta}]_c &= \Bsj x_{\gamma} + \Bsj_1 x_{12^234} x_{12^23^24} + \Bsj_2 x_{1234} x_{12^33^24} +\Bsj_3 x_{2^23^24} x_{1^22^234} 
\\ & \quad +\Bsj_4 x_{234}x_{1^22^33^24} +\Bsj_5 x_{234}x_{1^22^33^24} +\Bsj_6 x_{34}x_{1^22^43^24}.
\end{align*}

\end{itemize}

In all cases $[x_{\alpha},x_{\gamma}]_c =0 = [x_{\gamma},x_{\beta}]_c$: the proof of all these relations follow as in Lemma \ref{lem:Btheta-relations-root-vectors}. Hence the root vectors satisfy \eqref{eq:diff2-hypothesis}, and  $-\frac{q_{\alpha\alpha}}{q_{\beta\beta}}=-1$; then we take $L=2$. 

Next we look for solutions of \eqref{eq:equation-roots-cocycle-N=2}. That is, $\sum_{\delta\in\varDelta_+^{\bq}} f_\delta(n_\delta) \delta =2\gamma$, $\sum_{\delta\in\varDelta_+^{\bq}} n_\delta = 3$.
Notice that $n_{\delta}\ne 3$ for all $\delta\in\varDelta_+^{\bq}$: otherwise $n_{\eta}=0$ for all $\eta\neq \delta$, so $2\gamma=f_\delta(3) \delta$, a contradiction. Hence, either 
$2\gamma=f_{\gamma_1}(2)\gamma_1+\gamma_2$ or else $2\gamma=\gamma_1+\gamma_2+\gamma_3$ for some $\gamma_i\neq\gamma_j\in\varDelta_+^{\bq}$. By Lemma \ref{lema:3roots} there exists $w\in W$ such that
$\gamma_i'=w(\gamma_i)$ have support in a rank 3 subdiagram, so $\gamma'=w(\gamma)$  has the same support: this subdiagram is either of Cartan type $B_3$ or $C_3$. Looking at the corresponding cases (see the proofs of Propositions \ref{prop:roots-cocycles-Btheta} and \ref{prop:roots-cocycles-Ctheta}) the solutions for $\gamma'$ are $\gamma_3'=\gamma'$, $\gamma_1'+\gamma_2'=\gamma'$. Hence, all solutions for $\gamma$ are of form $n_{\gamma}=n_{\alpha}=n_{\beta}=1$ for a pair $(\alpha, \beta)$ satisfying \eqref{eq:alfa-beta-N=2}, and $n_{\varphi}= 0$ for the remaining $\varphi \in\varDelta_+$. Hence all the hypotheses of Proposition \ref{prop:cocycle-xgamma-N=2} hold, and $(\x_{\gamma}^{L_{\gamma}})^*$ is a $2$-cocycle.

\bigskip

Finally we consider $N=3$. If $\gamma \in\{1^22^43^34,\; 1^22^43^24,\; 1^22^23^24, \; 1^22^234, \; 1^22^43^34^2\}$, then $N_{\gamma}=3>P_{\gamma}, Q_{\gamma}$, so $(\x_{\gamma}^{N_{\gamma}})^*$ is a $2$-cocycle by Lemma \ref{lem:largeN}.

Now we consider $\gamma\in\{1^22^33^24, \; 12^33^24,\; 12^23^24,\; 12^234, \; 1234\}$. Let $\alpha<\beta$ be a pair of positive roots as in \eqref{eq:alfa-beta}. We have the following possibilities:
\begin{itemize}[leftmargin=*,label=$\circ$]
\item $\gamma=1^22^33^24$, $\alpha=1^22^43^24$, $\beta=1^22^23^24$. There exist $\Bsj\in\Bbbk$ such that $[x_{\alpha},x_{\beta}]_c = \Bsj x_{\gamma}^2$.

\medbreak

\item $\gamma=1^22^33^24$, $\alpha=1^22^43^34$, $\beta=1^22^234$. There exist $\Bsj, \Bsj'\in\Bbbk$ such that
\begin{align*}
[x_{\alpha}, x_{\beta}]_c &= \Bsj x_{\gamma}^2 + \Bsj' \; x_{1^22^23^24} x_{1^22^43^24}.
\end{align*}

\medbreak

\item $\gamma=12^33^24$, $\alpha=1^22^43^24$, $\beta=2^23^24$. There exist $\Bsj\in\Bbbk$ such that $[x_{\alpha},x_{\beta}]_c = \Bsj x_{\gamma}^2$.

\medbreak

\item $\gamma=12^33^24$, $\alpha=1^22^43^34$, $\beta=2^234$. There exist $\Bsj, \Bsj'\in\Bbbk$ such that
\begin{align*}
[x_{\alpha}, x_{\beta}]_c &= \Bsj x_{\gamma}^2 + \Bsj' \; x_{2^23^24} x_{1^22^43^24}.
\end{align*}

\medbreak

\item $\gamma=12^23^24$, $\alpha=1^22^23^24$, $\beta=2^23^24$. There exist $\Bsj\in\Bbbk$ such that $[x_{\alpha},x_{\beta}]_c = \Bsj x_{\gamma}^2$.

\medbreak

\item $\gamma=12^23^24$, $\alpha=1^22^43^34$, $\beta=34$. There exist $\Bsj, \Bsj'\in\Bbbk$ such that
\begin{align*}
[x_{\alpha}, x_{\beta}]_c &= \Bsj x_{\gamma}^2 + \Bsj' \; x_{2^23^24} x_{1^22^23^24}.
\end{align*}

\medbreak

\item $\gamma=12^234$, $\alpha=1^22^234$, $\beta=2^234$. There exist $\Bsj\in\Bbbk$ such that $[x_{\alpha},x_{\beta}]_c = \Bsj x_{\gamma}^2$.

\medbreak

\item $\gamma=12^234$, $\alpha=1^22^43^24$, $\beta=4$. There exist $\Bsj, \Bsj'\in\Bbbk$ such that
\begin{align*}
[x_{\alpha}, x_{\beta}]_c &= \Bsj x_{\gamma}^2 + \Bsj' \; x_{2^234} x_{1^22^234}.
\end{align*}

\medbreak

\item $\gamma=1234$, $\alpha=1^22^234$, $\beta=34$. There exist $\Bsj\in\Bbbk$ such that $[x_{\alpha},x_{\beta}]_c = \Bsj x_{\gamma}^2$.

\medbreak

\item $\gamma=1234$, $\alpha=1^22^23^24$, $\beta=4$. There exist $\Bsj, \Bsj'\in\Bbbk$ such that
\begin{align*}
[x_{\alpha}, x_{\beta}]_c &= \Bsj x_{\gamma}^2 + \Bsj' \; x_{34} x_{1^22^234}.
\end{align*}

\medbreak

\end{itemize}
In all cases $[x_{\alpha},x_{\gamma}]_c =0 = [x_{\gamma},x_{\beta}]_c$, and $\frac{q_{\alpha\gamma}}{q_{\gamma\beta}}=1$: the proof of all these relations follow as in Lemma \ref{lem:Btheta-relations-root-vectors}.
Therefore the root vectors satisfy \eqref{eq:diff2-hypothesis}, so we take $L=1$. 

Next we look for solutions of \eqref{eq:equation-roots-cocycle-N=2}. That is, $\sum_{\delta\in\varDelta_+^{\bq}} f_\delta(n_\delta) \delta =3\gamma$, $\sum_{\delta\in\varDelta_+^{\bq}} n_\delta = 3$.
Notice that $n_{\delta}\ne 3$ for all $\delta\in\varDelta_+^{\bq}$: otherwise $n_{\eta}=0$ for all $\eta\neq \delta$, so $3\gamma=f_\delta(3) \delta$, a contradiction. Hence, either 
$3\gamma=f_{\gamma_1}(2)\gamma_1+\gamma_2$ or else $3\gamma=\gamma_1+\gamma_2+\gamma_3$ for some $\gamma_i\neq\gamma_j\in\varDelta_+^{\bq}$. Again we use Lemma \ref{lema:3roots} to reduce to rank 3 subdiagrams and looking at the proofs of Propositions \ref{prop:roots-cocycles-Btheta} and \ref{prop:roots-cocycles-Ctheta} we conclude that all the solutions for $\gamma$ are of form $n_{\gamma}=n_{\alpha}=n_{\beta}=1$ for a pair $(\alpha, \beta)$ satisfying \eqref{eq:alfa-beta}, and $n_{\varphi}= 0$ for the remaining $\varphi \in\varDelta_+$. Hence all the hypotheses of Proposition \ref{prop:cocycle-xgamma-N>2} hold, and $(\x_{\gamma}^{N_{\gamma}})^*$ is a $2$-cocycle.
\epf

%
%
%
%

\subsection{Type \texorpdfstring{$G_2$ Cartan}{}}\label{sec:G2-cartan}

Let $q$ be a root of 1 of order $N > 3$. Set 
\begin{align*}
M&=\begin{cases}
N, & 3 \text{ does not divide }N;
\\
N/3, & 3 \text{ divides }N.
\end{cases}
\end{align*}
In this section, we deal with a Nichols algebra $\toba_{\bq}$ of Cartan type $G_{2}$, with Dynkin diagram
\begin{align*}
\xymatrix{  \overset{\,\,q}{\underset{\ }{\circ}} \ar  @{-}[r]^{q^{-3}} &
\overset{q^3}{\underset{\ }{\circ}}}.
\end{align*}
The set of positive roots is 
\begin{align}\label{eq:positive-roots-G2}
\varDelta_+ &= \{ 1, 1^32, 1^22,
1^32^2, 12, 2 \} .
\end{align}

We take as generators $x_1, x_2$, as well as
\begin{align}\label{eq:root-vectors-G2}
\begin{aligned}
x_{1112} & := (\ad_c x_1)^3 x_2 , &
x_{112} & := (\ad_c x_1)^2 x_2 , \\
x_{11212} & := [x_{112}, x_{12} ] _c , &
x_{12} & := (\ad_c x_1) x_2 .
\end{aligned}
\end{align}
We order these root vectors:
$x_1 < x_{1112} < x_{112} < x_{11212} < x_{12} < x_2$.

\medspace
The aim of this Section is to prove that Condition \ref{assumption:intro-combinatorial} holds for type
$G_{2}$. More precisely,

\begin{prop}\label{prop:roots-cocycles-G2-cartan}
For every $\gamma  \in \varDelta_+^{\bq}$,  $(\x_{\gamma}^{N_\gamma})^*$ is a $2$-cocycle.
\end{prop}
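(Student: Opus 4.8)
The plan is to run the algorithm of \S\ref{subsec:summary}. By Lemma \ref{lem:largeN} the simple roots $1$ and $2$ are immediate (for them $P_\gamma=Q_\gamma=0$), so I fix a non-simple $\gamma$, which by induction on the rank I may take with full support; thus $\gamma\in\{12,\,1^22,\,1^32,\,1^32^2\}$ in the notation of \eqref{eq:positive-roots-G2}, \eqref{eq:root-vectors-G2}. The first step is to record $N_\gamma$, $P_\gamma$, $Q_\gamma$. Computing the self-braidings of the root vectors gives $N_{12}=N_{1^22}=N$ and $N_{1^32}=N_{1^32^2}=M$, while Lemma \ref{lema:non-simple-roots-Palfa}\ref{item:typeG2} already bounds $P_\gamma\le 4$, $Q_\gamma\le 3$. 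A finite inspection of the six-element root system sharpens these to $P_{12}=3$, $P_{1^22}=4$, $P_{1^32}=P_{1^32^2}=2$, together with $Q_{1^32}=Q_{1^32^2}=0$ (the large coefficients $N_\delta\delta$ cannot be completed to a small multiple of a short root) and $Q_{12},Q_{1^22}\le 3$.

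With these numbers in hand the generic case falls to criterion \ref{item:summary-1}: whenever $N_\gamma>P_\gamma,Q_\gamma$ the cochain $(\x_\gamma^{N_\gamma})^*$ is a $2$-cocycle by Lemma \ref{lem:largeN}. Since $N>3$, this disposes of $12$ for all $N$ (as $N\ge 4>3=P_{12}\ge Q_{12}$), of $1^22$ as soon as $N\ge 5$, and of the short roots $1^32,1^32^2$ whenever $M>2$ (in particular for all $N$ with $3\nmid N$, and for $N=9,12,\dots$ when $3\mid N$, using $Q=0$ there). The only residual pairs $(N,\gamma)$ with $N_\gamma\le\max(P_\gamma,Q_\gamma)$ are the arithmetic coincidences $(4,1^22)$, where $N_\gamma=N=4=P_\gamma$, and $(6,1^32),(6,1^32^2)$, where $N_\gamma=M=2=P_\gamma$. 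These finitely many exceptions are what force the finer analysis.

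For $(4,1^22)$ I would apply Proposition \ref{prop:cocycle-xgamma-N>2} with $\ell=1$. Here $N_\gamma=4>2$, the unique pair $(\alpha,\beta)$ with $\alpha+\beta=3\gamma$ and $\alpha<\beta$ as in \eqref{eq:alfa-beta} is $(1^32,1^32^2)$, and convexity shows $x_\gamma$ $q$-commutes with both $x_\alpha$ and $x_\beta$, so \eqref{eq:diff2-hypothesis} holds; moreover $\Csj=\Dsj$, hence condition \ref{item:cocycle-xgamma-N>2-1} holds with $\ell=1$, since $(\ell)_{(\Csj/\Dsj)^{N_\gamma}}=1$ while $\Csj/\Dsj-1=0$. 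It then remains to solve \eqref{eq:equation-roots-cocycle-N>2} with $\ell=1$, that is, to find all configurations of total $\N_0^{\I}$-degree $4\gamma$ and homological degree $3$: the bound $Q_{1^22}<4$ rules out any solution with a repeated root, and the single admissible triple of distinct roots is $\{1^32,1^22,1^32^2\}=\{\alpha,\gamma,\beta\}$, which is exactly form \ref{item:cocycle-xgamma-N>2-2}. For $(6,1^32)$ and $(6,1^32^2)$ I would instead use Proposition \ref{prop:cocycle-xgamma-N=2} with $L=N_\gamma=2$: the pairs with $\alpha+\beta=\gamma$ are $(1,1^22)$ for $\gamma=1^32$, and $(1^22,12)$, $(1^32,2)$ for $\gamma=1^32^2$; in every case $-q_{\alpha\alpha}/q_{\beta\beta}=-1$, so $(2)_{-1}=0$ and \eqref{eq:Csj-Dsj-N=2} holds, placing each such chain in form \ref{item:cocycle-xgamma-N=2-1}. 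A finite enumeration of the solutions of \eqref{eq:equation-roots-cocycle-N=2} with $L=2$ (again excluding repeated-root and spurious triples) confirms there are no others.

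The main obstacle is the exceptional coincidence $P_{1^22}=4=N$ at $N=4$: unlike the rest of type $G_2$ it cannot be settled by the cheap degree argument of Lemma \ref{lem:largeN}, and needs the explicit differential computation packaged in Proposition \ref{prop:cocycle-xgamma-N>2} together with the verification that the grading--degree system \eqref{eq:equation-roots-cocycle-N>2} admits no solution outside the single nice triple. All the arithmetic reduces to a handful of Diophantine checks in the two-dimensional root lattice and to the vanishing of $(2)_{-1}$, so no computation beyond the lemmas of \S\ref{sec:nichols-strategy} is required; the entire difficulty lies in the bookkeeping of this short exceptional list.
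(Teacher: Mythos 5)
Your overall strategy coincides with the paper's: compute $N_\gamma$, $P_\gamma$, $Q_\gamma$, invoke Lemma \ref{lem:largeN} whenever the strict inequality $N_\gamma>P_\gamma,Q_\gamma$ holds, and settle the exceptional coincidences via Propositions \ref{prop:cocycle-xgamma-N=2} and \ref{prop:cocycle-xgamma-N>2}; your treatment of $1^22$ at $N=4$ and of $1^32$, $1^32^2$ at $N=6$ (pairs, vanishing of $(2)_{-1}$, enumeration of solutions) is exactly the paper's. However, there is a genuine gap, caused by the miscomputation $P_{12}=3$. In fact $P_{12}=4$: the three distinct positive roots $1^32^2$, $12$, $2$ satisfy $(3,2)+(1,1)+(0,1)=(4,4)=4\cdot 12$. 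Consequently your claim that Lemma \ref{lem:largeN} ``disposes of $12$ for all $N$'' fails precisely at $N=4$, where $N_{12}=4=P_{12}$ and the lemma's strict inequality is violated. This is not a removable technicality: the $3$-chain $\x_{1^32^2}\x_{12}\x_{2}$ has $\N_0^{\I}$-degree $4\cdot 12$ and homological degree $3$, and since $[x_{11212},x_{2}]_c=-2(1+q)q_{12}^2\,x_{12}^3\neq 0$ for $q\in\G_4'$ (note $q\neq -1$ there), its Anick differential a priori contains the term $\x_{12}^{4}\otimes 1$; no degree count alone can show its coefficient vanishes, so the case genuinely requires the finer analysis you reserved only for $1^22$.

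The repair is routine and parallel to your own $(4,1^22)$ argument, and it is what the paper does: apply Proposition \ref{prop:cocycle-xgamma-N>2} with $\ell=1$ to the unique pair $(\alpha,\beta)=(1^32^2,\,2)$ satisfying \eqref{eq:alfa-beta} for $\gamma=12$. The relations $x_{11212}x_{12}=q^3q_{12}\,x_{12}x_{11212}$, $x_{12}x_{2}=q^3q_{12}\,x_{2}x_{12}$ and the commutator above show that \eqref{eq:diff2-hypothesis} holds; moreover $q_{\alpha\gamma}/q_{\gamma\beta}=1$, so \eqref{eq:Csj-Dsj} holds with $\ell=1$, and the unique solution of \eqref{eq:equation-roots-cocycle-N>2} is $n_{1^32^2}=n_{12}=n_{2}=1$, which is of form \ref{item:cocycle-xgamma-N>2-2}. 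With this case added your proof is complete. (A smaller discrepancy: your values $Q_{1^32}=Q_{1^32^2}=0$ differ from the paper's Table \ref{tab:G2-1-alpha}, which records $2$ and $1$ respectively; this one is harmless because $P_\gamma=2$ dominates in those cases, but it is the same kind of root-system bookkeeping slip that produced the real gap at $\gamma=12$.)
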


\pf
\noindent $\circ$ For $\gamma=1^32$, the case $N_{1112}>2$ follows by Lemma \ref{lem:largeN} again. Assume now that $N_{1112}=2$ (that is, $N=6$). We will apply Proposition \ref{prop:cocycle-xgamma-N=2}. The unique pair as in  \eqref{eq:alfa-beta-N=2} is $\alpha=\alpha_1$, $\beta=2\alpha_1+\alpha_2$, since the following relations hold:
\begin{align*}
x_{1}x_{112}&=x_{1112}+q^2q_{12} \, x_{112}x_1, &
x_{1}x_{1112}&=q^3q_{12} \, x_{1112}x_1, &
x_{1112}x_{112}&=q^3q_{12} \, x_{112}x_{1112}.
\end{align*}
In this case, $-\frac{q_{\alpha\alpha}}{q_{\beta\beta}}=-1$ so we take $L=2$.
The unique solution of \eqref{eq:equation-roots-cocycle-N=2} is $n_1=n_{1^32}=n_{1^22}=1$, and $n_{\delta}=0$ for the remaining roots. Hence Proposition \ref{prop:cocycle-xgamma-N=2} applies and $(\x_{1112}^2)^*$ is a 2-cocycle.

\medbreak
\noindent $\circ$ For $\gamma=1^22$, 
the case $N_{112}>4$ follows by Lemma \ref{lem:largeN}. Assume now that $N_{112}=4$. We will apply Proposition \ref{prop:cocycle-xgamma-N>2}. The unique pair as in \eqref{eq:alfa-beta} is $\alpha=3\alpha_1+\alpha_2$, $\beta=3\alpha_1+2\alpha_2$, since the following relations hold:
\begin{align*}
x_{1112}x_{11212}&= -(q+1)q_{12}^2 x_{112}^3 
-q_{12}^3 \, x_{11212}x_{1112}, &
x_{1112}x_{112}&=q^3q_{12} \, x_{112}x_{1112}, 
\\ & &
x_{112}x_{11212}&=q^3q_{12} \, x_{11212}x_{112}.
\end{align*}
Here $\frac{q_{\alpha\gamma}}{q_{\gamma\beta}}=1$, so we take $L=1$.
The unique solution of \eqref{eq:equation-roots-cocycle-N>2} is $n_{1^32}=n_{1^22}=n_{1^32^2}=1$, and $n_{\delta}=0$ for the remaining roots. Hence Proposition \ref{prop:cocycle-xgamma-N>2} applies: $(\x_{112}^4)^*$ is a 2-cocycle.

\medbreak
\noindent $\circ$ For $\gamma=1^32^2$, 
the case $N_{11212}>2$ follows by Lemma \ref{lem:largeN}. Assume now that $N_{11212}=2$ (that is, $N=6$). We will apply Proposition \ref{prop:cocycle-xgamma-N=2}. The pairs as in  \eqref{eq:alfa-beta-N=2} are $\alpha=2\alpha_1+\alpha_2$, $\beta=\alpha_1+\alpha_2$, 
and $\alpha=3\alpha_1+\alpha_2$, $\beta=\alpha_2$, 
since the following relations hold:
\begin{align*}
x_{112}x_{12}&=x_{11212}+q^2q_{12} \, x_{12}x_{112}, &
x_{112}x_{11212}&=q_{12}^2 \, x_{11212}x_{112},
\\ & &
x_{11212}x_{12}&=q_{12}^2 \, x_{12}x_{11212}; 
\\
x_{1112}x_{2}&=-(3)_q q_{12}\, x_{11212}-q_{12}^3 \, x_{2}x_{1112} -2q^2q_{12}^2 \, x_{12}x_{112}, &
x_{1112}x_{11212}&=q_{12}^3 \, x_{11212}x_{1112}, 
\\ & &
x_{11212}x_{2}&=q_{12}^3 \, x_{2}x_{11212}.
\end{align*}
In both cases, $-\frac{q_{\alpha\alpha}}{q_{\beta\beta}}=-1$ so we take $L=2$.
The solutions of \eqref{eq:equation-roots-cocycle-N=2} are 

\begin{itemize}
\item $n_{112}=n_{1^32^2}=n_{12}=1$, and $n_{\delta}=0$ for the remaining roots, or

\item $n_{1^32}=n_{1^32^2}=n_{2}=1$, and $n_{\delta}=0$ for the remaining roots.
\end{itemize}
Hence Proposition \ref{prop:cocycle-xgamma-N=2} applies and $(\x_{11212}^2)^*$ is a 2-cocycle.

\medbreak
\noindent $\circ$ For $\gamma=12$, 
the case $N_{12}>4$ follows by Lemma \ref{lem:largeN}. Assume now that $N_{12}=4$. We will apply Proposition \ref{prop:cocycle-xgamma-N>2}. The unique pair as in \eqref{eq:alfa-beta} is $\alpha=3\alpha_1+2\alpha_2$, $\beta=\alpha_2$, since the following relations hold:
\begin{align*}
x_{11212}x_{2}&= - 2(1+q)q_{12}^2 \, x_{12}^3 
+qq_{12}^4 \, x_{2}x_{11212}, &
x_{11212}x_{12}&=q^3q_{12} \, x_{12}x_{11212}, 
\\ & &
x_{12}x_{2}&=q^3q_{12} \, x_{2}x_{12}.
\end{align*}
Here $\frac{q_{\alpha\gamma}}{q_{\gamma\beta}}=1$, so we take $L=1$. The unique solution of \eqref{eq:equation-roots-cocycle-N>2} is $n_{1^32^2}=n_{12}=n_{2}=1$, $n_{\delta}=0$ for the remaining roots. Hence Proposition \ref{prop:cocycle-xgamma-N>2} applies: $(\x_{12}^4)^*$ is a 2-cocycle.
\epf
  

\begin{table}[ht]
\caption{The roots with full support of $G_2$; $\gamma_1 < \gamma = \gamma_1 + \gamma_2 < \gamma_2$}\label{tab:G2-1-alpha}
\begin{center}

\begin{tabular}{|c|c|c|c|c|c c|c|c|}
\hline
$\gamma$& $N_{\gamma}$, Cartan & $N_{\gamma}$, \eqref{eq:dynkin-G2-st} b & $P_{\gamma}$ & $Q_{\gamma}$ & $\gamma_1$ & $\gamma_2$ & $L_{\gamma}$, Cartan & $L_{\gamma}$, \eqref{eq:dynkin-G2-st} b \\ \hline
$1^32$ & $M$ & 8 & 2 & 2 & $1$ & $1^22$ & $M$ & 8 \\ \hline
$1^22$ & $N$ & 4 & 4 & 3 & $1$ & $12$ & $N$ & 8 \\ \hline
$1^32^2$ & $M$ & 2 & 2 & 1 & $1^22$ & $12$ & $M$ & 8 \\ 
&  & & &  & $1^32$ & $2$ & & \\ \hline
$12$ & $N$ & 8 & 4 & 3 & $1$ & $2$ & $N$ & 8 \\ \hline
\end{tabular}
\end{center}
\end{table}

\subsection{Type \texorpdfstring{$G_2$}{} standard}\label{sec:G2-standard}

Let $\zeta \in \G'_8$.
In this section, we deal with a Nichols algebra $\toba_{\bq}$ of standard type $G_{2}$ associated to
any of the Dynkin diagrams
\begin{align}\label{eq:dynkin-G2-st}
&\text{a: }\xymatrix{\overset{\,\, \zeta^2}{\underset{\ }{\circ}} \ar  @{-}[r]^{\zeta}  &
\overset{\ztu}{\underset{\ }{\circ}},} &
&\text{b: }\xymatrix{\overset{\,\, \zeta^2}{\underset{\ }{\circ}} \ar  @{-}[r]^{\zeta^3}  &
\overset{-1}{\underset{\ }{\circ}},}& 
&\text{c: }\xymatrix{\overset{\zeta}{\underset{\ }{\circ}} \ar
@{-}[r]^{\zeta^5}  & \overset{-1}{\underset{\ }{\circ}}.}
\end{align}
The set of positive roots is again \eqref{eq:positive-roots-G2}. Thus we take as generators $x_1, x_2$, as well as
\eqref{eq:root-vectors-G2} with the same order for these root vectors:
$x_1 < x_{1112} < x_{112} < x_{11212} < x_{12} < x_2$.
For more information, see \cite[\S 6.2]{AA17}.
We prove Condition \ref{assumption:intro-combinatorial} for type
$G_2$ standard:

\begin{prop}\label{prop:roots-cocycles-G2}
For every  $\gamma \in \varDelta_+^{\bq}$, there exists $L_{\gamma}\in\N$ 
such that $(\x_{\gamma}^{L_\gamma})^*$ is a cocycle.
\end{prop}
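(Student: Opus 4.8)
The plan is to follow the same case-by-case strategy used for Cartan type $G_2$ in Proposition \ref{prop:roots-cocycles-G2-cartan}, now adapted to the three standard diagrams in \eqref{eq:dynkin-G2-st}. By the recursive reduction, it suffices to treat the four non-simple roots with full support, namely $\gamma \in \{1^32,\ 1^22,\ 1^32^2,\ 12\}$, since the simple roots $1$ and $2$ are handled by Lemma \ref{lem:largeN}. For each such $\gamma$ I would first compute $N_\gamma$, $P_\gamma$, $Q_\gamma$ for each of the diagrams a, b, c; the values of $P_\gamma$ and $Q_\gamma$ depend only on the root system and so coincide with the Cartan computation already recorded (see Table \ref{tab:G2-1-alpha}), while $N_\gamma = \ord q_{\gamma\gamma}$ must be recomputed from the specific parameters $\zeta \in \G'_8$ for each diagram.

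The dichotomy of the summary algorithm in \S\ref{subsec:summary} then drives the argument. First I would check, root by root and diagram by diagram, whether $N_\gamma > P_\gamma, Q_\gamma$; whenever this holds, Lemma \ref{lem:largeN} immediately gives that $(\x_\gamma^{N_\gamma})^*$ is a degree-$2$ cocycle and we are done for that case. For the remaining cases (the ones where $N_\gamma$ is small, typically $2$), I would extract the pairs $(\alpha,\beta)$ with $\alpha < \beta$, $\alpha + \beta = (N_\gamma-1)\gamma$, write out the braided-commutator relations among $x_\alpha$, $x_\gamma$, $x_\beta$ exactly as in the Cartan-type computations \eqref{eq:root-vectors-G2} (these relations follow from convexity and homogeneity as in Lemma \ref{lem:Btheta-relations-root-vectors}), and verify that the PBW generators satisfy \eqref{eq:diff2-hypothesis}. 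I would then set $L_\gamma$ according to the prescription in \ref{item:summary-3} or \ref{item:summary-4} and solve the $\N_0^{\I}$-grading and homological degree constraints \eqref{eq:equation-roots-cocycle-N=2} or \eqref{eq:equation-roots-cocycle-N>2}, confirming that every solution has one of the admissible forms. Applying Proposition \ref{prop:cocycle-xgamma-N=2} (for $N_\gamma = 2$) or Proposition \ref{prop:cocycle-xgamma-N>2} (for $N_\gamma > 2$) then yields that $(\x_\gamma^{L_\gamma})^*$ is a cocycle.

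The main obstacle is bookkeeping rather than conceptual: for $\zeta \in \G'_8$ the three diagrams give genuinely different orders $N_\gamma$, so some roots that were handled by Lemma \ref{lem:largeN} in the Cartan case may now fall into the small-$N_\gamma$ regime requiring the finer Propositions, and conversely. In particular, the delicate point is the diagram b, where (as Table \ref{tab:G2-1-alpha} indicates) $N_{1^32^2} = 2$; here I would need to identify the two relevant pairs $(\alpha,\beta)$ producing $\gamma = 1^32^2$ and confirm, via Proposition \ref{prop:cocycle-xgamma-N=2}, that each grading/degree solution is of the form \ref{item:cocycle-xgamma-N=2-1} with $L_\gamma$ as tabulated. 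Since all defining parameters are roots of unity of order dividing $8$, the quantum-integer vanishing conditions such as $(L_\gamma)_{-q_{\alpha\alpha}/q_{\beta\beta}} = 0$ will hold by the choice of $L_\gamma = \lcm$ prescribed in \S\ref{subsec:summary}, so these reduce to a finite check. The enumeration of grading solutions mirrors that already carried out for Cartan $G_2$, and Lemma \ref{lema:3roots} is not needed in rank $2$; hence the argument is a systematic but routine traversal of at most twelve (root, diagram) pairs.
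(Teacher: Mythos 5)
Your outline follows the paper's own strategy essentially step for step: reduce to the four full-support roots $1^32$, $1^22$, $1^32^2$, $12$, compute $N_\gamma$, $P_\gamma$, $Q_\gamma$, apply Lemma~\ref{lem:largeN} whenever $N_\gamma > P_\gamma, Q_\gamma$, and otherwise verify \eqref{eq:diff2-hypothesis} for the relevant pairs $(\alpha,\beta)$, choose $L_\gamma$ by the lcm prescription of \S\ref{subsec:summary}, enumerate the grading/degree solutions, and invoke Proposition~\ref{prop:cocycle-xgamma-N=2} or Proposition~\ref{prop:cocycle-xgamma-N>2}. Your identification of $\gamma = 1^32^2$ in diagram b ($N_\gamma = 2$, $L_\gamma = 8$, all solutions of type \ref{item:cocycle-xgamma-N=2-1}) as the delicate case matches the paper exactly, as does the need to treat $1^22$ by Proposition~\ref{prop:cocycle-xgamma-N>2} rather than Lemma~\ref{lem:largeN} (there $N_\gamma = 4 = P_\gamma$, and the paper obtains $(\x_{112}^8)^*$ as a $4$-cocycle). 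One organizational difference: the paper does not traverse all three diagrams of \eqref{eq:dynkin-G2-st} but treats only diagram b, the three braidings being Weyl-equivalent, so that one representative per Weyl-equivalence class suffices for the global argument (fgc for the other two braidings then follows via Corollary~\ref{cor:frobeniusreciprocity-nichols}); your twelve-case traversal is sound and proves the literal statement for all three braidings directly, at the cost of tripling the casework.

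The one genuine error in your justification is the claim that $Q_\gamma$ ``depends only on the root system and so coincides with the Cartan computation.'' This is false as stated: $Q_\gamma$ is defined using the orders $N_{\delta_1}$, which depend on the braiding matrix and not only on $\varDelta_+^{\bq}$ (for diagram b one has $N_{1^32^2}=2$, $N_1=4$, $N_2=2$, whereas in Cartan type these orders are $M$, $N$, $N$). Only $P_\gamma$ is diagram-independent. In the present case the slip is harmless, because the tabulated (Cartan) values of $Q_\gamma$ happen to dominate the diagram-b values, so every application of Lemma~\ref{lem:largeN} you would make (to $1^32$ and $12$, where $N_\gamma = 8$) remains valid; but in principle an underestimate of $Q_\gamma$ would lead to a spurious application of Lemma~\ref{lem:largeN}, so $Q_\gamma$ must be recomputed per diagram exactly as you already propose to do for $N_\gamma$.
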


\pf
We just consider the diagram \eqref{eq:dynkin-G2-st} b.

\medbreak
\noindent $\circ$ For $\gamma=1^32$, $(\x_{1112}^8)^*$ is a 2-cocycle by Lemma \ref{lem:largeN}.

\medbreak
\noindent $\circ$ For $\gamma=1^22$, we will use Proposition \ref{prop:cocycle-xgamma-N>2}. The unique pair as in \eqref{eq:alfa-beta} is $\alpha=3\alpha_1+\alpha_2$, $\beta=3\alpha_1+2\alpha_2$, since the following relations hold:
\begin{align*}
x_{1112}x_{11212}&= q_{12}^2 x_{112}^3 
-q_{12}^3 \, x_{11212}x_{1112}, &
x_{1112}x_{112}&=-q_{12} \, x_{112}x_{1112}, 
\\ & &
x_{112}x_{11212}&=\zeta^5q_{12} \, x_{11212}x_{112}.
\end{align*}
In this case, $\left(\frac{q_{\alpha\gamma}}{q_{\gamma\beta}} \right)^{N_{\gamma}}=-1$ so we take $L=2$.
The unique solution of \eqref{eq:equation-roots-cocycle-N>2} is $n_{1^22}=3$, $n_{1^32}=n_{1^32^2}=1$, $n_{\delta}=0$ for the remaining roots. Hence Proposition \ref{prop:cocycle-xgamma-N>2} applies and $(\x_{112}^8)^*$ is a 4-cocycle.

\medbreak
\noindent $\circ$ For $\gamma=1^32^2$, we will use Proposition \ref{prop:cocycle-xgamma-N=2}. The pairs as in  \eqref{eq:alfa-beta-N=2} are $\alpha=2\alpha_1+\alpha_2$, $\beta=\alpha_1+\alpha_2$, 
and $\alpha=3\alpha_1+\alpha_2$, $\beta=\alpha_2$, 
since the following relations hold:
\begin{align*}
x_{112}x_{12}&=x_{11212}+\zeta^3q_{12} \, x_{12}x_{112}, \\
x_{112}x_{11212}&=\zeta^7q_{12} \, x_{11212}x_{112},
\\ 
x_{11212}x_{12}&=q_{12} \, x_{12}x_{11212}; 
\\
x_{1112}x_{2}&=\zeta^3(4)_{\zeta^5} q_{12}\, x_{11212}-q_{12}^3 \, x_{2}x_{1112} -\zeta^2(2)_{\zeta^3}q_{12}^2 \, x_{12}x_{112}, \\
x_{1112}x_{11212}&=\zeta^3q_{12}^3 \, x_{11212}x_{1112}, 
\\ 
x_{11212}x_{2}&=q_{12}^3 \, x_{2}x_{11212}.
\end{align*}
In both cases, $-\frac{q_{\alpha\alpha}}{q_{\beta\beta}}=\zeta^7$ so we take $L=8$.
The solutions of \eqref{eq:equation-roots-cocycle-N=2} are 

\begin{itemize}
\item $n_{1^32^2}=7$, $n_{112}=n_{12}=1$, and $n_{\delta}=0$ for the remaining roots, or

\item $n_{1^32^2}=7$, $n_{1^32}=n_{2}=1$, and $n_{\delta}=0$ for the remaining roots.
\end{itemize}
Hence Proposition \ref{prop:cocycle-xgamma-N=2} applies and $(\x_{11212}^8)^*$ is an 8-cocycle.

\medbreak
\noindent $\circ$ For $\gamma=12$, $(\x_{12}^8)^*$ is a 2-cocycle by Lemma \ref{lem:largeN}.
\epf

\subsection{Type \texorpdfstring{$\superda{\alpha}$}{}}\label{sec:type-D2-1-alpha}
Here $q,r,s\neq 1$, $qrs=1$; $N=\ord q$, $M=\ord r$, $P=\ord s$.
In this section, we deal with a Nichols algebra $\toba_{\bq}$ of super type $\superda{\alpha}$ with Dynkin diagram
\begin{align*}
\xymatrix{ \overset{q}{\underset{1 }{\circ}}\ar  @{-}[r]^{q^{-1}}  &
\overset{-1}{\underset{2 }{\circ}} \ar  @{-}[r]^{r^{-1}}  & \overset{r}{\underset{3}{\circ}}}.
\end{align*}
We fix the following convex order on the set of positive roots:
\begin{align*}
1 < 12 < 123  < 12^23 < 2 < 23 <3.
\end{align*}
For more information, see \cite[\S 5.4]{AA17}.
We prove Condition \ref{assumption:intro-combinatorial} for type
$\superda{\alpha}$:

\begin{prop}\label{prop:roots-cocycles-D2,1;alfa4}
For every  $\gamma \in \varDelta_+^{\bq}$, there exists $L_{\gamma}\in\N$ 
such that $(\x_{\gamma}^{L_\gamma})^*$ is a cocycle.
\end{prop}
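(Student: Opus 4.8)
The plan is to follow the algorithm summarized in \S\ref{subsec:summary}, proceeding root by root through $\varDelta_+^{\bq}$ using the convex order $1 < 12 < 123 < 12^23 < 2 < 23 < 3$, and treating only the diagram displayed above (the other Weyl-equivalent representatives having already been handled or being reducible to this one). By induction on the rank we may assume $\gamma$ has full support, so $\gamma \in \{123,\, 12^23\}$; the roots $1$, $2$, $3$, $12$, $23$ are either simple or supported on a proper subdiagram of type $A_2$ (which was settled in \S\ref{sec:An}), and for these Lemma~\ref{lem:largeN} together with Proposition~\ref{prop:roots-cocycles-Atheta} already yields the existence of a cocycle $(\x_\gamma^{L_\gamma})^*$. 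Thus the genuine work is confined to $\gamma = 123$ and $\gamma = 12^23$.

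For each of these two roots I would first compute the triple of invariants $N_\gamma$, $P_\gamma$, $Q_\gamma$ directly from \eqref{eq:positive-roots-G2}-style inspection of the rank-$3$ root system, using $N = \ord q$, $M = \ord r$, $P = \ord s$ and the relation $qrs = 1$. Because $\alpha$ is a continuous parameter, $N_\gamma$ varies with $q,r,s$, so I expect to split into cases according to whether $N_\gamma > P_\gamma, Q_\gamma$ (apply criterion \ref{item:summary-1}, i.e.~Lemma~\ref{lem:largeN}, giving a degree-$2$ cocycle), or $N_\gamma = 2$ (try criterion \ref{item:summary-2} via \eqref{eq:second-technique-2cocycles}, and if that fails apply Proposition~\ref{prop:cocycle-xgamma-N=2} after computing $L_\gamma = \lcm$), or $N_\gamma > 2$ but small (apply Proposition~\ref{prop:cocycle-xgamma-N>2}). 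For each relevant pair $(\alpha,\beta)$ with $\alpha + \beta = \gamma$ (or $= (N_\gamma-1)\gamma$) I would exhibit the relations between $x_\alpha$, $x_\beta$, $x_\gamma$, verifying the $q$-commutativity hypotheses \eqref{eq:diff2-hypothesis} by the convexity argument of Lemma~\ref{lem:Btheta-relations-root-vectors}, and then check the arithmetic vanishing condition on the quantum integer (e.g.~$(L)_{-q_{\alpha\alpha}/q_{\beta\beta}} = 0$ or \eqref{eq:Csj-Dsj}) holds for the chosen $L_\gamma$, invoking Lemma~\ref{lemma:coef-roots-unity} where a $\coef{\cdot}{L}$ coefficient appears.

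The remaining task, and the main obstacle, is the enumeration of \emph{all} nonnegative integer solutions $(n_\delta)_{\delta \in \varDelta_+^{\bq}}$ of the grading and homological-degree constraints \eqref{eq:equation-roots-cocycle-N=2} (or \eqref{eq:equation-roots-cocycle-N>2}) and the verification that every such solution has one of the admissible shapes \ref{item:cocycle-xgamma-N=2-1}--\ref{item:cocycle-xgamma-N=2-10} of Proposition~\ref{prop:cocycle-xgamma-N=2} (respectively the shape in Proposition~\ref{prop:cocycle-xgamma-N>2}). Writing each root in the basis $\{\alpha_1,\alpha_2,\alpha_3\}$ and using that the coefficient of $\alpha_2$ in $\gamma$ may be $1$ or $2$, these constraints reduce to a small linear Diophantine system; the delicacy is that the height function $f_\delta$ is nonlinear (it jumps by $N_\delta$ rather than $1$), so I must argue that $f_\delta(n_\delta) = n_\delta$ for all contributing $\delta$ before the system linearizes, exactly as in the $\superd{j}{\theta-j}$ analysis of Proposition~\ref{prop:roots-cocycles-Dsuper}. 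Once linearized, the system is small enough to solve by hand, and I expect each solution to collapse to the shape $n_\gamma = L-1$, $n_\alpha = n_\beta = 1$ (or an analogous four-root configuration). Having matched every solution to an admissible case, the relevant Proposition applies and concludes that $(\x_\gamma^{L_\gamma})^*$ is a cocycle; assembling the two cases $\gamma = 123$ and $\gamma = 12^23$ with the subdiagram cases completes the proof of Condition~\ref{assumption:intro-combinatorial} for $\superda{\alpha}$.
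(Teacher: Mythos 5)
Your proposal is correct and follows essentially the same route as the paper: reduce by rank induction to the full-support roots $123$ and $12^23$, treat $123$ via Proposition~\ref{prop:cocycle-xgamma-N=2} with $L_\gamma=\lcm\{N,M,P\}$ (the paper indeed finds exactly the two pair-type solutions plus one four-root solution handled by \eqref{eq:alfa-beta-delta-eta-N=2} and Lemma~\ref{lemma:coef-roots-unity}), and treat $12^23$ by splitting on $N_\gamma>2$ (Lemma~\ref{lem:largeN}, since $P_\gamma=2$, $Q_\gamma\le 1$) versus $N_\gamma=2$ (Proposition~\ref{prop:cocycle-xgamma-N=2} with $L=2$). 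The only refinement worth noting is that for $\gamma=123$ no case split is needed, since $q_{\gamma\gamma}=-1$ identically in the parameters, so $N_\gamma=2$ always.
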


\pf By induction on the rank it is enough to consider $\gamma$  with full support. 
We start with $\gamma=123$. The pairs $(\alpha,\beta)$ of positive roots as in \eqref{eq:alfa-beta-N=2} are $(1,23)$ and $(12,3)$. As 
\begin{align*}
x_1 x_{23}&=q_{12}q_{13}x_{23}x_1+x_{123}, &
x_1x_{123}&=qq_{12}q_{13}x_{123}x_1, &
x_{123}x_{23} &=-q_{12}q_{13}x_{23}x_{123};
\\
x_{12} x_{3}&=q_{13}q_{23}x_{3}x_{12}+x_{123}, &
x_{12}x_{123}&=-q_{13}q_{23}x_{123}x_{12}, &
x_{123}x_{3} &=rq_{13}q_{23}x_{3}x_{123}.
\end{align*}
the root vectors satisfy \eqref{eq:diff2-hypothesis}. As $-\frac{q_{\alpha\alpha}}{q_{\beta\beta}}=q$, respectively $=r$, we may take $L=\lcm \{N,M,P\}$. There exists a $4$-tuples $(\alpha,\beta,\delta,\eta) \in \varDelta_+^4$ as in \eqref{eq:alfa-beta-delta-eta-N=2}:
\begin{align*}
\alpha=1 & < \eta=12 <\gamma=123 < \beta=12^23 <\delta=3.
\end{align*}
The corresponding PBW generators satisfy \eqref{eq:diff-case2-hypothesis}; indeed,
\begin{align*}
x_{\alpha}x_{\beta}&= q_{\alpha \beta} x_{\beta}x_{\alpha} + qq_{12}q_{13}(1-s) x_{\gamma}x_{\eta}, & x_{\eta}x_{\delta} = q_{\eta\delta}  x_{\delta}x_{\eta} + x_{\gamma},
\end{align*}
and the other pairs of root vectors $q$-commute. Now $\coef{\alpha\beta\gamma}{N}=0$ by Lemma \ref{lemma:coef-roots-unity} \ref{item:coef-roots-unity-ii} since $\widetilde{q}_{\alpha\gamma}=q$, $\widetilde{q}_{\beta\gamma}=s$. 
Next we look for solutions of \eqref{eq:equation-roots-cocycle-N=2}. 
There exist three solutions:
\begin{itemize}[leftmargin=*,label=$\circ$]
\item $n_{123}=L-1$, $n_{1}=n_{23}=1$, $n_{\delta}=0$ if $\delta\neq 123,1,23$;
\item $n_{123}=L-1$, $n_{12}=n_{3}=1$, $n_{\delta}=0$ if $\delta\neq 123,12,3$;
\item $n_{123}=L-2$, $n_{1}=n_{12^3}=n_3=1$, $n_{\delta}=0$ if $\delta\neq 123,1,12^3,3$;
\end{itemize}

Hence all the hypotheses of Proposition \ref{prop:cocycle-xgamma-N=2} hold, and $(\x_{\gamma}^{L_{\gamma}})^*$ is a cocycle.

\medbreak

Now we consider $\gamma=12^23$. If $N_{\gamma}>2=P_{\gamma}$, then Lemma \ref{lem:largeN} applies and $(\x_{\gamma}^{N_{\gamma}})^*$ is a $2$-cocycle. Now assume that $N_{\gamma}=2$. The following relations between root vectors hold:
\begin{align*}
&x_{123} x_{2}=-q_{12}q_{32}x_{2}x_{123}+x_{12^23},
\\
&
\begin{aligned}
x_{123}x_{12^23}&=q_{12}q_{32}x_{12^23}x_{123}, &
x_{12^23}x_{2} &=q_{12}q_{32}x_{2}x_{12^23};
\end{aligned}
\\
&x_{12} x_{23}=-q_{12}q_{13}q_{23}x_{23}x_{12} -q_{23}x_{12^23}+q_{12}(r-1)x_2x_{123},
\\
&
\begin{aligned}
x_{12}x_{12^23}&=-q_{12}q_{13}q_{23}x_{12^3}x_{12}, &
x_{12^23}x_{23} &=-q_{12}q_{13}q_{23}x_{23}x_{123}.
\end{aligned}
\end{align*}
Hence for each pair $(\alpha,\beta)$ as in \eqref{eq:alfa-beta-N=2}, the PBW generators satisfy \eqref{eq:diff2-hypothesis}. Now $L=2$ satifies the hypothesis of Proposition \ref{prop:cocycle-xgamma-N=2}, so $(\x_{\gamma}^2)^\ast$ is a $2$-cocycle.
\epf 

\subsection{Type \texorpdfstring{$\superf$}{}}\label{sec:F(4)}
Let $q$ be a root of 1 of order $N>3$. Set $M=\ord q^2$, $P=\ord q^3$.

In this subsection, we deal with the Nichols algebras $\toba_{\bq}$ of diagonal type $\superf$. We may assume that the corresponding diagram is
\begin{align*}
\xymatrix@C-4pt{ \overset{\,\, q^2}{\underset{1}{\circ}}\ar  @{-}[r]^{q^{-2}}  & \overset{\,\,
q^2}{\underset{2}{\circ}} \ar  @{-}[r]^{q^{-2}}  & \overset{q}{\underset{3}{\circ}} &
\overset{-1}{\underset{4}{\circ}} \ar  @{-}[l]_{q^{-1}}}
\end{align*}
We fix the following convex order on $\varDelta_+^{\bq}$:
\begin{align*}
&1, 12, 123, 123^2, 12^23^2, 2, 23, 23^2, 3, 12^23^34, 12^23^24, 123^24, 23^24, 12^23^34^2, 1234, 234, 34, 4.
\end{align*}
For more information, see \cite[\S 5.5]{AA17}.
The aim of this Section is to prove:

\begin{prop}\label{prop:roots-cocycles-F(4)}
For every positive root $\gamma$, there exists a positive integer $L_{\gamma}$ 
such that $(\x_{\gamma}^{L_\gamma})^*$ is a cocycle.
\end{prop}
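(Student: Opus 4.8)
The plan is to run the algorithm of \S\ref{subsec:summary} exactly as in the Cartan case $F_4$ (Proposition~\ref{prop:roots-cocycles-F4}) and the super classical types. First I would reduce to roots of full support: every proper subdiagram of $\superf$ is of a rank $\le 3$ type already treated in Section~\ref{sec:classical}, so by induction on $\dim U$ and Remark~\ref{obs:main} it suffices to produce, for each $\gamma\in\varDelta_+^{\bq}$ with $\supp\gamma=\I_4$, an exponent $L_\gamma$ making $(\x_\gamma^{L_\gamma})^*$ a cocycle. There are exactly five such roots,
\begin{align*}
1234,\quad 123^24,\quad 12^23^24,\quad 12^23^34,\quad 12^23^34^2,
\end{align*}
and a direct computation of $q_{\gamma\gamma}$ gives $N_\gamma=2$ for the first four, while $N_{12^23^34^2}=\ord q^{-3}=P$.

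Next I would dispose of $\gamma=12^23^34^2$ via Lemma~\ref{lem:largeN}. From the list of positive roots one checks that $P_\gamma=2$ (the coefficient of $\alpha_3$ in $3\gamma$ would force three distinct summands with $a_3=3$, and only two such roots exist) and $Q_\gamma\le 2$. Since $P=\ord q^3=2$ forces $\ord q\mid 6$, hence $N=6$, we get $N_\gamma=P>P_\gamma,Q_\gamma$ for every $N\ne 6$, so that $(\x_\gamma^{P})^*$ is a degree-$2$ cocycle. The single exceptional value $N=6$ gives $P=2=N_\gamma$, and this root is then folded into the $N_\gamma=2$ analysis below.

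For the roots with $N_\gamma=2$ I would invoke Proposition~\ref{prop:cocycle-xgamma-N=2}. Note that Lemma~\ref{lem:second-technique-2cocycles} is unavailable: for a splitting such as $1234=1+234$ the diagonal entries disagree ($q_{11}=q^2\neq-1=q_{234,234}$), so hypothesis~\eqref{eq:second-technique-2cocycles} fails. For each $\gamma$ I would list all pairs $\alpha<\beta$ with $\alpha+\beta=\gamma$, extract the relations among $x_\alpha,x_\gamma,x_\beta$ from the convex-order relations~\eqref{eq:convex-order-relations} exactly as in Lemma~\ref{lem:Btheta-relations-root-vectors} (homogeneity forces the shape~\eqref{eq:diff2-hypothesis}), and set
\begin{align*}
L_\gamma=\lcm\Big(\{2\}\cup\big\{\ord\big(-\tfrac{q_{\alpha\alpha}}{q_{\beta\beta}}\big):\ \gamma=\alpha+\beta,\ \alpha,\beta\in\varDelta_+^{\bq}\big\}\Big).
\end{align*}
Since each ratio $-q_{\alpha\alpha}/q_{\beta\beta}$ is $\pm$ a power of $q$, the exponent $L_\gamma$ is a small multiple of $M$ or $N$. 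The crux is then to enumerate all solutions of the grading/degree system~\eqref{eq:equation-roots-cocycle-N=2}; I would carry this out by applying Lemma~\ref{lema:3roots} to push any occurring triple of roots into a rank-$3$ subsystem (of type $B_3$, $C_3$ or a super analogue), where the enumeration was already performed in Propositions~\ref{prop:roots-cocycles-Btheta} and~\ref{prop:roots-cocycles-Ctheta}. This should show every solution has shape~\ref{item:cocycle-xgamma-N=2-1} (a pair summing to $\gamma$) or~\ref{item:cocycle-xgamma-N=2-2} (a $4$-tuple), and the required coefficient vanishing $(L_\gamma)_{\pm q^{\ast}}=0$, resp.\ $\coef{\alpha\beta\gamma}{L_\gamma}=0$, follows from the choice of $L_\gamma$ together with Lemma~\ref{lemma:coef-roots-unity}\ref{item:coef-roots-unity-ii}.

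The main obstacle I anticipate is purely organizational: pinning down the exact root-vector relations (in particular which lower PBW monomials $x_{\nu_k}\dots x_{\nu_1}$ occur with nonzero scalar) and proving that~\eqref{eq:equation-roots-cocycle-N=2} admits no ``exotic'' solution outside forms~\ref{item:cocycle-xgamma-N=2-1}--\ref{item:cocycle-xgamma-N=2-2}. The rank reduction of Lemma~\ref{lema:3roots} is what makes the enumeration tractable, but one must verify that in each reduced picture the image $w(\gamma)$ is again a full-support root with known solution set, which requires tracking the Weyl-groupoid action on $\varDelta_+^{\bq}$. Once the solution set is fixed, Proposition~\ref{prop:cocycle-xgamma-N=2} (together with Lemma~\ref{lem:largeN} for $N\ne 6$) yields that $(\x_\gamma^{L_\gamma})^*$ is a cocycle, completing the verification of Condition~\ref{assumption:intro-combinatorial} for $\superf$ and hence, by Theorem~\ref{lemma:reduction-to-cocycles}, that $\toba_{\bq}$ has fgc.
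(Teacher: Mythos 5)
Your reduction steps agree with the paper's proof: induction to full support, the five roots $1234$, $123^24$, $12^23^24$, $12^23^34$, $12^23^34^2$, the values $N_\gamma=2$ for the first four, and Lemma~\ref{lem:largeN} for $\gamma=12^23^34^2$ when $N\neq 6$ (your value $N_\gamma=\ord q^{3}=P$ is the correct one, and it is what makes the case split ``$N\neq 6$ versus $N_\gamma=2$'' work). The genuine gap is in the enumeration of solutions of \eqref{eq:equation-roots-cocycle-N=2}, which is the heart of the proof. Since $L$ is of size $N$, the homological degree constraint reads $\sum_\delta n_\delta=L+1$, and solutions contain $\gamma$ with multiplicity $L-1$, $L-2$ or $L-3$ together with three or four further roots; they are emphatically not all of shapes \ref{item:cocycle-xgamma-N=2-1} or \ref{item:cocycle-xgamma-N=2-2}. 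For $\gamma=12^23^24$, for instance, one must handle tuples of shapes \ref{item:cocycle-xgamma-N=2-5}, \ref{item:cocycle-xgamma-N=2-6}, \ref{item:cocycle-xgamma-N=2-7}, \ref{item:cocycle-xgamma-N=2-8} and \ref{item:cocycle-xgamma-N=2-9} (whose coefficient conditions require Lemma~\ref{lemma:coef-roots-unity} \ref{item:coef-roots-unity-iii}, not only \ref{item:coef-roots-unity-ii}), and, when $N=6$, the additional solution $n_2=n_{12}=n_{12^23^2}=1$, $n_{\gamma}=n_{12^23^34^2}=2$, which fits none of the forms \ref{item:cocycle-xgamma-N=2-1}--\ref{item:cocycle-xgamma-N=2-10} and must be disposed of by the separate differential computation of Lemma~\ref{lem:diff-case11} together with Remark~\ref{obs:main}. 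A proof that only certifies shapes \ref{item:cocycle-xgamma-N=2-1} and \ref{item:cocycle-xgamma-N=2-2} misses these solutions, so Proposition~\ref{prop:cocycle-xgamma-N=2} cannot be invoked.

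The tool you propose for the enumeration also does not apply. Lemma~\ref{lema:3roots} transports a \emph{triple} of roots into a rank-three subsystem; it is usable in Cartan type $F_4$ precisely because there $L=2$, so $\sum_\delta n_\delta=3$ and every solution involves at most three roots counted with multiplicity. Here solutions contain a high power of $\gamma$ plus several other roots (and, for $N=6$, a second root of multiplicity two), so there is no triple to reduce; moreover $w\in\Wc$ changes $\bq$ into a different matrix $\mathfrak p$, so even genuine triples need not land in a $B_3$ or $C_3$ configuration whose solution set you have already computed. The paper instead performs the enumeration directly: the coefficient of $\alpha_4$ is used to bound $n_\eta$ for $\eta=12^23^34^2$ and to prove $f_\delta(n_\delta)=n_\delta$ for all $\delta$, after which the problem becomes $\sum_i\gamma_i=N\gamma$ and is settled by a case analysis on the coefficients of $\alpha_1,\alpha_2,\alpha_3$. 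Without this combinatorial analysis, and without verifying the relation hypotheses \eqref{eq:diff-case2-hypothesis}--\eqref{eq:diff-case10-hypothesis} for the tuple shapes that actually occur, your argument does not close.
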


\pf 
Let $\gamma$ be a positive non simple root.
Arguing recursively we may assume that $\gamma$ has full support. That is, $\gamma\in \{12^23^34, 12^23^24, 123^24,  12^23^34^2, 1234\}$.

\medbreak

If $\gamma=12^23^24$, then $N_{\gamma}=2=P_{\gamma}$. First we look for pairs $\alpha<\beta$ as in \eqref{eq:alfa-beta-N=2}. We have the following posibilities:

\begin{itemize}[leftmargin=*,label=$\circ$]
\item $(\alpha,\beta)=(2,123^24)$ or $(\alpha,\beta)=(23,1234)$. There exists $\Bsj\in\Bbbk$ such that $[x_{\alpha},x_{\beta}]_c = \Bsj x_{\gamma}$.

\smallbreak

\item $(\alpha,\beta)=(12,23^24)$. There exist $\Bsj, \Bsj_1\in\Bbbk$ such that
\begin{align*}
[x_{\alpha},x_{\beta}]_c &= \Bsj x_{\gamma} + \Bsj_1 x_{123^24} x_{2}.
\end{align*}

\smallbreak

\item $(\alpha,\beta)=(123,234)$ or $(\alpha,\beta)=(12^23^2,4)$. There exist $\Bsj, \Bsj_1\in\Bbbk$ such that
\begin{align*}
[x_{\alpha},x_{\beta}]_c &= \Bsj x_{\gamma} + \Bsj_1 x_{1234} x_{23}.
\end{align*}
\end{itemize}
In all cases $[x_{\alpha},x_{\gamma}]_c =0 = [x_{\gamma},x_{\beta}]_c$, so the root vectors satisfy \eqref{eq:diff2-hypothesis}, and  $\left(-\frac{q_{\alpha\alpha}}{q_{\beta\beta}}\right)^N=1$: the proof of all these relations follow as in Lemma \ref{lem:Btheta-relations-root-vectors}. Hence we take $L=N$.

\medbreak

Next we check that $(12, 2, 12^23^34, 123^24, 23^24, 12^23^34^2)$ is a $6$-uple  $(\alpha, \beta, \delta, \tau, \varphi, \eta)$ satisfying \eqref{eq:alfa-beta-delta-etc-N=2-diff5}. As $\widetilde{q}_{\alpha\gamma}=q^2=\widetilde{q}_{\beta\gamma}$, $\widetilde{q}_{\delta\gamma}=q^{-1}$, we have that
$\coeff{\alpha\beta\delta\gamma}{L}=\coeff{q^{-1},q^2,q^2}{L}=0$ by Lemma
\ref{lemma:coef-roots-unity} \ref{item:coef-roots-unity-iii}.
Hence \ref{item:cocycle-xgamma-N=2-5} holds in this case.

Now we look for $4$-uples  $(\alpha, \beta,\delta,\eta)$ satisfying \eqref{eq:roots-N=2-case6}. There are three possibilities:
$(2,123,123^24,12^23^34^2)$, $(2,12^23^34,123^24,1234)$ and $(12^23^2, 1234, 234,23)$. In these cases, $\widetilde{q}_{\alpha\gamma}=q^2$,  $\widetilde{q}_{\beta\gamma}=q^{-1}$. Hence
$\coef{\alpha\beta\gamma}{L}=\coef{q^2,q^{-1}}{L}=0$ by Lemma
\ref{lemma:coef-roots-unity} \ref{item:coef-roots-unity-ii}, and \ref{item:cocycle-xgamma-N=2-6} holds.

We look for $4$-uples  $(\alpha, \beta,\delta,\eta)$ satisfying \eqref{eq:roots-N=2-case7}. A possibility is
$(12,23,12^23^34^2,23^24)$. As $\widetilde{q}_{\alpha\gamma}=q^2$ and  $\widetilde{q}_{\delta\gamma}=q^{-1}$; thus
$\coef{-\delta\alpha\gamma}{L}=\coef{q,q^2}{L}=0$ by Lemma
\ref{lemma:coef-roots-unity} \ref{item:coef-roots-unity-ii}, and \ref{item:cocycle-xgamma-N=2-7} holds.

Also, $(12, 12^23^34, 234, 12^23^34^2, 1234, 2, 123^24, 23^24)$ is a $7$-uple  $(\alpha, \beta, \delta, \eta, \tau, \mu, \nu)$ satisfying \eqref{eq:roots-N=2-case8}. As $\widetilde{q}_{\alpha\gamma}=q^2$, $\widetilde{q}_{\delta\gamma}=q^{-1}$, we have that
$\coef{-\delta,\alpha,\gamma}{L}=\coef{q,q^2}{L}=0$ by Lemma
\ref{lemma:coef-roots-unity} \ref{item:coef-roots-unity-i}.
Hence \ref{item:cocycle-xgamma-N=2-8} holds in this case.
Now $(2, 12^23^2, 12^23^34, 123^24, 12^23^34^2, 1234)$ and
$(12, 12^23^2, 12^23^34, 23^24, 12^23^34^2, 234)$ are $6$-uples  $(\alpha, \beta, \nu, \mu, \delta, \eta)$ satisfying \eqref{eq:roots-N=2-case9}. As $\widetilde{q}_{\alpha\gamma}=\widetilde{q}_{\beta\gamma}=q^2$, and $\widetilde{q}_{\delta\gamma}=q^{-1}$ in both cases, we have that
$\coeff{\alpha+\beta,\delta,\alpha,\gamma}{L}
=\coeff{q^4,q^{-1},q^{2}}{L}=0$ by Lemma \ref{lemma:coef-roots-unity} \ref{item:coef-roots-unity-ii}.
Thus \ref{item:cocycle-xgamma-N=2-9} holds in this case.

Notice that $\gamma_{1}=2$, $\gamma_{2}=12$, $\gamma_3=12^23^2$, $\gamma_4=\gamma_5=12^23^34^2$ satisfy
$\sum_{i\in\I_5} \gamma_i=4\gamma$. Hence, if $N_{12^23^34^2}=2$, then $n_{2}=n_{12}=n_{12^23^2}=1$, $n_{\gamma}=n_{12^23^34^2}=2$ 
is a solution of \eqref{eq:equation-roots-cocycle-N=2}. The coefficient of $\x_{\gamma}^6 \ot 1$ is zero in
$d(\x_{2}\x_{12}\x_{12^23^2}\x_{\gamma}^2\x_{12^23^34^2}^2 \ot 1)$ by Lemma \ref{lem:diff-case11}.

\medbreak

Finally we look for solutions of \eqref{eq:equation-roots-cocycle-N=2}, i.e. $\sum_{\delta\in\varDelta_+^{\bq}} f_\delta(n_\delta) \delta =N\gamma$, $\sum_{\delta\in\varDelta_+^{\bq}} n_\delta = N+1$. Set $\eta=12^23^34^2$. Looking at the coefficient of $\alpha_4$:
\begin{align*}
N =\sum_{\delta\in\varDelta_+^{\bq}} f_{\delta}(n_{\delta})a_4^{\delta} = 
2f_{\eta}(n_{\eta})+
\sum_{\delta \neq \eta, \, 4\in\supp \delta} n_{\delta} \ge 2f_{\eta}(n_{\eta}).
\end{align*}
As $N_{\eta}=M$, we have that $n_{\eta}\le 3$.
Suppose that $n_{\eta}=3$: necessarily $N=3M$ and
\begin{align*}
\sum_{\delta \neq \eta, \, 3,4\in\supp \delta} n_{\delta}
=3M-n_4-2f_{\eta}(3)=M-n_4-2.
\end{align*}
Looking at the coefficient of $\alpha_3$:
\begin{align*}
\sum_{\delta : 3\in\supp \delta, 4\notin \supp\delta}& f_{\delta}(n_{\delta})a_3^{\delta}
=6M-\sum_{\delta \neq \eta, \, 3,4\in\supp \delta} n_{\delta}a_3^{\delta}-3f_{\eta}(3)
\\
& \le 6M-\sum_{\delta \neq \eta, \, 3,4\in\supp \delta}   n_{\delta}-3f_{\eta}(3) = 2M + n_4-1 <3M=N.
\end{align*}
By inspection, 
$f_{\delta}(2)a_3^{\delta}=N_{\delta}a_3^{\delta}=N$ for all $\delta$ such that $3\in\supp \delta, 4\notin \supp\delta$, so $n_{\delta}\le 1$ for those $\delta$. This implies that $f_{\delta}(n_{\delta})=n_{\delta}$ for all $\delta\neq \eta$ such that $3\in\supp \delta$. Using this fact, the coefficients of $\alpha_2$ and $\alpha_3$ give the following equalities:
\begin{align*}
6M &= \sum_{\delta: 2\in\supp \delta} f_{\delta}(n_{\delta})a_2^{\delta} = f_{12}(n_{12}) + f_{2}(n_{2}) + 2(M+1) + \sum_{\delta\neq \eta: 2,3\in\supp \delta} n_{\delta}a_2^{\delta}
\\
6M &= \sum_{\delta: 3\in\supp \delta} f_{\delta}(n_{\delta})a_3^{\delta} = 3(M+1) + \sum_{\delta\neq \eta: 3\in\supp \delta} n_{\delta}a_3^{\delta}
\end{align*}
From these two equalities:
\begin{align*}
f_{12}(n_{12}) + f_{2}(n_{2}) 
&= M+1 + \sum_{\delta\neq \eta: 3\in\supp \delta} n_{\delta}a_3^{\delta}- \sum_{\delta\neq \eta: 2,3\in\supp \delta} n_{\delta}a_2^{\delta}
\\
&= M+1 + n_{123^2} + n_{23^2} + n_{3} + n_{12^23^34} + n_{123^24} + n_{23^24} + n_{34} \ge M+1.
\end{align*}
As $n_{12^23^34} + n_{123^24} + n_{23^24} + n_{34} \le M-n_4-2$ and $n_{123^2},n_{23^2},n_{3}\le 1$, we have that
\begin{align*}
f_{12}(n_{12}) + f_{2}(n_{2}) 
\le M+1 + 3 + M-n_4-2 \ge 2M+2-n_4\le 2M-2.
\end{align*}
But this is a contradiction since $N_2=N_{12}=P$. A similar argument holds if we suppose that $n_{\eta}=2$, so $n_{\eta}\le 1$. 

Hence $n_{\delta}=f_{\delta}(n_{\delta})$ for all $\delta$ such that $4\in\supp \delta$, so we may translate the equations to the following problem: Find $\gamma_i\in\varDelta_+^{\bq}$, $i\in\I_{N+1}$, such that $\sum_{i\in\I_{N+1}} \gamma_i=N\gamma$.
As $\sum a_1^{\gamma_i}=N$ and $a_1^{\delta}\le 1$ for all $\delta\in\varDelta_+^{\bq}$, we may assume that $a_1^{\gamma_i}=1$ for $i\in\I_N$, $a_1^{\gamma_{N+1}}=0$. As $\sum a_2^{\gamma_i}=2N$ and $a_2^{\delta}\le 2$ for all $\delta\in\varDelta_+^{\bq}$, there are two possible cases: either $a_2^{\gamma_i}=2$ for $N$ of them, $a_2^{\gamma_i}=0$ for the remaining root, or else $a_2^{\gamma_i}=2$ for $N-1$ of them, $a_2^{\gamma_i}=1$ for the remaining two roots. In any case $N-1$ roots have $a_2^{\gamma_i}=2$, and as $a_2^{\delta}=2$ implies that $a_1^{\delta}=1$, we may assume that $a_2^{\gamma_i}=2$ for all $i\in\I_{N-1}$, so $a_3^{\gamma_i}\ge 2$ for all $i\in\I_{N-1}$. As $\sum a_3^{\gamma_i}=2N$, at most two of $a_3^{\gamma_i}$'s are equal to $3$. Therefore we have three cases:

\smallbreak
\begin{enumerate}[leftmargin=*,label=\rm{(\alph*)}]
\item $a_3^{\gamma_1}=a_3^{\gamma_2}=3$, $a_3^{\gamma_i}=2$ for $i\in\I_{3,N-1}$. Hence $a_3^{\gamma_N}=a_3^{\gamma_{N+1}}=0$, which implies that $a_4^{\gamma_N}=a_4^{\gamma_{N+1}}=0$. As $\sum a_4^{\gamma_i}=2N$, at least one of them is equal to $2$. With all these conditions we find exactly two solutions:
\begin{align*}
\gamma_1&=12^23^34^2, & \gamma_2&=12^23^34, & \gamma_i&=\gamma, \, i\in\I_{3,N-1}, & \gamma_{N}&=12, & \gamma_{N+1}&=2;
\\
\gamma_1&=\gamma_2=12^23^34^2, & \gamma_3&=12^23^2, & \gamma_i&=\gamma, \, i\in\I_{4,N-1}, & \gamma_{N}&=12, & \gamma_{N+1}&=2.
\end{align*}
The last solution requires $2=f_{12^23^34^2}(n)$ for some $n\in\N$: the unique possibility is $N_{12^23^34^2}=2$, $n=2$.

\smallbreak

\item $a_3^{\gamma_1}=3$, $a_3^{\gamma_i}=2$ for $i\in\I_{2,N-1}$.
Hence either $a_3^{\gamma_N}=1$, $a_3^{\gamma_{N+1}}=0$ or else $a_3^{\gamma_N}=0$, $a_3^{\gamma_{N+1}}=1$. In the first case, $\gamma_{N+1}=2$, so $a_2^{N}=1$. The solutions are:
\begin{align*}
\gamma_1&=12^23^34^2, & \gamma_i&=\gamma, \, i\in\I_{2,N-2}, & \gamma_{N-1}&=12^23^2, & \gamma_{N}&=1234, & \gamma_{N+1}&=2;
\\
\gamma_1&=12^23^34^2, & \gamma_i&=\gamma, \, i\in\I_{2,N-1}, & \gamma_{N}&=123, & \gamma_{N+1}&=2;
\\
\gamma_1&=12^23^34, & \gamma_i&=\gamma, \, i\in\I_{2,N-1}, & \gamma_{N}&=1234, & \gamma_{N+1}&=2.
\end{align*}
Now we consider $a_3^{\gamma_N}=0$, $a_3^{\gamma_{N+1}}=1$. Notice that $a_2^{\gamma_{N}}, a_2^{\gamma_{N+1}}\le 1$, so $a_2^{\gamma_{N}}=a_2^{\gamma_{N+1}}=1$. This implies that $\gamma_N=12$. We have three solutions:
\begin{align*}
\gamma_1&=12^23^34^2, & \gamma_i&=\gamma, \, i\in\I_{2,N-2}, & \gamma_{N-1}&=12^23^2, & \gamma_{N}&=12, & \gamma_{N+1}&=234;
\\
\gamma_1&=12^23^34^2, & \gamma_i&=\gamma, \, i\in\I_{2,N-1}, & \gamma_{N}&=12, & \gamma_{N+1}&=23;
\\
\gamma_1&=12^23^34, & \gamma_i&=\gamma, \, i\in\I_{2,N-1}, & \gamma_{N}&=12, & \gamma_{N+1}&=234.
\end{align*}

\smallbreak
\item $a_3^{\gamma_i}=2$ for all $i\in\I_{N-1}$. In this case, exactly $N$ of the $a_4^{\gamma_i}$'s are $1$, and the remaining one is $0$. Hence either
\begin{align*}
\gamma_1&=12^23^2, & \gamma_i&=\gamma, \, i\in\I_{2,N-1}, & \gamma_{N}&=1234, & \gamma_{N+1}&=234,
\end{align*}
or $\gamma_i=\gamma$, for all $i\in\I_{N-1}$, so $\gamma_{N}+\gamma_{N+1}=\gamma$: the possible pairs $(\gamma_{N},\gamma_{N+1})$ are
$(123,234)$, $(1234,23)$, $(2,123^4)$, $(12,23^24)$, $(12^23^2,4)$.

\end{enumerate}
Hence all the hypothesis of Proposition \ref{prop:cocycle-xgamma-N=2} hold, and $(\x_{\gamma}^{L_{\gamma}})^*$ is a cocycle.

\bigbreak

If $\gamma=12^23^34$, then $N_{\gamma}=2=P_{\gamma}$.
First we look for pairs $\alpha<\beta$ as in \eqref{eq:alfa-beta-N=2}. We have the following posibilities:
\begin{align*}
&(12^23^2,34), & &(123^2,234), & &(23^2,1234), & &(3,12^23^24), & &(23,123^24), & &(123,23^24).
\end{align*}
In all cases $[x_{\alpha},x_{\gamma}]_c =0 = [x_{\gamma},x_{\beta}]_c$, so the root vectors satisfy \eqref{eq:diff2-hypothesis}, and  $\left(-\frac{q_{\alpha\alpha}}{q_{\beta\beta}}\right)^N=1$. Hence we take $L=N$.

Next we check that $(123^2, 23^2, 12^23^24, 1234, 234, 12^23^34^2)$ is a $6$-uple  $(\alpha, \beta, \delta, \tau, \varphi, \eta)$ satisfying \eqref{eq:alfa-beta-delta-etc-N=2-diff5}. As $\widetilde{q}_{\alpha\gamma}=q^2=\widetilde{q}_{\beta\gamma}$, $\widetilde{q}_{\delta\gamma}=q^{-1}$, we have that $\coeff{\alpha\beta\delta\gamma}{L}=\coeff{q^{-1},q^2,q^2}{L}=0$ by Lemma
\ref{lemma:coef-roots-unity} \ref{item:coef-roots-unity-iii}.
Hence \ref{item:cocycle-xgamma-N=2-5} holds in this case.

Now we look for $4$-uples  $(\alpha, \beta,\delta,\eta)$ satisfying \eqref{eq:roots-N=2-case6}. There are five possibilities:
\begin{align*}
&(123^2,23,12^23^34^2,123^24), &&(12^23^2,3,12^23^34^2,12^23^24), &&(123^2,12^23^24,23^24,3), 
\\
&(23^2,12^23^24,123^24,3), &&(12^23^2,123^24,23^24,23).
\end{align*}
Here $\widetilde{q}_{\alpha\gamma}=q^2$,  $\widetilde{q}_{\beta\gamma}=q^{-1}$. Hence
$\coef{\alpha\beta\gamma}{L}=\coef{q^2,q^{-1}}{L}=0$ by Lemma
\ref{lemma:coef-roots-unity} \ref{item:coef-roots-unity-ii}, and \ref{item:cocycle-xgamma-N=2-6} holds.

We look for $4$-uples  $(\alpha, \beta,\delta,\eta)$ satisfying \eqref{eq:roots-N=2-case7}: the unique is
$(123,23^2,12^23^34^2,23^24)$. As $\widetilde{q}_{\alpha\gamma}=q^2$ and  $\widetilde{q}_{\delta\gamma}=q^{-1}$; thus
$\coef{-\delta\alpha\gamma}{L}=\coef{q,q^2}{L}=0$ by Lemma
\ref{lemma:coef-roots-unity} \ref{item:coef-roots-unity-ii}, and \ref{item:cocycle-xgamma-N=2-7} holds.

There are two $6$-uples  $(\alpha, \beta, \nu, \mu, \delta, \eta)$ satisfying \eqref{eq:roots-N=2-case9}: 
\begin{align*}
&(23^2, 12^23^2, 12^23^24, 1234, 12^23^34^2, 123^24) && \text{and} &&(123^2, 12^23^2, 12^23^24, 234, 12^23^34^2, 23^24). 
\end{align*}
As $\widetilde{q}_{\alpha\gamma}=\widetilde{q}_{\beta\gamma}=q^2$, and $\widetilde{q}_{\delta\gamma}=q^{-1}$ in both cases, we have that
$\coeff{\alpha+\beta,\delta,\alpha,\gamma}{L}
=\coeff{q^4,q^{-1},q^{2}}{L}=0$ by Lemma \ref{lemma:coef-roots-unity} \ref{item:coef-roots-unity-ii}.
Thus \ref{item:cocycle-xgamma-N=2-9} holds.

Also $\gamma_{1}=12^23^2$, $\gamma_{2}=123^2$, $\gamma_3=23^2$, $\gamma_4=\gamma_5=12^23^34^2$ satisfy
$\sum_{i\in\I_5} \gamma_i=4\gamma$. Hence, if $N_{12^23^34^2}=2$, then $n_{12^23^2}=n_{123^2}=n_{23^2}=1$, $n_{\gamma}=n_{12^23^34^2}=2$ 
is a solution of \eqref{eq:equation-roots-cocycle-N=2}. The coefficient of $\x_{\gamma}^6 \ot 1$ is zero in
$d(\x_{12^23^2}\x_{123^2}\x_{23^2}\x_{\gamma}^2\x_{12^23^34^2}^2 \ot 1)$ by Lemma \ref{lem:diff-case11}.

\medbreak

Finally we look for solutions of \eqref{eq:equation-roots-cocycle-N=2}, i.e. $\sum_{\delta\in\varDelta_+^{\bq}} f_\delta(n_\delta) \delta =N\gamma$, $\sum_{\delta\in\varDelta_+^{\bq}} n_\delta = N+1$. Looking at the coefficient of $\alpha_4$ we get as in the previous case that $n_{\delta}=f_{\delta}(n_{\delta})$ for all $\delta$ such that $4\in\supp \delta$, so we may translate the equations to the following problem: Find $\gamma_i\in\varDelta_+^{\bq}$, $i\in\I_{N+1}$, such that $\sum_{i\in\I_{N+1}} \gamma_i=N\gamma$.
As $\sum a_1^{\gamma_i}=N$ and $a_1^{\delta}\le 1$ for all $\delta\in\varDelta_+^{\bq}$, we may assume that $a_1^{\gamma_i}=1$ for $i\in\I_N$, $a_1^{\gamma_{N+1}}=0$. As $\sum a_3^{\gamma_i}=3N$ and $a_3^{\delta}\le 3$ for all $\delta\in\varDelta_+^{\bq}$, at least $N-2$ of these roots satisfy $a_3^{\gamma_i}=3$ and we have three cases:

\smallbreak
\begin{enumerate}[leftmargin=*,label=\rm{(\alph*)}]
\item $a_3^{\gamma_1}=0$, $a_3^{\gamma_i}=3$ for $i\in\I_{2,N+1}$. Then $a_1^{\gamma_i}=1$, $a_2^{\gamma_i}=2$, $a_4^{\gamma_i}\ge 1$ for $i\in\I_{2,N+1}$ but there is no solution in this case.
\smallbreak

\item $a_3^{\gamma_1}=1$, $a_3^{\gamma_2}=2$, $a_3^{\gamma_i}=3$ for $i\in\I_{3,N+1}$. Then $a_1^{\gamma_i}=1$, $a_2^{\gamma_i}=2$, $a_4^{\gamma_i}\ge 1$ if $i\ge 3$, so
\begin{align*}
a_1^{\gamma_1}+a_1^{\gamma_2}&=1, & a_2^{\gamma_1}+a_2^{\gamma_2}&=2, & a_4^{\gamma_1}+a_4^{\gamma_2}&\le 1.
\end{align*}
If $a_4^{\gamma_1}=a_4^{\gamma_2}=0$, then we obtain the following solutions
\begin{align*}
\gamma_1&=123, & \gamma_2&=23^2, & \gamma_3&=12^23^34^2, & \gamma_i&=\gamma \text{  if }i\in\I_{4,N+1}; \\
\gamma_1&=123^2, & \gamma_2&=23, & \gamma_3&=12^23^34^2, & \gamma_i&=\gamma \text{  if }i\in\I_{4,N+1}; \\
\gamma_1&=12^23^2, & \gamma_2&=3, & \gamma_3&=12^23^34^2, & \gamma_i&=\gamma \text{  if }i\in\I_{4,N+1}.
\end{align*}
Otherwise $\gamma_i=\gamma$ for all $i\ge 3$; that is, $\gamma_1+\gamma_2=\gamma$, and the possible pairs $(\gamma_1,\gamma_2)$ are
\begin{align*}
&(12^23^2,34), & &(123^2,234), & &(23^2,1234), & &(3,12^23^24), & &(23,123^24), & &(123,23^24).
\end{align*}

\smallbreak
\item $a_3^{\gamma_i}=2$ for $i\in\I_{3}$, $a_3^{\gamma_i}=3$ for $i\in\I_{4,N+1}$. Then $a_1^{\gamma_i}=1$, $a_2^{\gamma_i}=2$, $a_4^{\gamma_i}\ge 1$ if $i\ge 4$, so
\begin{align*}
a_1^{\gamma_1}+a_1^{\gamma_2}+a_1^{\gamma_3}&=2, & a_2^{\gamma_1}+a_2^{\gamma_2}+a_2^{\gamma_3}&=4, &a_4^{\gamma_1}+a_4^{\gamma_2}+a_4^{\gamma_3}&\le 2.
\end{align*}
If $a_4^{\gamma_1}=a_4^{\gamma_2}=a_4^{\gamma_3}=0$, then the unique solution is 
\begin{align*}
\gamma_1&=12^23^2, & \gamma_2&=123^2, & \gamma_3&=23^2, & \gamma_4&=\gamma_5=12^23^34^2, & \gamma_i&=\gamma \text{  if }i\in\I_{6,N+1}.
\end{align*}
For this solution we need $N_{12^23^34^2}=2$, which implies that $N=6$.

If $a_4^{\gamma_1}=1$, $a_4^{\gamma_2}=a_4^{\gamma_3}=0$, then the solutions are 
\begin{align*}
\gamma_1&=12^23^24, & \gamma_2&=123^2, & \gamma_3&=23^2, & \gamma_4&=12^23^34^2, & \gamma_i&=\gamma \text{  if }i\in\I_{5,N+1}; \\
\gamma_1&=123^24, & \gamma_2&=12^23^2, & \gamma_3&=23^2, & \gamma_4&=12^23^34^2, & \gamma_i&=\gamma \text{  if }i\in\I_{5,N+1}; \\
\gamma_1&=23^24, & \gamma_2&=123^2, & \gamma_3&=12^23^2, & \gamma_4&=12^23^34^2, & \gamma_i&=\gamma \text{  if }i\in\I_{5,N+1}.
\end{align*}

If $a_4^{\gamma_1}=a_4^{\gamma_2}=1$, $a_4^{\gamma_3}=0$, then the unique solution is 
\begin{align*}
\gamma_1&=12^23^24, & \gamma_2&=23^24, & \gamma_3&=123^2, & \gamma_i&=\gamma \text{  if }i\in\I_{4,N+1}; \\
\gamma_1&=123^24, & \gamma_2&=12^23^24, & \gamma_3&=23^2, & \gamma_i&=\gamma \text{  if }i\in\I_{4,N+1}; \\
\gamma_1&=23^24, & \gamma_2&=123^24, & \gamma_3&=12^23^2, & \gamma_i&=\gamma \text{  if }i\in\I_{4,N+1}.
\end{align*}

\end{enumerate}
Hence all the hypothesis of Proposition \ref{prop:cocycle-xgamma-N=2} hold, and $(\x_{\gamma}^{L_{\gamma}})^*$ is a cocycle.

\bigbreak

If $\gamma=123^24$, then $N_{\gamma}=2=P_{\gamma}$. The pairs $\alpha<\beta$ as in \eqref{eq:alfa-beta-N=2} are 
$(1,23^24)$, $(123,34)$, $(\alpha,\beta)=(123^2,4)$. In all cases $[x_{\alpha},x_{\gamma}]_c =0 = [x_{\gamma},x_{\beta}]_c$, so the root vectors satisfy \eqref{eq:diff2-hypothesis}, and  $\left(-\frac{q_{\alpha\alpha}}{q_{\beta\beta}}\right)^N=1$. Hence we take $L=N$.

\medbreak

Now $(123^2,1234,34,3)$ is a $4$-uples  $(\alpha, \beta,\delta,\eta)$ satisfying \eqref{eq:roots-N=2-case6}. Here, $\widetilde{q}_{\alpha\gamma}=q^2$,  $\widetilde{q}_{\beta\gamma}=q^{-1}$, so
$\coef{\alpha\beta\gamma}{L}=\coef{q^2,q^{-1}}{L}=0$ by Lemma
\ref{lemma:coef-roots-unity} \ref{item:coef-roots-unity-ii}, and \ref{item:cocycle-xgamma-N=2-6} holds.

Next we check that $(1,3,12^23^34^2,23^24)$ and $(1,12^23^34,34,23^24)$ are $4$-uples  $(\alpha,\beta,\delta,\eta)$ satisfying \eqref{eq:roots-N=2-case7}. As $\widetilde{q}_{\alpha\gamma}=q^2$ and  $\widetilde{q}_{\delta\gamma}=q^{-1}$ in both cases,
$\coef{-\delta\alpha\gamma}{L}=\coef{q,q^2}{L}=0$ by Lemma
\ref{lemma:coef-roots-unity} \ref{item:coef-roots-unity-ii}, and \ref{item:cocycle-xgamma-N=2-7} holds.

Also, $(1, 123^2, 12^23^34, 23^24, 12^23^34^2, 34)$ is a $6$-uple satisfying \eqref{eq:roots-N=2-case9}. As $\widetilde{q}_{\alpha\gamma}=\widetilde{q}_{\beta\gamma}=q^2$ and $\widetilde{q}_{\delta\gamma}=q^{-1}$, we have that
$\coeff{\alpha+\beta,\delta,\alpha,\gamma}{L}
=\coeff{q^4,q^{-1},q^{2}}{L}=0$ by Lemma \ref{lemma:coef-roots-unity} \ref{item:coef-roots-unity-ii}.
Thus \ref{item:cocycle-xgamma-N=2-9} holds in this case.

\medbreak

Finally we look for solutions of \eqref{eq:equation-roots-cocycle-N=2}, i.e. $\sum_{\delta\in\varDelta_+^{\bq}} f_\delta(n_\delta) \delta =N\gamma$, $\sum_{\delta\in\varDelta_+^{\bq}} n_\delta = N+1$. 
Looking at the coefficient of $\alpha_4$ and arguing as in the case $\gamma=12^23^24$, we find that $n_{\delta}=f_{\delta}(n_{\delta})$ for all $\delta$, 
so we translate the equations to the following problem: Find $\gamma_i\in\varDelta_+^{\bq}$, $i\in\I_{N+1}$, such that $\sum_{i\in\I_{N+1}} \gamma_i=N\gamma$.
As $\sum a_1^{\gamma_i}=N$ and $a_1^{\delta}\le 1$ for all $\delta\in\varDelta_+^{\bq}$, we may assume that $a_1^{\gamma_i}=1$ for $i\in\I_N$, $a_1^{\gamma_{N+1}}=0$. Using a detailed study as the previous case we check that the solutions are
\begin{align*}
\gamma_1&=12^23^34^2, & \gamma_i&=\gamma, \, i\in\I_{2,N-2}, & \gamma_{N-1}&=123^2, & \gamma_{N}&=1, & \gamma_{N+1}&=34;
\\
\gamma_1&=12^23^34^2, & \gamma_i&=\gamma, \, i\in\I_{2,N-1}, & \gamma_{N}&=1, & \gamma_{N+1}&=3;
\\
\gamma_1&=12^23^34, & \gamma_i&=\gamma, \, i\in\I_{2,N-1}, & \gamma_{N}&=1, & \gamma_{N+1}&=34.
\\
\gamma_1&=123^2, & \gamma_i&=\gamma, \, i\in\I_{2,N-1}, & \gamma_{N}&=1234, & \gamma_{N+1}&=34,
\end{align*}
or $\gamma_i=\gamma$, for all $i\in\I_{N-1}$, so $\gamma_{N}+\gamma_{N+1}=\gamma$: the possible pairs $(\gamma_{N},\gamma_{N+1})$ are
$(123,34)$, $(1234,3)$, $(1,23^24)$, $(123^2,4)$.
Hence all the hypothesis of Proposition \ref{prop:cocycle-xgamma-N=2} hold, and $(\x_{\gamma}^{L_{\gamma}})^*$ is a cocycle.

\bigbreak

If $\gamma=1234$, then $N_{\gamma}=2=P_{\gamma}$. The pairs $\alpha<\beta$ as in \eqref{eq:alfa-beta-N=2} are 
$(1,234)$, $(123,4)$, $(12,34)$. In all cases $[x_{\alpha},x_{\gamma}]_c =0 = [x_{\gamma},x_{\beta}]_c$, so the root vectors satisfy \eqref{eq:diff2-hypothesis}, and  $\left(-\frac{q_{\alpha\alpha}}{q_{\beta\beta}}\right)^N=1$. Hence we take $L=N$.

\medbreak

Next we check that $(1,12^23^24^2,234,4)$ and $(12,123^24,4,34)$ are $4$-uples  $(\alpha,\beta,\delta,\eta)$ satisfying \eqref{eq:roots-N=2-case7}. As $\widetilde{q}_{\alpha\gamma}=q^2$ and  $\widetilde{q}_{\delta\gamma}=q^{-1}$ in both cases,
$\coef{-\delta\alpha\gamma}{L}=\coef{q,q^2}{L}=0$ by Lemma
\ref{lemma:coef-roots-unity} \ref{item:coef-roots-unity-ii}, so \ref{item:cocycle-xgamma-N=2-7} holds.

Notice that $\gamma_{1}=1$, $\gamma_{2}=12$, $\gamma_3=12^23^34^2$, $\gamma_4=4$ satisfy
$\sum_{i\in\I_4} \gamma_i=3\gamma$. The corresponding root vectors $q$-commute so the coefficient of $\x_{\gamma}^6 \ot 1$ in
$d(\x_{1}\x_{12}\x_{12^23^34^2}\x_{\gamma}^{N-3}\x_{4} \ot 1)$ is zero by Remark \ref{rem:differential-q-commute}.

\medbreak

Finally we look for solutions of \eqref{eq:equation-roots-cocycle-N=2}, i.e. $\sum_{\delta\in\varDelta_+^{\bq}} f_\delta(n_\delta) \delta =N\gamma$, $\sum_{\delta\in\varDelta_+^{\bq}} n_\delta = N+1$. 
Looking at the coefficient of $\alpha_4$ and arguing as in the case $\gamma=12^23^24$, we find that $n_{\delta}=f_{\delta}(n_{\delta})$ for all $\delta$, 
so we translate the equations to the following problem: Find $\gamma_i\in\varDelta_+^{\bq}$, $i\in\I_{N+1}$, such that $\sum_{i\in\I_{N+1}} \gamma_i=N\gamma$.
As $\sum a_1^{\gamma_i}=N$ and $a_1^{\delta}\le 1$ for all $\delta\in\varDelta_+^{\bq}$, we may assume that $a_1^{\gamma_i}=1$ for $i\in\I_N$, $a_1^{\gamma_{N+1}}=0$. Using a detailed study as the previous case we check that the solutions are
\begin{align*}
\gamma_1&=12^23^34^2, & \gamma_i&=\gamma, \, i\in\I_{2,N-2}, & \gamma_{N-1}&=12, & \gamma_{N}&=1, & \gamma_{N+1}&=4;
\\
\gamma_1&=12^23^24, & \gamma_i&=\gamma, \, i\in\I_{2,N-1}, & \gamma_{N}&=1, & \gamma_{N+1}&=4.
\\
\gamma_1&=12, & \gamma_i&=\gamma, \, i\in\I_{2,N-1}, & \gamma_{N}&=123^24, & \gamma_{N+1}&=4,
\end{align*}
or $\gamma_i=\gamma$, for all $i\in\I_{N-1}$, so $\gamma_{N}+\gamma_{N+1}=\gamma$: the possible pairs $(\gamma_{N},\gamma_{N+1})$ are
$(123,4)$, $(1,234)$, $(12,34)$.
Hence Proposition \ref{prop:cocycle-xgamma-N=2} applies and $(\x_{\gamma}^{L_{\gamma}})^*$ is a cocycle.

\bigbreak

Finally, if $\gamma=12^23^34^2$, then $N_{\gamma}=M$, $P_{\gamma}=2$, $Q_{\gamma}=1$. If $N\neq 6$, then $N_{\gamma}>P_{\gamma}=2,Q_{\gamma}$, so $(\x_{\gamma}^{N_{\gamma}})^*$ is a $2$-cocycle by Lemma \ref{lem:largeN}.
Next we assume $N=6$; that is, $N_{\gamma}=2$.
Let $\alpha<\beta$ be a pair of positive roots as in \eqref{eq:alfa-beta-N=2}. We have the following posibilities:
\begin{itemize}[leftmargin=*,label=$\circ$]
\item $\alpha=23^24$, $\beta=1234$. There exists $\Bsj\in\Bbbk$ such that $[x_{\alpha},x_{\beta}]_c = \Bsj x_{\gamma}$.
\smallbreak

\item $\alpha=123^24$, $\beta=234$. There exist $\Bsj, \Bsj_1\in\Bbbk$ such that
\begin{align*}
[x_{\alpha},x_{\beta}]_c &= \Bsj x_{\gamma} + \Bsj_1 x_{1234} x_{23^24}.
\end{align*}

\smallbreak

\item $\alpha=12^23^24$, $\beta=34$. There exist $\Bsj, \Bsj_t\in\Bbbk$ such that
\begin{align*}
[x_{\alpha},x_{\beta}]_c &= \Bsj x_{\gamma} + \Bsj_1 x_{1234} x_{23^24} + \Bsj_2 x_{234} x_{123^24}.
\end{align*}

\smallbreak

\item $\alpha=12^23^34$, $\beta=4$. There exist $\Bsj, \Bsj_t\in\Bbbk$ such that
\begin{align*}
[x_{\alpha},x_{\beta}]_c &= \Bsj x_{\gamma} + \Bsj_1 x_{1234} x_{23^24} + \Bsj_2 x_{234} x_{123^24}+ \Bsj_3 x_{34} x_{12^23^24}.
\end{align*}
\end{itemize}

In all cases $[x_{\alpha},x_{\gamma}]_c =0 = [x_{\gamma},x_{\beta}]_c$, so the root vectors satisfy \eqref{eq:diff2-hypothesis}, and  $-\frac{q_{\alpha\alpha}}{q_{\beta\beta}}=-1$: the proof of all these relations follow as in Lemma \ref{lem:Btheta-relations-root-vectors}. Hence we take $L=2$. 

Next we look for solutions of \eqref{eq:equation-roots-cocycle-N=2}. That is, $\sum_{\delta\in\varDelta_+^{\bq}} f_\delta(n_\delta) \delta =2\gamma$, $\sum_{\delta\in\varDelta_+^{\bq}} n_\delta = 3$.
Suppose that $n_{\eta}=3$ for some $\eta\in\varDelta_+^{\bq}$, then $n_{\delta}=0$ for $\delta\neq \eta$ and $2\gamma=(N_{\eta}+1)\eta$, a contradiction. Now suppose that $n_{\eta}=2$, $n_{\tau}=1$ for $\eta\neq \tau$: $2\gamma=N_{\eta}\eta+\tau$. As $a_1^{\eta}, a_1^{\tau}\le 1$ and $2=N_{\eta}a_1^{\eta}+a_1^{\tau}$, we have that $a_1^{\eta}=1$, $N_{\eta}=2$, $a_1^{\tau}=0$. As $4=2a_2^{\eta}+a_2^{\tau}$ and $a_2^{\eta}\le 2$, $a_2^{\tau}\le 1$,  we have that $a_2^{\eta}=2$, $a_2^{\tau}=0$. Thus $\tau\in\{3,34,4\}$, but there is no solution for these cases, a contradiction.

Therefore, $n_{\eta}=n_{\tau}=n_{\mu}=1$ for three different roots $\eta, \tau, \mu$. As $a_1^{\eta}, a_1^{\tau}, a_1^{\mu}\le 1$ and $2=a_1^{\eta}+a_1^{\tau}+a_1^{\mu}$, we may assume $a_1^{\eta}=a_1^{\tau}=1$, $a_1^{\mu}=0$. As $a_2^{\eta}, a_2^{\tau}\le 2$, $a_2^{\mu}\le 1$ and $4=a_2^{\eta}+a_2^{\tau}+a_2^{\mu}$, either $a_2^{\eta}=a_2^{\tau}=2$, $a_2^{\mu}=0$ or else $a_2^{\eta}=2$, $a_2^{\tau}=a_2^{\mu}=1$. In the first case, $a_3^{\eta}, a_3^{\tau}\le 3$, $a_3^{\mu}\le 1$ and $6=a_3^{\eta}+a_3^{\tau}+a_3^{\mu}$, so either $a_3^{\eta}=a_3^{\tau}=3$, $a_3^{\mu}=0$ or else $a_3^{\eta}=3$, $a_3^{\tau}=2$, $a_3^{\mu}=1$; in both cases we are forced to get $\eta=\gamma=\tau+\mu$, and moreover we obtain only two possibilities, either $\tau=12^23^24$, $\mu=34$ or else $\tau=12^23^34$, $\mu=4$. In the second case, $a_4^{\eta}\le 2$, $a_4^{\tau},a_4^{\mu}\le 1$ and $4=a_4^{\eta}+a_4^{\tau}+a_4^{\mu}$, so $a_4^{\eta}=2$, $a_4^{\tau}=a_4^{\mu} =1$, and again we are forced to get $\eta=\gamma=\tau+\mu$, and moreover we obtain only two possibilities, either $\tau=123^24$, $\mu=234$ or else $\tau=1234$, $\mu=23^24$.

Hence all the hypothesis of Proposition \ref{prop:cocycle-xgamma-N>2} hold, and $(\x_{\gamma}^{N_{\gamma}})^*$ is a $2$-cocycle.
\epf

\subsection{Type \texorpdfstring{$\superg$}{}}\label{sec:g(3)}
Let $q$ be a root of 1 of order $N > 3$.
In this section, we deal with a Nichols algebra $\toba_{\bq}$ of super type $\superg$, associated to the Dynkin diagram
\begin{align*}
&\xymatrix{ \overset{-1}{\underset{1}{\circ}}\ar  @{-}[r]^{q^{-1}}  &
\overset{q}{\underset{2}{\circ}} \ar  @{-}[r]^{q^{-3}}  & \overset{\,\, q^3}{\underset{3}{\circ}}.}
\end{align*}
For more information, see \cite[\S 5.6]{AA17}.
The set of positive roots with full support is
\begin{align*}
& \{123,12^23,12^33,12^33^2,12^43^2\}.
\end{align*}
We fix the following convex order of $\varDelta_+^{\bq}$:
\begin{align*}
1<12<123<12^23<12^33<12^33^2<12^43^2<2<2^33<2^23<2^33^2<23<3.
\end{align*}
It comes from Lyndon words once we fix the order of the letters $1<2<3$ and differs from the one in \cite[\S 5.6]{AA17}.
We prove Condition \ref{assumption:intro-combinatorial} for type
$\superg$:

\begin{prop}\label{prop:roots-cocycles-G(3)}
For every  $\gamma \in \varDelta_+^{\bq}$, there exists $L_{\gamma}\in\N$ such that $(\x_{\gamma}^{L_\gamma})^*$ is a cocycle.
\end{prop}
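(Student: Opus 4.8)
The plan is to apply the general machinery of Proposition \ref{prop:cocycle-xgamma-N=2} and Proposition \ref{prop:cocycle-xgamma-N>2} case-by-case to each of the five full-support roots of $\superg$, exactly as was done for types $\superf$ and $\superda{\alpha}$. By the reduction explained in \S\ref{subsec:summary} and the recursion on $\dim U$, it suffices to treat roots $\gamma$ of full support, since simple roots and proper subdiagrams (of type $A_2$ or super type) are already covered by Lemma \ref{lem:largeN} and earlier Propositions. For each $\gamma \in \{123,12^23,12^33,12^33^2,12^43^2\}$ I would first compute $N_\gamma = \ord q_{\gamma\gamma}$, $P_\gamma$, and $Q_\gamma$ from the root system data, and check whether the easy criterion $N_\gamma > P_\gamma, Q_\gamma$ of Lemma \ref{lem:largeN} already yields a degree-$2$ cocycle; given $N>3$ this will dispose of several roots immediately for generic $N$.

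For the remaining roots where $N_\gamma$ is small (typically $N_\gamma = 2$ or the minimal obstructing value), I would enumerate all pairs $(\alpha,\beta)$ with $\alpha < \beta$, $\alpha + \beta = (N_\gamma-1)\gamma$ as in \eqref{eq:alfa-beta} or \eqref{eq:alfa-beta-N=2}, and verify that the root vectors satisfy the $q$-commutation hypothesis \eqref{eq:diff2-hypothesis} of Lemma \ref{lem:diff2}; this is the same kind of homogeneity-and-convexity argument carried out in Lemma \ref{lem:Btheta-relations-root-vectors}, so I would invoke that style of reasoning rather than recompute each bracket. The key auxiliary input will be the relevant quantum-integer vanishing: I would compute the ratios $\widetilde q_{\alpha\gamma}$, $\widetilde q_{\beta\gamma}$ and apply Lemma \ref{lemma:coef-roots-unity} to conclude that the scalars $\coef{\alpha\beta\gamma}{L}$ (or $\coeff{\alpha\beta\delta\gamma}{L}$, or the relevant $(L)_\zeta$) vanish for the appropriate $L_\gamma$ chosen as the lcm prescribed in \ref{item:summary-3}--\ref{item:summary-4}.

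The decisive and most laborious step, as in the $\superf$ case, will be solving the Diophantine system \eqref{eq:equation-roots-cocycle-N=2} (or \eqref{eq:equation-roots-cocycle-N>2}): finding \emph{all} families $(n_\delta)_{\delta \in \varDelta_+^{\bq}}$ of non-negative integers with $\sum_\delta f_\delta(n_\delta)\delta = L_\gamma\,\gamma$ and $\sum_\delta n_\delta = L_\gamma + 1$, and checking that every such solution falls into one of the admissible shapes \ref{item:cocycle-xgamma-N=2-1}--\ref{item:cocycle-xgamma-N=2-10} of Proposition \ref{prop:cocycle-xgamma-N=2} or \ref{item:cocycle-xgamma-N>2-2} of Proposition \ref{prop:cocycle-xgamma-N>2}. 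For $\superg$ this enumeration is over a root system of size $13$ with coefficients up to $(a_1,a_2,a_3)=(1,4,2)$, so I would exploit the bounds $a_1^\delta \le 1$, $a_2^\delta \le 4$, $a_3^\delta \le 2$ and track the coefficient of each simple root—beginning with $\alpha_1$, whose coefficient is at most $1$, to force $n$-values to $0$ or $1$ on most roots, then $\alpha_3$ and $\alpha_2$—precisely as in the inductive coefficient-chasing arguments of Propositions \ref{prop:roots-cocycles-Dsuper} and \ref{prop:roots-cocycles-F(4)}. The main obstacle is purely combinatorial bookkeeping: ensuring no stray solution outside the listed forms survives, and handling the special small-$N$ coincidence $N=6$ (where $q^3$ and $-1$ interact and some $N_\gamma$ drop) separately if it arises. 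Once all solutions are matched to admissible forms, the relevant Proposition applies and $(\x_\gamma^{L_\gamma})^*$ is a cocycle, completing the verification of Condition \ref{assumption:intro-combinatorial} for type $\superg$.
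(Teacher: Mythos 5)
Your strategy coincides with the paper's for four of the five full-support roots: for $\gamma\in\{123,\,12^33,\,12^33^2,\,12^43^2\}$ one takes $L_\gamma=\ord(-q)$, and every solution of \eqref{eq:equation-roots-cocycle-N=2} really does match one of the forms \ref{item:cocycle-xgamma-N=2-1}--\ref{item:cocycle-xgamma-N=2-4} of Proposition \ref{prop:cocycle-xgamma-N=2}, with Lemma \ref{lemma:coef-roots-unity} (or a direct check) supplying the vanishing of the relevant coefficients, exactly as you outline.

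The gap is at $\gamma=12^23$ in the case $N_\gamma=3$. Besides the admissible solutions coming from the three pairs $(1,12^43^2)$, $(12,12^33^2)$, $(123,12^33)$ with $\alpha+\beta=2\gamma$, the system \eqref{eq:equation-roots-cocycle-N>2} has the stray solution $n_{12}=n_{123}=n_{12^43^2}=1$: indeed $12+123+12^43^2=3\gamma$, yet here $n_\gamma=0$ and no two of these three roots sum to $2\gamma$, so this solution has none of the shapes demanded by Proposition \ref{prop:cocycle-xgamma-N>2}\ref{item:cocycle-xgamma-N>2-2}. Your premise ``once all solutions are matched to admissible forms, the relevant Proposition applies'' therefore fails precisely where you expected only bookkeeping: a stray solution survives, and the general machinery cannot dispose of it. The paper has to leave the framework at this point and compute $d(\x_{12}\x_{123}\x_{12^43^2}\ot 1)$ by hand, using the commutation relations among $x_{12}$, $x_{123}$, $x_{12^43^2}$, $x_{12^33}$, $x_{12^33^2}$, $x_{12^23}$ and the structure constants $\Bsj_j$ computed from Lyndon words and the $q$-Jacobi identity; the coefficient of $\x_{12^23}^3\ot 1$ comes out as $q_{13}q_{23}\Bsj_1\Bsj_5-\Bsj_2\Bsj_3$, which vanishes only after this explicit cancellation. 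Without that computation (or a substitute for it) your proof of the $12^23$ case is incomplete. A minor further point: the special coincidence $N=6$ you propose to treat separately does not arise for $\superg$; that issue belongs to type $\superf$.
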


\pf
It is enough to prove the statement for $\gamma$  with full support.

\medbreak

\noindent $\circ$ For $\gamma=123$, we apply Proposition \ref{prop:cocycle-xgamma-N=2}. The pairs as in  \eqref{eq:alfa-beta-N=2} are
$\alpha=\alpha_1$, $\beta=\alpha_2+\alpha_3$, and 
$\alpha=\alpha_1+\alpha_2$, $\beta=\alpha_3$ since the following relations hold:
\begin{align*}
x_{\alpha}x_{\beta}&=x_{\gamma}+q_{\alpha\beta} \, x_{\beta}x_{\alpha}, &
x_{\alpha}x_{\gamma}&=q_{\alpha} \, x_{\gamma}x_{\alpha}, &
x_{\gamma}x_{\beta}&=q_{\beta} \, x_{\beta}x_{\gamma}.
\end{align*}
As $-\frac{q_{\alpha\alpha}}{q_{\beta\beta}}=q$, respectively $q^3$, $L$ should be a multiple of $N$.

Let $L=\ord (-q)$. Now \eqref{eq:alfa-beta-delta-eta-N=2} holds for $\alpha=1$, $\beta=12^23$, $\delta=3$, $\eta=12$, and the root vectors satisfy \eqref{eq:diff-case2-hypothesis}; the scalars $\widetilde{q}_{\alpha\gamma}=q^{-1}$ and $\widetilde{q}_{\beta\gamma}=q^{-2}$ satisfy
$\coef{\alpha\beta\gamma}{L}= 0$.

Also, \eqref{eq:alfa-beta-delta-eta-tau-N=2} holds for $\alpha=1$, $\beta=12^33^2$, $\delta=3$, $\eta=12$, $\tau=12^23$, and the root vectors satisfy \eqref{eq:diff-case3-hypothesis}; the scalars $\widetilde{q}_{\alpha\gamma}=q^{-1}$, $\widetilde{q}_{\beta\gamma}=q^{-1}$, $\widetilde{q}_{\tau\gamma}=q^{-2}$ satisfy \eqref{eq:Csj-Dsj-N=2-3}.

Let $(n_{\delta})_{\delta\in\varDelta_+^{\bq}}$ be a solution of \eqref{eq:equation-roots-cocycle-N=2}. If $n_3=0$, then
\begin{align*}
L(\alpha_1+\alpha_2) = s_3(L\gamma) = \sum_{\delta\in\varDelta_+^{\bq}} f_{\delta}(n_{\delta}) s_3(\delta),
\end{align*}
and $s_3(\delta)\in\varDelta_+^{\bq}$ if $\delta\neq \alpha_3$. As $N_{\delta}=N_{s_3(\delta)}$, we have that $f_{\delta}=f_{s_3(\delta)}$, so we have a system as in \eqref{eq:equation-roots-cocycle-N=2} for $\alpha_1+\alpha_2$ in place of $\gamma$ and we may restrict the support to $\alpha_1$, $\alpha_2$. The new system has a unique solution, which gives place to the solution of the original system: 
\begin{itemize}
\item $n_{1}=n_{23}=1$, $n_{123}=L-1$, $n_{\delta}=0$ for all the other $\delta\in\varDelta_+^{\bq}$.
\end{itemize}
Next we assume $n_3\neq 0$. Suppose that $n_{12^23}>1$. Then $f_{12^23}(n_{12^23})\ge N$, so the coefficient of $\alpha_2$ in $\sum_{\delta\in\varDelta_+^{\bq}} f_{\delta}(n_{\delta}) \delta$ is $\ge 2N$, a contradiction. Hence $n_{12^23}\le 1$, and then $f_{12^23}(n_{12^23})=n_{12^23}\le 1$. The coefficient of $\alpha_1$ in this sum is 
\begin{align*}
n_1+n_{12}+n_{123}+n_{12^23}+n_{12^33}+n_{12^33^2}+n_{12^43^2}=L.
\end{align*}
As the sum of all $n_{\delta}$'s is $L+1$ and $n_3\neq 0$, we have $n_3=1$, $n_2=n_{2^33}=n_{2^23}=n_{2^33^2}=n_{23}=0$. Now we look at the coefficients of $\alpha_2$, $\alpha_3$ in the equality $\sum_{\delta\in\varDelta_+^{\bq}} f_{\delta}(n_{\delta}) \delta =L\gamma$:
\begin{align*}
L&= n_{12}+n_{123}+2n_{12^23}+3n_{12^33}+3n_{12^33^2}+4n_{12^43^2}, \\
L&= 1+n_{123}+n_{12^23}+n_{12^33}+2n_{12^33^2}+2n_{12^43^2}.
\end{align*}
Thus $n_{12}+n_{12^23}+2n_{12^33}+n_{12^33^2}+2n_{12^43^2}=1$, which implies that $n_{12^33}=n_{12^43^2}=0$ and two of the three numbers $n_{12}$, $n_{12^23}$, $n_{12^33^2}$ are zero (the remaining one being 1).
Looking at the three possibilities, we have three solutions:
\begin{itemize}
\item $n_{12}=n_3=1$, $n_{123}=L-1$, $n_{\delta}=0$ for all the other $\delta\in\varDelta_+^{\bq}$; 
\item $n_{1}=n_3=n_{12^23}=1$, $n_{123}=L-2$, $n_{\delta}=0$ for all the other $\delta\in\varDelta_+^{\bq}$; 
\item $n_{1}=2$, $n_{12^33^2}=n_3=1$, $n_{123}=L-3$, $n_{\delta}=0$ for all the other $\delta\in\varDelta_+^{\bq}$.
\end{itemize}

Hence Proposition \ref{prop:cocycle-xgamma-N=2} applies and $(\x_{123}^L)^*$ is a cocycle.

\medbreak
\noindent $\circ$ For $\gamma=12^23$, 
the case $N_{\gamma}>3$ follows by Lemma \ref{lem:largeN}. Assume now that $N_{\gamma}=3$. We will work as in Proposition \ref{prop:cocycle-xgamma-N>2}. The pairs as in \eqref{eq:alfa-beta} are $(1,12^43^2)$, $(12,12^33^2)$, $(123,12^33)$,
since for each one of these pairs the following relations hold:
\begin{align*}
x_{\alpha}x_{\beta}&= \Bsj_{\alpha\beta} x_{\gamma}^2+q_{\alpha\beta} \, x_{\beta}x_{\alpha}, &
x_{\alpha}x_{\gamma}&=q_{\alpha\gamma} \, x_{\gamma}x_{\alpha}, &
x_{\gamma}x_{\beta}&= q_{\gamma\beta} \, x_{\beta}x_{112},
& & \Bsj_{\alpha\beta}\in\Bbbk.
\end{align*}
As $\frac{q_{\alpha\gamma}}{q_{\gamma\beta}}=1$ for the three cases, we take $L=1$.
We look for solutions of \eqref{eq:equation-roots-cocycle-N>2}: 
\begin{align*}
\sum_{\delta\in\varDelta_+^{\bq}} n_{\delta} &=3, & 
\sum_{\delta\in\varDelta_+^{\bq}} f_{\delta}(n_{\delta})\delta &=3\gamma.
\end{align*}

If $n_{\gamma}\ge 2$, then $f_{\gamma}(n_{\gamma})\ge 3$, a contradiction. Then $n_{\gamma}\le 1$, so $f_{\gamma}(n_{\gamma})=n_{\gamma}$. Looking at the coefficient of $\alpha_1$ we get the equation:
\begin{align}\label{eq:G3-system-alpha1}
3 &= n_1+n_{12}+n_{123}+n_{12^23}+n_{12^33}+n_{12^33^2}+n_{12^43^2}.
\end{align}
Hence $n_{\delta}=0$ for $\delta=2,2^33,2^23,2^33^2,23,3$.
Looking at the coefficients of $\alpha_2$ and $\alpha_3$,
\begin{align}\label{eq:G3-system-alpha2}
6 &= n_{12}+n_{123}+2n_{12^23}+3n_{12^33}+3n_{12^33^2}+4n_{12^43^2},
\\
\label{eq:G3-system-alpha3}
3 &= n_{123}+n_{12^23}+n_{12^33}+2n_{12^33^2}+2n_{12^43^2}.
\end{align}
From \eqref{eq:G3-system-alpha1} and \eqref{eq:G3-system-alpha3}, $n_1+n_{12} = n_{12^33^2}+n_{12^43^2}$. From \eqref{eq:G3-system-alpha3}, $n_{12^33^2}+n_{12^43^2}\le 1$. If $n_1=n_{12}=0$, then $n_{12^33^2}=n_{12^43^2}=0$: the solution is $n_{123}=n_{12^23}=n_{12^33}=1$. Next we assume $n_1+n_{12}=1 = n_{12^33^2}+n_{12^43^2}$. If $n_{12^23}=1$, then the solutions give pairs as in \eqref{eq:alfa-beta}. Otherwise we have a unique solution: $n_{12}=n_{123}=n_{12^43^2}=1$, $n_{\delta}=0$ otherwise. 
Hence we have to compute $d(\x_{12}\x_{123}\x_{12^43^2} \ot 1)$. Notice that
\begin{align*}
x_{12}x_{12^43^2} &= -q^3q_{12}^3q_{13}^2q_{23}^2 \, x_{12^43^2}x_{12}+ \Bsj_1 x_{12^33} x_{12^23}, 
\\
x_{123}x_{12^43^2}&= -q^3q_{12}^3q_{13}q_{32}^2 \, x_{12^43^2} x_{123} + \Bsj_2 x_{12^33^2} x_{12^23}, & 
\\
x_{12}x_{12^33^2} &= -q^2q_{12}^2q_{13}^2q_{23}^2 \, x_{12^33^2}x_{12}+ \Bsj_3 x_{12^23}^2 + \Bsj_4 x_{12^33} x_{123},
\\
x_{123}x_{12^33} &= -q^2q_{12}^2q_{32}^2 \, x_{12^33}x_{123}+ \Bsj_5 x_{12^23}^2,
\end{align*}
for some $\Bsj_j\in\Bbbk$.
Using these relations, we get
\begin{align*}
d( & \x_{12}\x_{123}\x_{12^43^2} \ot 1) = \x_{12} \x_{123}\ot x_{12^43^2} - s (\x_{12} \ot x_{123} x_{12^43^2} +q_{13}q_{23} \x_{123}\ot x_{12} x_{12^43^2})
\\
& = \x_{12} \x_{123}\ot x_{12^43^2} - s \big(-q^3q_{12}^3q_{13}q_{32}^2 \,\x_{12} \ot  x_{12^43^2} x_{123} +
\Bsj_2 \x_{12} \ot x_{12^33^2} x_{12^23}
\\
& \quad -q^3q_{12}^3q_{13}^3q_{23}^3 \, \x_{123}\ot  x_{12^43^2}x_{12}
+q_{13}q_{23}\Bsj_1 \x_{123}\ot  x_{12^33} x_{12^23} \big)
\\
& = \x_{12} \x_{123}\ot x_{12^43^2}-\Bsj_2 \x_{12} \x_{12^33^2} \ot  x_{12^23} - s \big(-q^3q_{12}^3q_{13}q_{32}^2 \,\x_{12} \ot  x_{12^43^2} x_{123} 
\\
& \quad 
- \Bsj_2 q^2q_{12}^2q_{13}^2q_{23}^2 \, \x_{12^33^2} \ot x_{12} x_{12^23}+\Bsj_2\Bsj_3 \x_{12^23} \ot x_{12^23}^2 + \Bsj_2 \Bsj_4 \x_{12^33} \ot x_{123} x_{12^23}
\\
& \quad -q^3q_{12}^3q_{13}^3q_{23}^3 \, \x_{123}\ot  x_{12^43^2}x_{12}
+q_{13}q_{23}\Bsj_1 \x_{123}\ot  x_{12^33} x_{12^23} \big)
\\
& = \x_{12} \x_{123}\ot x_{12^43^2}-\Bsj_2 \x_{12} \x_{12^33^2} \ot  x_{12^23} 
+q^3q_{12}^3q_{13}q_{32}^2 \,\x_{12} \x_{12^43^2} \ot   x_{123}
\\
&\quad
- s \big(  q_{12}^7q_{23}q_{13}^3 \, \x_{12^43^2} \otimes x_{123}x_{12} - \Bsj_1 q^3q_{12}^3q_{13}q_{32}^2 \x_{12^33} \otimes  x_{12^23}x_{123}
\\
& \quad 
+ \Bsj_2 q^3q_{12}^3q_{13}^3q_{23}^3 \, \x_{12^33^2} \ot  x_{12^23} x_{12} +\Bsj_2\Bsj_3 \x_{12^23} \ot x_{12^23}^2 -qq_{12}q_{32} \Bsj_2 \Bsj_4 \x_{12^33} \ot  x_{12^23}x_{123}
\\
& \quad -q^3q_{12}^3q_{13}^3q_{23}^3 \, \x_{123}\ot  x_{12^43^2}x_{12}
+q_{13}q_{23}\Bsj_1 \x_{123}\ot  x_{12^33} x_{12^23} \big)
\\
& = \x_{12} \x_{123}\ot x_{12^43^2}-\Bsj_2 \x_{12} \x_{12^33^2} \ot  x_{12^23} 
+q^3q_{12}^3q_{13}q_{32}^2 \,\x_{12} \x_{12^43^2} \ot   x_{123}
\\
&\quad 
-q_{13}q_{23}\Bsj_1 \x_{123} \x_{12^33}\ot   x_{12^23}
+q^3q_{12}^3q_{13}^3q_{23}^3 \, \x_{123} \x_{12^43^2} \ot  x_{12}
\\ & \quad 
+ (q_{13}q_{23}\Bsj_1\Bsj_5 -\Bsj_2\Bsj_3) \x_{12^23}^3 \ot 1.
\end{align*}
We compute the scalars $\Bsj_j$ using the form of the Lyndon words and the $q$-Jacobi identity:
\begin{align*}
\Bsj_1&= q_{12}^2q_{13}q_{23} q(1-q), \
\Bsj_2= q_{12}^2q_{13} q(1-q), \
\Bsj_3= q_{12}q_{13}q_{23}^3 q(1+q), \
\Bsj_5= q_{12}q_{23} q(1+q).
\end{align*}
Hence $q_{13}q_{23}\Bsj_1\Bsj_5 -\Bsj_2\Bsj_3=0$. Thus the coefficient of $\x_{12^23}^3 \ot 1$ in $d(c)$ is zero for all 2-chains $c$, so $(\x_{12^23}^3)^{\ast}$ is a $2$-cocycle.

\medbreak

\noindent $\circ$ For $\gamma=12^33$, we will apply Proposition \ref{prop:cocycle-xgamma-N=2}. The pairs $(\alpha,\beta)$ as in \eqref{eq:alfa-beta-N=2} are
$(1,2^33)$, $(12,2^23)$, $(12^23,2)$, since the following relations hold:
\begin{align*}
x_{\alpha}x_{\beta}&=x_{\gamma}+q_{\alpha\beta} \, x_{\beta}x_{\alpha}, &
x_{\alpha}x_{\gamma}&=q_{\alpha} \, x_{\gamma}x_{\alpha}, &
x_{\gamma}x_{\beta}&=q_{\beta} \, x_{\beta}x_{\gamma}.
\end{align*}
As $-\frac{q_{\alpha\alpha}}{q_{\beta\beta}}=q$, respectively $q^2$, $q^3$, 
$L$ should be a multiple of $N$.

Let $L=\ord (-q)$. Now \eqref{eq:alfa-beta-delta-eta-tau-N=2-diff4} holds for $\alpha=12$, $\beta=12^23$, $\delta=2^33$, and the root vectors satisfy \eqref{eq:diff-case2-hypothesis}; the scalars $\widetilde{q}_{\alpha\gamma}=q^{-1}$, $\widetilde{q}_{\beta\gamma}=q^{-2}$ satisfy \eqref{eq:Csj-Dsj-N=2-4}.

Also, \eqref{eq:alfa-beta-delta-eta-tau-N=2} holds for $\alpha=12$, $\beta=12^43^2$, $\delta=2^33$, $\eta=123$, $\tau=12^23$, and the root vectors satisfy \eqref{eq:diff-case3-hypothesis}; the scalars $\widetilde{q}_{\alpha\gamma}=q^{-1}$, $\widetilde{q}_{\beta\gamma}=q^{-1}$, $\widetilde{q}_{\tau\gamma}=q^{-2}$ satisfy \eqref{eq:Csj-Dsj-N=2-3}.

Let $(n_{\delta})_{\delta\in\varDelta_+^{\bq}}$ be a solution of \eqref{eq:equation-roots-cocycle-N=2}.
If $n_2=0$, then
\begin{align*}
N(\alpha_1+\alpha_2+\alpha_3) = s_2(N\gamma) = \sum_{\delta\in\varDelta_+^{\bq}} f_{\delta}(n_{\delta}) s_3(\delta),
\end{align*}
and $s_2(\delta)\in\varDelta_+^{\bq}$ if $\delta\neq \alpha_2$. As $N_{\delta}=N_{s_2(\delta)}$, we have that $f_{\delta}=f_{s_2(\delta)}$, so we have a system as in \eqref{eq:equation-roots-cocycle-N=2} for $\alpha_1+\alpha_2+\alpha_3$ in place of $\gamma$. The new system has four solutions, which gives place to the following solutions of the original system: 
\begin{itemize}
\item $n_{12}=n_{2^23}=1$, $n_{12^33}=N-1$, $n_{\delta}=0$ for all the other $\delta\in\varDelta_+^{\bq}$;
\item $n_{1}=n_{2^33}=1$, $n_{12^33}=N-1$, $n_{\delta}=0$ for all the other $\delta\in\varDelta_+^{\bq}$; 
\item $n_{12}=n_{2^33}=n_{12^23}=1$, $n_{12^33}=N-2$, $n_{\delta}=0$ for all the other $\delta\in\varDelta_+^{\bq}$; 
\item $n_{12}=2$, $n_{12^43^2}=n_{2^33}=1$, $n_{12^33}=N-3$, $n_{\delta}=0$ for all the other $\delta\in\varDelta_+^{\bq}$.
\end{itemize}
Next we assume $n_2\neq 0$. Suppose that $n_{12^23}>1$. Then $f_{12^23}(n_{12^23})\ge N$, so the coefficient of $\alpha_1$ in $\sum_{\delta\in\varDelta_+^{\bq}} f_{\delta}(n_{\delta}) \delta$ is $\ge N$; this forces to $n_{\delta}=0$ for any $\delta\neq \alpha_2$ since $f_{\delta}(n_{\delta})\delta$ must have $\alpha_1$, $\alpha_3$ with coefficient zero, and this gives a contradiction. Hence $n_{12^23}=f_{12^23}(n_{12^23})\le 1$. The coefficcient of $\alpha_1$ in this sum is 
\begin{align*}
n_1+n_{12}+n_{123}+n_{12^23}+n_{12^33}+n_{12^33^2}+n_{12^43^2}=N.
\end{align*}
As the sum of all $n_{\delta}$'s is $N+1$ and $n_2\neq 0$, we have $n_2=1$, $n_3=n_{2^33}=n_{2^23}=n_{2^33^2}=n_{23}=0$. Now we look at the coefficients of $\alpha_2$, $\alpha_3$ in the equality $\sum_{\delta\in\varDelta_+^{\bq}} f_{\delta}(n_{\delta}) \delta =N\gamma$:
\begin{align*}
3N&= 1+ n_{12}+n_{123}+2n_{12^23}+3n_{12^33}+3n_{12^33^2} +4n_{12^43^2}, \\
N&= n_{123}+n_{12^23}+n_{12^33}+2n_{12^33^2}+2n_{12^43^2}.
\end{align*}
Thus $n_{1}+2n_{123}+n_{12^33}+2n_{12^33^2}+4n_{12^43^2}=1$, which implies that $n_{123}=n_{12^33^2}=0$ and two of the three numbers $n_{1}$, $n_{12^33}$, $n_{12^43^2}$ are zero (the remaining one being 1). Reducing the three previous equations, 
we get $n_{12}+n_{12^33}=N-1$, $n_{12}+3n_{12^33}\ge 3N-3$, so 
we have a unique solution:
\begin{itemize}
\item $n_{12^23}=n_2=1$, $n_{12^33}=N-1$, $n_{\delta}=0$ for all the other $\delta\in\varDelta_+^{\bq}$.
\end{itemize}

Hence Proposition \ref{prop:cocycle-xgamma-N=2} applies and $(\x_{12^33}^L)^*$ is a cocycle.

\medbreak

\noindent $\circ$ For $\gamma=12^33^2$, we will apply Proposition \ref{prop:cocycle-xgamma-N=2}. The pairs $(\alpha,\beta)$ as in \eqref{eq:alfa-beta-N=2} are
$(1,2^33^2)$, $(123,2^23)$, $(12^23,23)$, $(12^33,3)$,
since the following relations hold:
\begin{align*}
x_{\alpha}x_{\beta}&=x_{\gamma}+q_{\alpha\beta} \, x_{\beta}x_{\alpha}, &
x_{\alpha}x_{\gamma}&=q_{\alpha} \, x_{\gamma}x_{\alpha}, &
x_{\gamma}x_{\beta}&=q_{\beta} \, x_{\beta}x_{\gamma}.
\end{align*}
As $-\frac{q_{\alpha\alpha}}{q_{\beta\beta}}\in \{q,q^2,q^3\}$, 
$L$ should be a multiple of $N$.

Let $L=\ord (-q)$. Now \eqref{eq:alfa-beta-delta-eta-tau-N=2-diff4} holds for $\alpha=123$, $\beta=12^23$, $\delta=2^33^2$, and the root vectors satisfy \eqref{eq:diff-case2-hypothesis}; the scalars $\widetilde{q}_{\alpha\gamma}=q^{-1}$, $\widetilde{q}_{\beta\gamma}=q^{-2}$ satisfy \eqref{eq:Csj-Dsj-N=2-4}.

Also, \eqref{eq:alfa-beta-delta-eta-tau-N=2} holds for $\alpha=123$, $\beta=12^43^2$, $\delta=2^33^2$, $\eta=12$, $\tau=12^23$, and the root vectors satisfy \eqref{eq:diff-case3-hypothesis}; the scalars $\widetilde{q}_{\alpha\gamma}=q^{-1}$, $\widetilde{q}_{\beta\gamma}=q^{-1}$, $\widetilde{q}_{\tau\gamma}=q^{-2}$ satisfy \eqref{eq:Csj-Dsj-N=2-3}.

Let $(n_{\delta})_{\delta\in\varDelta_+^{\bq}}$ be a solution of \eqref{eq:equation-roots-cocycle-N=2}.
If $n_3=0$, then
\begin{align*}
N(\alpha_1+3\alpha_2+\alpha_3) = s_3(N\gamma) = \sum_{\delta\in\varDelta_+^{\bq}} f_{\delta}(n_{\delta}) s_3(\delta),
\end{align*}
and $s_3(\delta)\in\varDelta_+^{\bq}$ if $\delta\neq \alpha_3$. As $N_{\delta}=N_{s_3(\delta)}$, we have that $f_{\delta}=f_{s_3(\delta)}$, so we have a system as in \eqref{eq:equation-roots-cocycle-N=2} for $\alpha_1+3\alpha_2+\alpha_3$ in place of $\gamma$. The new system has five solutions, which gives place to the following solutions of the original system: 
\begin{itemize}
\item $n_{12^23}=n_{23}=1$, $n_{12^33^2}=N-1$, $n_{\delta}=0$ for all the other $\delta\in\varDelta_+^{\bq}$;
\item $n_{123}=n_{2^23}=1$, $n_{12^33^2}=N-1$, $n_{\delta}=0$ for all the other $\delta\in\varDelta_+^{\bq}$;
\item $n_{1}=n_{2^33^2}=1$, $n_{12^33^2}=N-1$, $n_{\delta}=0$ for all the other $\delta\in\varDelta_+^{\bq}$; 
\item $n_{123}=n_{2^33^2}=n_{12^23}=1$, $n_{12^33^2}=N-2$, $n_{\delta}=0$ for all the other $\delta\in\varDelta_+^{\bq}$; 
\item $n_{123}=2$, $n_{12^43^2}=n_{2^33^2}=1$, $n_{12^33^2}=N-3$, $n_{\delta}=0$ for all the other $\delta\in\varDelta_+^{\bq}$.
\end{itemize}
Next we assume $n_3\neq 0$. An analogous analysis as for the root $12^33$ shows that the unique solution is:
\begin{itemize}
\item $n_{12^33}=n_3=1$, $n_{12^33^2}=N-1$, $n_{\delta}=0$ for all the other $\delta\in\varDelta_+^{\bq}$.
\end{itemize}

Hence Proposition \ref{prop:cocycle-xgamma-N=2} applies and $(\x_{12^33^2}^L)^*$ is a cocycle.

\medbreak

\noindent $\circ$ For $\gamma=12^43^2$, we apply Proposition \ref{prop:cocycle-xgamma-N=2} again. The pairs $(\alpha,\beta)$ as in \eqref{eq:alfa-beta-N=2} are
$(12,2^33^2)$, $(12^33,23)$, $(12^23,2^23)$, $(123,2^33)$, $(12^33^2,2)$,
since the following relations hold:
\begin{align*}
x_{\alpha}x_{\beta}&=x_{\gamma}+q_{\alpha\beta} \, x_{\beta}x_{\alpha}, &
x_{\alpha}x_{\gamma}&=q_{\alpha} \, x_{\gamma}x_{\alpha}, &
x_{\gamma}x_{\beta}&=q_{\beta} \, x_{\beta}x_{\gamma}.
\end{align*}
As $-\frac{q_{\alpha\alpha}}{q_{\beta\beta}}\in \{q,q^2,q^3\}$, 
$L$ should be a multiple of $N$.

Let $L=\ord (-q)$. Now \eqref{eq:alfa-beta-delta-eta-tau-N=2-diff4} holds for $\alpha=12^33$, $\beta=12^23$, $\delta=2^33^2$, and the root vectors satisfy \eqref{eq:diff-case2-hypothesis}; the scalars $\widetilde{q}_{\alpha\gamma}=q^{-1}$, $\widetilde{q}_{\beta\gamma}=q^{-2}$ satisfy \eqref{eq:Csj-Dsj-N=2-4}.

Also, \eqref{eq:alfa-beta-delta-eta-tau-N=2} holds for $\alpha=12^33$, $\beta=12^33^2$, $\delta=2^33^2$, $\eta=1$, $\tau=12^23$, and the root vectors satisfy \eqref{eq:diff-case3-hypothesis}; the scalars $\widetilde{q}_{\alpha\gamma}=q^{-1}$, $\widetilde{q}_{\beta\gamma}=q^{-1}$, $\widetilde{q}_{\tau\gamma}=q^{-2}$ satisfy \eqref{eq:Csj-Dsj-N=2-3}.

Let $(n_{\delta})_{\delta\in\varDelta_+^{\bq}}$ be a solution of \eqref{eq:equation-roots-cocycle-N=2}.
If $n_2=0$, then
\begin{align*}
N(\alpha_1+3\alpha_2+2\alpha_3) = s_2(N\gamma) = \sum_{\delta\in\varDelta_+^{\bq}} f_{\delta}(n_{\delta}) s_2(\delta),
\end{align*}
and $s_2(\delta)\in\varDelta_+^{\bq}$ if $\delta\neq \alpha_2$. As $N_{\delta}=N_{s_2(\delta)}$, we have that $f_{\delta}=f_{s_2(\delta)}$, so we have a system as in \eqref{eq:equation-roots-cocycle-N=2} for $\alpha_1+3\alpha_2+2\alpha_3$ in place of $\gamma$. The new system has six solutions, which gives place to the following solutions of the original system: 
\begin{itemize}
\item $n_{12^23}=n_{2^23}=1$, $n_{12^43^2}=N-1$, $n_{\delta}=0$ for all the other $\delta\in\varDelta_+^{\bq}$;
\item $n_{12^33^2}=n_{2^23}=1$, $n_{12^43^2}=N-1$, $n_{\delta}=0$ for all the other $\delta\in\varDelta_+^{\bq}$;
\item $n_{12}=n_{2^33^2}=1$, $n_{12^43^2}=N-1$, $n_{\delta}=0$ for all the other $\delta\in\varDelta_+^{\bq}$;
\item $n_{123}=n_{2^33}=1$, $n_{12^43^2}=N-1$, $n_{\delta}=0$ for all the other $\delta\in\varDelta_+^{\bq}$; 
\item $n_{12^33}=n_{2^33^2}=n_{12^23}=1$, $n_{12^43^2}=N-2$, $n_{\delta}=0$ for all the other $\delta\in\varDelta_+^{\bq}$; 
\item $n_{12^33}=2$, $n_{12^33^2}=n_{2^33^2}=1$, $n_{12^43^2}=N-3$, $n_{\delta}=0$ for all the other $\delta\in\varDelta_+^{\bq}$.
\end{itemize}
Next we assume $n_2\neq 0$. An analogous analysis as for the root $12^33$ shows that the unique solution is:
\begin{itemize}
\item $n_{12^33^2}=n_2=1$, $n_{12^43^2}=N-1$, $n_{\delta}=0$ for all the other $\delta\in\varDelta_+^{\bq}$.
\end{itemize}

Hence Proposition \ref{prop:cocycle-xgamma-N=2} applies and $(\x_{12^43^2}^L)^*$ is a cocycle.
\epf

\section{Parametric modular types}\label{sec:discrete}

\subsection{Modular type \texorpdfstring{$\Bgl(4)$}{}}\label{subsec:type-bgl(4,alpha)}
Here $\theta = 4$, $q\neq \pm 1$. 
In this subsection, we deal with a Nichols algebra $\toba_{\bq}$ of diagonal type $\Bgl(4)$. We may assume that the corresponding diagram is
\begin{align}\label{eq:dynkin-bgl(4,alpha)}
&\xymatrix@C-4pt{\overset{q}{\underset{\ }{\circ}}\ar  @{-}[r]^{q ^{-1}}  & \overset{-1}{\underset{\ }{\circ}}
\ar  @{-}[r]^{-1}  & \overset{-1}{\underset{\ }{\circ}}
\ar  @{-}[r]^{-q}  & \overset{-q^{-1}}{\underset{\ }{\circ}}}.
\end{align}
We fix the following convex order on $\varDelta_+^{\bq}$:
\begin{align*}
&1, 12, 2, 12^23, 123, 23, 3, 
12^23^24, 123^24, 23^24, 12^234, 1234, 234, 34, 4.
\end{align*}
For more information, see \cite[\S 7.1]{AA17}. Let $M=\ord (-q)$: We may assume that $N\le M$. 
Note that 
\begin{align*}
N_{\delta} &= \begin{cases}
N &\text{if } \delta\in\{1,123^24,23^24\}, \\
M &\text{if } \delta\in\{4,12^234,12^23\}, \\
2 &\text{ otherwise}.
\end{cases}
\end{align*}

We prove Condition \ref{assumption:intro-combinatorial} for type $\Bgl(4)$:

\begin{prop}\label{prop:roots-cocycles-wk4}
For every  $\gamma \in \varDelta_+^{\bq}$, there exists $L_{\gamma}\in\N$ such that $(\x_{\gamma}^{L_\gamma})^*$ is a cocycle.
\end{prop}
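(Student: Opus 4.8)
The plan is to follow the same case-by-case strategy used throughout Sections \ref{sec:classical}--\ref{sec:exceptional}, specialized to the root system of $\Bgl(4)$ with the convex order and orders $N_\delta$ listed above. By induction on the rank (as in \S\ref{subsec:summary}) it suffices to treat roots $\gamma$ with full support, since proper subdiagrams have already been handled. Reading off the positive roots, the full-support roots are $\gamma \in \{12^23^24,\,123^24,\,23^24,\,12^234,\,1234\}$ together with $12^23$, and for each of them I would first compute $N_\gamma$, $P_\gamma$, $Q_\gamma$. For the roots whose order is $N_\gamma = 2$ we will apply Proposition \ref{prop:cocycle-xgamma-N=2}, while for those with $N_\gamma = N$ or $M$ I would first try Lemma \ref{lem:largeN}, and only when the inequality $N_\gamma > P_\gamma, Q_\gamma$ fails (which happens precisely at the small values $N=2$, or $M$ small) fall back on Propositions \ref{prop:cocycle-xgamma-N=2} or \ref{prop:cocycle-xgamma-N>2}.

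The core of the argument has the familiar two-part shape. First, for each $\gamma$ and each pair (or tuple) of positive roots summing to the appropriate multiple of $\gamma$, I would record the commutation relations among the relevant PBW root vectors, verifying that they have the shape required by the hypotheses \eqref{eq:diff2-hypothesis}, \eqref{eq:diff-case2-hypothesis}, etc.; as in Lemma \ref{lem:Btheta-relations-root-vectors} and the $F_4$ and $\mathbf{F}(4)$ computations, these follow from convexity \eqref{eq:convex-order-relations} and the $\N_0^{\I}$-homogeneity of the relations. Second, and this is the combinatorial heart, I would solve the grading and degree constraints \eqref{eq:equation-roots-cocycle-N=2} (or \eqref{eq:equation-roots-cocycle-N>2}) for $L = L_\gamma$, and check that \emph{every} solution $(n_\delta)$ falls into one of the admissible forms \ref{item:cocycle-xgamma-N=2-1}--\ref{item:cocycle-xgamma-N=2-10} (respectively \ref{item:cocycle-xgamma-N>2-1}--). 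The relevant vanishing of the quantum-integer coefficients $\coef{\alpha\beta\gamma}{L}$, $\coeff{\alpha\beta\delta\gamma}{L}$ will be supplied by Lemma \ref{lemma:coef-roots-unity}, using the explicit values of $\widetilde q_{\alpha\gamma}$ read off from the diagram \eqref{eq:dynkin-bgl(4,alpha)}.

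Because $\Bgl(4)$ has both a $q$ and a $-q$ appearing, I expect that $L_\gamma$ will typically be $\lcm(N,M)$ rather than a single $N$ or $M$, reflecting the mixed root-of-unity orders; the definitions of $L_\gamma$ in \ref{item:summary-3} and \ref{item:summary-4} are designed exactly for this. The main obstacle will be the full enumeration of solutions of the grading constraints for the ``heaviest'' root $\gamma = 12^23^24$ (and its companion $12^234$), where many roots share support and the $f_\delta$ functions jump by $N_\delta$; as in the $\mathbf F(4)$ proof I would organize this by examining the coefficients of the extreme simple roots $\alpha_1$ and $\alpha_4$ first to force $f_\delta(n_\delta)=n_\delta$ for most $\delta$, thereby reducing to the purely additive problem of writing $L\gamma$ as a sum of $L+1$ roots, and then dispatch the remaining cases by hand. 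A secondary subtlety, already visible in the $\mathbf F(4)$ and $\mathbf G(3)$ cases, is that certain solutions with a repeated large root (e.g.\ $n_\gamma = L-3$ with an extra pair) will require the higher-tuple Lemmas \ref{lem:diff-case5}, \ref{lem:diff-case9}, \ref{lem:diff-case11} or Remark \ref{rem:differential-q-commute}; identifying which tuple-type each solution matches is the bookkeeping that must be carried out carefully but presents no new conceptual difficulty. Once all solutions are matched to admissible forms, Proposition \ref{prop:cocycle-xgamma-N=2} (or \ref{prop:cocycle-xgamma-N>2}) yields that $(\x_\gamma^{L_\gamma})^*$ is a cocycle, completing the proof.
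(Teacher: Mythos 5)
Your plan follows essentially the same route as the paper's proof: reduce to full support, compute $N_\gamma$, $P_\gamma$, $Q_\gamma$, apply Lemma \ref{lem:largeN} where it holds, and otherwise verify the relation shapes and enumerate the solutions of the grading constraints (working from the coefficients of $\alpha_1$ and $\alpha_4$ to force $f_\delta(n_\delta)=n_\delta$) before invoking Proposition \ref{prop:cocycle-xgamma-N=2} or \ref{prop:cocycle-xgamma-N>2}.

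Two corrections to your setup. First, the full-support roots of $\Bgl(4)$ are exactly $12^23^24$, $123^24$, $12^234$, $1234$: the roots $23^24$ and $12^23$ that you list miss $\alpha_1$, respectively $\alpha_4$, so they belong to proper subdiagrams and are already covered by the rank induction. Second, since the paper normalizes $N\le M$ and $q\ne\pm1$ forces $N\ge 3$, for $\gamma\in\{123^24,\,12^234\}$ one has $N_\gamma\ge 3>2=P_\gamma>Q_\gamma$, so Lemma \ref{lem:largeN} settles these two roots outright and no fallback is ever needed; the genuinely hard cases are precisely the two roots with $N_\gamma=2$, namely $12^23^24$ and $1234$, for which the paper takes $L_\gamma=M$ (equal to your $\lcm(N,M)$ under the normalization $N\le M$), and only the tuple types \ref{item:cocycle-xgamma-N=2-2} (for $12^23^24$) and \ref{item:cocycle-xgamma-N=2-7}, \ref{item:cocycle-xgamma-N=2-10} (for $1234$) actually arise, together with the vanishing statements of Lemma \ref{lemma:coef-roots-unity} --- the heavier Lemmas \ref{lem:diff-case5}, \ref{lem:diff-case9}, \ref{lem:diff-case11} that you anticipate are not needed in this type.
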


\pf
We may assume that $\gamma$ has full support i.e.~$\gamma\in \{12^23^24, 1234, 123^24,  12^234\}$.

\medbreak
First we consider $\gamma = 12^23^24$. Here $N_{\gamma}=2$. 
The pairs $\alpha<\beta$ such that $\alpha+\beta=\gamma$ are:
\begin{align*}
&(3,12^234), & &(23,1234), & &(123,234), & 
&(12^23,34), & &(2,123^24), & &(12,23^24).
\end{align*}
For all pairs, $x_{\alpha}x_{\gamma} = q_{\alpha\gamma} x_{\gamma}x_{\alpha}$, $x_{\gamma}x_{\beta} = q_{\gamma\beta} x_{\beta}x_{\gamma}$. Also, there exist $\Bsj_i\in\Bbbk$ such that
\begin{align*}
[x_{3}, x_{12^234}]_c &= \Bsj_1 \, x_{\gamma}, &
[x_{123}, x_{234}]_c &= \Bsj_2 \, x_{\gamma} + \Bsj_3 \, x_{1234}x_{23},
\\
[x_{23}, x_{1234}]_c &= \Bsj_4 \, x_{\gamma}, &
[x_{12^23}, x_{34}]_c &= \Bsj_5 \, x_{\gamma} + \Bsj_6 \, x_3x_{12^234} + \Bsj_7 \, x_{23}x_{1234} + \Bsj_8 \, x_{234}x_{123},
\\
[x_{2}, x_{123^24}]_c &= \Bsj_9 \, x_{\gamma}, &
[x_{12}, x_{23^24}]_c &= \Bsj_{10} \, x_{\gamma} + \Bsj_{11} \, x_{123^24} x_2.
\end{align*}
Thus the root vectors satisfy \eqref{eq:diff2-hypothesis}, and  $-\frac{q_{\alpha\alpha}}{q_{\beta\beta}}\in \{-1, \pm q^{\pm1} \}$; hence we take $L=M$.

\medbreak

Next we check that $(23, 123^24, 12^234, 3)$, $(123, 23^24, 12^234, 3)$ and $(12^23, 123^24, 234, 123)$ are $4$-tuples $(\alpha, \beta, \delta, \eta)$ satisfying \eqref{eq:alfa-beta-delta-eta-N=2}. As $\widetilde{q}_{\alpha\gamma}=-1$, $\widetilde{q}_{\beta\gamma}=q$, we have that
$\coef{\alpha\beta\gamma}{L}=\coef{-1,q}{L}=0$ by Lemma
\ref{lemma:coef-roots-unity} \ref{item:coef-roots-unity-ii}.
Hence \ref{item:cocycle-xgamma-N=2-2} holds.

\medbreak

Finally we compute the solutions of \eqref{eq:equation-roots-cocycle-N=2}. That is, 
\begin{align}
\label{eq:equation-roots-cocycle-N=2-wk4-1}
\sum_{\delta: 1\in \supp\delta} f_{\delta}(n_{\delta}) &=M, \quad \sum_{\delta: 4\in \supp\delta} f_{\delta}(n_{\delta}) = M, 
\\ \label{eq:equation-roots-cocycle-N=2-wk4-2}
\sum_{\delta: 2\in \supp\delta} f_{\delta}(n_{\delta})a_2^{\delta}&=2M, \quad
\sum_{\delta: 3\in \supp\delta} f_{\delta}(n_{\delta}) a_3^{\delta} = 2M. 
\\ \label{eq:equation-roots-cocycle-N=2-wk4-3}
\sum_{\delta\in\varDelta_+^{\bq}} n_\delta &= M+1.
\end{align}
Let $(n_{\delta})$ be a solution of \eqref{eq:equation-roots-cocycle-N=2}. We claim that $n_{\delta}\le 1$ if $N_{\delta}\neq 2$. To prove it, first we note that $n_4, n_{12^23},n_{12^234} \le 2$ by \eqref{eq:equation-roots-cocycle-N=2-wk4-1}. If $n_4=2$, then $n_{\delta}=0$ if $4\in\supp\delta$, $\delta\neq 4$, so 
\begin{align*}
2M &= \sum_{\delta\in\varDelta_+^{\bq}} f_\delta(n_\delta) a_3^{\delta} = f_{12^23}(n_{12^23})+n_{123}+n_{23}+n_3 
\le f_{12^23}(2)+ \sum_{\delta\ne 4,12^23} n_{\delta}
\\
& \le M + (M-1)=2M-1,
\end{align*}
a contradiction. Hence $n_4\le 1$. Next we suppose that $n_{12^23}=2$. Then $n_{\delta}=0$ if either $1\in\supp\delta$ or else $2\in\supp\delta$, $\delta\neq 12^23$. From \eqref{eq:equation-roots-cocycle-N=2-wk4-3}, 
$n_3+n_{34}+n_4=M+1-n_{12^23}=M-1$, but we check directly that there are no solutions of \eqref{eq:equation-roots-cocycle-N=2-wk4-1} and \eqref{eq:equation-roots-cocycle-N=2-wk4-2} with these conditions. Hence $n_{12^23} \le 1$. Analogously, $n_{12^234} \le 1$.

Next we check that $n_1, n_{23^24},n_{123^24} \le 1$. The proof is analogous to the cases $n_4$, $n_{12^23}$, $n_{12^234}$ if $N=M$. Thus we assume that $N<M$: that is, $M=2N$, $N$ odd. By \eqref{eq:equation-roots-cocycle-N=2-wk4-1}, $n_1, n_{23^24},n_{123^24} \le 4$. We deal first with $n_{123^24}$.
\medbreak

\begin{itemize}[leftmargin=*]
\item Suppose that $n_{123^24}=4$. Then $n_{\delta}=0$ if $\supp \delta \cap \{1,3,4\}\neq \emptyset$ by \eqref{eq:equation-roots-cocycle-N=2-wk4-1} and \eqref{eq:equation-roots-cocycle-N=2-wk4-2}, and also $n_2=2N$. But from \eqref{eq:equation-roots-cocycle-N=2-wk4-3}, $n_2=2N-3$, a contradiction.
\medbreak

\item Suppose that $n_{123^24}=3$. Then $\sum_{\delta\ne 123^24} n_\delta = 2N-2$. By \eqref{eq:equation-roots-cocycle-N=2-wk4-2},
\begin{align*}
n_{12^23^24} &= f_{12^23^24}(n_{12^23^24})a_4^{12^23^24}\le N-1.
\end{align*}
By the first equality of \eqref{eq:equation-roots-cocycle-N=2-wk4-2} and the previous computations
\begin{align*}
3N-1 &= \sum_{\delta\ne 123^24} f_{\delta}(n_{\delta})a_2^{\delta}
= 2 f_{12^23^24}(n_{12^23^24})+ 2 f_{12^23}(n_{12^23}) +2 f_{12^234}(n_{12^234})
\\ 
&+ \sum_{\delta: a_2^{\delta}=1} n_{\delta} \le 2 n_{12^23^24} + 2+2+(2N-2-n_{12^23^24}) \le 3N-1.
\end{align*}
The equality holds if and only if $n_{12^23^24}=N-1$, $n_{12^23}= n_{12^234}=1$, but in this case the second equation of \eqref{eq:equation-roots-cocycle-N=2-wk4-2} does not hold.
\medbreak

\item Suppose that $n_{123^24}=2$. Then $\sum_{\delta\ne 123^24} n_\delta = 2N-1$, and by \eqref{eq:equation-roots-cocycle-N=2-wk4-2}, $n_{12^23^24}\le N$.
A similar computation as for the previous case shows that the equality $3N-1 = \sum_{\delta\ne 123^24} f_{\delta}(n_{\delta})a_2^{\delta}$ holds if and only if $n_{12^23^24}=N$, $n_{12^23}= n_{12^234}=1$, but again the second equation of \eqref{eq:equation-roots-cocycle-N=2-wk4-2} does not hold.
\medbreak
\end{itemize}
The same argument applies for $n_{23^24}$. Finally we check that $n_1\le 1$. If $n_1=4$, then $n_\delta=0$ for all $\delta\neq 4$ such that $4\in\supp \delta$, so
\begin{align*}
2M &= \sum_{\delta: 3\in \supp\delta} f_{\delta}(n_{\delta}) a_3^{\delta} = n_3 +n_{23} + n_{123} + n_{12^23} \le 
\sum_{\delta\ne 4} n_\delta \overset{\eqref{eq:equation-roots-cocycle-N=2-wk4-3}}{=} M-3,
\end{align*}
a contradiction. Now suppose that $2\le n_1 \le3$: by \eqref{eq:equation-roots-cocycle-N=2-wk4-1}, $n_{12^23^24}\le N$ and
\begin{align*}
4N &= \sum_{\delta\ne 123^24} f_{\delta}(n_{\delta})a_3^{\delta}
\le 2 n_{12^23^24} + 2+2+(2N-2-n_{12^23^24}) \le 3N+2,
\end{align*}
a contradiction.  Hence $n_{\delta}\le 1$ if $N_{\delta}\neq 2$, so $f_{\delta}(n_{\delta})=n_{\delta}$ for all $\delta\in\varDelta_{+}^{\bq}$. Then we look for $\gamma_i\in\varDelta_+^{\bq}$, $i\in\I_{M+1}$, such that $\sum_{i\in\I_{M+1}}\gamma_i=M\gamma$. As $a_3^{\delta}\le 2$ for all $\delta\in\varDelta_{+}^{\bq}$, there exist two possibilities up to permutations of these roots: 

\medbreak
\begin{itemize}[leftmargin=*]\renewcommand{\labelitemi}{$\circ$}
\item $a_3^{\gamma_i}=2$ for $\I_{M}$, $a_3^{\gamma_{M+1}}=0$. As $a_2^{\delta}\le 2$ for all $\delta\in\varDelta_{+}^{\bq}$, at least $M-1$ roots satisfy that $a_2^{\gamma_i}=2$, and we know that $a_2^{\gamma_{M+1}}\le 1$, so $\gamma_i=\gamma$ for $i\in\I_{M-1}$ up to permutation, and $\gamma_{M}+\gamma_{M+1}=\gamma$.

\medbreak
\item $a_3^{\gamma_i}=2$ for $\I_{M-1}$, $a_3^{\gamma_{M}}= a_3^{\gamma_{M+1}}=1$. Again at least $M-1$ roots satisfy that $a_2^{\gamma_i}=2$. If these roots are $\gamma_i$, $i\in\I_{M-1}$, then $\gamma_i=\gamma$ for $i\in\I_{M-1}$ up to permutation, and $\gamma_{M}+\gamma_{M+1}=\gamma$. Otherwise we may assume that $\gamma_i=\gamma$ for $i\in\I_{M-2}$, $a_2^{\gamma_{M-1}}=a_2^{\gamma_{M}}=1$, $a_2^{\gamma_{M+1}}=2$. We have three possibilities for $(\gamma_{M-1}, \gamma_{M}, \gamma_{M+1})$:
\begin{align*}
&(123^24, 23, 12^234), & &(23^24, 123, 12^234), & 
&(123^24, 234, 12^23).
\end{align*}
\end{itemize}

Hence all the hypotheses of Proposition \ref{prop:cocycle-xgamma-N=2} hold, and $(\x_{\gamma}^M)^*$ is a cocycle.

\medbreak
Next we consider $\gamma = 1234$.
Here $N_{\gamma}=2$. 
The pairs $\alpha<\beta$ such that $\alpha+\beta=\gamma$ are $(1,234)$, $(12,34)$, $(123,4)$.
For all pairs, $x_{\alpha}x_{\gamma} = q_{\alpha\gamma} x_{\gamma}x_{\alpha}$, $x_{\gamma}x_{\beta} = q_{\gamma\beta} x_{\beta}x_{\gamma}$ and there exist $\Bsj\in\Bbbk$ such that
$[x_{\alpha}, x_{\beta}]_c = \Bsj \, x_{\gamma}$. Thus the root vectors satisfy \eqref{eq:diff2-hypothesis} and  $-\frac{q_{\alpha\alpha}}{q_{\beta\beta}}\in \{-1, \pm q \}$; hence we take $L=M$. 

Next we check that $(12, 123^24, 4, 34)$, $(1, 123^24, 34, 234)$ and $(1, 12^23^24, 4, 234)$ are $4$-tuples $(\alpha, \beta, \delta, \eta)$ satisfying \eqref{eq:roots-N=2-case7}. As $(\widetilde{q}_{\delta\gamma},\widetilde{q}_{\beta\gamma})$ are respectively $(-q^{-1},-1)$, $(-1,q)$, $(-q^{-1},q)$, we have that
$\coef{-\delta\alpha\gamma}{L}=0$ by Lemma
\ref{lemma:coef-roots-unity} \ref{item:coef-roots-unity-ii}.
Hence \ref{item:cocycle-xgamma-N=2-7} holds.

Finally we check that $(1, 123^24, 12^234, 4, 234, 34)$ is a $6$-tuple $(\alpha, \beta, \delta, \eta, \mu, \nu)$ satisfying \eqref{eq:roots-N=2-case10}. As $\widetilde{q}_{\alpha\gamma}=q=\widetilde{q}_{\beta\gamma}$, $\widetilde{q}_{\nu\gamma}=-1$, we have that
$\coeff{\beta-\nu\alpha\gamma}{L}=0$ by Lemma
\ref{lemma:coef-roots-unity} \ref{item:coef-roots-unity-iii}.
Hence \ref{item:cocycle-xgamma-N=2-10} holds.

\medbreak

Next we look for solutions of \eqref{eq:equation-roots-cocycle-N=2}. That is, \eqref{eq:equation-roots-cocycle-N=2-wk4-3} and
\begin{align}
\label{eq:equation-roots-cocycle-N=2-wk4-4}
\sum_{\delta: 1\in \supp\delta} f_{\delta}(n_{\delta}) &=M, \quad \sum_{\delta: 4\in \supp\delta} f_{\delta}(n_{\delta}) = M, 
\\ \label{eq:equation-roots-cocycle-N=2-wk4-5}
\sum_{\delta: 2\in \supp\delta} f_{\delta}(n_{\delta})a_2^{\delta}&=M, \quad
\sum_{\delta: 3\in \supp\delta} f_{\delta}(n_{\delta}) a_3^{\delta} = M.
\end{align}
Let $(n_{\delta})$ be a solution of \eqref{eq:equation-roots-cocycle-N=2}. We claim that $n_{\delta}\le 1$ if $N_{\delta}\neq 2$. By \eqref{eq:equation-roots-cocycle-N=2-wk4-5},
\begin{align*}
M & \geq 2f_{12^23}(n_{12^23}), 2f_{12^234}(n_{12^234}).
\end{align*}
As $N_{12^23}=N_{12^234}=M$ we have that $n_{12^23}, n_{12^234} \le 1$. Now suppose that $n_4\ge 2$. By \eqref{eq:equation-roots-cocycle-N=2-wk4-4} we have that $n_4=2$. Then $n_{\delta}=0$ for all $\delta\neq 4$ such that $4\in\supp \delta$. By \eqref{eq:equation-roots-cocycle-N=2-wk4-5},
\begin{align*}
M = \sum_{\delta: 3\in \supp\delta} f_{\delta}(n_{\delta}) a_3^{\delta} = n_{123}+n_{23}+n_3 \le \sum_{\delta\ne 4} n_\delta =M-1,
\end{align*}
a contradiction. 

We also have that $n_{23^24}, n_{123^24} \le 1$ if $N=M$, and $n_{23^24}, n_{123^24} \le 2$ if $M=2N$. Suppose that $M=2N$ and  $n_{1^a23^24}=2$, $a\in\{0,1\}$. We have that $n_{\delta}=0$ for all $\delta$ such that $3\in\supp\delta$, $\delta\ne 1^a23^24$. By \eqref{eq:equation-roots-cocycle-N=2-wk4-4}, $f_4(n_4)=M$: that is, $n_4=2$, a contradiction.

Finally suppose that $n_1\ge 2$. By \eqref{eq:equation-roots-cocycle-N=2-wk4-4},
\begin{align*}
M= \sum_{\delta: 4\in \supp\delta} f_{\delta}(n_{\delta}) = \sum_{\delta: 4\in \supp\delta} n_{\delta} \le \sum_{\delta \ne 1} n_{\delta} \le M-1,
\end{align*}
a contradiction. Hence $n_{\delta}\le 1$ if $N_{\delta}\neq 2$, so $f_{\delta}(n_{\delta})=n_{\delta}$ for all $\delta\in\varDelta_{+}^{\bq}$. Then we look for $\gamma_i\in\varDelta_+^{\bq}$, $i\in\I_{M+1}$, such that $\sum_{i\in\I_{M+1}}\gamma_i=M\gamma$. As $a_1^{\delta}, a_4^{\delta}\le 1$ for all $\delta\in\varDelta_{+}^{\bq}$, there exist exactly $M$ roots such that $a_1^{\gamma_i}=1$, respectively $a_4^{\gamma_i}=1$. Thus there exist $M-1$ roots $\gamma_i$ such that $a_1^{\delta}=a_4^{\delta}=1$, which implies that $\supp \gamma_i= \{1,2,3,4\}$ for these roots. We may assume that $a_1^{\gamma_i}=a_4^{\gamma_i}=1$ for all $i\in\I_{M-1}$ and we have three possibilities up to permutations of the roots: 
either $a_1^{\gamma_M}=a_4^{\gamma_M}=1$, $a_1^{\gamma_{M+1}}=a_4^{\gamma_{M+1}}=0$, or else $a_1^{\gamma_M}=a_4^{\gamma_{M+1}}=1$, $a_1^{\gamma_{M+1}}=a_4^{\gamma_{M}}=0$. For the first case, $a_2^{\gamma_i}, a_3^{\gamma_i}\ge 1$ for all $i\in\I_{M}$ and we have a contradiction. Hence $a_1^{\gamma_M}=a_4^{\gamma_{M+1}}=1$, $a_1^{\gamma_{M+1}}=a_4^{\gamma_{M}}=0$. For $i\in\I_{M-1}$ we write $\gamma_i=12^{a_i}3^{b_i}4$, $a_i,b_i\in\I_2$. We also write 
$\gamma_M=12^{a_M}3^{b_M}$, $\gamma_{M+1}=2^{a_{M+1}}3^{b_{M+1}}4$.
At most one of the $a_i$, respectively $b_i$, is 2 for $i\in\I_{M-1}$. We analyze each case.

\medbreak
\begin{itemize}[leftmargin=*]\renewcommand{\labelitemi}{$\circ$}
\item $a_i=1=b_i=1$ for all $i\in\I_{M-1}$. Hence $\gamma_i=\gamma$ for all $i\in\I_{M-1}$ and $\gamma_M+\gamma_{M+1}=\gamma$.

\medbreak
\item $a_i=1$ for all $i\in\I_{M-1}$, $b_i=1$ for all $i\in\I_{M-2}$, $b_{M-1}=2$. Here $b_M=b_{M+1}=0$,  so 
\begin{align*}
\gamma_i&=\gamma\text{ for all }i\in\I_{M-2}, & \gamma_{M-1}&=123^24, & \gamma_M&=12, & \gamma_{M+1}&=4.
\end{align*}

\medbreak
\item $a_i=1$ for all $i\in\I_{M-2}$, $b_i=1$ for all $i\in\I_{M-1}$, $a_{M-1}=2$. Here $a_M=a_{M+1}=0$, so 
\begin{align*}
\gamma_i&=\gamma\text{ for all }i\in\I_{M-2}, & \gamma_{M-1}&=12^234, & \gamma_M&=1, & \gamma_{M+1}&=34.
\end{align*}

\medbreak
\item $a_i=b_i=1$ for all $i\in\I_{M-2}$, $a_{M-1}=b_{M-1}=2$. Here $a_M=b_M=a_{M+1}=b_{M+1}=0$, so 
\begin{align*}
\gamma_i&=\gamma\text{ for all }i\in\I_{M-2}, & \gamma_{M-1}&=12^23^24, & \gamma_M&=1, & \gamma_{M+1}&=4.
\end{align*}

\medbreak
\item $a_i=1$ for all $i\in\I_{M-2}$, $b_i=1$ for all $i\in\I_{M-1}-\{M-2\}$, $a_{M-1}=b_{M-2}=2$. Here
\begin{align*}
\gamma_i&=\gamma\text{ for all }i\in\I_{M-3}, & \gamma_{M-2}&=123^24, & \gamma_{M-1}&=12^234, &
\gamma_M&=1, & \gamma_{M+1}&=4.
\end{align*}
\end{itemize}

Hence all the hypotheses of Proposition \ref{prop:cocycle-xgamma-N=2} hold, and $(\x_{\gamma}^M)^*$ is a cocycle.

\medbreak
Finally we consider $\gamma = 123^24, 12^234$. We have $P_{\gamma}=2$, $Q_{\gamma}=1$, so $P_{\gamma},Q_{\gamma}<N_{\gamma}$. By Lemma \ref{lem:largeN} $(\x_{\gamma}^{N_\gamma})^*$ is a $2$-cocycle.
\epf

\medbreak
\subsection{Modular type  \texorpdfstring{$\Brown(2)$}{}}\label{subsec:type-br(2,a)}
Here $\theta = 2$, $\zeta \in \G_3$, $q\notin \G_3$.
In this subsection, we deal with a Nichols algebra $\toba_{\bq}$ of modular type $\Brown(2)$, that is associated to
any of the Dynkin diagrams
\begin{align}\label{eq:dynkin-br(2,a)}
&\xymatrix{a_1 \ar  @{|->}[r]  &  \overset{\zeta}{\underset{\ }{\circ}} \ar  @{-}[r]^{q^{-1}}  &
\overset{q}{\underset{\ }{\circ}}},
& &\xymatrix{a_2 \ar  @{|->}[r]  &  \overset{\zeta}{\underset{\ }{\circ}} \ar  @{-}[r]^{\zeta^2q}  &
\overset{\zeta q^{-1}}{\underset{\ }{\circ}}}.
\end{align}
For more information, see \cite[\S 7.2]{AA17}.
Since (\ref{eq:dynkin-br(2,a)} b)
has the same shape as (\ref{eq:dynkin-br(2,a)} a) but with $\zeta q^{-1}$
instead of $q$, we just discuss  the latter. Essentially this is very similar to standard $B_2$.
The corresponding set of positive roots with full support is
\begin{align*}
& \{2\alpha_1+\alpha_2,  \alpha_1+\alpha_2\}.
\end{align*}

\begin{table}[ht]
\caption{The roots with full support of $\Brown(2)$; $\gamma_1 < \gamma_2$, $(N_{\gamma}-1)\gamma = \gamma_1 + \gamma_2$} \label{tab:br2}
\begin{center}

\begin{tabular}{|c|c|c|c|c c|c|}
\hline
$\gamma$& $N_{\gamma}$ & $P_{\gamma}$ & $Q_{\gamma}$ & $\gamma_1$ & $\gamma_2$ & $L_{\gamma}$ \\ \hline
$1^22$ & $M$ & 2 & 1 & $1$ & $12$ & $2$ \\ \hline
$12$ & $3$ & 3 & 2 & $1^22$ & $2$ & $\ord q^3$ \\ \hline
\end{tabular}
\end{center}
\end{table}

Let $M=\ord (\zeta q^{-1})$. We order the root vectors:
$x_1 < x_{112} < x_{12} < x_2$.

\medbreak
We prove Condition \ref{assumption:intro-combinatorial} for type $\Brown(2)$.

\begin{prop}\label{prop:roots-cocycles-br2}
For every $\gamma  \in \varDelta_+^{\bq}$,  $(\x_{\gamma}^{N_\gamma})^*$ is a $2$-cocycle.
\end{prop}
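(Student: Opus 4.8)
The proof of Proposition \ref{prop:roots-cocycles-br2} follows the same strategy as for the standard $B_2$-type analysis; indeed, the braiding (\ref{eq:dynkin-br(2,a)} a) has precisely the shape of a standard $B_2$ diagram, so most of the combinatorial bookkeeping is governed by the two full-support roots $\gamma = 1^22$ and $\gamma = 12$ recorded in Table \ref{tab:br2}. By induction on $\dim U$, using Lemma \ref{lem:largeN} to dispose of the simple roots, I only need to treat these two roots, and for each I will invoke the appropriate criterion among \ref{item:summary-1}--\ref{item:summary-4} in the summary of \S\ref{subsec:summary}.

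First I would handle $\gamma = 12$. Here $N_{\gamma} = 3$, $P_{\gamma} = 3$ and $Q_{\gamma} = 2$, so the inequality $N_\gamma > P_\gamma, Q_\gamma$ of Lemma \ref{lem:largeN} fails and I pass to Proposition \ref{prop:cocycle-xgamma-N>2}. The unique pair $(\alpha,\beta)$ with $\alpha < \beta$ and $\alpha + \beta = (N_\gamma - 1)\gamma = 2\gamma$ is $\alpha = 1^22$, $\beta = 2$ (cf. Table \ref{tab:br2}). The plan is to verify that the root vectors $x_{1^22}$, $x_{12}$, $x_2$ satisfy the $q$-commutation hypothesis \eqref{eq:diff2-hypothesis}, which follows by convexity and homogeneity exactly as in Lemma \ref{lem:Btheta-relations-root-vectors}; then I compute the relevant quotient $\frac{q_{\alpha\gamma}}{q_{\gamma\beta}}$ to pin down $L_\gamma = \ord q^3$ and check condition \ref{item:cocycle-xgamma-N>2-1}. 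Finally I solve the grading and degree constraints \eqref{eq:equation-roots-cocycle-N>2} to confirm that the only solution is $n_\gamma = 2\ell-1$, $n_\alpha = n_\beta = 1$, so that condition \ref{item:cocycle-xgamma-N>2-2} holds and Proposition \ref{prop:cocycle-xgamma-N>2} yields the cocycle.

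Next I would treat $\gamma = 1^22$, where $N_{\gamma} = M$, $P_{\gamma} = 2$ and $Q_{\gamma} = 1$. If $M > 2$ then $N_\gamma > P_\gamma, Q_\gamma$ and Lemma \ref{lem:largeN} immediately gives that $(\x_\gamma^{N_\gamma})^*$ is a $2$-cocycle. The remaining case is $M = 2$, i.e. $N_\gamma = 2$, which is the only genuinely interesting situation: here the unique pair $(\alpha,\beta)$ with $\alpha+\beta = \gamma$ is $\alpha = 1$, $\beta = 12$. I will use Proposition \ref{prop:cocycle-xgamma-N=2} with $L = 2$, checking that the root vectors satisfy \eqref{eq:diff2-hypothesis}, that $-\frac{q_{\alpha\alpha}}{q_{\beta\beta}}$ has the expected value, and that the only solution of the constraints \eqref{eq:equation-roots-cocycle-N=2} is of form \ref{item:cocycle-xgamma-N=2-1}, with $(L)_{-q_{\alpha\alpha}/q_{\beta\beta}} = 0$ holding for $L = 2$.

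The main obstacle is the explicit determination of the commutation relations among the root vectors and the resulting values of the scalars $\widetilde q_{\alpha\gamma}$, $\widetilde q_{\beta\gamma}$ and $-q_{\alpha\alpha}/q_{\beta\beta}$, since these depend on the precise parameters $\zeta \in \G_3$ and $q \notin \G_3$ of the diagram (\ref{eq:dynkin-br(2,a)} a); everything else is routine once these are in hand. In particular I must confirm that the relevant quantum integers $(L)_{-q_{\alpha\alpha}/q_{\beta\beta}}$ vanish for the chosen $L_\gamma$ in Table \ref{tab:br2}, which reduces to the order computations already implicit in the table. Because the root system has only the two full-support roots and very few pairs $(\alpha,\beta)$ to examine, the solution-counting for \eqref{eq:equation-roots-cocycle-N=2} and \eqref{eq:equation-roots-cocycle-N>2} is short and can be carried out directly, in close analogy with the $B_2$ computations of \S\ref{sec:Bn} and \S\ref{sec:Bn-standard}. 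This completes the verification of Condition \ref{assumption:intro-combinatorial} for type $\Brown(2)$.
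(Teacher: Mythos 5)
Your proposal matches the paper's own proof in all essentials: the same reduction to the two full-support roots $1^22$ and $12$, the same use of Lemma \ref{lem:largeN} when $N_{1^22}=M>2$, Proposition \ref{prop:cocycle-xgamma-N=2} with the pair $(1,12)$ and $L=2$ when $M=2$, and Proposition \ref{prop:cocycle-xgamma-N>2} with the pair $(1^22,2)$ and $L_\gamma=\ord q^3$ for $\gamma=12$. The details you defer (the explicit $q$-commutation relations verifying \eqref{eq:diff2-hypothesis} and the vanishing of the relevant quantum integers) are exactly the computations the paper carries out, so the argument goes through as planned.
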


\pf
As before we just consider non-simple roots, i.e.~with full support.

\medbreak
\noindent $\circ$ For $\gamma=1^22$, 
the case $N_{112}>2$ follows by Lemma \ref{lem:largeN}. Assume now that $N_{112}=2$. We will apply Proposition \ref{prop:cocycle-xgamma-N=2}. The unique pair as in  \eqref{eq:alfa-beta-N=2} is $\alpha=\alpha_1$, $\beta=\alpha_1+\alpha_2$, since the following relations hold:
\begin{align*}
x_{1}x_{12}&=x_{112}+\zeta q_{12} \, x_{12}x_{1}, &
x_{1}x_{112}&=\zeta^2q_{12} \, x_{112}x_{1}, &
x_{112}x_{12}&=\zeta^2 q_{12} \, x_{12}x_{112}.
\end{align*}
As $-\frac{q_{\alpha\alpha}}{q_{\beta\beta}}=-1$, we take $L=2$.
The unique solution of \eqref{eq:equation-roots-cocycle-N=2} is
$n_{112}=n_{1}=n_{12}=1$, and $n_{2}=0$.
Hence Proposition \ref{prop:cocycle-xgamma-N=2} applies and $(\x_{112}^2)^*$ is a 2-cocycle.

\medbreak
\noindent $\circ$ For $\gamma=12$, we will apply Proposition \ref{prop:cocycle-xgamma-N>2}. The unique pair as in \eqref{eq:alfa-beta} is $\alpha=2\alpha_1+\alpha_2$, $\beta=\alpha_2$, since the following relations hold:
\begin{align*}
x_{112}x_{2}&= (q-\zeta)q_{12} \, x_{12}^2 
+qq_{12}^2 \, x_{2}x_{112}, &
x_{112}x_{12}&=\zeta^2 q_{12} \, x_{12}x_{112}, 
\\ & &
x_{12}x_{2}&=qq_{12} \, x_{2}x_{12}.
\end{align*}
In this case, $\frac{q_{\alpha\gamma}}{q_{\gamma\beta}}=\zeta^2 q^{-1}$: as $q\neq \zeta^2$, we take $L=\ord q^3$. The unique solution of \eqref{eq:equation-roots-cocycle-N>2} is $n_{12}=L-1$, $n_{12}=n_{2}=1$, and $n_{2}=0$, so Proposition \ref{prop:cocycle-xgamma-N>2} applies and $(\x_{12}^L)^*$ is a $2L$-cocycle.
\epf


\section{Proofs of the Computational Lemmas}\label{sec:computational-lemmas}

\subsection{}

Given $\gamma\in\varDelta_+$, let $g_{\gamma}:\N_0\to\N_0$ be the function 
\begin{align}\label{eq:g-gamma-definition}
g_{\gamma}(n) := f_{\gamma}(n)-f_{\gamma}(n-1)= \begin{cases} 1, & n \text{ odd}, \\ N_{\gamma}-1 & n \text{ even}.\end{cases}
\end{align}

\begin{remark}\label{rem:differential-q-commute}
Let $\beta_1<\beta_2<\beta_3$ be positive roots such that the corresponding root vectors $q$-commute:
\begin{align*}
x_{\beta_i} x_{\beta_j} &= q_{\beta_i\beta_j} \, x_{\beta_j}x_{\beta_i},  &
& \text{for all }i<j.
\end{align*}
For each $n\in\N$,
\begin{align}\label{eq:diff-commuting roots-1}
d(\x_{\beta_1} \x_{\beta_2}^{f_{\beta_2}(n)} \ot 1 ) &= \x_{\beta_1} \x_{\beta_2}^{f_{\beta_2}(n-1)} \ot x_{\beta_2}^{g_{\beta_2}(n)} +(-1)^n q_{\beta_1\beta_2}^{f_{\beta_2}(n)} \x_{\beta_2}^{f_{\beta_2}(n)} \ot x_{\beta_1},
\\ \label{eq:diff-commuting roots-2}
d(\x_{\beta_1}^{f_{\beta_1}(n)} \x_{\beta_2}\ot 1 ) &=   \x_{\beta_1}^{f_{\beta_1}(n)} \ot x_{\beta_2} - q_{\beta_1\beta_2}^{g_{\beta_1}(n)}
\x_{\beta_1}^{f_{\beta_1}(n-1)} \x_{\beta_2} \ot x_{\beta_1}^{g_{\beta_1}(n)}
\\ \label{eq:diff-commuting roots-3}
d( \x_{\beta_1}\x_{\beta_2}^{f_{\beta_2}(n)}\x_{\beta_3} \ot 1 ) & =\x_{\beta_1}\x_{\beta_2}^{f_{\beta_2}(n)}\otimes x_{\beta_3} -q_{\beta_2\beta_3}^{g_{\beta_2}(n)} \x_{\beta_1}\x_{\beta_2}^{f_{\beta_2}(n-1)}\x_{\beta_3} \ot x_{\beta_2}^{g_{\beta_2}(n)} 
\\ \nonumber & \quad 
- (-1)^n q_{\beta_1 \beta_3} q_{\beta_1\beta_2}^{f_{\beta_2}(n)} \x_{\beta_2}^{f_{\beta_2}(n)}\x_{\beta_3} \otimes x_{\beta_1},
\\ \label{eq:diff-commuting roots-4}
d(\x_{\beta_1}^{f_{\beta_1}(n)} \x_{\beta_2} \x_{\beta_3}\ot 1)  &= \x_{\beta_1}^{f_{\beta_1}(n)} \x_{\beta_2} \ot x_{\beta_3} -q_{\beta_2\beta_3}\x_{\beta_1}^{f_{\beta_1}(n)} \x_{\beta_3}\ot x_{\beta_2}
\\ \nonumber & \quad 
+q_{\beta_1\beta_2}^{g_{\beta_1}(n)}q_{\beta_1\beta_3}^{g_{\beta_1}(n)} \x_{\beta_1}^{f_{\beta_1}(n-1)} \x_{\beta_2} \x_{\beta_3}\ot x_{\beta_1}^{g_{\beta_1}(n)}.
\end{align}

In fact, the root vectors $q$-commute by hypothesis so we can compute the differentials as in the proof of Proposition \ref{prop:qci}.
\end{remark}

The next results will allow us to identify some  cocycles
of degree higher  than~2.

\begin{lema}\label{lem:diff2}
Let $n\in\N_0$. Let $\alpha < \delta_1, \dots ,\delta_n <\gamma < \eta_1,\dots ,\eta_n < \beta$ be positive roots.
Assume that  the relations among the corresponding root
vectors take the form
\begin{align}\label{eq:diff2-hypothesis}
\begin{aligned}
x_{\alpha}x_{\beta} &= q_{\alpha \beta} x_{\beta}x_{\alpha} +\Bsj x_{\gamma}^{N_{\gamma}-1} + \sum_{j=1}^{n} \Bsj_j x_{\eta_j}x_{\delta_j}, \\
x_{\alpha}x_{\gamma} &= q_{\alpha\gamma}  x_{\gamma}x_{\alpha}, &
x_{\delta_j}x_{\gamma} &= q_{\delta_j\gamma}  x_{\gamma}x_{\delta_j}, \\
x_{\gamma}x_{\beta} &= q_{\gamma\beta} x_{\beta}x_{\gamma}, &
x_{\gamma}x_{\eta_j} &= q_{\gamma\eta_j} x_{\eta_j}x_{\gamma},
\end{aligned}
\end{align}
for some scalars $\Bsj,\Bsj_1,\dots,\Bsj_n$.
Then, for all $a\geq 1$, $d(\x_{\alpha} \x_{\gamma}^{aN_{\gamma}} \x_{\beta}\ot 1)  =$
\begin{align*}
&\x_{\alpha} \x_{\gamma}^{aN_{\gamma}}
\ot x_{\beta} - \Dsj^{N_{\gamma}-1} \x_{\alpha} \x_{\gamma}^{N_{\gamma}(a-1)+1} \x_{\beta}\ot x_{\gamma}^{N_{\gamma}-1} 
\\
& - \Asj\Csj^{aN_{\gamma}} \x_{\gamma}^{aN_{\gamma}} \x_{\beta}\ot x_{\alpha} - \sum_{j=1}^{n} \Bsj_j q_{\alpha \gamma} ^{aN_{\gamma}} \x_{\gamma}^{aN_{\gamma}}\x_{\eta_j} \ot x_{\delta_j}
\\ &+ \Bsj\Dsj^{aN_{\gamma}}
\left\{ \left(\frac{\Csj}{\Dsj}-1 \right) (a)_{(\frac{\Csj}{\Dsj})^{N_{\gamma}}}
- \left(\frac{\Csj}{\Dsj}\right)^{aN_{\gamma}}\right\}  \x_{\gamma}^{ aN_{\gamma}+1}\ot x_{\gamma}^{N_{\gamma}-2};
\end{align*}
and for all $a\geq 0$, $d(\x_{\alpha} \x_{\gamma}^{aN_{\gamma}+1} \x_{\beta}\ot 1)  =$
\begin{align*}
& \x_{\alpha}\x_{\gamma}^{aN_{\gamma}+1}
\ot x_{\beta} - \Dsj\x_{\alpha} \x_{\gamma}^{aN_{\gamma}} \x_{\beta}\ot x_{\gamma} 
+ \Asj\Csj^{aN_{\gamma}+1} \x_{\gamma}^{aN_{\gamma}+1} \x_{\beta} \ot x_{\alpha}  
\\
& 
+ \sum_{j=1}^{n} \Bsj_j q_{\alpha \gamma} ^{aN_{\gamma}+1} \x_{\gamma}^{aN_{\gamma}+1}\x_{\eta_j} \ot x_{\delta_j}
+ \Bsj\Dsj^{aN_{\gamma}+1} \left(\frac{\Csj}{\Dsj} -1\right) (a+1)_{\left(\frac{\Csj}{\Dsj}\right)^{N_{\gamma}}}
\x_{\gamma}^{N_{\gamma}(a+1)} \ot 1 .
\end{align*}
\end{lema}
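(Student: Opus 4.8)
The plan is to compute $d(\x_\alpha \x_\gamma^L \x_\beta \ot 1)$ directly from the recursive formula for the Anick differential, using the contracting homotopy $s$ and the hypotheses \eqref{eq:diff2-hypothesis} to control each term. I would first set up the base computation for the 2-chains appearing as suffixes: by definition of the differential on 2-chains together with \eqref{eq:diff2-hypothesis}, I can write down $d(\x_\gamma\x_\beta \ot 1)$ and $d(\x_\alpha\x_\gamma \ot 1)$ explicitly. The key structural fact is that $\x_\gamma^L$ (for $L = aN_\gamma$ or $aN_\gamma+1$) is itself a chain, so the chain $\x_\alpha \x_\gamma^L \x_\beta$ factors with $\x_\alpha \x_\gamma^L$ as its $(\text{length}-1)$-chain prefix; the differential formula $d(u\ot 1) = v\ot t - s\,d(v\ot t)$ then reduces everything to applying $s$ to expressions involving $x_\beta$ and lower-order products.

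The main work is an induction on $a$. I would treat the two parities ($L$ even, $L = aN_\gamma$, versus $L$ odd, $L = aN_\gamma+1$) in parallel, since the formula for $d(\x_\alpha\x_\gamma^L\x_\beta\ot 1)$ feeds into the computation of $d(\x_\alpha\x_\gamma^{L+1}\x_\beta\ot 1)$ through the homotopy $s$. The crucial point is that the coefficient of the ``diagonal'' term $\x_\gamma^{\bullet}\ot x_\gamma^{\bullet}$ accumulates a geometric-type sum: each time we increase the power of $\x_\gamma$ by applying $s$ to the term $\Bsj\,\x_\gamma^{\bullet}\ot x_\gamma^{N_\gamma-1}$ coming from the relation $x_\alpha x_\beta = q_{\alpha\beta}x_\beta x_\alpha + \Bsj x_\gamma^{N_\gamma-1} + \cdots$, the scalar $\tfrac{\Csj}{\Dsj} = \tfrac{q_{\alpha\gamma}}{q_{\gamma\beta}}$ is introduced. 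Summing these contributions is exactly what produces the quantum-integer factors $(a)_{(\Csj/\Dsj)^{N_\gamma}}$ and $(a+1)_{(\Csj/\Dsj)^{N_\gamma}}$ in the statement. I would keep careful track of the signs $(-1)^n$ and the powers $\Csj^{aN_\gamma}$, $\Dsj^{aN_\gamma}$ that arise from moving $x_\alpha$ past $x_\gamma^{aN_\gamma}$ and $x_\gamma^{aN_\gamma}$ past $x_\beta$, respectively.

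The essential technical lemma I would establish along the way is that $s$ \emph{annihilates} all the ``error'' terms: namely the terms $\x_{\eta_j}\ot x_{\delta_j}$-type contributions and any product $x_{\nu_k}\cdots x_{\nu_1}$ arising from re-expressing $x_\gamma x_{\eta_j}$ or $x_{\delta_j}x_\gamma$ via \eqref{eq:convex-order-relations}. This is exactly the mechanism used in the proof of Lemma \ref{lem:second-technique-2cocycles}: convexity of the PBW basis forces the relevant products to be linear combinations of $x_{\mu_j}\cdots x_{\mu_1}$ with $\mu_j \le \nu_k$, so that $\x_{\nu_k}\ot x_{\mu_j}\cdots x_{\mu_1} = s(x_{\nu_k}x_{\mu_j}\cdots x_{\mu_1})$ lies in the image of $s$, whence $s^2 = 0$ kills it. The constraints $\alpha < \delta_j < \gamma < \eta_j < \beta$ on the auxiliary roots are precisely what guarantees that these products land below $\x_\gamma$ and do not contribute to the coefficient of the diagonal term.

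The hardest part will be the bookkeeping of scalars in the inductive step: correctly collecting the coefficient of $\x_\gamma^{aN_\gamma+1}\ot x_\gamma^{N_\gamma-2}$ (even case) versus $\x_\gamma^{N_\gamma(a+1)}\ot 1$ (odd case), and verifying that the recursion for these coefficients telescopes into the closed forms $\Dsj^{aN_\gamma}\{(\tfrac{\Csj}{\Dsj}-1)(a)_{(\Csj/\Dsj)^{N_\gamma}} - (\tfrac{\Csj}{\Dsj})^{aN_\gamma}\}$ and $\Dsj^{aN_\gamma+1}(\tfrac{\Csj}{\Dsj}-1)(a+1)_{(\Csj/\Dsj)^{N_\gamma}}$. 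I expect the parity interplay—where the even-case output is needed to compute the odd-case coefficient and vice versa—to be where most of the delicate sign and exponent matching happens, and I would verify the closed form by checking the base case $a=0$ (or $a=1$) explicitly and then confirming the single-step recursion $(a+1)_t = 1 + t\,(a)_t$ reproduces the claimed expressions.
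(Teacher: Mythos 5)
Your proposal follows essentially the same route as the paper's proof: an induction on $a$ that interleaves the two parities (each formula feeding the next through the homotopy $s$), with the differential unwound via the recursive Anick formula, the correction terms killed by $s\circ s=0$ after exhibiting them as elements of the image of $s$, and the coefficient of the diagonal term accumulating into the quantum integers exactly as you describe. One small correction to your third paragraph: the terms $\x_{\gamma}^{L}\x_{\eta_j}\ot x_{\delta_j}$ are \emph{not} annihilated by $s$ --- they survive into the final formulas with coefficients $\mp\Bsj_j q_{\alpha\gamma}^{L}$; only the secondary corrections produced when $s$ is applied a second time vanish, and since \eqref{eq:diff2-hypothesis} already asserts exact $q$-commutation of $x_{\gamma}$ with $x_{\eta_j}$ and $x_{\delta_j}$, no appeal to \eqref{eq:convex-order-relations} is needed here.
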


\bigbreak
Notice that the first equality in \eqref{eq:diff2-hypothesis} forces
\begin{align}\label{eq:diff2-hypothesis-roots}
(N_{\gamma} - 1)\gamma&=\alpha+\beta=\delta_j+\eta_j & &\text{for all }j\in\I_n.
\end{align}

\begin{proof}
We need the following computation:
\begin{align*}
d(\x_{\alpha}\x_{\beta} \ot 1) &= \x_{\alpha}\ot x_{\beta} - q_{\alpha \beta} \x_{\beta} \ot x_{\alpha}
- \Bsj \x_{\gamma} \ot x_{\gamma}^{N_{\gamma}-2}
- \sum_{j=1}^{n} \Bsj_j \x_{\eta_j} \ot x_{\delta_j}
\end{align*}

The proof of the lemma is by induction on $a$.
First we compute: 
\begin{align*}
d&( \x_{\alpha}  \x_{\gamma}\x_{\beta}\ot 1) = 
\x_{\alpha}\x_{\gamma}\ot x_{\beta} - s(d(\x_{\alpha}\x_{\gamma}\ot 1) x_{\beta})\\
&= \x_{\alpha}\x_{\gamma}\ot x_{\beta} - s(
\x_{\alpha}\ot x_{\gamma}x_{\beta} - q_{\alpha \gamma}\x_{\gamma}\ot x_{\alpha}
x_{\beta})
\\
&= \x_{\alpha} \x_{\gamma}\ot x_{\beta} - s\Big(q_{\gamma\beta} \x_{\alpha}\ot  x_{\beta}x_{\gamma}
- q_{\alpha \gamma}  \x_{\gamma}\ot \big( q_{\alpha \beta} x_{\beta}x_{\alpha} +\Bsj x_{\gamma}^{N_{\gamma}-1} + \sum_{j=1}^{n} \Bsj_j x_{\eta_j}x_{\delta_j} \big) \Big)
\\
&= \x_{\alpha} \x_{\gamma}\ot x_{\beta} -q_{\gamma\beta} \x_{\alpha} \x_{\beta}\ot  x_{\gamma} + s\Big( q_{\alpha \gamma}q_{\alpha \beta}  \big(\x_{\gamma}\ot  x_{\beta} - q_{\gamma\beta}
\x_{\beta} \ot x_{\gamma} \big)x_{\alpha}
\\ & \,
+ \Bsj (q_{\alpha \gamma} - q_{\gamma\beta}) \x_{\gamma}\ot  x_{\gamma}^{N_{\gamma}-1}
+ \sum_{j=1}^{n} \Bsj_j  \big( q_{\alpha \gamma}  \x_{\gamma}\ot x_{\eta_j}
- q_{\gamma\beta} q_{\delta_j \gamma} \x_{\eta_j} \ot x_{\gamma} \big) x_{\delta_j}
\Big)
\\
&= \x_{\alpha} \x_{\gamma}\ot x_{\beta} -q_{\gamma\beta} \x_{\alpha} \x_{\beta}\ot  x_{\gamma} + 
\sum_{j=1}^{n} \Bsj_j q_{\alpha \gamma}  \x_{\gamma}\x_{\eta_j} \ot x_{\delta_j}
+ q_{\alpha \gamma}q_{\alpha \beta}\x_{\gamma} \x_{\beta}\ot x_{\alpha}
\\ & \,
+ \Bsj (q_{\alpha \gamma} - q_{\gamma\beta}) \x_{\gamma}^{N_{\gamma}}\ot 1
+ s\Big( \sum_{j=1}^{n} \Bsj_j  \big( q_{\alpha \gamma} q_{\gamma\eta_j}
- q_{\gamma\beta} q_{\delta_j \gamma} \big)  s(1\ot x_{\eta_j} x_{\gamma} x_{\delta_j}) \Big),
\end{align*}
which agrees with the second formula for $a=0$ since $s\circ s=0$.
Next we compute
\begin{align*}
& d(\x_{\alpha}  \x_{\gamma}^{N_{\gamma}}\x_{\beta}\ot 1) = 
\x_{\alpha}\x_{\gamma}^{N_{\gamma}}\ot x_{\beta} - s(d(\x_{\alpha}\x_{\gamma}^{N_{\gamma}}\ot 1) x_{\beta})
\\
&= 
\x_{\alpha}\x_{\gamma}^{N_{\gamma}}\ot x_{\beta} - s \Big(
q_{\gamma \beta}^{N_{\gamma}-1}\x_{\alpha}\x_{\gamma}
\ot x_{\beta}x_{\gamma}^{N_{\gamma}-1} 
+ q_{\alpha \beta} q_{\alpha \gamma}^{N_{\gamma}} \x_{\gamma}^{N_{\gamma}}\ot   x_{\beta}x_{\alpha} 
\\ & \quad
+ q_{\alpha \gamma}^{N_{\gamma}}\Bsj \x_{\gamma}^{N_{\gamma}}\ot  x_{\gamma}^{N_{\gamma}-1}
+ \sum_{j=1}^{n} \Bsj_j q_{\alpha \gamma}^{N_{\gamma}} \x_{\gamma}^{N_{\gamma}}\ot x_{\eta_j}x_{\delta_j}
\Big)
\\
&= 
\x_{\alpha}\x_{\gamma}^{N_{\gamma}}\ot x_{\beta} -q_{\gamma \beta}^{N_{\gamma}-1}\x_{\alpha}\x_{\gamma} \x_{\beta}
\ot x_{\gamma}^{N_{\gamma}-1} 
\\ & \quad 
- \Bsj \left(q_{\alpha \gamma}^{N_{\gamma}} - q_{\gamma \beta}^{N_{\gamma}-1}q_{\alpha \gamma} +q_{\gamma \beta}^{N_{\gamma}} \right) \x_{\gamma}^{N_{\gamma}+1}\ot x_{\gamma}^{N_{\gamma}-2}
- \sum_{j=1}^{n} \Bsj_j q_{\alpha \gamma}^{N_{\gamma}} \x_{\gamma}^{N_{\gamma}} \x_{\eta_j}\ot x_{\delta_j}
\\ & \quad - q_{\alpha \beta} q_{\alpha \gamma}^{N_{\gamma}} \x_{\gamma}^{N_{\gamma}}\x_{\beta}\ot x_{\alpha},
\end{align*}
and by~(\ref{eq:diff2-hypothesis-roots}), this
agrees with the first formula in the lemma when $a=1$. 
Now assume the second formula given in the lemma holds when $a$ is replaced by $a-1$. Then
\begin{align*}
d(\x_{\alpha} & \x_{\gamma}^{aN_{\gamma}} \x_{\beta}\ot 1)  
= \x_{\alpha} \x_{\gamma}^{aN_{\gamma}}\ot x_{\beta} - s ( d(\x_{\alpha} \x_{\gamma}^{aN_{\gamma}}\ot 1)
x_{\beta}) \\
& = \x_{\alpha} \x_{\gamma}^{aN_{\gamma}} \ot x_{\beta} - s \big(\x_{\alpha} \x_{\gamma}^{N_{\gamma}(a-1)+1}
\ot x_{\gamma}^{N_{\gamma}-1} x_{\beta} + \Csj ^{aN_{\gamma}} \x_{\gamma}^{aN_{\gamma}}\ot x_{\alpha}x_{\beta} \big)
\\
& = \x_{\alpha} \x_{\gamma}^{aN_{\gamma}}\ot x_{\beta} - s \Big( \Dsj^{N_{\gamma}-1} \x_{\alpha} 
\x_{\gamma}^{N_{\gamma}(a-1)+1} \ot x_{\beta} x_{\gamma}^{N_{\gamma}-1} 
+ \Asj\Csj^{aN_{\gamma}} \x_{\gamma}^{aN_{\gamma}}
\ot x_{\beta}x_{\alpha}
\\ & \quad   + \Bsj \Csj ^{aN_{\gamma}} \x_{\gamma}^{aN_{\gamma}}\ot x_{\gamma}^{N_{\gamma}-1} 
+ \sum_{j=1}^{n} \Bsj_j \Csj ^{aN_{\gamma}} \x_{\gamma}^{aN_{\gamma}}\ot x_{\eta_j} x_{\delta_j} \Big).
\end{align*}

Use the induction hypothesis to rewrite
the term $\Dsj ^{N_{\gamma}-1} \x_{\alpha} \x_{\gamma}^{N_{\gamma}(a-1)+1}\ot x_{\beta}x_{\gamma}^{N_{\gamma}-1}$
to obtain
\begin{align*}
d&(\x_{\alpha} \x_{\gamma}^{aN_{\gamma}} \x_{\beta}\ot 1)  
= \x_{\alpha} \x_{\gamma}^{aN_{\gamma}}\ot x_{\beta} - s \Big( \Dsj^{N_{\gamma}-1} d(\x_{\alpha} \x_{\gamma}^{(a-1)N_{\gamma}+1} \x_{\beta}\ot x_{\gamma}^{N_{\gamma}-1})   
\\ & \quad - \Asj\Csj^{(a-1)N_{\gamma}+1}\Dsj^{N_{\gamma}-1} \x_{\gamma}^{(a-1)N_{\gamma}+1} \x_{\beta} \ot x_{\alpha}x_{\gamma}^{N_{\gamma}-1} 
\\ & \quad - \Bsj\Dsj^{aN_{\gamma}} \left(\frac{\Csj}{\Dsj} -1\right) (a)_{\left(\frac{\Csj}{\Dsj}\right)^{N_{\gamma}}}
\x_{\gamma}^{aN_{\gamma}} \ot x_{\gamma}^{N_{\gamma}-1}
\\ & \quad -\sum_{j=1}^{n} \Bsj_j \Dsj^{N_{\gamma}-1} q_{\alpha \gamma} ^{(a-1)N_{\gamma}+1} \x_{\gamma}^{(a-1)N_{\gamma}+1}\x_{\eta_j} \ot x_{\delta_j}x_{\gamma}^{N_{\gamma}-1} 
+ \Asj\Csj^{aN_{\gamma}} \x_{\gamma}^{aN_{\gamma}}
\ot x_{\beta}x_{\alpha}
\\ & \quad   + \Bsj \Csj ^{aN_{\gamma}} \x_{\gamma}^{aN_{\gamma}}\ot x_{\gamma}^{N_{\gamma}-1} 
+ \sum_{j=1}^{n} \Bsj_j \Csj ^{aN_{\gamma}} \x_{\gamma}^{aN_{\gamma}}\ot x_{\eta_j} x_{\delta_j} \Big)
\\
& = \x_{\alpha} \x_{\gamma}^{aN_{\gamma}}\ot x_{\beta} - 
\Dsj^{N_{\gamma}-1} \x_{\alpha} \x_{\gamma}^{(a-1)N_{\gamma}+1} \x_{\beta}\ot x_{\gamma}^{N_{\gamma}-1}  
\\ & \quad +
s \Big( \Asj\Csj^{aN_{\gamma}}\Dsj^{N_{\gamma}-1} \x_{\gamma}^{(a-1)N_{\gamma}+1} \x_{\beta} \ot x_{\gamma}^{N_{\gamma}-1} x_{\alpha}
\\ & \quad  + \Bsj\Dsj^{aN_{\gamma}} \left( \left(\frac{\Csj}{\Dsj} -1\right) (a)_{\left(\frac{\Csj}{\Dsj}\right)^{N_{\gamma}}}
- \frac{\Csj ^{aN_{\gamma}}}{\Dsj^{aN_{\gamma}}} \right)
\x_{\gamma}^{aN_{\gamma}} \ot x_{\gamma}^{N_{\gamma}-1} - \Asj\Csj^{aN_{\gamma}} \x_{\gamma}^{aN_{\gamma}}
\ot x_{\beta}x_{\alpha}
\\ & \quad  +\sum_{j=1}^{n} \Bsj_j \Dsj^{N_{\gamma}-1} q_{\alpha \gamma} ^{(a-1)N_{\gamma}+1} q_{\delta_j\gamma}^{N_{\gamma}-1}
\x_{\gamma}^{(a-1)N_{\gamma}+1}\x_{\eta_j} \ot x_{\gamma}^{N_{\gamma}-1} x_{\delta_j} - \sum_{j=1}^{n} \Bsj_j \Csj ^{aN_{\gamma}} \x_{\gamma}^{aN_{\gamma}}\ot x_{\eta_j} x_{\delta_j} \Big).
\end{align*}
Now use the formula $d(\x_{\gamma}^{aN_{\gamma}+1}\ot 1) = x_{\gamma}^{aN_{\gamma}}\ot x_{\gamma}$,
\eqref{eq:diff-commuting roots-2}
and~(\ref{eq:diff2-hypothesis-roots})  to rewrite
the above expression as
\begin{align*}
&  \x_{\alpha} \x_{\gamma}^{aN_{\gamma}} \ot x_{\beta} - \Dsj ^{N_{\gamma}-1} \x_{\alpha} \x_{\gamma}^{N_{\gamma}(a-1)+1} \x_{\beta}\ot x_{\gamma}^{N_{\gamma}-1} 
- \Asj\Csj^{aN_{\gamma}} \x_{\gamma}^{aN_{\gamma}} \x_{\beta}\ot x_{\alpha}
\\
& \quad  + \Bsj \Dsj ^{aN_{\gamma}} \{ (\frac{\Csj}{\Dsj} -1)
(a)_{(\frac{\Csj}{\Dsj})^{N_{\gamma}}} - (\frac{\Csj}{\Dsj})^{aN_{\gamma}} \} \x_{\gamma}^{aN_{\gamma}+1}\ot x_{\gamma}^{N_{\gamma}-2} 
- \sum_{j=1}^{n} \Bsj_j \Csj ^{aN_{\gamma}} \x_{\gamma}^{aN_{\gamma}} \x_{\eta_j}\ot  x_{\delta_j}.
\end{align*}
This agrees with the first claimed formula in the lemma. 

Now we use the first formula to obtain the second formula: 
\begin{align*}
d(\x_{\alpha} \x_{\gamma}^{aN_{\gamma}+1} & \x_{\beta}\ot 1) 
= \x_{\alpha} \x_{\gamma}^{aN_{\gamma}+1} \ot x_{\beta} - s ( d(\x_{\alpha} \x_{\gamma}^{aN_{\gamma}+1}
\ot 1) x_{\beta}) \\
& = \x_{\alpha} \x_{\gamma}^{aN_{\gamma}+1} \ot x_{\beta} - s (\x_{\alpha} \x_{\gamma}^{aN_{\gamma}}
\ot x_{\gamma} x_{\beta} - \Csj ^{aN_{\gamma}+1} \x_{\gamma}^{aN_{\gamma}+1} \ot x_{\alpha}x_{\beta} ) \\
& = \x_{\alpha} \x_{\gamma}^{aN_{\gamma}+1} \ot x_{\beta} - s \Big( \Dsj  \x_{\alpha}
\x_{\gamma}^{aN_{\gamma}}\ot x_{\beta}x_{\gamma} - \Asj\Csj^{aN_{\gamma}+1} \x_{\gamma}^{aN_{\gamma}+1} \ot x_{\beta}
x_{\alpha}
\\ & \quad  - \Bsj \Csj ^{aN_{\gamma}+1} \x_{\gamma}^{aN_{\gamma}+1} \ot x_{\gamma}^{N_{\gamma}-1} 
- \sum_{j=1}^{n} \Bsj_j \Csj ^{aN_{\gamma}+1} \x_{\gamma}^{aN_{\gamma}+1} \ot x_{\eta_j} x_{\delta_j} \Big).
\end{align*}

By our induction hypothesis, we may  use the first formula in the statement 
of the lemma to rewrite the term $\Dsj  \x_{\alpha} \x_{\gamma}^{aN_{\gamma}}\ot
x_{\beta} x_{\gamma}$, obtaining
\begin{align*}
& \x_{\alpha} \x_{\gamma}^{aN_{\gamma}+1} \ot x_{\beta} - s \Big( \Dsj  d(\x_{\alpha}
\x_{\gamma}^{aN_{\gamma}}  \x_{\beta}\ot x_{\gamma})  + \Asj\Csj^{aN_{\gamma}} \Dsj  \x_{\gamma}^{aN_{\gamma}} \x_{\beta}
\ot x_{\alpha} x_{\gamma} \\
& \quad - \Bsj\Dsj^{aN_{\gamma}+1} \{ (\frac{\Csj}{\Dsj}-1) (a)_{(\frac{\Csj}{\Dsj})^{N_{\gamma}}}
- (\frac{\Csj}{\Dsj})^{aN_{\gamma}}\} \x_{\gamma}^{aN_{\gamma}+1}\ot x_{\gamma}^{N_{\gamma}-1} 
\\
& \quad  + \sum_{j=1}^{n} \Bsj_j \Csj ^{aN_{\gamma}} \Dsj \x_{\gamma}^{aN_{\gamma}} \x_{\eta_j}\ot  x_{\delta_j}x_{\gamma}
-\Asj\Csj^{aN_{\gamma}+1} \x_{\gamma}^{aN_{\gamma}+1}\ot x_{\beta}x_{\alpha}
\\ & \quad  - \Bsj \Csj ^{aN_{\gamma}+1}
\x_{\gamma}^{aN_{\gamma}+1} \ot x_{\gamma}^{N_{\gamma}-1} - \sum_{j=1}^{n} \Bsj_j \Csj ^{aN_{\gamma}+1} \x_{\gamma}^{aN_{\gamma}+1} \ot x_{\eta_j} x_{\delta_j} \Big)
\\
& = \x_{\alpha} \x_{\gamma}^{aN_{\gamma}+1}\ot x_{\beta} 
- \Dsj  \x_{\alpha} \x_{\gamma}^{aN_{\gamma}} \x_{\beta}
\ot x_{\gamma} 
\\ & \quad - s \Big( \Asj\Csj^{aN_{\gamma}+1} (\Dsj  \x_{\gamma}^{aN_{\gamma}}\x_{\beta} \ot x_{\gamma}
x_{\alpha} - \x_{\gamma}^{aN_{\gamma}+1} \ot x_{\beta}x_{\alpha}) \\
& \quad - \Bsj\Dsj^{aN_{\gamma}+1} \{ (\frac{\Csj}{\Dsj} -1) (a)_{(\frac{\Csj}{\Dsj})^{N_{\gamma}}}
- (\frac{\Csj}{\Dsj})^{aN_{\gamma}} + (\frac{\Csj}{\Dsj})^{aN_{\gamma}+1}\} 
\x_{\gamma}^{aN_{\gamma}+1}\ot x_{\gamma}^{N_{\gamma}-1} 
\\
& \quad - \sum_{j=1}^{n} \Bsj_j \Csj ^{aN_{\gamma}} \big(\Csj \x_{\gamma}^{aN_{\gamma}+1} \ot x_{\eta_j}
- \Dsj q_{\delta_j \gamma} \x_{\gamma}^{aN_{\gamma}} \x_{\eta_j}\ot x_{\gamma} \big)x_{\delta_j}
\Big).
\end{align*}
Now we use the formula $d(\x_{\gamma}^{N_{\gamma}(a+1)}\ot 1) = \x_{\gamma}^{aN_{\gamma}+1}\ot
x_{\gamma}^{N_{\gamma}-1}$,  \eqref{eq:diff-commuting roots-2} and~(\ref{eq:diff2-hypothesis-roots}) 
to rewrite the above expression as
\begin{align*}
& \x_{\alpha} \x_{\gamma}^{aN_{\gamma}+1}\ot x_{\beta} - \Dsj \x_{\alpha} \x_{\gamma}^{aN_{\gamma}}
\x_{\beta}\ot x_{\gamma}  + \Asj\Csj^{aN_{\gamma}+1} \x_{\gamma}^{aN_{\gamma}+1}\x_{\beta}\ot x_{\alpha} 
\\ & \quad
+ \Bsj\Dsj^{aN_{\gamma}+1} \{ (\frac{\Csj}{\Dsj}-1) (a+1)_{(\frac{\Csj}{\Dsj})^{N_{\gamma}} } \}
\x_{\gamma}^{N_{\gamma}(a+1)}\ot 1 
+\sum_{j=1}^{n} \Bsj_j \Csj ^{aN_{\gamma}+1}  \x_{\gamma}^{aN_{\gamma}+1} \x_{\eta_j} \ot  x_{\delta_j}
,
\end{align*}
which agrees with the second claimed formula in the lemma. 
\end{proof}

\begin{lema}\label{lem:diff-case2}
Let $\alpha < \eta <\gamma < \beta <\delta $ be positive roots
such that $N_{\gamma}=2$ and the relations among the corresponding root vectors take the form
\begin{align}\label{eq:diff-case2-hypothesis}
\begin{aligned}
x_{\alpha}x_{\beta} &= q_{\alpha \beta} x_{\beta}x_{\alpha} +\Bsj_1 x_{\gamma}x_{\eta}, &
x_{\eta}x_{\delta} &= q_{\eta\delta}  x_{\delta}x_{\eta} + \Bsj_2 x_{\gamma}, 
\end{aligned}
\end{align}
for some scalars $\Bsj_1, \Bsj_2$ and the other pairs of root vectors $q$-commute.
Then, for all $n\geq 0$, 
\begin{align}\label{eq:diff-case2-formula}
\begin{aligned}
d(\x_{\alpha} \x_{\gamma}^{n} & \x_{\beta} \x_{\delta}\ot 1)  = \x_{\alpha} \x_{\gamma}^{n} \x_{\beta} \ot x_{\delta}
-q_{\beta\delta}\x_{\alpha} \x_{\gamma}^{n} \x_{\delta}\ot x_{\beta}
+q_{\gamma\beta}q_{\gamma\delta} \x_{\alpha} \x_{\gamma}^{n-1} \x_{\beta} \x_{\delta}\ot x_{\gamma}
\\ &
+(-q_{\alpha\gamma})^nq_{\alpha\beta}q_{\alpha\delta} \x_{\gamma}^{n} \x_{\beta} \x_{\delta}\ot x_{\alpha}
-q_{\eta\delta} \Bsj_1 (-1)^{n-1} q_{\alpha\gamma}^{n} (n+1)_{\widetilde{q}_{\beta\gamma}}  \x_{\gamma}^{n+1} \x_{\delta}\ot x_{\eta}
\\ & + \Bsj_1 \Bsj_2 \coef{\alpha\beta\gamma}{n} (-q_{\gamma\alpha})^{-n}  \x_{\gamma}^{n+2} \ot 1.
\end{aligned}
\end{align}
where $\coef{\alpha\beta\gamma}{n}:=\sum\limits_{k=0}^{n} (-\widetilde{q}_{\alpha\gamma})^{k} (k+1)_{\widetilde{q}_{\beta\gamma}}$, $n\in\N$.
\end{lema}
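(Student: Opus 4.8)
The plan is to establish \eqref{eq:diff-case2-formula} by induction on $n$, using the recursive definition of the Anick differential together with the contracting homotopy $s$ of \eqref{eq:Anick-def-sn}. Throughout I will use the defining relations \eqref{eq:diff-case2-hypothesis}, the $q$-commutation of all the remaining pairs of root vectors (which furnishes the differentials recorded in Remark~\ref{rem:differential-q-commute}), and the two structural identities $s\circ s=0$ and $ds+sd=\id$. The degree bookkeeping is controlled by the constraints in \eqref{eq:diff-case2-hypothesis}: together with $\eta+\delta=\gamma$ they give $\alpha+\beta+\delta=2\gamma$, so every chain $\x_\alpha\x_\gamma^{n}\x_\beta\x_\delta$ has $\mathbb N_0^{\mathbb I}$-degree $(n+2)\gamma$, matching each term on the right-hand side of \eqref{eq:diff-case2-formula}.

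Since the Anick differential strips the last letter, the first step is
\[
d(\x_\alpha \x_\gamma^{n}\x_\beta\x_\delta \ot 1) = \x_\alpha \x_\gamma^{n}\x_\beta \ot x_\delta - s\bigl(d(\x_\alpha \x_\gamma^{n}\x_\beta \ot 1)\,x_\delta\bigr),
\]
so I first need a closed formula for the differential of the $(n+2)$-chain $\x_\alpha\x_\gamma^{n}\x_\beta$. This is \emph{not} covered by Lemma~\ref{lem:diff2}, because the correction term to $x_\alpha x_\beta$ is the product $x_\gamma x_\eta$ rather than a power of $x_\gamma$; hence I would prove a preliminary formula for $d(\x_\alpha\x_\gamma^{n}\x_\beta\ot 1)$ by a separate induction on $n$, modelled line by line on the proof of Lemma~\ref{lem:diff2}. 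The base case $n=0$ uses that $\x_\alpha\x_\beta$ is a minimal tip, so $d(\x_\alpha\x_\beta\ot1)=\x_\alpha\ot x_\beta-q_{\alpha\beta}\x_\beta\ot x_\alpha-\Bsj_1\x_\gamma\ot x_\eta$; the inductive step rewrites the term coming from $d(\x_\alpha\x_\gamma^{n}\ot1)$ and applies $s$, using $s^2=0$ and the fact that $s$ annihilates normal-ordered products to discard all summands except those producing the next chain. The outcome is an intermediate formula whose distinctive feature is a term of the shape $\x_\gamma^{n+1}\x_\eta\ot(\,\cdot\,)$, in which the correction $x_\gamma x_\eta$ has been absorbed into a longer $\gamma$-string; the quantum integer $(n+1)_{\widetilde{q}_{\beta\gamma}}$ enters here from commuting $x_\beta$ past the block $\x_\gamma^{n}$.

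With the intermediate formula in hand, I would substitute it into the stripping identity, multiply on the right by $x_\delta$, and reduce. The only genuinely new phenomenon relative to Lemma~\ref{lem:diff2} is the interaction of the $\x_\gamma^{n+1}\x_\eta$-term with $x_\delta$: the second relation in \eqref{eq:diff-case2-hypothesis}, namely $x_\eta x_\delta=q_{\eta\delta}x_\delta x_\eta+\Bsj_2 x_\gamma$, splits this term into a piece that reassembles into the chain contribution $\x_\gamma^{n+1}\x_\delta\ot x_\eta$ and a residual piece carrying an extra factor $\Bsj_2 x_\gamma$, which lengthens the $\gamma$-string to $\x_\gamma^{n+2}$. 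Tracking this residual piece across the recursion produces the coefficient of $\x_\gamma^{n+2}\ot1$; concretely, one verifies the one-step recursion $\coef{\alpha\beta\gamma}{n}=\coef{\alpha\beta\gamma}{n-1}+(-\widetilde{q}_{\alpha\gamma})^{n}(n+1)_{\widetilde{q}_{\beta\gamma}}$, which matches the defining sum $\coef{\alpha\beta\gamma}{n}=\sum_{k=0}^{n}(-\widetilde{q}_{\alpha\gamma})^{k}(k+1)_{\widetilde{q}_{\beta\gamma}}$ and accounts for the prefactor $\Bsj_1\Bsj_2$. Replacing the remaining occurrences of $s\circ d$ by $\id-d\circ s$ and invoking $s^2=0$ then collects precisely the six terms of \eqref{eq:diff-case2-formula}.

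The main obstacle is the application of $s$: at each stage one must correctly decide which summands $s$ kills—those already in normal (increasing) PBW order lie in $\im s$ and are annihilated after a further application of $s$, while the unique leading chain survives via $ds+sd=\id$—and one must propagate the scalar prefactors (powers of $q_{\alpha\gamma}$, $q_{\gamma\beta}$, $q_{\beta\delta}$, $q_{\eta\delta}$ and the quantum integers) without error. The bookkeeping is delicate because the $x_\gamma x_\eta$ and $x_\eta x_\delta$ corrections both feed new copies of $x_\gamma$ into the same power $\x_\gamma^{\bullet}$, so the quantum-integer manipulations must be organized exactly so that the accumulating coefficient telescopes to $\coef{\alpha\beta\gamma}{n}$.
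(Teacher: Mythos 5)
Your plan follows the paper's own proof essentially verbatim: the paper likewise first establishes an auxiliary formula for $d(\x_{\alpha}\x_{\gamma}^{n}\x_{\beta}\ot 1)$ by a separate induction on $n$ (its \eqref{eq:diff-case2-auxiliar}), then proves \eqref{eq:diff-case2-formula} by induction on $n$ via the stripping identity and contracting homotopy, using $x_{\eta}x_{\delta}=q_{\eta\delta}x_{\delta}x_{\eta}+\Bsj_2 x_{\gamma}$ to split the $\eta$-term, and finally solves the coefficient recursion $e_{n+1}=-q_{\gamma\beta}q_{\gamma\delta}e_n+(-q_{\alpha\gamma})^{n+1}(n+2)_{\widetilde{q}_{\beta\gamma}}$ (using $q_{\gamma\delta}=q_{\gamma\alpha}^{-1}q_{\gamma\beta}^{-1}$, a consequence of $N_{\gamma}=2$) to obtain $\coef{\alpha\beta\gamma}{n}(-q_{\gamma\alpha})^{-n}$, exactly as you track the residual $\Bsj_2 x_{\gamma}$ piece. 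One cosmetic correction: in the intermediate formula the distinctive term is $\x_{\gamma}^{n+1}\ot x_{\eta}$, with $x_{\eta}$ in the module coefficient rather than in the chain (indeed $\x_{\gamma}^{n+1}\x_{\eta}$ is not an admissible chain since $\eta<\gamma$); it only becomes the chain term $\x_{\gamma}^{n+1}\x_{\delta}\ot x_{\eta}$ after multiplication by $x_{\delta}$.
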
 

\bigbreak
Notice that the equalities in \eqref{eq:diff-case2-hypothesis} force
\begin{align}\label{eq:diff-case2-hypothesis-roots}
\gamma + \eta &=\alpha+\beta, &  \eta+\delta &= \gamma.
\end{align}
Hence the following equality also holds: $2\gamma=\alpha+\beta+\delta$.

\pf
First we claim that
\begin{align}
\label{eq:diff-case2-auxiliar}
\begin{aligned}
d(\x_{\alpha} \x_{\gamma}^{n} \x_{\beta}\ot 1)  &=
\x_{\alpha} \x_{\gamma}^{n}\otimes x_{\beta} 
-q_{\gamma\beta} \x_{\alpha} \x_{\gamma}^{n-1}\x_{\beta} \ot x_{\gamma} 
-q_{\alpha\beta}(-q_{\alpha\gamma})^{n} \x_{\gamma}^n\x_{\beta}\ot x_{\alpha}
\\ &
- \Bsj_1 (-q_{\alpha\gamma})^{n} (n+1)_{\widetilde{q}_{\beta\gamma}} \x_{\gamma}^{n+1} \ot x_{\eta}
\end{aligned}
\end{align}
The proof is by induction on $n$. When $n=0$,
\begin{align*}
d( & \x_{\alpha} \x_{\beta} \ot 1 ) =
\x_{\alpha}\otimes x_{\beta} - s_0 d_1(\x_{\alpha}\otimes x_{\beta})
=
\x_{\alpha}\otimes x_{\beta} - q_{\alpha \beta} \x_{\beta} \otimes x_{\alpha} -\Bsj_1 \x_{\gamma} \otimes x_{\eta}.
\end{align*}
Now assume that \eqref{eq:diff-case2-auxiliar} holds for $n$. Let $c_n=(-q_{\alpha\gamma})^{n} (n+1)_{\widetilde{q}_{\beta\gamma}}$. We compute:
\begin{align*}
d( & \x_{\alpha} \x_{\gamma}^{n+1} \x_{\beta}\ot 1)  =
\x_{\alpha} \x_{\gamma}^{n+1}\otimes x_{\beta}
-sd(\x_{\alpha} \x_{\gamma}^{n+1} \otimes x_{\beta})
\\
&= \x_{\alpha} \x_{\gamma}^{n+1}\otimes x_{\beta} - s \big( q_{\gamma\beta} \x_{\alpha} \x_{\gamma}^{n}
\ot x_{\beta}x_{\gamma} +(-q_{\alpha \gamma})^{n+1} \x_{\gamma}^{n+1}\ot (q_{\alpha \beta} x_{\beta}x_{\alpha} +\Bsj_1 x_{\gamma}x_{\eta})\big)
\\
&= \x_{\alpha} \x_{\gamma}^{n+1}\otimes x_{\beta}-q_{\gamma\beta} \x_{\alpha} \x_{\gamma}^{n}\x_{\beta}\ot x_{\gamma} - s \big( (-q_{\alpha \gamma})^{n+1}q_{\alpha \beta} \x_{\gamma}^{n+1}\ot  x_{\beta}x_{\alpha}
\\ & \quad
+(-q_{\alpha \gamma})^{n+1}\Bsj_1 \x_{\gamma}^{n+1}\ot  x_{\gamma}x_{\eta}
+ q_{\gamma\beta}
q_{\alpha\beta}(-q_{\alpha}\gamma)^{n} \x_{\gamma}^n\x_{\beta}\ot x_{\alpha}x_{\gamma}
+ q_{\gamma\beta} \Bsj_1 c_{n} \x_{\gamma}^{n+1} \ot x_{\eta}x_{\gamma}
\big)
\\
&= \x_{\alpha} \x_{\gamma}^{n+1}\otimes x_{\beta}-q_{\gamma\beta} \x_{\alpha} \x_{\gamma}^{n}\x_{\beta}\ot x_{\gamma} -\Bsj_1 \big(q_{\gamma\beta}q_{\eta\gamma} c_{n}+(-q_{\alpha \gamma})^{n+1}\big) \x_{\gamma}^{n+2} \ot x_{\eta}
\\ & \quad
- (-q_{\alpha \gamma})^{n+1}q_{\alpha \beta} s d\big(  \x_{\gamma}^{n+1} \x_{\beta} \ot x_{\alpha}
\big).
\end{align*}
Now the inductive step follows using Remark \ref{rem:differential-q-commute} and
\begin{align*}
q_{\eta\gamma} &= q_{\alpha\gamma}q_{\beta\gamma}q_{\gamma\gamma}^{-1} = -q_{\alpha\gamma}q_{\beta\gamma}.
\end{align*}

Next we prove by induction on $n$ that there exist $e_n\in\Bbbk$ such that:
\begin{align*}
d(\x_{\alpha} & \x_{\gamma}^{n} \x_{\beta} \x_{\delta}\ot 1)  = \x_{\alpha} \x_{\gamma}^{n} \x_{\beta} \ot x_{\delta}
-q_{\beta\delta}\x_{\alpha} \x_{\gamma}^{n} \x_{\delta}\ot x_{\beta}
+q_{\gamma\beta}q_{\gamma\delta} \x_{\alpha} \x_{\gamma}^{n-1} \x_{\beta} \x_{\delta}\ot x_{\gamma}
\\ &
+(-q_{\alpha\gamma})^nq_{\alpha\beta}q_{\alpha\delta} \x_{\gamma}^{n} \x_{\beta} \x_{\delta}\ot x_{\alpha}
-q_{\eta\delta} \Bsj_1 c_{n-1}  \x_{\gamma}^{n+1} \x_{\delta}\ot x_{\eta}
+ \Bsj_1 \Bsj_2 e_n \x_{\gamma}^{n+2} \ot 1.
\end{align*}
The proof is again by induction. When $n=0$,
\begin{align*}
d( & \x_{\alpha} \x_{\beta} \x_{\delta}\ot 1) =
\x_{\alpha}\x_{\beta} \ot x_{\delta} - s_1 d_2 \left( \x_{\alpha}\x_{\beta} \ot x_{\delta}\right)
\\
&= \x_{\alpha}\x_{\beta} \ot x_{\delta} - s_1 \left( 
q_{\beta\delta} \x_{\alpha}\otimes x_{\delta}x_{\beta} 
- q_{\alpha \beta}q_{\alpha\delta} \x_{\beta} \otimes x_{\delta}x_{\alpha} -\Bsj_1 \x_{\gamma} \otimes (q_{\eta\delta}  x_{\delta}x_{\eta} + \Bsj_2 x_{\gamma})
\right)
\\
&= \x_{\alpha}\x_{\beta} \ot x_{\delta} -q_{\beta\delta} \x_{\alpha}\x_{\delta}\otimes x_{\beta} - s_1 \big( 
- q_{\alpha \beta}q_{\alpha\delta} \x_{\beta} \otimes x_{\delta}x_{\alpha} 
-q_{\eta\delta}\Bsj_1 \x_{\gamma} \otimes   x_{\delta}x_{\eta}
\\
& \quad -\Bsj_1 \Bsj_2 \x_{\gamma} \otimes  x_{\gamma}
+ q_{\beta\delta} q_{\alpha \delta} \x_{\delta} \otimes (q_{\alpha \beta} x_{\beta}x_{\alpha} +\Bsj_1 x_{\gamma}x_{\eta}) \big)
\\
&= \x_{\alpha}\x_{\beta} \ot x_{\delta} -q_{\beta\delta} \x_{\alpha}\x_{\delta}\otimes x_{\beta}
+\Bsj_1 \Bsj_2 \x_{\gamma}^2 \otimes 1
+q_{\eta\delta}\Bsj_1 \x_{\gamma} \x_{\delta} \otimes x_{\eta}
\\
& \quad
- s_1 \big(- q_{\alpha \beta}q_{\alpha\delta} \x_{\beta} \otimes x_{\delta}x_{\alpha} 
+ q_{\beta\delta} q_{\alpha \delta} \x_{\delta} \otimes (q_{\alpha \beta} x_{\beta}x_{\alpha} +\Bsj_1 x_{\gamma}x_{\eta})
- q_{\gamma\delta} q_{\eta\delta}\Bsj_1 \x_{\delta} \otimes x_{\gamma}x_{\eta} \big)
\\
&= \x_{\alpha}\x_{\beta} \ot x_{\delta} -q_{\beta\delta} \x_{\alpha}\x_{\delta}\otimes x_{\beta}
+\Bsj_1 \Bsj_2 \x_{\gamma}^2 \otimes 1
+q_{\eta\delta}\Bsj_1 \x_{\gamma} \x_{\delta} \otimes x_{\eta}
+q_{\alpha \beta}q_{\alpha\delta} \x_{\beta}x_{\delta} \otimes x_{\alpha}
\\
& \quad
- \Bsj_1 (q_{\beta\delta} q_{\alpha \delta}- q_{\gamma\delta} q_{\eta\delta}) s \big(  \x_{\delta} \otimes x_{\gamma}x_{\eta}\big).
\end{align*}
Now assume that the formula holds for $n$. Using \eqref{eq:diff-case2-auxiliar}:
\begin{align*}
d(\x_{\alpha} & \x_{\gamma}^{n+1} \x_{\beta} \x_{\delta}\ot 1)  =
\x_{\alpha} \x_{\gamma}^{n+1} \x_{\beta}\otimes x_{\delta}
-sd(\x_{\alpha} \x_{\gamma}^{n+1} \x_{\beta}\otimes x_{\delta})
\\ &= \x_{\alpha} \x_{\gamma}^{n+1} \x_{\beta}\otimes x_{\delta} -
s \big( q_{\beta\delta} \x_{\alpha} \x_{\gamma}^{n+1}\otimes x_{\delta}x_{\beta}
-q_{\gamma\beta} q_{\gamma\delta} \x_{\alpha} \x_{\gamma}^{n}\x_{\beta} \ot x_{\delta}x_{\gamma}
\\ & \quad -q_{\alpha\beta}q_{\alpha\delta}(-q_{\alpha}\gamma)^{n+1} \x_{\gamma}^{n+1}\x_{\beta}\ot x_{\delta}x_{\alpha} + \Bsj_1 c_{n+1} \x_{\gamma}^{n+2} \ot (q_{\eta\delta}x_{\delta}x_{\eta} + \Bsj_2 x_{\gamma}) \big)
\\ &= \x_{\alpha} \x_{\gamma}^{n+1} \x_{\beta}\otimes x_{\delta} -q_{\beta\delta} \x_{\alpha} \x_{\gamma}^{n+1} \x_{\delta}\otimes x_{\beta} -
s \big( q_{\beta\delta}q_{\gamma\delta}q_{\gamma\beta} \x_{\alpha}\x_{\gamma}^{n}\x_{\delta} \ot x_{\beta}x_{\gamma}
\\ & \quad    
+ q_{\beta\delta} q_{\alpha \delta} (-q_{\alpha\gamma})^{n+1} \x_{\gamma}^{n+1}\x_{\delta} \otimes (q_{\alpha \beta} x_{\beta}x_{\alpha} +\Bsj_1 x_{\gamma}x_{\eta})
-q_{\gamma\beta} q_{\gamma\delta} \x_{\alpha} \x_{\gamma}^{n}\x_{\beta} \ot x_{\delta}x_{\gamma}
\\ & \quad -q_{\alpha\beta}q_{\alpha\delta}(-q_{\alpha}\gamma)^{n+1} \x_{\gamma}^{n+1}\x_{\beta}\ot x_{\delta}x_{\alpha} - \Bsj_1 c_{n+1} \x_{\gamma}^{n+2} \ot (q_{\eta\delta}x_{\delta}x_{\eta} + \Bsj_2 x_{\gamma}) \big)
\end{align*}
Next we use the inductive hypothesis, the relation $x_{\gamma}^2=0$, \eqref{eq:diff-commuting roots-3} and \eqref{eq:diff-commuting roots-4}:
\begin{align*}
d(\x_{\alpha} & \x_{\gamma}^{n+1} \x_{\beta} \x_{\delta}\ot 1)  =
\x_{\alpha} \x_{\gamma}^{n+1} \x_{\beta}\otimes x_{\delta} -q_{\beta\delta} \x_{\alpha} \x_{\gamma}^{n+1} \x_{\delta}\otimes x_{\beta} 
+q_{\gamma\beta} q_{\gamma\delta} \x_{\alpha} \x_{\gamma}^{n}\x_{\beta} \x_{\delta} \ot x_{\gamma}
\\ & \quad    
-s \big(
-q_{\gamma\beta} q_{\gamma\delta}(-q_{\alpha\gamma})^{n+1} q_{\alpha\beta}q_{\alpha\delta} \x_{\gamma}^{n} \x_{\beta} \x_{\delta}\ot x_{\gamma}x_{\alpha}
-q_{\gamma\beta} q_{\gamma\delta}q_{\eta\gamma} q_{\eta\delta} \Bsj_1 c_{n+1}  \x_{\gamma}^{n+1} \x_{\delta}\ot x_{\gamma}x_{\eta}
\\ & \quad    
+\Bsj_1 \Bsj_2 e_nq_{\gamma\beta} q_{\gamma\delta} \x_{\gamma}^{n+2} \ot x_{\gamma}
+ q_{\beta\delta} q_{\alpha \delta} (-q_{\alpha\gamma})^{n+1} \x_{\gamma}^{n+1}\x_{\delta} \otimes (q_{\alpha \beta} x_{\beta}x_{\alpha} +\Bsj_1 x_{\gamma}x_{\eta})
\\ & \quad -q_{\alpha\beta}q_{\alpha\delta}(-q_{\alpha\gamma})^{n+1} \x_{\gamma}^{n+1}\x_{\beta}\ot x_{\delta}x_{\alpha} - \Bsj_1 c_{n+1} \x_{\gamma}^{n+2} \ot (q_{\eta\delta}x_{\delta}x_{\eta} + \Bsj_2 x_{\gamma}) \big)
\\ & =
\x_{\alpha} \x_{\gamma}^{n+1} \x_{\beta}\otimes x_{\delta} -q_{\beta\delta} \x_{\alpha} \x_{\gamma}^{n+1} \x_{\delta}\otimes x_{\beta} 
+q_{\gamma\beta} q_{\gamma\delta} \x_{\alpha} \x_{\gamma}^{n}\x_{\beta} \x_{\delta} \ot x_{\gamma}
\\ & \quad    
+q_{\alpha\beta}q_{\alpha\delta}(-q_{\alpha\gamma})^{n+1} \x_{\gamma}^{n+1}\x_{\beta}\x_{\delta}\ot x_{\alpha}
-\Bsj_1 \Bsj_2 (e_nq_{\gamma\beta} q_{\gamma\delta} - c_{n+1}) \x_{\gamma}^{n+3} \ot  1
\\ & \quad    
+q_{\eta\delta} \Bsj_1 c_{n+1} \x_{\gamma}^{n+2}\x_{\delta} \ot x_{\eta},
\end{align*}
and the inductive step follows. To finish the proof we have to compute $e_n$. Note that
\begin{align*}
e_0&=1, &
e_{n+1} &= -e_nq_{\gamma\beta} q_{\gamma\delta} + c_{n+1}, & 
\mbox{for all } & n\ge 0.
\end{align*}
By \eqref{eq:diff-case2-hypothesis-roots} and using that $q_{\gamma\gamma}=-1$,
\begin{align*}
q_{\gamma\delta} &= q_{\gamma\gamma}^2q_{\gamma\alpha}^{-1}q_{\gamma\beta}^{-1} =q_{\gamma\alpha}^{-1}q_{\gamma\beta}^{-1}.
\end{align*}
Hence $e_n=\coef{\alpha\beta\gamma}{n} (-q_{\gamma\alpha})^{-n}$ for all $n\ge 0$.
\epf

\begin{lema}\label{lem:diff-case3}
Let $\alpha < \eta <\gamma < \tau < \beta <\delta $ be positive roots
such that $N_{\gamma}=N_{\alpha}=2$ and the relations among the corresponding root vectors take the form
\begin{align}\label{eq:diff-case3-hypothesis}
\begin{aligned}
x_{\alpha}x_{\beta} &= q_{\alpha \beta} x_{\beta}x_{\alpha} +\Bsj_1 x_{\tau}x_{\gamma}, &
x_{\eta}x_{\delta} &= q_{\eta\delta}  x_{\delta}x_{\eta} + \Bsj_2 x_{\gamma},
\\
x_{\alpha}x_{\tau} &= q_{\alpha \tau} x_{\tau}x_{\alpha} +\Bsj_3 x_{\gamma}x_{\eta}, &
x_{\eta}x_{\beta} &= q_{\eta\beta}  x_{\beta}x_{\eta} + \Bsj_4 x_{\tau}^2,
\end{aligned}
\end{align}
for some scalars $\Bsj_j\in\Bbbk$ and the other pairs of root vectors $q$-commute.
Then, for all $n\geq 0$, 
\begin{align}\label{eq:diff-case3-formula}
&\begin{aligned}
d(\x_{\alpha}^2 \x_{\gamma}^{n} & \x_{\beta} \x_{\delta}\ot 1)  = 
\x_{\alpha}^2 \x_{\gamma}^{n} \x_{\beta} \ot x_{\delta}
-q_{\beta\delta}\x_{\alpha}^2 \x_{\gamma}^{n} \x_{\delta}\ot x_{\beta}
+q_{\gamma\beta}q_{\gamma\delta} \x_{\alpha}^2 \x_{\gamma}^{n-1} \x_{\beta} \x_{\delta}\ot x_{\gamma}
\\ & +(-q_{\alpha\gamma})^nq_{\alpha\beta}q_{\alpha\delta} \x_{\alpha}\x_{\gamma}^{n} \x_{\beta} \x_{\delta}\ot x_{\alpha}
+ q_{\gamma\beta}^{n} q_{\eta\gamma}^{n+1}q_{\eta\delta} \Bsj_1\Bsj_3 \x_{\gamma}^{n+2} \x_{\delta} \ot x_{\eta}
\\ & + \frac{q_{\alpha\gamma}^{n+1}}{q_{\gamma\alpha}^{n+2}q_{\gamma\beta}} \Big( (n+1)_{\widetilde{q}_{\gamma\alpha}\widetilde{q}_{\gamma\beta}} + \sum_{j=1}^n \coef{\alpha\tau\gamma}{j}\Big) \Bsj_1\Bsj_2\Bsj_3 \x_{\gamma}^{n+3} \ot 1.
\end{aligned}
\end{align}
\end{lema}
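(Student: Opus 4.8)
The plan is to prove \eqref{eq:diff-case3-formula} by induction on $n$, following closely the template of Lemmas~\ref{lem:diff2} and \ref{lem:diff-case2}. Throughout I would use the recursive description of the Anick differential, $d(ut\ot 1) = u\ot t - s\,d(u\ot t)$ for $u$ the chain prefix and $t$ the trailing basis letter, together with the facts that $d$ is a right $\Ac$-module map, that $s$ is a contracting homotopy with $s^2=0$ vanishing on $\im s$, and Remark~\ref{rem:differential-q-commute} for the generators that $q$-commute. Two simplifications are available from the start: since $N_\alpha=N_\gamma=2$, the relations \eqref{eq:powerrootvector} give $x_\alpha^2=x_\gamma^2=0$ in $\toba_{\bq}$, so any such power appearing in the algebra (second) tensor factor drops out; and the first three relations in \eqref{eq:diff-case3-hypothesis} force the root identities $\alpha+\beta=\tau+\gamma$, $\eta+\delta=\gamma$, $\alpha+\tau=\gamma+\eta$, $\eta+\beta=2\tau$ (compare \eqref{eq:diff-case2-hypothesis-roots} and \eqref{eq:alfa-beta-delta-eta-tau-N=2}), which together with $q_{\gamma\gamma}=-1$ and the bicharacter property let me rewrite every auxiliary $q$-factor ($q_{\eta\gamma}$, $q_{\gamma\delta}$, $q_{\tau\gamma}$, and so on) in terms of $q_{\alpha\gamma}$, $q_{\gamma\beta}$ and their companions.

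First I would assemble the auxiliary differentials that feed the main induction. The relation $x_\alpha x_\tau=q_{\alpha\tau}x_\tau x_\alpha+\Bsj_3 x_\gamma x_\eta$ has exactly the shape treated in the auxiliary formula \eqref{eq:diff-case2-auxiliar}, with $\alpha<\eta<\gamma<\tau$ in place of $\alpha<\eta<\gamma<\beta$, so the computation leading to \eqref{eq:diff-case2-auxiliar} applies verbatim and yields $d(\x_\alpha\x_\gamma^n\x_\tau\ot 1)$; this is the source of the coefficients $\coef{\alpha\tau\gamma}{j}$. Next I would compute $d(\x_\alpha^2\x_\gamma^n\x_\beta\ot 1)$ by an inner induction: splitting off the last letter, $d(\x_\alpha^2\x_\gamma^n\x_\beta\ot 1)=\x_\alpha^2\x_\gamma^n\ot x_\beta-s\big(d(\x_\alpha^2\x_\gamma^n\ot 1)\,x_\beta\big)$, where $d(\x_\alpha^2\x_\gamma^n\ot 1)$ is a pure quantum linear space differential (Remark~\ref{rem:differential-q-commute}, as in Proposition~\ref{prop:qci}) because $x_\alpha$ and $x_\gamma$ $q$-commute. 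Re-expanding the products $x_\alpha x_\beta=q_{\alpha\beta}x_\beta x_\alpha+\Bsj_1 x_\tau x_\gamma$ and $x_\gamma x_\beta=q_{\gamma\beta}x_\beta x_\gamma$ introduces $x_\tau$, which is precisely where the $\x_\alpha\x_\gamma^n\x_\tau$ auxiliary enters; the relation $x_\eta x_\beta=q_{\eta\beta}x_\beta x_\eta+\Bsj_4 x_\tau^2$ produces only a $x_\tau^2$ term that is annihilated after a further application of $s$ (using $s^2=0$ and convexity, exactly as in the treatment of the terms \eqref{eq:s-annihilated-terms} in Lemma~\ref{lem:second-technique-2cocycles}), which is why $\Bsj_4$ does not appear in \eqref{eq:diff-case3-formula}.

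With these in hand, the main induction on $n$ proceeds as in Lemma~\ref{lem:diff-case2}. The base case $n=0$ is a direct computation of $d(\x_\alpha^2\x_\beta\x_\delta\ot 1)$. For the inductive step I would split off $x_\delta$, write $d(\x_\alpha^2\x_\gamma^{n+1}\x_\beta\x_\delta\ot 1)=\x_\alpha^2\x_\gamma^{n+1}\x_\beta\ot x_\delta-s\big(d(\x_\alpha^2\x_\gamma^{n+1}\x_\beta\ot 1)\,x_\delta\big)$, substitute the auxiliary formula for $d(\x_\alpha^2\x_\gamma^{n+1}\x_\beta\ot 1)$, and re-expand the products with $x_\delta$ using $x_\beta x_\delta=q_{\beta\delta}x_\delta x_\beta$ and the crucial relation $x_\eta x_\delta=q_{\eta\delta}x_\delta x_\eta+\Bsj_2 x_\gamma$. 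It is this last relation that converts the surviving $\x_\gamma^{n+2}\x_\delta\ot x_\eta$-type term into the degree-$(n+3)$ cocycle term $\x_\gamma^{n+3}\ot 1$, producing the triple product $\Bsj_1\Bsj_2\Bsj_3$. Applying the inductive hypothesis and Remark~\ref{rem:differential-q-commute}, and discarding via $s^2=0$, $x_\alpha^2=x_\gamma^2=0$ and convexity all terms on which $s$ would act as the identity on an image element, the five leading terms of \eqref{eq:diff-case3-formula} reproduce themselves, and the coefficient of $\x_\gamma^{n+3}\ot 1$ is seen to satisfy a first-order recursion in $n$ analogous to $e_{n+1}=-e_nq_{\gamma\beta}q_{\gamma\delta}+c_{n+1}$ from Lemma~\ref{lem:diff-case2}.

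The main obstacle is purely the \emph{bookkeeping}: keeping every $q$-power correct through the repeated re-expansions, and confirming that only the six displayed terms survive, all spurious contributions being killed by $s^2=0$, by $x_\alpha^2=x_\gamma^2=0$, or by the convexity of the PBW basis. The one genuinely substantive point is the closed form of the resulting scalar recursion: after factoring out the explicit $q$-power prefactor and expressing the $\widetilde{q}$'s through the root identities above, the bracketed sum must be identified with $(n+1)_{\widetilde{q}_{\gamma\alpha}\widetilde{q}_{\gamma\beta}}+\sum_{j=1}^n\coef{\alpha\tau\gamma}{j}$, which I would verify by checking that this expression satisfies the same recursion and the same initial value.
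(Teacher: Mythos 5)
Your skeleton --- the three-letter auxiliaries, an inner induction for $d(\x_\alpha^2\x_\gamma^n\x_\beta\ot 1)$, an outer induction splitting off $x_\delta$, and a scalar recursion solved by matching initial value and recursion --- is the same as the paper's, but the inductive step as you describe it cannot close, and the gap sits exactly where the sums $\coef{\alpha\tau\gamma}{j}$ must come from. When you expand $s\big(d(\x_\alpha^2\x_\gamma^{n+1}\x_\beta\ot 1)\,x_\delta\big)$, the auxiliary contains the term $-(-q_{\alpha\gamma})^{n+2}\Bsj_1\,\x_\alpha\x_\gamma^{n+1}\x_\tau\ot x_\gamma$; after multiplying by $x_\delta$ and $q$-commuting this becomes a multiple of $\x_\alpha\x_\gamma^{n+1}\x_\tau\ot x_\delta x_\gamma$, and this term is \emph{not} removable by $s^2=0$, by $x_\alpha^2=x_\gamma^2=0$, or by convexity: to evaluate $s$ on it you need the full four-letter differential $d(\x_\alpha\x_\gamma^{n+1}\x_\tau\x_\delta\ot 1)$. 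The paper supplies it by the key observation that the quintuple $\alpha<\eta<\gamma<\tau<\delta$ satisfies the hypotheses \eqref{eq:diff-case2-hypothesis} of Lemma \ref{lem:diff-case2} (via $x_\alpha x_\tau=q_{\alpha\tau}x_\tau x_\alpha+\Bsj_3 x_\gamma x_\eta$ and $x_\eta x_\delta=q_{\eta\delta}x_\delta x_\eta+\Bsj_2 x_\gamma$), so that \eqref{eq:diff-case2-formula} applies verbatim to this chain. That four-letter formula, not the three-letter auxiliary, is the true source of the $\coef{\alpha\tau\gamma}{j}$: it enters through its final term $\Bsj_2\Bsj_3\,\coef{\alpha\tau\gamma}{n}(-q_{\gamma\alpha})^{-n}\x_\gamma^{n+2}\ot 1$. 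Your attribution of these coefficients to $d(\x_\alpha\x_\gamma^n\x_\tau\ot 1)$ is wrong: that auxiliary (the analogue of \eqref{eq:diff-case2-auxiliar}) produces only single quantum integers $(n+1)_{\widetilde{q}_{\tau\gamma}}$. Consequently the recursion you would obtain --- modeled, as you say, on the two-term recursion $e_{n+1}=-e_nq_{\gamma\beta}q_{\gamma\delta}+c_{n+1}$ of Lemma \ref{lem:diff-case2} --- could never generate the summands $\sum_{j=1}^n\coef{\alpha\tau\gamma}{j}$ in the statement. The correct recursion has three sources, $c_{n+1}=q_{\gamma\beta}^{n+1}q_{\eta\gamma}^{n+2}+q_{\gamma\beta}q_{\gamma\delta}c_n-q_{\alpha\gamma}^{n+2}q_{\gamma\delta}q_{\gamma\alpha}^{-n-1}\coef{\alpha\tau\gamma}{n+1}$; the mechanism you do describe (the relation $x_\eta x_\delta$ converting the $\Bsj_1\Bsj_3\,\x_\gamma^{n+2}\ot x_\eta$ term into $\x_\gamma^{n+3}\ot 1$) accounts only for the first summand, and the third exists only because of the Lemma \ref{lem:diff-case2} input you omitted.

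A second, smaller omission of the same kind: the auxiliary also contains $-q_{\alpha\beta}(-q_{\alpha\gamma})^{n+2}\x_\alpha\x_\gamma^{n+1}\x_\beta\ot x_\alpha$, which after multiplication by $x_\delta$ yields a multiple of $\x_\alpha\x_\gamma^{n+1}\x_\beta\ot x_\delta x_\alpha$; evaluating $s$ here requires yet another four-letter formula, $d(\x_\alpha\x_\gamma^{n}\x_\beta\x_\delta\ot 1)$ (the paper's \eqref{eq:diff-case3-auxiliarbis}, proved by its own induction), which also does not appear in your plan. So the difficulty is not, as you frame it, purely bookkeeping: until both four-letter differentials are supplied, the list of discard mechanisms you give is structurally insufficient to resolve the $s$-terms arising in the inductive step.
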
 

\bigbreak
Notice that the equalities in \eqref{eq:diff-case3-hypothesis} force
\begin{align}\label{eq:diff-case3-hypothesis-roots}
\gamma + \tau &=\alpha+\beta, &  \eta+\delta &= \gamma, &
\alpha+\tau &= \gamma+\eta, & \eta+\beta &= 2\tau.
\end{align}
Thus the following equality also holds: $3\gamma=2\alpha+\beta+\delta$.

\pf
Let $n\in\N_0$. A computation similar to \eqref{eq:diff-case2-auxiliar} proves that
\begin{align}\label{eq:diff-case3-auxiliar-1}
\begin{aligned}
d(\x_{\alpha} \x_{\gamma}^{n} \x_{\tau}\ot 1) & =
\x_{\alpha} \x_{\gamma}^{n} \otimes x_{\tau}
- q_{\gamma\tau} \x_{\alpha} \x_{\gamma}^{n-1} \x_{\tau} \ot x_{\gamma}
- q_{\alpha\tau} (-q_{\alpha\gamma})^n \x_{\gamma}^{n} \x_{\tau} \ot x_{\alpha} 
\\ & \quad -\Bsj_3 (-q_{\alpha\gamma})^{n} (n+1)_{\widetilde{q}_{\tau\gamma}} \x_{\gamma}^{n+1} \ot x_{\eta},
\end{aligned}
\\\label{eq:diff-case3-auxiliar-2}
\begin{aligned}
d(\x_{\alpha} \x_{\gamma}^{n} \x_{\beta}\ot 1) & =
\x_{\alpha} \x_{\gamma}^{n} \otimes x_{\beta}
- q_{\gamma\beta} \x_{\alpha} \x_{\gamma}^{n-1} \x_{\beta} \ot x_{\gamma}
- q_{\alpha\beta} (-q_{\alpha\gamma})^n \x_{\gamma}^{n} \x_{\beta} \ot x_{\alpha} 
\\ & \quad -(-q_{\alpha\gamma})^n \Bsj_1 \x_{\gamma}^{n} \x_{\tau} \ot x_{\gamma}.
\end{aligned}
\end{align}

Now we compute more differentials:
\begin{align}\label{eq:diff-case3-auxiliar}
&\begin{aligned}
d(\x_{\alpha}^2 \x_{\gamma}^{n} \x_{\beta}\ot 1)  &=
\x_{\alpha}^2 \x_{\gamma}^{n}\otimes x_{\beta} -q_{\gamma\beta} \x_{\alpha}^2 \x_{\gamma}^{n-1}\x_{\beta} \ot x_{\gamma} -q_{\alpha\beta}(-q_{\alpha\gamma})^{n} \x_{\alpha} \x_{\gamma}^n\x_{\beta}\ot x_{\alpha}
\\ & + q_{\gamma\beta}^n q_{\eta\gamma}^{n+1} \Bsj_1 \Bsj_3 \x_{\gamma}^{n+2} \ot x_{\eta}
- (-q_{\alpha\gamma})^{n+1} \Bsj_1 \x_{\alpha} \x_{\gamma}^{n} \x_{\tau} \ot x_{\gamma},
\end{aligned}
\\ \label{eq:diff-case3-auxiliarbis}
&\begin{aligned}
d(\x_{\alpha} \x_{\gamma}^{n} & \x_{\beta} \x_{\delta}\ot 1)  = \x_{\alpha} \x_{\gamma}^{n} \x_{\beta} \ot x_{\delta}
-q_{\beta\delta}\x_{\alpha} \x_{\gamma}^{n} \x_{\delta}\ot x_{\beta}
+q_{\gamma\beta}q_{\gamma\delta} \x_{\alpha} \x_{\gamma}^{n-1} \x_{\beta} \x_{\delta}\ot x_{\gamma}
\\ & +(-q_{\alpha\gamma})^nq_{\alpha\beta}q_{\alpha\delta} \x_{\gamma}^{n} \x_{\beta} \x_{\delta}\ot x_{\alpha}
+ (-q_{\alpha\gamma})^{n} q_{\gamma\delta} \Bsj_1 \x_{\gamma}^{n}\x_{\tau} \x_{\delta} \ot x_{\gamma},
\end{aligned}
\end{align}

First we prove \eqref{eq:diff-case3-auxiliar} by induction on $n$. For $n=0$,
\begin{align*}
d(\x_{\alpha}^2 &\x_{\beta}\ot 1)  = \x_{\alpha}^2 \ot x_{\beta}- 
s \big(q_{\alpha \beta} \x_{\alpha} \ot x_{\beta}x_{\alpha}
+\Bsj_1 \x_{\alpha} \ot x_{\tau}x_{\gamma}
\big)
\\
& = \x_{\alpha}^2 \ot x_{\beta} - \Bsj_1 \x_{\alpha} \x_{\tau} \ot x_{\gamma} - q_{\alpha \beta} \x_{\alpha} \x_{\beta} \ot x_{\alpha}
+ s \big( q_{\eta\gamma} \Bsj_1\Bsj_3 \x_{\gamma} \ot x_{\gamma} x_{\eta}
\\ & \quad
+(q_{\alpha \tau}q_{\alpha\gamma} 
+ q_{\alpha \beta})\Bsj_1 \x_{\tau} \ot  x_{\gamma}x_{\alpha}\big)
\\
& = \x_{\alpha}^2 \ot x_{\beta} - \Bsj_1 \x_{\alpha} \x_{\tau} \ot x_{\gamma} - q_{\alpha \beta} \x_{\alpha} \x_{\beta} \ot x_{\alpha}
+ q_{\eta\gamma} \Bsj_1\Bsj_3 \x_{\gamma}^2 \ot x_{\eta}.
\end{align*}
Assume that \eqref{eq:diff-case3-auxiliar} holds for $n$. By inductive hypothesis, $x_{\gamma}^2=0$, \eqref{eq:diff-case3-auxiliar-1}
and \eqref{eq:diff-case3-auxiliar-2}:
\begin{align*}
d(\x_{\alpha}^2 & \x_{\gamma}^{n+1}\x_{\beta}\ot 1)  = \x_{\alpha}^2 \x_{\gamma}^{n+1} \ot x_{\beta} - s \big( q_{\gamma\beta} \x_{\alpha}^2 \x_{\gamma}^{n} \ot x_{\beta} x_{\gamma} + 
(-q_{\alpha\gamma})^{n+1}\x_{\alpha} \x_{\gamma}^{n+1} \ot x_{\alpha} x_{\beta} \big)
\\
&= \x_{\alpha}^2 \x_{\gamma}^{n+1} \ot x_{\beta} -
q_{\gamma\beta} \x_{\alpha}^2 \x_{\gamma}^{n} \x_{\beta} \ot  x_{\gamma} - s \big( q_{\gamma\beta}q_{\alpha\beta} (-q_{\alpha\gamma})^{n} \x_{\alpha} \x_{\gamma}^n\x_{\beta}\ot x_{\alpha}x_{\gamma}
\\ & \quad - q_{\gamma\beta}^{n+1} q_{\eta\gamma}^{n+1}\Bsj_1 \Bsj_3 \x_{\gamma}^{n+2} \ot x_{\eta}x_{\gamma}
+ (-q_{\alpha\gamma})^{n+1} q_{\alpha \beta} \x_{\alpha} \x_{\gamma}^{n+1} \ot x_{\beta}x_{\alpha}
\\ & \quad 
+ (-q_{\alpha\gamma})^{n+1} \Bsj_1 \x_{\alpha} \x_{\gamma}^{n+1} \ot  x_{\tau}x_{\gamma} \big)
\\
&= \x_{\alpha}^2 \x_{\gamma}^{n+1} \ot x_{\beta} -
q_{\gamma\beta} \x_{\alpha}^2 \x_{\gamma}^{n} \x_{\beta} \ot  x_{\gamma} - (-q_{\alpha\gamma})^{n+1} q_{\alpha \beta} \x_{\alpha} \x_{\gamma}^{n+1}\x_{\beta} \ot x_{\alpha} 
\\ & \quad - s \big(
(-q_{\alpha\gamma})^{n+1} \Bsj_1 \x_{\alpha} \x_{\gamma}^{n+1} \ot  x_{\tau}x_{\gamma} -q_{\gamma\beta}^{n+1} q_{\eta\gamma}^{n+1} \Bsj_1 \Bsj_3 \x_{\gamma}^{n+2} \ot x_{\eta}x_{\gamma}
\\ & 
+(-q_{\alpha\gamma})^{2n+2} q_{\alpha \beta} \Bsj_1 \x_{\gamma}^{n+1} \x_{\tau} \ot x_{\gamma}x_{\alpha}
\big)
\\
&= \x_{\alpha}^2 \x_{\gamma}^{n+1} \ot x_{\beta} -
q_{\gamma\beta} \x_{\alpha}^2 \x_{\gamma}^{n} \x_{\beta} \ot  x_{\gamma} - (-q_{\alpha\gamma})^{n+1} q_{\alpha \beta} \x_{\alpha} \x_{\gamma}^{n+1}\x_{\beta} \ot x_{\alpha} 
\\ & \quad
- (-q_{\alpha\gamma})^{n+1} \Bsj_1 \x_{\alpha} \x_{\gamma}^{n+1} \x_{\tau} \ot  x_{\gamma} + q_{\gamma\beta}^{n+1} q_{\eta\gamma}^{n+2} \Bsj_1 \Bsj_3 s \big( \x_{\gamma}^{n+2} \ot x_{\gamma} x_{\eta}) \big),
\end{align*}
so \eqref{eq:diff-case3-auxiliar} follows since 
$s \big( \x_{\gamma}^{n+2} \ot x_{\gamma} x_{\eta}) = \x_{\gamma}^{n+3} \ot x_{\eta}$.
\medbreak

Now we prove \eqref{eq:diff-case3-auxiliarbis} by induction on $n$. For $n=0$,
\begin{align*}
d(\x_{\alpha} &\x_{\beta}\x_{\delta} \ot 1)  = \x_{\alpha}\x_{\beta}\ot x_{\delta} - s \big( q_{\beta\delta}
\x_{\alpha} \ot x_{\delta} x_{\beta} 
- q_{\alpha \beta}q_{\alpha\delta} \x_{\beta} \ot x_{\delta}x_{\alpha}
- q_{\gamma\delta} \Bsj_1 \x_{\tau} \ot x_{\delta}x_{\gamma} \big)
\\ & = \x_{\alpha}\x_{\beta}\ot x_{\delta} -q_{\beta\delta}
\x_{\alpha} \x_{\delta} \ot  x_{\beta} - s \big( 
q_{\beta\delta}q_{\alpha\delta} \x_{\delta} \ot (q_{\alpha \beta} x_{\beta}x_{\alpha} +\Bsj_1 x_{\tau}x_{\gamma})
\\ & \quad
- q_{\alpha \beta}q_{\alpha\delta} \x_{\beta} \ot x_{\delta}x_{\alpha}
- q_{\gamma\delta} \Bsj_1 \x_{\tau} \ot x_{\delta}x_{\gamma} \big)
\\ & = \x_{\alpha}\x_{\beta}\ot x_{\delta} -q_{\beta\delta}
\x_{\alpha} \x_{\delta} \ot  x_{\beta} 
+ q_{\gamma\delta} \Bsj_1 \x_{\tau} \x_{\delta} \ot x_{\gamma}
+ q_{\alpha \beta}q_{\alpha\delta} \x_{\beta} \x_{\delta} \ot x_{\alpha}.
\end{align*}
Now assume that \eqref{eq:diff-case3-auxiliarbis} holds for $n$. Using \eqref{eq:diff-case3-auxiliar-2}, Remark \ref{rem:differential-q-commute} three times, inductive hypothesis, $x_{\gamma}^2=0=x_{\alpha}^2$,
\begin{align*}
d(\x_{\alpha} & \x_{\gamma}^{n+1} \x_{\beta}\x_{\delta} \ot 1)  = \x_{\alpha}\x_{\gamma}^{n+1}\x_{\beta}\ot x_{\delta} - 
s\big(q_{\beta\delta} \x_{\alpha} \x_{\gamma}^{n+1} \otimes x_{\delta}x_{\beta}
- q_{\gamma\beta} q_{\gamma\delta} \x_{\alpha} \x_{\gamma}^{n} \x_{\beta} \ot x_{\delta}x_{\gamma}
\\ & \quad 
- q_{\alpha\beta} (-q_{\alpha\gamma})^{n+1} q_{\alpha\delta} \x_{\gamma}^{n+1} \x_{\beta} \ot x_{\delta}x_{\alpha} 
-(-q_{\alpha\gamma})^{n+1} q_{\gamma\delta} \Bsj_1 \x_{\gamma}^{n+1} \x_{\tau} \ot x_{\delta}x_{\gamma}
\big)
\\ & = \x_{\alpha}\x_{\gamma}^{n+1}\x_{\beta}\ot x_{\delta} -
q_{\beta\delta} \x_{\alpha} \x_{\gamma}^{n+1} \x_{\delta} \otimes x_{\beta}
- s\big(- q_{\gamma\beta} q_{\gamma\delta} \x_{\alpha} \x_{\gamma}^{n} \x_{\beta} \ot x_{\delta}x_{\gamma}
\\ & \quad
+ q_{\beta\delta} q_{\gamma\delta} q_{\gamma\beta} \x_{\alpha} \x_{\gamma}^{n} \x_{\delta} \ot x_{\beta} x_{\gamma}
+ q_{\alpha\delta} (-q_{\alpha\gamma})^{n+1} q_{\beta\delta}
\x_{\gamma}^{n+1} \x_{\delta} \ot (q_{\alpha \beta} x_{\beta}x_{\alpha} +\Bsj_1 x_{\tau}x_{\gamma})
\\ & \quad 
- q_{\alpha\beta} (-q_{\alpha\gamma})^{n+1} q_{\alpha\delta} \x_{\gamma}^{n+1} \x_{\beta} \ot x_{\delta}x_{\alpha} 
-(-q_{\alpha\gamma})^{n+1} q_{\gamma\delta} \Bsj_1 \x_{\gamma}^{n+1} \x_{\tau} \ot x_{\delta}x_{\gamma} \big)
\\ & = \x_{\alpha}\x_{\gamma}^{n+1}\x_{\beta}\ot x_{\delta} -
q_{\beta\delta} \x_{\alpha} \x_{\gamma}^{n+1} \x_{\delta} \otimes x_{\beta}
+q_{\gamma\beta} q_{\gamma\delta} \x_{\alpha} \x_{\gamma}^{n} \x_{\beta} \x_{\delta} \ot x_{\gamma}
\\ &\quad 
- s\big(
-q_{\gamma\beta} q_{\gamma\delta} (-q_{\alpha\gamma})^{n+1}q_{\alpha\beta}q_{\alpha\delta} \x_{\gamma}^{n} \x_{\beta} \x_{\delta}\ot x_{\gamma}x_{\alpha}
\\ & \quad
+ q_{\alpha \beta} q_{\alpha\delta} (-q_{\alpha\gamma})^{n+1} q_{\beta\delta}
\x_{\gamma}^{n+1} \x_{\delta} \ot  x_{\beta}x_{\alpha}
+ q_{\alpha\delta} (-q_{\alpha\gamma})^{n+1} q_{\beta\delta} \Bsj_1
\x_{\gamma}^{n+1} \x_{\delta} \ot  x_{\tau}x_{\gamma}
\\ & \quad 
- q_{\alpha\beta} (-q_{\alpha\gamma})^{n+1} q_{\alpha\delta} \x_{\gamma}^{n+1} \x_{\beta} \ot x_{\delta}x_{\alpha} 
-(-q_{\alpha\gamma})^{n+1} q_{\gamma\delta} \Bsj_1 \x_{\gamma}^{n+1} \x_{\tau} \ot x_{\delta}x_{\gamma} \big)
\\ & = \x_{\alpha}\x_{\gamma}^{n+1}\x_{\beta}\ot x_{\delta} -
q_{\beta\delta} \x_{\alpha} \x_{\gamma}^{n+1} \x_{\delta} \otimes x_{\beta}
+q_{\gamma\beta} q_{\gamma\delta} \x_{\alpha} \x_{\gamma}^{n} \x_{\beta} \x_{\delta} \ot x_{\gamma}
\\ &\quad 
+(-q_{\alpha\gamma})^{n+1} q_{\gamma\delta} \Bsj_1 \x_{\gamma}^{n+1} \x_{\tau} \x_{\delta} \ot x_{\gamma}
- s\big(
-q_{\gamma\beta} q_{\gamma\delta} (-q_{\alpha\gamma})^{n+1}q_{\alpha\beta}q_{\alpha\delta} \x_{\gamma}^{n} \x_{\beta} \x_{\delta}\ot x_{\gamma}x_{\alpha}
\\ & \quad
+ q_{\alpha \beta} q_{\alpha\delta} (-q_{\alpha\gamma})^{n+1} q_{\beta\delta}
\x_{\gamma}^{n+1} \x_{\delta} \ot  x_{\beta}x_{\alpha}
- q_{\alpha\beta} (-q_{\alpha\gamma})^{n+1} q_{\alpha\delta} \x_{\gamma}^{n+1} \x_{\beta} \ot x_{\delta}x_{\alpha} \big)
\\ & = \x_{\alpha}\x_{\gamma}^{n+1}\x_{\beta}\ot x_{\delta} -
q_{\beta\delta} \x_{\alpha} \x_{\gamma}^{n+1} \x_{\delta} \otimes x_{\beta}
+q_{\gamma\beta} q_{\gamma\delta} \x_{\alpha} \x_{\gamma}^{n} \x_{\beta} \x_{\delta} \ot x_{\gamma}
\\ &\quad 
+(-q_{\alpha\gamma})^{n+1} q_{\gamma\delta} \Bsj_1 \x_{\gamma}^{n+1} \x_{\tau} \x_{\delta} \ot x_{\gamma}
+q_{\alpha\beta} (-q_{\alpha\gamma})^{n+1} q_{\alpha\delta} \x_{\gamma}^{n+1} \x_{\beta}\x_{\delta} \ot x_{\alpha}.
\end{align*}

\medbreak

Finally we prove \eqref{eq:diff-case3-formula} by induction on $n$. Notice that root vectors corresponding to $\alpha<\eta<\gamma<\tau<\delta$ satisfy \eqref{eq:diff-case2-hypothesis}, so $d(\x_{\alpha}\x_{\gamma}^n\x_{\tau}\x_{\delta} \ot 1)$ is given by  \eqref{eq:diff-case2-formula}. We claim that
\begin{align*}
d(\x_{\alpha}^2 \x_{\gamma}^{n} & \x_{\beta} \x_{\delta}\ot 1)  = 
\x_{\alpha}^2 \x_{\gamma}^{n} \x_{\beta} \ot x_{\delta}
-q_{\beta\delta}\x_{\alpha}^2 \x_{\gamma}^{n} \x_{\delta}\ot x_{\beta}
+q_{\gamma\beta}q_{\gamma\delta} \x_{\alpha}^2 \x_{\gamma}^{n-1} \x_{\beta} \x_{\delta}\ot x_{\gamma}
\\ & +(-q_{\alpha\gamma})^nq_{\alpha\beta}q_{\alpha\delta} \x_{\alpha}\x_{\gamma}^{n} \x_{\beta} \x_{\delta}\ot x_{\alpha}
+ q_{\gamma\beta}^{n} q_{\eta\gamma}^{n+1}q_{\eta\delta} \Bsj_1\Bsj_3 \x_{\gamma}^{n+2} \x_{\delta} \ot x_{\eta} + c_n \Bsj_1\Bsj_2\Bsj_3 \x_{\gamma}^{n+3} \ot 1
\end{align*}
for some scalar $c_n$. For $n=0$,
\begin{align*}
d(\x_{\alpha}^2 &\x_{\beta}\x_{\delta} \ot 1)  = \x_{\alpha}^2 \x_{\beta}\ot x_{\delta} - s \big( q_{\beta\delta}
\x_{\alpha}^2 \ot x_{\delta} x_{\beta} 
- q_{\gamma\delta} \Bsj_1 \x_{\alpha} \x_{\tau} \ot x_{\delta}x_{\gamma} 
- q_{\alpha \beta}q_{\alpha\delta} \x_{\alpha} \x_{\beta} \ot x_{\delta}x_{\alpha}
\\ & \quad + q_{\eta\gamma} \Bsj_1\Bsj_3 \x_{\gamma}^2 \ot (q_{\eta\delta}  x_{\delta}x_{\eta} + \Bsj_2 x_{\gamma}) \big)
\\ &
= \x_{\alpha}^2 \x_{\beta}\ot x_{\delta}- q_{\beta\delta}
\x_{\alpha}^2 \x_{\delta} \ot  x_{\beta} - s \big( 
- q_{\gamma\delta} \Bsj_1 \x_{\alpha} \x_{\tau} \ot x_{\delta}x_{\gamma} 
- q_{\alpha \beta}q_{\alpha\delta} \x_{\alpha} \x_{\beta} \ot x_{\delta}x_{\alpha}
\\ & \quad + q_{\eta\gamma}q_{\eta\delta} \Bsj_1\Bsj_3 \x_{\gamma}^2 \ot   x_{\delta}x_{\eta} 
+ q_{\eta\gamma} \Bsj_1\Bsj_2\Bsj_3 \x_{\gamma}^2 \ot x_{\gamma}
+ q_{\beta\delta} q_{\alpha\delta} \x_{\alpha}\x_{\delta} \ot (q_{\alpha \beta} x_{\beta}x_{\alpha} +\Bsj_1 x_{\tau}x_{\gamma}) \big)
\\ &
= \x_{\alpha}^2 \x_{\beta}\ot x_{\delta}- q_{\beta\delta}
\x_{\alpha}^2 \x_{\delta} \ot  x_{\beta} 
+ q_{\gamma\delta} \Bsj_1 \x_{\alpha} \x_{\tau} \x_{\delta} \ot x_{\gamma} - s \big( - q_{\alpha \beta}q_{\alpha\delta} \x_{\alpha} \x_{\beta} \ot x_{\delta}x_{\alpha} 
\\
& \quad 
+q_{\alpha\tau} q_{\alpha\delta}q_{\gamma\delta} \Bsj_1 \x_{\tau}\x_{\delta} \ot x_{\alpha}x_{\gamma}
+q_{\eta\delta} q_{\gamma\delta} \Bsj_1 \Bsj_3 \x_{\gamma}\x_{\delta} \ot x_{\eta}x_{\gamma}
+ q_{\eta\gamma}q_{\eta\delta} \Bsj_1\Bsj_3 \x_{\gamma}^2 \ot   x_{\delta}x_{\eta}
\\
& \quad  
+ (q_{\eta\gamma}+q_{\gamma\delta}) \Bsj_1\Bsj_2\Bsj_3 \x_{\gamma}^2 \ot x_{\gamma}
+ q_{\beta\delta} q_{\alpha\delta} q_{\alpha \beta} \x_{\alpha}\x_{\delta} \ot  x_{\beta}x_{\alpha} \big)
\\ &
= \x_{\alpha}^2 \x_{\beta}\ot x_{\delta}- q_{\beta\delta}
\x_{\alpha}^2 \x_{\delta} \ot  x_{\beta} 
+ q_{\gamma\delta} \Bsj_1 \x_{\alpha} \x_{\tau} \x_{\delta} \ot x_{\gamma} 
+ q_{\alpha \beta}q_{\alpha\delta} \x_{\alpha} \x_{\beta} \x_{\delta} \ot x_{\alpha}
\\ & \quad
+ (q_{\eta\gamma}+q_{\gamma\delta}) \Bsj_1\Bsj_2\Bsj_3 \x_{\gamma}^3 \ot 1 
- s \big((q_{\alpha \beta}+q_{\alpha\tau} q_{\alpha\gamma}) q_{\alpha\delta}q_{\gamma\delta} \Bsj_1 \x_{\tau}\x_{\delta} \ot x_{\gamma}x_{\alpha}
\\
& \quad 
+q_{\eta\delta} q_{\gamma\delta} \Bsj_1 \Bsj_3 \x_{\gamma}\x_{\delta} \ot x_{\eta}x_{\gamma}
+ q_{\eta\gamma}q_{\eta\delta} \Bsj_1\Bsj_3 \x_{\gamma}^2 \ot   x_{\delta}x_{\eta} \big)
\\ &
= \x_{\alpha}^2 \x_{\beta}\ot x_{\delta}- q_{\beta\delta}
\x_{\alpha}^2 \x_{\delta} \ot  x_{\beta} 
+ q_{\gamma\delta} \Bsj_1 \x_{\alpha} \x_{\tau} \x_{\delta} \ot x_{\gamma} 
+ q_{\alpha \beta}q_{\alpha\delta} \x_{\alpha} \x_{\beta} \x_{\delta} \ot x_{\alpha}
\\ & \quad
+ q_{\gamma\delta}(1-\widetilde{q}_{\gamma\delta}^{\, -1}) \Bsj_1\Bsj_2\Bsj_3 \x_{\gamma}^3 \ot 1 
+q_{\eta\gamma}q_{\eta\delta} \Bsj_1\Bsj_3 \x_{\gamma}^2 \x_{\delta} \ot   x_{\eta}.
\end{align*}
Now assume that \eqref{eq:diff-case3-formula} holds for $n$. Using \eqref{eq:diff-case3-auxiliar}, Remark \ref{rem:differential-q-commute}, inductive hypothesis, $x_{\gamma}^2=0=x_{\alpha}^2$, \eqref{eq:diff-case2-formula}, \eqref{eq:diff-case3-auxiliarbis},
\begin{align*}
d(\x_{\alpha}^2 & \x_{\gamma}^{n+1} \x_{\beta}\x_{\delta} \ot 1)  = \x_{\alpha}^2\x_{\gamma}^{n+1}\x_{\beta}\ot x_{\delta} - 
s\big(
q_{\beta\delta} \x_{\alpha}^2 \x_{\gamma}^{n+1}\otimes x_{\delta}x_{\beta} 
-q_{\gamma\beta} q_{\gamma\delta} \x_{\alpha}^2 \x_{\gamma}^{n} \x_{\beta} \ot x_{\delta}x_{\gamma} 
\\ & \quad 
-q_{\alpha\beta}(-q_{\alpha\gamma})^{n+1} q_{\alpha\delta} \x_{\alpha} \x_{\gamma}^{n+1}\x_{\beta}\ot x_{\delta}x_{\alpha}
- (-q_{\alpha\gamma})^{n+2} q_{\gamma\delta} \Bsj_1 \x_{\alpha} \x_{\gamma}^{n+1} \x_{\tau} \ot x_{\delta}x_{\gamma}
\\ & \quad 
+ q_{\gamma\beta}^{n+1} q_{\eta\gamma}^{n+2} \Bsj_1 \Bsj_2 \Bsj_3 \x_{\gamma}^{n+3} \ot  x_{\gamma}
+ q_{\gamma\beta}^{n+1} q_{\eta\gamma}^{n+2}q_{\eta\delta} \Bsj_1 \Bsj_3 \x_{\gamma}^{n+3} \ot x_{\delta}x_{\eta} \big)
\\
& = \x_{\alpha}^2\x_{\gamma}^{n+1}\x_{\beta}\ot x_{\delta} -
q_{\beta\delta} \x_{\alpha}^2 \x_{\gamma}^{n+1} \x_{\delta} \otimes x_{\beta} 
-s\big(
q_{\gamma\beta}q_{\gamma\delta}q_{\beta\delta}\x_{\alpha}^2 \x_{\gamma}^{n} \x_{\delta} \ot x_{\beta}x_{\gamma}
\\ & \quad 
+q_{\alpha\delta} (-q_{\alpha\gamma})^{n+1}q_{\beta\delta} \x_{\alpha} \x_{\gamma}^{n+1} \x_{\delta} \ot (q_{\alpha \beta} x_{\beta}x_{\alpha} +\Bsj_1 x_{\tau}x_{\gamma})  
-q_{\gamma\beta} q_{\gamma\delta} \x_{\alpha}^2 \x_{\gamma}^{n} \x_{\beta} \ot x_{\delta}x_{\gamma} 
\\ & \quad 
-q_{\alpha\beta}(-q_{\alpha\gamma})^{n+1} q_{\alpha\delta} \x_{\alpha} \x_{\gamma}^{n+1}\x_{\beta}\ot x_{\delta}x_{\alpha}
- (-q_{\alpha\gamma})^{n+2} q_{\gamma\delta} \Bsj_1 \x_{\alpha} \x_{\gamma}^{n+1} \x_{\tau} \ot x_{\delta}x_{\gamma}
\\ & \quad 
+ q_{\gamma\beta}^{n+1} q_{\eta\gamma}^{n+2} \Bsj_1 \Bsj_2 \Bsj_3 \x_{\gamma}^{n+3} \ot  x_{\gamma}
+ q_{\gamma\beta}^{n+1} q_{\eta\gamma}^{n+2}q_{\eta\delta} \Bsj_1 \Bsj_3 \x_{\gamma}^{n+3} \ot x_{\delta}x_{\eta} \big)
\\
& = \x_{\alpha}^2\x_{\gamma}^{n+1}\x_{\beta}\ot x_{\delta} -
q_{\beta\delta} \x_{\alpha}^2 \x_{\gamma}^{n+1} \x_{\delta} \otimes x_{\beta} 
+q_{\gamma\beta} q_{\gamma\delta} \x_{\alpha}^2 \x_{\gamma}^{n} \x_{\beta} \x_{\delta} \ot x_{\gamma}
\\ &\quad 
-s\big(
-q_{\gamma\beta} q_{\gamma\delta}  
(-q_{\alpha\gamma})^{n+1}q_{\alpha\beta}q_{\alpha\delta} \x_{\alpha}\x_{\gamma}^{n} \x_{\beta} \x_{\delta}\ot x_{\gamma}x_{\alpha}
\\ & \quad
+ q_{\gamma\beta} q_{\gamma\delta} c_n \Bsj_1\Bsj_2\Bsj_3 \x_{\gamma}^{n+3} \ot x_{\gamma} 
+ q_{\gamma\beta} q_{\gamma\delta}q_{\eta\gamma} q_{\gamma\beta}^{n} q_{\eta\gamma}^{n+1}q_{\eta\delta} \Bsj_1\Bsj_3 \x_{\gamma}^{n+2} \x_{\delta} \ot x_{\gamma}x_{\eta}
\\ & \quad
+ q_{\alpha \beta}q_{\alpha\delta} (-q_{\alpha\gamma})^{n+1}q_{\beta\delta} \x_{\alpha} \x_{\gamma}^{n+1} \x_{\delta} \ot  x_{\beta}x_{\alpha} 
+q_{\alpha\delta} (-q_{\alpha\gamma})^{n+1}q_{\beta\delta}\Bsj_1 \x_{\alpha} \x_{\gamma}^{n+1} \x_{\delta} \ot  x_{\tau}x_{\gamma}
\\ & \quad 
-q_{\alpha\beta}(-q_{\alpha\gamma})^{n+1} q_{\alpha\delta} \x_{\alpha} \x_{\gamma}^{n+1}\x_{\beta}\ot x_{\delta}x_{\alpha}
- (-q_{\alpha\gamma})^{n+2} q_{\gamma\delta} \Bsj_1 \x_{\alpha} \x_{\gamma}^{n+1} \x_{\tau} \ot x_{\delta}x_{\gamma}
\\ & \quad 
+ q_{\gamma\beta}^{n+1} q_{\eta\gamma}^{n+2} \Bsj_1 \Bsj_2 \Bsj_3 \x_{\gamma}^{n+3} \ot  x_{\gamma}
+ q_{\gamma\beta}^{n+1} q_{\eta\gamma}^{n+2}q_{\eta\delta} \Bsj_1 \Bsj_3 \x_{\gamma}^{n+3} \ot x_{\delta}x_{\eta} \big)
\\
& = \x_{\alpha}^2\x_{\gamma}^{n+1}\x_{\beta}\ot x_{\delta} -
q_{\beta\delta} \x_{\alpha}^2 \x_{\gamma}^{n+1} \x_{\delta} \otimes x_{\beta} 
+q_{\gamma\beta} q_{\gamma\delta} \x_{\alpha}^2 \x_{\gamma}^{n} \x_{\beta} \x_{\delta} \ot x_{\gamma}
\\ &\quad 
+(-q_{\alpha\gamma})^{n+2} q_{\gamma\delta} \Bsj_1 \x_{\alpha} \x_{\gamma}^{n+1} \x_{\tau} \x_{\delta} \ot x_{\gamma}
-s\big(
-q_{\gamma\beta} q_{\gamma\delta}  
(-q_{\alpha\gamma})^{n+1}q_{\alpha\beta}q_{\alpha\delta} \x_{\alpha}\x_{\gamma}^{n} \x_{\beta} \x_{\delta}\ot x_{\gamma}x_{\alpha}
\\ & \quad
+ q_{\gamma\beta} q_{\gamma\delta} c_n \Bsj_1\Bsj_2\Bsj_3 \x_{\gamma}^{n+3} \ot x_{\gamma} 
+ q_{\gamma\beta} q_{\gamma\delta}q_{\eta\gamma} q_{\gamma\beta}^{n} q_{\eta\gamma}^{n+1}q_{\eta\delta} \Bsj_1\Bsj_3 \x_{\gamma}^{n+2} \x_{\delta} \ot x_{\gamma}x_{\eta}
\\ & \quad
+ q_{\alpha \beta}q_{\alpha\delta} (-q_{\alpha\gamma})^{n+1}q_{\beta\delta} \x_{\alpha} \x_{\gamma}^{n+1} \x_{\delta} \ot  x_{\beta}x_{\alpha} 
-q_{\alpha\beta}(-q_{\alpha\gamma})^{n+1} q_{\alpha\delta} \x_{\alpha} \x_{\gamma}^{n+1}\x_{\beta}\ot x_{\delta}x_{\alpha}
\\ & \quad 
+ q_{\gamma\beta}^{n+1} q_{\eta\gamma}^{n+2} \Bsj_1 \Bsj_2 \Bsj_3 \x_{\gamma}^{n+3} \ot  x_{\gamma}
+ q_{\gamma\beta}^{n+1} q_{\eta\gamma}^{n+2}q_{\eta\delta} \Bsj_1 \Bsj_3 \x_{\gamma}^{n+3} \ot x_{\delta}x_{\eta} 
\\ & \quad 
- q_{\gamma\delta} q_{\eta\gamma}q_{\eta\delta} q_{\alpha\gamma}^{2n+3} (n+2)_{\widetilde{q}_{\tau\gamma}} \Bsj_1 \Bsj_3  \x_{\gamma}^{n+2} \x_{\delta}\ot x_{\gamma}x_{\eta}
-q_{\gamma\delta} q_{\alpha\gamma}^{2n+4} q_{\alpha\tau}q_{\alpha\delta} \Bsj_1 \x_{\gamma}^{n+1} \x_{\tau} \x_{\delta}\ot x_{\gamma}x_{\alpha}
\\ & \quad 
+ (-q_{\alpha\gamma})^{n+2} q_{\gamma\delta} \Bsj_1\Bsj_2 \Bsj_3 (-q_{\gamma\alpha})^{-n-1} \coef{\alpha\tau\gamma}{n+1} \x_{\gamma}^{n+3} \ot x_{\gamma} \big)
\\
& = \x_{\alpha}^2\x_{\gamma}^{n+1}\x_{\beta}\ot x_{\delta} -
q_{\beta\delta} \x_{\alpha}^2 \x_{\gamma}^{n+1} \x_{\delta} \otimes x_{\beta} 
+q_{\gamma\beta} q_{\gamma\delta} \x_{\alpha}^2 \x_{\gamma}^{n} \x_{\beta} \x_{\delta} \ot x_{\gamma}
\\ &\quad 
+(-q_{\alpha\gamma})^{n+2} q_{\gamma\delta} \Bsj_1 \x_{\alpha} \x_{\gamma}^{n+1} \x_{\tau} \x_{\delta} \ot x_{\gamma}
+q_{\alpha\beta}(-q_{\alpha\gamma})^{n+1} q_{\alpha\delta} \x_{\alpha} \x_{\gamma}^{n+1}\x_{\beta}\x_{\delta}\ot x_{\alpha}
\\ & \quad 
-s\big(q_{\gamma\beta}^{n+1} q_{\eta\gamma}^{n+2}q_{\eta\delta} \Bsj_1 \Bsj_3 \x_{\gamma}^{n+3} \ot x_{\delta}x_{\eta}
\\ & \quad 
+ (q_{\gamma\beta}  q_{\gamma\beta}^{n} q_{\eta\gamma}^{n+1}q_{\eta\delta} 
- q_{\eta\delta} q_{\alpha\gamma}^{2n+3} (n+2)_{\widetilde{q}_{\tau\gamma}}) q_{\gamma\delta}q_{\eta\gamma} \Bsj_1 \Bsj_3  \x_{\gamma}^{n+2} \x_{\delta}\ot x_{\gamma}x_{\eta} 
\\ & \quad 
+ \Big(q_{\gamma\beta}^{n+1} q_{\eta\gamma}^{n+2} 
+q_{\gamma\beta} q_{\gamma\delta} c_n 
- q_{\alpha\gamma}^{n+2} q_{\gamma\delta} q_{\gamma\alpha}^{-n-1} \coef{\alpha\tau\gamma}{n+1} \Big) \Bsj_1\Bsj_2 \Bsj_3
\x_{\gamma}^{n+3} \ot x_{\gamma} \big).
\end{align*}
Hence the claim follows using Remark \ref{rem:differential-q-commute} and that $d(\x_{\gamma}^{n+4}\ot 1) = \x_{\gamma}^{n+3}\ot x_{\gamma}$. The scalars $c_n$ are defined recursively by the equation:
\begin{align*}
c_{n+1} &= q_{\gamma\beta}^{n+1} q_{\eta\gamma}^{n+2} 
+q_{\gamma\beta} q_{\gamma\delta} c_n 
- q_{\alpha\gamma}^{n+2} q_{\gamma\delta} q_{\gamma\alpha}^{-n-1} \coef{\alpha\tau\gamma}{n+1}.
\end{align*}
Thus \eqref{eq:diff-case3-formula} follows using \eqref{eq:diff-case3-hypothesis-roots} to express all the roots in terms of $\alpha$, $\beta$, $\gamma$.
\epf

\begin{lema}\label{lem:diff-case4}
Let $\alpha < \beta <\gamma < \tau < \eta <\delta $ be positive roots
such that $N_{\gamma}=2$ and the relations among the corresponding root vectors take the form
\begin{align}\label{eq:diff-case4-hypothesis}
\begin{aligned}
x_{\alpha}x_{\delta} &= q_{\alpha \delta} x_{\delta}x_{\alpha} +\Bsj_1 x_{\eta}x_{\gamma}, &
x_{\beta}x_{\delta} &= q_{\beta\delta}  x_{\delta}x_{\beta} + \Bsj_2 x_{\eta} x_{\tau},
\\
x_{\gamma}x_{\delta} &= q_{\gamma \delta} x_{\delta}x_{\gamma} +\Bsj_3 x_{\eta}^2 x_{\tau}, &
x_{\alpha}x_{\tau} &= q_{\alpha\tau} x_{\tau}x_{\alpha} + \Bsj_4 x_{\gamma} x_{\beta},
\\
x_{\beta}x_{\eta} &= q_{\beta\eta} x_{\eta}x_{\beta} +\Bsj_5 x_{\gamma},
\end{aligned}
\end{align}
for some scalars $\Bsj_j\in\Bbbk$ and the other pairs of root vectors $q$-commute.
Then, for all $n\geq 0$, 
\begin{align}\label{eq:diff-case4-formula}
&\begin{aligned}
d( & \x_{\alpha} \x_{\beta} \x_{\gamma}^{n} \x_{\delta}\ot 1)  = 
\x_{\alpha} \x_{\beta}\x_{\gamma}^{n} \ot x_{\delta} 
-q_{\gamma\delta}\x_{\alpha} \x_{\beta}\x_{\gamma}^{n-1} \x_{\delta} \ot x_{\gamma} 
\\ &
- (-q_{\beta\gamma})^{n} q_{\beta\delta} \x_{\alpha} \x_{\gamma}^{n}\x_{\delta} \ot  x_{\beta} +q_{\alpha \beta} (-q_{\alpha\gamma})^{n} q_{\alpha \delta} \x_{\beta}\x_{\gamma}^{n} \x_{\delta} \ot x_{\alpha}
\\ &
-\big(q_{\gamma\eta}^{n-1}(n)_{-\frac{q_{\beta\gamma}}{q_{\gamma\eta}}} \Bsj_3\Bsj_5 +(-q_{\beta\gamma})^{n}\Bsj_2 \big)
\x_{\alpha} \x_{\gamma}^{n} \x_{\eta} \ot x_{\tau}
\\ & 
+q_{\alpha\beta}(-q_{\alpha\gamma})^{n}\Bsj_1 \Bsj_1 \x_{\beta} \x_{\gamma}^{n} \x_{\eta} \ot x_{\gamma}
-\Bsj_3 \x_{\alpha} \x_{\beta}\x_{\gamma}^{n-1} \x_{\eta} \ot x_{\eta}x_{\tau}
\\ &
- \frac{q_{\alpha\beta}}{q_{\alpha\gamma}^n q_{\gamma\beta}^n} \left( \sum_{k=0}^{n} (-\widetilde{q}_{\alpha\gamma})^k (k+1)_{\widetilde{q}_{\beta\gamma}} \right) \Bsj_1\Bsj_5 \x_{\gamma}^{n+2} \ot 1.
\end{aligned}
\end{align}
\end{lema}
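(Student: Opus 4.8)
The plan is to follow the same scheme used for the earlier computational Lemmas \ref{lem:diff2}, \ref{lem:diff-case2} and \ref{lem:diff-case3}: an induction on $n$ that repeatedly applies the recursive description of the Anick differential $d$ together with the contracting homotopy $s$ from \eqref{eq:contracting-homotopy}--\eqref{eq:Anick-def-sn}, using the relations \eqref{eq:diff-case4-hypothesis} and, for the pairs of $q$-commuting root vectors, the explicit differentials collected in Remark \ref{rem:differential-q-commute}. First I would record the numerical constraints that the $\N_0^{\I}$-homogeneity of \eqref{eq:diff-case4-hypothesis} imposes on the roots; comparing degrees in each relation yields exactly \eqref{eq:alfa-beta-delta-eta-tau-N=2-diff4}, and in particular $2\gamma = \alpha + \beta + \delta$. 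I would also translate the bicharacter identities forced by $\gamma = \beta + \eta$ into relations among the scalars $q_{\mu\nu}$ (e.g. $q_{\mu\gamma} = q_{\mu\beta}q_{\mu\eta}$), and use $q_{\gamma\gamma} = -1$ together with $x_\gamma^2 = 0$, both valid since $N_\gamma = 2$.

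Next I would establish the auxiliary differentials that serve as building blocks, namely closed formulas for $d(\x_{\beta}\x_{\gamma}^{n}\x_{\eta}\ot 1)$, $d(\x_{\alpha}\x_{\gamma}^{n}\x_{\delta}\ot 1)$ and $d(\x_{\beta}\x_{\gamma}^{n}\x_{\delta}\ot 1)$. Each of these has the shape treated in Lemma \ref{lem:diff-case2} (a single genuinely non-$q$-commuting pair governed by the relations in \eqref{eq:diff-case4-hypothesis}), so each is proved by a subsidiary induction on $n$ exactly as in \eqref{eq:diff-case2-auxiliar}. With these in hand, the main formula \eqref{eq:diff-case4-formula} is proved by induction on $n$. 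The base case $n=0$ is a direct computation of $d(\x_{\alpha}\x_{\beta}\x_{\delta}\ot 1)$ by applying $s$ to $d(\x_{\alpha}\x_{\beta}\ot 1)\,x_\delta$ and repeatedly straightening the resulting monomials using \eqref{eq:diff-case4-hypothesis}; the terms lying in $\im s$ are discarded via $s^2 = 0$ and convexity, precisely as in the proofs of \eqref{eq:diff-case2-formula} and \eqref{eq:diff-case3-formula}. For the inductive step I would write $d(\x_{\alpha}\x_{\beta}\x_{\gamma}^{n+1}\x_\delta \ot 1) = \x_{\alpha}\x_{\beta}\x_{\gamma}^{n+1}\ot x_\delta - s\,d(\x_{\alpha}\x_{\beta}\x_{\gamma}^{n+1}\ot x_\delta)$, substitute the inductive hypothesis and the auxiliary formulas, and simplify using Remark \ref{rem:differential-q-commute}, $s^2=0$ and $x_\gamma^2=0$.

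The hard part will be the bookkeeping of the scalar coefficients, and in particular verifying that the coefficient of the crucial term $\x_\gamma^{n+2}\ot 1$ telescopes to the stated closed form. As in Lemma \ref{lem:diff-case2}, this coefficient satisfies a first-order recursion in $n$ whose inhomogeneous term is a $q$-number; solving it produces the partial sum $\coef{\alpha\beta\gamma}{n} = \sum_{k=0}^{n}(-\widetilde{q}_{\alpha\gamma})^{k}(k+1)_{\widetilde{q}_{\beta\gamma}}$ appearing in \eqref{eq:diff-case4-formula}, up to the prefactor $q_{\alpha\beta}/(q_{\alpha\gamma}^{n}q_{\gamma\beta}^{n})$. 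I also expect the coefficient of $\x_{\alpha}\x_{\gamma}^{n}\x_{\eta}\ot x_\tau$ to require care: it mixes the contribution $(-q_{\beta\gamma})^{n}\Bsj_2$ coming from $x_\beta x_\delta = \cdots + \Bsj_2 x_\eta x_\tau$ with a contribution proportional to $\Bsj_3\Bsj_5$ whose recursion solves to $q_{\gamma\eta}^{n-1}(n)_{-q_{\beta\gamma}/q_{\gamma\eta}}$, so the compatibility of these two sources must be checked against the scalar relations derived in the first step from \eqref{eq:alfa-beta-delta-eta-tau-N=2-diff4}. Once all the recursions are verified the proof closes, and the constants are re-expressed in terms of $\alpha,\beta,\gamma$ alone using the homogeneity constraints recorded at the outset.
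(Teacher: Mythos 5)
Your proposal follows essentially the same route as the paper's proof: homogeneity constraints first, auxiliary differentials established by subsidiary inductions, then the main induction on $n$ in which the coefficient of $\x_{\gamma}^{n+2}\ot 1$ is obtained by solving a first-order recursion (the paper's recursion is $e_0=q_{\alpha\beta}$, $e_{n+1}=q_{\alpha\beta}q_{\alpha\gamma}^{n+1}q_{\gamma\beta}^{-n-1}(n+2)_{\widetilde{q}_{\beta\gamma}}+q_{\gamma\alpha}^{-1}q_{\gamma\beta}^{-1}e_n$, whose solution is exactly your partial sum). Two points to complete it: the formula for $d(\x_{\beta}\x_{\gamma}^{n}\x_{\eta}\ot 1)$ needs no new induction, being a direct instance of Lemma \ref{lem:diff2} applied to $\beta<\gamma<\eta$ (here $x_{\gamma}=x_{\gamma}^{N_{\gamma}-1}$ since $N_{\gamma}=2$); more importantly, your list of auxiliaries must be supplemented by a fourth one, $d(\x_{\alpha}\x_{\beta}\x_{\gamma}^{n}\x_{\eta}\ot 1)$ (the paper's \eqref{eq:diff-case4-auxiliar-1}), because in the inductive step the relation $x_{\gamma}x_{\delta}=q_{\gamma\delta}x_{\delta}x_{\gamma}+\Bsj_3 x_{\eta}^2x_{\tau}$ produces terms $\Bsj_3\,\x_{\alpha}\x_{\beta}\x_{\gamma}^{n}\ot x_{\eta}^2x_{\tau}$ whose image under $s$ cannot be handled by Remark \ref{rem:differential-q-commute} (the pair $(x_{\beta},x_{\eta})$ does not $q$-commute), and this auxiliary is itself a four-factor formula requiring the same kind of subsidiary induction you describe.
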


\bigbreak
Notice that the equalities in \eqref{eq:diff-case4-hypothesis} force
\begin{align}\label{eq:diff-case4-hypothesis-roots}
\alpha + \delta &=\gamma+\eta, &  \beta+\delta &= \eta+\tau, &
\gamma+\delta &= 2\eta+\tau, & \eta+\beta &= \gamma.
\end{align}
Thus the following equality also holds: $2\gamma=\alpha+\beta+\delta$.

\pf
A recursive computation on $n\in\N$ shows that
\begin{align}\label{eq:diff-case4-gamma-delta}
d(\x_{\gamma}^{n}\x_{\delta}\ot 1)&= \x_{\gamma}^{n} \ot x_{\delta} -q_{\gamma\delta} \x_{\gamma}^{n-1} \x_{\delta}\ot x_{\gamma}
-\Bsj_3 \x_{\gamma}^{n-1} \x_{\eta}\ot x_{\eta}x_{\tau}.
\end{align}

The root vectors corresponding to $\beta<\gamma<\eta$ satisfy \eqref{eq:diff2-hypothesis}, so by Lemma \ref{lem:diff2},
\begin{align}\label{eq:diff4-betagammadelta}
\begin{aligned}
d(\x_{\beta}& \x_{\gamma}^{n} \x_{\eta}\ot 1)  =
\x_{\beta} \x_{\gamma}^{n}\ot x_{\eta} 
- q_{\gamma\eta}\x_{\beta} \x_{\gamma}^{n-1} \x_{\eta}\ot x_{\gamma} 
- q_{\beta\eta}(-q_{\beta\gamma})^{n} \x_{\gamma}^{n}\x_{\eta}\ot x_{\beta}
\\ &+ \Bsj_5 q_{\gamma\eta}^{n} (n+1)_{-\frac{q_{\beta\gamma}}{q_{\gamma\eta}}} \x_{\gamma}^{n+1} \ot 1.
\end{aligned}
\end{align}

We need more auxiliary results:
\begin{align}\label{eq:diff-case4-auxiliar-1}
&\begin{aligned}
d(\x_{\alpha} & \x_{\beta} \x_{\gamma}^{n} \x_{\eta}\ot 1)  =
\x_{\alpha} \x_{\beta}\x_{\gamma}^{n} \ot x_{\eta} 
-q_{\gamma\eta}\x_{\alpha} \x_{\beta}\x_{\gamma}^{n-1} \x_{\eta} \ot x_{\gamma} 
\\ &
- (-q_{\beta\gamma})^{n} q_{\beta\eta} \x_{\alpha} \x_{\gamma}^{n}\x_{\eta} \ot  x_{\beta} +q_{\alpha \beta} (-q_{\alpha\gamma})^{n} q_{\alpha \eta} \x_{\beta}\x_{\gamma}^{n} \x_{\eta} \ot x_{\alpha}
\\ &
-q_{\gamma\eta}^{n}(n+1)_{-\frac{q_{\beta\gamma}}{q_{\gamma\eta}}} \Bsj_5 \x_{\alpha} \x_{\gamma}^{n+1} \ot 1,
\end{aligned}
\\ \label{eq:diff-case4-auxiliar-2}
&\begin{aligned}
d(\x_{\alpha} & \x_{\gamma}^{n} \x_{\delta}\ot 1) =
\x_{\alpha}\x_{\gamma}^{n}\ot x_{\delta} 
-q_{\gamma \delta} \x_{\alpha}\x_{\gamma}^{n-1}\x_{\delta}\ot x_{\gamma}
-(-q_{\alpha\gamma})^{n} q_{\alpha \delta} \x_{\gamma}^{n}\x_{\delta} \ot x_{\alpha}
\\ & - \Bsj_3 \x_{\alpha}\x_{\gamma}^{n-1}\x_{\eta}\ot x_{\eta} x_{\tau}
-(-q_{\alpha\gamma})^{n} \Bsj_1 \x_{\gamma}^{n}\x_{\eta}\ot x_{\gamma},
\end{aligned}
\\ \label{eq:diff-case4-auxiliar-3}
&\begin{aligned}
d(\x_{\beta} & \x_{\gamma}^{n} \x_{\delta}\ot 1) =
\x_{\beta}\x_{\gamma}^{n}\ot x_{\delta} 
-q_{\gamma \delta} \x_{\beta}\x_{\gamma}^{n-1}\x_{\delta}\ot x_{\gamma}
-(-q_{\beta\gamma})^{n} q_{\beta\delta} \x_{\gamma}^{n}\x_{\delta} \ot x_{\beta}
\\ & - \Bsj_3 \x_{\beta}\x_{\gamma}^{n-1}\x_{\eta}\ot x_{\eta} x_{\tau}
- \big(
(-q_{\beta\gamma})^{n}\Bsj_2-\Bsj_3\Bsj_5 q_{\gamma\eta}^{n} (n)_{-\frac{q_{\beta\gamma}}{q_{\gamma\eta}}} \big) \x_{\gamma}^{n}\x_{\eta} \ot x_{\tau}.
\end{aligned}
\end{align}

We start with the proof of \eqref{eq:diff-case4-auxiliar-1} by induction on $n$. For $n=0$,
\begin{align*}
d(\x_{\alpha} & \x_{\beta} \x_{\eta}\ot 1) = \x_{\alpha}\x_{\beta} \ot x_{\eta} - s\Big( 
q_{\beta\eta} \x_{\alpha} \ot x_{\eta}x_{\beta}
+ \Bsj_5 \x_{\alpha} \ot x_{\gamma}
- q_{\alpha\beta}q_{\alpha\eta} \x_{\beta} \ot x_{\eta}x_{\alpha} \Big)
\\
& = \x_{\alpha}\x_{\beta} \ot x_{\eta}
-q_{\beta\eta} \x_{\alpha}\x_{\eta} \ot x_{\beta}
+q_{\alpha\beta}q_{\alpha\eta} \x_{\beta}\x_{\eta} \ot x_{\alpha}
-\Bsj_5 \x_{\alpha}\x_{\gamma} \ot 1.
\end{align*}
Now assume that \eqref{eq:diff-case4-auxiliar-1} holds for $n$. By Remark \ref{rem:differential-q-commute}, inductive hypothesis 
and \eqref{eq:diff4-betagammadelta},
\begin{align*}
d(\x_{\alpha} & \x_{\beta} \x_{\gamma}^{n+1} \x_{\eta}\ot 1)  =
\x_{\alpha} \x_{\beta} \x_{\gamma}^{n+1} \ot x_{\eta}
-s\Big( 
q_{\gamma \eta} \x_{\alpha} \x_{\beta} \x_{\gamma}^{n} \ot  x_{\eta}x_{\gamma} 
\\ & \quad 
+ (-q_{\beta\gamma})^{n+1}\x_{\alpha} \x_{\gamma}^{n+1} \ot (q_{\beta\eta}  x_{\eta}x_{\beta} + \Bsj_5 x_{\gamma})
- q_{\alpha\beta}(-q_{\alpha\gamma})^{n+1}q_{\alpha\eta} \x_{\beta} \x_{\gamma}^{n+1} \ot x_{\eta}x_{\alpha}
\Big)
\\
& = \x_{\alpha} \x_{\beta} \x_{\gamma}^{n+1} \ot x_{\eta}
-q_{\gamma \eta} \x_{\alpha} \x_{\beta} \x_{\gamma}^{n}\x_{\eta} \ot  x_{\gamma}
-s\Big( 
(-q_{\beta\gamma})^{n} q_{\beta\eta}q_{\gamma\eta}q_{\beta\gamma} \x_{\alpha} \x_{\gamma}^{n}\x_{\eta} \ot x_{\gamma}x_{\beta}
\\ & \quad
-q_{\alpha \beta} (-q_{\alpha\gamma})^{n} q_{\alpha \eta}q_{\gamma\eta}q_{\alpha\gamma} \x_{\beta}\x_{\gamma}^{n} \x_{\eta} \ot x_{\gamma}x_{\alpha}
- q_{\alpha\beta}(-q_{\alpha\gamma})^{n+1}q_{\alpha\eta} \x_{\beta} \x_{\gamma}^{n+1} \ot x_{\eta}x_{\alpha}
\\ & \quad 
+ (-q_{\beta\gamma})^{n+1}q_{\beta\eta} \x_{\alpha} \x_{\gamma}^{n+1} \ot x_{\eta}x_{\beta}
+(q_{\gamma\eta}^{n+1}(n+1)_{-\frac{q_{\beta\gamma}}{q_{\gamma\eta}}}+ (-q_{\beta\gamma})^{n+1})\Bsj_5 \x_{\alpha} \x_{\gamma}^{n+1} \ot x_{\gamma} \Big)
\\
& = \x_{\alpha} \x_{\beta} \x_{\gamma}^{n+1} \ot x_{\eta}
-q_{\gamma \eta} \x_{\alpha} \x_{\beta} \x_{\gamma}^{n}\x_{\eta} \ot  x_{\gamma}
-q_{\gamma\eta}^{n+1}(n+2)_{-\frac{q_{\beta\gamma}}{q_{\gamma\eta}}}\Bsj_5 \x_{\alpha} \x_{\gamma}^{n+2} \ot 1
\\ & \quad
- (-q_{\beta\gamma})^{n+1}q_{\beta\eta} \x_{\alpha} \x_{\gamma}^{n+1}\x_{\eta} \ot x_{\beta}
+ q_{\alpha\beta}(-q_{\alpha\gamma})^{n+1}q_{\alpha\eta} \x_{\beta} \x_{\gamma}^{n+1}\x_{\eta} \ot x_{\alpha}.
\end{align*}

\bigbreak

Next we prove \eqref{eq:diff-case4-auxiliar-2} by induction on $n$. For $n=0$,
\begin{align*}
d(\x_{\alpha} & \x_{\delta}\ot 1) = \x_{\alpha}\ot x_{\delta} - q_{\alpha\delta} \x_{\delta} \ot x_{\alpha} - \Bsj_1 \x_{\eta} \ot x_{\gamma}.
\end{align*}

Now assume that \eqref{eq:diff-case4-auxiliar-2} holds for $n$. Using Remark \ref{rem:differential-q-commute} three times, inductive hypothesis and \eqref{eq:diff-case4-gamma-delta}: 
\begin{align*}
d(\x_{\alpha} & \x_{\gamma}^{n+1} \x_{\delta}\ot 1)  =
\x_{\alpha} \x_{\gamma}^{n+1} \ot x_{\delta}
-s\Big( \x_{\alpha} \x_{\gamma}^{n} \ot (q_{\gamma\delta} x_{\delta}x_{\gamma} +\Bsj_3 x_{\eta}^2 x_{\tau})
\\ & \quad 
+ (-q_{\alpha\gamma})^{n+1}\x_{\gamma}^{n+1} \ot (q_{\alpha \delta} x_{\delta}x_{\alpha} +\Bsj_1 x_{\eta}x_{\gamma})
\Big)
\\
& = \x_{\alpha} \x_{\gamma}^{n+1} \ot x_{\delta}
-\Bsj_3 \x_{\alpha}\x_{\gamma}^{n}\x_{\eta} \ot x_{\eta} x_{\tau}
-q_{\gamma\delta}\x_{\alpha} \x_{\gamma}^{n}\x_{\delta} \ot x_{\gamma}
-s\Big( (-q_{\alpha\gamma})^{n+1}\Bsj_1 \x_{\gamma}^{n+1} \ot x_{\eta}x_{\gamma}
\\ & \quad
+ q_{\alpha \delta}(-q_{\alpha\gamma})^{n+1} \x_{\gamma}^{n+1} \ot x_{\delta}x_{\alpha}
-(-q_{\alpha\gamma})^{n+1} q_{\alpha \delta}q_{\gamma\delta} \x_{\gamma}^{n}\x_{\delta} \ot x_{\gamma}x_{\alpha} 
\\ & \quad
+q_{\alpha\eta}^2(-q_{\alpha\eta})^nq_{\alpha\tau} \Bsj_3 \x_{\gamma}^{n}\x_{\eta} \ot x_{\eta}x_{\tau}x_{\alpha}
+q_{\alpha\eta}^2(-q_{\alpha\gamma})^n\Bsj_3\Bsj_4 \x_{\gamma}^{n}\x_{\eta} \ot x_{\eta}x_{\gamma}x_{\beta}
\Big)
\\
& = \x_{\alpha} \x_{\gamma}^{n+1} \ot x_{\delta}
-\Bsj_3 \x_{\alpha}\x_{\gamma}^{n}\x_{\eta} \ot x_{\eta} x_{\tau}
-q_{\gamma\delta}\x_{\alpha} \x_{\gamma}^{n}\x_{\delta} \ot x_{\gamma}
-(-q_{\alpha\gamma})^{n+1}\Bsj_1 \x_{\gamma}^{n+1}\x_{\eta} \ot x_{\gamma}
\\ & \quad
-q_{\alpha\delta}(-q_{\alpha\gamma})^{n+1} \x_{\gamma}^{n+1} \x_{\delta} \ot x_{\alpha}.
\end{align*}

\bigbreak

Now we prove \eqref{eq:diff-case4-auxiliar-3} by induction on $n$. For $n=0$,
\begin{align*}
d(\x_{\beta} & \x_{\delta}\ot 1) = \x_{\beta}\ot x_{\delta} - q_{\beta\delta} \x_{\delta} \ot x_{\beta} - \Bsj_2 \x_{\eta} \ot x_{\tau}.
\end{align*}


We assume that \eqref{eq:diff-case4-auxiliar-3} holds for $n$. Using 
Remark \ref{rem:differential-q-commute} three times, \eqref{eq:diff4-betagammadelta}, inductive hypothesis and \eqref{eq:diff-case4-gamma-delta}: 
\begin{align*}
d(\x_{\beta} & \x_{\gamma}^{n+1} \x_{\delta}\ot 1)  =
\x_{\beta} \x_{\gamma}^{n+1} \ot x_{\delta}
-s\Big( 
q_{\gamma\delta} \x_{\beta}\x_{\gamma}^{n} \ot x_{\delta}x_{\gamma}
+\Bsj_3 \x_{\beta}\x_{\gamma}^{n} \ot x_{\eta}^2x_{\tau}
\\ & \quad 
+ q_{\beta\delta}(-q_{\beta\gamma})^{n+1} \x_{\gamma}^{n+1} \ot x_{\delta}x_{\beta}
+ (-q_{\beta\gamma})^{n+1}\Bsj_2 \x_{\gamma}^{n+1} \ot x_{\eta}x_{\tau}
\Big)
\\
&=
\x_{\beta} \x_{\gamma}^{n+1} \ot x_{\delta}
-q_{\gamma\delta} \x_{\beta}\x_{\gamma}^{n}\x_{\delta} \ot x_{\gamma}
-\Bsj_3 \x_{\beta}\x_{\gamma}^{n}\x_{\eta} \ot x_{\eta}x_{\tau}
-s\Big(d_n q_{\gamma\delta} \x_{\gamma}^{n}\x_{\eta}\ot x_{\tau}x_{\gamma}
\\ & \quad
-q_{\gamma\delta}(-q_{\beta\gamma})^{n+1} q_{\beta\delta} \x_{\gamma}^{n}\x_{\delta} \ot x_{\gamma}x_{\beta}
+ q_{\beta\delta}(-q_{\beta\gamma})^{n+1} \x_{\gamma}^{n+1} \ot x_{\delta}x_{\beta}
\\ & \quad
+q_{\beta\eta}^2q_{\beta\tau}(-q_{\beta\gamma})^{n}\Bsj_3 \x_{\gamma}^{n}\x_{\eta}\ot x_{\eta}x_{\tau}x_{\beta}
+q_{\beta\eta}(-q_{\beta\gamma})^{n}q_{\gamma\tau} \Bsj_3\Bsj_5 \x_{\gamma}^{n}\x_{\eta}\ot x_{\tau}x_{\gamma}
\\ & \quad 
+ \big( (-q_{\beta\gamma})^{n+1}\Bsj_2-\Bsj_3\Bsj_5 q_{\gamma\eta}^{n} (n+1)_{-\frac{q_{\beta\gamma}}{q_{\gamma\eta}}} \big) \x_{\gamma}^{n+1} \ot x_{\eta}x_{\tau}
\Big)
\\
&=
\x_{\beta} \x_{\gamma}^{n+1} \ot x_{\delta}
-q_{\gamma\delta} \x_{\beta}\x_{\gamma}^{n}\x_{\delta} \ot x_{\gamma}
-\Bsj_3 \x_{\beta}\x_{\gamma}^{n}\x_{\eta} \ot x_{\eta}x_{\tau}
- q_{\beta\delta}(-q_{\beta\gamma})^{n+1} \x_{\gamma}^{n+1}\x_{\delta} \ot x_{\beta}
\\ & \quad
- \big( (-q_{\beta\gamma})^{n+1}\Bsj_2-\Bsj_3\Bsj_5 q_{\gamma\eta}^{n+1} (n+1)_{-\frac{q_{\beta\gamma}}{q_{\gamma\eta}}} \big) \x_{\gamma}^{n+1}\x_{\eta} \ot x_{\tau}.
\end{align*}

\bigbreak

Finally we prove \eqref{eq:diff-case4-formula}. To do so,
we prove that there exist $c_n,d_n,e_n\in\Bbbk$ such that
\begin{align*}
&\begin{aligned}
d( & \x_{\alpha} \x_{\beta} \x_{\gamma}^{n} \x_{\delta}\ot 1)  = 
\x_{\alpha} \x_{\beta}\x_{\gamma}^{n} \ot x_{\delta} 
-q_{\gamma\delta}\x_{\alpha} \x_{\beta}\x_{\gamma}^{n-1} \x_{\delta} \ot x_{\gamma} 
- (-q_{\beta\gamma})^{n} q_{\beta\delta} \x_{\alpha} \x_{\gamma}^{n}\x_{\delta} \ot  x_{\beta} 
\\ &
+q_{\alpha \beta} (-q_{\alpha\gamma})^{n} q_{\alpha \delta} \x_{\beta}\x_{\gamma}^{n} \x_{\delta} \ot x_{\alpha}-c_n \, \x_{\alpha} \x_{\gamma}^{n} \x_{\eta} \ot x_{\tau}
-\Bsj_3 \x_{\alpha} \x_{\beta}\x_{\gamma}^{n-1} \x_{\eta} \ot x_{\eta}x_{\tau}
\\ &
+ d_n \, \x_{\gamma}^{n} \x_{\eta} \ot x_{\gamma}
- e_n \Bsj_1\Bsj_5 \x_{\gamma}^{n+2} \ot 1.
\end{aligned}
\end{align*}

For $n=0$,
\begin{align*}
d( &\x_{\alpha} \x_{\beta} \x_{\delta} \ot 1)  = 
\x_{\alpha} \x_{\beta} \ot x_{\delta}-
s \big(\x_{\alpha} \ot (q_{\beta\delta}  x_{\delta}x_{\beta} + \Bsj_2 x_{\eta} x_{\tau}) 
- q_{\alpha \beta} \x_{\beta} \ot x_{\alpha} x_{\delta}\big)
\\
& = \x_{\alpha} \x_{\beta} \ot x_{\delta}
-q_{\beta\delta} \x_{\alpha}\x_{\delta} \ot x_{\beta}
-\Bsj_2 \x_{\alpha}\x_{\eta} \ot  x_{\tau}
+s \Big(
q_{\alpha \beta}q_{\alpha \delta} \x_{\beta} \ot x_{\delta}x_{\alpha}
+ q_{\alpha \beta}\Bsj_1 \x_{\beta} \ot x_{\eta}x_{\gamma}
\\ & \quad
-(q_{\beta\delta} \Bsj_1+q_{\alpha\eta} \Bsj_2\Bsj_4) \x_{\eta} \ot x_{\gamma} x_{\beta}
-q_{\alpha\tau}q_{\alpha\eta} \Bsj_2 \x_{\eta} \ot  x_{\tau}x_{\alpha} 
-q_{\alpha\beta}q_{\alpha\delta}q_{\beta\delta} \x_{\delta}\ot x_{\beta}x_{\alpha} \Big)
\\
& = \x_{\alpha} \x_{\beta} \ot x_{\delta}
-q_{\beta\delta} \x_{\alpha}\x_{\delta} \ot x_{\beta}
-\Bsj_2 \x_{\alpha}\x_{\eta} \ot  x_{\tau}
+q_{\alpha \beta}q_{\alpha \delta} \x_{\beta}\x_{\delta} \ot x_{\alpha}
+q_{\alpha \beta}\Bsj_1 \x_{\beta}\x_{\eta} \ot x_{\gamma}
\\ & \quad + q_{\alpha \beta}\Bsj_1\Bsj_5 \x_{\gamma}^2 \ot 1
-s \Big((q_{\beta\delta} \Bsj_1+q_{\alpha\eta} \Bsj_2\Bsj_4-q_{\beta\eta}q_{\beta\gamma}q_{\alpha \beta}\Bsj_1) s(x_{\eta}x_{\gamma} x_{\beta}) \Big).
\end{align*}

We assume that \eqref{eq:diff-case4-formula} holds for $n$. Using Remark \ref{rem:differential-q-commute} twice, \eqref{eq:diff-case4-auxiliar-1}, inductive hypothesis, \eqref{eq:diff-case4-auxiliar-2}, \eqref{eq:diff4-betagammadelta}, \eqref{eq:diff-case4-auxiliar-3}
\begin{align*}
d( & \x_{\alpha} \x_{\beta} \x_{\gamma}^{n+1} \x_{\delta}\ot 1)  = 
\x_{\alpha} \x_{\beta} \x_{\gamma}^{n+1} \ot x_{\delta}
-s\Big( 
q_{\gamma \delta} \x_{\alpha} \x_{\beta} \x_{\gamma}^{n} \ot  x_{\delta}x_{\gamma}
+\Bsj_3 \x_{\alpha} \x_{\beta} \x_{\gamma}^{n} \ot x_{\eta}^2 x_{\tau}
\\ & \quad 
+ (-q_{\beta\gamma})^{n+1}\x_{\alpha} \x_{\gamma}^{n+1} \ot x_{\beta}x_{\delta}
- q_{\alpha\beta}(-q_{\alpha\gamma})^{n+1} \x_{\beta} \x_{\gamma}^{n+1} \ot x_{\alpha}x_{\delta} \Big)
\\ & = 
\x_{\alpha} \x_{\beta} \x_{\gamma}^{n+1} \ot x_{\delta}
-q_{\gamma\delta} \x_{\alpha}\x_{\beta}\x_{\gamma}^{n}\x_{\delta} \ot x_{\gamma}
-\Bsj_3 \x_{\alpha}\x_{\beta}\x_{\gamma}^{n}\x_{\eta} \ot x_{\eta}x_{\tau}
-s\Big(q_{\gamma\delta}e_n \Bsj_1\Bsj_5 \x_{\gamma}^{n+2} \ot x_{\gamma}
\\ & \quad
+q_{\gamma\delta}(-q_{\beta\gamma})^{n}q_{\beta\gamma}q_{\beta\delta} \x_{\alpha}\x_{\gamma}^{n}\x_{\delta} \ot x_{\gamma}x_{\beta} 
+q_{\gamma\delta}q_{\alpha \beta}(-q_{\alpha\gamma})^{n+1} q_{\alpha\delta} \x_{\beta}\x_{\gamma}^{n}\x_{\delta} \ot x_{\gamma}x_{\alpha}
\\ & \quad 
+ (-q_{\beta\gamma})^{n+1}q_{\beta\delta} \x_{\alpha} \x_{\gamma}^{n+1} \ot x_{\delta}x_{\beta}
+q_{\gamma\delta}c_n \Bsj_2 \x_{\alpha} \x_{\gamma}^{n} \x_{\eta} \ot x_{\tau}x_{\gamma}
\\ & \quad
+(-q_{\beta\gamma})^{n}q_{\beta\eta}^2q_{\beta\tau}\Bsj_3 \x_{\alpha} \x_{\gamma}^{n}\x_{\eta} \ot x_{\eta}x_{\tau}x_{\beta}
+(-q_{\beta\gamma})^{n}q_{\beta\eta}q_{\gamma\tau}\Bsj_3\Bsj_5 \x_{\alpha}\x_{\gamma}^{n}\x_{\eta} \ot x_{\tau}x_{\gamma}
\\ & \quad
-q_{\alpha\beta}(-q_{\alpha\gamma})^{n}q_{\alpha\eta}^2q_{\alpha\tau}\Bsj_3 \x_{\beta}\x_{\gamma}^{n}\x_{\eta} \ot x_{\eta}x_{\tau}x_{\alpha}
-q_{\alpha \beta}(-q_{\alpha\gamma})^{n}q_{\alpha\eta}^2\Bsj_3\Bsj_4 \x_{\beta}\x_{\gamma}^{n}\x_{\eta} \ot x_{\eta}x_{\gamma}x_{\beta}
\\ & \quad
+\big(q_{\gamma\eta}^{n}(n+1)_{-\frac{q_{\beta\gamma}}{q_{\gamma\eta}}} \Bsj_3\Bsj_5 +(-q_{\beta\gamma})^{n+1}\Bsj_2 \big) \x_{\alpha} \x_{\gamma}^{n+1} \ot x_{\eta}x_{\tau}
\\ & \quad 
- q_{\alpha\beta}(-q_{\alpha\gamma})^{n+1}\Bsj_1 \x_{\beta}\x_{\gamma}^{n+1} \ot x_{\eta}x_{\gamma} 
- q_{\alpha\beta}(-q_{\alpha\gamma})^{n+1}q_{\alpha \delta} \x_{\beta}\x_{\gamma}^{n+1} \ot x_{\delta}x_{\alpha}\Big)
\\ & = 
\x_{\alpha} \x_{\beta} \x_{\gamma}^{n+1} \ot x_{\delta}
-q_{\gamma\delta} \x_{\alpha}\x_{\beta}\x_{\gamma}^{n}\x_{\delta} \ot x_{\gamma}
-(-q_{\beta\gamma})^{n+1}q_{\beta\delta} \x_{\alpha} \x_{\gamma}^{n+1}\x_{\delta} \ot x_{\beta}
\\ & \quad
-\Bsj_3 \x_{\alpha}\x_{\beta}\x_{\gamma}^{n}\x_{\eta} \ot x_{\eta}x_{\tau}
-\big(q_{\gamma\eta}^{n}(n+1)_{-\frac{q_{\beta\gamma}}{q_{\gamma\eta}}} \Bsj_3\Bsj_5 +(-q_{\beta\gamma})^{n+1}\Bsj_2 \big) \x_{\alpha} \x_{\gamma}^{n+1}\x_{\eta} \ot x_{\tau}
\\ & \quad
-s\Big(
- q_{\alpha\beta}(-q_{\alpha\gamma})^{n+1}\Bsj_1 \x_{\beta}\x_{\gamma}^{n+1} \ot x_{\eta}x_{\gamma} 
- q_{\alpha\beta}(-q_{\alpha\gamma})^{n+1}q_{\alpha \delta} \x_{\beta}\x_{\gamma}^{n+1} \ot x_{\delta}x_{\alpha}
\\ & \quad
+\big(q_{\gamma\eta}^{n}(n+1)_{-\frac{q_{\beta\gamma}}{q_{\gamma\eta}}} \Bsj_3\Bsj_5 +(-q_{\beta\gamma})^{n+1}\Bsj_2 \big) 
q_{\alpha\eta}(-q_{\alpha\gamma})^{n+1} \x_{\gamma}^{n+1}\x_{\eta} 
\ot x_{\alpha}x_{\tau}
\\ & \quad
+q_{\gamma\delta}e_n \Bsj_1\Bsj_5 \x_{\gamma}^{n+2} \ot x_{\gamma}
+q_{\alpha\beta}q_{\alpha\gamma}^{n+1} q_{\alpha \delta} q_{\beta\gamma}^{n+1}q_{\beta\delta} \x_{\gamma}^{n+1}\x_{\delta} \ot x_{\beta}x_{\alpha}
\\ & \quad
+q_{\gamma\delta}q_{\alpha \beta}(-q_{\alpha\gamma})^{n+1} q_{\alpha\delta} \x_{\beta}\x_{\gamma}^{n}\x_{\delta} \ot x_{\gamma}x_{\alpha}
+(-q_{\beta\gamma})^{n+1}q_{\beta\delta}(-q_{\alpha\gamma})^{n+1} \Bsj_1 \x_{\gamma}^{n+1}\x_{\eta}\ot x_{\gamma}x_{\beta}
\\ & \quad
-q_{\alpha\beta}(-q_{\alpha\gamma})^{n}q_{\alpha\eta}^2q_{\alpha\tau}\Bsj_3 \x_{\beta}\x_{\gamma}^{n}\x_{\eta} \ot x_{\eta}x_{\tau}x_{\alpha}
-q_{\alpha \beta}(-q_{\alpha\gamma})^{n}q_{\alpha\eta}^2\Bsj_3\Bsj_4 \x_{\beta}\x_{\gamma}^{n}\x_{\eta} \ot x_{\eta}x_{\gamma}x_{\beta}  
\Big)
\\ & = 
\x_{\alpha} \x_{\beta} \x_{\gamma}^{n+1} \ot x_{\delta}
-q_{\gamma\delta} \x_{\alpha}\x_{\beta}\x_{\gamma}^{n}\x_{\delta} \ot x_{\gamma}
-(-q_{\beta\gamma})^{n+1}q_{\beta\delta} \x_{\alpha} \x_{\gamma}^{n+1}\x_{\delta} \ot x_{\beta}
\\ & \quad
-\Bsj_3 \x_{\alpha}\x_{\beta}\x_{\gamma}^{n}\x_{\eta} \ot x_{\eta}x_{\tau}
-\big(q_{\gamma\eta}^{n}(n+1)_{-\frac{q_{\beta\gamma}}{q_{\gamma\eta}}} \Bsj_3\Bsj_5 +(-q_{\beta\gamma})^{n+1}\Bsj_2 \big) \x_{\alpha} \x_{\gamma}^{n+1}\x_{\eta} \ot x_{\tau}
\\ & \quad
+q_{\alpha\beta}(-q_{\alpha\gamma})^{n+1}\Bsj_1 \x_{\beta}\x_{\gamma}^{n+1}\x_{\eta} \ot x_{\gamma}
+q_{\alpha\beta}(-q_{\alpha\gamma})^{n+1}q_{\alpha \delta} \x_{\beta}\x_{\gamma}^{n+1}\x_{\delta} \ot x_{\alpha}
\\ & \quad
-s\Big(
\big(q_{\alpha\beta}(-q_{\alpha\gamma})^{n+1}q_{\gamma\eta}^{n+1} (n+2)_{-\frac{q_{\beta\gamma}}{q_{\gamma\eta}}}+q_{\gamma\delta}e_n \big) \Bsj_1\Bsj_5 \x_{\gamma}^{n+2} \ot x_{\gamma}
\\ & \quad
+\big(q_{\gamma\eta}^{n}(n+1)_{-\frac{q_{\beta\gamma}}{q_{\gamma\eta}}} \Bsj_3\Bsj_5 +(-q_{\beta\gamma})^{n+1}\Bsj_2 \big) 
q_{\alpha\eta}(-q_{\alpha\gamma})^{n+1}\Bsj_4 s(\x_{\gamma}^{n+1} \ot x_{\eta}x_{\gamma} x_{\beta})
\\ & \quad
-q_{\alpha \beta}(-q_{\alpha\gamma})^{n}q_{\alpha\eta}^2\Bsj_3\Bsj_4 s(\x_{\beta}\x_{\gamma}^{n}\ot x_{\eta}^2x_{\gamma}x_{\beta})  
\Big).
\end{align*}
Hence the inductive step follows since $s^2=0$ and $s\big(\x_{\gamma}^{n+2} \ot x_{\gamma}\big) =\x_{\gamma}^{n+3} \ot 1$; for the last step we use the equalities
\begin{align*}
s(\x_{\gamma}^{n+1} \ot x_{\eta}x_{\gamma} x_{\beta}) &= \x_{\gamma}^{n+1}\x_{\eta} \ot x_{\gamma} x_{\beta}, &
s(\x_{\beta}\x_{\gamma}^{n}\ot x_{\eta}^2x_{\gamma}x_{\beta}) &= \x_{\beta}\x_{\gamma}^{n}\x_{\eta}\ot x_{\eta}x_{\gamma}x_{\beta},
\end{align*}
which follow since $x_{\gamma}^2=0$ and $x_{\gamma}x_{\eta} = q_{\gamma\eta}x_{\eta}x_{\gamma}$.  The formula for $c_n$, $d_n$ is explicit, while for the $e_n$'s we have the recursive expression: $e_0=q_{\alpha\beta}$,
\begin{align*}
e_{n+1} &= q_{\alpha\beta}q_{\alpha\gamma}^{n+1} q_{\gamma\beta}^{-n-1} (n+2)_{\widetilde{q}_{\beta\gamma}} + q_{\gamma\alpha}^{-1}q_{\gamma\beta}^{-1}e_n, & &n\ge 0,
\end{align*}
where we use \eqref{eq:diff-case4-hypothesis-roots} to express $\delta$ and $\eta$ in terms of $\alpha$, $\beta$ and $\gamma$.
\epf

\begin{lema}\label{lem:diff-case5}
Let $\alpha < \beta < \delta < \gamma < \tau < \varphi < \eta $ be positive roots such that $N_{\gamma}=2$ and the relations among the corresponding root vectors take the form
\begin{align}\label{eq:diff-case5-hypothesis}
\begin{aligned}
x_{\alpha}x_{\varphi} &= q_{\alpha\varphi} x_{\varphi}x_{\alpha} +\Bsj_1 x_{\gamma}, \\
x_{\beta}x_{\tau} &= q_{\beta\tau} x_{\tau}x_{\beta} + \Bsj_2 x_{\gamma}, &
x_{\delta}x_{\eta} &= q_{\delta\eta} x_{\eta}x_{\delta} + \Bsj_3 x_{\varphi} x_{\tau} x_{\gamma},
\end{aligned}
\end{align}
for some scalars $\Bsj_i$, and the other pairs of root vectors $q$-commute.
Then, for all $n\geq 0$, 
\begin{align}\label{eq:diff-case5-formula}
\begin{aligned}
d( & \x_{\alpha} \x_{\beta} \x_{\delta}\x_{\gamma}^{n}\x_{\eta}\ot 1)  = 
\x_{\alpha}\x_{\beta}\x_{\delta}\x_{\gamma}^{n} \ot x_{\eta}
- q_{\gamma\eta}\x_{\alpha}\x_{\beta}\x_{\delta} \x_{\gamma}^{n-1}\x_{\eta} \ot x_{\gamma}
\\ &
-(-q_{\delta\gamma})^n q_{\delta\eta}\x_{\alpha}\x_{\beta} \x_{\gamma}^{n}\x_{\eta} \ot x_{\delta}
+q_{\beta\delta}(-q_{\beta\gamma})^n q_{\beta\eta} \x_{\alpha}\x_{\delta}\x_{\gamma}^{n}\x_{\eta} \ot x_{\beta}
\\ &
-q_{\alpha\beta}q_{\alpha\delta}(-q_{\alpha\gamma})^n q_{\alpha\eta} \x_{\beta}\x_{\delta}\x_{\gamma}^{n}\x_{\eta} \ot x_{\alpha}
- (-q_{\delta\gamma})^{n} \Bsj_3 \x_{\alpha}\x_{\beta} \x_{\gamma}^{n}\x_{\varphi} \ot x_{\tau} x_{\gamma}
\\ &
- q_{\alpha\beta}q_{\gamma\varphi} (n+1)_{\widetilde{q}_{\alpha\gamma}}(-q_{\delta\gamma})^{n} \Bsj_1 \Bsj_3 \x_{\beta}\x_{\gamma}^{n+1}\x_{\tau} \ot  x_{\gamma}
-\frac{q_{\alpha\beta}q_{\gamma\eta}}{(-q_{\delta\gamma})^{n}}\coeff{\alpha\beta\delta\gamma}{n} \x_{\gamma}^{n+3} \ot 1,
\end{aligned}
\end{align}
where $\coeff{\alpha\beta\delta\gamma}{n}:=\sum\limits_{k=0}^{n} \widetilde{q}_{\delta\gamma}^{\, k} (k+1)_{\widetilde{q}_{\alpha\gamma}}
(k+2)_{\widetilde{q}_{\beta\gamma}}$, $n\in\N$.
\end{lema}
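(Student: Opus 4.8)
The plan is to prove \eqref{eq:diff-case5-formula} by induction on $n$, following the same pattern as the proofs of Lemmas \ref{lem:diff2}--\ref{lem:diff-case4}: one repeatedly applies the recursive rule $d(u\otimes 1) = v\otimes t - s\,d(v\otimes t)$ for a chain $u = vt$ with $v \in \chain(n-1)$ and $t \in \basis$, together with $s^2 = 0$, the vanishing of $s$ on $\operatorname{im} s$, and the relation $x_\gamma^2 = 0$ forced by $N_\gamma = 2$. First I would record the root identities implied by \eqref{eq:diff-case5-hypothesis}, namely $\alpha + \varphi = \gamma$, $\beta + \tau = \gamma$ and $\delta + \eta = \varphi + \tau + \gamma$, whence $\alpha + \beta + \delta + \eta = 3\gamma$. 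Since $d$ is $\mathbb{N}_0^{\mathbb{I}}$-homogeneous by Lemma \ref{lem:grading}, these identities guarantee that every term below has the correct degree and that $\x_\gamma^{n+3}\otimes 1$ is the only pure power of $x_\gamma$ that can appear.

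Because $\x_\alpha\x_\beta\x_\delta\x_\gamma^n\x_\eta$ is obtained by appending $\x_\eta$ to $\x_\alpha\x_\beta\x_\delta\x_\gamma^n$, I would first build a short tower of auxiliary differential formulas, each proved by its own induction on $n$, for the relevant sub-chains, such as $\x_\gamma^n\x_\eta$, $\x_\delta\x_\gamma^n\x_\eta$ and $\x_\beta\x_\delta\x_\gamma^n\x_\eta$. The formula for $\x_\gamma^n\x_\eta$ is immediate from Remark \ref{rem:differential-q-commute} since $\gamma$ and $\eta$ $q$-commute; the one for $\x_\delta\x_\gamma^n\x_\eta$ uses the relation $x_\delta x_\eta = q_{\delta\eta}x_\eta x_\delta + \Bsj_3 x_\varphi x_\tau x_\gamma$, which introduces the triple product $x_\varphi x_\tau x_\gamma$; and the one for $\x_\beta\x_\delta\x_\gamma^n\x_\eta$ brings in $\beta$, which $q$-commutes with all of these roots except $\tau$. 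Feeding these into $d(\x_\alpha\x_\beta\x_\delta\x_\gamma^n\x_\eta\otimes 1)=\x_\alpha\x_\beta\x_\delta\x_\gamma^n\otimes x_\eta - s\,d(\x_\alpha\x_\beta\x_\delta\x_\gamma^n\otimes x_\eta)$ and then using the two reduction relations $x_\alpha x_\varphi = q_{\alpha\varphi}x_\varphi x_\alpha + \Bsj_1 x_\gamma$ and $x_\beta x_\tau = q_{\beta\tau}x_\tau x_\beta + \Bsj_2 x_\gamma$ turns the terms carrying $x_\varphi$ and $x_\tau$ into powers of $x_\gamma$, producing the quantum-integer factors $(k+1)_{\widetilde{q}_{\alpha\gamma}}$ and $(k+2)_{\widetilde{q}_{\beta\gamma}}$.

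The main obstacle, as in the earlier lemmas, is bookkeeping the coefficient of $\x_\gamma^{n+3}\otimes 1$. I would carry an undetermined scalar $c_n$ for this coefficient through the induction and extract from the inductive step a first-order recursion expressing $c_{n+1}$ in terms of $c_n$ and one new contribution built from $(n+2)_{\widetilde{q}_{\alpha\gamma}}$ and $(n+3)_{\widetilde{q}_{\beta\gamma}}$. After normalizing away the prefactors $q_{\alpha\beta}$, $q_{\gamma\eta}$ and $(-q_{\delta\gamma})^{-n}$ using the root identities and $q_{\gamma\gamma} = -1$, verifying the claimed closed form reduces to checking that $\coeff{\alpha\beta\delta\gamma}{n+1} - \coeff{\alpha\beta\delta\gamma}{n}$ equals the top term $\widetilde{q}_{\delta\gamma}^{\,n+1}(n+2)_{\widetilde{q}_{\alpha\gamma}}(n+3)_{\widetilde{q}_{\beta\gamma}}$, which is immediate from the definition \eqref{eq:coef-general-formula}. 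Relative to Lemma \ref{lem:diff-case4}, the added difficulty is that the triple product $x_\varphi x_\tau x_\gamma$ forces two successive reductions rather than one, so the intermediate expressions are longer and the $s$-annihilation arguments---showing, via $s^2 = 0$ and the convexity of the PBW basis, that the leftover mixed terms lie in $\operatorname{im} s$---must be tracked more carefully.
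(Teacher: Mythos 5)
Your plan is correct and follows essentially the same route as the paper's proof: the paper likewise proves \eqref{eq:diff-case5-formula} by induction on $n$, after first establishing a tower of auxiliary differential formulas for sub-chains (there: $\x_{\alpha}\x_{\gamma}^{n}\x_{\varphi}$ and $\x_{\beta}\x_{\gamma}^{n}\x_{\tau}$ via Lemma \ref{lem:diff2}, then $\x_{\alpha}\x_{\beta}\x_{\gamma}^{n}\x_{\varphi}$, $\x_{\delta}\x_{\gamma}^{n}\x_{\eta}$, $\x_{\alpha}\x_{\delta}\x_{\gamma}^{n}\x_{\eta}$, $\x_{\beta}\x_{\delta}\x_{\gamma}^{n}\x_{\eta}$ by separate inductions), and the coefficient of $\x_{\gamma}^{n+3}\ot 1$ is handled exactly as you describe, by a first-order recursion whose telescoping against the top term $\widetilde{q}_{\delta\gamma}^{\,n+1}(n+2)_{\widetilde{q}_{\alpha\gamma}}(n+3)_{\widetilde{q}_{\beta\gamma}}$ yields $\coeff{\alpha\beta\delta\gamma}{n+1}$, using $q_{\gamma\gamma}=-1$ and the root identities to normalize prefactors. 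The only difference is bookkeeping: the paper needs a few more auxiliaries than the three you list explicitly, but your ``such as'' tower is the same mechanism.
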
 

\bigbreak
Notice that the equalities in \eqref{eq:diff-case5-hypothesis} force
\begin{align}\label{eq:diff-case5-hypothesis-roots}
\alpha + \varphi &=\gamma = \beta + \tau, &  \eta+\delta &= \gamma+\tau+\varphi.
\end{align}
Hence the following equality also holds: $3\gamma=\alpha+\beta+\delta+\eta$.

\pf
We may apply Lemma \ref{lem:diff2} to the 3-tuples $\alpha<\gamma<\varphi$ and $\beta<\gamma<\tau$ to obtain
\begin{align}\label{eq:diff-case5-auxiliar0}
\begin{aligned}
d(\x_{\alpha} \x_{\gamma}^{n} \x_{\varphi}\ot 1)  &=
\x_{\alpha}\x_{\gamma}^{n}\ot x_{\varphi} 
- q_{\gamma\varphi} \x_{\alpha}\x_{\gamma}^{n-1}\x_{\varphi}
\ot x_{\gamma} 
- (-q_{\alpha\gamma})^{n}q_{\alpha\varphi} \x_{\gamma}^{n} \x_{\varphi}\ot x_{\alpha} 
\\ & \quad + \Bsj_1 q_{\gamma\varphi} (n+1)_{\widetilde{q}_{\alpha\gamma}} \x_{\gamma}^{n+1}\ot 1,
\end{aligned}
\\ \label{eq:diff-case5-auxiliar00}
\begin{aligned}
d(\x_{\beta} \x_{\gamma}^{n} \x_{\tau}\ot 1)  &=
\x_{\beta}\x_{\gamma}^{n}\ot x_{\tau} 
- q_{\gamma\tau} \x_{\beta}\x_{\gamma}^{n-1}\x_{\tau}
\ot x_{\gamma} 
- (-q_{\beta\gamma})^{n}q_{\beta\tau} \x_{\gamma}^{n} \x_{\tau}\ot x_{\beta} 
\\ & \quad + \Bsj_2 q_{\gamma\tau} (n+1)_{\widetilde{q}_{\beta\gamma}} \x_{\gamma}^{n+1}\ot 1,
\end{aligned}
\end{align}
for all $n\geq 0$. The next step is to prove by induction on $n$ the following equalities:
\begin{align}\label{eq:diff-case5-auxiliar1}
&\begin{aligned}
d(\x_{\alpha}  & \x_{\beta} \x_{\gamma}^{n} \x_{\varphi}\ot 1)  =
\x_{\alpha}\x_{\beta} \x_{\gamma}^{n} \ot x_{\varphi}
-q_{\gamma\varphi} \x_{\alpha}\x_{\beta} \x_{\gamma}^{n-1} \x_{\varphi} \ot x_{\gamma} 
\\ &
-(-q_{\beta\gamma})^n q_{\beta\varphi} \x_{\alpha}\x_{\gamma}^{n} \x_{\varphi} \ot x_{\beta}
+q_{\alpha\beta}(-q_{\alpha\gamma})^n q_{\alpha\varphi} \x_{\beta}\x_{\gamma}^{n} \x_{\varphi} \ot x_{\alpha}
\\ &
+ q_{\alpha\beta}q_{\gamma\varphi}^n (n+1)_{\widetilde{q}_{\alpha\gamma}}
\Bsj_1 \x_{\beta}\x_{\gamma}^{n+1} \ot 1,
\end{aligned}
\\
\label{eq:diff-case5-auxiliar2}
&\begin{aligned}
d(\x_{\delta} & \x_{\gamma}^{n} \x_{\eta}\ot 1)  =
\x_{\delta}\x_{\gamma}^{n} \ot x_{\eta}
-q_{\gamma\eta}\x_{\delta}\x_{\gamma}^{n-1}\x_{\eta} \ot x_{\gamma}
-(-q_{\delta\gamma})^n q_{\delta\eta} \x_{\gamma}^{n}\x_{\eta} \ot x_{\delta} 
\\
& -(-q_{\delta\gamma})^{n} \Bsj_3 \x_{\gamma}^{n}\x_{\varphi} \ot x_{\tau} x_{\gamma},
\end{aligned}
\\
\label{eq:diff-case5-auxiliar3}
&\begin{aligned}
d(\x_{\alpha}  & \x_{\delta} \x_{\gamma}^{n} \x_{\eta}\ot 1)  =
\x_{\alpha}\x_{\delta}\x_{\gamma}^{n} \ot x_{\eta}
-q_{\gamma\eta}\x_{\alpha}\x_{\delta}\x_{\gamma}^{n-1}\x_{\eta} \ot x_{\gamma}
\\ & 
-(-q_{\delta\gamma})^n q_{\delta\eta} \x_{\alpha} \x_{\gamma}^{n} \x_{\eta} \ot x_{\delta} 
+q_{\alpha\delta}(-q_{\alpha\gamma})^n q_{\alpha\eta} \x_{\delta} \x_{\gamma}^{n} \x_{\eta} \ot x_{\alpha}
\\ &
-(-q_{\delta\gamma})^{n} \Bsj_3 \x_{\alpha} \x_{\gamma}^{n} \x_{\varphi} \ot x_{\tau} x_{\gamma}
-q_{\gamma\varphi} (n+1)_{\widetilde{q}_{\alpha\gamma}} (-q_{\delta\gamma})^{n} \Bsj _1\Bsj_3 \x_{\gamma}^{n+1}\x_{\tau}\ot x_{\gamma},
\end{aligned}
\\
\label{eq:diff-case5-auxiliar4}
&\begin{aligned}
d(\x_{\beta}  & \x_{\delta} \x_{\gamma}^{n} \x_{\eta}\ot 1)  = \x_{\beta}\x_{\delta}\x_{\gamma}^{n} \ot x_{\eta}
-q_{\gamma\eta}\x_{\beta}\x_{\delta}\x_{\gamma}^{n-1}\x_{\eta} \ot x_{\gamma}
\\ & 
-(-q_{\delta\gamma})^n q_{\delta\eta} \x_{\beta} \x_{\gamma}^{n} \x_{\eta} \ot x_{\delta} 
+q_{\beta\delta}(-q_{\beta\gamma})^n q_{\beta\eta} \x_{\delta} \x_{\gamma}^{n} \x_{\eta} \ot x_{\beta}
\\ &
-(-q_{\delta\gamma})^{n} \Bsj_3 \x_{\beta} \x_{\gamma}^{n} \x_{\varphi} \ot x_{\tau} x_{\gamma}.
\end{aligned}
\end{align}
We start with \eqref{eq:diff-case5-auxiliar1}. For $n=0$ we have
\begin{align*}
d(\x_{\alpha}  & \x_{\beta}\x_{\varphi}\ot 1)  =
\x_{\alpha}\x_{\beta}\ot x_{\varphi} - s\big(q_{\beta\varphi} \x_{\alpha} \ot x_{\varphi}x_{\beta} - q_{\alpha\beta}\x_{\beta}\ot (q_{\alpha\varphi} x_{\varphi}x_{\alpha} +\Bsj_1 x_{\gamma}) \big)
\\ &=
\x_{\alpha}\x_{\beta}\ot x_{\varphi}
-q_{\beta\varphi} \x_{\alpha}\x_{\varphi} \ot x_{\beta}
- s\big(q_{\beta\varphi}q_{\alpha\varphi}q_{\alpha\beta} \x_{\varphi} \ot x_{\beta} x_{\alpha}
+q_{\beta\varphi}\Bsj_1 \x_{\gamma} \ot x_{\beta}
\\ &
\quad - q_{\alpha\beta}q_{\alpha\varphi} \x_{\beta}\ot  x_{\varphi}x_{\alpha} 
-q_{\alpha\beta}\Bsj_1 \x_{\beta}\ot x_{\gamma}
\big)
\\ &=
\x_{\alpha}\x_{\beta}\ot x_{\varphi}
-q_{\beta\varphi} \x_{\alpha}\x_{\varphi} \ot x_{\beta}
+q_{\alpha\beta}q_{\alpha\varphi} \x_{\beta}\x_{\varphi} \ot  x_{\alpha}
+q_{\alpha\beta}\Bsj_1 \x_{\beta}\x_{\gamma}\ot 1.
\end{align*}
We assume that \eqref{eq:diff-case5-auxiliar1} holds for $n$. Using Remark \ref{rem:differential-q-commute} three times, inductive hypothesis,
\eqref{eq:diff-case5-auxiliar0}, \eqref{eq:diff-case5-hypothesis} and $x_{\gamma}^2=0$, we compute
\begin{align*}
d( & \x_{\alpha}\x_{\beta}\x_{\gamma}^{n+1} \x_{\varphi}\ot 1)  =
\x_{\alpha}\x_{\beta}\x_{\gamma}^{n+1} \ot x_{\varphi}
-s\big(
q_{\gamma\varphi} \x_{\alpha}\x_{\beta}\x_{\gamma}^{n} \ot x_{\varphi}x_{\gamma} 
\\ & \quad
+ (-q_{\beta\gamma})^{n+1}q_{\beta\varphi} \x_{\alpha} \x_{\gamma}^{n+1} \ot x_{\varphi}x_{\beta}
- q_{\alpha\beta} (-q_{\alpha\gamma})^{n+1} \x_{\beta} \x_{\gamma}^{n+1} \ot (q_{\alpha\varphi} x_{\varphi}x_{\alpha} + \Bsj_1 x_{\gamma}) \big)
\\ &
=
\x_{\alpha}\x_{\beta}\x_{\gamma}^{n+1} \ot x_{\varphi}
-q_{\gamma\varphi} \x_{\alpha}\x_{\beta}\x_{\gamma}^{n}\x_{\varphi} \ot x_{\gamma}
-(-q_{\beta\gamma})^{n+1}q_{\beta\varphi} \x_{\alpha} \x_{\gamma}^{n+1}\x_{\varphi} \ot x_{\beta}
\\ & \quad
-s\big(
q_{\alpha\beta}(-q_{\alpha\gamma})^{n+1} q_{\alpha\varphi} q_{\gamma\varphi} \x_{\beta}\x_{\gamma}^{n} \x_{\varphi} \ot x_{\gamma}x_{\alpha}
+q_{\beta\varphi} q_{\alpha\beta} q_{\alpha\gamma}^{n+1} q_{\beta\gamma}^{n+1}q_{\alpha\varphi} \x_{\gamma}^{n+1} \x_{\varphi}\ot x_{\beta}x_{\alpha}
\\ & \quad
- q_{\alpha\beta} (-q_{\alpha\gamma})^{n+1} q_{\alpha\varphi} \x_{\beta} \x_{\gamma}^{n+1} \ot x_{\varphi}x_{\alpha}
-(-q_{\beta\gamma})^{n+1}q_{\beta\varphi} \Bsj_1 q_{\gamma\varphi} (n+2)_{\widetilde{q}_{\alpha\gamma}} \x_{\gamma}^{n+2}\ot x_{\beta}
\\ & \quad
-(q_{\gamma\varphi} (n+1)_{\widetilde{q}_{\alpha\gamma}} +(-q_{\alpha\gamma})^{n+1})q_{\alpha\beta} \Bsj_1 \x_{\beta} \x_{\gamma}^{n+1} \ot x_{\gamma} \big)
\\ &
=
\x_{\alpha}\x_{\beta}\x_{\gamma}^{n+1} \ot x_{\varphi}
-q_{\gamma\varphi} \x_{\alpha}\x_{\beta}\x_{\gamma}^{n}\x_{\varphi} \ot x_{\gamma}
-(-q_{\beta\gamma})^{n+1}q_{\beta\varphi} \x_{\alpha} \x_{\gamma}^{n+1}\x_{\varphi} \ot x_{\beta}
\\ & \quad
+q_{\alpha\beta}q_{\gamma\varphi} (n+2)_{\widetilde{q}_{\alpha\gamma}}\Bsj_1 \x_{\beta} \x_{\gamma}^{n+2} \ot 1
+ q_{\alpha\beta} (-q_{\alpha\gamma})^{n+1} q_{\alpha\varphi} \x_{\beta} \x_{\gamma}^{n+1}\x_{\varphi} \ot x_{\alpha}.
\end{align*}

\medbreak
Now we prove \eqref{eq:diff-case5-auxiliar2}. For $n=0$,
\begin{align*}
d (& \x_{\delta}\x_{\eta} \ot 1) = \x_{\delta} \ot x_{\eta} -
q_{\delta\eta} \x_{\eta} \ot x_{\delta}
-\Bsj_3 \x_{\varphi} \ot x_{\tau} x_{\gamma}.
\end{align*}
We assume that \eqref{eq:diff-case5-auxiliar2} holds for $n$. By Remark \ref{rem:differential-q-commute} and inductive hypothesis,
\begin{align*}
d( & \x_{\delta}\x_{\gamma}^{n+1}\x_{\eta} \ot 1)  =
\x_{\delta}\x_{\gamma}^{n+1}\ot x_{\eta} - s \big(q_{\gamma\eta}\x_{\delta}\x_{\gamma}^{n} \ot x_{\eta}x_{\gamma}
+ (-q_{\delta\gamma})^{n+1} \x_{\gamma}^{n+1} \ot x_{\delta} x_{\eta}\big)
\\ &
= \x_{\delta}\x_{\gamma}^{n+1}\ot x_{\eta}
-q_{\gamma\eta}\x_{\delta}\x_{\gamma}^{n}\x_{\eta} \ot x_{\gamma}
-(-q_{\delta\gamma})^{n+1}\Bsj_3 \x_{\gamma}^{n+1}\x_{\varphi} \ot  x_{\tau}x_{\gamma} 
\\ & \quad
-(-q_{\delta\gamma})^{n+1}q_{\delta\eta} \x_{\gamma}^{n+1}\x_{\eta} \ot x_{\delta}.
\end{align*}

\medbreak
Next we prove \eqref{eq:diff-case5-auxiliar3}. For $n=0$,
\begin{align*}
d (\x_{\alpha} & \x_{\delta}\x_{\eta} \ot 1) = \x_{\alpha}\x_{\delta} \ot x_{\eta} -s \big( 
q_{\delta\eta} \x_{\alpha}\ot x_{\eta}x_{\delta}
+\Bsj_3 \x_{\alpha}\ot x_{\varphi} x_{\tau} x_{\gamma}
- q_{\alpha\delta}q_{\alpha\eta} \x_{\delta}\ot x_{\eta}x_{\alpha} \big)
\\ &
= \x_{\alpha}\x_{\delta} \ot x_{\eta} 
-\Bsj_3 \x_{\alpha} \x_{\varphi} \ot x_{\tau} x_{\gamma}
-q_{\delta\eta} \x_{\alpha}\x_{\eta} \ot x_{\delta}
-s \big( q_{\delta\eta}q_{\alpha\eta}q_{\alpha\delta} 
\x_{\eta} \ot x_{\delta}x_{\alpha}
\\ & \quad
+q_{\alpha\varphi}q_{\alpha\tau}q_{\alpha\gamma}\Bsj_3 \x_{\varphi}\ot x_{\tau}x_{\gamma}x_{\alpha} 
+ \Bsj _1\Bsj_3 \x_{\gamma}\ot x_{\tau} x_{\gamma}
- q_{\alpha\delta}q_{\alpha\eta} \x_{\delta}\ot x_{\eta}x_{\alpha} \big)
\\ &
= \x_{\alpha}\x_{\delta} \ot x_{\eta} 
-\Bsj_3 \x_{\alpha} \x_{\varphi} \ot x_{\tau} x_{\gamma}
-q_{\delta\eta} \x_{\alpha}\x_{\eta} \ot x_{\delta}
+ q_{\alpha\delta}q_{\alpha\eta} \x_{\delta}\x_{\eta}\ot x_{\alpha}
-\Bsj _1\Bsj_3 \x_{\gamma}\x_{\tau}\ot x_{\gamma}.
\end{align*}
We assume that \eqref{eq:diff-case5-auxiliar3} holds for $n$. Using Remark \ref{rem:differential-q-commute} three times, inductive hypothesis, \eqref{eq:diff-case5-auxiliar0}, \eqref{eq:diff-case5-auxiliar2}, we have
\begin{align*}
d( & \x_{\alpha}\x_{\delta}\x_{\gamma}^{n+1}\x_{\eta} \ot 1)  =
\x_{\alpha}\x_{\delta}\x_{\gamma}^{n+1}\ot x_{\eta}
-s\big( 
q_{\gamma\eta}\x_{\alpha}\x_{\delta}\x_{\gamma}^{n} \ot x_{\eta}x_{\gamma}
+(-q_{\delta\gamma})^{n+1}q_{\delta\eta} \x_{\alpha} \x_{\gamma}^{n+1} \ot x_{\eta}x_{\delta}
\\ & \quad
+(-q_{\delta\gamma})^{n+1} \Bsj_3 \x_{\alpha}\x_{\gamma}^{n+1} \ot  x_{\varphi}x_{\tau}x_{\gamma}
+q_{\alpha\delta}(-q_{\alpha\gamma})^{n+1}q_{\alpha\eta} \x_{\delta} \x_{\gamma}^{n+1} \ot x_{\eta}x_{\alpha} \big)
\\ &
= \x_{\alpha}\x_{\delta}\x_{\gamma}^{n+1}\ot x_{\eta}
-q_{\gamma\eta}\x_{\alpha}\x_{\delta}\x_{\gamma}^{n}\x_{\eta} \ot x_{\gamma}
-(-q_{\delta\gamma})^{n+1} \Bsj_3 \x_{\alpha}\x_{\gamma}^{n+1} \x_{\varphi} \ot  x_{\tau}x_{\gamma}
\\ & \quad
-s\big( 
-(-q_{\delta\gamma})^{n+1} q_{\delta\eta}q_{\gamma\eta} \x_{\alpha} \x_{\gamma}^{n} \x_{\eta} \ot x_{\gamma}x_{\delta}
+q_{\alpha\delta}(-q_{\alpha\gamma})^{n+1} q_{\alpha\eta} q_{\gamma\eta} \x_{\delta}\x_{\gamma}^{n}\x_{\eta} \ot x_{\gamma}x_{\alpha}
\\ & \quad 
+(-q_{\delta\gamma})^{n+1}q_{\delta\eta} \x_{\alpha} \x_{\gamma}^{n+1} \ot x_{\eta}x_{\delta}
+q_{\alpha\delta}(-q_{\alpha\gamma})^{n+1}q_{\alpha\eta} \x_{\delta} \x_{\gamma}^{n+1} \ot x_{\eta}x_{\alpha} 
\\ & \quad
+ q_{\alpha\gamma}^{n+2}q_{\alpha\varphi}q_{\alpha\tau} q_{\delta\gamma}^{n+1} \Bsj_3 \x_{\gamma}^{n+1} \x_{\varphi}\ot x_{\tau}x_{\gamma}x_{\alpha}
+q_{\gamma\varphi}(n+2)_{\widetilde{q}_{\alpha\gamma}} (-q_{\delta\gamma})^{n+1} \Bsj_1 \Bsj_3 \x_{\gamma}^{n+2}\ot x_{\tau}x_{\gamma} \big)
\\ &
= \x_{\alpha}\x_{\delta}\x_{\gamma}^{n+1}\ot x_{\eta}
-q_{\gamma\eta}\x_{\alpha}\x_{\delta}\x_{\gamma}^{n}\x_{\eta} \ot x_{\gamma}
-(-q_{\delta\gamma})^{n+1} \Bsj_3 \x_{\alpha}\x_{\gamma}^{n+1} \x_{\varphi} \ot  x_{\tau}x_{\gamma}
\\ & \quad 
-(-q_{\delta\gamma})^{n+1}q_{\delta\eta} \x_{\alpha} \x_{\gamma}^{n+1}\x_{\eta} \ot x_{\delta}
+q_{\alpha\delta}(-q_{\alpha\gamma})^{n+1}q_{\alpha\eta} \x_{\delta} \x_{\gamma}^{n+1}\x_{\eta} \ot x_{\alpha}
\\ & \quad
- q_{\gamma\varphi} (n+2)_{\widetilde{q}_{\alpha\gamma}} (-q_{\delta\gamma})^{n+1} \Bsj_1 \Bsj_3 s\big( \x_{\gamma}^{n+2}\ot x_{\tau}x_{\gamma} \big).
\end{align*}
As $x_{\gamma}^2=0$ and $x_{\gamma}x_{\tau}=q_{\gamma\tau}x_{\tau}x_{\gamma}$, we have that $s\big(\x_{\gamma}^{n+2}\ot x_{\tau}x_{\gamma} \big) =
\x_{\gamma}^{n+2}\x_{\tau}\ot x_{\gamma}$; hence the inductive step follows.

\medbreak
Now we prove \eqref{eq:diff-case5-auxiliar4}. For $n=0$,
\begin{align*}
d (\x_{\beta} & \x_{\delta}\x_{\eta} \ot 1) = \x_{\beta}\x_{\delta} \ot x_{\eta} -s \big( 
q_{\delta\eta} \x_{\beta}\ot x_{\eta}x_{\delta}
+\Bsj_3 \x_{\beta}\ot x_{\varphi} x_{\tau} x_{\gamma}
- q_{\beta\delta}q_{\beta\eta} \x_{\delta}\ot x_{\eta}x_{\beta} \big)
\\ &
=  \x_{\beta}\x_{\delta} \ot x_{\eta} 
-q_{\delta\eta} \x_{\beta}\x_{\eta}\ot x_{\delta} 
-\Bsj_3 \x_{\beta}\x_{\varphi}\ot  x_{\tau} x_{\gamma}
-s \big( q_{\delta\eta}q_{\beta\eta}q_{\beta\delta} \x_{\eta}\ot x_{\delta}x_{\beta}
\\ & \quad
+q_{\beta\varphi}\Bsj_3 \x_{\varphi}\ot (q_{\beta\tau}x_{\tau}x_{\beta}+\Bsj_2 x_{\gamma}) x_{\gamma}
- q_{\beta\delta}q_{\beta\eta} \x_{\delta}\ot x_{\eta}x_{\beta} \big)
\\ &
=  \x_{\beta}\x_{\delta} \ot x_{\eta} 
-q_{\delta\eta} \x_{\beta}\x_{\eta}\ot x_{\delta} 
-\Bsj_3 \x_{\beta}\x_{\varphi}\ot  x_{\tau} x_{\gamma}
+ q_{\beta\delta}q_{\beta\eta} \x_{\delta}\x_{\eta}\ot x_{\beta}.
\end{align*}
We assume that \eqref{eq:diff-case5-auxiliar4} holds for $n$. Using Remark \ref{rem:differential-q-commute} three times, inductive hypothesis and \eqref{eq:diff-case5-auxiliar2}, we have
\begin{align*}
d( & \x_{\beta}\x_{\delta}\x_{\gamma}^{n+1}\x_{\eta} \ot 1)  =
\x_{\beta}\x_{\delta}\x_{\gamma}^{n+1}\ot x_{\eta}
-s\big( 
q_{\gamma\eta}\x_{\beta}\x_{\delta}\x_{\gamma}^{n} \ot x_{\eta}x_{\gamma}
+(-q_{\delta\gamma})^{n+1}q_{\delta\eta} \x_{\beta} \x_{\gamma}^{n+1} \ot x_{\eta}x_{\delta}
\\ & \quad
+(-q_{\delta\gamma})^{n+1} \Bsj_3 \x_{\beta}\x_{\gamma}^{n+1} \ot  x_{\varphi}x_{\tau}x_{\gamma}
+q_{\beta\delta}(-q_{\beta\gamma})^{n+1}q_{\beta\eta} \x_{\delta} \x_{\gamma}^{n+1} \ot x_{\eta}x_{\beta} \big)
\\ &
= \x_{\beta}\x_{\delta}\x_{\gamma}^{n+1}\ot x_{\eta}
-q_{\gamma\eta}\x_{\beta}\x_{\delta}\x_{\gamma}^{n} \x_{\eta} \ot x_{\gamma}
-(-q_{\delta\gamma})^{n+1} \Bsj_3 \x_{\beta}\x_{\gamma}^{n+1} \x_{\varphi} \ot  x_{\tau}x_{\gamma}
\\ & \quad
-(-q_{\delta\gamma})^{n+1}q_{\delta\eta} \x_{\beta} \x_{\gamma}^{n+1}\x_{\eta} \ot x_{\delta} -s\big(
q_{\beta\delta}(-q_{\beta\gamma})^{n+1} q_{\beta\eta}q_{\gamma\eta} \x_{\delta} \x_{\gamma}^{n} \x_{\eta} \ot x_{\gamma}x_{\beta}
\\ & \quad
+q_{\beta\delta}(-q_{\beta\gamma})^{n+1}q_{\beta\eta} \x_{\delta} \x_{\gamma}^{n+1} \ot x_{\eta}x_{\beta} 
+q_{\delta\gamma}^{n+1} q_{\beta\gamma}^{n+1}q_{\beta\varphi} \Bsj_3 \x_{\gamma}^{n+1} \x_{\varphi} \ot 
(q_{\beta\tau} x_{\tau}x_{\beta} + \Bsj_2 x_{\gamma})x_{\gamma}
\\ & \quad
+q_{\delta\gamma}^{n+1}q_{\delta\eta}q_{\beta\gamma}^{n+1}q_{\beta\eta} q_{\beta\delta} \x_{\gamma}^{n+1}\x_{\eta} 
\ot x_{\delta}x_{\beta})
\big)
\\ &
= \x_{\beta}\x_{\delta}\x_{\gamma}^{n+1}\ot x_{\eta}
-q_{\gamma\eta}\x_{\beta}\x_{\delta}\x_{\gamma}^{n} \x_{\eta} \ot x_{\gamma}
-(-q_{\delta\gamma})^{n+1} \Bsj_3 \x_{\beta}\x_{\gamma}^{n+1} \x_{\varphi} \ot  x_{\tau}x_{\gamma}
\\ & \quad
-(-q_{\delta\gamma})^{n+1}q_{\delta\eta} \x_{\beta} \x_{\gamma}^{n+1}\x_{\eta} \ot x_{\delta} 
+q_{\beta\delta}(-q_{\beta\gamma})^{n+1}q_{\beta\eta} \x_{\delta} \x_{\gamma}^{n+1}\x_{\eta} \ot x_{\beta}.
\end{align*}

\bigbreak

Finally, we prove \eqref{eq:diff-case5-formula} by induction on $n$. For $n=0$,
\begin{align*}
d(\x_{\alpha} & \x_{\beta} \x_{\delta}\x_{\eta}\ot 1)  = 
\x_{\alpha}\x_{\beta} \x_{\delta}\ot x_{\eta}
-s \big(q_{\delta\eta} \x_{\alpha}\x_{\beta} \ot  x_{\eta}x_{\delta} 
+ \Bsj_3 \x_{\alpha}\x_{\beta} \ot  x_{\varphi} x_{\tau} x_{\gamma}
\\ &
\quad -q_{\beta\delta} q_{\beta\eta}\x_{\alpha}\x_{\delta} \ot x_{\eta}x_{\beta} +q_{\alpha\beta} q_{\alpha\delta} q_{\alpha\eta}\x_{\beta} \x_{\delta} \ot x_{\eta}x_{\alpha} \big)
\\ &
= \x_{\alpha}\x_{\beta} \x_{\delta} \ot x_{\eta}
-\Bsj_3 \x_{\alpha}\x_{\beta}\x_{\varphi} \ot x_{\tau}x_{\gamma}
-q_{\delta\eta} \x_{\alpha}\x_{\beta}\x_{\eta} \ot x_{\delta}
\\ & \quad
-s \big( q_{\beta\eta}q_{\beta\delta}q_{\delta\eta} \x_{\alpha}\x_{\eta} \ot x_{\delta}x_{\beta}
-q_{\alpha\beta}q_{\alpha\eta}q_{\alpha\delta}q_{\delta\eta} \x_{\beta}\x_{\eta} \ot x_{\delta}x_{\alpha}
\\ & \quad
+q_{\beta\varphi}q_{\beta\tau}q_{\beta\gamma} \Bsj_3 \x_{\alpha}\x_{\varphi} \ot x_{\tau}x_{\gamma}x_{\beta}
-q_{\alpha\beta} q_{\alpha\varphi} q_{\alpha\tau} q_{\alpha\gamma}
\Bsj_3 \x_{\beta}\x_{\varphi} \ot  x_{\tau}x_{\gamma}x_{\alpha}
\\ & \quad 
-q_{\alpha\beta}\Bsj_1\Bsj_3 \x_{\beta}\x_{\gamma}\ot x_{\tau} x_{\gamma}
-q_{\beta\delta} q_{\beta\eta}\x_{\alpha}\x_{\delta} \ot x_{\eta}x_{\beta} +q_{\alpha\beta} q_{\alpha\delta} q_{\alpha\eta}\x_{\beta} \x_{\delta} \ot x_{\eta}x_{\alpha} \big)
\\ &
= \x_{\alpha}\x_{\beta} \x_{\delta} \ot x_{\eta}
-\Bsj_3 \x_{\alpha}\x_{\beta}\x_{\varphi} \ot x_{\tau}x_{\gamma}
-q_{\delta\eta} \x_{\alpha}\x_{\beta}\x_{\eta} \ot x_{\delta}
+q_{\beta\delta} q_{\beta\eta}\x_{\alpha}\x_{\delta}\x_{\eta} \ot x_{\beta}
\\ & \quad
-q_{\alpha\beta} q_{\alpha\delta} q_{\alpha\eta}\x_{\beta} \x_{\delta}\x_{\eta} \ot x_{\alpha}
+q_{\alpha\beta}\Bsj_1\Bsj_3 \x_{\beta}\x_{\gamma}\x_{\tau} \ot x_{\gamma} 
-q_{\alpha\beta}q_{\gamma\tau}[2]_{\widetilde{q}_{\beta\gamma}} \Bsj_1\Bsj_2\Bsj_3  \x_{\gamma}^3\ot 1.
\end{align*}
We assume that \eqref{eq:diff-case5-formula} holds for $n$. Using Remark \ref{rem:differential-q-commute} twice, inductive hypothesis, \eqref{eq:diff-case5-auxiliar1}, \eqref{eq:diff-case5-auxiliar3},
\eqref{eq:diff-case5-auxiliar4}, \eqref{eq:diff-case5-auxiliar00}, we have
\begin{align*}
d& (  \x_{\alpha}\x_{\beta} \x_{\delta}\x_{\gamma}^{n+1}\x_{\eta}\ot 1)  = \x_{\alpha}\x_{\beta}\x_{\delta}\x_{\gamma}^{n+1} \ot x_{\eta}
-s\big(
q_{\gamma\eta}\x_{\alpha}\x_{\beta}\x_{\delta}\x_{\gamma}^{n} \ot x_{\eta}x_{\gamma}
\\ & \quad 
+ (-q_{\delta\gamma})^{n+1} q_{\delta\eta} \x_{\alpha}\x_{\beta} \x_{\gamma}^{n+1} \ot x_{\eta}x_{\delta}
+ (-q_{\delta\gamma})^{n+1} \Bsj_3 \x_{\alpha}\x_{\beta} \x_{\gamma}^{n+1} \ot x_{\varphi} x_{\tau} x_{\gamma}
\\ & \quad 
- q_{\beta\delta}(-q_{\beta\gamma})^{n+1}q_{\beta\eta} \x_{\alpha}\x_{\delta} \x_{\gamma}^{n+1} \ot x_{\eta}x_{\beta}
+ q_{\alpha\beta}q_{\alpha\delta}(-q_{\alpha\gamma})^{n+1} q_{\alpha\eta} \x_{\beta}\x_{\delta} \x_{\gamma}^{n+1} \ot x_{\eta}x_{\alpha} \big)
\\ &
= \x_{\alpha}\x_{\beta}\x_{\delta}\x_{\gamma}^{n+1} \ot x_{\eta}
-q_{\gamma\eta}\x_{\alpha}\x_{\beta}\x_{\delta}\x_{\gamma}^{n}
\x_{\eta} \ot x_{\gamma}
- (-q_{\delta\gamma})^{n+1} \Bsj_3 \x_{\alpha}\x_{\beta} \x_{\gamma}^{n+1}\x_{\varphi} \ot x_{\tau} x_{\gamma}
\\ & \quad 
-s\big(q_{\gamma\eta}(-q_{\delta\gamma})^n q_{\delta\gamma} q_{\delta\eta}\x_{\alpha}\x_{\beta} \x_{\gamma}^{n}\x_{\eta} \ot x_{\gamma}x_{\delta}
+q_{\beta\gamma}^{n+2} q_{\beta\varphi}q_{\delta\gamma}^{n+1} \Bsj_3 \x_{\alpha} \x_{\gamma}^{n+1} \x_{\varphi} \ot q_{\beta\tau}
x_{\tau}x_{\gamma}x_{\beta}
\\ & \quad 
-q_{\gamma\eta}q_{\beta\delta}(-q_{\beta\gamma})^n q_{\beta\gamma} q_{\beta\eta} \x_{\alpha}\x_{\delta}\x_{\gamma}^{n}\x_{\eta} \ot x_{\gamma}x_{\beta}
-q_{\gamma\eta}q_{\alpha\beta}q_{\alpha\delta} (-q_{\alpha\gamma})^{n+1} q_{\alpha\eta} \x_{\beta}\x_{\delta} \x_{\gamma}^{n} \x_{\eta} \ot x_{\gamma}x_{\alpha}
\\ & \quad 
+ (-q_{\delta\gamma})^{n+1} q_{\delta\eta} \x_{\alpha}\x_{\beta} \x_{\gamma}^{n+1} \ot x_{\eta}x_{\delta}
-q_{\alpha\beta}q_{\alpha\gamma}^{n+2} q_{\alpha\varphi}q_{\alpha\tau} q_{\delta\gamma}^{n+1} \Bsj_3 \x_{\beta}\x_{\gamma}^{n+1} \x_{\varphi} \ot x_{\tau} x_{\gamma}x_{\alpha}
\\ & \quad
+q_{\gamma\eta}q_{\alpha\beta}q_{\delta\gamma}^{-n-1}\coeff{\alpha\beta\delta\gamma}{n} \Bsj_1\Bsj_2\Bsj_3  \x_{\gamma}^{n+3}\ot x_{\gamma}
- q_{\alpha\beta}q_{\gamma\varphi} (n+2)_{\widetilde{q}_{\alpha\gamma}}(-q_{\delta\gamma})^{n+1} \Bsj_3 \Bsj_1 \x_{\beta}\x_{\gamma}^{n+2} \ot x_{\tau} x_{\gamma}
\\ & \quad 
- q_{\beta\delta}(-q_{\beta\gamma})^{n+1}q_{\beta\eta} \x_{\alpha}\x_{\delta} \x_{\gamma}^{n+1} \ot x_{\eta}x_{\beta}
+ q_{\alpha\beta}q_{\alpha\delta}(-q_{\alpha\gamma})^{n+1} q_{\alpha\eta} \x_{\beta}\x_{\delta} \x_{\gamma}^{n+1} \ot x_{\eta}x_{\alpha} \big)
\\ &
= \x_{\alpha}\x_{\beta}\x_{\delta}\x_{\gamma}^{n+1} \ot x_{\eta}
-q_{\gamma\eta}\x_{\alpha}\x_{\beta}\x_{\delta}\x_{\gamma}^{n}
\x_{\eta} \ot x_{\gamma}
- (-q_{\delta\gamma})^{n+1} \Bsj_3 \x_{\alpha}\x_{\beta} \x_{\gamma}^{n+1}\x_{\varphi} \ot x_{\tau} x_{\gamma}
\\ & \quad 
- (-q_{\delta\gamma})^{n+1} q_{\delta\eta} \x_{\alpha}\x_{\beta} \x_{\gamma}^{n+1}\x_{\eta} \ot x_{\delta}
+ q_{\beta\delta}(-q_{\beta\gamma})^{n+1}q_{\beta\eta} \x_{\alpha}\x_{\delta} \x_{\gamma}^{n+1}\x_{\eta} \ot x_{\beta}
\\ & \quad 
-s\big(-q_{\gamma\eta}q_{\alpha\beta}q_{\alpha\delta} (-q_{\alpha\gamma})^{n+1} q_{\alpha\eta} \x_{\beta}\x_{\delta} \x_{\gamma}^{n} \x_{\eta} \ot x_{\gamma}x_{\alpha}
\\ & \quad 
-q_{\alpha\beta}q_{\alpha\gamma}^{n+2} q_{\alpha\varphi}q_{\alpha\tau} q_{\delta\gamma}^{n+1} \Bsj_3 \x_{\beta}\x_{\gamma}^{n+1} \x_{\varphi} \ot x_{\tau} x_{\gamma}x_{\alpha}
\\ & \quad
-q_{\gamma\eta}q_{\alpha\beta}q_{\delta\gamma}^{-n-1}\coeff{\alpha\beta\delta\gamma}{n} \Bsj_1\Bsj_2\Bsj_3  \x_{\gamma}^{n+3}\ot x_{\gamma}
- q_{\alpha\beta}q_{\gamma\varphi} (n+2)_{\widetilde{q}_{\alpha\gamma}}(-q_{\delta\gamma})^{n+1} \Bsj_3 \Bsj_1 \x_{\beta}\x_{\gamma}^{n+2} \ot x_{\tau} x_{\gamma}
\\ & \quad 
+ q_{\alpha\beta}q_{\alpha\delta}(-q_{\alpha\gamma})^{n+1} q_{\alpha\eta} \x_{\beta}\x_{\delta} \x_{\gamma}^{n+1} \ot x_{\eta}x_{\alpha} 
-q_{\delta\gamma}^{n+1} q_{\delta\eta} q_{\alpha\beta} q_{\alpha\gamma}^{n+1} q_{\alpha\eta}q_{\alpha\delta} \x_{\beta} \x_{\gamma}^{n+1}\x_{\eta} \ot x_{\delta}x_{\alpha} 
\\ & \quad 
+q_{\beta\delta}q_{\beta\gamma}^{n+1}q_{\beta\eta}
q_{\alpha\beta}q_{\alpha\delta}q_{\alpha\gamma}^{n+1} q_{\alpha\eta} \x_{\delta} \x_{\gamma}^{n+1} \x_{\eta} \ot x_{\beta}x_{\alpha}
\\ & \quad
-q_{\gamma\varphi} q_{\delta\gamma}^{n+1}q_{\beta\delta} q_{\beta\gamma}^{n+1} q_{\beta\eta} (n+2)_{\widetilde{q}_{\alpha\gamma}}  \Bsj _1\Bsj_3 \x_{\gamma}^{n+2}\x_{\tau}\ot x_{\gamma}x_{\beta}
\big)
\\ &
= \x_{\alpha}\x_{\beta}\x_{\delta}\x_{\gamma}^{n+1} \ot x_{\eta}
-q_{\gamma\eta}\x_{\alpha}\x_{\beta}\x_{\delta}\x_{\gamma}^{n}
\x_{\eta} \ot x_{\gamma}
- (-q_{\delta\gamma})^{n+1} \Bsj_3 \x_{\alpha}\x_{\beta} \x_{\gamma}^{n+1}\x_{\varphi} \ot x_{\tau} x_{\gamma}
\\ & \quad 
- (-q_{\delta\gamma})^{n+1} q_{\delta\eta} \x_{\alpha}\x_{\beta} \x_{\gamma}^{n+1}\x_{\eta} \ot x_{\delta}
+ q_{\beta\delta}(-q_{\beta\gamma})^{n+1}q_{\beta\eta} \x_{\alpha}\x_{\delta} \x_{\gamma}^{n+1}\x_{\eta} \ot x_{\beta}
\\ & \quad 
- q_{\alpha\beta}q_{\alpha\delta}(-q_{\alpha\gamma})^{n+1} q_{\alpha\eta} \x_{\beta}\x_{\delta} \x_{\gamma}^{n+1}\x_{\eta} \ot x_{\alpha} 
-s\big( q_{\alpha\beta}q_{\gamma\eta}q_{\delta\gamma}^{-n-1}\coeff{\alpha\beta\delta\gamma}{n} \Bsj_1\Bsj_2\Bsj_3  \x_{\gamma}^{n+3}\ot x_{\gamma}
\\ & \quad
- q_{\alpha\beta}q_{\gamma\varphi} (n+2)_{\widetilde{q}_{\alpha\gamma}}(-q_{\delta\gamma})^{n+1} \Bsj_3 \Bsj_1 \x_{\beta}\x_{\gamma}^{n+2} \ot x_{\tau} x_{\gamma}
\\ & \quad
-q_{\gamma\varphi} q_{\delta\gamma}^{n+1}q_{\beta\delta} q_{\beta\gamma}^{n+1} q_{\beta\eta} (n+2)_{\widetilde{q}_{\alpha\gamma}}  \Bsj _1\Bsj_3 \x_{\gamma}^{n+2}\x_{\tau}\ot x_{\gamma}x_{\beta} \big)
\\ &
= \x_{\alpha}\x_{\beta}\x_{\delta}\x_{\gamma}^{n+1} \ot x_{\eta}
-q_{\gamma\eta}\x_{\alpha}\x_{\beta}\x_{\delta}\x_{\gamma}^{n}
\x_{\eta} \ot x_{\gamma}
- (-q_{\delta\gamma})^{n+1} q_{\delta\eta} \x_{\alpha}\x_{\beta} \x_{\gamma}^{n+1}\x_{\eta} \ot x_{\delta}
\\ & \quad 
+ q_{\beta\delta}(-q_{\beta\gamma})^{n+1}q_{\beta\eta} \x_{\alpha}\x_{\delta} \x_{\gamma}^{n+1}\x_{\eta} \ot x_{\beta}
- q_{\alpha\beta}q_{\alpha\delta}(-q_{\alpha\gamma})^{n+1} q_{\alpha\eta} \x_{\beta}\x_{\delta} \x_{\gamma}^{n+1}\x_{\eta} \ot x_{\alpha}
\\ & \quad 
- (-q_{\delta\gamma})^{n+1} \Bsj_3 \x_{\alpha}\x_{\beta} \x_{\gamma}^{n+1}\x_{\varphi} \ot x_{\tau} x_{\gamma}
- q_{\alpha\beta}q_{\gamma\varphi} (n+2)_{\widetilde{q}_{\alpha\gamma}}(-q_{\delta\gamma})^{n+1} \Bsj_1 \Bsj_3 \x_{\beta}\x_{\gamma}^{n+2}\x_{\tau} \ot  x_{\gamma}
\\ & \quad
-q_{\alpha\beta}\big( q_{\gamma\eta}(-q_{\delta\gamma})^{-n}\coeff{\alpha\beta\delta\gamma}{n}-q_{\gamma\varphi} (n+2)_{\widetilde{q}_{\alpha\gamma}}(-q_{\delta\gamma})^{n+1} q_{\gamma\tau} (n+3)_{\widetilde{q}_{\beta\gamma}}\big) \Bsj_1 \Bsj_2 \Bsj_3 s(\x_{\gamma}^{n+3}\ot x_{\gamma} ).
\end{align*}
Thus the proof of the inductive step follows since
$q_{\gamma\delta}q_{\gamma\delta} = q_{\gamma\gamma}q_{\gamma\tau} q_{\gamma\varphi}= -q_{\gamma\tau}q_{\gamma\varphi}$
and $s(\x_{\gamma}^{n+3}\ot x_{\gamma} )=\x_{\gamma}^{n+4}\ot 1$.
\epf

\begin{lema}\label{lem:diff-case6}
Let $\alpha < \eta <\gamma < \beta <\delta $ be positive roots
such that $N_{\gamma}=2$ and the relations among the corresponding root vectors take the form
\begin{align}\label{eq:diff-case6-hypothesis}
\begin{aligned}
x_{\alpha}x_{\delta} &= q_{\alpha\delta} x_{\delta}x_{\alpha} +\Bsj_1 x_{\gamma}x_{\eta}+ \Bsj_2 x_{\beta}x_{\eta}^2, &
x_{\eta}x_{\beta} &= q_{\eta\beta}  x_{\beta}x_{\eta} + \Bsj_3 x_{\gamma},
\end{aligned}
\end{align}
for some scalars $\Bsj_1, \Bsj_2, \Bsj_3$ and the other pairs of root vectors $q$-commute.
Then, for all $n\geq 0$, 
\begin{align}\label{eq:diff-case6-formula}
\begin{aligned}
d(\x_{\alpha} \x_{\gamma}^{n} & \x_{\beta} \x_{\delta}\ot 1)  = \x_{\alpha} \x_{\gamma}^{n} \x_{\beta} \ot x_{\delta}
-q_{\beta\delta}\x_{\alpha} \x_{\gamma}^{n} \x_{\delta}\ot x_{\beta}
+q_{\gamma\beta}q_{\gamma\delta} \x_{\alpha} \x_{\gamma}^{n-1} \x_{\beta} \x_{\delta}\ot x_{\gamma}
\\ &
+(-q_{\alpha\gamma})^nq_{\alpha\beta}q_{\alpha\delta} \x_{\gamma}^{n} \x_{\beta} \x_{\delta}\ot x_{\alpha}
+\Bsj_1 (-q_{\alpha\gamma})^{n} (n+1)_{\widetilde{q}_{\delta\gamma}}  \x_{\gamma}^{n+1} \x_{\beta}\ot x_{\eta}
\\ & 
+q_{\alpha\beta}(n+2)_{\widetilde{q}_{\beta\delta}}\delta_{N_{\beta},2} \Bsj_2 \x_{\gamma}^{n}\x_{\beta}^2 \ot x_{\eta}^2
-q_{\beta\delta}(-q_{\gamma\alpha})^{-n} \Bsj_1 \Bsj_3 \coef{\alpha\delta\gamma}{n} \x_{\gamma}^{n+2} \ot 1.
\end{aligned}
\end{align}
\end{lema}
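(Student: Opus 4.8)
The computation parallels that of Lemma~\ref{lem:diff-case2}, which treats the formally analogous situation: indeed the first four terms of the target formula \eqref{eq:diff-case6-formula} coincide verbatim with those of \eqref{eq:diff-case2-formula}, the difference being the extra correction $\Bsj_2 x_{\beta}x_{\eta}^2$ in the relation for $x_{\alpha}x_{\delta}$ and the interchanged roles of $\beta$ and $\delta$. The plan is to induct on $n$, computing $d$ through the recursive rule $d(vt\ot 1)=v\ot t-s\,d(v\ot t)$ together with the contracting-homotopy identity \eqref{eq:contracting-homotopy}, and reducing every intermediate root-vector product to PBW normal form via \eqref{eq:convex-order-relations} and the two structure relations \eqref{eq:diff-case6-hypothesis}. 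Throughout I use $N_{\gamma}=2$, so that $x_{\gamma}^2=0$ and $g_{\gamma}\equiv 1$, together with the degree identities $\gamma+\eta=\alpha+\delta$ and $\eta+\beta=\gamma$ forced by \eqref{eq:diff-case6-hypothesis} (equivalently \eqref{eq:roots-N=2-case6}); these also give $2\gamma=\alpha+\beta+\delta$ and, via the bicharacter property, the relations among the scalars $q_{\mu\nu}$ that will be needed to collapse the recursion at the end.

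First I would record the auxiliary differential $d(\x_{\alpha}\x_{\gamma}^{n}\x_{\beta}\ot 1)$. Since none of the pairs $(\alpha,\gamma)$, $(\alpha,\beta)$, $(\gamma,\beta)$ lies in the list $\{(\alpha,\delta),(\eta,\beta)\}$ of non-$q$-commuting pairs, the three root vectors $x_{\alpha},x_{\gamma},x_{\beta}$ pairwise $q$-commute, and this differential is given directly by \eqref{eq:diff-commuting roots-3} in Remark~\ref{rem:differential-q-commute} (with $\beta_1=\alpha$, $\beta_2=\gamma$, $\beta_3=\beta$, $f_{\gamma}(n)=n$, $g_{\gamma}(n)=1$), exactly as \eqref{eq:diff-case2-auxiliar} is obtained in the proof of Lemma~\ref{lem:diff-case2}. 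The decisive point is that the roots $\eta$ and $\delta$, and hence the genuinely noncommutative relations, enter only once $x_{\delta}$ is brought into play: applying $d$ to $\x_{\alpha}\x_{\gamma}^{n}\x_{\beta}\ot x_{\delta}$ produces $x_{\alpha}x_{\delta}$ and $x_{\beta}x_{\delta}$, which I rewrite by \eqref{eq:diff-case6-hypothesis} and $q$-commutativity, after which the resulting $x_{\gamma}x_{\eta}$ and $x_{\beta}x_{\eta}^2$ contributions are reordered, the former using $x_{\eta}x_{\beta}=q_{\eta\beta}x_{\beta}x_{\eta}+\Bsj_3 x_{\gamma}$.

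I would then prove \eqref{eq:diff-case6-formula} by induction on $n$. The base case $n=0$ is a direct computation of $d(\x_{\alpha}\x_{\beta}\x_{\delta}\ot 1)$: expand $\x_{\alpha}\x_{\beta}\ot x_{\delta}$, substitute \eqref{eq:diff-case6-hypothesis}, and repeatedly invoke $s^2=0$ together with convexity to discard those $s$-images whose leading monomials already lie in $\im s$, as in \eqref{eq:Anick-def-sn}. This yields the four ``$q$-commuting'' terms, the $\x_{\gamma}\x_{\beta}\ot x_{\eta}$ term, the $\delta_{N_{\beta},2}$-term, and the $\x_{\gamma}^{2}\ot 1$ term. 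For the inductive step I write $d(\x_{\alpha}\x_{\gamma}^{n+1}\x_{\beta}\x_{\delta}\ot 1)=\x_{\alpha}\x_{\gamma}^{n+1}\x_{\beta}\ot x_{\delta}-s\,d(\x_{\alpha}\x_{\gamma}^{n+1}\x_{\beta}\ot x_{\delta})$, insert the auxiliary formula above, reduce with the two relations and $x_{\gamma}^{2}=0$, and feed the pieces of the form $d(\text{shorter chain})$ back through the inductive hypothesis; the remaining $s$-images are annihilated using $s^{2}=0$ and the convexity observation (exactly as in Lemma~\ref{lem:diff-case2}) that a product such as $x_{\gamma}^{n}x_{\eta}$ re-expands into normal monomials bounded above by $x_{\gamma}$, so that $s$ kills the corresponding tensors.

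The main obstacle, and the feature absent from Lemma~\ref{lem:diff-case2}, is the bookkeeping of the extra term $\Bsj_2 x_{\beta}x_{\eta}^2$. I would track its propagation by a subsidiary induction recording how the tensors $\x_{\gamma}^{m}\x_{\beta}^{2}\ot x_{\eta}^{2}$ are created and preserved, with careful attention to the dichotomy $N_{\beta}=2$ versus $N_{\beta}>2$: only in the former is $\x_{\beta}^{2}$ an Anick chain, which is the source both of the Kronecker factor $\delta_{N_{\beta},2}$ and of the quantum integer $(n+2)_{\widetilde{q}_{\beta\delta}}$. Finally, the coefficient $c_n$ of $\x_{\gamma}^{n+2}\ot 1$ obeys a first-order inhomogeneous recursion $c_{n+1}=\lambda\,c_{n}+(\text{explicit term in }(n{+}1)_{\widetilde{q}_{\delta\gamma}})$ for an explicit root of unity $\lambda$ determined by the degree identities; solving it and simplifying the scalar prefactors via $q_{\gamma\gamma}=-1$ and $\gamma+\eta=\alpha+\delta$ produces the asserted closed form $c_{n}=-q_{\beta\delta}(-q_{\gamma\alpha})^{-n}\Bsj_1\Bsj_3\,\coef{\alpha\delta\gamma}{n}$, precisely as the analogous coefficient $e_n$ is resolved at the end of Lemma~\ref{lem:diff-case2}.
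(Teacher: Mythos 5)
Your reduction to the pattern of Lemma~\ref{lem:diff-case2} misses the structural consequence of the very feature you point out, namely that the roles of $\beta$ and $\delta$ are interchanged. In Lemma~\ref{lem:diff-case2} the noncommuting pair is $(\alpha,\beta)$, so the nontrivial auxiliary computation \eqref{eq:diff-case2-auxiliar} concerns the \emph{prefix} chain $\x_{\alpha}\x_{\gamma}^{n}\x_{\beta}$, and it is that auxiliary induction which generates the quantum integers $(n+1)_{\widetilde{q}_{\beta\gamma}}$; the absorption of $x_{\delta}$ into the chain $\x_{\alpha}\x_{\gamma}^{n+1}$ (needed to evaluate $s$ on the term $q_{\beta\delta}\,\x_{\alpha}\x_{\gamma}^{n+1}\ot x_{\delta}x_{\beta}$) is there governed by the trivial $q$-commuting formulas of Remark~\ref{rem:differential-q-commute}, since $(\alpha,\delta)$ and $(\gamma,\delta)$ $q$-commute in that setting. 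In the present lemma the situation is exactly reversed: the prefix formula $d(\x_{\alpha}\x_{\gamma}^{n}\x_{\beta}\ot 1)$ is the trivial one, as you correctly note, but the inductive step still produces the term $q_{\beta\delta}\,\x_{\alpha}\x_{\gamma}^{n+1}\ot x_{\delta}x_{\beta}$ (from $\x_{\alpha}\x_{\gamma}^{n+1}\ot x_{\beta}x_{\delta}$), and to apply $s$ to it you must know how $x_{\delta}$ is absorbed into the chain $\x_{\alpha}\x_{\gamma}^{n+1}$, i.e.\ you need the full expansion of $d(\x_{\alpha}\x_{\gamma}^{n+1}\x_{\delta}\ot 1)$ --- a chain which is \emph{not} a prefix of $\x_{\alpha}\x_{\gamma}^{n+1}\x_{\beta}\x_{\delta}$, is not covered by your inductive hypothesis, and is not killed by $s^{2}=0$ or convexity. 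Since $(\alpha,\delta)$ is now the noncommuting pair, this expansion is genuinely nontrivial and requires its own separate induction; it reads
\begin{align*}
d(\x_{\alpha} \x_{\gamma}^{n} \x_{\delta}\ot 1) &=
\x_{\alpha}\x_{\gamma}^{n} \ot x_{\delta}
- q_{\gamma\delta} \x_{\alpha}\x_{\gamma}^{n-1}\x_{\delta} \ot x_{\gamma}
-(-q_{\alpha\gamma})^{n}\Bsj_2 \x_{\gamma}^{n}\x_{\beta} \ot x_{\eta}^{2}
\\ & \quad
-(-q_{\alpha\gamma})^{n}(n+1)_{\widetilde{q}_{\delta\gamma}}\Bsj_1  \x_{\gamma}^{n+1} \ot x_{\eta}
-(-q_{\alpha\gamma})^{n}q_{\alpha\delta} \x_{\gamma}^{n}\x_{\delta} \ot x_{\alpha}.
\end{align*}

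This auxiliary formula is precisely where the quantum integers $(n+1)_{\widetilde{q}_{\delta\gamma}}$ and the tensor $\x_{\gamma}^{n}\x_{\beta}\ot x_{\eta}^{2}$ are created: multiplying its $\Bsj_1$-term by $x_{\beta}$ and using $x_{\eta}x_{\beta}=q_{\eta\beta}x_{\beta}x_{\eta}+\Bsj_3 x_{\gamma}$ then yields both the $\x_{\gamma}^{n+1}\x_{\beta}\ot x_{\eta}$ term of \eqref{eq:diff-case6-formula} and, via $s(\x_{\gamma}^{n+1}\ot x_{\gamma})=\x_{\gamma}^{n+2}\ot 1$, the inhomogeneous term of your recursion for the coefficient of $\x_{\gamma}^{n+2}\ot 1$, whose closed-form solution is $\coef{\alpha\delta\gamma}{n}$. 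Your plan supplies no mechanism producing these quantum integers: the $q$-commuting prefix formula contains none, your ``subsidiary induction'' tracks only the $\Bsj_2$-tensors $\x_{\gamma}^{m}\x_{\beta}^{2}\ot x_{\eta}^{2}$, and the recursion you propose has its inhomogeneous term asserted rather than derived. So the proof as outlined stalls at the first inductive step; it becomes correct once you insert, before the main induction, a separate induction establishing the displayed formula for $d(\x_{\alpha}\x_{\gamma}^{n}\x_{\delta}\ot 1)$ (together with, for the $x_{\eta}$-reorderings, the formula for $d(\x_{\eta}\x_{\gamma}^{n}\x_{\beta}\ot 1)$ obtained from Lemma~\ref{lem:diff2} applied to $\eta<\gamma<\beta$). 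Your treatment of the $N_{\beta}=2$ dichotomy and of the base case is otherwise consistent with what is required.
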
 

\bigbreak
Notice that the equalities in \eqref{eq:diff-case6-hypothesis} force
\begin{align}\label{eq:diff-case6-hypothesis-roots}
\gamma + \eta &=\alpha+\delta, &  \eta+\beta &= \gamma.
\end{align}
Hence the following equality also holds: $2\gamma=\alpha+\beta+\delta$.

\pf
Lemma \ref{lem:diff2} applied to $\eta<\gamma<\beta$ says that the following formula holds for $n\geq 0$:
\begin{align} \label{eq:diff-case6-auxiliar0}
\begin{aligned}
d(\x_{\eta} \x_{\gamma}^{n} \x_{\beta}\ot 1)  &=
\x_{\eta}\x_{\gamma}^{n}\ot x_{\beta} 
- q_{\gamma\beta} \x_{\eta}\x_{\gamma}^{n-1}\x_{\beta}
\ot x_{\gamma} 
- (-q_{\eta\gamma})^{n}q_{\eta\beta} \x_{\gamma}^{n} \x_{\beta}\ot x_{\eta} 
\\ & \quad + \Bsj_3 q_{\gamma\beta} (n+1)_{\widetilde{q}_{\eta\gamma}} \x_{\gamma}^{n+1}\ot 1,
\end{aligned}
\end{align}
Next we prove that the following formula holds for all $n\ge 0$:
\begin{align}
\label{eq:diff-case6-auxiliar1}
&\begin{aligned}
d(\x_{\alpha} & \x_{\gamma}^{n} \x_{\delta}\ot 1) =
\x_{\alpha}\x_{\gamma}^{n} \ot x_{\delta}
- q_{\gamma\delta} \x_{\alpha}\x_{\gamma}^{n-1}\x_{\delta} \ot x_{\gamma}
-(-q_{\alpha\gamma})^n\Bsj_2 \x_{\gamma}^{n}\x_{\beta} \ot x_{\eta}^2
\\ &
-(-q_{\alpha\gamma})^n(n+1)_{\widetilde{q}_{\delta\gamma}}\Bsj_1  \x_{\gamma}^{n+1} \ot x_{\eta}
-(-q_{\alpha\gamma})^nq_{\alpha\delta} \x_{\gamma}^{n}\x_{\delta} \ot x_{\alpha}
\end{aligned}
\end{align}
For $n=0$ we have that
\begin{align*}
d( & \x_{\alpha}\x_{\delta}\ot 1) = \x_{\alpha} \ot x_{\delta}- s \big( q_{\alpha\delta} x_{\delta}x_{\alpha} +\Bsj_1 x_{\gamma}x_{\eta}+ \Bsj_2 x_{\beta}x_{\eta}^2 \big)
\\ & = \x_{\alpha} \ot x_{\delta} -\Bsj_1 \x_{\gamma} \ot x_{\eta}
-\Bsj_2 \x_{\beta} \ot x_{\eta}^2
- q_{\alpha\delta} \x_{\delta} \ot x_{\alpha}
\end{align*}
%
Now assume that \eqref{eq:diff-case6-auxiliar1} holds for $n$. Using Remark \ref{rem:differential-q-commute}, inductive hypothesis, 
\begin{align*}
d( & \x_{\alpha}\x_{\gamma}^{n+1} \x_{\delta}\ot 1) = \x_{\alpha}\x_{\gamma}^{n+1} \ot x_{\delta}
-s \big( q_{\gamma\delta} \x_{\alpha}\x_{\gamma}^{n} \ot x_{\delta}x_{\gamma} 
+(-q_{\alpha\gamma})^{n+1} \x_{\gamma}^{n+1} \ot x_{\alpha}x_{\delta}
\big)
\\ & = \x_{\alpha}\x_{\gamma}^{n+1} \ot x_{\delta}
-q_{\gamma\delta} \x_{\alpha}\x_{\gamma}^{n}\x_{\delta} \ot x_{\gamma}
-s \big(
(-q_{\alpha\gamma})^{n+1}q_{\alpha\delta}  \x_{\gamma}^{n+1} \ot x_{\delta}x_{\alpha}
\\ & \quad
+(-q_{\alpha\gamma})^{n+1}(1+\widetilde{q}_{\gamma\delta}(n+1)_{\widetilde{q}_{\delta\gamma}}
)\Bsj_1  \x_{\gamma}^{n+1} \ot x_{\gamma}x_{\eta}
+(-q_{\alpha\gamma})^{n+1}\Bsj_2 \x_{\gamma}^{n+1} \ot x_{\beta}x_{\eta}^2
\\ & \quad
+q_{\eta\gamma}^2q_{\gamma\delta}(-q_{\alpha\gamma})^n\Bsj_2 \x_{\gamma}^{n}\x_{\beta} \ot x_{\gamma}x_{\eta}^2
-(-q_{\alpha\gamma})^{n+1}q_{\alpha\delta}q_{\gamma\delta} \x_{\gamma}^{n}\x_{\delta} \ot x_{\gamma}x_{\alpha}
\big)
\\ & = \x_{\alpha}\x_{\gamma}^{n+1} \ot x_{\delta}
-q_{\gamma\delta} \x_{\alpha}\x_{\gamma}^{n}\x_{\delta} \ot x_{\gamma}
-(-q_{\alpha\gamma})^{n+1}(n+2)_{\widetilde{q}_{\delta\gamma}}\Bsj_1  \x_{\gamma}^{n+2} \ot x_{\eta}
\\ & \quad
-(-q_{\alpha\gamma})^{n+1}q_{\alpha\delta}  \x_{\gamma}^{n+1}\x_{\delta} \ot x_{\alpha}
-(-q_{\alpha\gamma})^{n+1}\Bsj_2 \x_{\gamma}^{n+1}\x_{\beta} \ot x_{\eta}^2.
\end{align*}

Now we prove \eqref{eq:diff-case6-formula} by induction on $n$. When $n=0$,
\begin{align*}
d( & \x_{\alpha}\x_{\beta}\x_{\delta} \ot 1) = \x_{\alpha}\x_{\beta} \ot x_{\delta}
-s \big(q_{\beta\delta} \x_{\alpha} \ot x_{\delta}x_{\beta} -q_{\alpha\beta} \x_{\beta} \ot x_{\alpha}x_{\delta}\big)
\\ & = \x_{\alpha}\x_{\beta} \ot x_{\delta}
-q_{\beta\delta} \x_{\alpha}\x_{\delta} \ot x_{\beta}
-q_{\beta\delta}\Bsj_1\Bsj_3 \x_{\gamma}^2 \ot 1
-q_{\eta\beta}q_{\beta\delta}\Bsj_1 \x_{\gamma}\x_{\beta} \ot x_{\eta}
\\ & \quad
+q_{\alpha\beta}(2)_{\widetilde{q}_{\beta\delta}}\delta_{N_{\beta},2} \Bsj_2 \x_{\beta}^2 \ot x_{\eta}^2
+q_{\alpha\beta}q_{\alpha\delta} \x_{\beta}\x_{\delta} \ot x_{\alpha},
\end{align*}
since $\x_{\beta} \ot x_{\gamma}x_{\eta}=s(x_{\beta}x_{\gamma}x_{\eta})$ and $s(\x_{\beta} \ot x_{\beta}x_{\eta}^2)=0$: either $\x_{\beta} \ot x_{\beta}x_{\eta}^2=s(x_{\beta}^2x_{\eta}^2)$ if $N_{\beta}\ne 2$ or $q_{\beta\delta}q_{\eta\beta}^2=q_{\beta\delta}q_{\gamma\beta}^2 =-q_{\alpha\beta}\widetilde{q}_{\beta\delta}$ if $N_{\beta}=2$.
Now assume that \eqref{eq:diff-case7-formula} holds for $n$. Using Remark \ref{rem:differential-q-commute}, \eqref{eq:diff-case6-auxiliar1} and inductive hypothesis,
\begin{align*}
d(& \x_{\alpha}\x_{\gamma}^{n+1}\x_{\beta}\x_{\delta}\ot 1)  =
\x_{\alpha}\x_{\gamma}^{n+1}\x_{\beta} \ot x_{\delta}
-s \big(
q_{\beta\delta} \x_{\alpha}\x_{\gamma}^{n+1} \ot x_{\delta}x_{\beta}
-q_{\gamma\beta}q_{\gamma\delta} \x_{\alpha}\x_{\gamma}^{n}\x_{\beta} \ot x_{\delta}x_{\gamma}
\\ & \quad
-(-q_{\alpha\gamma})^{n+1}q_{\alpha\beta} \x_{\gamma}^{n+1}\x_{\beta} \ot (q_{\alpha\delta} x_{\delta}x_{\alpha} +\Bsj_1 x_{\gamma}x_{\eta}+ \Bsj_2 x_{\beta}x_{\eta}^2)
\big)
\\ & =
\x_{\alpha}\x_{\gamma}^{n+1}\x_{\beta} \ot x_{\delta}
-q_{\beta\delta} \x_{\alpha}\x_{\gamma}^{n+1}\x_{\delta} \ot x_{\beta}
+q_{\gamma\beta}q_{\gamma\delta} \x_{\alpha}\x_{\gamma}^{n}\x_{\beta}\x_{\delta} \ot x_{\gamma}
\\ & \quad
-q_{\beta\delta}(-q_{\gamma\alpha})^{-n+1}\coef{\alpha\delta\gamma}{n+1}\Bsj_1\Bsj_3 \x_{\gamma}^{n+3} \ot 1
-(-q_{\alpha\gamma})^{n+1}q_{\eta\beta}q_{\beta\delta}(n+2)_{\widetilde{q}_{\delta\gamma}}\Bsj_1  \x_{\gamma}^{n+2}\x_{\beta} \ot x_{\eta}
\\ & \quad
+q_{\alpha\beta}(n+2)_{\widetilde{q}_{\beta\delta}}\delta_{N_{\beta},2} \Bsj_2 \x_{\gamma}^{n}\x_{\beta}^2 \ot x_{\eta}^2
+(-q_{\alpha\gamma})^{n+1}q_{\alpha\beta}q_{\alpha\delta} \x_{\gamma}^{n+1}\x_{\beta}\x_{\delta} \ot x_{\alpha},
\end{align*}
which completes the inductive step.
\epf

\begin{lema}\label{lem:diff-case7}
Let $\alpha < \beta <\gamma < \eta <\delta $ be positive roots
such that $N_{\gamma}=2$ and the relations among the corresponding root vectors take the form
\begin{align}\label{eq:diff-case7-hypothesis}
\begin{aligned}
x_{\beta}x_{\delta} &= q_{\beta\delta} x_{\delta}x_{\beta} +\Bsj_1 x_{\eta}x_{\gamma}, &
x_{\alpha}x_{\eta} &= q_{\alpha\eta}  x_{\eta}x_{\alpha} + \Bsj_2 x_{\gamma}, 
\end{aligned}
\end{align}
for some scalars $\Bsj_1, \Bsj_2$ and the other pairs of root vectors $q$-commute.
Then, for all $n\geq 0$, 
\begin{align}\label{eq:diff-case7-formula}
\begin{aligned}
d(\x_{\alpha}& \x_{\beta} \x_{\gamma}^{n} \x_{\delta}\ot 1)  = \x_{\alpha}\x_{\beta} \x_{\gamma}^{n} \ot x_{\delta}
-q_{\gamma\delta} \x_{\alpha}\x_{\beta}\x_{\gamma}^{n-1} \x_{\delta} \ot x_{\gamma}
\\ &
-(-q_{\beta\gamma})^n q_{\beta\delta} \x_{\alpha}\x_{\gamma}^{n} \x_{\delta}\ot x_{\beta}
+(-q_{\alpha\gamma})^nq_{\alpha\beta}q_{\alpha\delta} \x_{\beta} \x_{\gamma}^{n} \x_{\delta}\ot x_{\alpha}
\\ & 
- (-q_{\beta\gamma})^{n} \Bsj_1 \x_{\alpha} \x_{\gamma}^{n} \x_{\eta}\ot x_{\gamma}
-q_{\gamma\delta}^{n} \coef{-\delta,\alpha,\gamma}{n} \Bsj_1 \Bsj_2 \x_{\gamma}^{n+2} \ot 1.
\end{aligned}
\end{align}
\end{lema}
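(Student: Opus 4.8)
The plan is to establish the formula by induction on $n$, after first recording the numerical constraints on the roots and isolating one auxiliary differential identity supplied by Lemma~\ref{lem:diff2}.

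First I would read off the homogeneity constraints forced by \eqref{eq:diff-case7-hypothesis}. Since the Anick differential preserves the $\N_0^{\I}$-grading (Lemma~\ref{lem:grading}) and the defining relations are homogeneous, comparing degrees on the two sides of each relation yields
\begin{align*}
\gamma + \eta &= \beta + \delta, & \eta + \alpha &= \gamma,
\end{align*}
and hence $2\gamma = \alpha + \beta + \delta$. These identities let every root vector arising in the computation be rewritten in terms of $\alpha,\beta,\gamma$, and in particular they confirm that $\x_\gamma^{n+2}$ is the only chain of cohomological degree $2(n+1)$ and $\N_0^{\I}$-degree $(n+2)\gamma$ that $\x_\alpha\x_\beta\x_\gamma^n\x_\delta$ can reach; this is what makes the final term of \eqref{eq:diff-case7-formula} the relevant one for cocycle verification.

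The key auxiliary input is $d(\x_\alpha\x_\gamma^n\x_\eta\ot 1)$. Because $N_\gamma=2$ we have $x_\gamma^{N_\gamma-1}=x_\gamma$, so the second relation of \eqref{eq:diff-case7-hypothesis}, together with the fact that $x_\alpha,x_\gamma$ and $x_\gamma,x_\eta$ each $q$-commute, places the triple $\alpha<\gamma<\eta$ exactly into the hypothesis \eqref{eq:diff2-hypothesis} of Lemma~\ref{lem:diff2} (with $\beta\mapsto\eta$, middle root $\gamma$, no $\delta_j,\eta_j$ terms, and $\Bsj\mapsto\Bsj_2$). Lemma~\ref{lem:diff2} then gives a closed formula for $d(\x_\alpha\x_\gamma^n\x_\eta\ot 1)$ for all $n$, which feeds the main induction. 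I would also note that the pairs $x_\alpha$ with $x_\gamma,x_\delta$, the pair $x_\beta$ with $x_\gamma$, and the pair $x_\gamma$ with $x_\delta$ all $q$-commute, so the corresponding strands are handled directly by Remark~\ref{rem:differential-q-commute}. The induction then proceeds as in Lemmas~\ref{lem:diff-case5} and \ref{lem:diff-case6}: one writes $d(\x_\alpha\x_\beta\x_\gamma^{n+1}\x_\delta\ot 1)=\x_\alpha\x_\beta\x_\gamma^{n+1}\ot x_\delta - s\,d(\x_\alpha\x_\beta\x_\gamma^{n+1}\ot x_\delta)$, expands the inner differential using the inductive hypothesis, the two relations of \eqref{eq:diff-case7-hypothesis}, and the auxiliary formula, and then applies the contracting homotopy $s$ repeatedly. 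Most intermediate terms vanish under $s$: either they already lie in $\im s$ (so $s^2=0$ applies), or by convexity of the PBW basis \eqref{eq:convex-order-relations} the products in the right tensor factor are combinations of monomials whose letters all sit below the leading chain letter, so $s$ merely reattaches them. The surviving contributions reproduce the six terms of \eqref{eq:diff-case7-formula}, while the coefficient of $\x_\gamma^{n+2}\ot 1$ is governed by a first-order inhomogeneous recursion whose inhomogeneous term is the quantum integer $(n+1)_{\widetilde{q}_{\alpha\gamma}}$ weighted by a power of $\widetilde{q}_{\delta\gamma}$; solving this telescoping recursion is precisely what produces $\coef{-\delta,\alpha,\gamma}{n}=\coef{-\widetilde{q}_{\delta\gamma},\widetilde{q}_{\alpha\gamma}}{n}$ in the sense of \eqref{eq:coef-general-formula}.

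I expect the main obstacle to be the coefficient bookkeeping in the inductive step: tracking the many $q$-power prefactors generated each time a root vector is commuted past $x_\gamma$, verifying term by term which contributions are annihilated by $s$, and—most delicately—checking that the leftover scalar in front of $\x_\gamma^{n+2}\ot 1$ assembles into the claimed quantum-integer sum rather than into some unrelated combination. The root identities $\gamma+\eta=\beta+\delta$ and $\eta+\alpha=\gamma$, together with $q_{\gamma\gamma}=-1$ (forced by $N_\gamma=2$), are the levers that collapse the stray prefactors into the clean closed form of \eqref{eq:diff-case7-formula}.
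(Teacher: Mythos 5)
Your plan coincides with the paper's proof in all its main moves: the same grading identities $\gamma+\eta=\beta+\delta$, $\eta+\alpha=\gamma$; the same application of Lemma~\ref{lem:diff2} to the triple $\alpha<\gamma<\eta$ (this is exactly the paper's auxiliary formula \eqref{eq:diff-case7-auxiliar2}); the same induction on $n$ via $d=\id - s\,d$; and the same first-order recursion whose solution is $\coef{-\delta,\alpha,\gamma}{n}$.

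There is, however, one concrete gap. You assert that, apart from the $\alpha<\gamma<\eta$ strand, the remaining strands are handled directly by Remark~\ref{rem:differential-q-commute} because the pairs $(\alpha,\gamma)$, $(\alpha,\delta)$, $(\beta,\gamma)$, $(\gamma,\delta)$ all $q$-commute. This overlooks the strand headed by $\x_{\beta}\x_{\gamma}^{n+1}$ and ending in $x_{\delta}$: in the inductive step, commuting $x_{\alpha}$ past $x_{\delta}$ produces the term $\x_{\beta}\x_{\gamma}^{n+1}\ot x_{\delta}x_{\alpha}$ inside $s(\cdot)$, and evaluating $s$ on it requires the full differential $d(\x_{\beta}\x_{\gamma}^{n+1}\x_{\delta}\ot 1)$. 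Since $x_{\beta}$ and $x_{\delta}$ do \emph{not} $q$-commute (their commutator is $\Bsj_1 x_{\eta}x_{\gamma}$), this differential is not covered by Remark~\ref{rem:differential-q-commute}; it carries the extra term $-(-q_{\beta\gamma})^{n+1}\Bsj_1\,\x_{\gamma}^{n+1}\x_{\eta}\ot x_{\gamma}$. The paper therefore proves a second auxiliary identity, \eqref{eq:diff-case7-auxiliar}, by its own small induction (analogous to \eqref{eq:diff-case2-auxiliar}) before starting the main one. This identity is genuinely needed: its $\Bsj_1$-correction is what interacts with the $\x_{\gamma}^{n+1}\x_{\eta}\ot x_{\gamma}x_{\alpha}$ term coming from the $\alpha\gamma\eta$ strand, and without it one can neither produce the term $q_{\alpha\beta}(-q_{\alpha\gamma})^{n+1}q_{\alpha\delta}\,\x_{\beta}\x_{\gamma}^{n+1}\x_{\delta}\ot x_{\alpha}$ correctly nor certify the final coefficient of $\x_{\gamma}^{n+3}\ot 1$. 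The repair is routine—state and prove the analogue of \eqref{eq:diff-case2-auxiliar} for the triple $\beta<\gamma<\delta$ with tail term $-(-q_{\beta\gamma})^{n}\Bsj_1\x_{\gamma}^{n}\x_{\eta}\ot x_{\gamma}$—but it must appear as a separate lemma in your induction toolkit; it cannot be absorbed into the $q$-commuting case.
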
 

\bigbreak
Notice that the equalities in \eqref{eq:diff-case7-hypothesis} force
\begin{align}\label{eq:diff-case7-hypothesis-roots}
\gamma + \eta &=\beta+\delta, &  \eta+\alpha &= \gamma.
\end{align}
Hence the following equality also holds: $2\gamma=\alpha+\beta+\delta$.

\pf 
The following formula holds for all $n\ge 0$:
\begin{align}
\label{eq:diff-case7-auxiliar}
&\begin{aligned}
d(\x_{\beta} & \x_{\gamma}^{n} \x_{\delta}\ot 1) =
\x_{\beta}\x_{\gamma}^{n}\ot x_{\delta} 
-q_{\gamma \delta} \x_{\beta}\x_{\gamma}^{n-1}\x_{\delta}\ot x_{\gamma}
-(-q_{\beta\gamma})^{n} q_{\beta \delta} \x_{\gamma}^{n}\x_{\delta} \ot x_{\beta}
\\ & -(-q_{\beta\gamma})^{n} \Bsj_1 \x_{\gamma}^{n}\x_{\eta}\ot x_{\gamma}.
\end{aligned}
\end{align}
The proof is analogous to \eqref{eq:diff-case2-auxiliar}, see also the proof of \eqref{eq:diff-case4-auxiliar-2}. 
Next, we apply Lemma \ref{lem:diff2} for $\alpha < \gamma < \eta$ (no other intermediate roots) to get 
\begin{align}\label{eq:diff-case7-auxiliar2}
\begin{aligned}
d(\x_{\alpha} & \x_{\gamma}^{n} \x_{\eta}\ot 1)  =
\x_{\alpha}\x_{\gamma}^{n} \ot x_{\eta} 
- q_{\gamma\eta} \x_{\alpha} \x_{\gamma}^{n-1}\x_{\eta}\ot x_{\gamma} 
- q_{\alpha\eta}(-q_{\alpha\gamma})^{n} \x_{\gamma}^{n} \x_{\eta} \ot x_{\alpha}  
\\
& 
- (-q_{\alpha\gamma})^{n} (n+1)_{\widetilde{q}_{\alpha\gamma}} \Bsj_2
\x_{\gamma}^{n+1} \ot 1.
\end{aligned}
\end{align}

Now we prove \eqref{eq:diff-case7-formula} by induction on $n$. When $n=0$,
\begin{align*}
d( & \x_{\alpha} \x_{\beta} \x_{\delta}\ot 1) =
\x_{\alpha}\x_{\beta} \ot x_{\delta} - s \left( 
q_{\beta\delta} \x_{\alpha}\ot x_{\delta}x_{\beta}
+\Bsj_1 \x_{\alpha}\ot x_{\eta}x_{\gamma}
-q_{\alpha\beta}q_{\alpha\delta} \x_{\beta} \ot x_{\delta}x_{\alpha}
\right)
\\ &
= \x_{\alpha}\x_{\beta} \ot x_{\delta} 
-\Bsj_1 \x_{\alpha}\x_{\eta} \ot x_{\gamma}
-q_{\beta\delta} \x_{\alpha} \x_{\delta} \ot x_{\beta}
- s \big( 
q_{\alpha\beta}q_{\alpha\delta}q_{\beta\delta} \x_{\delta} \ot x_{\beta}x_{\alpha}
\\ & \quad
+ q_{\alpha\gamma}q_{\alpha\eta}\Bsj_1 \x_{\eta}\ot x_{\gamma}x_{\alpha} +
\Bsj_1 \Bsj_2 \x_{\gamma}\ot x_{\gamma}
-q_{\alpha\beta}q_{\alpha\delta} \x_{\beta} \ot x_{\delta}x_{\alpha}
\big)
\\ &
= \x_{\alpha}\x_{\beta} \ot x_{\delta} 
-\Bsj_1 \x_{\alpha}\x_{\eta} \ot x_{\gamma}
-q_{\beta\delta} \x_{\alpha} \x_{\delta} \ot x_{\beta}
+q_{\alpha\beta}q_{\alpha\delta} \x_{\beta}\x_{\delta} \ot x_{\alpha}
- \Bsj_1 \Bsj_2 \x_{\gamma}^2 \ot 1.
\end{align*}

Now assume that \eqref{eq:diff-case7-formula} holds for $n$. Using Remark \ref{rem:differential-q-commute}, inductive hypothesis, the relation $x_{\gamma}^2=0$, \eqref{eq:diff-case7-auxiliar2}, \eqref{eq:diff-case7-auxiliar}, 
\begin{align*}
d(\x_{\alpha} & \x_{\beta} \x_{\gamma}^{n+1} \x_{\delta}\ot 1)  =
\x_{\alpha} \x_{\beta}\x_{\gamma}^{n+1} \otimes x_{\delta}
-s \big(
q_{\gamma\delta} \x_{\alpha} \x_{\beta}\x_{\gamma}^{n} \ot x_{\delta} x_{\gamma}
+ (-q_{\beta\gamma})^{n+1}\Bsj_1 \x_{\alpha}\x_{\gamma}^{n+1} \ot  x_{\eta}x_{\gamma}
\\ & \quad 
+ (-q_{\beta\gamma})^{n+1}q_{\beta\delta} \x_{\alpha}\x_{\gamma}^{n+1} \ot  x_{\delta}x_{\beta}
-q_{\alpha\beta}(-q_{\alpha\gamma})^{n+1} q_{\alpha\delta} \x_{\beta}\x_{\gamma}^{n+1} \ot x_{\delta}x_{\alpha} \big)
\\ &
=
\x_{\alpha} \x_{\beta}\x_{\gamma}^{n+1} \otimes x_{\delta}
-q_{\gamma\delta} \x_{\alpha} \x_{\beta}\x_{\gamma}^{n}\x_{\delta} \ot  x_{\gamma}
- (-q_{\beta\gamma})^{n+1}\Bsj_1 \x_{\alpha}\x_{\gamma}^{n+1}\x_{\eta} \ot  x_{\gamma}
\\ & \quad 
-(-q_{\beta\gamma})^{n+1}q_{\beta\delta} \x_{\alpha}\x_{\gamma}^{n+1} \x_{\delta} \ot  x_{\beta} 
-s \big(
-q_{\alpha\beta}(-q_{\alpha\gamma})^{n+1} q_{\alpha\delta} \x_{\beta}\x_{\gamma}^{n+1} \ot x_{\delta}x_{\alpha} 
\\ & \quad 
-q_{\alpha\gamma} (-q_{\alpha\gamma})^n q_{\alpha\beta} q_{\alpha\delta} q_{\gamma\delta} \x_{\beta} \x_{\gamma}^{n} \x_{\delta} \ot x_{\gamma}x_{\alpha}
\\ & \quad 
+ (q_{\gamma\delta}^{n+1} \coef{-\delta,\alpha,\gamma}{n} + q_{\beta\gamma}^{n+1}  q_{\alpha\gamma}^{n+1} (n+2)_{\widetilde{q}_{\alpha\gamma}}) \Bsj_1\Bsj_2 \x_{\gamma}^{n+2} \ot x_{\gamma} 
\\ & \quad
+q_{\beta\gamma}^{n+1}
q_{\alpha\eta}q_{\alpha\gamma}^{n+2}\Bsj_1 \x_{\gamma}^{n+1} \x_{\eta} \ot x_{\gamma}x_{\alpha} 
-q_{\beta\gamma}^{n+1}q_{\beta\delta} 
q_{\alpha\beta}q_{\alpha\gamma}^{n+1}q_{\alpha\delta} \x_{\gamma}^{n+1} \x_{\delta} \ot x_{\beta}x_{\alpha}
\big)
\\ &
=
\x_{\alpha} \x_{\beta}\x_{\gamma}^{n+1} \otimes x_{\delta}
-q_{\gamma\delta} \x_{\alpha} \x_{\beta}\x_{\gamma}^{n}\x_{\delta} \ot  x_{\gamma}
- (-q_{\beta\gamma})^{n+1}\Bsj_1 \x_{\alpha}\x_{\gamma}^{n+1}\x_{\eta} \ot  x_{\gamma}
\\ & \quad 
-(-q_{\beta\gamma})^{n+1}q_{\beta\delta} \x_{\alpha}\x_{\gamma}^{n+1} \x_{\delta} \ot  x_{\beta} 
+q_{\alpha\beta}(-q_{\alpha\gamma})^{n+1} q_{\alpha\delta} \x_{\beta}\x_{\gamma}^{n+1}\x_{\delta} \ot x_{\alpha} 
\\ & \quad 
-(q_{\gamma\delta}^{n+1} \coef{-\delta,\alpha,\gamma}{n} + q_{\gamma\gamma}^{2(n+1)}  q_{\delta\gamma}^{-n-1} (n+2)_{\widetilde{q}_{\alpha\gamma}}) \Bsj_1\Bsj_2 \x_{\gamma}^{n+3} \ot 1,
\end{align*}
and the inductive step follows.
\epf

\begin{lema}\label{lem:diff-case8}
Let $\alpha < \tau < \beta <\gamma < \mu < \nu 
< \eta <\delta $ be positive roots
such that $N_{\gamma}=2$ and the relations among the corresponding root vectors take the form
\begin{align}\label{eq:diff-case8-hypothesis}
\begin{aligned}
x_{\alpha}x_{\delta} &= q_{\alpha\delta} x_{\delta}x_{\alpha} +\Bsj_1 x_{\eta}x_{\tau}, &
x_{\beta}x_{\delta} &= q_{\beta\delta}  x_{\delta}x_{\beta} + \Bsj_2 x_{\nu} x_{\gamma},
\\
x_{\beta}x_{\eta} &= q_{\beta\eta} x_{\eta}x_{\beta} +\Bsj_3 x_{\mu}x_{\gamma}, &
x_{\alpha}x_{\nu} &= q_{\alpha\nu}  x_{\nu}x_{\alpha} + \Bsj_4 x_{\gamma},
\end{aligned}
\end{align}
for some scalars $\Bsj_i$ and the other pairs of root vectors $q$-commute.
Then, for all $n\geq 0$, 
\begin{align}\label{eq:diff-case8-formula}
\begin{aligned}
d(\x_{\alpha}& \x_{\beta} \x_{\gamma}^{n} \x_{\delta}\ot 1)  = \x_{\alpha}\x_{\beta} \x_{\gamma}^{n} \ot x_{\delta}
-q_{\gamma\delta} \x_{\alpha}\x_{\beta}\x_{\gamma}^{n-1} \x_{\delta} \ot x_{\gamma}
\\ &
-(-q_{\beta\gamma})^n q_{\beta\delta} \x_{\alpha}\x_{\gamma}^{n} \x_{\delta}\ot x_{\beta}
+(-q_{\alpha\gamma})^nq_{\alpha\beta}q_{\alpha\delta} \x_{\beta} \x_{\gamma}^{n} \x_{\delta}\ot x_{\alpha}
\\ & 
+q_{\alpha\beta} (-q_{\alpha\gamma})^{n} \Bsj_1 \x_{\beta} \x_{\gamma}^{n} \x_{\eta} \otimes x_{\tau}
-(-q_{\beta\gamma})^{n}\Bsj_2 \x_{\alpha}\x_{\gamma}^{n}\x_{\nu} \otimes x_{\gamma}
\\ & 
- q_{\gamma\delta}^{n} \coef{-\delta,\alpha,\gamma}{n} \Bsj_2 \Bsj_4 \x_{\gamma}^{n+2} \ot 1.
\end{aligned}
\end{align}
\end{lema}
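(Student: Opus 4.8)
The plan is to prove \eqref{eq:diff-case8-formula} by induction on $n$, following verbatim the template already used for Lemmas \ref{lem:diff2}--\ref{lem:diff-case7}: compute $d$ on each chain through the defining recursion $d(c\ot 1)=(\text{leading term})-s\,d(\cdots)$, reduce every product of root vectors to the PBW basis using the relations \eqref{eq:diff-case8-hypothesis} together with the $q$-commutation of all other pairs, and discard the terms annihilated by $s$. First I record the consequences of \eqref{eq:diff-case8-hypothesis} on the roots, namely $\alpha+\delta=\eta+\tau$, $\beta+\delta=\nu+\gamma$, $\beta+\eta=\mu+\gamma$ and $\alpha+\nu=\gamma$, whence $2\gamma=\alpha+\beta+\delta$; these grading identities are what let one recognize which products collapse onto powers of $x_\gamma$, and together with $N_\gamma=2$ they give $x_\gamma^2=0$, used repeatedly to kill monomials in which two $x_\gamma$'s meet.

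Before the main induction I would establish, each by its own induction on $n$, the auxiliary differentials that surface when $s$ is applied to a leading factor. The formula for $d(\x_\alpha\x_\gamma^n\x_\nu\ot 1)$ is a direct instance of Lemma \ref{lem:diff2} for the triple $\alpha<\gamma<\nu$ (here $N_\gamma=2$ turns the $x_\gamma^{N_\gamma-1}$ of \eqref{eq:diff2-hypothesis} into the single $\Bsj_4 x_\gamma$); the formulas for $d(\x_\beta\x_\gamma^n\x_\delta\ot 1)$, $d(\x_\beta\x_\gamma^n\x_\eta\ot 1)$ and $d(\x_\alpha\x_\gamma^n\x_\delta\ot 1)$ are of the mixed shape of \eqref{eq:diff-case7-auxiliar}, each producing, apart from the expected transport terms, residual summands built from the extra term in the corresponding relation of \eqref{eq:diff-case8-hypothesis}. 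It is the careful accounting of these residual terms that ultimately feeds the $\x_\gamma^{n+2}\ot 1$ coefficient.

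With the auxiliaries in hand, the main induction runs as usual: apply $d$ to $\x_\alpha\x_\beta\x_\gamma^{n+1}\x_\delta\ot 1$, split off $\x_\alpha\x_\beta\x_\gamma^{n+1}\ot x_\delta$, and rewrite $s$ of the remainder using Remark \ref{rem:differential-q-commute} for the $q$-commuting interactions, the inductive hypothesis for $d(\x_\alpha\x_\beta\x_\gamma^n\x_\delta\ot 1)$, and the auxiliary formulas. One then repeatedly uses $x_\gamma^2=0$ and $s\circ s=0$, and the convexity of the PBW basis (which forces the surviving products $x_{\mu_j}\cdots x_{\mu_1}x_\gamma$ to be linear combinations of monomials on which $s$ acts by stripping a single prefix), to discard exactly the terms that must vanish, precisely as in the proofs of Lemmas \ref{lem:diff-case2}, \ref{lem:diff-case6} and \ref{lem:diff-case7}. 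The base case $n=0$ is the corresponding direct computation of $d(\x_\alpha\x_\beta\x_\delta\ot 1)$.

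The main obstacle, as in all these lemmas, is the coefficient of $\x_\gamma^{n+2}\ot 1$. The induction produces it as a scalar $c_n$ obeying a first-order recursion $c_{n+1}=\lambda\,c_n + (\text{a }\widetilde{q}\text{-quantum integer})$, with $\lambda$ a monomial in the $q_{\gamma\,\cdot}$, of exactly the type that in Lemma \ref{lem:diff-case2} yielded $e_n=\coef{\alpha\beta\gamma}{n}(-q_{\gamma\alpha})^{-n}$ and in Lemma \ref{lem:diff-case7} yielded the $\coef{-\delta,\alpha,\gamma}{n}$ term. The real work is to solve this recursion in closed form and, using the root identities above to pin down $\widetilde{q}_{\alpha\gamma}$ and $\widetilde{q}_{\delta\gamma}$, to match it against $q_{\gamma\delta}^n\coef{-\delta,\alpha,\gamma}{n}\Bsj_2\Bsj_4$. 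A parallel but easier part of the bookkeeping is to verify that $\Bsj_1$ and $\Bsj_3$ contribute only to the non-collapsing summands such as $\x_\beta\x_\gamma^n\x_\eta\ot x_\tau$ and to the intermediate $x_\mu$- and $x_\nu$-terms, and never to $\x_\gamma^{n+2}\ot 1$, so that the final coefficient depends only on $\Bsj_2\Bsj_4$.
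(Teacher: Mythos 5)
Your proposal follows essentially the same route as the paper's proof: the auxiliary differentials (including obtaining $d(\x_{\alpha}\x_{\gamma}^{n}\x_{\nu}\ot 1)$ as an instance of Lemma \ref{lem:diff2}, and the three mixed-shape formulas for $d(\x_{\alpha}\x_{\gamma}^{n}\x_{\delta}\ot 1)$, $d(\x_{\beta}\x_{\gamma}^{n}\x_{\delta}\ot 1)$, $d(\x_{\beta}\x_{\gamma}^{n}\x_{\eta}\ot 1)$ proven as in \eqref{eq:diff-case2-auxiliar}), then the main induction using Remark \ref{rem:differential-q-commute}, $x_{\gamma}^2=0$ and $s\circ s=0$, with the coefficient of $\x_{\gamma}^{n+2}\ot 1$ arising from a first-order recursion resolved via $\alpha+\beta=2\gamma-\delta$ into $q_{\gamma\delta}^{n}\coef{-\delta,\alpha,\gamma}{n}$, and the check that the $\Bsj_1\Bsj_3$ terms (such as $\x_{\gamma}^{n+1}\x_{\mu}\ot x_{\gamma}x_{\tau}$, which lies in the image of $s$) never feed that coefficient. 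This is exactly the structure of the paper's argument, so the proposal is correct in approach; what remains is only the explicit computation it outlines.
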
 

\bigbreak
Notice that the equalities in \eqref{eq:diff-case8-hypothesis} force
\begin{align}\label{eq:diff-case8-hypothesis-roots}
\begin{aligned}
\alpha + \delta &= \eta + \tau, &
\beta + \delta &= \nu + \gamma, &
\beta + \eta &= \mu + \gamma, &
\alpha + \nu &= \gamma.
\end{aligned}
\end{align}
Hence the following equality also holds: $2\gamma=\alpha+\beta+\delta$.

\pf
We need some auxiliary computations. First we apply Lemma \ref{lem:diff2} to $\alpha < \gamma < \nu$: 
\begin{align}\label{eq:diff-case8-auxiliar1}
\begin{aligned}
d(\x_{\alpha} & \x_{\gamma}^{n} \x_{\nu}\ot 1)  =
\x_{\alpha}\x_{\gamma}^{n} \ot x_{\nu} 
- q_{\gamma\nu} \x_{\alpha} \x_{\gamma}^{n-1}\x_{\nu}\ot x_{\gamma} 
- q_{\alpha\nu}(-q_{\alpha\gamma})^{n} \x_{\gamma}^{n} \x_{\nu} \ot x_{\alpha}  
\\
& 
- (-q_{\alpha\gamma})^{n} (n+1)_{\widetilde{q}_{\alpha\gamma}} \Bsj_4
\x_{\gamma}^{n+1} \ot 1.
\end{aligned}
\end{align}

Next we claim that the following formulas hold for all $n\ge 0$:
\begin{align}
\label{eq:diff-case8-auxiliar2}
&\begin{aligned}
d(\x_{\alpha} & \x_{\gamma}^{n} \x_{\delta}\ot 1) =
\x_{\alpha}\x_{\gamma}^{n}\ot x_{\delta} 
-q_{\gamma \delta} \x_{\alpha}\x_{\gamma}^{n-1}\x_{\delta}\ot x_{\gamma}
-(-q_{\alpha\gamma})^{n} q_{\alpha \delta} \x_{\gamma}^{n}\x_{\delta} \ot x_{\alpha}
\\ & -(-q_{\alpha\gamma})^{n} \Bsj_1 \x_{\gamma}^{n}\x_{\eta}\ot x_{\tau},
\end{aligned}
\\
\label{eq:diff-case8-auxiliar3}
&\begin{aligned}
d(\x_{\beta} & \x_{\gamma}^{n} \x_{\delta}\ot 1) =
\x_{\beta}\x_{\gamma}^{n}\ot x_{\delta} 
-q_{\gamma \delta} \x_{\beta}\x_{\gamma}^{n-1}\x_{\delta}\ot x_{\gamma}
-(-q_{\beta\gamma})^{n} q_{\beta \delta} \x_{\gamma}^{n}\x_{\delta} \ot x_{\beta}
\\ & -(-q_{\beta\gamma})^{n} \Bsj_2 \x_{\gamma}^{n}\x_{\nu}\ot x_{\gamma},
\end{aligned}
\\
\label{eq:diff-case8-auxiliar4}
&\begin{aligned}
d(\x_{\beta} & \x_{\gamma}^{n} \x_{\eta}\ot 1) =
\x_{\beta}\x_{\gamma}^{n}\ot x_{\eta} 
-q_{\gamma \eta} \x_{\beta}\x_{\gamma}^{n-1}\x_{\eta}\ot x_{\gamma}
-(-q_{\beta\gamma})^{n} q_{\beta \eta} \x_{\gamma}^{n}\x_{\eta} \ot x_{\beta}
\\ & -(-q_{\beta\gamma})^{n} \Bsj_3 \x_{\gamma}^{n}\x_{\mu}\ot x_{\gamma}.
\end{aligned}
\end{align}
The proof of each equality is analogous to \eqref{eq:diff-case2-auxiliar}. 

\medskip

Now we prove \eqref{eq:diff-case8-formula} by induction on $n$. When $n=0$,
\begin{align*}
d( & \x_{\alpha} \x_{\beta} \x_{\delta}\ot 1) = 
\x_{\alpha} \x_{\beta} \ot x_{\delta} - s \big( q_{\beta\delta}  \x_{\alpha}  \ot x_{\delta}x_{\beta} + \Bsj_2 \x_{\alpha}  \ot x_{\nu} x_{\gamma} -q_{\alpha\beta} \x_{\beta} \ot x_{\alpha}x_{\delta} \big)
\\ & 
=\x_{\alpha} \x_{\beta} \ot x_{\delta}
-\Bsj_2 \x_{\alpha}\x_{\nu} \ot  x_{\gamma} 
-q_{\beta\delta}  \x_{\alpha}\x_{\delta} \ot x_{\beta} 
-s \big(-q_{\alpha\beta}q_{\alpha\delta} \x_{\beta} \ot  x_{\delta}x_{\alpha}
-q_{\alpha\beta}\Bsj_1 \x_{\beta} \ot  x_{\eta}x_{\tau}
\\ & \quad 
+q_{\alpha\nu} \Bsj_2 \x_{\nu}\ot x_{\alpha}x_{\gamma} +\Bsj_2 \Bsj_4 \x_{\gamma} \ot x_{\gamma}
+q_{\alpha\beta} q_{\alpha\delta}q_{\beta\delta} \x_{\delta}\ot x_{\beta}x_{\alpha} 
+q_{\tau\beta}q_{\beta\delta} \Bsj_1\x_{\eta}\ot x_{\beta}x_{\tau} \big)
\\ & 
=\x_{\alpha} \x_{\beta} \ot x_{\delta}
-\Bsj_2 \x_{\alpha}\x_{\nu} \ot  x_{\gamma} 
-q_{\beta\delta}  \x_{\alpha}\x_{\delta} \ot x_{\beta}
+q_{\alpha\beta}\Bsj_1 \x_{\beta}\x_{\eta} \ot  x_{\tau}
+q_{\alpha\beta}q_{\alpha\delta} \x_{\beta}\x_{\delta} \ot x_{\alpha}
\\ & \quad
-s \big(\Bsj_2 \Bsj_4 \x_{\gamma} \ot x_{\gamma}
-q_{\alpha\beta}\Bsj_1 \Bsj_3 \x_{\mu}\ot x_{\gamma}x_{\tau}
\big),
\end{align*}
which is \eqref{eq:diff-case8-formula} for $n=0$ since
\begin{align*}
s \big(\x_{\gamma} \ot x_{\gamma}\big)&=\x_{\gamma}^2 \ot 1,
&
s \big(\x_{\mu}\ot x_{\gamma}x_{\tau}\big) &= s \circ s \big(x_{\mu}x_{\gamma}x_{\tau}\big)=0.
\end{align*}
Now assume that \eqref{eq:diff-case7-formula} holds for $n$. Using Remark \ref{rem:differential-q-commute}, inductive hypothesis, the relation $x_{\gamma}^2=0$, \eqref{eq:diff-case8-auxiliar1}, \eqref{eq:diff-case8-auxiliar2}, \eqref{eq:diff-case8-auxiliar3}, \eqref{eq:diff-case8-auxiliar4},
\begin{align*}
d(\x_{\alpha} & \x_{\beta} \x_{\gamma}^{n+1} \x_{\delta}\ot 1)  =
\x_{\alpha} \x_{\beta}\x_{\gamma}^{n+1} \otimes x_{\delta}
- s \big(
q_{\gamma\delta} \x_{\alpha} \x_{\beta}\x_{\gamma}^{n} \otimes x_{\delta}x_{\gamma}
+(-q_{\beta\gamma})^{n+1}\Bsj_2 \x_{\alpha} \x_{\gamma}^{n+1} \otimes x_{\nu} x_{\gamma}
\\ & \quad
+q_{\beta\delta} (-q_{\beta\gamma})^{n+1} \x_{\alpha} \x_{\gamma}^{n+1} \otimes  x_{\delta}x_{\beta}
-q_{\alpha\beta} (-q_{\alpha\gamma})^{n+1} q_{\alpha\delta} \x_{\beta}\x_{\gamma}^{n+1} \otimes x_{\delta}x_{\alpha}
\\ & \quad
-q_{\alpha\beta} (-q_{\alpha\gamma})^{n+1} \Bsj_1 \x_{\beta}\x_{\gamma}^{n+1} \otimes x_{\eta}x_{\tau}
\big)
\\ &=
\x_{\alpha} \x_{\beta}\x_{\gamma}^{n+1} \otimes x_{\delta}
-q_{\gamma\delta} \x_{\alpha} \x_{\beta}\x_{\gamma}^{n}\x_{\delta} \otimes x_{\gamma}
-(-q_{\beta\gamma})^{n+1}\Bsj_2 \x_{\alpha} \x_{\gamma}^{n+1} \x_{\nu} \otimes x_{\gamma}
\\ & \quad
- s \big(-q_{\gamma\delta}(-q_{\beta\gamma})^{n+1} q_{\beta\delta} \x_{\alpha}\x_{\gamma}^{n} \x_{\delta}\ot x_{\gamma}x_{\beta}
+q_{\gamma\delta}(-q_{\alpha\gamma})^{n+1}q_{\alpha\beta}q_{\alpha\delta} \x_{\beta} \x_{\gamma}^{n} \x_{\delta}\ot x_{\gamma}x_{\alpha}
\\ & \quad
-q_{\gamma\delta}q_{\alpha\beta} (-q_{\alpha\gamma})^{n} \Bsj_1 \x_{\beta} \x_{\gamma}^{n} \x_{\eta} \otimes x_{\tau}x_{\gamma}
+q_{\gamma\delta}(-q_{\beta\gamma})^{n}\Bsj_2 \x_{\alpha}\x_{\gamma}^{n}\x_{\nu} \otimes x_{\gamma}^2
\\ & \quad
+ (q_{\gamma\delta}^{n+1}\coef{-\delta,\alpha,\gamma}{n} + q_{\alpha\gamma}^{n+1} q_{\beta\gamma}^{n+1} (n+2)_{\widetilde{q}_{\alpha\gamma}}) \Bsj_2 \Bsj_4 \x_{\gamma}^{n+2} \ot x_{\gamma}
\\ & \quad
+q_{\beta\delta} (-q_{\beta\gamma})^{n+1} \x_{\alpha} \x_{\gamma}^{n+1} \otimes  x_{\delta}x_{\beta}
-q_{\alpha\beta} (-q_{\alpha\gamma})^{n+1} q_{\alpha\delta} \x_{\beta}\x_{\gamma}^{n+1} \otimes x_{\delta}x_{\alpha}
\\ & \quad
-q_{\alpha\beta} (-q_{\alpha\gamma})^{n+1} \Bsj_1 \x_{\beta}\x_{\gamma}^{n+1} \otimes x_{\eta}x_{\tau}
+ q_{\alpha\nu}q_{\alpha\gamma}^{n+2}q_{\beta\gamma}^{n+1}\Bsj_2 \x_{\gamma}^{n+1} \x_{\nu} \ot x_{\gamma}x_{\alpha} \big)
\\ &=
\x_{\alpha} \x_{\beta}\x_{\gamma}^{n+1} \otimes x_{\delta}
-q_{\gamma\delta} \x_{\alpha} \x_{\beta}\x_{\gamma}^{n}\x_{\delta} \otimes x_{\gamma}
-(-q_{\beta\gamma})^{n+1}\Bsj_2 \x_{\alpha} \x_{\gamma}^{n+1} \x_{\nu} \otimes x_{\gamma}
\\ & \quad
-q_{\beta\delta} (-q_{\beta\gamma})^{n+1} \x_{\alpha} \x_{\gamma}^{n+1}\x_{\delta} \otimes  x_{\beta}
+q_{\alpha\beta} (-q_{\alpha\gamma})^{n+1} \Bsj_1 \x_{\beta}\x_{\gamma}^{n+1}\x_{\eta} \otimes x_{\tau}
\\ & \quad
+q_{\alpha\beta} (-q_{\alpha\gamma})^{n+1} q_{\alpha\delta} \x_{\beta}\x_{\gamma}^{n+1}\x_{\delta} \otimes x_{\alpha}
- s \big( -q_{\alpha\beta} q_{\alpha\gamma}^{n+1}q_{\beta\gamma}^{n+1} \Bsj_1 \Bsj_3 \x_{\gamma}^{n+1}\x_{\mu}\ot x_{\gamma}x_{\tau}
\\ & \quad
+ q_{\gamma\delta}^{n+1}(\coef{-\delta,\alpha,\gamma}{n} + q_{\alpha\gamma}^{n+1} q_{\beta\gamma}^{n+1}q_{\gamma\delta}^{-n-1} (n+2)_{\widetilde{q}_{\alpha\gamma}}) \Bsj_2 \Bsj_4 \x_{\gamma}^{n+2} \ot x_{\gamma} \big).
\end{align*}
As $\alpha+\beta=2\gamma-\delta$, we have 
$q_{\alpha\gamma}^{n+1} q_{\beta\gamma}^{n+1}q_{\gamma\delta}^{-n-1} = \widetilde{q}_{\gamma\delta}^{\, -n-1}$. Also, $s(\x_{\gamma}^{n+2} \ot x_{\gamma})=\x_{\gamma}^{n+3} \ot 1$, so
\begin{align*}
d(\x_{\alpha} & \x_{\beta} \x_{\gamma}^{n+1} \x_{\delta}\ot 1)  =
\x_{\alpha} \x_{\beta}\x_{\gamma}^{n+1} \otimes x_{\delta}
-q_{\gamma\delta} \x_{\alpha} \x_{\beta}\x_{\gamma}^{n}\x_{\delta} \otimes x_{\gamma}
-(-q_{\beta\gamma})^{n+1}\Bsj_2 \x_{\alpha} \x_{\gamma}^{n+1} \x_{\nu} \otimes x_{\gamma}
\\ & \quad
-q_{\beta\delta} (-q_{\beta\gamma})^{n+1} \x_{\alpha} \x_{\gamma}^{n+1}\x_{\delta} \otimes  x_{\beta}
+q_{\alpha\beta} (-q_{\alpha\gamma})^{n+1} \Bsj_1 \x_{\beta}\x_{\gamma}^{n+1}\x_{\eta} \otimes x_{\tau}
\\ & \quad
+q_{\alpha\beta} (-q_{\alpha\gamma})^{n+1} q_{\alpha\delta} \x_{\beta}\x_{\gamma}^{n+1}\x_{\delta} \otimes x_{\alpha}
- q_{\gamma\delta}^{n+1}\coef{-\delta,\alpha,\gamma}{n+1}\Bsj_2 \Bsj_4 \x_{\gamma}^{n+3} \ot 1
\\ & \quad
+ q_{\alpha\beta} q_{\alpha\gamma}^{n+1}q_{\beta\gamma}^{n+1} \Bsj_1 \Bsj_3 s \big( \x_{\gamma}^{n+1}\x_{\mu}\ot x_{\gamma}x_{\tau}\big).
\end{align*}
Next we claim that $\x_{\gamma}^{n+1}\x_{\mu}\ot x_{\gamma}x_{\tau} = s (\x_{\gamma}^{n+1}\ot x_{\mu}x_{\gamma}x_{\tau})$. Indeed,
\begin{align*}
d(\x_{\gamma}^{n+1}\ot x_{\mu}x_{\gamma}x_{\tau}) &= \x_{\gamma}^{n}\ot x_{\gamma}x_{\mu}x_{\gamma}x_{\tau}
= q_{\gamma\mu} \x_{\gamma}^{n}\ot x_{\mu}x_{\gamma}^2 x_{\tau} =0,
\end{align*}
so $\x_{\gamma}^{n+1}\ot x_{\mu}x_{\gamma}x_{\tau}\in\ker d_n$, and we compute
\begin{align*}
s\big(\x_{\gamma}^{n+1} & \ot x_{\mu}x_{\gamma}x_{\tau}\big) =
\x_{\gamma}^{n+1}\x_{\mu}\ot x_{\gamma}x_{\tau} + s\big(\x_{\gamma}^{n+1}\ot x_{\mu}x_{\gamma}x_{\tau} -d(\x_{\gamma}^{n+1}\x_{\mu}\ot x_{\gamma}x_{\tau}) \big)
\\ &
= \x_{\gamma}^{n+1}\x_{\mu}\ot x_{\gamma}x_{\tau} + s\big(\x_{\gamma}^{n+1}\ot x_{\mu}x_{\gamma}x_{\tau} -(\x_{\gamma}^{n+1}\ot x_{\mu}x_{\gamma}x_{\tau}
-q_{\gamma\mu} \x_{\gamma}^{n} \x_{\mu} \ot x_{\gamma}^2 x_{\tau} ) \big)
\\ &
= \x_{\gamma}^{n+1}\x_{\mu}\ot x_{\gamma}x_{\tau}.
\end{align*}
From this claim, $s \big( \x_{\gamma}^{n+1}\x_{\mu}\ot x_{\gamma}x_{\tau}\big)=0$, and the inductive step follows.
\epf

\begin{lema}\label{lem:diff-case9}
Let $\alpha < \beta < \nu < \gamma < \mu <\delta < \eta $ be positive roots
such that $N_{\gamma}=2$ and the relations among the corresponding root vectors take the form
\begin{align}\label{eq:diff-case9-hypothesis}
\begin{aligned}
x_{\beta}x_{\delta} &= q_{\beta\delta} x_{\delta}x_{\beta} +\Bsj_1 x_{\gamma}x_{\nu}, &
x_{\nu}x_{\eta} &= q_{\nu\eta} x_{\eta}x_{\nu} +\Bsj_2 x_{\mu}x_{\gamma}, 
\\
x_{\alpha}x_{\mu} &= q_{\alpha\mu}  x_{\mu}x_{\alpha} + \Bsj_3 x_{\gamma},
\end{aligned}
\end{align}
for some scalars $\Bsj_i$, $x_{\gamma}$ $q$-commutes with the other root vectors and the following pairs of root vectors also $q$-commute: $(x_{\alpha},x_{\beta})$, $(x_{\alpha},x_{\nu})$, $(x_{\alpha},x_{\delta})$, $(x_{\alpha},x_{\eta})$, $(x_{\beta},x_{\nu})$, $(x_{\beta},x_{\eta})$, $(x_{\nu},x_{\delta})$, $(x_{\nu},x_{\eta})$, $(x_{\mu},x_{\delta})$, $(x_{\mu},x_{\eta})$.
Then, for all $n\geq 0$, 
\begin{align}\label{eq:diff-case9-formula}
\begin{aligned}
d(&\x_{\alpha} \x_{\beta} \x_{\gamma}^{n} \x_{\delta}\x_{\eta}\ot 1)  = \x_{\alpha}\x_{\beta} \x_{\gamma}^{n} \x_{\delta}\ot x_{\eta}
+(-q_{\beta\gamma})^{n} (n+1)_{\widetilde{q}_{\delta\gamma}} \Bsj_1\Bsj_2 \x_{\alpha}\x_{\gamma}^{n+1}\x_{\mu} \ot x_{\gamma}
\\ &
-q_{\delta\eta} \x_{\alpha}\x_{\beta} \x_{\gamma}^{n} \x_{\eta}
\ot x_{\delta}
+q_{\nu\eta}(-q_{\beta\gamma})^{n} (n+1)_{\widetilde{q}_{\delta\gamma}}\Bsj_1 \x_{\alpha}\x_{\gamma}^{n+1}\x_{\eta} \ot x_{\nu}
\\ & 
+q_{\gamma\delta}q_{\gamma\eta} \x_{\alpha}\x_{\beta} \x_{\gamma}^{n-1} \x_{\delta}\x_{\eta} \ot x_{\gamma}
+(-q_{\beta\gamma})^n q_{\beta\delta} q_{\beta\eta}
\x_{\alpha}\x_{\gamma}^{n} \x_{\delta}\x_{\eta} \ot x_{\beta}
\\ &
-q_{\alpha\beta}(-q_{\alpha\gamma})^n q_{\alpha\delta}q_{\alpha\eta}
\x_{\beta} \x_{\gamma}^{n} \x_{\delta}\x_{\eta} \ot x_{\alpha}
-q_{\alpha\gamma}q_{\gamma\delta}^{n}q_{\gamma\eta}^{n} \coeff{\alpha+\beta,\delta,\alpha,\gamma}{n} \Bsj_1\Bsj_2\Bsj_3
\x_{\gamma}^{n+3} \ot 1.
\end{aligned}
\end{align}
\end{lema}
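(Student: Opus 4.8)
The plan is to follow exactly the scheme used for Lemmas \ref{lem:diff2}--\ref{lem:diff-case8}: I would reduce the computation of $d(\x_{\alpha} \x_{\beta} \x_{\gamma}^{n} \x_{\delta}\x_{\eta}\ot 1)$ to a family of auxiliary differential formulas for shorter chains, each proved by induction on the exponent of $\x_\gamma$, and then assemble \eqref{eq:diff-case9-formula} by a final induction on $n$. Throughout I use the recursive definition of the Anick differential $d(w \ot 1) = (\text{prefix})\ot(\text{last letter}) - sd(\cdots)$, Remark \ref{rem:differential-q-commute} for the many pairs of $q$-commuting root vectors, the contracting homotopy relations $s^2 = 0$ and $ds = \id$ on $\ker d$, and the relation $\x_{\gamma}^{N_\gamma} = \x_{\gamma}^2 = 0$. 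First I record that the three relations in \eqref{eq:diff-case9-hypothesis} force the grading identities $\beta + \delta = \gamma + \nu$, $\nu + \eta = \mu + \gamma$ and $\alpha + \mu = \gamma$, whence $3\gamma = \alpha + \beta + \delta + \eta$; these are used at the end to rewrite every $\widetilde{q}$-factor in terms of $\alpha$, $\beta$, $\delta$, $\gamma$.

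The building blocks come in two layers. At the bottom, $d(\x_\alpha \x_\gamma^n \x_\mu \ot 1)$ is given directly by Lemma \ref{lem:diff2} applied to the triple $\alpha < \gamma < \mu$ with the single correction $\Bsj_3 \x_\gamma$; this is the source of the $q$-integer factor $(n+1)_{\widetilde{q}_{\alpha\gamma}}$. Similarly, $d(\x_\beta \x_\gamma^n \x_\delta \ot 1)$ and $d(\x_\nu \x_\gamma^n \x_\eta \ot 1)$ each carry one extra term (namely $\Bsj_1 \x_\gamma^n \x_\nu \ot x_\gamma$, respectively $\Bsj_2 \x_\gamma^n \x_\mu \ot x_\gamma$) and are proved exactly as \eqref{eq:diff-case2-auxiliar} and \eqref{eq:diff-case4-auxiliar-2}. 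At the next layer I would establish, again by induction on $n$, the four-letter formulas for $d(\x_\alpha \x_\beta \x_\gamma^n \x_\delta \ot 1)$ and $d(\x_\alpha \x_\gamma^n \x_\delta \x_\eta \ot 1)$ together with the auxiliary pieces produced when the leading term of the main chain is expanded, feeding the lower-layer formulas into the $-sd(\cdots)$ terms and discarding everything that $s$ annihilates.

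The main induction then proceeds as in Lemma \ref{lem:diff-case5}, whose chain $\x_\alpha \x_\beta \x_\delta \x_\gamma^n \x_\eta$ is the closest analogue and which already produces a triple $q$-binomial coefficient of type $\coeff{\,\cdot\,}{n}$. For the base case $n = 0$ I compute $d(\x_\alpha \x_\beta \x_\delta \x_\eta \ot 1)$ directly; the three relations chain together ($x_\beta x_\delta$ feeding $x_\gamma x_\nu$, then $x_\nu x_\eta$ feeding $x_\mu x_\gamma$, then $x_\alpha x_\mu$ feeding $x_\gamma$) to produce the term $\Bsj_1\Bsj_2\Bsj_3\,\x_\gamma^3 \ot 1$. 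In the inductive step I peel off $x_\eta$, substitute the inductive hypothesis and the auxiliary formulas, and repeatedly invoke $x_\gamma^2 = 0$ together with $s^2 = 0$ to clear the non-basis terms created by $s$; the surviving coefficient $e_n$ of $\x_\gamma^{n+3}\ot 1$ obeys a first-order recurrence.

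The main obstacle will be purely in the bookkeeping of these corrections: tracking the many summands through each application of $s$, verifying term-by-term that the non-basis contributions cancel or are annihilated by $s$ (using convexity and $x_\gamma^2 = 0$ as in the $s$-annihilation arguments of Lemmas \ref{lem:diff-case5} and \ref{lem:diff-case8}), and finally recognizing the closed form of the recurrence for $e_n$ as $\coeff{\alpha+\beta,\delta,\alpha,\gamma}{n}$. I expect the recurrence to telescope to the triple sum of \eqref{eq:coef-general-formula} exactly as in Lemma \ref{lem:diff-case5}, after substituting the grading identities above to express $\widetilde{q}_{\nu\gamma}$, $\widetilde{q}_{\mu\gamma}$ and $\widetilde{q}_{\eta\gamma}$ in terms of $\widetilde{q}_{\alpha\gamma}$, $\widetilde{q}_{\beta\gamma}$, $\widetilde{q}_{\delta\gamma}$.
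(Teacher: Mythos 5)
Your overall strategy coincides with the paper's proof: record the grading identities $\beta+\delta=\gamma+\nu$, $\nu+\eta=\mu+\gamma$, $\alpha+\mu=\gamma$; obtain $d(\x_{\beta}\x_{\gamma}^{n}\x_{\delta}\ot 1)$ from \eqref{eq:diff-case2-auxiliar} and $d(\x_{\alpha}\x_{\gamma}^{n}\x_{\mu}\ot 1)$ from Lemma \ref{lem:diff2}; prove two four-letter formulas by induction on $n$; then run a final induction for \eqref{eq:diff-case9-formula}. However, two concrete details of your plan are wrong and would derail the computation if followed literally. First, the extra term in $d(\x_{\beta}\x_{\gamma}^{n}\x_{\delta}\ot 1)$ is \emph{not} $\Bsj_1\,\x_{\gamma}^{n}\x_{\nu}\ot x_{\gamma}$. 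Since the correction in the relation is $x_{\gamma}x_{\nu}$ with $\nu<\gamma$ (so $\gamma$ is the \emph{larger} root of the correction monomial), the pattern of \eqref{eq:diff-case2-auxiliar} gives
$-(-q_{\beta\gamma})^{n}(n+1)_{\widetilde{q}_{\delta\gamma}}\Bsj_1\,\x_{\gamma}^{n+1}\ot x_{\nu}$;
note also that $\x_{\gamma}^{n}\x_{\nu}$ is not even a chain, because chains are increasing words and $\nu<\gamma$. This is not cosmetic: the $q$-integer $(n+1)_{\widetilde{q}_{\delta\gamma}}$ created here is exactly what propagates into the $\Bsj_1\Bsj_2$- and $q_{\nu\eta}\Bsj_1$-terms of \eqref{eq:diff-case9-formula} and, combined with the $(n+1)_{\widetilde{q}_{\alpha\gamma}}$ coming from $d(\x_{\alpha}\x_{\gamma}^{n}\x_{\mu}\ot 1)$, builds the closed form $\coeff{\alpha+\beta,\delta,\alpha,\gamma}{n}$; the form you wrote would never generate it. (Your stated form is the one appropriate when $\gamma$ sits on the \emph{right} of the correction monomial, as for $x_{\mu}x_{\gamma}$ in the relation for $x_{\nu}x_{\eta}$, cf.\ Lemma \ref{lem:diff-case7}.)

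Second, you picked the wrong four-letter auxiliaries. The chain $\x_{\alpha}\x_{\gamma}^{n}\x_{\delta}\x_{\eta}$ requires no induction at all: every pair among $\{\alpha,\gamma,\delta,\eta\}$ $q$-commutes (the statement's list evidently intends $(x_{\delta},x_{\eta})$ where it writes $(x_{\nu},x_{\eta})$, since the latter already carries the $\Bsj_2$-relation), so its differential is immediate from Remark \ref{rem:differential-q-commute}. The four-letter chain that genuinely needs an inductive auxiliary is $\x_{\beta}\x_{\gamma}^{n}\x_{\delta}\x_{\eta}$ (the paper's \eqref{eq:diff-case9-auxiliar4}), because the pair $(\beta,\delta)$ carries the $\Bsj_1$-relation; it is indispensable in the main induction to resolve the terms $\x_{\beta}\x_{\gamma}^{n+1}\x_{\delta}\ot x_{\eta}x_{\alpha}$ produced when $x_{\alpha}$ is peeled off. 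Likewise your proposed auxiliary $d(\x_{\nu}\x_{\gamma}^{n}\x_{\eta}\ot 1)$ is never used: $x_{\nu}$ only ever occurs in the coefficient (module) factor, never as a chain letter. With these two corrections your plan becomes the paper's proof.
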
 

\bigbreak
Notice that the equalities in \eqref{eq:diff-case9-hypothesis} force
\begin{align}\label{eq:diff-case9-hypothesis-roots}
\begin{aligned}
\beta+\delta &= \gamma+\nu, &
\nu+\eta &= \mu+\gamma, &
\alpha+\mu &= \gamma.
\end{aligned}
\end{align}
Hence the following equality also holds: $3\gamma=\alpha+\beta+\delta+\eta$.

\pf
We need some auxiliary computations. By \eqref{eq:diff-case2-auxiliar}
\begin{align}\label{eq:diff-case9-auxiliar1}
\begin{aligned}
d(\x_{\beta} \x_{\gamma}^{n} \x_{\delta}\ot 1)  &=
\x_{\beta} \x_{\gamma}^{n}\otimes x_{\delta} 
-q_{\gamma\delta} \x_{\beta} \x_{\gamma}^{n-1}\x_{\delta} \ot x_{\gamma} 
-q_{\beta\delta}(-q_{\beta\gamma})^{n} \x_{\gamma}^n\x_{\delta}\ot x_{\beta}
\\ &
- (-q_{\beta\gamma})^{n} (n+1)_{\widetilde{q}_{\delta\gamma}}\Bsj_1  \x_{\gamma}^{n+1} \ot x_{\nu}
\end{aligned}
\end{align}
Next we apply Lemma \ref{lem:diff2} to $\alpha < \gamma < \mu$: 
\begin{align}\label{eq:diff-case9-auxiliar2}
\begin{aligned}
d(\x_{\alpha} & \x_{\gamma}^{n} \x_{\mu}\ot 1)  =
\x_{\alpha}\x_{\gamma}^{n} \ot x_{\mu} 
- q_{\gamma\mu} \x_{\alpha} \x_{\gamma}^{n-1}\x_{\mu}\ot x_{\gamma} 
- q_{\alpha\mu}(-q_{\alpha\gamma})^{n} \x_{\gamma}^{n} \x_{\mu} \ot x_{\alpha}  
\\
& 
- (-q_{\alpha\gamma})^{n} (n+1)_{\widetilde{q}_{\alpha\gamma}} \Bsj_3
\x_{\gamma}^{n+1} \ot 1.
\end{aligned}
\end{align}

Now we prove the following equality by induction on $n$:
\begin{align}\label{eq:diff-case9-auxiliar3}
&\begin{aligned}
d(\x_{\alpha}& \x_{\beta} \x_{\gamma}^{n} \x_{\delta}\ot 1)  = \x_{\alpha}\x_{\beta} \x_{\gamma}^{n} \ot x_{\delta}
-q_{\gamma\delta} \x_{\alpha}\x_{\beta}\x_{\gamma}^{n-1} \x_{\delta} \ot x_{\gamma}
\\ &
-(-q_{\beta\gamma})^n q_{\beta\delta} \x_{\alpha}\x_{\gamma}^{n} \x_{\delta}\ot x_{\beta}
+q_{\alpha\beta}(-q_{\alpha\gamma})^n q_{\alpha\delta}
\x_{\beta}\x_{\gamma}^{n} \x_{\delta} \ot x_{\alpha}
\\ &
-(-q_{\beta\gamma})^{n} (n+1)_{\widetilde{q}_{\delta\gamma}}\Bsj_1 \x_{\alpha}\x_{\gamma}^{n+1} \ot x_{\nu}.
\end{aligned}
\\ \label{eq:diff-case9-auxiliar4}
&\begin{aligned}
d(& \x_{\beta} \x_{\gamma}^{n} \x_{\delta}\x_{\eta}\ot 1)  = \x_{\beta} \x_{\gamma}^{n} \x_{\delta}\ot x_{\eta}
-q_{\delta\eta} \x_{\beta}\x_{\gamma}^{n}\x_{\eta} \ot x_{\delta}
\\ &
+q_{\gamma\delta}q_{\gamma\eta} \x_{\beta}\x_{\gamma}^{n-1} \x_{\delta}\x_{\eta} \ot x_{\gamma}
+ (-q_{\beta\gamma})^{n} (n+1)_{\widetilde{q}_{\delta\gamma}} \Bsj_1\Bsj_2 \x_{\gamma}^{n+1}\x_{\mu} \ot x_{\gamma}
\\ &
+(-q_{\beta\gamma})^n q_{\beta\delta}q_{\beta\eta}
\x_{\gamma}^{n} \x_{\delta}\x_{\eta} \ot x_{\beta} 
+ (-q_{\beta\gamma})^{n}q_{\nu\eta} (n+1)_{\widetilde{q}_{\delta\gamma}} \Bsj_1 \x_{\gamma}^{n+1}\x_{\eta} \ot x_{\nu}.
\end{aligned}
\end{align}

Indeed, for $n=0$ we have:
\begin{align*}
d(\x_{\alpha}& \x_{\beta}\x_{\delta}\ot 1)  
= \x_{\alpha} \x_{\beta}\ot x_{\delta} -s\big( 
q_{\beta\delta} \x_{\alpha} \ot  x_{\delta}x_{\beta}
+\Bsj_1 \x_{\alpha} \ot x_{\gamma}x_{\nu}
-q_{\alpha\beta}q_{\alpha\delta} \x_{\beta}\ot x_{\delta}x_{\alpha} \big)
\\ &= 
\x_{\alpha} \x_{\beta}\ot x_{\delta} 
-q_{\beta\delta} \x_{\alpha}\x_{\delta} \ot  x_{\beta}
-\Bsj_1 \x_{\alpha}\x_{\gamma} \ot x_{\nu} -s\big(
-q_{\alpha\beta}q_{\alpha\delta} \x_{\beta}\ot x_{\delta}x_{\alpha}
\\ & \quad
+q_{\alpha\gamma}q_{\alpha\nu}\Bsj_1 \x_{\gamma}\ot x_{\nu}x_{\alpha}
+q_{\alpha\beta}q_{\beta\delta}q_{\alpha\delta} \x_{\delta}\ot x_{\beta}x_{\alpha}\big)
\\ &= 
\x_{\alpha} \x_{\beta}\ot x_{\delta} 
-q_{\beta\delta} \x_{\alpha}\x_{\delta} \ot  x_{\beta}
-\Bsj_1 \x_{\alpha}\x_{\gamma} \ot x_{\nu} 
+q_{\alpha\beta}q_{\alpha\delta} \x_{\beta}\x_{\delta}\ot x_{\alpha}.
\end{align*}
And for the other equality,
\begin{align*}
d(& \x_{\beta}\x_{\delta}\x_{\eta}\ot 1)  
= \x_{\beta}\x_{\delta}\ot x_{\eta}
-s \big(
q_{\delta\eta}\x_{\beta} \ot x_{\eta}x_{\delta}
- q_{\beta\delta}q_{\beta\eta} \x_{\delta} \ot x_{\eta}x_{\beta} 
-\Bsj_1 \x_{\gamma} \ot x_{\nu}x_{\eta} \big)
\\ & = 
\x_{\beta}\x_{\delta}\ot x_{\eta}
-q_{\delta\eta}\x_{\beta}\x_{\eta} \ot x_{\delta}
-s \big(
- q_{\beta\delta}q_{\beta\eta} \x_{\delta} \ot x_{\eta}x_{\beta} 
-q_{\nu\eta}\Bsj_1 \x_{\gamma} \ot x_{\eta}x_{\nu}
\\ & \quad
-\Bsj_1\Bsj_2 \x_{\gamma} \ot x_{\mu}x_{\gamma} 
+q_{\beta\delta}q_{\beta\eta}q_{\delta\eta} \x_{\eta}\ot  x_{\delta}x_{\beta}
+q_{\beta\eta}q_{\delta\eta}\Bsj_1 \x_{\eta}\ot  x_{\gamma}x_{\nu}
\big)
\\ & = 
\x_{\beta}\x_{\delta}\ot x_{\eta}
-q_{\delta\eta}\x_{\beta}\x_{\eta} \ot x_{\delta}
+\Bsj_1\Bsj_2 \x_{\gamma}\x_{\mu} \ot x_{\gamma}
+q_{\nu\eta}\Bsj_1 \x_{\gamma}\x_{\eta} \ot x_{\nu}
+q_{\beta\delta}q_{\beta\eta} \x_{\delta}\x_{\eta} \ot x_{\beta}.
\end{align*}

Now assume that \eqref{eq:diff-case9-auxiliar3} holds for $n$. Using Remark \ref{rem:differential-q-commute} repeatedly, inductive hypothesis, the relation $x_{\gamma}^2=0$ and \eqref{eq:diff-case9-auxiliar1},
\begin{align*}
d(\x_{\alpha} & \x_{\beta} \x_{\gamma}^{n+1} \x_{\delta} \ot 1)  = 
\x_{\alpha}\x_{\beta} \x_{\gamma}^{n+1} \ot x_{\delta}
-s \big( q_{\gamma\delta}\x_{\alpha}\x_{\beta} \x_{\gamma}^{n} \ot x_{\delta}x_{\gamma}
+(-q_{\beta\gamma})^{n+1}\Bsj_1 \x_{\alpha}\x_{\gamma}^{n+1} \ot x_{\gamma}x_{\nu}
\\ & \quad 
+(-q_{\beta\gamma})^{n+1}q_{\beta\delta} \x_{\alpha}\x_{\gamma}^{n+1} \ot x_{\delta}x_{\beta}
-q_{\alpha\beta}(-q_{\alpha\gamma})^{n+1}q_{\alpha\delta} \x_{\beta} \x_{\gamma}^{n+1} \ot x_{\delta}x_{\alpha} \big)
\\ &  = 
\x_{\alpha}\x_{\beta} \x_{\gamma}^{n+1} \ot x_{\delta}
-q_{\gamma\delta}\x_{\alpha}\x_{\beta} \x_{\gamma}^{n}\x_{\delta} \ot x_{\gamma}
-s \big( 
(-q_{\beta\gamma})^{n+1}q_{\beta\delta} \x_{\alpha}\x_{\gamma}^{n+1} \ot x_{\delta}x_{\beta}
\\ & \quad 
+ (-q_{\beta\gamma})^{n+1} (1 +q_{\gamma\delta}q_{\nu\gamma} (-q_{\beta\gamma})^{-1} (n+1)_{\widetilde{q}_{\delta\gamma}})\Bsj_1 \x_{\alpha}\x_{\gamma}^{n+1} \ot x_{\gamma}x_{\nu}
\\ & \quad
-q_{\alpha\beta}(-q_{\alpha\gamma})^{n+1}q_{\alpha\delta} \x_{\beta} \x_{\gamma}^{n+1} \ot x_{\delta}x_{\alpha} 
-(-q_{\beta\gamma})^{n+1} q_{\beta\delta}q_{\gamma\delta} \x_{\alpha}\x_{\gamma}^{n} \x_{\delta}\ot x_{\gamma}x_{\beta}
\\ & \quad
+q_{\alpha\beta}(-q_{\alpha\gamma})^{n+1} q_{\alpha\delta} q_{\gamma\delta}
\x_{\beta}\x_{\gamma}^{n} \x_{\delta} \ot x_{\gamma}x_{\alpha}\big)
\\ &  = 
\x_{\alpha}\x_{\beta} \x_{\gamma}^{n+1} \ot x_{\delta}
-q_{\gamma\delta}\x_{\alpha}\x_{\beta} \x_{\gamma}^{n}\x_{\delta} \ot x_{\gamma}
-(-q_{\beta\gamma})^{n+1}q_{\beta\delta} \x_{\alpha}\x_{\gamma}^{n+1} \x_{\delta} \ot x_{\beta}
\\ & \quad
-(-q_{\beta\gamma})^{n+1} (n+2)_{\widetilde{q}_{\delta\gamma}} \Bsj_1 \x_{\alpha}\x_{\gamma}^{n+2} \ot x_{\nu}
-s \big( 
-q_{\alpha\beta}(-q_{\alpha\gamma})^{n+1}q_{\alpha\delta} \x_{\beta} \x_{\gamma}^{n+1} \ot x_{\delta}x_{\alpha} 
\\ & \quad
+q_{\alpha\beta}(-q_{\alpha\gamma})^{n+1} q_{\alpha\delta} q_{\gamma\delta}
\x_{\beta}\x_{\gamma}^{n} \x_{\delta} \ot x_{\gamma}x_{\alpha}
+q_{\alpha\beta}q_{\alpha\gamma}^{n+1} q_{\alpha\delta}q_{\beta\gamma}^{n+1}q_{\beta\delta}  \x_{\gamma}^{n+1} \x_{\delta} \ot x_{\beta}x_{\alpha}
\\ & \quad
+q_{\alpha\gamma}^{n+2}q_{\beta\gamma}^{n+1} (n+2)_{\widetilde{q}_{\delta\gamma}} \Bsj_1 \x_{\gamma}^{n+2} \ot x_{\alpha}x_{\nu} \big)
\\ &  = 
\x_{\alpha}\x_{\beta} \x_{\gamma}^{n+1} \ot x_{\delta}
-q_{\gamma\delta}\x_{\alpha}\x_{\beta} \x_{\gamma}^{n}\x_{\delta} \ot x_{\gamma}
-(-q_{\beta\gamma})^{n+1}q_{\beta\delta} \x_{\alpha}\x_{\gamma}^{n+1} \x_{\delta} \ot x_{\beta}
\\ & \quad
-(-q_{\beta\gamma})^{n+1} (n+2)_{\widetilde{q}_{\delta\gamma}} \Bsj_1 \x_{\alpha}\x_{\gamma}^{n+2} \ot x_{\nu}
+q_{\alpha\beta}(-q_{\alpha\gamma})^{n+1}q_{\alpha\delta} \x_{\beta} \x_{\gamma}^{n+1}\x_{\delta} \ot x_{\alpha}.
\end{align*}

Next we assume that \eqref{eq:diff-case9-auxiliar4} holds for $n$. Using \eqref{eq:diff-case9-auxiliar1}, inductive hypothesis, Remark \ref{rem:differential-q-commute}, the relation $x_{\gamma}^2=0$,
\begin{align*}
d(& \x_{\beta} \x_{\gamma}^{n+1} \x_{\delta}\x_{\eta}\ot 1)  = 
\x_{\beta} \x_{\gamma}^{n+1} \x_{\delta}\ot x_{\eta}
-s \big(
q_{\delta\eta}\x_{\beta} \x_{\gamma}^{n+1}\otimes x_{\eta}x_{\delta}
-q_{\gamma\delta}q_{\gamma\eta} \x_{\beta} \x_{\gamma}^{n}\x_{\delta} \ot x_{\eta}x_{\gamma}
\\ & \quad
-(-q_{\beta\gamma})^{n+1}q_{\beta\delta}q_{\beta\eta} \x_{\gamma}^{n+1} \x_{\delta}\ot x_{\eta}x_{\beta}
- (-q_{\beta\gamma})^{n+1} (n+2)_{\widetilde{q}_{\delta\gamma}} \Bsj_1  \x_{\gamma}^{n+2} \ot x_{\nu}x_{\eta} \big)
\\ & = 
\x_{\beta} \x_{\gamma}^{n+1} \x_{\delta}\ot x_{\eta}
-q_{\delta\eta}\x_{\beta} \x_{\gamma}^{n+1}\x_{\eta} \otimes x_{\delta}
+q_{\gamma\delta}q_{\gamma\eta} \x_{\beta} \x_{\gamma}^{n}\x_{\delta}\x_{\eta} \ot x_{\gamma}
-s \big(
\\ & \quad
- (-q_{\beta\gamma})^{n+1} (n+2)_{\widetilde{q}_{\delta\gamma}} \Bsj_1\Bsj_2 \x_{\gamma}^{n+2} \ot x_{\mu}x_{\gamma}
- (-q_{\beta\gamma})^{n+1}q_{\nu\eta} (n+2)_{\widetilde{q}_{\delta\gamma}} \Bsj_1 \x_{\gamma}^{n+2} \ot x_{\eta}x_{\nu}
\\ & \quad
-(-q_{\beta\gamma})^{n+1}q_{\beta\delta}q_{\beta\eta} \x_{\gamma}^{n+1} \x_{\delta}\ot x_{\eta}x_{\beta}
\\ & \quad
+q_{\nu\eta}q_{\gamma\eta} (-q_{\beta\gamma})^{n+1} (1 +\widetilde{q}_{\gamma\delta} (n+1)_{\widetilde{q}_{\delta\gamma}}
) \Bsj_1 \x_{\gamma}^{n+1}\x_{\eta} \ot x_{\gamma}x_{\nu}
\\ & \quad
+q_{\delta\eta}(-q_{\beta\gamma})^{n+1}q_{\beta\delta}q_{\beta\eta} \x_{\gamma}^{n+1}\x_{\eta} \ot  x_{\delta}x_{\beta}
-q_{\gamma\delta}q_{\gamma\eta}(-q_{\beta\gamma})^{n+1} q_{\beta\delta}q_{\beta\eta}
\x_{\gamma}^{n} \x_{\delta}\x_{\eta} \ot x_{\gamma}x_{\beta} 
\big)
\\ & = 
\x_{\beta} \x_{\gamma}^{n+1} \x_{\delta}\ot x_{\eta}
-q_{\delta\eta}\x_{\beta} \x_{\gamma}^{n+1}\x_{\eta} \otimes x_{\delta}
+q_{\gamma\delta}q_{\gamma\eta} \x_{\beta} \x_{\gamma}^{n}\x_{\delta}\x_{\eta} \ot x_{\gamma}
\\ & \quad
+ (-q_{\beta\gamma})^{n+1} (n+2)_{\widetilde{q}_{\delta\gamma}} \Bsj_1\Bsj_2 \x_{\gamma}^{n+2}\x_{\mu} \ot x_{\gamma}
+ (-q_{\beta\gamma})^{n+1}q_{\nu\eta} (n+2)_{\widetilde{q}_{\delta\gamma}} \Bsj_1 \x_{\gamma}^{n+2}\x_{\eta} \ot x_{\nu}
\\ & \quad
-s \big(
-(-q_{\beta\gamma})^{n+1}q_{\beta\delta}q_{\beta\eta} \x_{\gamma}^{n+1} \x_{\delta}\ot x_{\eta}x_{\beta}
+q_{\delta\eta}(-q_{\beta\gamma})^{n+1}q_{\beta\delta}q_{\beta\eta} \x_{\gamma}^{n+1}\x_{\eta} \ot  x_{\delta}x_{\beta}
\\ & \quad
-q_{\gamma\delta}q_{\gamma\eta}(-q_{\beta\gamma})^{n+1} q_{\beta\delta}q_{\beta\eta}
\x_{\gamma}^{n} \x_{\delta}\x_{\eta} \ot x_{\gamma}x_{\beta} 
\big)
\\ & = 
\x_{\beta} \x_{\gamma}^{n+1} \x_{\delta}\ot x_{\eta}
-q_{\delta\eta}\x_{\beta} \x_{\gamma}^{n+1}\x_{\eta} \otimes x_{\delta}
+q_{\gamma\delta}q_{\gamma\eta} \x_{\beta} \x_{\gamma}^{n}\x_{\delta}\x_{\eta} \ot x_{\gamma}
\\ & \quad
+ (-q_{\beta\gamma})^{n+1} (n+2)_{\widetilde{q}_{\delta\gamma}} \Bsj_1\Bsj_2 \x_{\gamma}^{n+2}\x_{\mu} \ot x_{\gamma}
+ (-q_{\beta\gamma})^{n+1}q_{\nu\eta} (n+2)_{\widetilde{q}_{\delta\gamma}} \Bsj_1 \x_{\gamma}^{n+2}\x_{\eta} \ot x_{\nu}
\\ & \quad
+(-q_{\beta\gamma})^{n+1}q_{\beta\delta}q_{\beta\eta} \x_{\gamma}^{n+1} \x_{\delta}\x_{\eta} \ot x_{\beta}.
\end{align*}

\medskip

Finally we prove \eqref{eq:diff-case9-formula} by induction on $n$. When $n=0$,
\begin{align*}
d( & \x_{\alpha} \x_{\beta} \x_{\delta} \x_{\eta} \ot 1) = 
\x_{\alpha} \x_{\beta} \x_{\delta} \ot x_{\eta}
-s\big( q_{\delta\eta}
\x_{\alpha} \x_{\beta}\ot x_{\eta}x_{\delta}
-q_{\beta\eta}q_{\beta\delta} \x_{\alpha}\x_{\delta} \ot x_{\eta}x_{\beta}
\\ & \quad
-q_{\nu\eta}\Bsj_1 \x_{\alpha}\x_{\gamma} \ot  x_{\eta}x_{\nu}
-\Bsj_1\Bsj_2 \x_{\alpha}\x_{\gamma} \ot  x_{\mu}x_{\gamma}
+q_{\alpha\beta}q_{\alpha\delta}q_{\alpha\eta} \x_{\beta}\x_{\delta}\ot x_{\eta}x_{\alpha} \big)
\\ & = 
\x_{\alpha} \x_{\beta} \x_{\delta} \ot x_{\eta}
-q_{\delta\eta}
\x_{\alpha} \x_{\beta}\x_{\eta} \ot x_{\delta}
+q_{\beta\eta}q_{\beta\delta} \x_{\alpha}\x_{\delta}\x_{\eta} \ot x_{\beta}
-s\big(-q_{\nu\eta}\Bsj_1 \x_{\alpha}\x_{\gamma} \ot  x_{\eta}x_{\nu}
\\ & \quad
-\Bsj_1\Bsj_2 \x_{\alpha}\x_{\gamma} \ot  x_{\mu}x_{\gamma}
+q_{\beta\eta}q_{\delta\eta}\Bsj_1 \x_{\alpha}\x_{\eta} \ot x_{\gamma}x_{\nu}
+q_{\alpha\beta}q_{\alpha\delta}q_{\alpha\eta} \x_{\beta}\x_{\delta}\ot x_{\eta}x_{\alpha} 
\\ & \quad
-q_{\alpha\beta}q_{\alpha\eta}q_{\alpha\delta}q_{\delta\eta} \x_{\beta}\x_{\eta} \ot x_{\delta}x_{\alpha}
+q_{\alpha\beta}q_{\alpha\delta}q_{\alpha\eta} q_{\beta\eta}q_{\beta\delta} \x_{\delta}\x_{\eta} \ot x_{\beta}x_{\alpha}
\big)
\\ & = 
\x_{\alpha} \x_{\beta} \x_{\delta} \ot x_{\eta}
-q_{\delta\eta}
\x_{\alpha} \x_{\beta}\x_{\eta} \ot x_{\delta}
+q_{\beta\eta}q_{\beta\delta} \x_{\alpha}\x_{\delta}\x_{\eta} \ot x_{\beta}
+q_{\nu\eta}\Bsj_1 \x_{\alpha}\x_{\gamma}\x_{\eta} \ot x_{\nu}
\\ & \quad
+\Bsj_1\Bsj_2 \x_{\alpha}\x_{\gamma}\x_{\mu} \ot  x_{\gamma}
-s\big(q_{\alpha\beta}q_{\alpha\delta}q_{\alpha\eta} \x_{\beta}\x_{\delta}\ot x_{\eta}x_{\alpha}
+q_{\alpha\gamma} (2)_{\widetilde{q}_{\alpha\gamma}} \Bsj_1\Bsj_2\Bsj_3
\x_{\gamma}^2 \ot x_{\gamma}
\\ & \quad
-q_{\alpha\beta}q_{\alpha\eta}q_{\alpha\delta}q_{\delta\eta} \x_{\beta}\x_{\eta} \ot x_{\delta}x_{\alpha}
+q_{\alpha\beta}q_{\alpha\delta}q_{\alpha\eta} q_{\beta\eta}q_{\beta\delta} \x_{\delta}\x_{\eta} \ot x_{\beta}x_{\alpha}
\\ & \quad
+q_{\alpha\gamma}q_{\alpha\eta}q_{\alpha\nu}q_{\nu\eta}\Bsj_1 \x_{\gamma}\x_{\eta} \ot x_{\nu}x_{\alpha}
+q_{\alpha\mu}q_{\alpha\gamma}^2\Bsj_1\Bsj_2 \x_{\gamma}\x_{\mu} \ot x_{\gamma}x_{\alpha} 
\big)
\\ & = 
\x_{\alpha} \x_{\beta} \x_{\delta} \ot x_{\eta}
-q_{\delta\eta}
\x_{\alpha} \x_{\beta}\x_{\eta} \ot x_{\delta}
+q_{\beta\eta}q_{\beta\delta} \x_{\alpha}\x_{\delta}\x_{\eta} \ot x_{\beta}
+q_{\nu\eta}\Bsj_1 \x_{\alpha}\x_{\gamma}\x_{\eta} \ot x_{\nu}
\\ & \quad
+\Bsj_1\Bsj_2 \x_{\alpha}\x_{\gamma}\x_{\mu} \ot  x_{\gamma}
-q_{\alpha\beta}q_{\alpha\delta}q_{\alpha\eta} \x_{\beta}\x_{\delta}\x_{\eta} \ot x_{\alpha}
-q_{\alpha\gamma} (2)_{\widetilde{q}_{\alpha\gamma}} \Bsj_1\Bsj_2\Bsj_3
\x_{\gamma}^3 \ot 1,
\end{align*}
which is \eqref{eq:diff-case9-formula} for $n=0$.
Now assume that \eqref{eq:diff-case9-formula} holds for $n$. Using \eqref{eq:diff-case9-auxiliar3}, Remark \ref{rem:differential-q-commute}, inductive hypothesis, the relation $x_{\gamma}^2=0$, \eqref{eq:diff-case9-auxiliar2}, \eqref{eq:diff-case9-auxiliar4},
\begin{align*}
d(&\x_{\alpha}\x_{\beta}\x_{\gamma}^{n+1}\x_{\delta}\x_{\eta}\ot 1)  
= \x_{\alpha}\x_{\beta}\x_{\gamma}^{n+1}\x_{\delta}\ot x_{\eta}
-s \big( 
q_{\delta\eta} \x_{\alpha}\x_{\beta} \x_{\gamma}^{n+1} \ot x_{\eta}x_{\delta}
\\ & \quad
-(-q_{\beta\gamma})^{n+1} q_{\beta\delta}q_{\beta\eta} \x_{\alpha}\x_{\gamma}^{n+1} \x_{\delta}\ot x_{\eta}x_{\beta}
+q_{\alpha\beta}(-q_{\alpha\gamma})^{n+1} q_{\alpha\delta}q_{\alpha\eta}
\x_{\beta}\x_{\gamma}^{n+1} \x_{\delta} \ot x_{\eta}x_{\alpha}
\\ & \quad
-q_{\gamma\delta}q_{\gamma\eta} \x_{\alpha}\x_{\beta}\x_{\gamma}^{n} \x_{\delta} \ot x_{\eta}x_{\gamma}
-(-q_{\beta\gamma})^{n+1} (n+2)_{\widetilde{q}_{\delta\gamma}}\Bsj_1 \x_{\alpha}\x_{\gamma}^{n+2} \ot (q_{\nu\eta} x_{\eta}x_{\nu} +\Bsj_2 x_{\mu}x_{\gamma}) \big)
\\ & 
= \x_{\alpha}\x_{\beta}\x_{\gamma}^{n+1}\x_{\delta}\ot x_{\eta}
-q_{\delta\eta} \x_{\alpha}\x_{\beta} \x_{\gamma}^{n+1}\x_{\eta} \ot x_{\delta}
-s \big(-(-q_{\beta\gamma})^{n+1} q_{\beta\delta}q_{\beta\eta} \x_{\alpha}\x_{\gamma}^{n+1} \x_{\delta}\ot x_{\eta}x_{\beta}
\\ & \quad
+q_{\alpha\beta}(-q_{\alpha\gamma})^{n+1} q_{\alpha\delta}q_{\alpha\eta}
\x_{\beta}\x_{\gamma}^{n+1} \x_{\delta} \ot x_{\eta}x_{\alpha}
-q_{\gamma\delta}q_{\gamma\eta} \x_{\alpha}\x_{\beta}\x_{\gamma}^{n} \x_{\delta} \ot x_{\eta}x_{\gamma}
\\ & \quad
-q_{\nu\eta}(-q_{\beta\gamma})^{n+1} (n+2)_{\widetilde{q}_{\delta\gamma}}\Bsj_1 \x_{\alpha}\x_{\gamma}^{n+2} \ot x_{\eta}x_{\nu}
+q_{\gamma\eta}q_{\delta\eta}q_{\gamma\delta} \x_{\alpha}\x_{\beta}\x_{\gamma}^{n}\x_{\eta} \ot x_{\delta}x_{\gamma}
\\ & \quad
-(-q_{\beta\gamma})^{n+1} (n+2)_{\widetilde{q}_{\delta\gamma}} \Bsj_1\Bsj_2 \x_{\alpha}\x_{\gamma}^{n+2} \ot x_{\mu}x_{\gamma}
-q_{\alpha\beta}(-q_{\alpha\gamma})^{n+1}q_{\alpha\delta}
q_{\alpha\eta}q_{\delta\eta} \x_{\beta}\x_{\gamma}^{n+1}\x_{\eta} \ot x_{\delta}x_{\alpha}
\\ & \quad
+(-q_{\beta\gamma})^{n+1}q_{\beta\eta}q_{\delta\eta} \x_{\alpha}\x_{\gamma}^{n+1} \x_{\eta} \ot (q_{\beta\delta} x_{\delta}x_{\beta} +\Bsj_1 x_{\gamma}x_{\nu})
\big)
\\ & 
= \x_{\alpha}\x_{\beta}\x_{\gamma}^{n+1}\x_{\delta}\ot x_{\eta}
-q_{\delta\eta} \x_{\alpha}\x_{\beta} \x_{\gamma}^{n+1}\x_{\eta} \ot x_{\delta}
+q_{\gamma\delta}q_{\gamma\eta} \x_{\alpha}\x_{\beta}\x_{\gamma}^{n} \x_{\delta}\x_{\eta} \ot x_{\gamma}
\\ & \quad
-s \big(
q_{\alpha\beta}(-q_{\alpha\gamma})^{n+1} q_{\alpha\delta}q_{\alpha\eta}
\x_{\beta}\x_{\gamma}^{n+1} \x_{\delta} \ot x_{\eta}x_{\alpha}
-(-q_{\beta\gamma})^{n+1} (n+2)_{\widetilde{q}_{\delta\gamma}} \Bsj_1\Bsj_2 \x_{\alpha}\x_{\gamma}^{n+2} \ot x_{\mu}x_{\gamma}
\\ & \quad
-(-q_{\beta\gamma})^{n+1} q_{\beta\delta}q_{\beta\eta} \x_{\alpha}\x_{\gamma}^{n+1} \x_{\delta}\ot x_{\eta}x_{\beta}
-q_{\nu\eta}(-q_{\beta\gamma})^{n+1} (n+2)_{\widetilde{q}_{\delta\gamma}}\Bsj_1 \x_{\alpha}\x_{\gamma}^{n+2} \ot x_{\eta}x_{\nu}
\\ & \quad
+(-q_{\beta\gamma})^{n}(q_{\nu\eta} (n+1)_{\widetilde{q}_{\delta\gamma}}
q_{\nu\gamma} q_{\gamma\delta}q_{\gamma\eta} -q_{\beta\gamma}q_{\beta\eta}q_{\delta\eta}) \Bsj_1 \x_{\alpha}\x_{\gamma}^{n+1} \x_{\eta} \ot x_{\gamma}x_{\nu}
\\ & \quad
-q_{\alpha\beta}(-q_{\alpha\gamma})^{n+1}q_{\alpha\delta}
q_{\alpha\eta}q_{\delta\eta} \x_{\beta}\x_{\gamma}^{n+1}\x_{\eta} \ot x_{\delta}x_{\alpha}
+(-q_{\beta\gamma})^{n+1}q_{\beta\delta}q_{\beta\eta}q_{\delta\eta} \x_{\alpha}\x_{\gamma}^{n+1} \x_{\eta} \ot  x_{\delta}x_{\beta}
\\ & \quad
- (-q_{\beta\gamma})^{n+1} q_{\beta\delta} q_{\beta\eta} q_{\gamma\delta}q_{\gamma\eta}
\x_{\alpha}\x_{\gamma}^{n} \x_{\delta}\x_{\eta} \ot x_{\gamma}x_{\beta}
-q_{\alpha\gamma}q_{\gamma\delta}^{n+1}q_{\gamma\eta}^{n+1} \coeff{\alpha+\beta,\delta,\alpha,\gamma}{n} \Bsj_1\Bsj_2\Bsj_3 \x_{\gamma}^{n+3} \ot x_{\gamma}
\\ & \quad
+q_{\alpha\beta}(-q_{\alpha\gamma})^{n+1} q_{\alpha\delta} q_{\alpha\eta} q_{\gamma\delta}q_{\gamma\eta} \x_{\beta} \x_{\gamma}^{n} \x_{\delta}\x_{\eta} \ot x_{\gamma}x_{\alpha}
\big)
\\ & 
= \x_{\alpha}\x_{\beta}\x_{\gamma}^{n+1}\x_{\delta}\ot x_{\eta}
-q_{\delta\eta} \x_{\alpha}\x_{\beta} \x_{\gamma}^{n+1}\x_{\eta} \ot x_{\delta}
+q_{\gamma\delta}q_{\gamma\eta} \x_{\alpha}\x_{\beta}\x_{\gamma}^{n} \x_{\delta}\x_{\eta} \ot x_{\gamma}
\\ & \quad
+(-q_{\beta\gamma})^{n+1} (n+2)_{\widetilde{q}_{\delta\gamma}} \Bsj_1\Bsj_2 \x_{\alpha}\x_{\gamma}^{n+2}\x_{\mu} \ot x_{\gamma}
+q_{\nu\eta}(-q_{\beta\gamma})^{n+1} (n+2)_{\widetilde{q}_{\delta\gamma}}\Bsj_1 \x_{\alpha}\x_{\gamma}^{n+2}\x_{\eta} \ot x_{\nu}
\\ & \quad
+(-q_{\beta\gamma})^{n+1} q_{\beta\delta}q_{\beta\eta} \x_{\alpha}\x_{\gamma}^{n+1} \x_{\delta}\x_{\eta}\ot x_{\beta}
-s \big(
-q_{\alpha\beta}(-q_{\alpha\gamma})^{n+1}q_{\alpha\delta}
q_{\alpha\eta}q_{\delta\eta} \x_{\beta}\x_{\gamma}^{n+1}\x_{\eta} \ot x_{\delta}x_{\alpha}
\\ & \quad
+q_{\alpha\beta}(-q_{\alpha\gamma})^{n+1} q_{\alpha\delta}q_{\alpha\eta}
\x_{\beta}\x_{\gamma}^{n+1} \x_{\delta} \ot x_{\eta}x_{\alpha}
+q_{\alpha\beta}q_{\alpha\gamma}^{n+1}q_{\alpha\delta}q_{\alpha\eta}
q_{\beta\gamma}^{n+1} q_{\beta\delta}q_{\beta\eta}
\x_{\gamma}^{n+1} \x_{\delta}\x_{\eta} \ot x_{\beta}x_{\alpha}
\\ & \quad
+q_{\alpha\beta}(-q_{\alpha\gamma})^{n+1} q_{\alpha\delta} q_{\alpha\eta} q_{\gamma\delta}q_{\gamma\eta} \x_{\beta} \x_{\gamma}^{n} \x_{\delta}\x_{\eta} \ot x_{\gamma}x_{\alpha}
\\ & \quad
-q_{\alpha\gamma}q_{\gamma\delta}^{n+1}q_{\gamma\eta}^{n+1}
(\widetilde{q}_{\alpha\gamma}^{n+1}\widetilde{q}_{\beta\gamma}^{n+1} (n+2)_{\widetilde{q}_{\delta\gamma}}
(n+3)_{\widetilde{q}_{\alpha\gamma}}
+\coeff{\alpha+\beta,\delta,\alpha,\gamma}{n}) \Bsj_1\Bsj_2\Bsj_3 \x_{\gamma}^{n+3} \ot x_{\gamma}
\\ & \quad
+q_{\alpha\mu}q_{\alpha\gamma}^{n+3}q_{\beta\gamma}^{n+1} (n+2)_{\widetilde{q}_{\delta\gamma}} \Bsj_1\Bsj_2 \x_{\gamma}^{n+2} \x_{\mu} \ot x_{\gamma}x_{\alpha}
\\ & \quad
+q_{\alpha\nu}q_{\alpha\gamma}^{n+2}q_{\alpha\eta}q_{\nu\eta}
q_{\beta\gamma}^{n+1} (n+2)_{\widetilde{q}_{\delta\gamma}}\Bsj_1 \x_{\gamma}^{n+2}\x_{\eta} \ot x_{\nu}x_{\alpha}
\big)
\\ & 
= \x_{\alpha}\x_{\beta}\x_{\gamma}^{n+1}\x_{\delta}\ot x_{\eta}
-q_{\delta\eta} \x_{\alpha}\x_{\beta} \x_{\gamma}^{n+1}\x_{\eta} \ot x_{\delta}
+q_{\gamma\delta}q_{\gamma\eta} \x_{\alpha}\x_{\beta}\x_{\gamma}^{n} \x_{\delta}\x_{\eta} \ot x_{\gamma}
\\ & \quad
+(-q_{\beta\gamma})^{n+1} (n+2)_{\widetilde{q}_{\delta\gamma}} \Bsj_1\Bsj_2 \x_{\alpha}\x_{\gamma}^{n+2}\x_{\mu} \ot x_{\gamma}
+q_{\nu\eta}(-q_{\beta\gamma})^{n+1} (n+2)_{\widetilde{q}_{\delta\gamma}}\Bsj_1 \x_{\alpha}\x_{\gamma}^{n+2}\x_{\eta} \ot x_{\nu}
\\ & \quad
+(-q_{\beta\gamma})^{n+1} q_{\beta\delta}q_{\beta\eta} \x_{\alpha}\x_{\gamma}^{n+1} \x_{\delta}\x_{\eta}\ot x_{\beta}
-q_{\alpha\beta}(-q_{\alpha\gamma})^{n+1} q_{\alpha\delta}q_{\alpha\eta}
\x_{\beta}\x_{\gamma}^{n+1}\x_{\delta}\x_{\eta} \ot x_{\alpha}
\\ & \quad
-q_{\alpha\gamma}q_{\gamma\delta}^{n+1}q_{\gamma\eta}^{n+1}
\coeff{\alpha+\beta,\delta,\alpha,\gamma}{n+1} \Bsj_1\Bsj_2\Bsj_3 
s \big(\x_{\gamma}^{n+3} \ot x_{\gamma}\big),
\end{align*}
and the inductive step follows.
\epf

\begin{lema}\label{lem:diff-case10}
Let $\alpha < \beta < \delta  < \gamma < \mu < \nu < \eta $ be positive roots
such that $N_{\gamma}=2$ and the relations among the corresponding root vectors take the form
\begin{align}\label{eq:diff-case10-hypothesis}
\begin{aligned}
x_{\beta}x_{\eta} &= q_{\beta\eta} x_{\eta}x_{\beta} +\Bsj_1 x_{\nu}x_{\gamma},
\\
x_{\delta}x_{\nu} &= q_{\delta\nu} x_{\nu}x_{\delta} +\Bsj_2 x_{\mu}x_{\gamma}, 
&
x_{\alpha}x_{\mu} &= q_{\alpha\mu}  x_{\mu}x_{\alpha} + \Bsj_3 x_{\gamma}, &
\end{aligned}
\end{align}
for some scalars $\Bsj_i$ and all other pairs of root vectors $q$-commute except possibly $(x_{\beta},x_{\mu})$.
Then, for all $n\geq 0$, 
\begin{align}\label{eq:diff-case10-formula}
\begin{aligned}
d(&\x_{\alpha} \x_{\beta}\x_{\delta}\x_{\gamma}^{n} \x_{\eta}\ot 1)  = 
\x_{\alpha}\x_{\beta}\x_{\delta}\x_{\gamma}^{n} \ot x_{\eta}
-q_{\gamma\eta} \x_{\alpha} \x_{\beta}\x_{\delta} \x_{\gamma}^{n-1} \x_{\eta} \ot x_{\gamma}
\\ & \quad
-(-q_{\delta\gamma})^{n}q_{\delta\eta} \x_{\alpha}\x_{\beta}\x_{\gamma}^{n}\x_{\eta} \ot x_{\delta}
+q_{\beta\delta}(-q_{\beta\gamma})^{n}q_{\beta\eta} \x_{\alpha}\x_{\delta} \x_{\gamma}^{n}\x_{\eta} \ot x_{\beta}
\\ & \quad
+q_{\beta\delta} (-q_{\beta\gamma})^n \Bsj_1 \x_{\alpha}\x_{\delta} \x_{\gamma}^{n} \x_{\nu} \ot x_{\gamma}
-q_{\alpha\beta}q_{\alpha\delta}(-q_{\alpha\gamma})^{n} q_{\alpha\eta} \x_{\beta}\x_{\delta} \x_{\gamma}^{n}\x_{\eta} \ot x_{\alpha}
\\ & \quad
+q_{\beta\delta} q_{\gamma\eta}^{n} \coeff{\beta \, -\nu \, \alpha\gamma}{n} \Bsj_1\Bsj_2 \Bsj_3 \x_{\gamma}^{n+3} \ot 1.
\end{aligned}
\end{align}
\end{lema}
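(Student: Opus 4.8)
The relations \eqref{eq:diff-case10-hypothesis} force, exactly as in the analogous cases, the root identities of \eqref{eq:roots-N=2-case10}, namely $\beta+\eta=\gamma+\nu$, $\delta+\nu=\mu+\gamma$ and $\alpha+\mu=\gamma$, whence $3\gamma=\alpha+\beta+\delta+\eta$. I would record these first, since they are what ultimately lets one rewrite every $\widetilde{q}$-exponent occurring below in terms of $\alpha$, $\beta$, $\gamma$ alone, and they guarantee that $\x_\gamma^{n+3}$ is the unique chain of the correct multidegree on which the contribution $\x_\gamma^{n+3}\ot 1$ can land. The overall strategy is identical to that of Lemmas~\ref{lem:diff-case2}--\ref{lem:diff-case9}, and most closely parallels Lemma~\ref{lem:diff-case9}: build the differential of the long chain $\x_\alpha\x_\beta\x_\delta\x_\gamma^n\x_\eta$ out of the differentials of its sub-chains, and prove the closed formula by induction on $n$, using throughout the recursive description $d(c\ot 1)=c\ot(\text{last letter})-s\,d(\dots)$, Remark~\ref{rem:differential-q-commute} for the $q$-commuting blocks, the relation $x_\gamma^2=0$ (as $N_\gamma=2$), and $s\circ s=0$.

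Concretely, I would first assemble the auxiliary formulas, each obtained either from an earlier result or by a short induction of the same kind as \eqref{eq:diff-case2-auxiliar}. From Lemma~\ref{lem:diff2} applied to the triple $\alpha<\gamma<\mu$ (the relation $x_\alpha x_\mu=q_{\alpha\mu}x_\mu x_\alpha+\Bsj_3 x_\gamma$ is of the required shape with $N_\gamma=2$ and no extra summands) I get $d(\x_\alpha\x_\gamma^n\x_\mu\ot 1)$. From the relations $x_\beta x_\eta=q_{\beta\eta}x_\eta x_\beta+\Bsj_1 x_\nu x_\gamma$ and $x_\delta x_\nu=q_{\delta\nu}x_\nu x_\delta+\Bsj_2 x_\mu x_\gamma$ I would derive, by the argument proving \eqref{eq:diff-case2-auxiliar}, the formulas for $d(\x_\beta\x_\gamma^n\x_\eta\ot 1)$ and $d(\x_\delta\x_\gamma^n\x_\nu\ot 1)$ (note the offending factor $x_\nu x_\gamma$, resp. $x_\mu x_\gamma$, has its larger root on the left, so these are \emph{not} instances of Lemma~\ref{lem:diff2} and must be treated by hand). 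Building on these I would then establish, each by its own induction on $n$, the three-distinct-root auxiliaries $d(\x_\alpha\x_\delta\x_\gamma^n\x_\nu\ot 1)$, $d(\x_\alpha\x_\beta\x_\gamma^n\x_\eta\ot 1)$ and $d(\x_\beta\x_\delta\x_\gamma^n\x_\eta\ot 1)$; these are precisely the intermediate chains that surface in the right-hand side of \eqref{eq:diff-case10-formula} and feed into the final induction.

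With the auxiliaries in place, I would prove \eqref{eq:diff-case10-formula} itself by induction on $n$. The base case $n=0$ is a direct computation of $d(\x_\alpha\x_\beta\x_\delta\x_\eta\ot 1)$: expand $\x_\alpha\x_\beta\x_\delta\ot x_\eta - s\,d(\x_\alpha\x_\beta\x_\delta\ot x_\eta)$, push $x_\eta$ leftward through the relations, and collect, discarding $s$-exact terms. For the inductive step I would write $d(\x_\alpha\x_\beta\x_\delta\x_\gamma^{n+1}\x_\eta\ot 1)=\x_\alpha\x_\beta\x_\delta\x_\gamma^{n+1}\ot x_\eta-s\,d(\x_\alpha\x_\beta\x_\delta\x_\gamma^{n+1}\ot x_\eta)$, substitute the inductive hypothesis together with the auxiliary formulas, and repeatedly apply Remark~\ref{rem:differential-q-commute}, $x_\gamma^2=0$ and $s^2=0$ to cancel. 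As in all the previous lemmas, the terms multiplying $\x_\gamma^{n+3}\ot 1$ are governed by a first-order recursion in $n$, and the main arithmetic task is to check that its solution is the scalar $\coeff{\beta\,-\nu\,\alpha\gamma}{n}$ named in the statement, after the $\widetilde{q}$-exponents are reduced using the forced identities above.

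The step I expect to be the genuine obstacle is twofold. First, controlling the coefficient of $\x_\gamma^{n+3}\ot 1$: one must verify that the recursion it satisfies matches the defining sum of $\coeff{\beta\,-\nu\,\alpha\gamma}{n}$, which requires the careful $\alpha,\beta,\gamma$-reduction of exponents and is exactly where sign and root-of-unity bookkeeping can go wrong. Second, and more delicate than in the earlier cases, is the hypothesis that \emph{only} $(x_\beta,x_\mu)$ need not $q$-commute. I must argue that whatever the relation between $x_\beta$ and $x_\mu$, its extra terms never contribute to the final answer; the mechanism is the one used at the end of the proof of Lemma~\ref{lem:diff-case8}, where a chain of the form $\x_\gamma^{\,\ast}\x_\mu\ot x_\gamma(\cdots)$ is shown to be $s$-exact because $x_\gamma^2=0$ forces the relevant preimage under $s$ to vanish. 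Establishing that every occurrence of an $x_\beta x_\mu$ (or $x_\mu x_\beta$) product is annihilated in this way is the key point that makes the weaker commutation hypothesis admissible.
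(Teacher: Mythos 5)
Your proposal is correct and follows essentially the same route as the paper: the same forced root identities, the same auxiliary differentials ($d(\x_{\beta}\x_{\gamma}^{n}\x_{\eta}\ot 1)$ and $d(\x_{\delta}\x_{\gamma}^{n}\x_{\nu}\ot 1)$ done by hand, then the three-root chains $\x_{\alpha}\x_{\beta}\x_{\gamma}^{n}\x_{\eta}$, $\x_{\alpha}\x_{\delta}\x_{\gamma}^{n}\x_{\nu}$, $\x_{\beta}\x_{\delta}\x_{\gamma}^{n}\x_{\eta}$), followed by a final induction on $n$; the only cosmetic difference is that the paper obtains $d(\x_{\alpha}\x_{\delta}\x_{\gamma}^{n}\x_{\nu}\ot 1)$ by directly citing Lemma~\ref{lem:diff-case7} with the roots re-indexed, rather than by a fresh induction. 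Your diagnosis of the $(x_{\beta},x_{\mu})$ hypothesis also matches the paper's implicit mechanism: every term containing $x_{\mu}$ arises paired with $x_{\gamma}$ and is annihilated via $x_{\gamma}^{2}=0$ and $s^{2}=0$, so no commutation of $x_{\beta}$ past $x_{\mu}$ is ever needed.
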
 

\bigbreak
Notice that the equalities in \eqref{eq:diff-case10-hypothesis} force
\begin{align}\label{eq:diff-case10-hypothesis-roots}
\begin{aligned}
\beta+\eta &= \gamma+\nu, &
\delta+\nu &= \gamma+\mu, &
\alpha+\mu &= \gamma.
\end{aligned}
\end{align}
Hence the following equality also holds: $3\gamma=\alpha+\beta+\delta+\eta$.

\pf
We need some auxiliary computations. By \eqref{eq:diff-case8-auxiliar3},
\begin{align}\label{eq:diff-case10-auxiliar1}
&\begin{aligned}
d(\x_{\beta} & \x_{\gamma}^{n} \x_{\eta}\ot 1) =
\x_{\beta}\x_{\gamma}^{n}\ot x_{\eta} 
-q_{\gamma \eta} \x_{\beta}\x_{\gamma}^{n-1}\x_{\eta}\ot x_{\gamma}
-(-q_{\beta\gamma})^{n} q_{\beta \eta} \x_{\gamma}^{n}\x_{\eta} \ot x_{\beta}
\\ & -(-q_{\beta\gamma})^{n} \Bsj_1 \x_{\gamma}^{n}\x_{\nu}\ot x_{\gamma},
\end{aligned}
\\ \label{eq:diff-case10-auxiliar2}
&\begin{aligned}
d(\x_{\delta} & \x_{\gamma}^{n} \x_{\nu}\ot 1) =
\x_{\delta}\x_{\gamma}^{n}\ot x_{\nu} 
-q_{\gamma \nu} \x_{\delta}\x_{\gamma}^{n-1}\x_{\nu}\ot x_{\gamma}
-(-q_{\delta\gamma})^{n} q_{\delta \nu} \x_{\gamma}^{n}\x_{\nu} \ot x_{\delta}
\\ & -(-q_{\delta\gamma})^{n} \Bsj_2 \x_{\gamma}^{n}\x_{\mu}\ot x_{\gamma}.
\end{aligned}
\end{align}

Now we prove the following equality by induction on $n$:
\begin{align}\label{eq:diff-case10-auxiliar3}
&\begin{aligned}
d(& \x_{\alpha}\x_{\beta} \x_{\gamma}^{n} \x_{\eta}\ot 1)  =
\x_{\alpha}\x_{\beta}\x_{\gamma}^{n} \ot x_{\eta} 
-q_{\gamma\eta} \x_{\alpha}\x_{\beta}\x_{\gamma}^{n-1} \x_{\eta} \ot x_{\gamma}
-(-q_{\beta\gamma})^{n} \Bsj_1 \x_{\alpha}\x_{\gamma}^{n}\x_{\nu} \ot x_{\gamma}
\\ & \quad
-(-q_{\beta\gamma})^{n}q_{\beta\eta} \x_{\alpha} \x_{\gamma}^{n} \x_{\eta} \ot x_{\beta}
+q_{\alpha\beta}(-q_{\alpha\gamma})^{n}q_{\alpha\eta} \x_{\beta}\x_{\gamma}^{n}\x_{\eta} \ot x_{\alpha}.
\end{aligned}
\end{align}
Indeed for $n=0$ we have:
\begin{align*}
d(& \x_{\alpha}\x_{\beta}\x_{\eta}\ot 1)  =
\x_{\alpha}\x_{\beta} \ot x_{\eta} -s\big( 
q_{\beta\eta} \x_{\alpha} \ot x_{\eta}x_{\beta}
+\Bsj_1 \x_{\alpha} \ot x_{\nu}x_{\gamma}
-q_{\alpha\beta}q_{\alpha\beta} \x_{\beta} \ot x_{\eta}x_{\alpha}
\big)
\\ & =
\x_{\alpha}\x_{\beta} \ot x_{\eta} 
-\Bsj_1 \x_{\alpha}\x_{\nu} \ot x_{\gamma}
-q_{\beta\eta} \x_{\alpha}\x_{\eta} \ot x_{\beta}
+q_{\alpha\beta}q_{\alpha\eta} \x_{\beta}\x_{\eta} \ot x_{\alpha}.
\end{align*}
Now assume that \eqref{eq:diff-case10-auxiliar3} holds for $n$. By Remark \ref{rem:differential-q-commute}, inductive hypothesis and \eqref{eq:diff-case10-auxiliar1},
\begin{align*}
d(& \x_{\alpha}\x_{\beta} \x_{\gamma}^{n+1} \x_{\eta}\ot 1)  =
\x_{\alpha}\x_{\beta} \x_{\gamma}^{n+1} \ot x_{\eta}
-s \big( 
q_{\gamma\eta} \x_{\alpha}\x_{\beta}\x_{\gamma}^{n} \ot x_{\eta}x_{\gamma}
\\ & \quad
+(-q_{\beta\gamma})^{n+1} \x_{\alpha} \x_{\gamma}^{n+1} \ot (q_{\beta\eta} x_{\eta}x_{\beta} +\Bsj_1 x_{\nu}x_{\gamma})
-q_{\alpha\beta}(-q_{\alpha\gamma})^{n+1}q_{\alpha\eta} \x_{\beta} \x_{\gamma}^{n+1} \ot x_{\eta}x_{\alpha}
\big)
\\ & =
\x_{\alpha}\x_{\beta} \x_{\gamma}^{n+1} \ot x_{\eta}
-q_{\gamma\eta} \x_{\alpha}\x_{\beta}\x_{\gamma}^{n}\x_{\eta} \ot x_{\gamma}
-(-q_{\beta\gamma})^{n+1}\Bsj_1 \x_{\alpha}\x_{\gamma}^{n+1} \x_{\nu} \ot x_{\gamma}
\\ & \quad
-(-q_{\beta\gamma})^{n+1}q_{\beta\eta} \x_{\alpha} \x_{\gamma}^{n+1}\x_{\eta} \ot x_{\beta}
+q_{\alpha\beta}(-q_{\alpha\gamma})^{n+1}q_{\alpha\eta} \x_{\beta} \x_{\gamma}^{n+1}\x_{\eta} \ot x_{\alpha}.
\end{align*}

Next we apply Lemma \ref{lem:diff-case7}
to $\alpha < \delta <\gamma < \mu <\nu$ to get: \begin{align}\label{eq:diff-case10-auxiliar4}
&\begin{aligned}
d(&\x_{\alpha}\x_{\delta} \x_{\gamma}^{n} \x_{\nu}\ot 1)  = \x_{\alpha}\x_{\delta} \x_{\gamma}^{n} \ot x_{\nu}
-q_{\gamma\nu} \x_{\alpha}\x_{\delta}\x_{\gamma}^{n-1} \x_{\nu} \ot x_{\gamma}
\\ &
-(-q_{\delta\gamma})^n q_{\delta\nu} \x_{\alpha}\x_{\gamma}^{n} \x_{\nu}\ot x_{\delta}
+(-q_{\alpha\gamma})^nq_{\alpha\delta}q_{\alpha\nu} \x_{\delta} \x_{\gamma}^{n} \x_{\nu}\ot x_{\alpha}
\\ & 
- (-q_{\delta\gamma})^{n} \Bsj_2 \x_{\alpha} \x_{\gamma}^{n} \x_{\mu}\ot x_{\gamma}
-q_{\gamma\nu}^{n} \coef{-\nu,\alpha,\gamma}{n} \Bsj_2 \Bsj_3 \x_{\gamma}^{n+2} \ot 1.
\end{aligned}
\end{align}

Now we prove by induction on $n$ that
\begin{align}\label{eq:diff-case10-auxiliar5}
&\begin{aligned}
d(&\x_{\beta}\x_{\delta} \x_{\gamma}^{n} \x_{\eta}\ot 1)  = \x_{\beta}\x_{\delta} \x_{\gamma}^{n} \ot x_{\eta}
-q_{\gamma\eta} \x_{\beta}\x_{\delta}\x_{\gamma}^{n-1} \x_{\eta} \ot x_{\gamma}
-(-q_{\delta\gamma})^n q_{\delta\eta} \x_{\beta}\x_{\gamma}^{n} \x_{\eta}\ot x_{\delta}
\\ &
+(-q_{\beta\gamma})^nq_{\beta\delta}q_{\beta\eta} \x_{\delta} \x_{\gamma}^{n} \x_{\eta}\ot x_{\beta}
+q_{\beta\delta}(-q_{\beta\gamma})^{n} \Bsj_1 \x_{\delta}\x_{\gamma}^{n}\x_{\nu} \ot x_{\gamma}.
\end{aligned}
\end{align}
Indeed, for $n=0$ we have
\begin{align*}
d(&\x_{\beta}\x_{\delta}\x_{\eta} \ot 1)  = 
\x_{\beta}\x_{\delta} \ot x_{\eta} -s \big( 
q_{\delta\eta} \x_{\beta} \ot x_{\eta}x_{\delta} 
-q_{\beta\delta}q_{\beta\eta} \x_{\delta} \ot  x_{\eta}x_{\beta}
-q_{\beta\delta}\Bsj_1 \x_{\delta} \ot x_{\nu}x_{\gamma}
\big)
\\ & = 
\x_{\beta}\x_{\delta} \ot x_{\eta}
-q_{\delta\eta} \x_{\beta}\x_{\eta} \ot x_{\delta} 
+q_{\beta\delta}q_{\beta\eta} \x_{\delta}\x_{\eta} \ot x_{\beta}
+q_{\beta\delta}\Bsj_1 \x_{\delta}\x_{\nu} \ot x_{\gamma}.
\end{align*}
Now we assume that \eqref{eq:diff-case10-auxiliar5} holds for $n$. Using Remark \ref{rem:differential-q-commute}, inductive hypothesis, \eqref{eq:diff-case10-auxiliar1}, \eqref{eq:diff-case10-auxiliar2},
\begin{align*}
d(&\x_{\beta}\x_{\delta}\x_{\gamma}^{n+1}\x_{\eta} \ot 1)  = 
\x_{\beta}\x_{\delta}\x_{\gamma}^{n+1} \ot x_{\eta}
-s \big( 
q_{\gamma\eta} \x_{\beta}\x_{\delta}\x_{\gamma}^{n} \ot x_{\eta}x_{\gamma}
+(-q_{\delta\gamma})^{n+1}q_{\delta\eta} \x_{\beta} \x_{\gamma}^{n+1} \ot x_{\eta}x_{\delta}
\\ & \quad
-q_{\beta\delta}(-q_{\beta\gamma})^{n+1}q_{\beta\eta} \x_{\delta} \x_{\gamma}^{n+1} \ot x_{\eta}x_{\beta}
-q_{\beta\delta}(-q_{\beta\gamma})^{n+1}\Bsj_1 \x_{\delta} \x_{\gamma}^{n+1} \ot x_{\nu}x_{\gamma}
\big)
\\ & = 
\x_{\beta}\x_{\delta}\x_{\gamma}^{n+1} \ot x_{\eta}
-q_{\gamma\eta} \x_{\beta}\x_{\delta}\x_{\gamma}^{n}\x_{\eta} \ot x_{\gamma}
-(-q_{\delta\gamma})^{n+1}q_{\delta\eta} \x_{\beta} \x_{\gamma}^{n+1}\x_{\eta} \ot x_{\delta}
\\ & \quad
+q_{\beta\delta}(-q_{\beta\gamma})^{n+1}\Bsj_1 \x_{\delta} \x_{\gamma}^{n+1}\x_{\nu} \ot x_{\gamma}
+q_{\beta\delta}(-q_{\beta\gamma})^{n+1}q_{\beta\eta} \x_{\delta}\x_{\gamma}^{n+1}\x_{\eta} \ot x_{\beta}
\\ & \quad
- q_{\beta\gamma}^{n+1}q_{\delta\gamma}^{n+1}q_{\delta\eta} (1-\widetilde{q}_{\beta\delta}) \Bsj_1  s \big( 
\x_{\gamma}^{n+1}\x_{\nu}\ot x_{\gamma}x_{\delta}\big),
\end{align*}
and the inductive step follows since 
$\x_{\gamma}^{n+1}\x_{\nu}\ot x_{\gamma}x_{\delta} = s(\x_{\gamma}^{n+1}\ot x_{\nu}x_{\gamma}x_{\delta})$.

\medskip

Finally we prove \eqref{eq:diff-case10-formula} by induction on $n$. When $n=0$,
\begin{align*}
d( & \x_{\alpha} \x_{\beta} \x_{\delta} \x_{\eta} \ot 1) = 
\x_{\alpha}\x_{\beta}\x_{\delta} \ot x_{\eta}
-s \big(q_{\delta\eta} \x_{\alpha}\x_{\beta} \ot x_{\eta}x_{\delta}
-q_{\beta\delta}q_{\beta\eta} \x_{\alpha}\x_{\delta} \ot  x_{\eta}x_{\beta}
\\ & \quad
-q_{\beta\delta}\Bsj_1 \x_{\alpha}\x_{\delta} \ot x_{\nu}x_{\gamma}
+q_{\alpha\beta}q_{\alpha\delta}q_{\alpha\eta} \x_{\beta}\x_{\delta} \ot x_{\eta}x_{\alpha}
\big)
\\ &  = 
\x_{\alpha}\x_{\beta}\x_{\delta} \ot x_{\eta}
-q_{\delta\eta} \x_{\alpha}\x_{\beta}\x_{\eta} \ot x_{\delta}
+q_{\beta\delta}q_{\beta\eta} \x_{\alpha}\x_{\delta}\x_{\eta} \ot x_{\beta}
+q_{\beta\delta}\Bsj_1 \x_{\alpha}\x_{\delta}\x_{\nu} \ot x_{\gamma}
\\ & \quad
-q_{\alpha\beta}q_{\alpha\delta}q_{\alpha\eta} \x_{\beta}\x_{\delta}\x_{\eta} \ot x_{\alpha}
+q_{\beta\delta}\Bsj_1\Bsj_2 \Bsj_3 \x_{\gamma}^{3} \ot 1.
\end{align*}
Now assume that \eqref{eq:diff-case10-formula} holds for $n$. Using Remark \ref{rem:differential-q-commute}, inductive hypothesis, \eqref{eq:diff-case10-auxiliar3}, \eqref{eq:diff-case10-auxiliar4} and \eqref{eq:diff-case10-auxiliar5}
\begin{align*}
d(&\x_{\alpha}\x_{\beta}\x_{\delta}\x_{\gamma}^{n+1} \x_{\eta}\ot 1)  =
\x_{\alpha}\x_{\beta}\x_{\delta}\x_{\gamma}^{n+1} \ot x_{\eta}
-s \big( 
q_{\gamma\eta} \x_{\alpha}\x_{\beta}\x_{\delta}\x_{\gamma}^{n} \ot x_{\eta}x_{\gamma}
\\ & \quad 
+(-q_{\delta\gamma})^{n+1}q_{\delta\eta} \x_{\alpha}\x_{\beta} \x_{\gamma}^{n+1} \ot x_{\eta}x_{\delta} 
-q_{\beta\delta}(-q_{\beta\gamma})^{n+1}q_{\beta\eta} \x_{\alpha} \x_{\delta}\x_{\gamma}^{n+1} \ot  x_{\eta}x_{\beta} 
\\ & \quad 
-q_{\beta\delta}(-q_{\beta\gamma})^{n+1}\Bsj_1 \x_{\alpha} \x_{\delta}\x_{\gamma}^{n+1} \ot x_{\nu}x_{\gamma}
+q_{\alpha\beta}q_{\alpha\delta}(-q_{\alpha\gamma})^{n+1} q_{\alpha\eta} \x_{\beta} \x_{\delta} \x_{\gamma}^{n+1} \ot x_{\eta}x_{\alpha} 
\big)
\\ & =
\x_{\alpha}\x_{\beta}\x_{\delta}\x_{\gamma}^{n+1} \ot x_{\eta}
-q_{\gamma\eta} \x_{\alpha}\x_{\beta}\x_{\delta} \x_{\gamma}^{n}\x_{\eta} \ot x_{\gamma}
-(-q_{\delta\gamma})^{n+1}q_{\delta\eta} \x_{\alpha}\x_{\beta} \x_{\gamma}^{n+1}\x_{\eta} \ot x_{\delta} 
\\ & \quad 
+q_{\beta\delta}(-q_{\beta\gamma})^{n+1}\Bsj_1 \x_{\alpha} \x_{\delta}\x_{\gamma}^{n+1}\x_{\nu} \ot x_{\gamma}
+q_{\beta\delta}(-q_{\beta\gamma})^{n+1}q_{\beta\eta} \x_{\alpha} \x_{\delta}\x_{\gamma}^{n+1}\x_{\eta} \ot  x_{\beta} 
\\ & \quad 
-s \big( 
q_{\alpha\beta}q_{\alpha\delta}(-q_{\alpha\gamma})^{n+1} q_{\alpha\eta} \x_{\beta} \x_{\delta} \x_{\gamma}^{n+1} \ot x_{\eta}x_{\alpha}
-q_{\alpha\beta}q_{\alpha\delta}q_{\alpha\gamma}^{n+1} q_{\alpha\eta}q_{\delta\gamma}^{n+1}q_{\delta\eta} \x_{\beta}\x_{\gamma}^{n+1}\x_{\eta} \ot x_{\delta}x_{\alpha}
\\ & \quad 
-q_{\alpha\beta}q_{\alpha\delta}(-q_{\alpha\gamma})^{n+1} q_{\alpha\eta}q_{\gamma\eta} \x_{\beta}\x_{\delta} \x_{\gamma}^{n}\x_{\eta} \ot x_{\gamma}x_{\alpha}
+q_{\beta\delta}q_{\beta\gamma}^{n+1} 
q_{\alpha\gamma}^{n+2} q_{\alpha\delta}q_{\alpha\nu} \Bsj_1 \x_{\delta} \x_{\gamma}^{n+1} \x_{\nu}\ot x_{\gamma}x_{\alpha}
\\ & \quad
-q_{\beta\delta}(q_{\gamma\eta}^{n+1} \coeff{\beta \, -\nu \, \alpha\gamma}{n} +q_{\gamma\nu}^{n+1} \coef{-\nu,\alpha,\gamma}{n+1}(-q_{\beta\gamma})^{n+1}) \Bsj_1 \Bsj_2 \Bsj_3 \x_{\gamma}^{n+3} \ot x_{\gamma}
\\ & \quad
+q_{\beta\delta}q_{\beta\gamma}^{n+1}q_{\beta\eta} 
q_{\alpha\beta}q_{\alpha\delta}q_{\alpha\gamma}^{n+1} q_{\alpha\eta} \x_{\delta}\x_{\gamma}^{n+1}\x_{\eta} \ot x_{\beta}x_{\alpha}
\big)
\\ & =
\x_{\alpha}\x_{\beta}\x_{\delta}\x_{\gamma}^{n+1} \ot x_{\eta}
-q_{\gamma\eta} \x_{\alpha}\x_{\beta}\x_{\delta} \x_{\gamma}^{n}\x_{\eta} \ot x_{\gamma}
-(-q_{\delta\gamma})^{n+1}q_{\delta\eta} \x_{\alpha}\x_{\beta} \x_{\gamma}^{n+1}\x_{\eta} \ot x_{\delta} 
\\ & \quad 
+q_{\beta\delta}(-q_{\beta\gamma})^{n+1}\Bsj_1 \x_{\alpha} \x_{\delta}\x_{\gamma}^{n+1}\x_{\nu} \ot x_{\gamma}
+q_{\beta\delta}(-q_{\beta\gamma})^{n+1}q_{\beta\eta} \x_{\alpha} \x_{\delta}\x_{\gamma}^{n+1}\x_{\eta} \ot  x_{\beta} 
\\ & \quad 
-q_{\alpha\beta}q_{\alpha\delta}(-q_{\alpha\gamma})^{n+1} q_{\alpha\eta} \x_{\beta} \x_{\delta} \x_{\gamma}^{n+1} \x_{\eta} \ot x_{\alpha}
+q_{\beta\delta} q_{\gamma\eta}^{n+1} \coeff{\beta \, -\nu \, \alpha\gamma}{n+1} \Bsj_1 \Bsj_2 \Bsj_3 \x_{\gamma}^{n+4} \ot 1,
\end{align*}
and the inductive step follows.
\epf

\begin{lema}\label{lem:diff-case11}
Let $\alpha < \beta < \tau < \delta < \mu < \nu <\gamma < \kappa < \iota < \eta$ be positive roots
such that $N_{\gamma}=N_{\eta}=N_{\kappa}=2$ and the relations among the corresponding root vectors take the form
\begin{align}\label{eq:diff-case11-hypothesis}
\begin{aligned}
x_{\alpha}x_{\delta} &= q_{\alpha\delta} x_{\delta}x_{\alpha} 
+\Bsj_1 x_{\tau}^2x_{\beta}, &
x_{\delta}x_{\eta} &= q_{\delta\eta} x_{\eta}x_{\delta} + \Bsj_2 x_{\kappa}x_{\gamma}x_{\mu} + \Bsj_3 x_{\gamma} x_{\nu},
\\
x_{\tau}x_{\eta} &= q_{\tau\eta} x_{\eta}x_{\tau} +\Bsj_4 x_{\kappa}x_{\gamma}, &
x_{\alpha}x_{\mu} &= q_{\alpha\mu} x_{\mu}x_{\alpha} + \Bsj_5 x_{\tau}x_{\beta},
\\
x_{\mu}x_{\eta} &= q_{\mu\eta} x_{\eta}x_{\mu} +\Bsj_6 x_{\iota}x_{\gamma}, &
x_{\nu}x_{\eta} &= q_{\nu\eta} x_{\eta}x_{\nu} +\Bsj_7 x_{\iota}x_{\kappa}x_{\gamma},
\\
x_{\beta}x_{\kappa} &= q_{\beta\kappa} x_{\kappa}x_{\beta} +\Bsj_8 x_{\gamma}, &
x_{\alpha}x_{\iota} &= q_{\alpha\iota} x_{\iota}x_{\alpha} +\Bsj_9 x_{\gamma},
\end{aligned}
\end{align}
for some scalars $\Bsj_j\in\Bbbk$ and the other pairs of root vectors $q$-commute, except possibly $(x_{\tau},x_{\mu})$, $(x_{\mu},x_{\kappa})$, $(x_{\delta},x_{\kappa})$.
Then
\begin{align}\label{eq:diff-case11-formula}
&\begin{aligned}
d(& \x_{\alpha}\x_{\beta}\x_{\delta}\x_{\gamma}^{2} \x_{\eta}^2 \ot 1)  = \x_{\alpha} \x_{\beta} \x_{\delta} \x_{\gamma}^{2} \x_{\eta} \ot x_{\eta} 
+q_{\gamma\eta}^2 \x_{\alpha}\x_{\beta}\x_{\delta}\x_{\gamma}\x_{\eta}^2 \ot x_{\gamma}
\\ &
+q_{\delta\gamma}^2(3)_{\widetilde{q}_{\gamma\eta}} \Bsj_3\Bsj_7 \x_{\alpha}\x_{\beta}\x_{\gamma}^3\x_{\iota} \ot x_{\kappa}x_{\gamma}
+q_{\delta\gamma}^2q_{\nu\eta} (3)_{\widetilde{q}_{\gamma\eta}} \Bsj_3 \x_{\alpha}\x_{\beta}\x_{\gamma}^3\x_{\eta} \ot x_{\nu}
\\ &
+q_{\delta\gamma}^2q_{\mu\eta}q_{\gamma\eta} \Bsj_2 \x_{\alpha}\x_{\beta}\x_{\gamma}^2\x_{\kappa}\x_{\eta} \ot x_{\gamma}x_{\mu}
+q_{\delta\gamma}^2q_{\delta\eta}^2 \x_{\alpha}\x_{\beta}\x_{\gamma}^2\x_{\eta}^2 \ot x_{\delta}
\\ &
+q_{\delta\gamma}^2 q_{\gamma\kappa}^2 \coef{\beta\eta\gamma}{3} \Bsj_2\Bsj_6\Bsj_8 \x_{\alpha}\x_{\gamma}^4\x_{\iota} \ot x_{\gamma}
+q_{\delta\gamma}^2 q_{\gamma\kappa}^2q_{\mu\eta} \coef{\beta\eta\gamma}{3} \Bsj_2\Bsj_8 \x_{\alpha}\x_{\gamma}^4\x_{\eta} \ot x_{\mu}
\\ &
-q_{\beta\delta}q_{\beta\gamma}^2q_{\beta\eta}^2 \x_{\alpha} \x_{\delta}\x_{\gamma}^2\x_{\eta}^2 \ot x_{\beta}
+q_{\alpha\beta}q_{\beta\gamma}^2q_{\beta\eta}^2q_{\tau\gamma}^2q_{\gamma\eta} \Bsj_1\Bsj_4 \x_{\beta}\x_{\tau}\x_{\gamma}^2\x_{\kappa}\x_{\eta} \ot x_{\gamma}x_{\beta}
\\ &
-q_{\alpha\beta}q_{\beta\gamma}^2q_{\beta\eta}^2q_{\tau\gamma}^2q_{\tau\eta}^2 \Bsj_1 \x_{\beta}\x_{\tau}\x_{\gamma}^2\x_{\eta}^2 \ot x_{\tau}x_{\beta}
+q_{\alpha\beta}q_{\alpha\delta}q_{\alpha\gamma}^2q_{\alpha\eta}^2 \x_{\beta}\x_{\delta}\x_{\gamma}^2\x_{\eta}^2 \ot x_{\alpha}
\\ &
-q_{\alpha\beta}q_{\beta\tau}q_{\beta\gamma}^2q_{\beta\eta}^2q_{\gamma\beta}^{-2}q_{\tau\gamma}^2\coef{\beta\eta\gamma}{3} \Bsj_1\Bsj_4\Bsj_8 \x_{\tau}\x_{\gamma}^4\x_{\eta} \ot x_{\beta} 
-q_{\delta\gamma}^2 q_{\gamma\kappa}^2 \coeff{\alpha\beta\eta\gamma}{4} \Bsj_2\Bsj_6\Bsj_8\Bsj_9 \x_{\gamma}^6 \ot 1.
\end{aligned}
\end{align}
\end{lema}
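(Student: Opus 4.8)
The plan is to compute the differential directly from the recursive definition \eqref{eq:Anick-def-sn} of the Anick maps, exactly as in the proofs of Lemmas \ref{lem:diff2}--\ref{lem:diff-case10}, peeling the word $\x_\alpha\x_\beta\x_\delta\x_\gamma^2\x_\eta^2$ apart one PBW-letter at a time from the right. Since $N_\gamma=N_\eta=2$, this word is a $7$-chain (its homological degree is $1+1+1+2+2$), its unique $6$-chain prefix is $\x_\alpha\x_\beta\x_\delta\x_\gamma^2\x_\eta$ and the trailing basis element is $x_\eta$; hence $d(\x_\alpha\x_\beta\x_\delta\x_\gamma^2\x_\eta^2\ot 1)=\x_\alpha\x_\beta\x_\delta\x_\gamma^2\x_\eta\ot x_\eta - s\bigl(d(\x_\alpha\x_\beta\x_\delta\x_\gamma^2\x_\eta\ot 1)\cdot x_\eta\bigr)$, which already produces the leading term of \eqref{eq:diff-case11-formula}. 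The task reduces to computing $d(\x_\alpha\x_\beta\x_\delta\x_\gamma^2\x_\eta\ot 1)$ and then applying $s(-\cdot x_\eta)$.

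First I would assemble a short library of auxiliary differentials for the subwords that occur. The triples $(\beta,\gamma,\kappa)$ and $(\alpha,\gamma,\iota)$ satisfy relations in \eqref{eq:diff-case11-hypothesis} of exactly the shape \eqref{eq:diff2-hypothesis}, so $d(\x_\beta\x_\gamma^n\x_\kappa\ot 1)$ and $d(\x_\alpha\x_\gamma^n\x_\iota\ot 1)$ are given verbatim by Lemma \ref{lem:diff2}; the longer subconfigurations built from relations 2, 3, 5, 6 reproduce the patterns already solved in Lemmas \ref{lem:diff-case8}--\ref{lem:diff-case10}, which I would quote rather than re-derive. With these in hand I would compute $d$ of the intermediate chains $\x_\alpha\x_\beta\x_\delta\x_\gamma^m\x_\eta$ and of the side-chains $\x_\alpha\x_\delta\x_\gamma^m\x_\eta$, $\x_\beta\x_\tau\x_\gamma^m\x_\kappa\x_\eta$, etc., by repeated application of the recursion, always lowering the power of $\gamma$ by one at each step. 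Throughout, the two mechanisms clearing away spurious terms are $s^2=0$ and Remark \ref{rem:differential-q-commute} for the $q$-commuting pairs; the relation $x_\gamma^2=0$ together with convexity of the PBW basis is what lets me recognise, as in the earlier lemmas, that an element $\x_\mu\ot(\text{a product containing a }\gamma)$ lies in $\im s$ and is therefore annihilated by $s$.

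The bookkeeping of the scalar coefficients is where the quantum integers enter. Each time a letter is commuted past a power of $\gamma$ using Remark \ref{rem:differential-q-commute} and relations 2, 3, 5, 6, 7, 8, a geometric-type sum in $\widetilde q_{\alpha\gamma}$, $\widetilde q_{\beta\gamma}$, $\widetilde q_{\gamma\eta}$ is generated; these telescope into the factors $(3)_{\widetilde q_{\gamma\eta}}$ and $\coef{\beta\eta\gamma}{3}$, and finally, for the coefficient of $\x_\gamma^6\ot 1$, into $\coeff{\alpha\beta\eta\gamma}{4}$, via the recursions underlying \eqref{eq:coef-general-formula} and the identities of Lemma \ref{lemma:coef-roots-unity}. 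I would carry all thirteen surviving terms symbolically and verify at each merge that the nested sums reassemble in the asserted form. The relations \eqref{eq:diff-case11-hypothesis} impose numerical constraints on the roots, namely $\alpha+\delta=2\tau+\beta$, $\beta+\kappa=\gamma$, $\alpha+\iota=\gamma$, $\delta+\eta=\gamma+\nu=\kappa+\gamma+\mu$, and $\alpha+\mu+\eta=2\gamma$ (whence $\alpha+\beta+\delta+2\eta=4\gamma$, matching the $\N_0^{\I}$-degree of the target $\x_\gamma^6$); these are precisely what is needed to rewrite the various $q_{\cdot\,\cdot}$ monomials into the uniform powers of $\widetilde q_{\alpha\gamma}$, $\widetilde q_{\beta\gamma}$, $\widetilde q_{\gamma\eta}$ displayed in \eqref{eq:diff-case11-formula}.

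The main obstacle will be this coefficient tracking rather than any conceptual difficulty: with ten roots, eight structural relations, three potentially non-$q$-commuting exceptional pairs $(x_\tau,x_\mu)$, $(x_\mu,x_\kappa)$, $(x_\delta,x_\kappa)$, and a target carrying fourteen terms, the computation is long and each exceptional pair threatens to inject an extra term that must be shown either to cancel or to be killed by $s$ (again through $x_\gamma^2=0$ and convexity). I expect the delicate point to be checking that the two pieces of relation 2 (the $x_\kappa x_\gamma x_\mu$ and the $x_\gamma x_\nu$ contributions) combine correctly with the outputs of relations 6 and 7, so that the $\x_\gamma^4\x_\iota$- and $\x_\gamma^4\x_\eta$-terms acquire exactly the coefficients $q_{\delta\gamma}^2 q_{\gamma\kappa}^2\coef{\beta\eta\gamma}{3}\Bsj_2\Bsj_6\Bsj_8$ and $q_{\delta\gamma}^2 q_{\gamma\kappa}^2 q_{\mu\eta}\coef{\beta\eta\gamma}{3}\Bsj_2\Bsj_8$, and that their residues feed the final $\x_\gamma^6\ot 1$ coefficient as $-q_{\delta\gamma}^2 q_{\gamma\kappa}^2\coeff{\alpha\beta\eta\gamma}{4}\Bsj_2\Bsj_6\Bsj_8\Bsj_9$.
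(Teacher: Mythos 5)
Your proposal follows essentially the same route as the paper's proof: unwinding the Anick recursion one PBW letter at a time from the right, accumulating auxiliary differentials of subchains layer by layer (3-chains through 6-chains), importing Lemma~\ref{lem:diff2} for the triples $(\alpha,\gamma,\iota)$ and $(\beta,\gamma,\kappa)$, and clearing spurious terms via $s^2=0$, Remark~\ref{rem:differential-q-commute}, $x_{\gamma}^2=0$ and convexity, with the quantum integers $(3)_{\widetilde{q}_{\gamma\eta}}$, $\coef{\beta\eta\gamma}{3}$, $\coeff{\alpha\beta\eta\gamma}{4}$ assembled exactly as you describe. One correction to your citation plan: the longer subconfigurations that can be imported come from Lemma~\ref{lem:diff-case7} (applied to $\beta<\tau<\gamma<\kappa<\eta$ via the pair of relations with $\Bsj_4$ and $\Bsj_8$, and to $\alpha<\mu<\gamma<\iota<\eta$ via those with $\Bsj_6$ and $\Bsj_9$), not from Lemmas~\ref{lem:diff-case8}--\ref{lem:diff-case10}; the relations with three-factor tails $x_{\kappa}x_{\gamma}x_{\mu}$ and $x_{\iota}x_{\kappa}x_{\gamma}$ match no prior lemma and account for the bulk of the fresh computation, which your fallback of computing the intermediate chains directly correctly anticipates.
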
 

Notice that the equalities in \eqref{eq:diff-case3-hypothesis} forces
\begin{align}\label{eq:diff-case11-hypothesis-roots}
\begin{aligned}
\alpha + \delta &=\beta+ 2\tau, &
\delta+\eta &= \kappa+\gamma+\mu, &
\delta+\eta &= \gamma+\nu,
\\
\tau+\eta &= \kappa+\gamma, &
\alpha+\mu &= \tau+\beta, &
\mu+\eta &= \iota+\gamma,
\\
\nu+\eta &= \iota+\kappa+\gamma, &
\beta+\kappa &= \gamma, &
\alpha+\iota &= \gamma.
\end{aligned}
\end{align}
Thus the following equality also holds: 
$4\gamma=\alpha+\beta+\delta+2\eta$.

\pf
First we note that Lemma \ref{lem:diff2} applies for $\alpha<\gamma<\iota$, and Lemma \ref{lem:diff-case7} applies for
$\beta<\tau<\gamma<\kappa<\eta$. Hence the following formulas hold for all $n\ge0$:
\begin{align}\label{eq:diff-case11-agggi}
&\begin{aligned}
d(\x_{\alpha} & \x_{\gamma}^{n}\x_{\iota}\ot 1)  =
\x_{\alpha}\x_{\gamma}^{n} \ot x_{\iota} 
- q_{\gamma\iota} \x_{\alpha}\x_{\gamma}^{n-1}\x_{\iota}\ot x_{\gamma} 
- q_{\alpha\iota}(-q_{\alpha\gamma})^{n} \x_{\gamma}^{n}\x_{\iota} \ot x_{\alpha}  
\\
& 
- (-q_{\alpha\gamma})^{n} (n+1)_{\widetilde{q}_{\alpha\gamma}} \Bsj_9
\x_{\gamma}^{n+1} \ot 1,
\end{aligned}
\\\label{eq:diff-case11-tggge}
&\begin{aligned}
d(\x_{\tau} & \x_{\gamma}^{n} \x_{\eta}\ot 1) =
\x_{\tau}\x_{\gamma}^{n}\ot x_{\eta} 
-q_{\gamma\eta} \x_{\tau}\x_{\gamma}^{n-1}\x_{\eta}\ot x_{\gamma}
-(-q_{\tau\gamma})^{n} q_{\tau \eta} \x_{\gamma}^{n}\x_{\eta} \ot x_{\tau}
\\ & -(-q_{\tau\gamma})^{n} \Bsj_4 \x_{\gamma}^{n}\x_{\kappa}\ot x_{\gamma},
\end{aligned}
\\ 
\label{eq:diff-case11-bgggk}
&\begin{aligned}
d(\x_{\beta} & \x_{\gamma}^{n} \x_{\kappa}\ot 1)  =
\x_{\beta}\x_{\gamma}^{n} \ot x_{\kappa} 
- q_{\gamma\kappa} \x_{\beta} \x_{\gamma}^{n-1}\x_{\kappa}\ot x_{\gamma} 
- q_{\beta\kappa}(-q_{\beta\gamma})^{n} \x_{\gamma}^{n} \x_{\kappa} \ot x_{\beta}  
\\
& 
- (-q_{\beta\gamma})^{n} (n+1)_{\widetilde{q}_{\beta\gamma}} \Bsj_8
\x_{\gamma}^{n+1} \ot 1,
\end{aligned}
\\ \label{eq:diff-case11-btggge}
&\begin{aligned}
d(\x_{\beta}& \x_{\tau} \x_{\gamma}^{n} \x_{\eta}\ot 1)  = \x_{\beta}\x_{\tau} \x_{\gamma}^{n} \ot x_{\eta}
-q_{\gamma\eta} \x_{\beta}\x_{\tau}\x_{\gamma}^{n-1} \x_{\eta} \ot x_{\gamma}
\\ &
-(-q_{\tau\gamma})^n q_{\tau\eta} \x_{\beta}\x_{\gamma}^{n} \x_{\eta}\ot x_{\tau}
+(-q_{\beta\gamma})^nq_{\beta\tau}q_{\beta\eta} \x_{\tau} \x_{\gamma}^{n} \x_{\eta}\ot x_{\beta}
\\ & 
- (-q_{\tau\gamma})^{n} \Bsj_4 \x_{\beta} \x_{\gamma}^{n} \x_{\kappa}\ot x_{\gamma}
-q_{\gamma\eta}^{n} \coef{-\eta,\beta,\gamma}{n} \Bsj_4 \Bsj_8 \x_{\gamma}^{n+2} \ot 1.
\end{aligned}
\end{align}

We also need some auxiliar computations. These are straightforward and we omit the details:
\begin{align}
\label{eq:diff-case11-abd}&
\begin{aligned}
d(\x_{\alpha} & \x_{\beta} \x_{\delta}
\ot 1)  = \x_{\alpha} \x_{\beta}\ot x_{\delta} -q_{\beta\delta} \x_{\alpha} \x_{\delta}\ot x_{\beta}
+q_{\alpha\beta}\Bsj_1 \x_{\beta}\x_{\tau} \ot x_{\tau}x_{\beta}
\\ &
+q_{\alpha\beta}q_{\alpha\delta} \x_{\beta}\x_{\delta} \ot x_{\alpha},
\end{aligned}
\\
\label{eq:diff-case11-abk}&
\begin{aligned}
d(\x_{\alpha} & \x_{\beta} \x_{\kappa}
\ot 1)  = \x_{\alpha}\x_{\beta} \ot x_{\kappa}
-\Bsj_8 \x_{\alpha}\x_{\gamma} \ot 1
-q_{\beta\kappa} \x_{\alpha}\x_{\kappa} \ot x_{\beta}
\\ &
+q_{\alpha\beta}q_{\alpha\kappa} \x_{\beta}\x_{\kappa} \ot x_{\alpha},
\end{aligned}
\\
\label{eq:diff-case11-adg}&
\begin{aligned}
d(\x_{\alpha} & \x_{\delta}\x_{\gamma} \ot 1)  
= \x_{\alpha}\x_{\delta} \ot x_{\gamma}
-q_{\delta\gamma} \x_{\alpha}\x_{\gamma} \ot x_{\delta}
+q_{\alpha\delta}q_{\alpha\gamma} \x_{\delta}\x_{\gamma} \ot x_{\alpha}
\\ &
+ q_{\tau\gamma} q_{\beta\gamma}\Bsj_1 \x_{\tau}\x_{\gamma} \ot x_{\tau}x_{\beta},
\end{aligned}
\\
\label{eq:diff-case11-ade}&
\begin{aligned}
d(\x_{\alpha} & \x_{\delta}\x_{\eta} \ot 1)  
= \x_{\alpha}\x_{\delta}\ot x_{\eta}
-q_{\delta\eta} \x_{\alpha}\x_{\eta} \ot x_{\delta}
-\Bsj_2 \x_{\alpha}\x_{\kappa} \ot x_{\gamma}x_{\mu}
-\Bsj_3 \x_{\alpha}\x_{\gamma} \ot  x_{\nu}
\\ & \quad 
+q_{\alpha\delta}q_{\alpha\eta} \x_{\delta}\x_{\eta} \ot x_{\alpha}
+ q_{\beta\eta}\Bsj_1\Bsj_4 \x_{\tau}\x_{\kappa} \ot x_{\gamma}x_{\beta}
+ q_{\beta\eta}q_{\tau\eta}\Bsj_1 \x_{\tau}\x_{\eta} \ot  x_{\tau}x_{\beta},
\end{aligned}
\\
\label{eq:diff-case11-btk}&
\begin{aligned}
d(\x_{\beta} & \x_{\tau}\x_{\kappa} \ot 1)  
= \x_{\beta}\x_{\tau}\ot x_{\kappa}
-q_{\tau\kappa} \x_{\beta}\x_{\kappa} \ot x_{\tau}
+q_{\beta\tau}\Bsj_8 \x_{\tau}\x_{\gamma} \ot 1
\\ & \quad
+q_{\beta\tau}q_{\beta\kappa} \x_{\tau}\x_{\kappa} \ot x_{\beta},
\end{aligned}
\\
\label{eq:diff-case11-bde}&
\begin{aligned}
d(\x_{\beta} & \x_{\delta} \x_{\eta}
\ot 1)  = \x_{\beta}\x_{\delta} \ot x_{\eta}
-q_{\delta\eta} \x_{\beta}\x_{\eta} \ot x_{\delta}
-\Bsj_2 \x_{\beta}\x_{\kappa} \ot x_{\gamma}x_{\mu}
-\Bsj_3 \x_{\beta}\x_{\gamma} \ot x_{\nu}
\\ & \quad
+q_{\beta\delta}q_{\beta\eta} \x_{\delta}\x_{\eta} \ot x_{\beta} 
-\Bsj_2\Bsj_8 \x_{\gamma}^2 \ot x_{\mu},
\end{aligned}
\\
\label{eq:diff-case11-bkk}&
\begin{aligned}
d(\x_{\beta} & \x_{\kappa}^2\ot 1) = \x_{\beta}\x_{\kappa} \ot x_{\kappa}
+\Bsj_8 \x_{\gamma}\x_{\kappa} \ot 1 
+q_{\beta\kappa}^2 \x_{\kappa}^2 \ot x_{\beta},
\end{aligned}
\\
\label{eq:diff-case11-bke}&
\begin{aligned}
d(\x_{\beta} & \x_{\kappa}\x_{\eta}\ot 1)  = 
\x_{\beta}\x_{\kappa} \ot x_{\eta}
-q_{\kappa\eta} \x_{\beta}\x_{\eta} \ot x_{\kappa}
+\Bsj_8 \x_{\gamma}\x_{\eta} \ot 1
+q_{\beta\kappa}q_{\beta\eta} \x_{\kappa}\x_{\eta} \ot x_{\beta},
\end{aligned}
\\
\label{eq:diff-case11-tke}&
\begin{aligned}
d(\x_{\tau} & \x_{\kappa} \x_{\eta} \ot 1)  
= \x_{\tau}\x_{\kappa} \ot x_{\eta} 
-q_{\kappa\eta} \x_{\tau}\x_{\eta} \ot x_{\kappa} 
+q_{\tau\kappa}q_{\tau\eta} \x_{\kappa}\x_{\eta} \ot x_{\tau}
\\ & \quad
+q_{\kappa\tau}(2)_{\eta\kappa} \Bsj_4 \x_{\kappa}^2 \ot x_{\gamma},
\end{aligned}
\\
\label{eq:diff-case11-tee}&
\begin{aligned}
d(\x_{\tau} & \x_{\eta}^2\ot 1)  = \x_{\tau}\x_{\eta} \ot x_{\eta}
+q_{\gamma\eta}\Bsj_4 \x_{\kappa}\x_{\eta} \ot x_{\gamma}
+q_{\tau\eta}^2 \x_{\eta}^2 \ot x_{\tau},
\end{aligned}
\\
\label{eq:diff-case11-dge}&
\begin{aligned}
d(\x_{\delta} & \x_{\gamma} \x_{\eta}
\ot 1)  = \x_{\delta}\x_{\gamma} \ot x_{\eta} 
-q_{\gamma\eta} \x_{\delta}\x_{\eta} \ot x_{\gamma}
+q_{\delta\gamma}q_{\delta\eta} \x_{\gamma}\x_{\eta} \ot x_{\delta}
\\ & \quad
+q_{\delta\gamma}\Bsj_2 \x_{\gamma}\x_{\kappa} \ot x_{\gamma}x_{\mu}
+q_{\delta\gamma}(2)_{\widetilde{q}_{\gamma\eta}}\Bsj_3 \x_{\gamma}^2 \ot x_{\nu}.
\end{aligned}
\\
\label{eq:diff-case11-dee}&
\begin{aligned}
d(\x_{\delta} & \x_{\eta}^2\ot 1)  = \x_{\delta}\x_{\eta} \ot x_{\eta}
+\Bsj_3\Bsj_7 \x_{\gamma}\x_{\iota} \ot x_{\kappa}x_{\gamma}
+q_{\nu\eta} \Bsj_3 \x_{\gamma}\x_{\eta} \ot x_{\nu}
\\ & \quad
+q_{\gamma\eta}q_{\mu\eta} \Bsj_2 \x_{\kappa}\x_{\eta} \ot x_{\gamma}x_{\mu}
+q_{\delta\eta}^2 \x_{\eta}^2 \ot x_{\delta}.
\end{aligned}
\end{align}

Next we compute differentials of some $4$-chains, using the previous computations on $3$-chains and Remark \ref{rem:differential-q-commute}:
\begin{align}
\label{eq:diff-case11-abdg}&
\begin{aligned}
d( & \x_{\alpha}\x_{\beta} \x_{\delta} \x_{\gamma}
\ot 1)  = \x_{\alpha}\x_{\beta}\x_{\delta} \ot x_{\gamma}
-q_{\delta\gamma} \x_{\alpha} \x_{\beta}\x_{\gamma} \ot x_{\delta}
+q_{\beta\delta} q_{\beta\gamma} \x_{\alpha} \x_{\delta}\x_{\gamma} \ot x_{\beta} 
\\ & \quad
- q_{\alpha\beta}q_{\beta\gamma} q_{\tau\gamma} \Bsj_1 \x_{\beta}\x_{\tau}\x_{\gamma} \ot x_{\tau}x_{\beta}
-q_{\alpha\beta}q_{\alpha\delta}q_{\alpha\gamma} \x_{\beta}\x_{\delta}\x_{\gamma} \ot x_{\alpha},
\end{aligned}
\\
\label{eq:diff-case11-adgg}&
\begin{aligned}
d( & \x_{\alpha}\x_{\delta} \x_{\gamma}^{2}
\ot 1)  = \x_{\alpha}\x_{\delta} \x_{\gamma} \ot x_{\gamma}
+q_{\delta\gamma}^2 \x_{\alpha}\x_{\gamma}^2 \ot x_{\delta}
-q_{\alpha\delta}q_{\alpha\gamma}^2 \x_{\delta}\x_{\gamma}^2 \ot x_{\alpha}
\\ & 
-q_{\tau\gamma}^2 q_{\beta\gamma}^2 \Bsj_1 \x_{\tau}\x_{\gamma}^2 \ot x_{\tau}x_{\beta},
\end{aligned}
\\
\label{eq:diff-case11-abde}&
\begin{aligned}
d( & \x_{\alpha}\x_{\beta} \x_{\delta} \x_{\eta}
\ot 1)  = \x_{\alpha}\x_{\beta}\x_{\delta} \ot x_{\eta}
-q_{\delta\eta} \x_{\alpha}\x_{\beta}\x_{\eta} \ot x_{\delta}
-\Bsj_3 \x_{\alpha}\x_{\beta}\x_{\gamma} \ot x_{\nu}
\\ & \quad
-\Bsj_2 \x_{\alpha}\x_{\beta}\x_{\kappa} \ot x_{\gamma}x_{\mu}
-\Bsj_2\Bsj_8 \x_{\alpha}\x_{\gamma}^2 \ot x_{\mu}
+q_{\beta\delta}q_{\beta\eta} \x_{\alpha}\x_{\delta}\x_{\eta} \ot x_{\beta}
\\ & \quad
-q_{\alpha\beta}q_{\beta\eta} \Bsj_1\Bsj_4  \x_{\beta}\x_{\tau}\x_{\kappa} \ot x_{\gamma}x_{\beta}
-q_{\alpha\beta}q_{\beta\eta}q_{\tau\eta} \Bsj_1 \x_{\beta}\x_{\tau}\x_{\eta} \ot x_{\tau}x_{\beta}
\\ & \quad
-q_{\alpha\beta}q_{\alpha\delta}q_{\alpha\eta} \x_{\beta}\x_{\delta}\x_{\eta} \ot x_{\alpha}
+q_{\beta\tau}q_{\alpha\beta}q_{\beta\eta} \Bsj_1\Bsj_4 \Bsj_8 \x_{\tau}\x_{\gamma}^2 \ot x_{\beta},
\end{aligned}
\\
\label{eq:diff-case11-abke}&
\begin{aligned}
d( & \x_{\alpha}\x_{\beta}\x_{\kappa}\x_{\eta} \ot 1)  = \x_{\alpha}\x_{\beta}\x_{\kappa} \ot x_{\eta} 
- q_{\kappa\eta} \x_{\alpha}\x_{\beta}\x_{\eta} \ot x_{\kappa} 
+q_{\beta\kappa}q_{\beta\eta} \x_{\alpha}\x_{\kappa}\x_{\eta} \ot x_{\beta}
\\ & \quad
+\Bsj_8 \x_{\alpha}\x_{\gamma}\x_{\eta} \ot 1
-q_{\alpha\beta}q_{\alpha\kappa}q_{\alpha\eta} \x_{\beta}\x_{\kappa}\x_{\eta} \ot x_{\alpha},
\end{aligned}
\\
\label{eq:diff-case11-abgk}&
\begin{aligned}
d( & \x_{\alpha}\x_{\beta}\x_{\gamma}\x_{\kappa} \ot 1)  = \x_{\alpha}\x_{\beta}\x_{\gamma} \ot x_{\kappa} 
-q_{\gamma\kappa} \x_{\alpha}\x_{\beta}\x_{\kappa} \ot x_{\gamma}
+q_{\beta\gamma}q_{\beta\kappa} \x_{\alpha}\x_{\gamma}\x_{\kappa} \ot  x_{\beta}
\\ & \quad
+q_{\gamma\beta}^{-1} (2)_{\widetilde{q}_{\beta\gamma}} \Bsj_8 \x_{\alpha}\x_{\gamma}^2 \ot 1 -q_{\alpha\beta}q_{\alpha\gamma}q_{\alpha\kappa} \x_{\beta}\x_{\gamma}\x_{\kappa} \ot x_{\alpha},
\end{aligned}
\\
\label{eq:diff-case11-adge}&
\begin{aligned}
d( & \x_{\alpha}\x_{\delta}\x_{\gamma}\x_{\eta} \ot 1)  = 
\x_{\alpha}\x_{\delta}\x_{\gamma} \ot x_{\eta}
-q_{\gamma\eta} \x_{\alpha}\x_{\delta}\x_{\eta} \ot x_{\gamma}
+q_{\delta\gamma}q_{\delta\eta} \x_{\alpha}\x_{\gamma} \x_{\eta} \ot  x_{\delta}
\\ & \quad
+q_{\delta\gamma}\Bsj_2 \x_{\alpha}\x_{\gamma}\x_{\kappa} \ot  x_{\gamma}x_{\mu}
-q_{\alpha\delta}q_{\alpha\gamma}q_{\alpha\eta} \x_{\delta}\x_{\gamma}\x_{\eta} \ot x_{\alpha}
\\ & \quad
- q_{\tau\gamma}q_{\beta\gamma}q_{\beta\eta}\Bsj_1\Bsj_4  \x_{\tau}\x_{\gamma}\x_{\kappa} \ot x_{\gamma}x_{\beta}
+q_{\delta\gamma}\Bsj_3 \x_{\alpha}\x_{\gamma}^2 \ot x_{\nu}
\\ & \quad 
-q_{\beta\gamma}q_{\beta\eta} q_{\tau\gamma}q_{\tau\eta} \Bsj_1 \x_{\tau}\x_{\gamma}\x_{\eta} \ot x_{\tau}x_{\beta}
-q_{\alpha\gamma}^2q_{\alpha\kappa}q_{\delta\gamma}\Bsj_2 \Bsj_5 \x_{\gamma}^2\x_{\kappa} \ot x_{\tau}x_{\beta},
\end{aligned}
\\
\label{eq:diff-case11-adee}&
\begin{aligned}
d( & \x_{\alpha}\x_{\delta}\x_{\eta}^2 \ot 1)  = \x_{\alpha}\x_{\delta}\x_{\eta} \ot x_{\eta}
+q_{\nu\eta} \Bsj_3 \x_{\alpha}\x_{\gamma}\x_{\eta} \ot x_{\nu}
+q_{\delta\eta}^2 \x_{\alpha}\x_{\eta}^2 \ot x_{\delta}
\\ & \quad 
+q_{\mu\eta}q_{\gamma\eta} \Bsj_2 \x_{\alpha}\x_{\kappa}\x_{\eta} \ot x_{\gamma}x_{\mu}
-q_{\beta\eta}^2q_{\gamma\eta}\Bsj_1\Bsj_4 \x_{\tau}\x_{\kappa}\x_{\eta} \ot x_{\gamma}x_{\beta}
\\ & \quad 
-q_{\beta\eta}^2q_{\tau\eta}^2\Bsj_1 \x_{\tau}\x_{\eta}^2 \ot x_{\tau}x_{\beta}
-q_{\alpha\delta}q_{\alpha\eta}^2 \x_{\delta}\x_{\eta}^2 \ot x_{\alpha},
\end{aligned}
\\
\label{eq:diff-case11-btgk}&
\begin{aligned}
d( & \x_{\beta}\x_{\tau}\x_{\gamma}\x_{\kappa} \ot 1)  = \x_{\beta}\x_{\tau}\x_{\gamma} \ot x_{\kappa}
-q_{\gamma\kappa} \x_{\beta}\x_{\tau}\x_{\kappa} \ot x_{\gamma}
+q_{\tau\gamma}q_{\tau\kappa} \x_{\beta}\x_{\gamma}\x_{\kappa} \ot x_{\tau}
\\ & \quad
-q_{\beta\tau}q_{\gamma\beta}^{-1} (2)_{\widetilde{q}_{\beta\gamma}} \Bsj_8 \x_{\tau}\x_{\gamma}^2 \ot 1
-q_{\beta\tau}q_{\beta\gamma}q_{\beta\kappa} \x_{\tau}\x_{\gamma} \x_{\kappa} \ot x_{\beta},
\end{aligned}
\\
\label{eq:diff-case11-btke}&
\begin{aligned}
d( & \x_{\beta}\x_{\tau}\x_{\kappa}\x_{\eta} \ot 1)  = \x_{\beta}\x_{\tau}\x_{\kappa} \ot x_{\eta}
-q_{\kappa\eta} \x_{\beta}\x_{\tau}\x_{\eta} \ot x_{\kappa}
+q_{\tau\kappa}q_{\tau\eta} \x_{\beta}\x_{\kappa}\x_{\eta} \ot x_{\tau}
\\ & \quad
-q_{\beta\tau}\Bsj_8 \x_{\tau}\x_{\gamma}\x_{\eta} \ot 1
-q_{\gamma\kappa}q_{\eta\kappa}^{-1}(2)_{\widetilde{q}_{\kappa\eta}} \Bsj_4 \x_{\beta}\x_{\kappa}^2 \ot x_{\gamma}
\\ & \quad
-q_{\beta\tau}q_{\beta\kappa}q_{\beta\eta} \x_{\tau}\x_{\kappa}\x_{\eta} \ot x_{\beta}
-q_{\kappa\eta} \Bsj_4\Bsj_8 \x_{\gamma}^2\x_{\kappa} \ot 1,
\end{aligned}
\\
\label{eq:diff-case11-btee}&
\begin{aligned}
d( & \x_{\beta}\x_{\tau}\x_{\eta}^2 \ot 1) = \x_{\beta}\x_{\tau}\x_{\eta} \ot x_{\eta}
+q_{\tau\eta}^2 \x_{\beta}\x_{\eta}^2 \ot x_{\tau}
-q_{\beta\tau}q_{\beta\eta}^2 \x_{\tau}\x_{\eta}^2 \ot x_{\beta}
\\ & \quad
+q_{\gamma\eta}\Bsj_4 \x_{\beta}\x_{\kappa}\x_{\eta} \ot x_{\gamma}
+\Bsj_4\Bsj_8 \x_{\gamma}^2\x_{\eta} \ot 1,
\end{aligned}
\\
\label{eq:diff-case11-bdge}&
\begin{aligned}
d( & \x_{\beta}\x_{\delta}\x_{\gamma}\x_{\eta} \ot 1)  = \x_{\beta}\x_{\delta}\x_{\gamma} \ot x_{\eta}
-q_{\gamma\eta} \x_{\beta}\x_{\delta}\x_{\eta} \ot x_{\gamma}
+q_{\delta\gamma}q_{\delta\eta} \x_{\beta}\x_{\gamma}\x_{\eta} \ot x_{\delta}
\\ & \quad
+q_{\delta\gamma}\Bsj_2 \x_{\beta}\x_{\gamma}\x_{\kappa} \ot x_{\gamma}x_{\mu}
+q_{\delta\gamma}(2)_{\widetilde{q}_{\gamma\eta}}\Bsj_3  \x_{\beta}\x_{\gamma}^2 \ot x_{\nu}
\\ & \quad
+q_{\beta\gamma}q_{\delta\gamma} (2)_{\widetilde{q}_{\gamma\eta}} \Bsj_2\Bsj_8 \x_{\gamma}^3 \ot x_{\mu}
-q_{\beta\delta}q_{\beta\gamma}q_{\beta\eta} \x_{\delta}\x_{\gamma}\x_{\eta} \ot x_{\beta},
\end{aligned}
\\
\label{eq:diff-case11-bdee}&
\begin{aligned}
d( & \x_{\beta}\x_{\delta}\x_{\eta}^2 \ot 1) = \x_{\beta}\x_{\delta}\x_{\eta} \ot x_{\eta}
+\Bsj_3\Bsj_7 \x_{\beta}\x_{\gamma}\x_{\iota} \ot x_{\kappa}x_{\gamma}
+q_{\nu\eta} \Bsj_3 \x_{\beta}\x_{\gamma}\x_{\eta} \ot x_{\nu}
\\ & \quad
+q_{\mu\eta}q_{\gamma\eta} \Bsj_2 \x_{\beta}\x_{\kappa}\x_{\eta} \ot x_{\gamma}x_{\mu}
+q_{\delta\eta}^2 \x_{\beta}\x_{\eta}^2 \ot x_{\delta}
-q_{\beta\delta}q_{\beta\eta}^2 \x_{\delta}\x_{\eta}^2 \ot x_{\beta}
\\ & \quad
+q_{\mu\eta} \Bsj_2\Bsj_8 \x_{\gamma}^2\x_{\eta} \ot x_{\mu}
+\Bsj_2\Bsj_6\Bsj_8 \x_{\gamma}^2\x_{\iota} \ot x_{\gamma},
\end{aligned}
\\
\label{eq:diff-case11-bgkk}&
\begin{aligned}
d( & \x_{\beta}\x_{\gamma}\x_{\kappa}^2 \ot 1)  = \x_{\beta}\x_{\gamma}\x_{\kappa} \ot x_{\kappa}
+q_{\gamma\kappa}^2 \x_{\beta}\x_{\kappa}^2 \ot x_{\gamma}
-q_{\beta\gamma}\Bsj_8(2)_{\widetilde{q}_{\gamma\kappa}} \x_{\gamma}^2\x_{\kappa} \ot 1
\\ & \quad 
-q_{\beta\gamma}q_{\beta\kappa}^2 \x_{\gamma}\x_{\kappa}^2 \ot x_{\beta},
\end{aligned}
\\
\label{eq:diff-case11-bgke}&
\begin{aligned}
d( & \x_{\beta}\x_{\gamma}\x_{\kappa}\x_{\eta} \ot 1)  = \x_{\beta}\x_{\gamma}\x_{\kappa} \ot x_{\eta}
-q_{\kappa\eta} \x_{\beta}\x_{\gamma}\x_{\eta} \ot x_{\kappa}
+q_{\gamma\kappa}q_{\gamma\eta} \x_{\beta}\x_{\kappa}\x_{\eta} \ot x_{\gamma}
\\ & \quad
-q_{\beta\gamma}(2)_{\widetilde{q}_{\gamma\kappa}}\Bsj_8 \x_{\gamma}^2\x_{\eta} \ot 1
-q_{\beta\gamma}q_{\beta\kappa}q_{\beta\eta} \x_{\gamma}\x_{\kappa}\x_{\eta} \ot x_{\beta},
\end{aligned}
\\
\label{eq:diff-case11-tgke}&
\begin{aligned}
d( & \x_{\tau}\x_{\gamma}\x_{\kappa}\x_{\eta} \ot 1) = \x_{\tau}\x_{\gamma}\x_{\kappa} \ot x_{\eta}
-q_{\kappa\eta} \x_{\tau}\x_{\gamma}\x_{\eta} \ot x_{\kappa}
+q_{\gamma\kappa}q_{\gamma\eta} \x_{\tau}\x_{\kappa}\x_{\eta} \ot x_{\gamma}
\\ & \quad
-q_{\tau\gamma}q_{\tau\kappa}(2)_{\widetilde{q}_{\kappa\eta}} \Bsj_4 \x_{\gamma}\x_{\kappa}^2 \ot x_{\gamma}
-q_{\tau\gamma}q_{\tau\kappa}q_{\tau\eta} \x_{\gamma}\x_{\kappa}\x_{\eta} \ot x_{\tau},
\end{aligned}
\\
\label{eq:diff-case11-tgee}&
\begin{aligned}
d( & \x_{\tau}\x_{\gamma}\x_{\eta}^2 \ot 1) = \x_{\tau}\x_{\gamma}\x_{\eta} \ot x_{\eta} 
+q_{\gamma\eta}^2 \x_{\tau}\x_{\eta}^2 \ot x_{\gamma}
-q_{\tau\gamma}q_{\gamma\eta} \Bsj_4 \x_{\gamma}\x_{\kappa}\x_{\eta} \ot x_{\gamma} 
\\ & \quad
-q_{\tau\gamma}q_{\tau\eta}^2 \x_{\gamma}\x_{\eta}^2 \ot x_{\tau},
\end{aligned}
\\
\label{eq:diff-case11-dgge}&
\begin{aligned}
d( & \x_{\delta}\x_{\gamma}^2\x_{\eta} \ot 1)  = \x_{\delta}\x_{\gamma}^2 \ot x_{\eta}
-q_{\gamma\eta}\x_{\delta}\x_{\gamma}\x_{\eta} \ot x_{\gamma}
-q_{\delta\gamma}^2\Bsj_2 \x_{\gamma}^2\x_{\kappa} \ot x_{\gamma}x_{\mu}
\\ & \quad
-q_{\delta\gamma}^2(3)_{\widetilde{q}_{\gamma\eta}} \Bsj_3 \x_{\gamma}^3 \ot x_{\nu}
-q_{\delta\gamma}^2q_{\delta\eta} \x_{\gamma}^2\x_{\eta} \ot x_{\delta},
\end{aligned}
\\
\label{eq:diff-case11-dgee}&
\begin{aligned}
d( & \x_{\delta}\x_{\gamma}\x_{\eta}^2 \ot 1)  =\x_{\delta}\x_{\gamma}\x_{\eta} \ot x_{\eta}
+q_{\gamma\eta}^2 \x_{\delta}\x_{\eta}^2 \ot x_{\gamma}
-q_{\delta\gamma}(2)_{\widetilde{q}_{\gamma\eta}} \Bsj_3\Bsj_7 \x_{\gamma}^2\x_{\iota} \ot x_{\kappa}x_{\gamma}
\\ &
-q_{\delta\gamma}q_{\nu\eta}(2)_{\widetilde{q}_{\gamma\eta}}\Bsj_3 \x_{\gamma}^2\x_{\eta} \ot x_{\nu}
-q_{\delta\gamma}q_{\mu\eta}q_{\gamma\eta} \Bsj_2 \x_{\gamma}\x_{\kappa}\x_{\eta} \ot x_{\gamma}x_{\mu}
-q_{\delta\gamma}q_{\delta\eta}^2 \x_{\gamma}\x_{\eta}^2 \ot x_{\delta}.
\end{aligned}
\end{align}

\medspace

Next we compute differentials of some $5$-chains:
\begin{align}
\label{eq:diff-case11-abdgg}&
\begin{aligned}
d(& \x_{\alpha}\x_{\beta} \x_{\delta} \x_{\gamma}^{2}
\ot 1)  = \x_{\alpha}\x_{\beta} \x_{\delta} \x_{\gamma} \ot x_{\gamma} 
+q_{\delta\gamma}^2 \x_{\alpha}\x_{\beta}\x_{\gamma}^2 \ot x_{\delta}
-q_{\beta\delta} q_{\beta\gamma}^2 \x_{\alpha} \x_{\delta}\x_{\gamma}^2 \ot x_{\beta}
\\ & \quad
+ q_{\alpha\beta}q_{\beta\gamma}^2 q_{\tau\gamma}^2 \Bsj_1 \x_{\beta}\x_{\tau}\x_{\gamma}^2 \ot x_{\tau}x_{\beta} 
+q_{\alpha\beta}q_{\alpha\delta}q_{\alpha\gamma}^2 \x_{\beta}\x_{\delta}\x_{\gamma}^2 \ot x_{\alpha},
\end{aligned}
\\
\label{eq:diff-case11-abdge}&
\begin{aligned}
d(& \x_{\alpha} \x_{\beta} \x_{\delta} \x_{\gamma}\x_{\eta} 
\ot 1)  = \x_{\alpha}\x_{\beta}\x_{\delta}\x_{\gamma}\ot x_{\eta}
+q_{\delta\gamma}(2)_{\widetilde{q}_{\gamma\eta}}\Bsj_3 \x_{\alpha}\x_{\beta}\x_{\gamma}^2 \ot x_{\nu}
\\ & \quad
-q_{\gamma\eta} \x_{\alpha}\x_{\beta}\x_{\delta}\x_{\eta} \ot x_{\gamma}
+q_{\delta\gamma}q_{\delta\eta} \x_{\alpha}\x_{\beta}\x_{\gamma}\x_{\eta} \ot x_{\delta}
+q_{\delta\gamma}\Bsj_2 \x_{\alpha}\x_{\beta}\x_{\gamma} \x_{\kappa} \ot  x_{\gamma}x_{\mu}
\\ & \quad
-q_{\beta\delta}q_{\beta\gamma}q_{\beta\eta} \x_{\alpha} \x_{\delta}\x_{\gamma}\x_{\eta} \ot x_{\beta}
+q_{\alpha\beta}q_{\beta\gamma}q_{\beta\eta} q_{\tau\gamma} q_{\tau\eta} \Bsj_1 \x_{\beta}\x_{\tau}\x_{\gamma}\x_{\eta} \ot x_{\tau}x_{\beta}
\\ & \quad
-q_{\delta\gamma}q_{\gamma\beta}^{-1} \coef{\beta\eta\gamma}{2}
\Bsj_2\Bsj_8 \x_{\alpha}\x_{\gamma}^3 \ot x_{\mu} 
+ q_{\alpha\beta}q_{\beta\gamma}q_{\beta\eta} q_{\tau\gamma} \Bsj_1\Bsj_4 \x_{\beta}\x_{\tau}\x_{\gamma}\x_{\kappa} \ot x_{\gamma}x_{\beta}
\\ & \quad
+q_{\alpha\beta}q_{\alpha\delta}q_{\alpha\gamma}q_{\alpha\eta} \x_{\beta}\x_{\delta}\x_{\gamma}\x_{\eta} \ot x_{\alpha}
+q_{\alpha\beta}q_{\beta\tau}q_{\beta\eta} q_{\eta\gamma}^{-1} q_{\gamma\beta}^{-1} \coef{\beta\eta\gamma}{2} \Bsj_1 \Bsj_4 \Bsj_8 \x_{\tau}\x_{\gamma}^3 \ot x_{\beta},
\end{aligned}
\\
\label{eq:diff-case11-abdee}&
\begin{aligned}
d(& \x_{\alpha}\x_{\beta}\x_{\delta}\x_{\eta}^{2}\ot 1)  = \x_{\alpha}\x_{\beta}\x_{\delta}\x_{\eta} \ot x_{\eta}
+\Bsj_3\Bsj_7 \x_{\alpha}\x_{\beta}\x_{\gamma}\x_{\iota} \ot x_{\kappa}x_{\gamma}
+q_{\delta\eta}^2 \x_{\alpha}\x_{\beta}\x_{\eta}^2 \ot x_{\delta}
\\ & \quad
+q_{\nu\eta} \Bsj_3 \x_{\alpha}\x_{\beta}\x_{\gamma}\x_{\eta} \ot x_{\nu}
+q_{\mu\gamma}q_{\mu\eta}q_{\gamma\eta} \Bsj_2 \x_{\alpha}\x_{\beta}\x_{\kappa}\x_{\eta} \ot x_{\gamma}x_{\mu}
\\ & \quad
-q_{\beta\delta}q_{\beta\eta}^2 \x_{\alpha}\x_{\delta}\x_{\eta}^2 \ot x_{\beta}
+\Bsj_2\Bsj_6\Bsj_8 \x_{\alpha}\x_{\gamma}^2\x_{\iota} \ot x_{\gamma}
-(4)_{\widetilde{q}_{\alpha\gamma}}\Bsj_2\Bsj_6\Bsj_8\Bsj_9 \x_{\gamma}^4 \ot 1
\\ & \quad
+q_{\mu\eta} \Bsj_2\Bsj_8 \x_{\alpha}\x_{\gamma}^2\x_{\eta} \ot x_{\mu}
+q_{\alpha\beta}q_{\beta\eta}^2q_{\tau\eta}^2 \Bsj_1 \x_{\beta}\x_{\tau}\x_{\eta}^2 \ot x_{\tau}x_{\beta}
\\ & \quad
+q_{\alpha\beta}q_{\beta\gamma}q_{\beta\eta}^2q_{\gamma\eta} \Bsj_1\Bsj_4  \x_{\beta}\x_{\tau}\x_{\kappa}\x_{\eta} \ot x_{\gamma}x_{\beta}
+q_{\alpha\beta}q_{\alpha\delta}q_{\alpha\eta}^2 \x_{\beta}\x_{\delta}\x_{\eta}^2 \ot x_{\alpha}
\\ & \quad
-q_{\beta\tau}q_{\alpha\beta}q_{\beta\eta}^2 \Bsj_1\Bsj_4 \Bsj_8 \x_{\tau}\x_{\gamma}^2\x_{\eta} \ot x_{\beta},
\end{aligned}
\\
\label{eq:diff-case11-abggk}&
\begin{aligned}
d(& \x_{\alpha}\x_{\beta}\x_{\gamma}^2 \x_{\kappa}
\ot 1)  = \x_{\alpha}\x_{\beta}\x_{\gamma}^2 \ot x_{\kappa}
-q_{\gamma\kappa} \x_{\alpha}\x_{\beta}\x_{\gamma}\x_{\kappa} \ot x_{\gamma}
-q_{\beta\gamma}^2q_{\beta\kappa} \x_{\alpha}\x_{\gamma}^2 \x_{\kappa} \ot  x_{\beta}
\\ & \quad
-q_{\gamma\beta}^{-2}(3)_{\widetilde{q}_{\beta\gamma}} \Bsj_8 \x_{\alpha}\x_{\gamma}^3 \ot 1
+q_{\alpha\beta}q_{\alpha\gamma}^2q_{\alpha\kappa} \x_{\beta}\x_{\gamma}^2\x_{\kappa} \ot x_{\alpha},
\end{aligned}
\\
\label{eq:diff-case11-abgke}&
\begin{aligned}
d(& \x_{\alpha}\x_{\beta}\x_{\gamma}\x_{\kappa}\x_{\eta}\ot 1) = \x_{\alpha}\x_{\beta}\x_{\gamma}\x_{\kappa} \ot x_{\eta}
-q_{\kappa\eta} \x_{\alpha}\x_{\beta}\x_{\gamma}\x_{\eta} \ot x_{\kappa}
\\ & \quad
-q_{\gamma\beta}^{-1} (2)_{\widetilde{q}_{\beta\gamma}} \Bsj_8 \x_{\alpha}\x_{\gamma}^2\x_{\eta} \ot 1
+q_{\gamma\kappa}q_{\gamma\eta} \x_{\alpha}\x_{\beta}\x_{\kappa}\x_{\eta} \ot x_{\gamma}
\\ & \quad
-q_{\beta\gamma}q_{\beta\kappa}q_{\beta\eta} \x_{\alpha}\x_{\gamma}\x_{\kappa}\x_{\eta} \ot x_{\beta}
+q_{\alpha\beta}q_{\alpha\gamma}q_{\alpha\kappa}q_{\alpha\eta} \x_{\beta}\x_{\gamma}\x_{\kappa}\x_{\eta} \ot x_{\alpha},
\end{aligned}
\\
\label{eq:diff-case11-adgge}&
\begin{aligned}
d(& \x_{\alpha}\x_{\delta}\x_{\gamma}^2\x_{\eta}\ot 1)  
= \x_{\alpha}\x_{\delta}\x_{\gamma}^2 \ot x_{\eta}
-q_{\gamma\eta} \x_{\alpha}\x_{\delta} \x_{\gamma}\x_{\eta} \ot x_{\gamma}
-q_{\delta\gamma}^2(2)_{\widetilde{q}_{\gamma\eta}} \Bsj_3 \x_{\alpha}\x_{\gamma}^3 \ot x_{\nu}
\\ & \quad
-q_{\delta\gamma}^2\Bsj_2 \x_{\alpha}\x_{\gamma}^2\x_{\kappa} \ot  x_{\gamma}x_{\mu}
-q_{\delta\gamma}^2q_{\delta\eta} \x_{\alpha}\x_{\gamma}^2 \x_{\eta} \ot  x_{\delta}
+q_{\alpha\delta}q_{\alpha\gamma}^2q_{\alpha\eta} \x_{\delta}\x_{\gamma}^2\x_{\eta} \ot x_{\alpha}
\\ & \quad
+q_{\beta\eta}q_{\tau\gamma}^2 q_{\beta\gamma}^2q_{\tau\eta} \Bsj_1 \x_{\tau}\x_{\gamma}^2\x_{\eta} \ot x_{\tau}x_{\beta}
+q_{\beta\eta}q_{\tau\gamma}^2 q_{\beta\gamma}^2 \Bsj_1\Bsj_4 \x_{\tau}\x_{\gamma}^2\x_{\kappa} \ot x_{\gamma}x_{\beta},
\end{aligned}
\\
\label{eq:diff-case11-adgee}&
\begin{aligned}
d(& \x_{\alpha}\x_{\delta}\x_{\gamma}\x_{\eta}^2\ot 1)  
= \x_{\alpha}\x_{\delta}\x_{\gamma}\x_{\eta} \ot x_{\eta}
-q_{\delta\gamma}q_{\nu\eta}(2)_{\widetilde{q}_{\gamma\eta}} \Bsj_3 \x_{\alpha}\x_{\gamma}^2\x_{\eta} \ot x_{\nu}
\\ & \quad
-q_{\delta\gamma}(2)_{\widetilde{q}_{\gamma\eta}} \Bsj_3\Bsj_7 \x_{\alpha}\x_{\gamma}^2\x_{\iota} \ot x_{\kappa}x_{\gamma}
+q_{\delta\gamma}\coef{\alpha\eta\gamma}{2} \Bsj_3\Bsj_7\Bsj_9 \x_{\gamma}^4 \ot x_{\kappa}
\\ & \quad
-q_{\delta\gamma}q_{\mu\eta}q_{\gamma\eta} \Bsj_2 \x_{\alpha}\x_{\gamma}\x_{\kappa}\x_{\eta} \ot x_{\gamma}x_{\mu}
+q_{\gamma\eta}^2 \x_{\alpha}\x_{\delta}\x_{\eta}^2 \ot x_{\gamma}
-q_{\delta\gamma}q_{\delta\eta}^2 \x_{\alpha}\x_{\gamma}\x_{\eta}^2 \ot x_{\delta}
\\ & \quad 
+q_{\beta\gamma}q_{\beta\eta}^2 q_{\tau\gamma}q_{\tau\eta}^2 \Bsj_1 \x_{\tau}\x_{\gamma}\x_{\eta}^2 \ot x_{\tau}x_{\beta}
+q_{\tau\gamma}q_{\beta\gamma}q_{\beta\eta}^2q_{\gamma\eta} \Bsj_1\Bsj_4 \x_{\tau}\x_{\gamma}\x_{\kappa}\x_{\eta} \ot x_{\gamma}x_{\beta}
\\ & \quad
+q_{\alpha\gamma}^2q_{\alpha\kappa}q_{\beta\eta}q_{\delta\gamma} \Bsj_2\Bsj_4\Bsj_5 \x_{\gamma}^2\x_{\kappa}^2 \ot x_{\gamma}x_{\beta}
+q_{\alpha\delta}q_{\alpha\gamma}q_{\alpha\eta}^2 \x_{\delta}\x_{\gamma}\x_{\eta}^2 \ot x_{\alpha}
\\ & \quad
+q_{\alpha\gamma}^2q_{\alpha\kappa}q_{\beta\eta}q_{\delta\gamma}q_{\tau\eta} \Bsj_2\Bsj_5 \x_{\gamma}^2\x_{\kappa}\x_{\eta} \ot x_{\tau}x_{\beta},
\end{aligned}
\\
\label{eq:diff-case11-btggk}&
\begin{aligned}
d(& \x_{\beta}\x_{\tau}\x_{\gamma}^2\x_{\kappa}\ot 1) = \x_{\beta}\x_{\tau}\x_{\gamma}^2 \ot x_{\kappa}
-q_{\gamma\kappa} \x_{\beta}\x_{\tau}\x_{\gamma}\x_{\kappa} \ot x_{\gamma}
-q_{\tau\gamma}^2q_{\tau\kappa} \x_{\beta}\x_{\gamma}^2 \x_{\kappa} \ot x_{\tau}
\\ & \quad 
+q_{\beta\tau}q_{\gamma\beta}^{-2} (3)_{\widetilde{q}_{\beta\gamma}}\Bsj_8 \x_{\tau} \x_{\gamma}^3 \ot 1
+q_{\beta\tau}q_{\beta\gamma}^2q_{\beta\kappa} \x_{\tau} \x_{\gamma}^2\x_{\kappa} \ot x_{\beta},
\end{aligned}
\\
\label{eq:diff-case11-btgke}&
\begin{aligned}
d(& \x_{\beta}\x_{\tau}\x_{\gamma}\x_{\kappa}\x_{\eta} \ot 1) = \x_{\beta}\x_{\tau}\x_{\gamma}\x_{\kappa} \ot x_{\eta}
-q_{\kappa\eta} \x_{\beta}\x_{\tau}\x_{\gamma}\x_{\eta} \ot x_{\kappa}
\\ & \quad
-q_{\tau\gamma}q_{\tau\kappa}(2)_{\widetilde{q}_{\kappa\eta}} \Bsj_4 \x_{\beta}\x_{\gamma}\x_{\kappa}^2 \ot x_{\gamma}
+q_{\gamma\kappa}q_{\gamma\eta} \x_{\beta}\x_{\tau}\x_{\kappa}\x_{\eta} \ot x_{\gamma}
\\ & \quad
-q_{\tau\gamma}q_{\tau\kappa}q_{\tau\eta} \x_{\beta}\x_{\gamma}\x_{\kappa}\x_{\eta} \ot x_{\tau}
+q_{\beta\tau}q_{\gamma\beta}^{-1} (2)_{\widetilde{q}_{\beta\gamma}} \Bsj_8 \x_{\tau}\x_{\gamma}^2\x_{\eta} \ot 1
\\ & \quad
+q_{\beta\tau}q_{\beta\gamma}q_{\beta\kappa}q_{\beta\eta} \x_{\tau}\x_{\gamma}\x_{\kappa}\x_{\eta} \ot x_{\beta}
-q_{\kappa\eta}q_{\eta\gamma}^{-1}(2)_{\widetilde{q}_{\gamma\eta}} \Bsj_4\Bsj_8 \x_{\gamma}^3\x_{\kappa} \ot 1,
\end{aligned}
\\
\label{eq:diff-case11-btgee}&
\begin{aligned}
d(& \x_{\beta}\x_{\tau}\x_{\gamma}\x_{\eta}^2 \ot 1) = \x_{\beta}\x_{\tau}\x_{\gamma}\x_{\eta} \ot x_{\eta}
+q_{\gamma\eta}^2 \x_{\beta}\x_{\tau}\x_{\eta}^2 \ot x_{\gamma}
-q_{\tau\gamma}q_{\gamma\eta} \Bsj_4 \x_{\beta}\x_{\gamma}\x_{\kappa}\x_{\eta} \ot x_{\gamma}
\\ & \quad
-q_{\tau\gamma}q_{\tau\eta}^2 \x_{\beta}\x_{\gamma}\x_{\eta}^2 \ot x_{\tau}
+q_{\beta\tau}q_{\beta\gamma}q_{\beta\eta}^2 \x_{\tau}\x_{\gamma}\x_{\eta}^2 \ot x_{\beta}
+q_{\gamma\eta}^{-1}\coef{-\beta-\eta\gamma}{2} \Bsj_4\Bsj_8 \x_{\gamma}^3\x_{\eta} \ot 1,
\end{aligned}
\\
\label{eq:diff-case11-bdgge}&
\begin{aligned}
d(& \x_{\beta}\x_{\delta}\x_{\gamma}^2\x_{\eta}\ot 1) = \x_{\beta}\x_{\delta}\x_{\gamma}^2 \ot x_{\eta}
-q_{\gamma\eta} \x_{\beta}\x_{\delta}\x_{\gamma}\x_{\eta} \ot x_{\gamma}
-q_{\delta\gamma}^2q_{\delta\eta} \x_{\beta}\x_{\gamma}^2\x_{\eta} \ot x_{\delta}
\\ & \quad
-q_{\delta\gamma}^2\Bsj_2 \x_{\beta}\x_{\gamma}^2\x_{\kappa} \ot x_{\gamma}x_{\mu}
-q_{\delta\gamma}^2(3)_{\widetilde{q}_{\gamma\eta}} \Bsj_3  \x_{\beta}\x_{\gamma}^3 \ot x_{\nu}
\\ & \quad
+q_{\beta\delta}q_{\beta\gamma}^2q_{\beta\eta} \x_{\delta}\x_{\gamma}^2\x_{\eta} \ot x_{\beta}
-q_{\beta\gamma}q_{\gamma\beta}^{-1}q_{\delta\gamma}^2
\coef{\beta\eta}{3} \Bsj_2\Bsj_8 \x_{\gamma}^4 \ot x_{\mu},
\end{aligned}
\\
\label{eq:diff-case11-bdgee}&
\begin{aligned}
d(& \x_{\beta}\x_{\delta}\x_{\gamma}\x_{\eta}^2\ot 1) = \x_{\beta}\x_{\delta}\x_{\gamma}\x_{\eta} \ot x_{\eta}
-q_{\delta\gamma}(2)_{\widetilde{q}_{\gamma\eta}} \Bsj_3\Bsj_7 \x_{\beta}\x_{\gamma}^2\x_{\iota} \ot x_{\kappa}x_{\gamma}
\\ & \quad
-q_{\delta\gamma}(2)_{\widetilde{q}_{\gamma\eta}}q_{\nu\eta} \Bsj_3  \x_{\beta}\x_{\gamma}^2\x_{\eta} \ot x_{\nu}
-q_{\delta\gamma}q_{\mu\eta}q_{\gamma\eta} \Bsj_2 \x_{\beta}\x_{\gamma}\x_{\kappa}\x_{\eta} \ot x_{\gamma}x_{\mu}
\\ & \quad
+q_{\gamma\eta}^2 \x_{\beta}\x_{\delta}\x_{\eta}^2 \ot x_{\gamma}
-q_{\delta\gamma}q_{\delta\eta}^2 \x_{\beta}\x_{\gamma}\x_{\eta}^2 \ot x_{\delta}
+q_{\beta\delta}q_{\beta\gamma}q_{\beta\eta}^2 \x_{\delta}\x_{\gamma}\x_{\eta}^2 \ot x_{\beta}
\\ & \quad
-q_{\beta\gamma}q_{\delta\gamma} (2)_{\widetilde{q}_{\gamma\eta}} \Bsj_2\Bsj_6\Bsj_8 \x_{\gamma}^3\x_{\iota} \ot x_{\gamma}
-q_{\beta\gamma}q_{\delta\gamma}q_{\mu\eta} (2)_{\widetilde{q}_{\gamma\eta}} \Bsj_2\Bsj_8 \x_{\gamma}^3\x_{\eta} \ot x_{\mu},
\end{aligned}
\\
\label{eq:diff-case11-bggkk}&
\begin{aligned}
d(& \x_{\beta}\x_{\gamma}^2\x_{\kappa}^2\ot 1) = \x_{\beta}\x_{\gamma}^2\x_{\kappa} \ot x_{\kappa}
+q_{\gamma\kappa}^2 \x_{\beta} \x_{\gamma}\x_{\kappa}^2 \ot x_{\gamma}
\\ & \quad
+q_{\beta\gamma}^2 (3)_{\widetilde{q}_{\beta\gamma}} \Bsj_8 \x_{\gamma}^3\x_{\kappa} \ot 1
+q_{\beta\kappa}^2q_{\beta\gamma}^2 \x_{\gamma}^2\x_{\kappa}^2 \ot x_{\beta},
\end{aligned}
\\
\label{eq:diff-case11-bggke}&
\begin{aligned}
d(& \x_{\beta}\x_{\gamma}^2\x_{\kappa}\x_{\eta}\ot 1) = \x_{\beta}\x_{\gamma}^2\x_{\kappa} \ot x_{\eta}
-q_{\kappa\eta} \x_{\beta}\x_{\gamma}^2\x_{\eta} \ot x_{\kappa}
+q_{\gamma\kappa}q_{\gamma\eta} \x_{\beta}\x_{\gamma}\x_{\kappa}\x_{\eta} \ot x_{\gamma}
\\ & \quad 
+q_{\beta\kappa}q_{\beta\gamma}^2q_{\beta\eta} \x_{\gamma}^2\x_{\kappa}\x_{\eta} \ot x_{\beta}
+q_{\beta\gamma}^2 (3)_{\widetilde{q}_{\beta\gamma}} \Bsj_8 \x_{\gamma}^3\x_{\eta} \ot 1,
\end{aligned}
\\
\label{eq:diff-case11-tggke}&
\begin{aligned}
d(& \x_{\tau}\x_{\gamma}^2\x_{\kappa}\x_{\eta}\ot 1) = \x_{\tau}\x_{\gamma}^2\x_{\kappa} \ot x_{\eta}
-q_{\kappa\eta} \x_{\tau}\x_{\gamma}^2\x_{\eta} \ot x_{\kappa}
+q_{\gamma\kappa}q_{\gamma\eta} \x_{\tau}\x_{\gamma}\x_{\kappa}\x_{\eta} \ot x_{\gamma}
\\ & \quad
+q_{\tau\gamma}^2q_{\tau\kappa}(2)_{\widetilde{q}_{\kappa\eta}} \Bsj_4 \x_{\gamma}^2\x_{\kappa}^2 \ot x_{\gamma}
+q_{\tau\gamma}^2q_{\tau\kappa}q_{\tau\eta} \x_{\gamma}^2\x_{\kappa}\x_{\eta} \ot x_{\tau},
\end{aligned}
\\
\label{eq:diff-case11-tggee}&
\begin{aligned}
d(& \x_{\tau}\x_{\gamma}^2\x_{\eta}^2\ot 1) = \x_{\tau}\x_{\gamma}^2\x_{\eta} \ot x_{\eta}
+q_{\gamma\eta}^2 \x_{\tau}\x_{\gamma}\x_{\eta}^2 \ot x_{\gamma}
\\ & \quad
+q_{\tau\gamma}^2q_{\gamma\eta} \Bsj_4 \x_{\gamma}^2\x_{\kappa}\x_{\eta} \ot x_{\gamma}
+q_{\tau\gamma}^2q_{\tau\eta}^2 \x_{\gamma}^2\x_{\eta}^2 \ot x_{\tau},
\end{aligned}
\\
\label{eq:diff-case11-dggee}&
\begin{aligned}
d(& \x_{\delta}\x_{\gamma}^2\x_{\eta}^2\ot 1) = \x_{\delta}\x_{\gamma}^2\x_{\eta} \ot x_{\eta}
+q_{\gamma\eta}^2 \x_{\delta}\x_{\gamma}\x_{\eta}^2 \ot x_{\gamma}
\\ & \quad
+q_{\delta\gamma}^2(3)_{\widetilde{q}_{\gamma\eta}} \Bsj_3\Bsj_7 \x_{\gamma}^3\x_{\iota} \ot x_{\kappa}x_{\gamma}
+q_{\delta\gamma}^2q_{\nu\eta} (3)_{\widetilde{q}_{\gamma\eta}} \Bsj_3 \x_{\gamma}^3\x_{\eta} \ot x_{\nu}
\\ & \quad
+q_{\delta\gamma}^2q_{\mu\eta}q_{\gamma\eta} \Bsj_2 \x_{\gamma}^2\x_{\kappa}\x_{\eta} \ot x_{\gamma}x_{\mu}
+q_{\delta\gamma}^2q_{\delta\eta}^2 \x_{\gamma}^2\x_{\eta}^2 \ot x_{\delta}.
\end{aligned}
\end{align}

First we check \eqref{eq:diff-case11-abdgg}. Using \eqref{eq:diff-case11-abdg}, Remark \ref{rem:differential-q-commute}, \eqref{eq:diff-case11-adgg},
\begin{align*}
d( & \x_{\alpha}\x_{\beta} \x_{\delta} \x_{\gamma}^{2}\ot 1) =
\x_{\alpha}\x_{\beta} \x_{\delta} \x_{\gamma} \ot x_{\gamma} 
-s\big( 
-q_{\delta\gamma}^2 \x_{\alpha} \x_{\beta}\x_{\gamma} \ot x_{\gamma}x_{\delta}
+q_{\beta\delta} q_{\beta\gamma}^2 \x_{\alpha} \x_{\delta}\x_{\gamma} \ot x_{\gamma}x_{\beta}
\\ & \quad
- q_{\alpha\beta}q_{\beta\gamma}^2 q_{\tau\gamma}^2 \Bsj_1 \x_{\beta}\x_{\tau}\x_{\gamma} \ot x_{\gamma}x_{\tau}x_{\beta} 
-q_{\alpha\beta}q_{\alpha\delta}q_{\alpha\gamma}^2 \x_{\beta}\x_{\delta}\x_{\gamma} \ot x_{\gamma}x_{\alpha} \big)
\\ & =
\x_{\alpha}\x_{\beta} \x_{\delta} \x_{\gamma} \ot x_{\gamma} 
+q_{\delta\gamma}^2 \x_{\alpha}\x_{\beta}\x_{\gamma}^2 \ot x_{\delta}
-q_{\beta\delta} q_{\beta\gamma}^2 \x_{\alpha} \x_{\delta}\x_{\gamma}^2 \ot x_{\beta}
\\ & \quad
-s\big( 
- q_{\alpha\beta}q_{\beta\gamma}^2 q_{\tau\gamma}^2 \Bsj_1 \x_{\beta}\x_{\tau}\x_{\gamma} \ot x_{\gamma}x_{\tau}x_{\beta} 
-q_{\alpha\beta}q_{\alpha\delta}q_{\alpha\gamma}^2 \x_{\beta}\x_{\delta}\x_{\gamma} \ot x_{\gamma}x_{\alpha} 
\\ & \quad
-q_{\alpha\beta}q_{\alpha\gamma}^2q_{\alpha\delta} q_{\delta\gamma}^2 \x_{\beta}\x_{\gamma}^2 \ot x_{\delta}x_{\alpha} 
-q_{\alpha\beta}q_{\alpha\gamma}^2q_{\delta\gamma}^2 \Bsj_1 \x_{\beta}\x_{\gamma}^2 \ot x_{\tau}^2x_{\beta}
\\ & \quad
+q_{\beta\delta} q_{\beta\gamma}^4 q_{\tau\gamma}^2 \Bsj_1 \x_{\tau}\x_{\gamma}^2 \ot x_{\tau}x_{\beta}^2
+q_{\alpha\beta}q_{\alpha\delta}q_{\alpha\gamma}^2 q_{\beta\delta} q_{\beta\gamma}^2 \x_{\delta}\x_{\gamma}^2 \ot x_{\beta}x_{\alpha}
\big)
\\ & =
\x_{\alpha}\x_{\beta} \x_{\delta} \x_{\gamma} \ot x_{\gamma} 
+q_{\delta\gamma}^2 \x_{\alpha}\x_{\beta}\x_{\gamma}^2 \ot x_{\delta}
-q_{\beta\delta} q_{\beta\gamma}^2 \x_{\alpha} \x_{\delta}\x_{\gamma}^2 \ot x_{\beta}
\\ & \quad
+ q_{\alpha\beta}q_{\beta\gamma}^2 q_{\tau\gamma}^2 \Bsj_1 \x_{\beta}\x_{\tau}\x_{\gamma}^2 \ot x_{\tau}x_{\beta} 
+q_{\alpha\beta}q_{\alpha\delta}q_{\alpha\gamma}^2 \x_{\beta}\x_{\delta}\x_{\gamma}^2 \ot x_{\alpha}.
\end{align*}

For \eqref{eq:diff-case11-abdge}, we use \eqref{eq:diff-case11-abdg}, \eqref{eq:diff-case11-abde}, \eqref{eq:diff-case11-adge}, \eqref{eq:diff-case11-btgk}, \eqref{eq:diff-case11-btggge},
Remark \ref{rem:differential-q-commute}, \eqref{eq:diff-case11-bdge},
$\x_{\alpha}\x_{\gamma}^2 \ot x_{\nu}x_{\beta}= s(\x_{\alpha}\x_{\gamma} \ot x_{\gamma}x_{\nu}x_{\beta})$, 
$\x_{\gamma}^3 \ot x_{\mu}x_{\alpha}= s(\x_{\gamma}^2 \ot x_{\gamma}x_{\mu}x_{\alpha})$,
\begin{align*}
d(& \x_{\alpha}\x_{\beta}\x_{\delta}\x_{\gamma}\x_{\eta} \ot 1)  = \x_{\alpha}\x_{\beta}\x_{\delta}\x_{\gamma}\ot x_{\eta}
- s\big( 
q_{\gamma\eta} \x_{\alpha}\x_{\beta}\x_{\delta} \ot x_{\eta}x_{\gamma}
-q_{\delta\gamma}q_{\delta\eta} \x_{\alpha}\x_{\beta}\x_{\gamma} \ot x_{\eta}x_{\delta}
\\ & \quad
+q_{\beta\delta}q_{\beta\gamma}q_{\beta\eta} \x_{\alpha} \x_{\delta}\x_{\gamma} \ot x_{\eta}x_{\beta}
-q_{\delta\gamma}\Bsj_2 \x_{\alpha}\x_{\beta}\x_{\gamma} \ot  x_{\kappa}x_{\gamma}x_{\mu}
-q_{\delta\gamma}\Bsj_3 \x_{\alpha}\x_{\beta}\x_{\gamma} \ot  x_{\gamma} x_{\nu}
\\ & \quad
- q_{\alpha\beta}q_{\beta\gamma}q_{\beta\eta} q_{\tau\gamma}q_{\tau\eta} \Bsj_1 \x_{\beta}\x_{\tau}\x_{\gamma} \ot x_{\eta}x_{\tau}x_{\beta}
- q_{\alpha\beta}q_{\beta\gamma}q_{\beta\eta} q_{\tau\gamma} \Bsj_1\Bsj_4 \x_{\beta}\x_{\tau}\x_{\gamma} \ot x_{\kappa}x_{\gamma}x_{\beta}
\\ & \quad
-q_{\alpha\beta}q_{\alpha\delta}q_{\alpha\gamma}q_{\alpha\eta} \x_{\beta}\x_{\delta}\x_{\gamma} \ot x_{\eta}x_{\alpha}
\big)
\\ & = \x_{\alpha}\x_{\beta}\x_{\delta}\x_{\gamma}\ot x_{\eta}
-q_{\gamma\eta} \x_{\alpha}\x_{\beta}\x_{\delta}\x_{\eta} \ot x_{\gamma}
+q_{\delta\gamma}q_{\delta\eta} \x_{\alpha}\x_{\beta}\x_{\gamma}\x_{\eta} \ot x_{\delta}
+q_{\delta\gamma}\Bsj_2 \x_{\alpha}\x_{\beta}\x_{\gamma} \x_{\kappa} \ot  x_{\gamma}x_{\mu}
\\ & \quad
+q_{\delta\gamma}(2)_{\widetilde{q}_{\gamma\eta}}\Bsj_3 \x_{\alpha}\x_{\beta}\x_{\gamma}^2 \ot x_{\nu}
-q_{\beta\delta}q_{\beta\gamma}q_{\beta\eta} \x_{\alpha} \x_{\delta}\x_{\gamma}\x_{\eta} \ot x_{\beta}
-q_{\delta\gamma}q_{\gamma\beta}^{-1} \coef{\beta\eta\gamma}{2}
\Bsj_2\Bsj_8 \x_{\alpha}\x_{\gamma}^3 \ot x_{\mu} 
\\ & \quad
+q_{\alpha\beta}q_{\beta\gamma}q_{\beta\eta} q_{\tau\gamma} q_{\tau\eta} \Bsj_1 \x_{\beta}\x_{\tau}\x_{\gamma}\x_{\eta} \ot x_{\tau}x_{\beta}
+ q_{\alpha\beta}q_{\beta\gamma}q_{\beta\eta} q_{\tau\gamma} \Bsj_1\Bsj_4 \x_{\beta}\x_{\tau}\x_{\gamma}\x_{\kappa} \ot x_{\gamma}x_{\beta}
\\ & \quad
+q_{\alpha\beta}q_{\alpha\delta}q_{\alpha\gamma}q_{\alpha\eta} \x_{\beta}\x_{\delta}\x_{\gamma}\x_{\eta} \ot x_{\alpha}
+q_{\alpha\beta}q_{\beta\tau}q_{\beta\eta} q_{\eta\gamma}^{-1} q_{\gamma\beta}^{-1} \coef{\beta\eta\gamma}{2} \Bsj_1 \Bsj_4 \Bsj_8 \x_{\tau}\x_{\gamma}^3 \ot x_{\beta}.
\end{align*}

For \eqref{eq:diff-case11-abdee}, we use \eqref{eq:diff-case11-abde}, \eqref{eq:diff-case11-abke}, \eqref{eq:diff-case11-adee}, \eqref{eq:diff-case11-agggi}, \eqref{eq:diff-case11-btke}, \eqref{eq:diff-case11-btee}, Remark \ref{rem:differential-q-commute}, \eqref{eq:diff-case11-bdee} and \eqref{eq:diff-case11-tggge}:
\begin{align*}
d(& \x_{\alpha}\x_{\beta}\x_{\delta}\x_{\eta}^{2}\ot 1) = \x_{\alpha}\x_{\beta}\x_{\delta}\x_{\eta} \ot x_{\eta} -
s\big( 
-q_{\nu\eta} \Bsj_3 \x_{\alpha}\x_{\beta}\x_{\gamma} \ot x_{\eta}x_{\nu}
-\Bsj_3\Bsj_7 \x_{\alpha}\x_{\beta}\x_{\gamma} \ot x_{\iota}x_{\kappa}x_{\gamma}
\\ & \quad
-q_{\delta\eta}^2 \x_{\alpha}\x_{\beta}\x_{\eta} \ot x_{\eta}x_{\delta}
-q_{\delta\eta} \Bsj_2 \x_{\alpha}\x_{\beta}\x_{\eta} \ot x_{\kappa}x_{\gamma}x_{\mu}
-q_{\delta\eta} \Bsj_3 \x_{\alpha}\x_{\beta}\x_{\eta} \ot x_{\gamma}x_{\nu}
\\ & \quad
-q_{\gamma\eta}q_{\mu\eta}\Bsj_2 \x_{\alpha}\x_{\beta}\x_{\kappa} \ot x_{\eta}x_{\mu}x_{\gamma}
-\Bsj_2\Bsj_8 \x_{\alpha}\x_{\gamma}^2 \ot (q_{\mu\eta} x_{\eta}x_{\mu} +\Bsj_6 x_{\iota}x_{\gamma})
+q_{\beta\delta}q_{\beta\eta}^2 \x_{\alpha}\x_{\delta}\x_{\eta} \ot x_{\eta}x_{\beta}
\\ & \quad
-q_{\alpha\beta}q_{\beta\eta}^2q_{\gamma\eta} \Bsj_1\Bsj_4  \x_{\beta}\x_{\tau}\x_{\kappa} \ot x_{\eta}x_{\beta}x_{\gamma}
-q_{\alpha\beta}q_{\beta\eta}^2q_{\tau\eta} \Bsj_1 \x_{\beta}\x_{\tau}\x_{\eta} \ot (q_{\tau\eta} x_{\eta}x_{\tau} +\Bsj_4 x_{\kappa}x_{\gamma})x_{\beta}
\\ & \quad
-q_{\alpha\beta}q_{\alpha\delta}q_{\alpha\eta}^2 \x_{\beta}\x_{\delta}\x_{\eta} \ot x_{\eta}x_{\alpha}
+q_{\beta\tau}q_{\alpha\beta}q_{\beta\eta}^2 \Bsj_1\Bsj_4 \Bsj_8 \x_{\tau}\x_{\gamma}^2 \ot x_{\eta}x_{\beta}
\big)
\\ & = 
\x_{\alpha}\x_{\beta}\x_{\delta}\x_{\eta} \ot x_{\eta}
+\Bsj_3\Bsj_7 \x_{\alpha}\x_{\beta}\x_{\gamma}\x_{\iota} \ot x_{\kappa}x_{\gamma}
+q_{\nu\eta} \Bsj_3 \x_{\alpha}\x_{\beta}\x_{\gamma}\x_{\eta} \ot x_{\nu}
\\ & \quad
+q_{\mu\gamma}q_{\mu\eta}q_{\gamma\eta} \Bsj_2 \x_{\alpha}\x_{\beta}\x_{\kappa}\x_{\eta} \ot x_{\gamma}x_{\mu}
+q_{\delta\eta}^2 \x_{\alpha}\x_{\beta}\x_{\eta}^2 \ot x_{\delta}
-q_{\beta\delta}q_{\beta\eta}^2 \x_{\alpha}\x_{\delta}\x_{\eta}^2 \ot x_{\beta}
\\ & \quad
+\Bsj_2\Bsj_6\Bsj_8 \x_{\alpha}\x_{\gamma}^2\x_{\iota} \ot x_{\gamma}
-(4)_{\widetilde{q}_{\alpha\gamma}}\Bsj_2\Bsj_6\Bsj_8\Bsj_9 \x_{\gamma}^4 \ot 1
+q_{\mu\eta} \Bsj_2\Bsj_8 \x_{\alpha}\x_{\gamma}^2\x_{\eta} \ot x_{\mu}
\\ & \quad
+q_{\alpha\beta}q_{\beta\eta}^2q_{\tau\eta}^2 \Bsj_1 \x_{\beta}\x_{\tau}\x_{\eta}^2 \ot x_{\tau}x_{\beta}
+q_{\alpha\beta}q_{\beta\gamma}q_{\beta\eta}^2q_{\gamma\eta} \Bsj_1\Bsj_4  \x_{\beta}\x_{\tau}\x_{\kappa}\x_{\eta} \ot x_{\gamma}x_{\beta}
\\ & \quad
+q_{\alpha\beta}q_{\alpha\delta}q_{\alpha\eta}^2 \x_{\beta}\x_{\delta}\x_{\eta}^2 \ot x_{\alpha}
-q_{\beta\tau}q_{\alpha\beta}q_{\beta\eta}^2 \Bsj_1\Bsj_4 \Bsj_8 \x_{\tau}\x_{\gamma}^2\x_{\eta} \ot x_{\beta}.
\end{align*}

Next we check \eqref{eq:diff-case11-abggk} using Remark \ref{rem:differential-q-commute}, \eqref{eq:diff-case11-abgk} and $\x_{\gamma}^3 \ot x_{\alpha}=s\big(\x_{\gamma}^2 \ot x_{\gamma}x_{\alpha}\big)$:
\begin{align*}
d(& \x_{\alpha}\x_{\beta}\x_{\gamma}^2\x_{\kappa} \ot 1) =
\x_{\alpha}\x_{\beta}\x_{\gamma}^2 \ot x_{\kappa}
-s \big( 
q_{\gamma\kappa} \x_{\alpha}\x_{\beta}\x_{\gamma} \ot x_{\kappa}x_{\gamma}
+q_{\beta\gamma}^2q_{\beta\kappa} \x_{\alpha}\x_{\gamma}^2 \ot  x_{\kappa}x_{\beta}
\\ & \quad
+q_{\beta\gamma}^2\Bsj_8 \x_{\alpha}\x_{\gamma}^2 \ot x_{\gamma}
-q_{\alpha\beta}q_{\alpha\gamma}^2q_{\alpha\kappa} \x_{\beta}\x_{\gamma}^2 \ot x_{\kappa}x_{\alpha}
\big)
\\ & =
\x_{\alpha}\x_{\beta}\x_{\gamma}^2 \ot x_{\kappa}
-q_{\gamma\kappa} \x_{\alpha}\x_{\beta}\x_{\gamma}\x_{\kappa} \ot x_{\gamma}
-q_{\beta\gamma}^2q_{\beta\kappa} \x_{\alpha}\x_{\gamma}^2 \x_{\kappa} \ot  x_{\beta}
-q_{\gamma\beta}^{-2}(3)_{\widetilde{q}_{\beta\gamma}} \Bsj_8 \x_{\alpha}\x_{\gamma}^3 \ot 1
\\ & \quad
+q_{\alpha\beta}q_{\alpha\gamma}^2q_{\alpha\kappa} \x_{\beta}\x_{\gamma}^2\x_{\kappa} \ot x_{\alpha}.
\end{align*}

For \eqref{eq:diff-case11-abgke} we use \eqref{eq:diff-case11-abgk}, Remark \ref{rem:differential-q-commute}, \eqref{eq:diff-case11-abke} and \eqref{eq:diff-case11-bgke}:
\begin{align*}
d(& \x_{\alpha}\x_{\beta}\x_{\gamma}\x_{\kappa}\x_{\eta}\ot 1) = \x_{\alpha}\x_{\beta}\x_{\gamma}\x_{\kappa} \ot x_{\eta}
-s \big( 
q_{\eta\kappa} \x_{\alpha}\x_{\beta}\x_{\gamma} \ot x_{\eta}x_{\kappa}
-q_{\gamma\kappa}q_{\gamma\eta} \x_{\alpha}\x_{\beta}\x_{\kappa} \ot x_{\eta}x_{\gamma}
\\ &
+q_{\beta\gamma}q_{\beta\kappa}q_{\beta\eta} \x_{\alpha}\x_{\gamma}\x_{\kappa} \ot x_{\eta}x_{\beta}
+q_{\gamma\beta}^{-1} (2)_{\widetilde{q}_{\beta\gamma}} \Bsj_8 \x_{\alpha}\x_{\gamma}^2 \ot x_{\eta} 
-q_{\alpha\beta}q_{\alpha\gamma}q_{\alpha\kappa} \x_{\beta}\x_{\gamma}\x_{\kappa} \ot x_{\alpha}x_{\eta}
\big)
\\ & = \x_{\alpha}\x_{\beta}\x_{\gamma}\x_{\kappa} \ot x_{\eta}
-q_{\kappa\eta} \x_{\alpha}\x_{\beta}\x_{\gamma}\x_{\eta} \ot x_{\kappa}
-q_{\gamma\beta}^{-1} (2)_{\widetilde{q}_{\beta\gamma}} \Bsj_8 \x_{\alpha}\x_{\gamma}^2\x_{\eta} \ot 1
\\ & \quad
+q_{\gamma\kappa}q_{\gamma\eta} \x_{\alpha}\x_{\beta}\x_{\kappa}\x_{\eta} \ot x_{\gamma}
-q_{\beta\gamma}q_{\beta\kappa}q_{\beta\eta} \x_{\alpha}\x_{\gamma}\x_{\kappa}\x_{\eta} \ot x_{\beta}
+q_{\alpha\beta}q_{\alpha\gamma}q_{\alpha\kappa}q_{\alpha\eta} \x_{\beta}\x_{\gamma}\x_{\kappa}\x_{\eta} \ot x_{\alpha}.
\end{align*}

To check \eqref{eq:diff-case11-adgge} we use \eqref{eq:diff-case11-adgg}, \eqref{eq:diff-case11-adge}, Remark \ref{rem:differential-q-commute}, \eqref{eq:diff-case11-tggge} and \eqref{eq:diff-case11-dgge}:
\begin{align*}
d( & \x_{\alpha}\x_{\delta}\x_{\gamma}^2\x_{\eta}\ot 1)
= \x_{\alpha}\x_{\delta}\x_{\gamma}^2 \ot x_{\eta}
-s \big( 
q_{\gamma\eta} \x_{\alpha}\x_{\delta} \x_{\gamma} \ot x_{\eta}x_{\gamma}
+q_{\delta\gamma}^2q_{\delta\eta} \x_{\alpha}\x_{\gamma}^2 \ot  x_{\eta}x_{\delta}
\\ & \quad
+q_{\delta\gamma}^2\Bsj_2 \x_{\alpha}\x_{\gamma}^2 \ot  x_{\kappa}x_{\gamma}x_{\mu}
+q_{\delta\gamma}^2\Bsj_3 \x_{\alpha}\x_{\gamma}^2 \ot x_{\gamma} x_{\nu}
-q_{\beta\eta}q_{\tau\gamma}^2 q_{\beta\gamma}^2q_{\tau\eta} \Bsj_1 \x_{\tau}\x_{\gamma}^2 \ot x_{\eta}x_{\tau}x_{\beta}
\\ & \quad
-q_{\beta\eta}q_{\tau\gamma}^2 q_{\beta\gamma}^2 \Bsj_1\Bsj_4 \x_{\tau}\x_{\gamma}^2 \ot x_{\kappa}x_{\gamma}x_{\beta}
-q_{\alpha\delta}q_{\alpha\gamma}^2q_{\alpha\eta} \x_{\delta}\x_{\gamma}^2 \ot x_{\eta}x_{\alpha}
\big)
\\ & = 
\x_{\alpha}\x_{\delta}\x_{\gamma}^2 \ot x_{\eta}
-q_{\gamma\eta} \x_{\alpha}\x_{\delta} \x_{\gamma}\x_{\eta} \ot x_{\gamma}
-q_{\delta\gamma}^2(2)_{\widetilde{q}_{\gamma\eta}} \Bsj_3 \x_{\alpha}\x_{\gamma}^3 \ot x_{\nu}
-q_{\delta\gamma}^2\Bsj_2 \x_{\alpha}\x_{\gamma}^2\x_{\kappa} \ot  x_{\gamma}x_{\mu}
\\ & \quad
-q_{\delta\gamma}^2q_{\delta\eta} \x_{\alpha}\x_{\gamma}^2 \x_{\eta} \ot  x_{\delta}
+q_{\alpha\delta}q_{\alpha\gamma}^2q_{\alpha\eta} \x_{\delta}\x_{\gamma}^2\x_{\eta} \ot x_{\alpha}
+q_{\beta\eta}q_{\tau\gamma}^2 q_{\beta\gamma}^2q_{\tau\eta} \Bsj_1 \x_{\tau}\x_{\gamma}^2\x_{\eta} \ot x_{\tau}x_{\beta}
\\ & \quad
+q_{\beta\eta}q_{\tau\gamma}^2 q_{\beta\gamma}^2 \Bsj_1\Bsj_4 \x_{\tau}\x_{\gamma}^2\x_{\kappa} \ot x_{\gamma}x_{\beta}
+q_{\alpha\delta}q_{\alpha\gamma}^2q_{\alpha\eta} 
q_{\delta\gamma}^2(3)_{\widetilde{q}_{\gamma\eta}} \Bsj_3 s \big(\x_{\gamma}^3 \ot x_{\nu}x_{\alpha}\big),
\end{align*}
and we use that 
$\x_{\gamma}^3 \ot x_{\nu}x_{\alpha}
=s\big(\x_{\gamma}^2 \ot x_{\gamma}x_{\nu}x_{\alpha}\big)$. To prove \eqref{eq:diff-case11-adgee} we use \eqref{eq:diff-case11-adge}, \eqref{eq:diff-case11-adee}, \eqref{eq:diff-case11-agggi}, Remark \ref{rem:differential-q-commute}, \eqref{eq:diff-case11-tgke}, \eqref{eq:diff-case11-tgee} and \eqref{eq:diff-case11-dgee}:
\begin{align*}
d(& \x_{\alpha}\x_{\delta}\x_{\gamma}\x_{\eta}^2\ot 1)= \x_{\alpha}\x_{\delta}\x_{\gamma}\x_{\eta} \ot x_{\eta}
-s \big( 
-q_{\gamma\eta}^2 \x_{\alpha}\x_{\delta}\x_{\eta} \ot x_{\eta}x_{\gamma}
+q_{\delta\gamma}q_{\delta\eta} \x_{\alpha}\x_{\gamma} \x_{\eta} \ot x_{\delta}x_{\eta}
\\ & \quad
+q_{\delta\gamma}\Bsj_2 \x_{\alpha}\x_{\gamma}\x_{\kappa} \ot  x_{\gamma}x_{\mu}x_{\eta}
-q_{\alpha\delta}q_{\alpha\gamma}q_{\alpha\eta} \x_{\delta}\x_{\gamma}\x_{\eta} \ot x_{\alpha}x_{\eta}
\\ & \quad
- q_{\tau\gamma}q_{\beta\gamma}q_{\beta\eta}\Bsj_1\Bsj_4  \x_{\tau}\x_{\gamma}\x_{\kappa} \ot x_{\gamma}x_{\beta}x_{\eta}
+q_{\delta\gamma}\Bsj_3 \x_{\alpha}\x_{\gamma}^2 \ot x_{\nu}x_{\eta}
\\ & \quad 
-q_{\beta\gamma}q_{\beta\eta} q_{\tau\gamma}q_{\tau\eta} \Bsj_1 \x_{\tau}\x_{\gamma}\x_{\eta} \ot x_{\tau}x_{\beta}x_{\eta}
-q_{\alpha\gamma}^2q_{\alpha\kappa}q_{\delta\gamma}\Bsj_2 \Bsj_5 \x_{\gamma}^2\x_{\kappa} \ot x_{\tau}x_{\beta}x_{\eta}
\big)
\\ & = \x_{\alpha}\x_{\delta}\x_{\gamma}\x_{\eta} \ot x_{\eta}
+q_{\gamma\eta}^2 \x_{\alpha}\x_{\delta}\x_{\eta}^2 \ot x_{\gamma}
-q_{\delta\gamma}q_{\nu\eta}(2)_{\widetilde{q}_{\gamma\eta}} \Bsj_3 \x_{\alpha}\x_{\gamma}^2\x_{\eta} \ot x_{\nu}
\\ & \quad
-q_{\delta\gamma}(2)_{\widetilde{q}_{\gamma\eta}} \Bsj_3\Bsj_7 \x_{\alpha}\x_{\gamma}^2\x_{\iota} \ot x_{\kappa}x_{\gamma}
+q_{\delta\gamma}\coef{\alpha\eta\gamma}{2} \Bsj_3\Bsj_7\Bsj_9 \x_{\gamma}^4 \ot x_{\kappa}
-q_{\delta\gamma}q_{\delta\eta}^2 \x_{\alpha}\x_{\gamma}\x_{\eta}^2 \ot x_{\delta}
\\ & \quad
-q_{\delta\gamma}q_{\mu\eta}q_{\gamma\eta} \Bsj_2 \x_{\alpha}\x_{\gamma}\x_{\kappa}\x_{\eta} \ot x_{\gamma}x_{\mu}
+q_{\tau\gamma}q_{\beta\gamma}q_{\beta\eta}^2q_{\gamma\eta} \Bsj_1\Bsj_4 \x_{\tau}\x_{\gamma}\x_{\kappa}\x_{\eta} \ot x_{\gamma}x_{\beta}
\\ & \quad 
+q_{\beta\gamma}q_{\beta\eta}^2 q_{\tau\gamma}q_{\tau\eta}^2 \Bsj_1 \x_{\tau}\x_{\gamma}\x_{\eta}^2 \ot x_{\tau}x_{\beta}
+q_{\alpha\gamma}^2q_{\alpha\kappa}q_{\beta\eta}q_{\delta\gamma} \Bsj_2\Bsj_4\Bsj_5 \x_{\gamma}^2\x_{\kappa}^2 \ot x_{\gamma}x_{\beta}
\\ & \quad
+q_{\alpha\delta}q_{\alpha\gamma}q_{\alpha\eta}^2 \x_{\delta}\x_{\gamma}\x_{\eta}^2 \ot x_{\alpha}
+q_{\alpha\gamma}^2q_{\alpha\kappa}q_{\beta\eta}q_{\delta\gamma}q_{\tau\eta} \Bsj_2\Bsj_5 \x_{\gamma}^2\x_{\kappa}\x_{\eta} \ot x_{\tau}x_{\beta}.
\end{align*}

We compute \eqref{eq:diff-case11-btgke} using \eqref{eq:diff-case11-btgk}, \eqref{eq:diff-case11-btggge}, \eqref{eq:diff-case11-btke}, 
\eqref{eq:diff-case11-bgkk}, \eqref{eq:diff-case11-bgke}, \eqref{eq:diff-case11-tggge}, \eqref{eq:diff-case11-tgke} and Remark \ref{rem:differential-q-commute}:
\begin{align*}
d(& \x_{\beta}\x_{\tau}\x_{\gamma}\x_{\kappa}\x_{\eta} \ot 1) = \x_{\beta}\x_{\tau}\x_{\gamma}\x_{\kappa} \ot x_{\eta}
-s\big( 
q_{\kappa\eta} \x_{\beta}\x_{\tau}\x_{\gamma} \ot x_{\eta}x_{\kappa}
-q_{\gamma\kappa}q_{\gamma\eta} \x_{\beta}\x_{\tau}\x_{\kappa} \ot x_{\eta}x_{\gamma}
\\ & \quad
+q_{\tau\gamma}q_{\tau\kappa}q_{\tau\eta} \x_{\beta}\x_{\gamma}\x_{\kappa} \ot x_{\eta}x_{\tau}
+q_{\tau\gamma}q_{\tau\kappa} \Bsj_4 \x_{\beta}\x_{\gamma}\x_{\kappa} \ot x_{\kappa}x_{\gamma}
\\ & \quad
-q_{\beta\tau}q_{\gamma\beta}^{-1} (2)_{\widetilde{q}_{\beta\gamma}} \Bsj_8 \x_{\tau}\x_{\gamma}^2 \ot x_{\eta}
-q_{\beta\tau}q_{\beta\gamma}q_{\beta\kappa}q_{\beta\eta} \x_{\tau}\x_{\gamma} \x_{\kappa} \ot x_{\eta}x_{\beta}
\big)
\\ & = \x_{\beta}\x_{\tau}\x_{\gamma}\x_{\kappa} \ot x_{\eta}
-q_{\kappa\eta} \x_{\beta}\x_{\tau}\x_{\gamma}\x_{\eta} \ot x_{\kappa}
-q_{\tau\gamma}q_{\tau\kappa}(2)_{\widetilde{q}_{\kappa\eta}} \Bsj_4 \x_{\beta}\x_{\gamma}\x_{\kappa}^2 \ot x_{\gamma}
\\ & \quad
+q_{\gamma\kappa}q_{\gamma\eta} \x_{\beta}\x_{\tau}\x_{\kappa}\x_{\eta} \ot x_{\gamma}
-q_{\tau\gamma}q_{\tau\kappa}q_{\tau\eta} \x_{\beta}\x_{\gamma}\x_{\kappa}\x_{\eta} \ot x_{\tau}
+q_{\beta\tau}q_{\gamma\beta}^{-1} (2)_{\widetilde{q}_{\beta\gamma}} \Bsj_8 \x_{\tau}\x_{\gamma}^2\x_{\eta} \ot 1
\\ & \quad
+q_{\beta\tau}q_{\beta\gamma}q_{\beta\kappa}q_{\beta\eta} \x_{\tau}\x_{\gamma}\x_{\kappa}\x_{\eta} \ot x_{\beta}
-q_{\kappa\eta}q_{\eta\gamma}^{-1}(2)_{\widetilde{q}_{\gamma\eta}} \Bsj_4\Bsj_8 \x_{\gamma}^3\x_{\kappa} \ot 1.
\end{align*}

Next we check \eqref{eq:diff-case11-btggk} using Remark \ref{rem:differential-q-commute}, \eqref{eq:diff-case11-btgk} and \eqref{eq:diff-case11-bgggk}:
\begin{align*}
d(& \x_{\beta}\x_{\tau}\x_{\gamma}^2\x_{\kappa}\ot 1) =
\x_{\beta}\x_{\tau}\x_{\gamma}^2 \ot x_{\kappa}
-s \big( 
q_{\gamma\kappa} \x_{\beta}\x_{\tau}\x_{\gamma} \ot x_{\kappa}x_{\gamma}
+q_{\tau\gamma}^2q_{\tau\kappa} \x_{\beta}\x_{\gamma}^2 \ot x_{\kappa}x_{\tau}
\\ & \quad 
-q_{\beta\tau}q_{\beta\gamma}^2q_{\beta\kappa} \x_{\tau} \x_{\gamma}^2 \ot x_{\kappa}x_{\beta}
-q_{\beta\tau}q_{\beta\gamma}^2\Bsj_8 \x_{\tau} \x_{\gamma}^2 \ot x_{\gamma}
\big)
\\ & =
\x_{\beta}\x_{\tau}\x_{\gamma}^2 \ot x_{\kappa}
-q_{\gamma\kappa} \x_{\beta}\x_{\tau}\x_{\gamma}\x_{\kappa} \ot x_{\gamma}
-q_{\tau\gamma}^2q_{\tau\kappa} \x_{\beta}\x_{\gamma}^2 \x_{\kappa} \ot x_{\tau}
+q_{\beta\tau}q_{\gamma\beta}^{-2} (3)_{\widetilde{q}_{\beta\gamma}}\Bsj_8 \x_{\tau} \x_{\gamma}^3 \ot 1
\\ & \quad 
+q_{\beta\tau}q_{\beta\gamma}^2q_{\beta\kappa} \x_{\tau} \x_{\gamma}^2\x_{\kappa} \ot x_{\beta}.
\end{align*}

For \eqref{eq:diff-case11-btgee} we use \eqref{eq:diff-case11-btggge}, \eqref{eq:diff-case11-btee}, \eqref{eq:diff-case11-bgke}, Remark \ref{rem:differential-q-commute} and \eqref{eq:diff-case11-tgee}:
\begin{align*}
d(& \x_{\beta}\x_{\tau}\x_{\gamma}\x_{\eta}^2 \ot 1) = \x_{\beta}\x_{\tau}\x_{\gamma}\x_{\eta} \ot x_{\eta}
-s \big( 
-q_{\gamma\eta}^2 \x_{\beta}\x_{\tau}\x_{\eta} \ot x_{\eta}x_{\gamma}
+q_{\tau\gamma}q_{\tau\eta}^2 \x_{\beta}\x_{\gamma}\x_{\eta} \ot x_{\eta}x_{\tau}
\\ & \quad
+q_{\tau\gamma}q_{\tau\eta} \Bsj_4 \x_{\beta}\x_{\gamma}\x_{\eta} \ot x_{\kappa}x_{\gamma}
+q_{\tau\gamma}q_{\gamma\eta} \Bsj_4 \x_{\beta}\x_{\gamma}\x_{\kappa} \ot x_{\eta}x_{\gamma}
\\ & \quad
-q_{\beta\tau}q_{\beta\gamma}q_{\beta\eta}^2 \x_{\tau}\x_{\gamma} \x_{\eta} \ot x_{\eta}x_{\beta}
-q_{\eta\gamma}^{-1}(2)_{\widetilde{q}_{\gamma\eta}} \Bsj_4\Bsj_8 \x_{\gamma}^3 \ot x_{\eta}
\big)
\\ & = \x_{\beta}\x_{\tau}\x_{\gamma}\x_{\eta} \ot x_{\eta}
+q_{\gamma\eta}^2 \x_{\beta}\x_{\tau}\x_{\eta}^2 \ot x_{\gamma}
-q_{\tau\gamma}q_{\gamma\eta} \Bsj_4 \x_{\beta}\x_{\gamma}\x_{\kappa}\x_{\eta} \ot x_{\gamma}
-q_{\tau\gamma}q_{\tau\eta}^2 \x_{\beta}\x_{\gamma}\x_{\eta}^2 \ot x_{\tau}
\\ & \quad
+q_{\beta\tau}q_{\beta\gamma}q_{\beta\eta}^2 \x_{\tau}\x_{\gamma}\x_{\eta}^2 \ot x_{\beta}
+q_{\gamma\eta}^{-1}\coef{-\beta-\eta\gamma}{2} \Bsj_4\Bsj_8 \x_{\gamma}^3\x_{\eta} \ot 1.
\end{align*}

The proof of \eqref{eq:diff-case11-bdgge} is similar, using Remark \ref{rem:differential-q-commute}, \eqref{eq:diff-case11-bdge}, \eqref{eq:diff-case11-bgggk} and \eqref{eq:diff-case11-dgge}:
\begin{align*}
d(& \x_{\beta}\x_{\delta}\x_{\gamma}^2\x_{\eta}\ot 1) = 
\x_{\beta}\x_{\delta}\x_{\gamma}^2 \ot x_{\eta}
-s \big( 
q_{\gamma\eta} \x_{\beta}\x_{\delta}\x_{\gamma} \ot x_{\eta}x_{\gamma}
+q_{\delta\gamma}^2q_{\delta\eta} \x_{\beta}\x_{\gamma}^2 \ot x_{\eta}x_{\delta}
\\ & \quad
+q_{\delta\gamma}^2\Bsj_2 \x_{\beta}\x_{\gamma}^2 \ot x_{\kappa}x_{\gamma}x_{\mu}
+q_{\delta\gamma}^2\Bsj_3  \x_{\beta}\x_{\gamma}^2 \ot x_{\gamma}x_{\nu}
-q_{\beta\delta}q_{\beta\gamma}^2q_{\beta\eta} \x_{\delta}\x_{\gamma}^2 \ot x_{\eta}x_{\beta}
\big)
\\ & = 
\x_{\beta}\x_{\delta}\x_{\gamma}^2 \ot x_{\eta}
-q_{\gamma\eta} \x_{\beta}\x_{\delta}\x_{\gamma}\x_{\eta} \ot x_{\gamma}
-q_{\delta\gamma}^2q_{\delta\eta} \x_{\beta}\x_{\gamma}^2\x_{\eta} \ot x_{\delta}
-q_{\delta\gamma}^2\Bsj_2 \x_{\beta}\x_{\gamma}^2\x_{\kappa} \ot x_{\gamma}x_{\mu}
\\ & \quad
-q_{\delta\gamma}^2(3)_{\widetilde{q}_{\gamma\eta}} \Bsj_3  \x_{\beta}\x_{\gamma}^3 \ot x_{\nu}
+q_{\beta\delta}q_{\beta\gamma}^2q_{\beta\eta} \x_{\delta}\x_{\gamma}^2\x_{\eta} \ot x_{\beta}
-q_{\beta\gamma}q_{\gamma\beta}^{-1}q_{\delta\gamma}^2
\coef{\beta\eta}{3} \Bsj_2\Bsj_8 \x_{\gamma}^4 \ot x_{\mu}.
\end{align*}

For \eqref{eq:diff-case11-bdgee} we use \eqref{eq:diff-case11-bdge}, \eqref{eq:diff-case11-bdee}, Remark \ref{rem:differential-q-commute}, \eqref{eq:diff-case11-bgke} and \eqref{eq:diff-case11-dgee}:
\begin{align*}
d(& \x_{\beta}\x_{\delta}\x_{\gamma}\x_{\eta}^2\ot 1) = \x_{\beta}\x_{\delta}\x_{\gamma}\x_{\eta} \ot x_{\eta}
-s \big( 
-q_{\gamma\eta}^2 \x_{\beta}\x_{\delta}\x_{\eta} \ot x_{\eta}x_{\gamma}
+q_{\delta\gamma}q_{\delta\eta}^2 \x_{\beta}\x_{\gamma}\x_{\eta} \ot x_{\eta}x_{\delta}
\\ & \quad
+q_{\delta\gamma}q_{\delta\eta} \Bsj_2 \x_{\beta}\x_{\gamma}\x_{\eta} \ot x_{\kappa}x_{\gamma}x_{\mu}
+q_{\delta\gamma}q_{\delta\eta} \Bsj_3 \x_{\beta}\x_{\gamma}\x_{\eta} \ot x_{\gamma} x_{\nu}
\\ & \quad
+q_{\delta\gamma}q_{\mu\eta}q_{\gamma\eta} \Bsj_2 \x_{\beta}\x_{\gamma}\x_{\kappa} \ot x_{\eta}x_{\gamma}x_{\mu}
+q_{\delta\gamma}(2)_{\widetilde{q}_{\gamma\eta}}q_{\nu\eta} \Bsj_3  \x_{\beta}\x_{\gamma}^2 \ot x_{\eta}x_{\nu}
\\ & \quad
+q_{\delta\gamma}(2)_{\widetilde{q}_{\gamma\eta}} \Bsj_3\Bsj_7 \x_{\beta}\x_{\gamma}^2 \ot x_{\iota}x_{\kappa}x_{\gamma}
+q_{\beta\gamma}q_{\delta\gamma}q_{\mu\eta} (2)_{\widetilde{q}_{\gamma\eta}} \Bsj_2\Bsj_8 \x_{\gamma}^3 \ot x_{\eta}x_{\mu}
\\ & \quad
+q_{\beta\gamma}q_{\delta\gamma} (2)_{\widetilde{q}_{\gamma\eta}} \Bsj_2\Bsj_6\Bsj_8 \x_{\gamma}^3 \ot x_{\iota}x_{\gamma}
-q_{\beta\delta}q_{\beta\gamma}q_{\beta\eta}^2 \x_{\delta}\x_{\gamma}\x_{\eta} \ot x_{\eta}x_{\beta}
\big)
\\ & = \x_{\beta}\x_{\delta}\x_{\gamma}\x_{\eta} \ot x_{\eta}
+q_{\gamma\eta}^2 \x_{\beta}\x_{\delta}\x_{\eta}^2 \ot x_{\gamma}
-q_{\delta\gamma}(2)_{\widetilde{q}_{\gamma\eta}} \Bsj_3\Bsj_7 \x_{\beta}\x_{\gamma}^2\x_{\iota} \ot x_{\kappa}x_{\gamma}
\\ & \quad
-q_{\delta\gamma}(2)_{\widetilde{q}_{\gamma\eta}}q_{\nu\eta} \Bsj_3  \x_{\beta}\x_{\gamma}^2\x_{\eta} \ot x_{\nu}
-q_{\delta\gamma}q_{\mu\eta}q_{\gamma\eta} \Bsj_2 \x_{\beta}\x_{\gamma}\x_{\kappa}\x_{\eta} \ot x_{\gamma}x_{\mu}
\\ & \quad
-q_{\delta\gamma}q_{\delta\eta}^2 \x_{\beta}\x_{\gamma}\x_{\eta}^2 \ot x_{\delta}
+q_{\beta\delta}q_{\beta\gamma}q_{\beta\eta}^2 \x_{\delta}\x_{\gamma}\x_{\eta}^2 \ot x_{\beta}
\\ & \quad
-q_{\beta\gamma}q_{\delta\gamma} (2)_{\widetilde{q}_{\gamma\eta}} \Bsj_2\Bsj_6\Bsj_8 \x_{\gamma}^3\x_{\iota} \ot x_{\gamma}
-q_{\beta\gamma}q_{\delta\gamma}q_{\mu\eta} (2)_{\widetilde{q}_{\gamma\eta}} \Bsj_2\Bsj_8 \x_{\gamma}^3\x_{\eta} \ot x_{\mu}.
\end{align*}

We check \eqref{eq:diff-case11-bggkk} using \eqref{eq:diff-case11-bgggk}, \eqref{eq:diff-case11-bgkk} and Remark \ref{rem:differential-q-commute}:
\begin{align*}
d(& \x_{\beta}\x_{\gamma}^2\x_{\kappa}^2\ot 1) = \x_{\beta}\x_{\gamma}^2\x_{\kappa} \ot x_{\kappa}
-s \big( 
-q_{\gamma\kappa}^2 \x_{\beta} \x_{\gamma}\x_{\kappa}\ot x_{\kappa}x_{\gamma}
-q_{\beta\kappa}q_{\beta\gamma}^2 \Bsj_8 \x_{\gamma}^2\x_{\kappa} \ot x_{\gamma} 
\\ & \quad
-q_{\beta\kappa}^2q_{\beta\gamma}^2 \x_{\gamma}^2 \x_{\kappa} \ot x_{\kappa}x_{\beta}
-q_{\beta\gamma}^2 (3)_{\widetilde{q}_{\beta\gamma}} \Bsj_8 \x_{\gamma}^3 \ot x_{\kappa}
\big)
\\ & = \x_{\beta}\x_{\gamma}^2\x_{\kappa} \ot x_{\kappa}
+q_{\gamma\kappa}^2 \x_{\beta} \x_{\gamma}\x_{\kappa}^2 \ot x_{\gamma}
+q_{\beta\gamma}^2 (3)_{\widetilde{q}_{\beta\gamma}} \Bsj_8 \x_{\gamma}^3\x_{\kappa} \ot 1
+q_{\beta\kappa}^2q_{\beta\gamma}^2 \x_{\gamma}^2\x_{\kappa}^2 \ot x_{\beta}.
\end{align*}

For \eqref{eq:diff-case11-bggke} we use \eqref{eq:diff-case11-bgggk}, Remark \ref{rem:differential-q-commute} and \eqref{eq:diff-case11-bgke}:
\begin{align*}
d(& \x_{\beta}\x_{\gamma}^2\x_{\kappa}\x_{\eta}\ot 1) = \x_{\beta}\x_{\gamma}^2\x_{\kappa} \ot x_{\eta}
-s \big( 
q_{\kappa\eta} \x_{\beta}\x_{\gamma}^2 \ot x_{\eta}x_{\kappa}
- q_{\gamma\kappa}q_{\gamma\eta} \x_{\beta}\x_{\gamma}\x_{\kappa}\ot x_{\eta}x_{\gamma}
\\ & \quad 
- q_{\beta\kappa}q_{\beta\gamma}^2q_{\beta\eta} \x_{\gamma}^2\x_{\kappa} \ot x_{\eta}x_{\beta}
-q_{\beta\gamma}^2 (3)_{\widetilde{q}_{\beta\gamma}} \Bsj_8 \x_{\gamma}^3 \ot x_{\eta}
\big)
\\ & = \x_{\beta}\x_{\gamma}^2\x_{\kappa} \ot x_{\eta}
-q_{\kappa\eta} \x_{\beta}\x_{\gamma}^2\x_{\eta} \ot x_{\kappa}
+q_{\gamma\kappa}q_{\gamma\eta} \x_{\beta}\x_{\gamma}\x_{\kappa}\x_{\eta} \ot x_{\gamma}
\\ & \quad 
+q_{\beta\kappa}q_{\beta\gamma}^2q_{\beta\eta} \x_{\gamma}^2\x_{\kappa}\x_{\eta} \ot x_{\beta}
+q_{\beta\gamma}^2 (3)_{\widetilde{q}_{\beta\gamma}} \Bsj_8 \x_{\gamma}^3\x_{\eta} \ot 1.
\end{align*}

Now we compute \eqref{eq:diff-case11-tggke} using Remark \ref{rem:differential-q-commute}, \eqref{eq:diff-case11-tggge} and \eqref{eq:diff-case11-tgke},
\begin{align*}
d(& \x_{\tau}\x_{\gamma}^2\x_{\kappa}\x_{\eta}\ot 1) = \x_{\tau}\x_{\gamma}^2\x_{\kappa} \ot x_{\eta}
-s \big(  
q_{\kappa\eta} \x_{\tau}\x_{\gamma}^2 \ot x_{\eta}x_{\kappa}
-q_{\gamma\eta}^2 \x_{\tau}\x_{\gamma}\x_{\kappa} \ot x_{\eta}x_{\gamma}
\\ & \quad
-q_{\tau\gamma}^2q_{\tau\kappa}q_{\tau\eta} \x_{\gamma}^2\x_{\kappa} \ot x_{\eta}x_{\tau}
-q_{\tau\gamma}^2q_{\tau\kappa} \Bsj_4 \x_{\gamma}^2\x_{\kappa} \ot x_{\kappa}x_{\gamma}
\big)
\\ & = \x_{\tau}\x_{\gamma}^2\x_{\kappa} \ot x_{\eta}
-q_{\kappa\eta} \x_{\tau}\x_{\gamma}^2\x_{\eta} \ot x_{\kappa}
+q_{\gamma\kappa}q_{\gamma\eta} \x_{\tau}\x_{\gamma}\x_{\kappa}\x_{\eta} \ot x_{\gamma}
+q_{\tau\gamma}^2q_{\tau\kappa}(2)_{\widetilde{q}_{\kappa\eta}} \Bsj_4 \x_{\gamma}^2\x_{\kappa}^2 \ot x_{\gamma}
\\ & \quad
+q_{\tau\gamma}^2q_{\tau\kappa}q_{\tau\eta} \x_{\gamma}^2\x_{\kappa}\x_{\eta} \ot x_{\tau}.
\end{align*}

For \eqref{eq:diff-case11-tggee} we use \eqref{eq:diff-case11-tggge}, \eqref{eq:diff-case11-tgee} and Remark \ref{rem:differential-q-commute}:
\begin{align*}
d(& \x_{\tau}\x_{\gamma}^2\x_{\eta}^2\ot 1) = \x_{\tau}\x_{\gamma}^2\x_{\eta} \ot x_{\eta}
- s\big( 
-q_{\gamma\eta}^2 \x_{\tau}\x_{\gamma}\x_{\eta}\ot x_{\eta}x_{\gamma}
-q_{\tau\gamma}^2 q_{\tau\eta} \Bsj_4 \x_{\gamma}^2\x_{\eta} \ot x_{\kappa}x_{\gamma}
\\ & \quad
-q_{\tau\gamma}^2q_{\tau\eta}^2 \x_{\gamma}^2\x_{\eta} \ot x_{\eta}x_{\tau}
-q_{\tau\gamma}^2q_{\gamma\eta} \Bsj_4 \x_{\gamma}^2\x_{\kappa}\ot x_{\eta}x_{\gamma}
\big)
\\ & = \x_{\tau}\x_{\gamma}^2\x_{\eta} \ot x_{\eta}
+q_{\gamma\eta}^2 \x_{\tau}\x_{\gamma}\x_{\eta}^2 \ot x_{\gamma}
+q_{\tau\gamma}^2q_{\gamma\eta} \Bsj_4 \x_{\gamma}^2\x_{\kappa}\x_{\eta} \ot x_{\gamma}
+q_{\tau\gamma}^2q_{\tau\eta}^2 \x_{\gamma}^2\x_{\eta}^2 \ot x_{\tau}.
\end{align*}

Finally we compute \eqref{eq:diff-case11-dggee} using \eqref{eq:diff-case11-dgge}, \eqref{eq:diff-case11-dgee} and Remark \ref{rem:differential-q-commute}:
\begin{align*}
d(& \x_{\delta}\x_{\gamma}^2\x_{\eta}^2\ot 1) = \x_{\delta}\x_{\gamma}^2\x_{\eta} \ot x_{\eta}
-s \big( 
-q_{\gamma\eta}^2 \x_{\delta}\x_{\gamma}\x_{\eta} \ot x_{\eta}x_{\gamma}
-q_{\delta\gamma}^2q_{\mu\eta}q_{\gamma\eta} \Bsj_2 \x_{\gamma}^2\x_{\kappa} \ot x_{\eta}x_{\gamma}x_{\mu}
\\ & \quad
-q_{\delta\gamma}^2(3)_{\widetilde{q}_{\gamma\eta}} \Bsj_3\Bsj_7 \x_{\gamma}^3 \ot x_{\iota}x_{\kappa}x_{\gamma}
-q_{\delta\gamma}^2q_{\nu\eta} (3)_{\widetilde{q}_{\gamma\eta}} \Bsj_3 \x_{\gamma}^3 \ot x_{\eta}x_{\nu}
-q_{\delta\gamma}^2q_{\delta\eta} \Bsj_3 \x_{\gamma}^2\x_{\eta} \ot x_{\gamma} x_{\nu}
\\ & \quad
-q_{\delta\gamma}^2q_{\delta\eta} \Bsj_2 \x_{\gamma}^2\x_{\eta} \ot x_{\kappa}x_{\gamma}x_{\mu}
-q_{\delta\gamma}^2q_{\delta\eta}^2 \x_{\gamma}^2\x_{\eta} \ot x_{\eta}x_{\delta}
\big)
\\ & = \x_{\delta}\x_{\gamma}^2\x_{\eta} \ot x_{\eta}
+q_{\gamma\eta}^2 \x_{\delta}\x_{\gamma}\x_{\eta}^2 \ot x_{\gamma}
+q_{\delta\gamma}^2(3)_{\widetilde{q}_{\gamma\eta}} \Bsj_3\Bsj_7 \x_{\gamma}^3\x_{\iota} \ot x_{\kappa}x_{\gamma}
\\ & \quad
+q_{\delta\gamma}^2q_{\nu\eta} (3)_{\widetilde{q}_{\gamma\eta}} \Bsj_3 \x_{\gamma}^3\x_{\eta} \ot x_{\nu}
+q_{\delta\gamma}^2q_{\mu\eta}q_{\gamma\eta} \Bsj_2 \x_{\gamma}^2\x_{\kappa}\x_{\eta} \ot x_{\gamma}x_{\mu}
+q_{\delta\gamma}^2q_{\delta\eta}^2 \x_{\gamma}^2\x_{\eta}^2 \ot x_{\delta}.
\end{align*}

\medspace

Next we compute differentials of some $6$-chains:
\begin{align}
\label{eq:diff-case11-abdgge}&
\begin{aligned}
d( & \x_{\alpha}\x_{\beta}\x_{\delta}\x_{\gamma}^{2}\x_{\eta}\ot 1)  = 
\x_{\alpha}\x_{\beta} \x_{\delta} \x_{\gamma}^{2} \ot x_{\eta} 
-q_{\gamma\eta} \x_{\alpha}\x_{\beta}\x_{\delta}\x_{\gamma} \x_{\eta} \ot x_{\gamma}
-q_{\delta\gamma}^2(3)_{\widetilde{q}_{\gamma\eta}}\Bsj_3 \x_{\alpha}\x_{\beta} \x_{\gamma}^3 \ot x_{\nu}
\\ & \quad
-q_{\delta\gamma}^2\Bsj_2 \x_{\alpha}\x_{\beta}\x_{\gamma}^2 \x_{\kappa} \ot  x_{\gamma}x_{\mu}
-q_{\delta\gamma}^2q_{\delta\eta} \x_{\alpha}\x_{\beta}\x_{\gamma}^2\x_{\eta} \ot x_{\delta}
+q_{\beta\delta}q_{\beta\gamma}^2q_{\beta\eta} \x_{\alpha} \x_{\delta}\x_{\gamma}^2\x_{\eta} \ot x_{\beta}
\\ & \quad
-q_{\delta\gamma}^2 q_{\gamma\beta}^{-2} \coef{\beta\eta\gamma}{3}
\Bsj_2\Bsj_8 \x_{\alpha}\x_{\gamma}^4 \ot x_{\mu}
- q_{\alpha\beta}q_{\beta\gamma}^2q_{\beta\eta} q_{\tau\gamma}^2 \Bsj_1\Bsj_4 \x_{\beta}\x_{\tau}\x_{\gamma}^2 \x_{\kappa} \ot x_{\gamma}x_{\beta}
\\ & \quad
- q_{\alpha\beta}q_{\beta\gamma}^2q_{\beta\eta} q_{\tau\gamma}^2 q_{\tau\eta} \Bsj_1 \x_{\beta}\x_{\tau}\x_{\gamma}^2\x_{\eta} \ot  x_{\tau}x_{\beta}
-q_{\alpha\beta}q_{\alpha\delta}q_{\alpha\gamma}^2q_{\alpha\eta} \x_{\beta}\x_{\delta}\x_{\gamma}^2\x_{\eta} \ot x_{\alpha}
\\ & \quad
+ q_{\alpha\beta}q_{\beta\gamma}^2q_{\beta\eta}q_{\beta\tau} q_{\gamma\beta}^{-2} (3)_{\widetilde{q}_{\beta\gamma}} q_{\tau\gamma}^2 \Bsj_1\Bsj_4\Bsj_8 \x_{\tau} \x_{\gamma}^4 \ot x_{\beta},
\end{aligned}
\\
\label{eq:diff-case11-abdgee}&
\begin{aligned}
d( & \x_{\alpha}\x_{\beta}\x_{\delta}\x_{\gamma}\x_{\eta}^2 \ot 1)  = \x_{\alpha}\x_{\beta} \x_{\delta} \x_{\gamma}\x_{\eta} \ot x_{\eta}
+q_{\gamma\eta}^2 \x_{\alpha}\x_{\beta}\x_{\delta}\x_{\eta}^2 \ot x_{\gamma}
-q_{\delta\gamma}(2)_{\widetilde{q}_{\gamma\eta}}q_{\nu\eta} \Bsj_3 \x_{\alpha}\x_{\beta}\x_{\gamma}^2\x_{\eta} \ot x_{\nu}
\\ & \quad
-q_{\delta\gamma}(2)_{\widetilde{q}_{\gamma\eta}} \Bsj_3\Bsj_7  \x_{\alpha}\x_{\beta}\x_{\gamma}^2\x_{\iota} \ot x_{\kappa}x_{\gamma}
-q_{\delta\gamma}q_{\gamma\eta}q_{\mu\eta}\Bsj_2 \x_{\alpha}\x_{\beta}\x_{\gamma}\x_{\kappa}\x_{\eta} \ot x_{\gamma}x_{\mu}
\\ & \quad
-q_{\delta\gamma}q_{\delta\eta}^2 \x_{\alpha}\x_{\beta}\x_{\gamma}\x_{\eta}^2 \ot x_{\delta}
+q_{\alpha\beta}q_{\beta\gamma}q_{\beta\eta}^2 q_{\tau\gamma}q_{\gamma\eta} \Bsj_1\Bsj_4 \x_{\beta}\x_{\tau}\x_{\gamma}\x_{\kappa}\x_{\eta} \ot x_{\gamma}x_{\beta}
\\ & \quad
+q_{\delta\gamma}q_{\gamma\beta}^{-1} \coef{\beta\eta\gamma}{2} \Bsj_2\Bsj_6\Bsj_8 \x_{\alpha}\x_{\gamma}^3\x_{\iota} \ot x_{\gamma}
+q_{\delta\gamma}q_{\gamma\kappa} \coeff{\alpha\beta\eta\gamma}{3} \Bsj_2\Bsj_6\Bsj_8\Bsj_9 \x_{\gamma}^5 \ot 1
\\ & \quad
+q_{\delta\gamma}q_{\gamma\beta}^{-1}q_{\mu\eta}  \coef{\beta\eta\gamma}{2} \Bsj_2\Bsj_8 \x_{\alpha}\x_{\gamma}^3\x_{\eta} \ot x_{\mu}
+q_{\beta\delta}q_{\beta\gamma}q_{\beta\eta}^2 \x_{\alpha} \x_{\delta}\x_{\gamma}\x_{\eta}^2 \ot x_{\beta}
\\ & \quad
-q_{\alpha\beta}q_{\beta\gamma}q_{\beta\eta}^2q_{\tau\gamma}q_{\tau\eta}^2 \Bsj_1 \x_{\beta}\x_{\tau}\x_{\gamma}\x_{\eta}^2 \ot x_{\tau} x_{\beta}
-q_{\alpha\beta}q_{\alpha\delta}q_{\alpha\gamma}q_{\alpha\eta}^2 \x_{\beta}\x_{\delta}\x_{\gamma}\x_{\eta}^2 \ot x_{\alpha}
\\ & \quad
-q_{\alpha\beta}q_{\beta\tau}q_{\beta\eta}^2 q_{\eta\gamma}^{-1} q_{\gamma\beta}^{-1} \coef{\beta\eta\gamma}{2} \Bsj_1 \Bsj_4 \Bsj_8 \x_{\tau}\x_{\gamma}^3\x_{\eta} \ot x_{\beta},
\end{aligned}
\\
\label{eq:diff-case11-abggke}&
\begin{aligned}
d( & \x_{\alpha}\x_{\beta}\x_{\gamma}^{2}\x_{\kappa}\x_{\eta}\ot 1)  = \x_{\alpha}\x_{\beta}\x_{\gamma}^{2}\x_{\kappa} \ot x_{\eta}
-q_{\kappa\eta} \x_{\alpha}\x_{\beta}\x_{\gamma}^2\x_{\eta} \ot x_{\kappa}
+q_{\gamma\kappa}q_{\gamma\eta} \x_{\alpha}\x_{\beta}\x_{\gamma}\x_{\kappa}\x_{\eta} \ot x_{\gamma}
\\ & \quad
+q_{\gamma\beta}^{-2}(3)_{\widetilde{q}_{\beta\gamma}} \Bsj_8 \x_{\alpha}\x_{\gamma}^3\x_{\eta} \ot 1
+q_{\beta\gamma}^2q_{\beta\kappa}q_{\beta\eta} \x_{\alpha}\x_{\gamma}^2\x_{\kappa}\x_{\eta} \ot  x_{\beta}
-q_{\alpha\beta}q_{\alpha\gamma}^2q_{\alpha\kappa}q_{\alpha\eta} \x_{\beta}\x_{\gamma}^2\x_{\kappa}\x_{\eta} \ot x_{\alpha},
\end{aligned}
\\
\label{eq:diff-case11-adggee}&
\begin{aligned}
d( & \x_{\alpha}\x_{\delta}\x_{\gamma}^{2}\x_{\eta}^2 \ot 1) = \x_{\alpha}\x_{\delta}\x_{\gamma}^{2}\x_{\eta} \ot x_{\eta}
+q_{\gamma\eta}^2 \x_{\alpha}\x_{\delta}\x_{\gamma}\x_{\eta}^2 \ot x_{\gamma}
+q_{\delta\gamma}^2(2)_{\widetilde{q}_{\gamma\eta}} \Bsj_3\Bsj_7 \x_{\alpha}\x_{\gamma}^3\x_{\iota} \ot x_{\kappa}x_{\gamma}
\\ & \quad
-q_{\delta\gamma}^2\coef{\alpha\eta\gamma}{3} \Bsj_3\Bsj_7\Bsj_9 \x_{\gamma}^5 \ot x_{\kappa}
+q_{\delta\gamma}^2q_{\nu\eta}(2)_{\widetilde{q}_{\gamma\eta}} \Bsj_3 \x_{\alpha}\x_{\gamma}^3\x_{\eta} \ot x_{\nu}
+q_{\delta\gamma}^2q_{\delta\eta}^2 \x_{\alpha}\x_{\gamma}^2\x_{\eta}^2 \ot x_{\delta}
\\ & \quad
+q_{\delta\gamma}^2q_{\mu\eta}q_{\gamma\eta} \Bsj_2 \x_{\alpha}\x_{\gamma}^2\x_{\kappa}\x_{\eta} \ot x_{\gamma}x_{\mu}
-q_{\beta\eta}^2q_{\tau\gamma}^2q_{\beta\gamma}^2q_{\gamma\eta} \Bsj_1\Bsj_4 \x_{\tau}\x_{\gamma}^2\x_{\kappa}\x_{\eta} \ot x_{\gamma}x_{\beta}
\\ & \quad
-q_{\beta\eta}^2q_{\tau\gamma}^2 q_{\beta\gamma}^2q_{\tau\eta}^2 \Bsj_1 \x_{\tau}\x_{\gamma}^2\x_{\eta}^2 \ot x_{\tau}x_{\beta}
-q_{\alpha\delta}q_{\alpha\gamma}^2q_{\alpha\eta}^2 \x_{\delta}\x_{\gamma}^2\x_{\eta}^2 \ot x_{\alpha},
\end{aligned}
\\
\label{eq:diff-case11-btggke}&
\begin{aligned}
d( & \x_{\beta}\x_{\tau}\x_{\gamma}^{2}\x_{\kappa}\x_{\eta}\ot 1) = \x_{\beta}\x_{\tau}\x_{\gamma}^{2}\x_{\kappa} \ot x_{\eta}
-q_{\kappa\eta} \x_{\beta}\x_{\tau}\x_{\gamma}^2\x_{\eta} \ot x_{\kappa}
+q_{\gamma\kappa}q_{\gamma\eta} \x_{\beta}\x_{\tau}\x_{\gamma}\x_{\kappa}\x_{\eta} \ot x_{\gamma}
\\ & \quad
+q_{\tau\gamma}^2q_{\tau\kappa}(2)_{\widetilde{q}_{\kappa\eta}} \Bsj_4 \x_{\beta}\x_{\gamma}^2\x_{\kappa}^2 \ot x_{\gamma}
+q_{\tau\gamma}^2q_{\tau\kappa}q_{\tau\eta} \x_{\beta}\x_{\gamma}^2\x_{\kappa}\x_{\eta} \ot x_{\tau}
-q_{\beta\tau}q_{\gamma\beta}^{-2} (3)_{\widetilde{q}_{\beta\gamma}}\Bsj_8 \x_{\tau}\x_{\gamma}^3\x_{\eta} \ot 1
\\ & \quad 
-q_{\beta\tau}q_{\beta\gamma}^2q_{\beta\kappa}q_{\beta\eta} \x_{\tau} \x_{\gamma}^2\x_{\kappa}\x_{\eta} \ot x_{\beta}
-q_{\gamma\eta}^2q_{\kappa\eta} \coef{-\eta,\beta,\gamma}{2} \Bsj_4 \Bsj_8 \x_{\gamma}^{4}\x_{\kappa} \ot 1,
\end{aligned}
\\
\label{eq:diff-case11-btggee}&
\begin{aligned}
d( & \x_{\beta}\x_{\tau}\x_{\gamma}^{2}\x_{\eta}^2\ot 1) =  \x_{\beta}\x_{\tau}\x_{\gamma}^{2}\x_{\eta} \ot x_{\eta}
+q_{\gamma\eta}^2 \x_{\beta}\x_{\tau}\x_{\gamma}\x_{\eta}^2 \ot x_{\gamma}
+q_{\tau\gamma}^2q_{\gamma\eta} \Bsj_4 \x_{\beta}\x_{\gamma}^2\x_{\kappa}\x_{\eta} \ot x_{\gamma}
\\ & \quad
+q_{\tau\gamma}^2q_{\tau\eta}^2 \x_{\beta}\x_{\gamma}^2\x_{\eta}^2 \ot x_{\tau}
-q_{\beta\tau}q_{\beta\gamma}^2q_{\beta\eta}^2 \x_{\tau}\x_{\gamma}^2\x_{\eta}^2 \ot x_{\beta}
+q_{\gamma\eta}^2 \coef{-\eta,\beta,\gamma}{2} \Bsj_4 \Bsj_8 \x_{\gamma}^4 \ot x_{\eta}.
\end{aligned}
\\
\label{eq:diff-case11-bdggee}&
\begin{aligned}
d( & \x_{\beta}\x_{\delta}\x_{\gamma}^{2}\x_{\eta}^2\ot 1) = \x_{\beta}\x_{\delta}\x_{\gamma}^{2}\x_{\eta} \ot x_{\eta}
+q_{\gamma\eta}^2 \x_{\beta}\x_{\delta}\x_{\gamma}\x_{\eta}^2 \ot x_{\gamma}
+q_{\delta\gamma}^2(3)_{\widetilde{q}_{\gamma\eta}} \Bsj_3\Bsj_7 \x_{\beta}\x_{\gamma}^3\x_{\iota} \ot x_{\kappa}x_{\gamma}
\\ & \quad
+q_{\delta\gamma}^2q_{\nu\eta}(3)_{\widetilde{q}_{\gamma\eta}} \Bsj_3 \x_{\beta}\x_{\gamma}^3\x_{\eta} \ot x_{\nu}
+q_{\delta\gamma}^2q_{\mu\eta}q_{\gamma\eta} \Bsj_2 \x_{\beta}\x_{\gamma}^2\x_{\kappa}\x_{\eta} \ot x_{\gamma}x_{\mu}
\\ & \quad
+q_{\delta\gamma}^2q_{\delta\eta}^2 \x_{\beta}\x_{\gamma}^2\x_{\eta}^2 \ot x_{\delta}
-q_{\beta\delta}q_{\beta\gamma}^2q_{\beta\eta}^2 \x_{\delta}\x_{\gamma}^2\x_{\eta}^2 \ot x_{\beta}
\\ & \quad
-q_{\beta\gamma}q_{\gamma\kappa}q_{\delta\gamma}^2 \coef{\beta\eta}{3} \Bsj_2\Bsj_6\Bsj_8 \x_{\gamma}^4\x_{\iota} \ot x_{\gamma}
-q_{\beta\gamma}q_{\gamma\kappa}q_{\delta\gamma}^2 \coef{\beta\eta}{3}q_{\mu\eta} \Bsj_2\Bsj_8 \x_{\gamma}^4\x_{\eta} \ot x_{\mu}.
\end{aligned}
\end{align}

First we deal with \eqref{eq:diff-case11-abdgge}: using \eqref{eq:diff-case11-abdgg}, Remark \ref{rem:differential-q-commute}, \eqref{eq:diff-case11-abdge}, \eqref{eq:diff-case11-abggk}, \eqref{eq:diff-case11-adgge}, \eqref{eq:diff-case11-btggk}, \eqref{eq:diff-case11-btggge} and \eqref{eq:diff-case11-bdgge}:
\begin{align*}
d(& \x_{\alpha} \x_{\beta} \x_{\delta} \x_{\gamma}^{2} \x_{\eta} 
\ot 1)  = \x_{\alpha}\x_{\beta} \x_{\delta} \x_{\gamma}^{2} \ot x_{\eta} - s \big( 
q_{\gamma\eta} \x_{\alpha}\x_{\beta} \x_{\delta} \x_{\gamma} \ot x_{\eta}x_{\gamma}
+q_{\delta\gamma}^2 \x_{\alpha}\x_{\beta}\x_{\gamma}^2 \ot q_{\delta\eta} x_{\eta}x_{\delta}
\\ & \quad
+q_{\delta\gamma}^2 \x_{\alpha}\x_{\beta}\x_{\gamma}^2 \ot \Bsj_2 x_{\kappa}x_{\gamma}x_{\mu}
+q_{\delta\gamma}^2 \x_{\alpha}\x_{\beta}\x_{\gamma}^2 \ot \Bsj_3 x_{\gamma} x_{\nu}
-q_{\beta\delta}q_{\beta\gamma}^2q_{\beta\eta} \x_{\alpha} \x_{\delta}\x_{\gamma}^2 \ot x_{\eta}x_{\beta}
\\ & \quad
+ q_{\alpha\beta}q_{\beta\gamma}^2q_{\beta\eta} q_{\tau\gamma}^2 \Bsj_1 \x_{\beta}\x_{\tau}\x_{\gamma}^2 \ot (q_{\tau\eta} x_{\eta}x_{\tau} +\Bsj_4 x_{\kappa}x_{\gamma})x_{\beta}
+q_{\alpha\beta}q_{\alpha\delta}q_{\alpha\gamma}^2q_{\alpha\eta} \x_{\beta}\x_{\delta}\x_{\gamma}^2 \ot x_{\eta}x_{\alpha} \big)
\\ & = 
\x_{\alpha}\x_{\beta} \x_{\delta} \x_{\gamma}^{2} \ot x_{\eta} 
-q_{\gamma\eta} \x_{\alpha}\x_{\beta}\x_{\delta}\x_{\gamma} \x_{\eta} \ot x_{\gamma}
-q_{\delta\gamma}^2(3)_{\widetilde{q}_{\gamma\eta}}\Bsj_3 \x_{\alpha}\x_{\beta} \x_{\gamma}^3 \ot x_{\nu}
\\ & \quad
-q_{\delta\gamma}^2\Bsj_2 \x_{\alpha}\x_{\beta}\x_{\gamma}^2 \x_{\kappa} \ot  x_{\gamma}x_{\mu}
-q_{\delta\gamma}^2q_{\delta\eta} \x_{\alpha}\x_{\beta}\x_{\gamma}^2\x_{\eta} \ot x_{\delta}
+q_{\beta\delta}q_{\beta\gamma}^2q_{\beta\eta} \x_{\alpha} \x_{\delta}\x_{\gamma}^2\x_{\eta} \ot x_{\beta}
\\ & \quad
-q_{\delta\gamma}^2 q_{\gamma\beta}^{-2} \coef{\beta\eta\gamma}{3}
\Bsj_2\Bsj_8 \x_{\alpha}\x_{\gamma}^4 \ot x_{\mu}
- q_{\alpha\beta}q_{\beta\gamma}^2q_{\beta\eta} q_{\tau\gamma}^2 \Bsj_1\Bsj_4 \x_{\beta}\x_{\tau}\x_{\gamma}^2 \x_{\kappa} \ot x_{\gamma}x_{\beta}
\\ & \quad
- q_{\alpha\beta}q_{\beta\gamma}^2q_{\beta\eta} q_{\tau\gamma}^2 q_{\tau\eta} \Bsj_1 \x_{\beta}\x_{\tau}\x_{\gamma}^2\x_{\eta} \ot  x_{\tau}x_{\beta}
-q_{\alpha\beta}q_{\alpha\delta}q_{\alpha\gamma}^2q_{\alpha\eta} \x_{\beta}\x_{\delta}\x_{\gamma}^2\x_{\eta} \ot x_{\alpha}
\\ & \quad
+ q_{\alpha\beta}q_{\beta\gamma}^2q_{\beta\eta}q_{\beta\tau} q_{\gamma\beta}^{-2} (3)_{\widetilde{q}_{\beta\gamma}} q_{\tau\gamma}^2 \Bsj_1\Bsj_4\Bsj_8 \x_{\tau} \x_{\gamma}^4 \ot x_{\beta}.
\end{align*}

Next we compute \eqref{eq:diff-case11-abdgee}: using \eqref{eq:diff-case11-abdge}, \eqref{eq:diff-case11-abdee}, Remark \ref{rem:differential-q-commute}, \eqref{eq:diff-case11-abgke}, \eqref{eq:diff-case11-adgee}, \eqref{eq:diff-case11-agggi}, \eqref{eq:diff-case11-btgke}, \eqref{eq:diff-case11-btgee}, \eqref{eq:diff-case11-bdgee} and \eqref{eq:diff-case11-tggge}:
\begin{align*}
d(& \x_{\alpha}\x_{\beta}\x_{\delta}\x_{\gamma}\x_{\eta}^{2}\ot 1)  = 
\x_{\alpha}\x_{\beta} \x_{\delta} \x_{\gamma}\x_{\eta} \ot x_{\eta}
- s \big(
q_{\delta\gamma}(2)_{\widetilde{q}_{\gamma\eta}}\Bsj_3 \x_{\alpha}\x_{\beta}\x_{\gamma}^2 \ot (q_{\nu\eta} x_{\eta}x_{\nu} + \Bsj_7 x_{\iota}x_{\kappa}x_{\gamma})
\\ & \quad
-q_{\gamma\eta}^2 \x_{\alpha}\x_{\beta}\x_{\delta}\x_{\eta} \ot x_{\eta}x_{\gamma}
+q_{\delta\gamma}q_{\delta\eta} \x_{\alpha}\x_{\beta}\x_{\gamma}\x_{\eta} \ot (q_{\delta\eta} x_{\eta}x_{\delta} + \Bsj_2 x_{\kappa}x_{\gamma}x_{\mu} + \Bsj_3 x_{\gamma} x_{\nu})
\\ & \quad
+q_{\delta\gamma}q_{\gamma\eta}q_{\mu\eta}\Bsj_2 \x_{\alpha}\x_{\beta}\x_{\gamma} \x_{\kappa} \ot x_{\eta}x_{\gamma}x_{\mu}
-q_{\beta\delta}q_{\beta\gamma}q_{\beta\eta}^2 \x_{\alpha} \x_{\delta}\x_{\gamma}\x_{\eta} \ot x_{\eta}x_{\beta}
\\ & \quad
+q_{\alpha\beta}q_{\beta\gamma}q_{\beta\eta}^2q_{\tau\gamma}q_{\tau\eta} \Bsj_1 \x_{\beta}\x_{\tau}\x_{\gamma}\x_{\eta} \ot (q_{\tau\eta} x_{\eta}x_{\tau} +\Bsj_4 x_{\kappa}x_{\gamma})x_{\beta}
\\ & \quad
-q_{\delta\gamma}q_{\gamma\beta}^{-1} \coef{\beta\eta\gamma}{2} \Bsj_2\Bsj_8 \x_{\alpha}\x_{\gamma}^3 \ot x_{\mu}x_{\eta}
+ q_{\alpha\beta}q_{\beta\gamma}q_{\beta\eta}^2 q_{\tau\gamma}q_{\gamma\eta} \Bsj_1\Bsj_4 \x_{\beta}\x_{\tau}\x_{\gamma}\x_{\kappa} \ot x_{\eta}x_{\gamma}x_{\beta}
\\ & \quad
+q_{\alpha\beta}q_{\alpha\delta}q_{\alpha\gamma}q_{\alpha\eta}^2 \x_{\beta}\x_{\delta}\x_{\gamma}\x_{\eta} \ot x_{\eta}x_{\alpha}
+q_{\alpha\beta}q_{\beta\tau}q_{\beta\eta}^2 q_{\eta\gamma}^{-1} q_{\gamma\beta}^{-1} \coef{\beta\eta\gamma}{2} \Bsj_1 \Bsj_4 \Bsj_8 \x_{\tau}\x_{\gamma}^3 \ot x_{\eta}x_{\beta}
\big)
\\ & = \x_{\alpha}\x_{\beta} \x_{\delta} \x_{\gamma}\x_{\eta} \ot x_{\eta}
+q_{\gamma\eta}^2 \x_{\alpha}\x_{\beta}\x_{\delta}\x_{\eta}^2 \ot x_{\gamma}
-q_{\delta\gamma}(2)_{\widetilde{q}_{\gamma\eta}} \Bsj_3\Bsj_7  \x_{\alpha}\x_{\beta}\x_{\gamma}^2\x_{\iota} \ot x_{\kappa}x_{\gamma}
\\ & \quad
-q_{\delta\gamma}(2)_{\widetilde{q}_{\gamma\eta}}q_{\nu\eta} \Bsj_3 \x_{\alpha}\x_{\beta}\x_{\gamma}^2\x_{\eta} \ot x_{\nu}
-q_{\delta\gamma}q_{\gamma\eta}q_{\mu\eta}\Bsj_2 \x_{\alpha}\x_{\beta}\x_{\gamma}\x_{\kappa}\x_{\eta} \ot x_{\gamma}x_{\mu}
\\ & \quad
-q_{\delta\gamma}q_{\delta\eta}^2 \x_{\alpha}\x_{\beta}\x_{\gamma}\x_{\eta}^2 \ot x_{\delta}
+q_{\alpha\beta}q_{\beta\gamma}q_{\beta\eta}^2 q_{\tau\gamma}q_{\gamma\eta} \Bsj_1\Bsj_4 \x_{\beta}\x_{\tau}\x_{\gamma}\x_{\kappa}\x_{\eta} \ot x_{\gamma}x_{\beta}
\\ & \quad
+q_{\delta\gamma}q_{\gamma\beta}^{-1} \coef{\beta\eta\gamma}{2} \Bsj_2\Bsj_6\Bsj_8 \x_{\alpha}\x_{\gamma}^3\x_{\iota} \ot x_{\gamma}
+q_{\delta\gamma}q_{\gamma\kappa} \coeff{\alpha\beta\eta\gamma}{3} \Bsj_2\Bsj_6\Bsj_8\Bsj_9 \x_{\gamma}^5 \ot 1
+q_{\delta\gamma}q_{\gamma\beta}^{-1}q_{\mu\eta}  \coef{\beta\eta\gamma}{2} \Bsj_2\Bsj_8 \x_{\alpha}\x_{\gamma}^3\x_{\eta} \ot x_{\mu}
\\ & \quad
+q_{\beta\delta}q_{\beta\gamma}q_{\beta\eta}^2 \x_{\alpha} \x_{\delta}\x_{\gamma}\x_{\eta}^2 \ot x_{\beta}
-q_{\alpha\beta}q_{\beta\gamma}q_{\beta\eta}^2q_{\tau\gamma}q_{\tau\eta}^2 \Bsj_1 \x_{\beta}\x_{\tau}\x_{\gamma}\x_{\eta}^2 \ot x_{\tau} x_{\beta}
\\ & \quad
-q_{\alpha\beta}q_{\alpha\delta}q_{\alpha\gamma}q_{\alpha\eta}^2 \x_{\beta}\x_{\delta}\x_{\gamma}\x_{\eta}^2 \ot x_{\alpha}
-q_{\alpha\beta}q_{\beta\tau}q_{\beta\eta}^2 q_{\eta\gamma}^{-1} q_{\gamma\beta}^{-1} \coef{\beta\eta\gamma}{2} \Bsj_1 \Bsj_4 \Bsj_8 \x_{\tau}\x_{\gamma}^3\x_{\eta} \ot x_{\beta}.
\end{align*}

For \eqref{eq:diff-case11-abggke} we use \eqref{eq:diff-case11-abggk}, Remark \ref{rem:differential-q-commute}, \eqref{eq:diff-case11-abgke} and \eqref{eq:diff-case11-bggke}:
\begin{align*}
d( & \x_{\alpha}\x_{\beta}\x_{\gamma}^{2}\x_{\kappa}\x_{\eta}\ot 1)  = \x_{\alpha}\x_{\beta}\x_{\gamma}^{2}\x_{\kappa} \ot x_{\eta}
-s \big( 
q_{\kappa\eta} \x_{\alpha}\x_{\beta}\x_{\gamma}^2 \ot x_{\eta}x_{\kappa}
-q_{\gamma\kappa}q_{\gamma\eta} \x_{\alpha}\x_{\beta}\x_{\gamma}\x_{\kappa} \ot x_{\eta}x_{\gamma}
\\ & \quad
-q_{\beta\gamma}^2q_{\beta\kappa}q_{\beta\eta} \x_{\alpha}\x_{\gamma}^2\x_{\kappa} \ot  x_{\eta}x_{\beta}
-q_{\gamma\beta}^{-2}(3)_{\widetilde{q}_{\beta\gamma}} \Bsj_8 \x_{\alpha}\x_{\gamma}^3 \ot x_{\eta}
+q_{\alpha\beta}q_{\alpha\gamma}^2q_{\alpha\kappa}q_{\alpha\eta} \x_{\beta}\x_{\gamma}^2\x_{\kappa} \ot x_{\eta}x_{\alpha}
\big)
\\ & = \x_{\alpha}\x_{\beta}\x_{\gamma}^{2}\x_{\kappa} \ot x_{\eta}
-q_{\kappa\eta} \x_{\alpha}\x_{\beta}\x_{\gamma}^2\x_{\eta} \ot x_{\kappa}
+q_{\gamma\kappa}q_{\gamma\eta} \x_{\alpha}\x_{\beta}\x_{\gamma}\x_{\kappa}\x_{\eta} \ot x_{\gamma}
\\ & \quad
+q_{\gamma\beta}^{-2}(3)_{\widetilde{q}_{\beta\gamma}} \Bsj_8 \x_{\alpha}\x_{\gamma}^3\x_{\eta} \ot 1
+q_{\beta\gamma}^2q_{\beta\kappa}q_{\beta\eta} \x_{\alpha}\x_{\gamma}^2\x_{\kappa}\x_{\eta} \ot  x_{\beta}
-q_{\alpha\beta}q_{\alpha\gamma}^2q_{\alpha\kappa}q_{\alpha\eta} \x_{\beta}\x_{\gamma}^2\x_{\kappa}\x_{\eta} \ot x_{\alpha}.
\end{align*}

For \eqref{eq:diff-case11-adggee} we use \eqref{eq:diff-case11-adgge}, \eqref{eq:diff-case11-adgee}, \eqref{eq:diff-case11-agggi}, \eqref{eq:diff-case11-tggke}, \eqref{eq:diff-case11-tggee} and \eqref{eq:diff-case11-dggee}:
\begin{align*}
d( & \x_{\alpha}\x_{\delta}\x_{\gamma}^{2}\x_{\eta}^2 \ot 1) = \x_{\alpha}\x_{\delta}\x_{\gamma}^{2}\x_{\eta} \ot x_{\eta}
-s \big( 
-q_{\gamma\eta}^2 \x_{\alpha}\x_{\delta} \x_{\gamma}\x_{\eta} \ot x_{\eta}x_{\gamma}
-q_{\delta\gamma}^2q_{\nu\eta}(2)_{\widetilde{q}_{\gamma\eta}} \Bsj_3 \x_{\alpha}\x_{\gamma}^3 \ot x_{\eta}x_{\nu}
\\ & \quad
-q_{\delta\gamma}^2(2)_{\widetilde{q}_{\gamma\eta}} \Bsj_3\Bsj_7 \x_{\alpha}\x_{\gamma}^3 \ot x_{\iota}x_{\kappa}x_{\gamma}
-q_{\delta\gamma}^2q_{\mu\eta}q_{\gamma\eta} \Bsj_2 \x_{\alpha}\x_{\gamma}^2\x_{\kappa} \ot x_{\eta}x_{\gamma}x_{\mu}
-q_{\delta\gamma}^2q_{\delta\eta}^2 \x_{\alpha}\x_{\gamma}^2\x_{\eta} \ot x_{\eta}x_{\delta}
\\ & \quad
-q_{\delta\gamma}^2q_{\delta\eta} \Bsj_2 \x_{\alpha}\x_{\gamma}^2\x_{\eta} \ot x_{\kappa}x_{\gamma}x_{\mu}
-q_{\delta\gamma}^2q_{\delta\eta} \Bsj_3 \x_{\alpha}\x_{\gamma}^2\x_{\eta} \ot x_{\gamma} x_{\nu}
+q_{\alpha\delta}q_{\alpha\gamma}^2q_{\alpha\eta}^2 \x_{\delta}\x_{\gamma}^2\x_{\eta} \ot x_{\eta}x_{\alpha}
\\ & \quad
+q_{\beta\eta}^2q_{\tau\gamma}^2 q_{\beta\gamma}^2q_{\tau\eta}^2 \Bsj_1 \x_{\tau}\x_{\gamma}^2\x_{\eta} \ot x_{\eta}x_{\tau}x_{\beta}
+q_{\beta\eta}^2q_{\tau\gamma}^2 q_{\beta\gamma}^2q_{\tau\eta} \Bsj_1\Bsj_4 \x_{\tau}\x_{\gamma}^2\x_{\eta} \ot x_{\kappa}x_{\gamma}x_{\beta}
\\ & \quad
+q_{\beta\eta}^2q_{\tau\gamma}^2q_{\beta\gamma}^2q_{\gamma\eta} \Bsj_1\Bsj_4 \x_{\tau}\x_{\gamma}^2\x_{\kappa} \ot x_{\eta}x_{\gamma}x_{\beta}
\big)
\\ & = \x_{\alpha}\x_{\delta}\x_{\gamma}^{2}\x_{\eta} \ot x_{\eta}
+q_{\gamma\eta}^2 \x_{\alpha}\x_{\delta}\x_{\gamma}\x_{\eta}^2 \ot x_{\gamma}
+q_{\delta\gamma}^2(2)_{\widetilde{q}_{\gamma\eta}} \Bsj_3\Bsj_7 \x_{\alpha}\x_{\gamma}^3\x_{\iota} \ot x_{\kappa}x_{\gamma}
\\ & \quad
-q_{\delta\gamma}^2\coef{\alpha\eta\gamma}{3} \Bsj_3\Bsj_7\Bsj_9 \x_{\gamma}^5 \ot x_{\kappa}
+q_{\delta\gamma}^2q_{\nu\eta}(2)_{\widetilde{q}_{\gamma\eta}} \Bsj_3 \x_{\alpha}\x_{\gamma}^3\x_{\eta} \ot x_{\nu}
+q_{\delta\gamma}^2q_{\mu\eta}q_{\gamma\eta} \Bsj_2 \x_{\alpha}\x_{\gamma}^2\x_{\kappa}\x_{\eta} \ot x_{\gamma}x_{\mu}
\\ & \quad
+q_{\delta\gamma}^2q_{\delta\eta}^2 \x_{\alpha}\x_{\gamma}^2\x_{\eta}^2 \ot x_{\delta}
-q_{\beta\eta}^2q_{\tau\gamma}^2q_{\beta\gamma}^2q_{\gamma\eta} \Bsj_1\Bsj_4 \x_{\tau}\x_{\gamma}^2\x_{\kappa}\x_{\eta} \ot x_{\gamma}x_{\beta}
\\ & \quad
-q_{\beta\eta}^2q_{\tau\gamma}^2 q_{\beta\gamma}^2q_{\tau\eta}^2 \Bsj_1 \x_{\tau}\x_{\gamma}^2\x_{\eta}^2 \ot x_{\tau}x_{\beta}
-q_{\alpha\delta}q_{\alpha\gamma}^2q_{\alpha\eta}^2 \x_{\delta}\x_{\gamma}^2\x_{\eta}^2 \ot x_{\alpha}.
\end{align*}

Next we compute \eqref{eq:diff-case11-btggke} using \eqref{eq:diff-case11-btggk}, \eqref{eq:diff-case11-btggge}, \eqref{eq:diff-case11-btgke}, \eqref{eq:diff-case11-bggkk}, \eqref{eq:diff-case11-bggke}, \eqref{eq:diff-case11-tggge} and \eqref{eq:diff-case11-tggke}:
\begin{align*}
d( & \x_{\beta}\x_{\tau}\x_{\gamma}^{2}\x_{\kappa}\x_{\eta}\ot 1) = \x_{\beta}\x_{\tau}\x_{\gamma}^{2}\x_{\kappa} \ot x_{\eta}
-s \big( 
q_{\kappa\eta} \x_{\beta}\x_{\tau}\x_{\gamma}^2 \ot x_{\eta}x_{\kappa}
-q_{\gamma\kappa}q_{\gamma\eta} \x_{\beta}\x_{\tau}\x_{\gamma}\x_{\kappa} \ot x_{\eta}x_{\gamma}
\\ & \quad 
-q_{\tau\gamma}^2q_{\tau\kappa} \Bsj_4 \x_{\beta}\x_{\gamma}^2 \x_{\kappa} \ot x_{\kappa}x_{\gamma}
-q_{\tau\gamma}^2q_{\tau\kappa}q_{\tau\eta} \x_{\beta}\x_{\gamma}^2 \x_{\kappa} \ot x_{\eta}x_{\tau}
\\ & \quad 
+q_{\beta\tau}q_{\gamma\beta}^{-2} (3)_{\widetilde{q}_{\beta\gamma}}\Bsj_8 \x_{\tau} \x_{\gamma}^3 \ot x_{\eta}
+q_{\beta\tau}q_{\beta\gamma}^2q_{\beta\kappa}q_{\beta\eta} \x_{\tau} \x_{\gamma}^2\x_{\kappa} \ot x_{\eta}x_{\beta}
\big)
\\ & = \x_{\beta}\x_{\tau}\x_{\gamma}^{2}\x_{\kappa} \ot x_{\eta}
-q_{\kappa\eta} \x_{\beta}\x_{\tau}\x_{\gamma}^2\x_{\eta} \ot x_{\kappa}
+q_{\gamma\kappa}q_{\gamma\eta} \x_{\beta}\x_{\tau}\x_{\gamma}\x_{\kappa}\x_{\eta} \ot x_{\gamma}
\\ & \quad
+q_{\tau\gamma}^2q_{\tau\kappa}(2)_{\widetilde{q}_{\kappa\eta}} \Bsj_4 \x_{\beta}\x_{\gamma}^2\x_{\kappa}^2 \ot x_{\gamma}
+q_{\tau\gamma}^2q_{\tau\kappa}q_{\tau\eta} \x_{\beta}\x_{\gamma}^2\x_{\kappa}\x_{\eta} \ot x_{\tau}
-q_{\beta\tau}q_{\gamma\beta}^{-2} (3)_{\widetilde{q}_{\beta\gamma}}\Bsj_8 \x_{\tau}\x_{\gamma}^3\x_{\eta} \ot 1
\\ & \quad 
-q_{\beta\tau}q_{\beta\gamma}^2q_{\beta\kappa}q_{\beta\eta} \x_{\tau} \x_{\gamma}^2\x_{\kappa}\x_{\eta} \ot x_{\beta}
-q_{\gamma\eta}^2q_{\kappa\eta} \coef{-\eta,\beta,\gamma}{2} \Bsj_4 \Bsj_8 \x_{\gamma}^{4}\x_{\kappa} \ot 1.
\end{align*}

For \eqref{eq:diff-case11-btggee} we use \eqref{eq:diff-case11-btggge}, \eqref{eq:diff-case11-btgee}, \eqref{eq:diff-case11-bggke}, Remark \ref{rem:differential-q-commute} and \eqref{eq:diff-case11-tggee}:
\begin{align*}
d( & \x_{\beta}\x_{\tau}\x_{\gamma}^{2}\x_{\eta}^2\ot 1) = \x_{\beta}\x_{\tau}\x_{\gamma}^{2}\x_{\eta} \ot x_{\eta}
-s \big( 
-q_{\gamma\eta}^2 \x_{\beta}\x_{\tau}\x_{\gamma}\x_{\eta} \ot x_{\eta}x_{\gamma}
-q_{\tau\gamma}^2q_{\tau\eta}^2 \x_{\beta}\x_{\gamma}^2\x_{\eta}\ot x_{\eta}x_{\tau}
\\ &
-q_{\tau\gamma}^2q_{\gamma\eta} \Bsj_4 \x_{\beta} \x_{\gamma}^2\x_{\kappa}\ot x_{\eta}x_{\gamma}
-q_{\tau\gamma}^2q_{\tau\eta} \Bsj_4 \x_{\beta}\x_{\gamma}^2\x_{\eta}\ot x_{\kappa}x_{\gamma}
\\ & 
+q_{\beta\tau}q_{\beta\gamma}^2q_{\beta\eta}^2 \x_{\tau}\x_{\gamma}^2\x_{\eta}\ot x_{\eta}x_{\beta}
-q_{\gamma\eta}^2 \coef{-\eta,\beta,\gamma}{2} \Bsj_4 \Bsj_8 \x_{\gamma}^4 \ot x_{\eta}
\big)
\\ & = \x_{\beta}\x_{\tau}\x_{\gamma}^{2}\x_{\eta} \ot x_{\eta}
+q_{\gamma\eta}^2 \x_{\beta}\x_{\tau}\x_{\gamma}\x_{\eta}^2 \ot x_{\gamma}
+q_{\tau\gamma}^2q_{\gamma\eta} \Bsj_4 \x_{\beta}\x_{\gamma}^2\x_{\kappa}\x_{\eta} \ot x_{\gamma}
+q_{\tau\gamma}^2q_{\tau\eta}^2 \x_{\beta}\x_{\gamma}^2\x_{\eta}^2 \ot x_{\tau}
\\ & \quad
-q_{\beta\tau}q_{\beta\gamma}^2q_{\beta\eta}^2 \x_{\tau}\x_{\gamma}^2\x_{\eta}^2 \ot x_{\beta}
+q_{\gamma\eta}^2 \coef{-\eta,\beta,\gamma}{2} \Bsj_4 \Bsj_8 \x_{\gamma}^4 \ot x_{\eta}.
\end{align*}

The proof of \eqref{eq:diff-case11-bdggee} is similar, using in this case \eqref{eq:diff-case11-bdgge}, \eqref{eq:diff-case11-bdgee}, Remark \ref{rem:differential-q-commute}, \eqref{eq:diff-case11-bggke} and \eqref{eq:diff-case11-dggee}:
\begin{align*}
d( & \x_{\beta}\x_{\delta}\x_{\gamma}^{2}\x_{\eta}^2\ot 1) = \x_{\beta}\x_{\delta}\x_{\gamma}^{2}\x_{\eta} \ot x_{\eta}
-s \big( 
-q_{\gamma\eta}^2 \x_{\beta}\x_{\delta}\x_{\gamma}\x_{\eta} \ot x_{\eta}x_{\gamma}
-q_{\delta\gamma}^2q_{\delta\eta} \Bsj_3 \x_{\beta}\x_{\gamma}^2\x_{\eta} \ot x_{\gamma} x_{\nu}
\\ & \quad
-q_{\delta\gamma}^2q_{\delta\eta} \Bsj_2 \x_{\beta}\x_{\gamma}^2\x_{\eta} \ot x_{\kappa}x_{\gamma}x_{\mu}
-q_{\delta\gamma}^2q_{\delta\eta}^2 \x_{\beta}\x_{\gamma}^2\x_{\eta} \ot x_{\eta}x_{\delta}
-q_{\delta\gamma}^2q_{\mu\eta}q_{\gamma\eta} \Bsj_2 \x_{\beta}\x_{\gamma}^2\x_{\kappa} \ot x_{\eta}x_{\gamma}x_{\mu}
\\ & \quad
-q_{\delta\gamma}^2(3)_{\widetilde{q}_{\gamma\eta}} \Bsj_3\Bsj_7 \x_{\beta}\x_{\gamma}^3 \ot x_{\iota}x_{\kappa}x_{\gamma}
-q_{\delta\gamma}^2q_{\nu\eta}(3)_{\widetilde{q}_{\gamma\eta}} \Bsj_3 \x_{\beta}\x_{\gamma}^3 \ot x_{\eta}x_{\nu}
+q_{\beta\delta}q_{\beta\gamma}^2q_{\beta\eta}^2 \x_{\delta}\x_{\gamma}^2\x_{\eta} \ot x_{\eta}x_{\beta}
\\ & \quad
-q_{\beta\gamma}q_{\gamma\beta}^{-1}q_{\delta\gamma}^2 \coef{\beta\eta}{3} \Bsj_2\Bsj_6\Bsj_8 \x_{\gamma}^4 \ot x_{\iota}x_{\gamma}
-q_{\beta\gamma}q_{\gamma\beta}^{-1}q_{\delta\gamma}^2 \coef{\beta\eta}{3}q_{\mu\eta} \Bsj_2\Bsj_8 \x_{\gamma}^4 \ot x_{\eta}x_{\mu}
\big)
\\ & = \x_{\beta}\x_{\delta}\x_{\gamma}^{2}\x_{\eta} \ot x_{\eta}
+q_{\gamma\eta}^2 \x_{\beta}\x_{\delta}\x_{\gamma}\x_{\eta}^2 \ot x_{\gamma}
+q_{\delta\gamma}^2(3)_{\widetilde{q}_{\gamma\eta}} \Bsj_3\Bsj_7 \x_{\beta}\x_{\gamma}^3\x_{\iota} \ot x_{\kappa}x_{\gamma}
\\ & \quad
+q_{\delta\gamma}^2q_{\nu\eta}(3)_{\widetilde{q}_{\gamma\eta}} \Bsj_3 \x_{\beta}\x_{\gamma}^3\x_{\eta} \ot x_{\nu}
+q_{\delta\gamma}^2q_{\mu\eta}q_{\gamma\eta} \Bsj_2 \x_{\beta}\x_{\gamma}^2\x_{\kappa}\x_{\eta} \ot x_{\gamma}x_{\mu}
\\ & \quad
+q_{\delta\gamma}^2q_{\delta\eta}^2 \x_{\beta}\x_{\gamma}^2\x_{\eta}^2 \ot x_{\delta}
-q_{\beta\delta}q_{\beta\gamma}^2q_{\beta\eta}^2 \x_{\delta}\x_{\gamma}^2\x_{\eta}^2 \ot x_{\beta}
\\ & \quad
-q_{\beta\gamma}q_{\gamma\kappa}q_{\delta\gamma}^2 \coef{\beta\eta}{3} \Bsj_2\Bsj_6\Bsj_8 \x_{\gamma}^4\x_{\iota} \ot x_{\gamma}
-q_{\beta\gamma}q_{\gamma\kappa}q_{\delta\gamma}^2 \coef{\beta\eta}{3}q_{\mu\eta} \Bsj_2\Bsj_8 \x_{\gamma}^4\x_{\eta} \ot x_{\mu}.
\end{align*}

\bigbreak
Finally we compute \eqref{eq:diff-case11-formula}. Using \eqref{eq:diff-case11-abdgge}, \eqref{eq:diff-case11-abdgee}, Remark \ref{rem:differential-q-commute},
\eqref{eq:diff-case11-abggke}, \eqref{eq:diff-case11-adggee}, \eqref{eq:diff-case11-btggke}, \eqref{eq:diff-case11-btggee}, \eqref{eq:diff-case11-bdggee} and \eqref{eq:diff-case11-tggge}:
\begin{align*}
d(\x_{\alpha} & \x_{\beta} \x_{\delta} \x_{\gamma}^{2} \x_{\eta}^2 \ot 1)  = \x_{\alpha} \x_{\beta} \x_{\delta} \x_{\gamma}^{2} \x_{\eta} \ot x_{\eta} - s \big( 
-q_{\gamma\eta}^2 \x_{\alpha}\x_{\beta}\x_{\delta}\x_{\gamma} \x_{\eta} \ot x_{\eta}x_{\gamma}
\\ & \quad
-q_{\delta\gamma}^2(3)_{\widetilde{q}_{\gamma\eta}}\Bsj_3 \x_{\alpha}\x_{\beta} \x_{\gamma}^3 \ot x_{\nu}x_{\eta}
-q_{\delta\gamma}^2\Bsj_2 \x_{\alpha}\x_{\beta}\x_{\gamma}^2 \x_{\kappa} \ot  x_{\gamma}x_{\mu}x_{\eta}
-q_{\delta\gamma}^2q_{\delta\eta} \x_{\alpha}\x_{\beta}\x_{\gamma}^2\x_{\eta} \ot x_{\delta}x_{\eta}
\\ & \quad
+q_{\beta\delta}q_{\beta\gamma}^2q_{\beta\eta} \x_{\alpha} \x_{\delta}\x_{\gamma}^2\x_{\eta} \ot x_{\beta}x_{\eta}
- q_{\alpha\beta}q_{\beta\gamma}^2q_{\beta\eta} q_{\tau\gamma}^2 \Bsj_1\Bsj_4 \x_{\beta}\x_{\tau}\x_{\gamma}^2 \x_{\kappa} \ot x_{\gamma}x_{\beta}x_{\eta}
\\ & \quad
-q_{\delta\gamma}^2 q_{\gamma\beta}^{-2} \coef{\beta\eta\gamma}{3}
\Bsj_2\Bsj_8 \x_{\alpha}\x_{\gamma}^4 \ot x_{\mu}x_{\eta}
- q_{\alpha\beta}q_{\beta\gamma}^2q_{\beta\eta} q_{\tau\gamma}^2 q_{\tau\eta} \Bsj_1 \x_{\beta}\x_{\tau}\x_{\gamma}^2\x_{\eta} \ot  x_{\tau}x_{\beta}x_{\eta}
\\ & \quad
-q_{\alpha\beta}q_{\alpha\delta}q_{\alpha\gamma}^2q_{\alpha\eta} \x_{\beta}\x_{\delta}\x_{\gamma}^2\x_{\eta} \ot x_{\alpha}x_{\eta}
+ q_{\alpha\beta}q_{\beta\gamma}^2q_{\beta\eta}q_{\beta\tau} q_{\gamma\beta}^{-2} (3)_{\widetilde{q}_{\beta\gamma}} q_{\tau\gamma}^2 \Bsj_1\Bsj_4\Bsj_8 \x_{\tau} \x_{\gamma}^4 \ot x_{\beta}x_{\eta} 
\big)
\\ &
= \x_{\alpha} \x_{\beta} \x_{\delta} \x_{\gamma}^{2} \x_{\eta} \ot x_{\eta} 
+q_{\gamma\eta}^2 \x_{\alpha}\x_{\beta}\x_{\delta}\x_{\gamma}\x_{\eta}^2 \ot x_{\gamma}
+q_{\delta\gamma}^2(3)_{\widetilde{q}_{\gamma\eta}} \Bsj_3\Bsj_7 \x_{\alpha}\x_{\beta}\x_{\gamma}^3\x_{\iota} \ot x_{\kappa}x_{\gamma}
\\ & \quad
+q_{\delta\gamma}^2q_{\nu\eta} (3)_{\widetilde{q}_{\gamma\eta}} \Bsj_3 \x_{\alpha}\x_{\beta}\x_{\gamma}^3\x_{\eta} \ot x_{\nu}
+q_{\delta\gamma}^2q_{\mu\eta}q_{\gamma\eta} \Bsj_2 \x_{\alpha}\x_{\beta}\x_{\gamma}^2\x_{\kappa}\x_{\eta} \ot x_{\gamma}x_{\mu}
\\ & \quad
+q_{\delta\gamma}^2q_{\delta\eta}^2 \x_{\alpha}\x_{\beta}\x_{\gamma}^2\x_{\eta}^2 \ot x_{\delta}
-q_{\beta\delta}q_{\beta\gamma}^2q_{\beta\eta}^2 \x_{\alpha} \x_{\delta}\x_{\gamma}^2\x_{\eta}^2 \ot x_{\beta}
\\ & \quad
+q_{\delta\gamma}^2 q_{\gamma\kappa}^2 \coef{\beta\eta\gamma}{3} \Bsj_2\Bsj_6\Bsj_8 \x_{\alpha}\x_{\gamma}^4\x_{\iota} \ot x_{\gamma}
+q_{\delta\gamma}^2 q_{\gamma\kappa}^2q_{\mu\eta} \coef{\beta\eta\gamma}{3} \Bsj_2\Bsj_8 \x_{\alpha}\x_{\gamma}^4\x_{\eta} \ot x_{\mu}
\\ & \quad
+q_{\alpha\beta}q_{\beta\gamma}^2q_{\beta\eta}^2q_{\tau\gamma}^2q_{\gamma\eta} \Bsj_1\Bsj_4 \x_{\beta}\x_{\tau}\x_{\gamma}^2\x_{\kappa}\x_{\eta} \ot x_{\gamma}x_{\beta}
-q_{\alpha\beta}q_{\beta\gamma}^2q_{\beta\eta}^2q_{\tau\gamma}^2q_{\tau\eta}^2 \Bsj_1 \x_{\beta}\x_{\tau}\x_{\gamma}^2\x_{\eta}^2 \ot x_{\tau}x_{\beta}
\\ & \quad
+q_{\alpha\beta}q_{\alpha\delta}q_{\alpha\gamma}^2q_{\alpha\eta}^2 \x_{\beta}\x_{\delta}\x_{\gamma}^2\x_{\eta}^2 \ot x_{\alpha}
-q_{\alpha\beta}q_{\beta\tau}q_{\beta\gamma}^2q_{\beta\eta}^2q_{\gamma\beta}^{-2}q_{\tau\gamma}^2\coef{\beta\eta\gamma}{3} \Bsj_1\Bsj_4\Bsj_8 \x_{\tau}\x_{\gamma}^4\x_{\eta} \ot x_{\beta} 
\\ & \quad
-q_{\delta\gamma}^2 q_{\gamma\kappa}^2 \coeff{\alpha\beta\eta\gamma}{4} \Bsj_2\Bsj_6\Bsj_8\Bsj_9 \x_{\gamma}^6 \ot 1.
\end{align*}
This completes the proof.
\epf

\bigbreak

\end{document}